\documentclass[11pt,a4paper]{amsart}
\usepackage[T1]{fontenc}
\usepackage{lmodern}
\usepackage[margin=1in]{geometry}
\usepackage{amsmath,amssymb,amsthm,mathtools,dsfont,mathrsfs,centernot}
\usepackage{microtype,graphicx,xcolor,booktabs,longtable,array,enumitem}
\usepackage{tikz-cd}
\usepackage{color}
\usepackage[colorlinks=true,linkcolor=blue,citecolor=blue,urlcolor=blue]{hyperref}
\usepackage[nameinlink,capitalise]{cleveref}
\usepackage{soul}
\makeatletter
\@namedef{subjclassname@2020}{%
  \textup{2020} Mathematics Subject Classification}
\makeatother

\allowdisplaybreaks

\theoremstyle{plain}
\newtheorem{theorem}{Theorem}[section]
\newtheorem{lemma}[theorem]{Lemma}

\newtheorem{proposition}[theorem]{Proposition}

\newtheorem{corollary}[theorem]{Corollary}

\newtheorem{introtheorem}{Theorem}

\newtheorem{introcorollary}{Corollary}[introtheorem]

\theoremstyle{definition}
\newtheorem{definition}[theorem]{Definition}
\newtheorem{example}[theorem]{Example}

\theoremstyle{remark}
\newtheorem{remark}[theorem]{Remark}

\numberwithin{equation}{section}

\DeclareMathOperator{\rank}{rank}

\DeclareMathOperator{\End}{End}

\newcommand{\C}{\mathcal{C}}

\newcommand{\D}{\mathcal{D}}
\newcommand{\E}{\mathcal{E}}

\newcommand{\cL}{\mathcal{L}}
\newcommand{\M}{\mathcal{M}}

\newcommand{\bC}{\mathbb{C}}
\newcommand{\bZ}{\mathbb{Z}}

\newcommand{\kk}{\Bbbk}

\newcommand{\Hom}{\mathrm{Hom}}
\newcommand{\unit}{\mathds{1}}

\definecolor{darkgreen}{RGB}{55,138,0}
\definecolor{burntorange}{RGB}{180,85,0}
\definecolor{navyblue}{RGB}{18,40,180}
\definecolor{cyan(process)}{rgb}{0.0, 0.6, 1.0}

\newcommand{\semanticTag}[1]{\stepcounter{equation}\tag{#1}}

\DeclareMathOperator{\FPdim}{FPdim}
\newcommand{\one}{\mathbf1}

\hypersetup{hypertexnames=false,pdftitle={Haagerup--Izumi categories of Q-system type},pdfauthor={Terry Gannon, Andrew Schopieray, Harshit Yadav}}
\title[On Haagerup--Izumi fusion categories]{On Haagerup--Izumi fusion categories}
\author{Terry Gannon}
\address{Department of Mathematics, University of Alberta, Edmonton, Alberta, Canada}
\email{tjgannon@ualberta.ca}

\author{Andrew Schopieray}
\address{Department of Mathematics, SUNY-Plattsburgh, Plattsburgh, NY 12901, US}
\email{ascho013@plattsburgh.edu}

\author{Harshit Yadav}
\address{Max-Planck-Institut f\"ur Mathematik, Vivatsgasse 7, 53111 Bonn, Germany}
\email{yadav@mpim-bonn.mpg.de}

\date{September 15, 2026}

\begin{document}
\raggedbottom
\begin{abstract}
For every odd $N$, we construct a 
Haagerup--Izumi (HI) category  for $\mathbb Z/N$,
using Barnes double-sine functions.
Equivariantization gives near-group categories of type
$((\mathbb Z/N)^2,N^2)$ for every odd $N$.
 We classify connected separable algebras up to Morita equivalence in
such HI categories and
describe their dual categories. These duals are
often HI for noncyclic groups with nontrivial pointed associators. We prove the Evans--Gannon conjecture that the modular data of the center is always a smashed sum involving a finite metric group of
order $N^2+4$ and recover this group and its quadratic form directly from the data  of the HI category. 
We also give the HI equations and reconstruction in arbitrary
characteristic and prove a Frobenius symmetry in characteristic two.
Using this, we obtain the complete list of (untwisted, subfactor type) HI categories over $\mathbb{C}$  for odd $N\le 43$.
Other applications include finding three pairwise non-Morita-equivalent
$\mathbb Z/15$ categories whose centers have the same modular data.
We show that the center of a $\mathbb Z/5$ HI category cannot be defined (i.e., admits no split ribbon form) over a cyclotomic field, disproving a conjecture of
Davidovich--Hagge--Wang.
\end{abstract}
\maketitle
{\small
\setcounter{tocdepth}{1}
\tableofcontents}


\section{Introduction}
\label{sec:introduction}

Let $G$ be a finite abelian group. The Haagerup--Izumi (HI) fusion
rules have simple objects $\alpha_g$ and $\alpha_g\rho$, with
\begin{equation*}
 \alpha_g\alpha_h\cong\alpha_{g+h},\qquad
 \alpha_g\rho\cong\rho\alpha_{-g},\qquad\text{ and }\qquad \rho^2\cong\mathbf1\oplus\bigoplus_{g\in G}\alpha_g\rho.
\end{equation*}
We construct categorifications of these fusion rules for every cyclic group of
odd order. We also characterize and describe separable algebras, Morita duals and centers under the hypotheses stated below. The primary framework of our exposition is the endomorphism presentation developed by M.\ Izumi for Cuntz algebras
\cite[Sections~8 and 9]{IzumiLR2}
and extended by D.\ Evans and T.\ Gannon to nonunitary categories using Leavitt
algebras \cite[Theorems~1--2]{evans2017non}. In this presentation, the 
category is reconstructed from a matrix indexed by $G$ satisfying the
polynomial HI equations. We solve these equations uniformly for cyclic
$G$.

The importance of the HI categories is that they (and their centers) are \textit{exotic}, i.e.\ can't be constructed yet from standard methods applied to standard tensor categories. Studying them can challenge or sharpen our conjectures, and hopefully lead to new constructions and new classes of tensor categories. The HI categories seem to be the most accessible class of exotic fusion categories.

Many of our results restrict to HI categories with the specified algebra structure on $Q=\mathbf1\oplus\rho$ associated with the boundary condition in
Definition~\ref{def:q-system-type}. We call these categories
of \emph{Q-system type}; for unitary HI categories this is equivalent to there being a subfactor with canonical endomorphism $Q$. Over $\mathbb C$, the algebra $Q$ is
connected and separable and its module category gives a Morita dual; this condition does not require unitarity. We call an HI category
\emph{untwisted} when the associator on its pointed subcategory is trivial.

\subsection{The main theorem}

Let $N\geq3$ be odd,
\begin{equation*}
 G=\mathbb Z/N\mathbb Z,\qquad
 \mu=N^2+4,\qquad\text{ and }\qquad
 d_\pm=\frac{1}{2}(N\pm\sqrt{\mu}).
\end{equation*}
Write $\tau(N)$ for the number of positive divisors of $N$.  For a subgroup
$H\leq G$, let $R_H=\bigoplus_{h\in H}\alpha_h$ with its inherited
subgroup-algebra multiplication.  We use
$\mathcal{MD}_\pm(G;H',q)$ for the smashed sum modular data discussed in
Appendix~\ref{ce:compact-MD}: $H'$ is the metric group labeling the
remaining center primaries, $q$ is its quadratic form, and $\pm$ fixes
the spherical dimension.  In this notation, $S$ includes its
normalizing factor $1/N$. Let $\sigma\in\{1,-1\}$.

\begin{introtheorem}[Cyclic HI categories of Q-system type over $\mathbb C$]
\label{thm:intro-main}
\;
\begin{enumerate}[label=\textup{(\arabic*)},ref=\arabic*]
\item\label{item:intro-existence}
\textbf{Uniform existence.} 
There is an untwisted spherical HI category of Q-system type
for $G$, with $\dim(\rho)=d_-$, which we give in closed form. There is also a pseudo-unitary untwisted HI category of 
Q-system type for $G$, with $\dim(\rho)=d_+$, obtained by Galois
conjugation.

\item\label{item:intro-centers}
\textbf{Modular data.}
Let $\mathcal C$ be a complex spherical untwisted HI category 
with $\dim(\rho)=d_\sigma$. 
The modular data of $Z(\mathcal C)$ are
$\mathcal{MD}_\sigma(G;H',q)$ for a nondegenerate metric group
$(H',q)$ of order $\mu$, with
\begin{equation*}
 \sum_{h\in H'}e^{2\pi i q(h)}=-\sigma\sqrt\mu.
\end{equation*}
The isometry class of $(H',q)$ is determined by explicit scalar
invariants of the HI matrix, given in Proposition~\ref{ce:metric-recognition}.
If $\mu$ is square-free, then $H'\cong\mathbb Z/\mu$ and
$q(h)=s_\sigma h^2/\mu\pmod{\mathbb Z}$, where $s_\sigma\in(\mathbb Z/\mu)^\times$ has Jacobi symbol $(s_\sigma/\mu)=-\sigma$. When $\mu$ is prime,
$\sigma$ determines this form up to isometry.

\item\label{item:intro-classification}
\textbf{Separable algebras and their duals.}
Let $\mathcal C$ be a  unitary untwisted HI category of Q-system
type for $G$.
Then every connected separable algebra in $\mathcal C$ is Morita equivalent to exactly one of
\begin{equation*}
 \{R_H:H\leq G\}\ \sqcup\ \{Q\}.
\end{equation*}
Thus there are $\tau(N)+1$ equivalence classes of indecomposable module categories.
The dual associated with $R_H$ has HI fusion rules with invertible group
$(G/H)\times\widehat H$ and pointed associator given by the carry cocycle
\eqref{sa:carry-cocycle}; this class is trivial precisely when
$\gcd(|H|,[G:H])=1$.  The dual associated with $Q$ has $N+1$ simple
objects, of Frobenius--Perron dimensions
\begin{equation*}
 1,\quad d_+,\quad
 \underbrace{d_+-1,\ldots,d_+-1}_{(N-1)/2},\quad
 \underbrace{d_++1,\ldots,d_++1}_{(N-1)/2},
\end{equation*}
and is tensor-generated by the object of dimension $d_+$, with fusion
matrix \eqref{sa:Q-dual-matrix}.
\end{enumerate}
\end{introtheorem}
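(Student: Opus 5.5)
The proof splits along the three parts of Theorem~\ref{thm:intro-main}. In each part we reduce everything to the HI matrix $(A(g,h))_{g,h\in G}$ and the Izumi--Evans--Gannon reconstruction \cite[Theorems~1--2]{evans2017non}.

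\textbf{Existence.} We write down an explicit candidate for the HI data: a sign/cube-root datum, scalars $a(g)$ given by a quadratic Gauss-type phase in $g$, $b(g)$, and the matrix $A(g,h)$. The entries of $A$ are expressed through ratios of Barnes double-sine functions evaluated at points of $\frac1N\mathbb Z$, with the parameter chosen so that $d_-$ appears. We then verify the polynomial HI equations one by one. The linear and quadratic ones should follow from orthogonality of characters of $\mathbb Z/N$ and Gauss-sum evaluations (this is where $N$ odd is used). The cubic and pentagon-type identities should reduce to functional equations of the double sine, namely the quasi-periodicity $S_2(z+\omega_i)=S_2(z)/2\sin(\pi z/\omega_j)$ and the reflection $S_2(z)S_2(\omega_1+\omega_2-z)=1$, together with a finite Fourier (pentagon) identity for quantum-dilogarithm-type products at roots of unity. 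This last verification is the step I expect to be the main obstacle, and I would first try to recast it as the known cyclic quantum dilogarithm pentagon.

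The remaining properties of the construction are then checked directly. The boundary condition of Definition~\ref{def:q-system-type} holds by evaluation at $g=0$. Sphericality is inherited from the reconstruction theorem. Applying the Galois automorphism $\sqrt\mu\mapsto-\sqrt\mu$ to the coefficient field sends $d_-\mapsto d_+$ and preserves the HI equations, and positivity of all dimensions then gives pseudo-unitarity.

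\textbf{Modular data.} Here we use the tube-algebra (Izumi) description of $Z(\mathcal C)$. The $G$-graded part yields the $\mathcal D(G)$-type primaries, and the $\rho$-sector yields the remaining $\mu$-type primaries. For these we compute the $T$-eigenvalues from the twist formula on $\rho$-tube elements and the $S$-entries from traces involving $A$. Comparing with the smashed-sum template of Appendix~\ref{ce:compact-MD}, we show the extra primaries carry a finite abelian group structure $H'$ of order $\mu$ on which the computed $T$ defines a nondegenerate quadratic form $q$. The Gauss sum value $-\sigma\sqrt\mu$ then follows from $\sum\theta\, d^2=\dim Z\cdot$ (the central charge relation) together with $\dim(\rho)=d_\sigma$. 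The isometry invariants are those of Proposition~\ref{ce:metric-recognition}. When $\mu$ is square-free, the group $H'$ is cyclic and the form is diagonal, $s h^2/\mu$. The Gauss sum sign pins down the Jacobi symbol of $s$, and for $\mu$ prime the two classes of forms are separated by exactly this symbol.

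\textbf{Separable algebras.} Indecomposable module categories correspond to Lagrangian algebras in $Z(\mathcal C)$, equivalently to modular invariants of the computed modular data. We count these: there are $\tau(N)$ coming from subgroups $H$ (the $\mathcal D(G)$ part) plus one extra involving the $H'$ part, so the total is $\tau(N)+1$. The algebras $R_H$ and $Q$ realize pairwise distinct invariants, which also follows from their differing internal-End dimensions.

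For the dual with respect to $R_H$, we compute the $R_H$-bimodules. The invertible ones are $\alpha_g$ twisted by characters, giving the group $(G/H)\times\widehat H$, and the remaining simple bimodules are images of $\rho$, so the fusion rules are of HI type. The associator on the pointed part is the extension class of $0\to H\to G\to G/H\to0$, i.e.\ the carry cocycle \eqref{sa:carry-cocycle}. This class vanishes exactly when the extension splits, which happens iff $\gcd(|H|,[G:H])=1$.

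For the dual with respect to $Q$, we decompose $Q\otimes X\otimes Q$ into simple $Q$-bimodules. The simples found this way are $\mathbf 1$, one bimodule of dimension $d_+$, and pairs $\alpha_g\pm$ indexed by $\{g,-g\}$ with $g\ne0$, of dimensions $d_+\mp1$. The fusion matrix of the $d_+$ object is computed by Frobenius reciprocity, and tensor generation follows because that matrix is irreducible.
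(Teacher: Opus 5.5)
There are genuine gaps in all three parts, each at the step that carries the real content.

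\textbf{Existence and modular data.} In (1) you never write down the matrix, and the mechanism you anticipate is not the one that works. The paper takes $A_{g,h}=s_gs_{h-g}/s_h+\delta_{h,0}$. Here $s_g$ is, up to a constant and explicit phase factors, the value $F(g)=S(\varpi_1+g\mid\varpi_1,\varpi_2)$ at \emph{integers}, with $\varpi_1=1+d_+^{-1}$ and $\varpi_2=1+d_+$. The period ratio $d_+$ is irrational, so there is no root-of-unity degeneration and no cyclic quantum-dilogarithm pentagon to invoke. The $\mathbb Z/N$-structure comes instead from the integer quasi-periodicity $F(k+N)/F(k)=(-1)^{k+1}$ for $k\neq0$. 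Nor is \eqref{eq:HImatrix3} a character-orthogonality or Gauss-sum computation: it is the four-factor identity of Lemma~\ref{ds:lem:four-factor-sum}. Likewise \eqref{eq:HI-cubic} reduces to the six-factor identity $\kappa^2M(p,q)=T(p,q)$ of Lemma~\ref{ds:lem:six-factor-sum}. Both finite sums are evaluated by folding a contour integral of a balanced double-sine ratio against cotangent kernels, the second via the hyperbolic Saalsch\"utz integral. With the cubic criterion (Theorem~\ref{thm:cubic-criterion}) you never need to verify \eqref{eq:HImatrix4}.

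In (2), ``we show the extra primaries carry a finite abelian group structure'' is precisely the Evans--Gannon conjecture. The $\mathfrak d$-block obtained from the tube algebra is exactly what could not be evaluated, and you give no way to extract a group from it. The missing idea has four steps:
\begin{enumerate}
\item work in the fusion subring spanned by $c_i=0_++0_--\mathfrak d_i$;
\item prove $c_i^2=2+c_{\pi(i)}$ and $c_ic_j=c_p+c_q$, using integrality, a trace form invertible mod $2$, Galois symmetry, and a Frobenius lift at a prime above $2$;
\item identify this ring with $\mathbb C[H]^{\{\pm1\}}$ via the classification of involutive two-valued groups;
\item only then use $S=S^{\top}$ and $(ST)^3=C$ to obtain a symmetric bicharacter and the quadratic form.
\end{enumerate}
Your central-charge derivation of the Gauss sum is fine once this labelling exists. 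Citing Proposition~\ref{ce:metric-recognition}, however, does not prove the recognition clause, which is part of the statement.

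\textbf{Classification.} In (3), module categories do correspond to Lagrangian algebras in $Z(\mathcal C)$, but not ``equivalently'' to modular invariants. An $S,T$-invariant vector or matrix need not come from an algebra and need not determine one, so even a complete list of invariants would not classify Morita classes. In any case no count is performed: ``$\tau(N)$ from subgroups plus one'' just restates the answer. Differing algebra dimensions also do not separate Morita classes, since internal Ends of different simples in one module category have different dimensions; the paper uses the ranks $2N/|H|$ and $N+1$.

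Your plan never uses the unitarity hypothesis, which the paper needs. Its route is as follows. Every indecomposable module category has exactly two pointed orbits. Transporting to ${}_{R_H}\mathcal C_{R_H}$, for $H$ the stabilizer of the orbit with smaller diagonal coefficient $a$, forces $a\in\{0,1\}$. The case $a=0$ gives $\operatorname{Mod}_{\mathcal C}(R_H)$. The case $a=1$ gives complementary Hall stabilizers and a connected separable algebra on $\one\oplus\sigma$ in the dual. Unitarity (unimodular Izumi phases and a $C^*$-Frobenius normalization of that algebra) then yields a Hermitian, trace-zero $|H|\times|H|$ matrix $X$ with $X^2=\lambda X+(|H|-1)I$ and $\lambda^2=\sqrt\mu+2\notin\mathbb Q$. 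This is impossible unless $H=\{0\}$, and Ostrik's uniqueness lemma then identifies the remaining algebra with a conjugate of $Q$.

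For the $Q$-dual, Hom-dimension counts on $Q\otimes Y\otimes Q$ do not determine the simple bimodules: already $\dim\Hom(\rho,Q\rho Q)=N+4$ is compatible with many decompositions. The paper imports Izumi's multiplicity-one restriction result for the $3^G$ subfactor and reads $N_\eta$ off the restriction matrix.

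Finally, ``the carry class vanishes iff the extension splits'' needs a proof that the induced class on $(G/H)\times\widehat H$ is nonzero when $f=\gcd(|H|,[G:H])>1$. The paper multiplies $\omega_H$ along the cyclic subgroup generated by $(1,1)$ and gets $e^{2\pi i/f}$, whereas any coboundary gives $1$.
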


\subsection{The construction}

We observed that for some numerical solutions in Table~\ref{tab:cyclic-negative}, $A_{1,g}$ was close to $\pm g$ for $g\ll N$. In these solutions, the entries $A_{g,h}$ approximate binomial coefficients up to a sign. This insinuates ratios of gamma values, since $\Gamma(n+1)=n!$. Proposition~\ref{ds:fixed-index-integrality} proves
the fixed-$g$ limit that motivated this line of reasoning. But the ordinary gamma function does not supply the shift
relations needed for the quadratic and cubic identities. Through its connection to extensions of real quadratic fields, we were led to the Barnes double-sine, which has two compatible shift directions.
Its nonzero values $s_g$ give the matrix
\begin{equation*}
 A_{g,h}=\frac{s_gs_{h-g}}{s_h}+\delta_{h,0}
 \qquad(g,h\in G).
\end{equation*}
Theorem~\ref{ds:uniform-existence} shows that this matrix satisfies the
full HI equations for $d_-$ and $\omega=1$.

\subsection{Unitarity}
Given any automorphism of $\overline{\mathbb Q}$ sending $d_-\mapsto d_+$, the conjugate category has spherical dimensions equal to its
Frobenius--Perron dimensions, hence is pseudo-unitary; this is also a special
case of Thornton's general result for generalized near-group categories
\cite[Theorem~IV.3.6]{thornton2012generalized}. By \cite{IzumiLR2} and \cite{evans2017non}, this will be unitary if $A_{g,h}=\overline{A_{h,g}}$ 
for all $0<g<h<N$.  We expect all untwisted cyclic HI categories with dimension $d_+$ to be unitary, and have verified this for all odd $N\le43$ in Table~\ref{tab:cyclic-census}.

\subsection{Positive characteristic} The HI equations and reconstruction are treated over algebraically closed
fields of arbitrary characteristic. The HI matrices will then have entries in finite fields. Moreover, by the Frobenius Density Theorem, for a positive density of primes $p$ the HI categories will be defined over $\mathbb Z/p$. 
Characteristic 2 is especially useful: every HI-matrix satisfies
\begin{equation*}
 A_{2g,2h}=A_{g,h}^4.
\end{equation*}
This gives the finite-field bound used for enumeration. Conversely,
Theorem~\ref{thm:HI-integrality-at-two} proves that  full
HI-matrices are integral above $2$. The proof uses a Fourier identity to
rule out a nonzero reduction of a nonintegral matrix.
Proposition~\ref{prop:HI-good-reduction-at-two} then relates the
characteristic-zero categories to their reductions by lifting.
Corollary~\ref{cor:HI-categorical-completeness} gives completeness up to
tensor equivalence for Q-system HI categories, provided the
finite-field list is exhaustive. 
Using this we obtain the completeness of Tables~\ref{tab:cyclic-census} and~\ref{tab:cyclic-negative}.

\subsection{Modular data and categorical applications}

Expressions for the modular data of the centers of HI categories are given in \cite{evans2017non}, building on \cite{IzumiLR2}, and much of it is very simple, although one block was very complicated. An important observation of \cite{evans2011exoticness} was that in examples, that mysterious block is equally simple, and is given by the smashed sum  $\mathcal{MD}_\pm(G;H',q)$. Part~\ref{item:intro-centers} proves this is always the case. To identify that block, we study a subalgebra  it defines of the full fusion algebra. Its product rules
recover addition in a finite abelian group up to sign. Galois symmetry of the modular $S$ matrix and
Frobenius at $2$ force these rules; $S=S^{\top}$  and the
modular relation $(ST)^3=C$ then identify the pairing and ribbon twists. 
Section~\ref{ce:n15-application} explains the three $\mathbb Z/15$ examples in Table~\ref{tab:cyclic-census} are pairwise
non-Morita-equivalent by part~\ref{item:intro-classification}, while their
centers have the same modular data by part~\ref{item:intro-centers}.
 These HI categories are Galois conjugates of each other. The first examples of inequivalent modular  categories with identical modular data were found in \cite{mignard2021modular}.

In the context of Theorem~\ref{thm:intro-main}(3), the Morita dual ${}_A\C_A$ need not be cyclic or untwisted. For example, for $G=\mathbb Z/9$ and $H=3\mathbb Z/9$, the  dual has invertible
group $\mathbb Z/3\times\mathbb Z/3$ and nontrivial pointed associator
(Example~\ref{sa:n9-dual}). 
Compare \cite{evans2011exoticness} where it was shown there are no untwisted solutions to (E1)-(E4) for $G=\mathbb Z/3\times\mathbb Z/3$.

\subsection{An infinite family of near-group categories}

We apply an algebraic form of Izumi's HI-to-near-group
construction \cite[Theorem~12.9 and Example~12.15]{IzumiNearGroupCuntz}
to the categories constructed above.
Conjugation by the pointed objects gives a $G$-action on an HI category
$\mathcal C$. Its equivariantization is the relative center with respect
to the pointed subcategory. When this subcategory is untwisted, the
result is a $G\times\widehat G$ near-group category with multiplicity $N^2$
(Theorem~\ref{ng:near-group}). The existence theorem therefore gives:

\begin{introcorollary}[Uniform near-group existence]
\label{cor:intro-near-group}
For every odd $N$, there is a near-group fusion category over
$\mathbb C$ of type
\begin{equation*}
 \bigl(\mathbb Z/N\times\mathbb Z/N,N^2\bigr).
\end{equation*}
In particular, these categories form an infinite family with noncyclic
invertible groups.
\end{introcorollary}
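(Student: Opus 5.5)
The plan is to combine the uniform existence statement of Theorem~\ref{thm:intro-main}\eqref{item:intro-existence} with the equivariantization result stated in the introduction (Theorem~\ref{ng:near-group}). Fix an odd $N\ge3$ and set $G=\mathbb Z/N$. By part~\eqref{item:intro-existence}, the Barnes double-sine matrix $A_{g,h}=s_gs_{h-g}/s_h+\delta_{h,0}$ satisfies the full HI equations with $\omega=1$ (Theorem~\ref{ds:uniform-existence}). Reconstruction through the Leavitt-algebra presentation of \cite{evans2017non} then yields a spherical HI category $\mathcal C$ over $\mathbb C$ for $G$. The condition $\omega=1$ means that the pointed subcategory $\mathrm{Vec}_G\subset\mathcal C$ carries the trivial associator, so $\mathcal C$ is untwisted. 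Either $\mathcal C$ or its pseudo-unitary Galois conjugate will serve.

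Next, I let $G$ act on $\mathcal C$ by conjugation, $X\mapsto \alpha_g X\alpha_{-g}$. Because the associator on $\mathrm{Vec}_G$ is trivial, the tensorators of this action can be chosen to satisfy the cocycle condition strictly. This is the only place where untwistedness enters, and I expect it to be the main point requiring care: one has to check that the canonical action from conjugation has no obstruction in $H^3(G,\mathbb C^\times)$ and that a genuine action, rather than an action up to a $3$-cocycle, exists. The introduction identifies the equivariantization $\mathcal C^G$ with the relative center $Z_{\mathrm{Vec}_G}(\mathcal C)$, and Theorem~\ref{ng:near-group} asserts that under untwistedness this relative center is a near-group category with invertible group $G\times\widehat G$ and multiplicity $N^2$. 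I will invoke that theorem directly rather than reprove it.

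As a sanity check on the type, I will count dimensions:
\[
\dim\mathcal C^G=N\dim\mathcal C=N\cdot 2N\,(1+d^2)\quad\text{with } d^2=Nd+1 .
\]
On the near-group side, the noninvertible simple object $X$ should satisfy $\dim X=Nd$ and $X^2=\bigoplus_{G\times\widehat G}\alpha\oplus N^2X$. Then $(Nd)^2=N^2+N^2\cdot Nd$ is equivalent to $d^2=Nd+1$, which is consistent. Finally, since $N$ ranges over all odd integers $\ge3$ and $\mathbb Z/N\times\mathbb Z/N$ is not cyclic, this produces an infinite family with noncyclic invertible groups. The case $N=1$, if it is wanted at all, is degenerate and can be excluded or treated separately.
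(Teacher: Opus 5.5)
Your route is the paper's: the double-sine HI matrix (Theorem~\ref{ds:uniform-existence}) reconstructs an untwisted HI category for $\mathbb Z/N$. The conjugation equivariantization (Theorem~\ref{ng:near-group} with Corollary~\ref{ng:cyclic-near-group}) then gives a near-group of type $(G\times\widehat G,N^2)$. The conclusion is right, but two of your justifications are wrong and should be fixed.

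\textbf{Why $\mathcal C$ is untwisted.} It is not because $\omega=1$. The scalar $\omega$ is tied to the third Frobenius--Schur indicator of $\rho$, and Theorem~\ref{thm:reconstruction} produces an untwisted category for \emph{every} $(d,\omega)$-HI-matrix. Untwistedness comes from strictness of the endomorphism model. The $\alpha_g$ are genuine automorphisms of the Leavitt algebra with $\alpha_g\alpha_h=\alpha_{g+h}$ on the nose. The tensor product is composition, so the associator on the pointed part is the identity.

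\textbf{Where untwistedness is used.} You place it in the wrong step. The conjugation action $F_g(X)=\alpha_g\otimes X\otimes\alpha_g^\vee$ exists for any pointed associator. The associator scalar enters once directly and once inverted on the dual side, so it cancels in the action coherence and there is no $H^3$ obstruction to check. Accordingly, Theorem~\ref{ng:near-group} gives a near-group of type $(E_\Omega,N^2)$ with no untwistedness hypothesis. What untwistedness actually buys is the identification $E_0\cong G\times\widehat G$ of the invertibles of $Z(\mathrm{Vec}_G)$ (Corollary~\ref{ng:cyclic-near-group}). For a nontrivial $\Omega$, the invertible group is an extension of $G$ by $\widehat G$ that need not split. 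In that case the type $(\mathbb Z/N\times\mathbb Z/N,N^2)$ would not follow.

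\textbf{Smaller points.} Your dimension check has a slip. In fact $\dim\mathcal C=N+Nd^2=N(1+d^2)$, not $2N(1+d^2)$. Hence $\dim\mathcal C^G=N^2(1+d^2)=|G\times\widehat G|+(Nd)^2$, which is the consistency you wanted; with your factor of $2$ the two sides would disagree. The case $N=1$ is not degenerate. There the HI category is the Fibonacci-type category with $\rho^2=\mathbf 1\oplus\rho$, which is itself a near-group of type $(\{0\},1)$.
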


For the pseudo-unitary conjugates, Corollary~\ref{ng:modular-realization}
also identifies the center data with the Evans--Gannon and Grossman--Izumi
near-group formulas. The metric group of order $N^2+4$ is the same one
that occurs in the HI center.

Equivariantization gives
a central functor $\operatorname{Rep}(G)\to Z(\mathcal C^G)$;
de-equivariantization by its regular algebra recovers $\mathcal C$.
For a nontrivial pointed associator, the invertible group can instead be
a nontrivial extension of $G$ by $\widehat G$. Section~\ref{sec:near-group}
gives this extension and the data required for de-equivariantization. This construction suggests that near-groups are more fundamental than HI categories.

\subsection{A counterexample to cyclotomic fields of definition}

In \cite[Section 2]{etingof2005fusion}, Etingof, Nikshych and Ostrik explain that the usual examples of fusion categories are all defined over cyclotomic fields, and asked whether all are. Morrison and Snyder \cite{morrison2012non} showed that the unique unitary Q-system $\mathbb Z/3$ HI category is not cyclotomic. They also showed that its center, regarded as a fusion category, was cyclotomic.
Davidovich, Hagge and Wang conjectured that every modular category over
$\mathbb C$ can be defined over a cyclotomic  field
\cite[Conjecture~4.5]{DavidovichHaggeWang2013}. They require
the endomorphism algebra of each simple object to be the defining field
\cite[Sections~2.1--2.2]{DavidovichHaggeWang2013}, so the question concerns
split ribbon forms.

Let $\mathcal C_5$ be the unique unitary Q-system $\mathbb Z/5$ HI category
first found in  \cite[Example~7.2]{IzumiLR2}. 
Using the explicit HI matrix for $\mathcal C_5$, we prove the following in Theorem~\ref{sf:counterexample}.

\begin{introtheorem}[Noncyclotomic modular category]
\label{thm:intro-cyclotomic}
Every normal number field $J\subset\overline{\mathbb Q}$ supporting a split
ribbon form of $Z(\mathcal C_5)$ contains both $d_+$ and $\eta=(d_++\sqrt{d_+ +9})/{2}$.  Since the normal closure of $\mathbb Q(d_+,\eta)$ has dihedral Galois group of order $8$, no cyclotomic field can support it.
\end{introtheorem}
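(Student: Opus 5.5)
We argue by contradiction with Galois theory: a split ribbon form of $Z(\mathcal C_5)$ over a normal field $J$ forces $J$ to contain certain canonical invariants, and these invariants generate a field that is not abelian over $\mathbb Q$.

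We first record that, for $N=5$, $\mu=29$ and $d_+=(5+\sqrt{29})/2$. This is the Frobenius--Perron dimension of $\rho$, and it reappears as the dimension of simple objects of $Z(\mathcal C_5)$.

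\textbf{Step 1: $d_+\in J$.} Suppose $J$ supports a split ribbon form of $Z(\mathcal C_5)$. Every simple object has endomorphism algebra $J$, so the categorical (quantum) dimensions lie in $J$. Since $Z(\mathcal C_5)$ contains a simple object with $\dim = d_+$ (for instance an induction summand coming from $\rho$), this gives $d_+\in J$. Similarly, the $S$ and $T$ entries lie in $J$ up to the normalizing global dimension.

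\textbf{Step 2: $\eta\in J$.} This is the key step and the main obstacle. The modular data are cyclotomic by general theory, so $\eta$ cannot be seen there. It must come from a gauge-invariant quantity of higher order, such as the trace of a braiding eigenvalue combination or an $F$-symbol invariant. I would proceed as follows.
\begin{enumerate}
\item Take the simple objects $X$ of $Z(\mathcal C_5)$ lying over $\rho$, and a multiplicity space $\Hom(X\otimes X, Y)$ of dimension at least $2$.
\item On this space, consider the gauge-invariant scalars given by traces of products of $F$-moves and $R$-moves. These are computable from the explicit HI matrix $A$ for $\mathcal C_5$ via the half-braiding description of $Z(\mathcal C)$ in the Izumi tube-algebra picture.
\item Since the form is split over $J$, every such invariant lies in $J$.
\end{enumerate}
I expect the explicit $\mathbb Z/5$ matrix entries (which involve $\sqrt{d_++9}$, as in Izumi's Example~7.2) to surface directly. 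Concretely, I would try the invariant $\operatorname{tr}(\text{half-braiding of }\rho\text{ with }\rho)$ restricted to an eigenspace, or equivalently an eigenvalue of the tube-algebra idempotent. I would then show that it equals a rational expression in $d_+$ and $\eta$ with nonzero $\eta$-coefficient, which forces $\eta\in J$.

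\textbf{Step 3: the Galois group.} We have $\eta^2-d_+\eta+(d_+^2-d_+-9)/4=0$, so $[\mathbb Q(d_+,\eta):\mathbb Q]=4$ provided $d_++9$ is not a square in $\mathbb Q(\sqrt{29})$. I would check this via the norm: $N(d_++9)=(23/2)^2-29/4=125$, which is not a square, so the extension is quartic. Its normal closure adjoins $\sqrt{d_-+9}$ as well. Because the norm is not a rational square, $\sqrt{(d_++9)(d_-+9)}=\sqrt{125}=5\sqrt5\notin\mathbb Q(\sqrt{29})$, so the closure has degree $8$ and group $D_4$. A dihedral group of order $8$ is nonabelian. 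Any normal $J$ containing $d_+$ and $\eta$ contains this closure, and a cyclotomic field has abelian Galois group, all of whose quotients are abelian. Hence no cyclotomic $J$ can support a split ribbon form, which proves the theorem.
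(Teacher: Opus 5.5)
\textbf{There is a genuine gap in Step 2, which carries the entire content of the theorem.} Steps 1 and 3 are essentially fine. Step 2, however, is not a proof: you never exhibit an invariant. You only say which kinds of quantities you would try, and that you ``expect'' $\sqrt{d_++9}$ to surface.

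Worse, the candidates you name are not admissible. A split $J$-form of $\mathcal B=Z(\mathcal C_5)$ forces into $J$ only intrinsic invariants of the braided category $\mathcal B$, that is, gauge classes of its own $F$- and $R$-symbols. It says nothing about the following, all of which live over a field of definition of $\mathcal C_5$ rather than of its center:
the HI matrix of $\mathcal C_5$;
the forgetful functor;
half-braidings against $\rho$ (which is not an object of $\mathcal B$);
coefficients of tube-algebra idempotents.

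Your heuristic that the non-cyclotomic entries of Izumi's matrix will ``surface directly'' is exactly what fails for $N=3$. There the Haagerup category is non-cyclotomic while its center, as a fusion category, is cyclotomic (Morrison--Snyder). So any argument must use an ingredient specific to $N=5$, and your proposal contains none.

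A smaller slip in Step 1: no simple of $\mathcal B$ has dimension $d_+$; the dimensions are $1$, $d_+^2$, $1+d_+^2$ and $5d_+$. Still, $5d_+\in J$ gives $d_+\in J$. The paper instead gets $e^{2\pi i/29}\in J$ from the twists.

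\textbf{How the paper obtains $\eta$ without computing any invariant.} Put $F=\mathbb Q(d_+)$ and $E=F(\sqrt{d_++9})$, and let $\tau$ be the nontrivial element of $\operatorname{Gal}(E/F)$, lifted to $\overline{\mathbb Q}$.
\begin{enumerate}
\item The off-diagonal entries $x,y$ of $A_5$ are roots of $X^2-\eta X+d_+$ and $X^2-(d_+-\eta)X+d_+$. Hence $\tau$ sends $\{x,\overline{x}\}$ to $\{y,\overline{y}\}$, and $\tau(A_5)$ is again a Q-system HI matrix for $d_+$.
\item The unitary Q-system $\mathbb Z/5$ HI category is unique. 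The Evans--Gannon equivalence criterion then gives $\tau(A_5)_{g,h}=(A_5)_{2g,2h}$, after possibly inverting the lift.
\item Suppose $\eta$ is not in the Galois closure of $J$. Then $E\cap J=F$, so $\tau$ extends to an automorphism of $\overline{\mathbb Q}$ fixing $J$.
\item Descent of the split form gives a braided equivalence $D:\mathcal B^\tau\to\mathcal B$ fixing every simple. The relabeling gives $T:\mathcal B^\tau\to\mathcal B$ swapping $\mathfrak a_{[1,1]}$ and $\mathfrak a_{[2,1]}$.
\item Hence $T\circ D^{-1}$ is a nontrivial braided autoequivalence of $\mathcal B$. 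This contradicts $\operatorname{EqBr}(\mathcal B)\cong\operatorname{BrPic}(\mathcal C_5)=1$, which follows from trivial outer autoequivalences and the Grossman--Snyder list of module categories.
\end{enumerate}
This is precisely the $N=5$-specific input you are missing. For $N=3$ the analogous relabeling permutes no simples of the center, so no contradiction arises.

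Your Galois computation in Step 3 is correct and matches the paper: the norm of $d_++9$ is $125$, the normal closure has degree $8$, and its group is $D_4$.
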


\subsection{Foundations}

For an odd-order abelian group, the HI categories are constructed using 
a matrix $A\in M_{|G|}(\kk)$ satisfying the HI system of equations \eqref{eq:HImatrix1}--\eqref{eq:HImatrix4}; these 
depend on the choice of $d,\omega\in\kk$ where $d$ satisfies \eqref{eq:HI-scalars} and $\omega$ is a third root of unity. 
We show that symmetry \eqref{eq:HImatrix1} and the cubic identity \eqref{eq:HI-cubic} are equivalent to the full HI system
(Theorem~\ref{thm:cubic-criterion}), in any characteristic. The Q-system boundary gives the
algebra $Q$, with the exceptional positive-characteristic cases
stated explicitly. If the group order is invertible in the field, $\omega$ must be $1$.

\subsection{Organization}
Section~\ref{sec:foundations} gives the foundations, with detailed proofs in
Appendices~\ref{app:equations}, \ref{app:reconstruction}, and \ref{app:q-system}.
Section~\ref{sec:double-sine}, together with the analytic details in Appendix~\ref{app:double-sine}, gives the construction and proof of Theorem~\ref{thm:intro-main}(1). 
Section~\ref{sec:separable} and Appendix~\ref{app:separable} give the separable 
algebra classification and duals. Section~\ref{sec:centers} proves the smashed-sum
formula for the center and recovers its metric group by scalar
contractions of the HI matrix;
the arithmetic recognition proof is in Appendix~\ref{ce:recognition-proof}. Section~\ref{sec:near-group} treats
equivariantization. Section~\ref{sec:split-field} gives the noncyclotomic example,
and Section~\ref{sec:characteristic-two} gives the characteristic-two results, integrality above $2$, and the categorical reduction criterion. Appendix~\ref{app:finite-cyclic-data} contains the explicit cyclic
data over $\mathbb C$ for $N\le 43$.

\textbf{Note added}: While completing this paper we learned that Tzu-Chen Huang was independently working on similar questions. In particular, in \cite{huang2026cyclic} he established like us existence of an HI category for each odd cyclic $G$. He constructed the same category as we did, though his proof was quite different. He did not address the other things we have included.

\textbf{Acknowledgements.} TG would like to thank Paul Budinsky, who in his 2021 MSc thesis obtained some of the data in Table~\ref{tab:cyclic-census} for $N\le 29$, and Pavel Etingof for encouraging us to study these categories in positive characteristic. The research of TG is supported, and HY was supported during much of this work, by NSERC Canada. HY is grateful to MPIM, Bonn for its hospitality and financial support.

\textbf{AI use disclosure.} We used ChatGPT during the development and 
drafting of this paper to explore questions, possible formulations, and proof
strategies; to assist with computations and literature searches; and
to review arguments and improve the exposition. This assistance
included suggestions for intermediate claims, proof steps, and
corrections. All mathematical statements, proofs, computational
results, and references were independently checked by the authors,
who take sole responsibility for the contents of the paper.


\section{Q-system-type categories and the HI equations}
\label{sec:foundations}\label{sec:matrixtoLrealization}\label{sec:HI-fixed-boundary-general}
We give the HI equations and the reconstruction theorem, and identify the
boundary condition defining Q-system type. The cubic criterion reduces
verification of the HI equations to \eqref{eq:HImatrix1} and the cubic
identity; we use it in the double-sine construction.

Let $\kk$ be an algebraically closed field of arbitrary characteristic and let $G$ be a finite abelian
group of odd order $N$ (not necessarily cyclic). A category of type $\mathfrak{HI}_G$ has simple objects
$\alpha_g$ and $\alpha_g\rho$, for $g\in G$, with fusion rules
\begin{equation}\label{HIfusrul}
 \alpha_g\alpha_h=\alpha_{g+h},\qquad
 \alpha_g\rho=\rho\alpha_{-g},\qquad
 \rho^2=\mathbf1\oplus\bigoplus_{g\in G}\alpha_g\rho.
\end{equation}
We call the category \emph{untwisted} when its pointed subcategory is
$\mathrm{Vec}_G$ with trivial associator. 

\subsection{The equations and the cubic criterion}
Choose scalars $d,\omega\in \kk$ satisfying
\begin{equation}\label{eq:HI-scalars}
 d^2=Nd+1,\qquad \omega^3=1,
\end{equation}
and put
\begin{equation}\label{eq:HI-a}
 a=d^{-1},\qquad a^2+Na=1.
\end{equation}
The scalar equation gives $d\ne0$. In these equations, an integer denotes
its image in $\kk$; the group itself remains the abstract finite group $G$.
No division by $N$, $2$, $3$, or $N^2+4$ is used in this setup.
\begin{definition}[HI-matrix]\label{def:HI-matrix}
For fixed $(d,\omega)$ satisfying \eqref{eq:HI-scalars}, a \emph{$(d,\omega)$-HI-matrix} is a matrix
$A=(A_{g,h})_{g,h\in G}\in M_{|G|}(\kk)$ satisfying the following equations:
\begin{align}
 A_{g,h}
 &=\omega A_{-h,g-h}
  =\omega^{-1}A_{h-g,-g},
 . \semanticTag{E1}\label{eq:HImatrix1}\\
 \sum_{r\in G}A_{r,0}
 &=-\omega^{-1}a, \semanticTag{E2}\label{eq:HImatrix2}\\
 \sum_{r\in G}A_{h+r,k}A_{k,r}
 &=\delta_{h,0}-a\delta_{k,0}, \semanticTag{E3}\label{eq:HImatrix3}\\
 \sum_{l,m\in G}
 A_{l,m}A_{l+g,h}A_{h+m,l+i}A_{i,k+m}
 &=A_{h-g,i-g}\delta_{k,g}
   -\omega^{-1}a\delta_{h,0}A_{i,k}
   -\omega a A_{g,h}\delta_{i,0},. 
   \semanticTag{E4}\label{eq:HImatrix4}
\end{align}
for all $g,h,i,k\in G$. 
Equations \eqref{eq:HImatrix1}--\eqref{eq:HImatrix4} are the polynomial form of the complex equations in
\cite[Theorem~1, equations~(4.7)--(4.10)]{evans2017non},
with the complex-conjugate  $\overline\omega$ replaced by  $\omega^{-1}$.
\end{definition}

The first change of indices in \eqref{eq:HImatrix1} has order three, so a
cube root $\omega$ is allowed at this stage. It can be shown, directly from the Leavitt presentation, that $\omega$ corresponds to the third Frobenius--Schur indicator of $\rho$. We will show in 
Theorem~\ref{thm:HI-omega-trivial}, when $|G|\neq 0$ in $\kk$, the remaining equations force $\omega$ to be $1$.

\begin{remark}\label{rem:HI-trivial-group}
If $G=\{0\}$, then \eqref{eq:HImatrix2} makes $A_{0,0}$ nonzero, while
\eqref{eq:HImatrix1} gives $A_{0,0}=\omega A_{0,0}$.  Thus every HI-matrix
for the trivial group has $\omega=1$.
\end{remark}

In \cite[Proposition~2]{evans2017non}, \eqref{eq:HImatrix1}--\eqref{eq:HImatrix4} were shown to imply the following cubic relation:
\begin{equation}
 \omega^{-1}\sum_{m\in G}
 A_{m,g+h}A_{g,m+k}A_{h,m+l}
 =A_{g+l,k}A_{h+k,l}-a\delta_{g,0}\delta_{h,0},
 \qquad g,h,k,l\in G. \semanticTag{C}\label{eq:HI-cubic}
\end{equation}
Moreover, it was asked whether the cubic implies the complicated 
relation \eqref{eq:HImatrix4}. Our next results proves something stronger:

\begin{theorem}[Cubic criterion]\label{thm:cubic-criterion}
If $G$ is nontrivial, a matrix satisfies
\eqref{eq:HImatrix1}--\eqref{eq:HImatrix4} if and only if it satisfies
\eqref{eq:HImatrix1} and \eqref{eq:HI-cubic}.
\end{theorem}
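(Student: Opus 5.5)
The forward direction is \cite[Proposition~2]{evans2017non}. Their proof is over $\mathbb C$, but it uses only polynomial manipulations of \eqref{eq:HImatrix1}--\eqref{eq:HImatrix4}, so I would recheck that no division by $N$, $2$ or $3$ enters and that $\overline\omega$ can be replaced by $\omega^{-1}$ throughout. The content is the converse: assuming \eqref{eq:HImatrix1} and \eqref{eq:HI-cubic}, derive \eqref{eq:HImatrix2}, \eqref{eq:HImatrix3} and \eqref{eq:HImatrix4}, in that order.

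\textbf{Step 1: the quadratic relation \eqref{eq:HImatrix3}.} Specialize \eqref{eq:HI-cubic} at $g=0$. The left side becomes $\omega^{-1}\sum_m A_{m,h}A_{0,m+k}A_{h,m+l}$, and \eqref{eq:HImatrix1} should control the row $A_{0,\cdot}$. Indeed $A_{0,x}=\omega A_{-x,-x}$ and $A_{0,x}=\omega^{-1}A_{x,0}$, so row zero is tied to the diagonal and to column zero. The plan is to put $g=h=0$ and then $g=0$ with general $h$, and to sum \eqref{eq:HI-cubic} over one free index. Summing over $l$ turns $\sum_l A_{h,m+l}$ into a row sum, which by \eqref{eq:HImatrix1} is a column sum $\sum_r A_{r,0}$ up to a power of $\omega$. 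Call this $\Sigma$. The result is a linear system in $\Sigma$ and in the quadratic sums $P_{h,k}=\sum_r A_{h+r,k}A_{k,r}$, of the shape $P_{h,k}\cdot(\text{scalar})=\ldots-a\delta$. Using $a^2+Na=1$ together with the nontriviality of $G$ (so that some choice of indices separates the two Kronecker deltas), I expect to solve first $\Sigma=-\omega^{-1}a$, which is \eqref{eq:HImatrix2}, and then $P_{h,k}=\delta_{h,0}-a\delta_{k,0}$, which is \eqref{eq:HImatrix3}. This is where the hypothesis $G\ne 0$ is needed: for the trivial group the cubic is a single scalar equation and cannot pin down $A_{0,0}$.

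\textbf{Step 2: the quartic relation \eqref{eq:HImatrix4}.} In the Leavitt/Cuntz picture, \eqref{eq:HImatrix4} expresses associativity of $\rho^3$ on the $\alpha\rho$-isotypic component, while the cubic expresses the same identity contracted once. I would therefore write the left side of \eqref{eq:HImatrix4} as $\sum_{l}A_{l+g,h}\bigl(\sum_m A_{l,m}A_{h+m,l+i}A_{i,k+m}\bigr)$. After reindexing the three $A$'s by the order-three symmetry \eqref{eq:HImatrix1} into the pattern $A_{m,\cdot}A_{\cdot,m+\cdot}A_{\cdot,m+\cdot}$, the inner sum matches the left side of \eqref{eq:HI-cubic}. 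Replacing it by the right side gives $\omega\bigl(A A - a\delta\delta\bigr)$. The remaining single sum over $l$ of a product of three entries then has one pair that is exactly a quadratic sum of the form \eqref{eq:HImatrix3}. Applying Step 1 collapses it to $\delta$-terms times a single $A$. Collecting the terms should produce $A_{h-g,i-g}\delta_{k,g}-\omega^{-1}a\delta_{h,0}A_{i,k}-\omega aA_{g,h}\delta_{i,0}$, where powers of $\omega$ are matched using $\omega^3=1$.

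\textbf{Main obstacle.} The hard part is the index bookkeeping in Step 2. The inner sum must be brought into exact cubic shape using only the three index permutations allowed by \eqref{eq:HImatrix1}, and each use of \eqref{eq:HImatrix1} contributes a power of $\omega$ that has to be tracked. If the direct substitution fails, I would first try applying \eqref{eq:HI-cubic} to a different triple of factors, for instance the $l$-sum with $m$ fixed, and then use \eqref{eq:HImatrix3} on the remaining $m$-sum. A secondary concern is that Step 1 may need an extra nondegeneracy input, such as $A$ being invertible, which would itself follow from \eqref{eq:HImatrix3} once that identity is partly established. I would handle this by first deriving the $k\ne0$ cases of \eqref{eq:HImatrix3}, which involve no $a$, and then bootstrapping to the remaining cases.
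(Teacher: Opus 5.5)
Your forward direction is fine, and so is your Step~2, which is the paper's own argument. The inner $m$-sum is \eqref{eq:HI-cubic} at $(g,h,k,l)=(r,i,-h,k-h)$ after shifting $m$, with no extra reindexing needed. The leftover $r$-sum is the convolution identity $\sum_rA_{r+g,h}A_{r+k-h,-h}=\omega(\delta_{g,k}-a\delta_{h,0})$, which follows from \eqref{eq:HImatrix1} and \eqref{eq:HImatrix3}. Injectivity of that same convolution operator, using $1-Na=a^2\neq0$, is also how the paper proves the forward direction.

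\textbf{The gap is Step~1.} One-index contractions of \eqref{eq:HI-cubic} do not determine the quadratic sums. For $h\neq0$, summing over $l$ produces the row sum $\sum_rA_{h,r}=\omega^{-1}\sum_rA_{r,-h}$, not $\Sigma$. The right side becomes $\sum_lA_{g+l,k}A_{h+k,l}$, which is not of the form $P$, so no closed system arises. At $h=0$ you do get a closed system. With $P(x,y)=\sum_rA_{x+r,y}A_{y,r}$ and $c=\omega^{-1}\sum_yA_{0,y}$ it reads
\[ P(g,k)-c\,P(k,-g)=Na\,\delta_{g,0},\qquad \sum_{x\in G}P(x,0)=c^2. \]
This system is underdetermined. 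It only links $P$ along orbits of the order-four map $(g,k)\mapsto(k,-g)$, with the unknown coefficient $c$. For $c^4\neq1$ it is solved by $P(g,k)=0$ for $g,k\neq0$, $P(x,0)=cNa/(1-c^2)$ and $P(0,y)=Na/(1-c^2)$ for $x,y\neq0$, and $P(0,0)=Na/(1-c)$. Here $c$ may be any root of $(c+a)(c^3-ac^2-Nac+N)=0$. The cubic factor has spurious roots; for $N=3$, $\omega=1$, $a=(\sqrt{13}-3)/2$ there is one at about $-1.54$. So these relations cannot force $c=-a$, which is \eqref{eq:HImatrix2}, nor \eqref{eq:HImatrix3}. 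On such a branch even $P(0,y)=1$ fails, so bootstrapping from the cases $k\neq0$ does not help.

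\textbf{The missing idea} is pointwise control of the error $F(x,y)=P(x,y)-\delta_{x,0}+a\delta_{y,0}$. Getting it requires using \eqref{eq:HI-cubic} twice and comparing. The paper evaluates $\Xi(g,h,i,k)=\sum_{r,m}A_{r,m}A_{r+g,h}A_{h+m,r+i}A_{i,k+m}$, the left side of \eqref{eq:HImatrix4}, in two ways. First, apply \eqref{eq:HI-cubic} to the $m$-sum, as in your Step~2. Second, rewrite $A_{r+g,h}=\omega A_{-h,r+g-h}$ and apply \eqref{eq:HI-cubic} to the $r$-sum. Equating the two, without assuming \eqref{eq:HImatrix3}, gives $A_{u,v}F(x,h)=A_{u,v+x}F(x,h+u)$.

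The rest of the argument runs as follows.
\begin{enumerate}
\item \eqref{eq:HI-cubic} at $(0,0,z,z)$, together with $a\neq0$, shows that the row indices of nonzero entries generate $G$.
\item Hence, for $x\neq0$, a single nonzero value $F(x,h_0)$ propagates to every $h$. That forces $A_{0,v+x}=A_{0,v}$ and $A_{q,x}A_{x,q}=A_{q,0}A_{0,q}$.
\item Comparing \eqref{eq:HI-cubic} at $(0,0,x,l)$ and $(x,0,0,l)$ then gives $a=0$, a contradiction. So $F(x,\cdot)=0$ for $x\neq0$.
\item The same identity makes $F(0,\cdot)$ a constant $\lambda-1$.
\end{enumerate}
Only now does your contraction close. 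Evaluating it at $(g,k)=(g,0)$ and $(0,k)$ with $g,k\neq0$ gives $c\lambda=-a$ and $\lambda+ac=Na$; this is where $G\neq\{0\}$ enters. The relation $\sum_xP(x,0)=c^2$ gives $c^2=\lambda-Na$. Together these force $c=-a$ and $\lambda=1$, which are exactly \eqref{eq:HImatrix2} and \eqref{eq:HImatrix3}.
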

\begin{proof}
The forward implication follows from
Theorem~\ref{thm:HI-quartic-implies-cubic}, since $1-Na=a^2$ is a unit.
For the reverse implication,
Theorem~\ref{thm:HI-cubic-implies-quadratic} gives the sum \eqref{eq:HImatrix2} and quadratic \eqref{eq:HImatrix3}
identities, and Theorem~\ref{thm:HI-cubic-implies-quartic} gives the quartic
identity \eqref{eq:HImatrix4}. These results, including their proofs over the stated fields or
rings, are in Appendix~\ref{app:equations}.
\end{proof}
Thus, after the symmetry~\eqref{eq:HImatrix1} is checked, the cubic identity is
the only equation needed for the reconstruction. The precise ring hypotheses
for the two implications are recorded in Appendix~\ref{app:equations}.

\begin{theorem}[Trivial cube-root]\label{thm:HI-omega-trivial}
\label{thm:scalar-obstruction}
If $N\ne0$ in $\kk$, every $(d,\omega)$-HI-matrix has $\omega=1$.
\end{theorem}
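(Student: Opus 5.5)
The approach is to find a scalar contraction of $A$ that the equations evaluate in two ways differing by a factor of $\omega$, and that is nonzero because of \eqref{eq:HImatrix2}. The hypothesis $N\ne0$ in $\kk$ lets us use the Fourier transform on $G$ (characters into $\kk^\times$, with $|G|$ invertible) and lets us divide by $N$. The case $G=\{0\}$ is Remark~\ref{rem:HI-trivial-group}, so assume $G$ is nontrivial. By Theorem~\ref{thm:cubic-criterion} we may use \eqref{eq:HImatrix1} together with \eqref{eq:HI-cubic}, \eqref{eq:HImatrix2} and \eqref{eq:HImatrix3}.

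\textbf{Step 1: record the constraints that do not involve $\omega$, and discard them.} From \eqref{eq:HImatrix1} we have $A_{0,r}=\omega^{-1}A_{r,0}$ and $A_{r,r}=\omega A_{-r,0}$. Hence the row sum, column sum and trace of $A$ are all fixed by \eqref{eq:HImatrix2}, and in particular all are nonzero. However, the obvious full contractions produce only $\omega^3=1$ or no $\omega$ at all. Examples are summing \eqref{eq:HImatrix3} over $h$ with $k=0$, or summing \eqref{eq:HI-cubic} at $g=h=0$ over $k,l$; both reduce to $a^2=1-Na$ and $a^3=a-N+N^2a$. So the contraction we need must break the full $\mathbb Z/3$ symmetry of \eqref{eq:HImatrix1}. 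That is, it must involve an index pattern that \eqref{eq:HImatrix1} rotates only once, not three times.

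\textbf{Step 2: use the Fourier transform in the second index.} For a character $\chi$ of $G$, set $\hat A_k(\chi)=\sum_r A_{r,k}\chi(r)$. Then \eqref{eq:HImatrix3} says that $\sum_r A_{h+r,k}A_{k,r}$ is $\delta_{h,0}-a\delta_{k,0}$. Transforming in $h$ gives, for each fixed $k$,
\[
\hat A_k(\chi)\,\widetilde{A}_k(\chi^{-1})=1-Na\,\delta_{k,0}\,\delta_{\chi,1}\cdot(\text{normalisation}),
\]
where $\widetilde A_k$ is the transform of row $k$. The factor $N$ appears here, and $N$ invertible is what makes this a genuine pointwise identity. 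Next, apply \eqref{eq:HImatrix1} to rewrite row $k$ as $\omega^{\pm1}$ times a skewed column. The pointwise identity then becomes a relation between column transforms with a single factor $\omega^{-1}$.

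\textbf{Step 3: evaluate the same quantity a second way.} Sum the $\chi\ne1$ relations against $\chi(k)$, and compare with the same sum computed from \eqref{eq:HI-cubic} at $h=0$ via the transform. The latter carries the explicit $\omega^{-1}$ on its left side but no $\omega$ from the symmetry. This should give an identity of the form $(\omega-1)\,c=0$. Here $c$ is a polynomial in $a$ and $N$, such as a multiple of $a^2$, which is a unit since $a^2=1-Na$ and $d=a^{-1}$. Hence $\omega=1$.

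\textbf{Main obstacle.} The hard step is Step 3: choosing the contraction so that $\omega$ enters to the first power rather than the third. My first attempt will be the ``mixed'' contraction $\sum_{k,r}A_{r,k}A_{k,r}\chi(r-k)$ for a nontrivial $\chi$. This is invariant under only one of the two index changes in \eqref{eq:HImatrix1}. I would evaluate it once through \eqref{eq:HImatrix3} and once through \eqref{eq:HImatrix1}, then average over $\chi\ne1$, dividing by $N-1$ or $N$ as needed. If that fails, the fallback is the fixed-point count of the order-three map $(g,h)\mapsto(-h,g-h)$, whose only fixed points are the pairs $(-h,h)$ with $3h=0$, together with a trace of $A$ restricted to these orbits.
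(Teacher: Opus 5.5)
There is a genuine gap. The identity $(\omega-1)c=0$ of Step~3 is never produced, and the routes you name do not produce it.

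Your Step~2 relation $\omega\chi(k)Y_{-k}(\chi)Y_k(\chi^{-1})=1-Na\,\delta_{k,0}\delta_{\chi,1}$, with $Y_t(\chi)=\sum_rA_{t,r}\chi(r)$, does carry one factor of $\omega$. But any linear combination of these relations over $\chi$ and $k$ just inverts the Fourier transform. It returns \eqref{eq:HImatrix3} rewritten by \eqref{eq:HImatrix1}, for example $\sum_rA_{-k,r}A_{k,r}=\omega^{-1}(1-a)\delta_{k,0}$. There the $\omega$ multiplies unknown entries, and the remaining equations reproduce the same $\omega$-dependent value rather than an $\omega$-free one. For instance, the cubic identity at $g=h=0$, $k=l$, summed over $k$ (with \eqref{eq:HImatrix1} and \eqref{eq:HImatrix2}), gives $\sum_rA_{r,0}^2=\omega Na/(1+a)$. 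This agrees with $\omega(1-a)$ because $1-a^2=Na$.

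Your concrete candidate is $\omega$-blind. Applying the first rotation in \eqref{eq:HImatrix1} to $A_{r,k}$ and the inverse rotation to $A_{k,r}$ gives $A_{r,k}A_{k,r}=A_{-k,r-k}A_{r-k,-k}$, with no $\omega$. Moreover, \eqref{eq:HI-cubic} at $g=h=0$ gives $A_{l,k}A_{k,l}=a+\sum_mA_{m,0}A_{m+k,0}A_{m+l,0}$. Inserting the Fourier form of \eqref{eq:HImatrix3} for the column $(A_{r,0})_r$ then evaluates $\sum_{r,k}A_{r,k}A_{k,r}\chi(r-k)$ as exactly $-a$ for every $\chi\ne1$: the $\omega^{-1}$ from \eqref{eq:HImatrix2} cancels the $\omega$ from \eqref{eq:HImatrix1}. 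Your fallback only shows that $\omega\ne1$ forces $A_{g,h}=0$ at the fixed points of $(g,h)\mapsto(-h,g-h)$, for instance $A_{0,0}=0$. Nothing in the general $(d,\omega)$ setting forbids this.

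The missing idea is to eliminate the unknown Fourier coefficients multiplicatively, which is what the paper does.
\begin{itemize}
\item Put $\Pi=\prod_{g,\chi}Y_g(\chi)$. Multiplying your Step~2 relation over all $N^2$ pairs $(k,\chi)$, and using $\prod_k\chi(k)=1$ for odd $N$, gives $\Pi^2=a^2\omega^{-N^2}$.
\item A second evaluation of $\Pi^2$ comes from the trace Gram matrix $\Gamma$ of the basis $M_{g,\chi}=(A_{i+g,j}\chi(j))_{i,j}$ of $M_G(\kk)$.
\item A Fourier change of basis to rank-one matrices gives $\det\mathcal F\cdot\det\Gamma=(N\omega)^{2N^2}\Pi^2$, where $\mathcal F$ is the Fourier matrix of $G\times\widehat G$.
\item Separately, the Fourier form of \eqref{eq:HI-cubic} writes $\Gamma=\omega^{-1}DHD+aN^2\mathbf e_0\mathbf e_0^{\top}$, with $D=\operatorname{diag}(Y_g(\chi))$ and $H$ a Hankel matrix of determinant $N^{N^2}\Pi$. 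This is where the unknown ratios $Y_g(\chi)Y_h(\eta)/Y_{g+h}(\chi\eta)$, which block any entrywise comparison with Step~2, get absorbed.
\item The rank-one correction is computed from the trace-dual vector of $M_{0,1}=A$. This is $W/N$ with $W=A-\omega R_0$, where $R_0$ is the matrix with all-ones $0$th row, so that $(\Gamma^{-1})_{0,0}=\operatorname{Tr}(W^2)/N^2$.
\item The matrix determinant lemma then gives $\det(\Gamma-aN^2\mathbf e_0\mathbf e_0^{\top})/\det\Gamma=-a\omega^2$. Combined with the two determinant formulas, this yields $\Pi^2=a^2\omega$.
\end{itemize}
Comparing the two formulas gives $\omega^{N^2+1}=1$, and $3\nmid N^2+1$ forces $\omega=1$. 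So the single net power of $\omega$ you are looking for does appear, but only after these $N^2$-fold products have eliminated the unknown coefficients.
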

This is proved in Appendix~\ref{app:omega}. The hypothesis always holds in characteristic zero and in characteristic two, and in characteristic three $\omega^3=1$ itself implies $\omega=1$.

\subsection{From a matrix to a spherical category}
The reconstruction uses a Leavitt realization. The explicit endomorphism
formulas and their verification in arbitrary characteristic are given in
Appendix~\ref{app:reconstruction}; the main construction uses their
$\omega=1$ specialization. The tensor product in the category of algebra
endomorphisms is composition, and an intertwiner $x:f\to g$ satisfies
$xf(y)=g(y)x$ for every $y$ in the Leavitt algebra.

\begin{theorem}[Reconstruction]\label{thm:reconstruction}
\label{thm:HImatrix-to-Leavitt}
Every $(d,\omega)$-HI-matrix $A$ over an algebraically closed field gives an
untwisted spherical fusion category $\mathcal C(A)$ of type $\mathfrak{HI}_G$.
Its noninvertible simple objects have dimension $d$, and its pivotal
structure is unique. In the main setting $N\ne0$ in $\kk$, $\omega=1$, and
$b^2=a$, the category is the additive idempotent completion of the
endomorphism category given in Appendix~\ref{app:reconstruction}.
\end{theorem}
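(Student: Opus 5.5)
The proof follows the Izumi--Evans--Gannon endomorphism presentation, redone over an arbitrary algebraically closed field $\kk$. We work in the Leavitt algebra $L$ generated by partial isometries $S_g$ (one for each $\alpha_g\rho$ summand) and $T$ (for the unit summand of $\rho^2$). Its relations are
\begin{equation*}
S_g^*S_h=\delta_{g,h},\qquad T^*T=1,\qquad TT^*+\sum_{g}S_gS_g^*=1,
\end{equation*}
where $*$ is a formal anti-involution and no complex conjugation is used. We fix $b$ with $b^2=a$; when $N\ne0$ in $\kk$ we have $\omega=1$ by Theorem~\ref{thm:HI-omega-trivial}.

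The objects are built as endomorphisms of $L$. Define $\alpha_g(S_h)=S_{h+2g}$ and $\alpha_g(T)=T$, which gives a strict action of $G$, so the pointed part is untwisted automatically. Define $\rho$ by the Izumi formulas:
\begin{equation*}
\rho(T)=aT+b\sum_g S_gS_g,
\end{equation*}
and $\rho(S_g)$ is a combination of $S_{g}TT^*$-type terms and $\sum_{h,k}A_{g+h,g+k}S_{h}S_{g+h+k}S_{k}^*$-type terms, with the coefficients read off from $A$.

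The first step is to check that $\rho$ respects the Leavitt relations, i.e.\ that $\rho(S_g)^*\rho(S_h)=\delta_{g,h}$ and that the Cuntz sum relation holds. Expanding these products gives exactly the linear, quadratic and sum identities (E2), (E3) together with (E1), using $a^2+Na=1$. Next we check that $\alpha_g\rho=\rho\alpha_{-g}$ holds on generators, which uses the index symmetry of (E1). Finally, the fusion rule $\rho^2\cong\mathbf1\oplus\bigoplus_g\alpha_g\rho$ requires that $T$ and $S_g$ be intertwiners $\mathbf1\to\rho^2$ and $\alpha_g\rho\to\rho^2$. Checking $S_g\,\alpha_g\rho(x)=\rho^2(x)S_g$ on $x=S_h$ produces a cubic-in-$A$ identity, which is (E4), equivalently (C) by Theorem~\ref{thm:cubic-criterion}. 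This verification is the main obstacle. I would handle it by reducing to generators and collecting coefficients of monomials $S_pS_qS_r^*\cdots$, following \cite{evans2017non} while tracking every place where positivity or conjugation was previously used and replacing it by the formal $*$.

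Next we show the category is fusion with the stated simples. Take the category whose objects are finite sums of the endomorphisms $\alpha_g$, $\alpha_g\rho$ and whose morphisms are intertwiners in $L$, then idempotent-complete it. The hard point is that we need $\Hom(\alpha_g\rho,\alpha_h\rho)=\delta_{g,h}\kk$ and $\Hom(\mathbf1,\rho)=0$, which cannot be obtained from a $C^*$/Cuntz algebra argument. I would instead bound the intertwiner spaces by computing them inside a finite-dimensional filtered piece of $L$: we work in the graded algebra, use the $\kk$-basis of normal words, and show that any intertwiner is forced to have bounded degree. The alternative would be to use an existing argument for Leavitt algebras from \cite{evans2017non} if it is purely algebraic. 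Semisimplicity then follows because the intertwiners $T,S_g$ split $\rho^2$ via the Cuntz relation. Duality comes from $\bar\rho=\rho$, with evaluation and coevaluation given by $T$ and $T^*$ up to scalars. The rigidity zigzag $T^*\rho(T)=a$ gives $\dim\rho=d=a^{-1}$ after normalizing $\mathrm{ev}=d^{1/2}T^*$-type maps.

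Finally, for sphericality and uniqueness of the pivotal structure: pivotal structures on a fusion category differ by characters of the universal grading group. For HI fusion rules $\rho^2\ni\mathbf1$ and $\rho^2\ni\alpha_g\rho$, so $\rho$ has trivial degree, hence $\alpha_g$ does too, and the universal grading group is trivial. Therefore the pivotal structure is unique, and it is spherical since the left and right dimensions of $\rho$ both equal $d$ (self-duality). The general-$\omega$ case uses the same formulas with $\omega$ inserted, as recorded in Appendix~\ref{app:reconstruction}.
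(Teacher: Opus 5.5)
\textbf{Gap 1: the formal anti-involution.} Your overall architecture matches the paper's: a Leavitt realization, then the relations and (LR1)--(LR3), Hom spaces, rigidity, and the pivotal structure. Your universal-grading argument for uniqueness of the pivotal structure is a valid repackaging of the paper's naturality argument on $\rho^2\cong\mathbf 1\oplus\bigoplus_g\alpha_g\rho$. But the first verification step fails as written. You define $\rho$ only on $T$ and $S_g$, and you test $\rho(S_g)^*\rho(S_h)=\delta_{g,h}$ with $*$ a formal anti-involution. That implicitly sets $\rho(S_g^*)=\rho(S_g)^*$, so the cubic part of $\rho(S_g^*)$ carries $A_{g+h,g+k}$ on the word $S_kS^*_{g+h+k}S^*_h$. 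For $g=h$, the vanishing of the coefficient of $S_nS_n^*$ then requires the bilinear column sum $\sum_rA_{r,c}^2=1$ for $c=n+g\neq0$. But (E3) gives $\sum_rA_{c,r}A_{r,c}=1$, which is a different identity. For the unitary $\mathbb Z/3$ matrix, column $c\ne0$ consists of $\theta,\theta$ and one non-real entry $A_{x,c}$. So $\sum_rA_{r,c}^2=2\theta^2+A_{x,c}^2$ is not even real, and $\rho$ would not be a homomorphism. Replacing complex conjugation by a formal $*$ is exactly what breaks, because the $C^*$ computation used Hermiticity of $A$. When $\omega\ne1$, even $\rho(T^*)\neq\rho(T)^*$.

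The paper instead treats $s',t'_g$ (your $T^*,S_g^*$) as independent generators. It gives $\rho(t'_g)$ its own formula, with the transposed coefficient $A_{k+g,h+g}$ and the factors $\omega b,\omega^{-1}$. Consequently $\rho$ is not a $\dagger$-map. The primed half of (LR3) is then deduced from the unprimed half by showing that $x\mapsto\rho(x^\dagger)^\dagger$ is the reconstruction for the transposed data $(\omega^{-1},\omega b,A^{\top})$. One must also prove that this transposed data again satisfies (E1)--(E4), which the paper does via the cubic identity. Two smaller inaccuracies: the unprimed half of (LR3) yields the quartic (E4) together with contractions of (E1) and (E3), not a cubic identity; and (LR2) is a pure reindexing that does not use (E1).

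\textbf{Gap 2: the Hom spaces.} You rightly call the Hom-space conditions the crux, but you do not prove them. ``Bounding degrees and computing in a finite-dimensional piece'' gives no mechanism: every element of $L$ already has bounded degree, and the intertwining equations would still have to be solved for an arbitrary $A$. The missing idea is to evaluate the intertwining relation at $T$ and $T^*$. Since $\alpha_u\rho=\rho\alpha_{-u}$ and $\alpha_u$ fixes $T,T^*$, one has $\alpha_g\rho(T)=\rho(T)$ for every $g$. Hence any $x\in\Hom(\alpha_g\rho,\alpha_h\rho)$ commutes with $\rho(T)=aT+b\sum_gS_gS_g$ and with $\rho(T^*)$. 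A reduced-word argument gives $\operatorname{Cent}_L(\rho(T))=\kk[\rho(T)]$ and $\operatorname{Cent}_L(\rho(T))\cap\operatorname{Cent}_L(\rho(T^*))=\kk$. If then $x=\lambda\neq0$, evaluating at $S_0$ gives $\rho(S_{-2g})=\rho(S_{-2h})$. The coefficient $b$ of the reduced word $TS^*_{2g}$ rules this out for $g\neq h$. For the mixed spaces, evaluate at $T^*$ (resp.\ $T$) and compare the lowest (resp.\ highest) degree components. This uses injectivity of right multiplication by $\sum_gS_g^*S_g^*$ (resp.\ left multiplication by $\sum_gS_gS_g$).

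\textbf{Gap 3: existence of a pivotal structure.} Your grading argument shows only that there is \emph{at most} one pivotal structure. Existence is not automatic for a fusion category, and you never establish it. The paper proves it as follows. With $\mathrm{ev}_\rho=T^*$ and $\mathrm{coev}_\rho=a^{-1}T$, the double-dual functor is strictly the identity. It fixes every object $\alpha_g\rho^m$. It fixes $T,T^*,S_g,S_g^*$, because dualization acts on the relevant one-dimensional Hom spaces by mutually reciprocal scalars. Hence it fixes all morphisms, which are scalar matrices relative to decompositions built from these maps. Only after this do your uniqueness and self-duality arguments give the unique spherical structure with $\dim(\alpha_g\rho)=a^{-1}=d$.
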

\begin{proof}
The general assignments use $b^2=\omega^{-1}a$ and are displayed in
Appendix~\ref{app:reconstruction}. Theorem~\ref{thm:HImatrix-to-Leavitt-general}
verifies their relations and their intertwiner spaces; setting $\omega=1$
gives the specialization used in the main construction.
Theorem~\ref{thm:LRtoHIcat} then gives the spherical fusion category.
The construction is strict on the pointed subcategory, so its pointed
associator is trivial.
\end{proof}
The detailed proof includes the cases in which the characteristic divides $N$, the discriminant $\mu$, or the global dimension. The global dimension of the resulting spherical category is $N(1+d^2)$. 
For every finite abelian $G$ and in every characteristic, simultaneous
relabeling a solution $A$ by $\alpha\in\operatorname{Aut}(G)$ to get $\alpha(A)$ (that is, $\alpha(A)_{g,h}=A_{\alpha g,\alpha h}$) gives tensor-equivalent reconstructed categories.

When $G$ is cyclic and $\kk=\mathbb C$, \cite[Theorem~2(b)]{evans2017non} says the categories $\mathcal C(A)$ and $\mathcal C(A')$ are tensor equivalent iff $A'=\alpha(A)$ for some $\alpha\in\mathrm{Aut}(G)$. 
When $G$ is arbitrary but both $\mathcal C(A)$ and $\mathcal C(A')$ are unitary, \cite[Theorem~5.9]{izumi2018classification} says they are tensor equivalent iff $A$ and $A'$ lie in the same $H^2(G,\mathbb C^\times)\rtimes\mathrm{Aut}(G)$ orbit.

\begin{remark}[Algebraicity of solutions]\label{algebraicic} 
By Ocneanu rigidity \cite[Theorem~2.28]{etingof2005fusion} (together with the equivalence criterion for HI-matrices \cite[Theorem~2(b)]{evans2017non}), the number of solutions $A$ to equations (E1)--(E4) must be finite, at least in characteristic 0; in characteristic two the same conclusion follows from Corollary~\ref{cor:HI-char-two-field-bound}. In these characteristics, the entries $A_{g,h}\in\kk$ are algebraic over any subfield of $\kk$. In Section~\ref{sec:double-sine} we therefore find an infinite family of special values of the double-sine function which are algebraic. \end{remark}

\subsection{The Q-system condition}
The object $Q=\mathbf1\oplus\rho$ connects the matrix construction to the
module categories. For $a\ne1$, define
\begin{equation*}
 \theta=-\frac{a}{1-a}=-\frac1{d-1}.
\end{equation*}
\begin{definition}[Q-system condition]\label{def:HI-Q-system-condition}
\label{def:q-system-type}
A $(d,1)$-HI-matrix, with $a\ne1$, satisfies the \emph{Q-system
condition} if
\begin{equation}\label{eq:display-0271}
 A_{g,0}=\delta_{g,0}+\theta\qquad(g\in G).
\end{equation}
\end{definition}
We use \emph{Q-system type} in this algebraic sense. Theorem~\ref{thm:algebraic-q-system} below identifies the corresponding
algebra on $Q$. Over $\mathbb C$ this
includes the negative-dimension family constructed below. A
genuine $C^*$-Q-system (equivalently, a corresponding subfactor) requires unitarity of the HI category.

The double-sine construction uses reciprocal samples to impose this
boundary condition. 

\begin{theorem}[Q-system interpretation]\label{thm:algebraic-q-system}
If $a\ne1$, the Q-system condition is equivalent to the existence of a unital
associative multiplication on $Q=\mathbf1\oplus\rho$ that is nonzero on
$\rho\otimes\rho$. If $a\ne\pm1$, it is equivalent to the existence of a
connected separable algebra structure on $Q$.
If $a=1$, no such separable algebra exists; when $a=-1\ne1$, the indicated
nonzero multiplication is not separable.
\end{theorem}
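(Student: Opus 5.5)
We plan to work directly in the Leavitt realization of Theorem~\ref{thm:reconstruction}. With $\omega=1$, $\rho$ is an endomorphism and $\mathbf1\to\rho^2$ is spanned by an isometry-like element $S$. The copies $\alpha_g\rho\to\rho^2$ are spanned by elements $T_g$, with $S^*S=1$, $T_g^*T_h=\delta_{g,h}$ and $SS^*+\sum_g T_gT_g^*=1$, normalized so that the spherical structure uses $d$. A unital multiplication $m:Q\otimes Q\to Q$ with unit $\mathbf1\to Q$ is determined by its component $\rho\otimes\rho\to Q$, by unitality and $\Hom(\mathbf1\otimes\rho,\mathbf1)=0$. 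That component is $m=\lambda S^*\oplus w$, where $\lambda\in\kk$ and $w\in\Hom(\rho^2,\rho)$. By the fusion rules $w=0$ unless $\rho$ occurs in $\rho^2$, which forces $0\in G$, and then $w=cT_0^*$. So the data are two scalars $(\lambda,c)$, and we require $(\lambda,c)\ne(0,0)$.

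\textbf{Step 1: associativity.} We write out the associativity constraint $m(m\otimes1)=m(1\otimes m)$ on each summand of $Q^{\otimes3}$. The summands with at least one $\mathbf1$ hold automatically by unitality. The only content is on $\rho^{\otimes3}$, where it becomes an identity in $\Hom(\rho^3,\mathbf1)\oplus\Hom(\rho^3,\rho)$. We express both sides in terms of $S,T_0,\rho(S),\rho(T_0)$. We then use the explicit action of $\rho$ on the Leavitt generators from Appendix~\ref{app:reconstruction}, in which $\rho(T_0)$ and $\rho(S)$ are expanded with coefficients $A_{g,h}$, $a$ and $b$.

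Projecting the $\mathbf1$-component gives the Frobenius-type relation $\lambda\,S^*\rho(T_0)\cdots$. Together with $\rho(S^*)T_0=$ (const)$\cdot S^*$-type identities, this yields a relation $c\cdot(\text{const})=\lambda\cdot(\text{const})$ fixing $c/\lambda$ in terms of $b$ and $a$. Projecting the $\rho$-component, paired against the basis $T_g^*$ of $\Hom(\rho^2,\alpha_g\rho)$ composed appropriately, gives for each $g$ an equation of the form
\[
c^2A_{g,0}+(\text{terms in }\lambda c)\,\delta_{g,0}+\lambda c\,(\text{const})=0 .
\]
We then eliminate $\lambda$ using the first relation, where $a\neq1$ guarantees the relevant denominator $1-a$ is nonzero. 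This should reduce the system exactly to $A_{g,0}=\delta_{g,0}+\theta$, with $\theta=-a/(1-a)$. The case $c=0$ should force $\lambda=0$, and conversely the boundary condition supplies a solution. This establishes the first equivalence.

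\textbf{Step 2: separability.} A connected unital algebra is separable iff the composite $Q\to Q\otimes Q\to Q$ of $m$ with the dual (Frobenius) comultiplication is a nonzero multiple of the identity on each simple summand. Equivalently, the trace form $\epsilon\circ m$ is nondegenerate and the induced bimodule splitting exists. On the $\mathbf1$-summand, $m\circ m^\dagger$ gives $1+\lambda\lambda' d$. On the $\rho$-summand it gives a scalar of the form $\lambda\lambda'+c c'(\cdots)$. Substituting the values found in Step 1, we expect these scalars to be proportional to $(1-a)$ and $(1+a)$ respectively. This would give separability iff $a\neq\pm1$, show that no separable structure exists when $a=1$ (where no multiplication exists at all), and show that the multiplication at $a=-1$ is not separable. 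Connectedness is automatic, since $\dim\Hom(\mathbf1,Q)=1$.

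The main obstacle is Step 1, specifically computing $\rho(T_0)$ and $\rho(S)$ against $T_g^*$ correctly, including the normalization $b^2=a$. We would first verify the computation in the trivial-group-like scalar sector (the $\mathbf1$-component) to pin down the constants, and then handle the $\rho$-component uniformly in $g$, using \eqref{eq:HImatrix1} and \eqref{eq:HImatrix2} to simplify.
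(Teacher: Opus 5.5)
Your Step~1 follows the paper's route: normalize the unit, write $m_{\rho,\rho}=\lambda s'\oplus c\,t_0'$ (your $\lambda S^*\oplus cT_0^*$), and compare the two associativity composites on $\rho^3$ in reduced words. But the bookkeeping you anticipate is wrong. The $\mathbf1$-component carries no information: both composites equal $\lambda c\,s't_0'$, because $s'\rho(t_0')=s't_0'$. The relation between $\lambda$ and $c$ is the coefficient of $s'$ in the $\rho$-component, which lies in the span of $s'$ and the words $t_h't_h'$. Comparing coefficients gives
\[
 (1-a)\lambda=bc^2,\qquad c^2\delta_{h,0}=b\lambda+c^2A_{0,h}\quad(h\in G).
\]
Pairing only against the $T_g^*$ directions loses the first equation. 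The shapes you predict (``$c/\lambda$ fixed'', mixed $\lambda c$ terms) cannot occur. Rescaling the summand $\rho$ by $\mu$ sends $(\lambda,c)\mapsto(\mu^{-2}\lambda,\mu^{-1}c)$, so every relation is homogeneous in $\lambda$ and $c^2$. With the correct equations your conclusion goes through: for $a\ne1$, $c=0$ forces $\lambda=0$, and $c=1$ gives $\lambda=b/(1-a)$ and $A_{0,h}=\delta_{h,0}+\theta$.

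The genuine gap is in Step~2.
\begin{itemize}
\item \emph{The square-zero algebra is never treated.} The multiplication $\lambda=c=0$ is unital and associative for every $a$, and at $a=1$ it is the only one, not ``no multiplication at all''. Both the claim that no separable structure exists at $a=1$ and the direction ``separable $\Rightarrow$ Q-system condition'' for $a\ne\pm1$ require showing this algebra is not separable. Your criterion reaches it only through an unproved implication. Every counit is a multiple of $\pi_{\mathbf1}$ and gives the degenerate form $\pi_{\mathbf1}\circ m$, so you would need ``separable $\Rightarrow$ Frobenius'', which you have not justified over an arbitrary algebraically closed field.
\item \emph{The separability computation does not apply as stated.} The adjoint $m^\dagger$ does not exist in this generality, since the theorem covers $d_-$ and positive characteristic. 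You could instead use the Frobenius comultiplication from $\varepsilon=\pi_{\mathbf1}$, available when $\lambda\ne0$. Then $m\Delta$ is a bimodule endomorphism of a connected algebra, hence a single scalar $(1+a)/a$. It does not have separate factors $1-a$ and $1+a$ on the two summands.
\item \emph{Vanishing of $m\Delta$ is not enough at $a=-1$.} Even with the Frobenius structure, $m\Delta=0$ excludes separability only once you know every bimodule map $Q\to Q\otimes Q$ is a multiple of $\Delta$.
\end{itemize}

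The missing step, which is the paper's argument, is to parametrize all candidate sections directly. A bimodule map $Q\to Q\otimes Q$ is $(m\otimes\mathrm{id})(\mathrm{id}\otimes e)$ with $e=r\,\mathrm{id}_{\mathbf1}\oplus zs$. Centrality on the $\rho$-summand is the single condition $r=z\lambda a$. The section equations are $r+z\lambda=1$ on $\mathbf1$ and $2r+zbc^2=1$ on $\rho$. For $\lambda=c=0$ these force both $r=0$ and $r=1$, so the square-zero algebra is not separable. For $c=1$ they force $z\lambda(1+a)=1$, which is impossible at $a=-1$. For $a\ne-1$ they are solved by $r=a/(1+a)$ and $z=(1-a)/(b(1+a))$.
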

\begin{proof}
The multiplication and separability section are computed explicitly in
Theorem~\ref{thm:HI-fixed-boundary-algebra}, proved in
Appendix~\ref{app:q-system}.
\end{proof}
When $N$ is invertible in $\kk$, the scalar equation excludes $a=\pm1$. Hence the
algebraic boundary used in the complex construction produces the connected
separable algebra needed later.

\begin{remark}[Exhaustiveness for subfactor-type categorifications]
\label{rem:subfactor-parametrization}
Here \emph{subfactor type} means that the category is unitary $Q=\mathbf1\oplus\rho$ carries a genuine $C^*$-Q-system. All such categorifications of HI fusion rules \eqref{HIfusrul} are tensor equivalent to $\mathcal C(A)$ for some 
Hermitian matrix $A$ with $d=d_+$ that satisfies
\eqref{eq:HImatrix1}, \eqref{eq:HI-cubic}, and the Q-system boundary
\eqref{eq:display-0271}, with $\omega=1$. $G$ need not be cyclic. 

The exhaustiveness comes from starting with an arbitrary subfactor-type
category. By \cite[Lemma~7.1]{izumi2018classification} it will be untwisted. The Q-system--subfactor correspondence realizes it as the even
part of the associated $3^G$ subfactor
\cite[Sections~6.1 and~7]{izumi2018classification}, and 
\cite[Theorem~7.3]{izumi2018classification} then  gives an HI-matrix with the stated
boundary.
 Conversely, these equations reconstruct a
category by Theorem~\ref{thm:reconstruction}; $d=d_+$ and
Hermitian condition give unitarity
\cite[Theorem~2\textup{(c)}]{evans2017non}.
The boundary gives a connected separable algebra by
Theorem~\ref{thm:algebraic-q-system}.  

When we drop unitarity and/or the Q-system condition, then we have to allow for non-trivial associator on the pointed part. We  expect though that the remaining unitary Q-system type HI categories are obtained via Fourier transform/Morita dual of the unitary ones. When $|G|$ is even, the data and equations are much more involved  \cite[Theorem~5.7]{izumi2018classification}.
\end{remark}


\section{The uniform double-sine construction}\label{sec:double-sine}
We construct a $(d_-,1)$-HI-matrix of Q-system type for every odd
$N\ge3$ over $\mathbb C$. Its entries are determined by a single sequence
$s_g$ on $\mathbb Z/N\mathbb Z$, defined using values of the double sine
at integers. The proof reduces to two finite summation identities.

We first define the sequence and record its basic identities. We then
state the two finite summation identities and use them to verify the matrix
equations. Their analytic proofs are given in Appendix~\ref{app:double-sine}.
Finally, we derive the asymptotic and categorical consequences of the
construction.

\subsection{Construction}\label{ds:sec:construction}
Put $ N=2r+1\ge3$ and let $G=\mathbb Z/N\mathbb Z$. In this section, we write $d=d_-=-d_+^{-1}$. 
Throughout the proof, $\omega=1$.
We also introduce the following constants which will be used for defining the double sine function:
\begin{equation}\label{ds:analytic-parameters}
 \varpi_1=1+d_+^{-1}=1-d,\quad
 \varpi_2=1+d_+=1-d^{-1},\quad
 \Omega=\varpi_1+\varpi_2=\varpi_1\varpi_2.
\end{equation}
They satisfy $\varpi_2-\varpi_1=N$, $\varpi_1^{-1}+\varpi_2^{-1}=1$ and $\varpi_1d_+=\varpi_2$; 
in particular, $\varpi_1$ is irrational.
Recall $\theta=(1-d)^{-1}=\varpi_1^{-1}=-a/(1-a)$ and put $\kappa = d\theta^2 = -1/(\varpi_1\varpi_2)$. 
All square roots below are positive. We will use
\begin{equation}\label{ds:matrix-scalar-identities}
\begin{gathered}
 \kappa=\theta(\theta-1)=d\theta^2,\qquad
 N\kappa=1-2\theta,\qquad \varpi_2=-\theta/\kappa,\\
 N\theta+1=N\theta^2+2\theta=-d^{-1}.
\end{gathered}
\end{equation}

We recall the definition of the double sine to fix its normalization. It is defined 
using Barnes' double gamma function \cite{Barnes1901} and was used in the works 
of Shintani \cite{Shintani1977}. We follow the convention of
\cite[equations~(2.1)--(2.2)]{KK}. For the positive constants defined above, the 
double zeta function is initially defined by
\[ 
\zeta_2(s,w\mid \varpi_1, \varpi_2) = \sum_{m,n\geq 0} (w + m\varpi_1 + n\varpi_2)^{-s}, \qquad \Re s>2, \;\; \Re w >0,
\]
with the principal branch of the power. Its meromorphic continuation in $s$ is regular at $s=0$. Define the normalized double 
Gamma function and the \emph{double sine} by
\[
\begin{gathered}
 \Gamma_2(w\mid\varpi_1,\varpi_2)
 =\exp\!\left(\left.\frac{\partial}{\partial s}
       \zeta_2(s,w\mid\varpi_1,\varpi_2)\right|_{s=0}\right),\\
 S(w)=S(w\mid\varpi_1,\varpi_2)
 =\frac{\Gamma_2(\Omega-w\mid\varpi_1,\varpi_2)}
        {\Gamma_2(w\mid\varpi_1,\varpi_2)}.
\end{gathered}
\]
The quotient is initially defined for $0 < \Re w < \Omega$ and extends meromorphically to $\mathbb C$. Thus,
$S$ is the function denoted $S_2$ in the references; its reciprocal will be denoted by $\Gamma$ in Appendix~\ref{ds:sec:hypGamma}.
The double-gamma satisfies shift equations which give shift equations for the double sine \cite[Theorem~2.1(a)]{KK}; 
reflection and the value at the midpoint
follow from the defining quotient:
\begin{equation}\label{ds:short-S-convention}
\begin{gathered}
 S(w+\varpi_1)=\frac{S(w)}{2\sin(\pi w/\varpi_2)},\qquad
 S(w+\varpi_2)=\frac{S(w)}{2\sin(\pi w/\varpi_1)},\\
 S(w)S(\Omega-w)=1,\qquad S(\Omega/2)=1.
\end{gathered}
\end{equation}

Write $F(z)=S(\varpi_1+z)$ and choose $\lambda$ with
\begin{equation}\label{ds:lambda}
 \lambda^N=(-1)^{N(N+1)/2+1}.
\end{equation}
Define
\begin{equation}\label{ds:samples}
 s_{[g]}=\frac{1}{\sqrt{\Omega}}\lambda^g(-1)^{g(g+1)/2}F(g)\quad(0\le g<N),\qquad
 s_{[g+N]}=s_{[g]}.
\end{equation}
All group indices below lie in $G$; we omit brackets when no integer lift is involved. We write $\mathbf1_{\{P\}}$ for the indicator of a condition $P$.

\begin{theorem}\label{ds:uniform-existence}
For every odd $N\geq3$, the entries
\begin{equation}
 A_{g,h}=\frac{s_{[g]}s_{[h-g]}}{s_{[h]}}+\delta_{h,0},
 \qquad g,h\in G,
 \label{ds:matrix-formula}
\end{equation}
are defined and form a $(d,1)$-HI-matrix.  The matrix is independent of the
choice of $\lambda$ satisfying~\eqref{ds:lambda}.  Its boundary is
\begin{equation*}
 A_{g,0}=\delta_{g,0}+\theta,
\end{equation*}
so the solution is of Q-system type.  The construction does not classify other
$(d,1)$-HI-matrices.
\end{theorem}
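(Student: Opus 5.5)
By the Cubic criterion (Theorem~\ref{thm:cubic-criterion}), it suffices to check that the entries are well defined, that they satisfy the symmetry \eqref{eq:HImatrix1} with $\omega=1$, and that they satisfy the cubic identity \eqref{eq:HI-cubic}. The boundary and the $\lambda$-independence are then direct computations.

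\textbf{Well-definedness and normalization.} The first step is to show $s_g\neq0$ for all $g$. Zeros and poles of $S$ lie on the lattices $\varpi_1\mathbb Z_{\ge1}+\varpi_2\mathbb Z_{\ge1}$ and $-\varpi_1\mathbb Z_{\ge0}-\varpi_2\mathbb Z_{\ge0}$. Since $\varpi_1$ is irrational, $F(g)=S(\varpi_1+g)$ for $0\le g<N$ avoids both (note $\varpi_2=\varpi_1+N$ is excluded because $g<N$). Next, iterating the second shift in \eqref{ds:short-S-convention} with $\varpi_2=\varpi_1+N$, together with $\sin(\pi(w+1)/\varpi_1)$-type identities, gives a quasi-periodicity $F(g+N)=c_g F(g)$. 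I would use this to check that the sign factor $\lambda^g(-1)^{g(g+1)/2}$ and the normalization \eqref{ds:lambda} make the formula consistent with the reduction $g\mapsto[g]$. The expression $s_gs_{h-g}/s_h$ is homogeneous of degree $0$ in $\lambda$, since $\lambda^{g+(h-g)-h}$ is a power $\lambda^{\pm N}$ once integer lifts are taken. So only $\lambda^N$ enters, which gives the claimed independence of the choice of $\lambda$. For the boundary, $A_{g,0}=s_gs_{-g}/s_0+1$, and reflection $S(w)S(\Omega-w)=1$ with $\Omega-\varpi_1-g=\varpi_2-g=\varpi_1+(N-g)$ gives $F(g)F(N-g)=1$. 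Hence $s_gs_{N-g}$ is a fixed sign times $1/\Omega$. With $s_0=F(0)/\sqrt\Omega$ and $F(0)=S(\varpi_1)$, computed from the first shift at $w\to0$, one should obtain $A_{g,0}-\delta_{g,0}=\theta=\varpi_1^{-1}$ for $g\ne0$, and the analogous computation at $g=0$.

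\textbf{Symmetry.} For $h\neq0$, \eqref{eq:HImatrix1} says that $s_gs_{h-g}/s_h$ is invariant under $(g,h)\mapsto(-h,g-h)$. The three indices $g$, $h-g$, $-h$ sum to zero. Writing $t_g=s_gs_{-g}$, which is constant (essentially $\theta$) for $g\ne0$ by the boundary computation, the quotient $s_gs_{h-g}/s_h$ becomes $s_gs_{h-g}s_{-h}/t_h$, which is symmetric in the triple. The cases where some index is $0$ are handled using the $\delta_{h,0}$ correction and $s_0$ separately.

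\textbf{Cubic identity (main obstacle).} Substituting the formula into \eqref{eq:HI-cubic} reduces it, after clearing the common factors $s_\bullet$, to finite identities of the following form:
\begin{itemize}
\item a sum over $m\in G$ of products $\frac{s_ms_{g+h-m}\cdots}{s_{m+k}s_{m+l}\cdots}$, which is a cyclic analogue of a ${}_3\phi_2$-type summation with the double sine as hypergeometric Gamma;
\item plus boundary corrections from the $\delta$ terms.
\end{itemize}
I would isolate two summation identities: a quadratic one (an orthogonality or Fourier identity, equivalent to \eqref{eq:HImatrix3}) and a cubic one (a Saalschütz/pentagon-type identity). I would prove these analytically by viewing the sum over $m$ as a residue sum of a meromorphic function built from $\Gamma=1/S$ that is quasi-periodic under $z\mapsto z+\varpi_1$ and $z\mapsto z+\varpi_2$. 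The two shift equations make the summand periodic up to elementary trigonometric factors, so a contour integral over a period parallelogram vanishes. The residues at $z=\varpi_1+m$ give the sum, and the remaining residues give the right-hand side of \eqref{eq:HI-cubic}. Getting the exact residues and the sign conventions $(-1)^{g(g+1)/2}$ right is where I expect the real work to lie. If this fails, a fallback is the Faddeev–Volkov pentagon (the integral Ramanujan summation for the double sine), discretized at the integer lattice.
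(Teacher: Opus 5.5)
Your use of Theorem~\ref{thm:cubic-criterion}, the $\lambda$-independence (only $\lambda^{[g]+[h-g]-[h]}\in\{1,\lambda^N\}$ enters), the boundary and the symmetry agree with the paper, up to two slips. First, for $g\ne0$ one has $s_gs_{-g}=\kappa=\theta(\theta-1)=-1/\Omega$, not ``essentially $\theta$''; this is what gives $A_{g,0}=\kappa/\theta+1=\theta$. Second, $F(0)=\sqrt{d_+}$ follows from $F(0)F(N)=1$ together with $F(N)/F(0)=\lim_{z\to0}\sin(\pi z/\varpi_2)/\sin(\pi z/\varpi_1)=d_+^{-1}$; the first shift alone only relates $S(\varpi_1)$ to $S'(0)$.

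The genuine gap is the cubic identity, which is the substance of the theorem. You correctly guess that a quadratic identity and a Saalsch\"utz-type identity are needed, but you never say what \eqref{eq:HI-cubic} reduces to. In the paper, write $A=\widetilde A+\varpi_2\mathbf 1_{\{g=0\}}\mathbf 1_{\{h=0\}}$ with $\widetilde A_{g,h}=\kappa^{-1}s_gs_{h-g}s_{-h}$. For $g,h,g+h\ne0$, \eqref{eq:HI-cubic} is then exactly $\kappa^2M(p,q)=T(p,q)$ for the balanced triples $p=(0,k-g,l-h)$, $q=(-g-h,k,l)$. Shared entries of $p,q$ must be cancelled with the four-factor identity $K(x,y)=\kappa(\mathbf 1_{\{x=0\}}+\mathbf 1_{\{y=0\}})+\theta\mathbf 1_{\{x=0\}}\mathbf 1_{\{y=0\}}$. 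When the triples coincide there is a defect $-\theta\kappa^3$, compensated exactly by the origin correction $\varpi_2$ in the one right-hand entry sitting at the origin. The cases with a zero among $g,h,g+h$ go through \eqref{eq:HImatrix3}, a separate boundary identity, and the index symmetries of the residual. ``Boundary corrections from the $\delta$ terms'' does not supply this bookkeeping.

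The analytic mechanism you propose also cannot work as stated. The quasi-periods $\varpi_1,\varpi_2$ are positive reals with irrational ratio, so they generate a dense subgroup of $\mathbb R$, not a lattice in $\mathbb C$; there is no period parallelogram. Nor can the sum over $m$ arise as residues of a function built from $\Gamma=1/S$ alone, since $\varpi_1+m$ is never a zero or pole of $S$. You need an integer kernel such as $\cot(\pi z)$, and then control at $\pm i\infty$.

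The paper uses an upward vertical Barnes contour $C$ separating the two pole families, together with its translates $C+N$ and $C-\varpi_1$, exploiting $N=\varpi_2-\varpi_1$. The combination $E_H$ of \eqref{ds:short-eq:analytic:folding-remainder}, with kernels $2\cot(\pi z)$, $D_{\varpi_1}$, $D_{\varpi_2}$, makes the horizontal pieces cancel by the balanced asymptotics and gives every pole of $H$ winding number zero. Only integer residues survive (Lemma~\ref{ds:short-lem:analytic:folding}). For the four-factor sum $E_R\equiv0$. For the six-factor sum the integral does not vanish: the weights \eqref{ds:short-eq:cubic:adapted-weights} make $E_H$ a constant multiple of the $3+3$ hyperbolic beta integrand. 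The nine-fold product $\Pi=\prod_{i,j}F(P_i-Q_j)$, which is $T(p,q)$ up to normalization, is the value of the hyperbolic Saalsch\"utz integral. It is not a residue, so nothing in your sketch produces the right-hand side.

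Finally, the exceptional shift $F(N)/F(0)=d_+^{-1}$ (versus $(-1)^{k+1}$ for $k\ne0$) generates the correction terms $(d^{-1}-1)R(N-h)$ and $\varpi_2H(m_*)$ in the finite sums, and your plan does not account for it. The Faddeev--Volkov fallback is too vague to assess.
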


\subsection{Basic identities}\label{ds:sec:basic-identities}
The function $F$ has neither a zero nor a pole at an integer, and
$S(w)>0$ for $0<w<\Omega$; see Appendix~\ref{ds:sec:hypGamma}.
Hence all values used in the sequence are finite and nonzero.
Reflection and the two shifts give
\begin{equation}\label{ds:short-eq:analytic:F-properties}
\begin{gathered}
 F(z)F(N-z)=1,\qquad
 \frac{F(z+N)}{F(z)}=\frac{\sin(\pi z/\varpi_2)}{\sin(\pi z/\varpi_1)},\\
 F(0)=\sqrt{d_+},\qquad
 \frac{F(k+N)}{F(k)}=
 \begin{cases}(-1)^{k+1},&k\in\mathbb Z\setminus\{0\},\\ d_+^{-1},&k=0.
 \end{cases}
\end{gathered}
\end{equation}
Here $F(0)^2=d_+$ follows by taking $z=0$ in the first two identities, and positivity selects the square root.

Extend the formula for $s$ in \eqref{ds:samples} to every integer and call it $\bar s_k$. Equation \eqref{ds:short-eq:analytic:F-properties} and the quadratic phase give
\begin{equation}\label{ds:short-eq:matrix:seam}
 \bar s_0=\theta,\quad \bar s_k\bar s_{-k}=\kappa\ (k\ne0),\quad
 \bar s_{k+N}=\bar s_k\ (k\ne0),\quad \bar s_N=d\bar s_0.
\end{equation}
Thus the only exceptional shift is from $0$ to $N$. Consequently $\bar s_k=s_{[k]}$ except at positive multiples of $N$, where $\bar s_k=d s_0$. To check the product at opposite indices, the phases contribute
$(-1)^{k^2}$ and $F(k)F(-k)=(-1)^{k+1}$. For the shift,
$\lambda^N$ cancels the sign at every nonzero integer. These calculations
also verify \eqref{ds:short-eq:matrix:seam} at nonzero multiples of $N$.
Replacing $\lambda$ by another allowed root multiplies $s$ by a character of $G$, which cancels from $A$.

\subsection{Cyclic sums and the matrix equations}\label{ds:short-sec:matrix}

Define
\[
 K(x,y)=\sum_m s_m s_{m+x+y}s_{-m-x}s_{-m-y},\qquad R_0(x)=s_xs_{-x}=\kappa+\theta\mathbf1_{\{x=0\}}.
\]
\begin{lemma}[Four-factor sum]\label{ds:lem:four-factor-sum}
For every $x,y\in G$,
\begin{equation}\label{ds:short-eq:matrix:K-all}
 K(x,y)=\kappa\bigl(\mathbf1_{\{x=0\}}+\mathbf1_{\{y=0\}}\bigr)+\theta\mathbf1_{\{x=0\}}\mathbf1_{\{y=0\}}.
\end{equation}
\end{lemma}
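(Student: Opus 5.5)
The identity separates into a degenerate part, which is pure bookkeeping with \eqref{ds:short-eq:matrix:seam} and \eqref{ds:matrix-scalar-identities}, and a generic part, where $x,y\neq0$, which needs an analytic input from the double sine. I would handle the degenerate cases first, then reduce the generic case to a cross-ratio sum, and prove its vanishing analytically.

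\emph{Degenerate cases.} If $x=0$, each summand of $K(0,y)$ is $s_ms_{-m}\,s_{m+y}s_{-m-y}=R_0(m)R_0(m+y)$. Using $R_0(u)=\kappa+\theta\mathbf 1_{\{u=0\}}$ and summing over $m\in G$ gives
\[
K(0,y)=N\kappa^2+2\theta\kappa+\theta^2\mathbf 1_{\{y=0\}}.
\]
By \eqref{ds:matrix-scalar-identities}, $N\kappa=1-2\theta$, so $N\kappa^2+2\theta\kappa=\kappa$. Also $\theta^2=\kappa+\theta$, since $\kappa=\theta(\theta-1)$. Hence $K(0,y)=\kappa+(\kappa+\theta)\mathbf 1_{\{y=0\}}$, which is exactly \eqref{ds:short-eq:matrix:K-all} with $x=0$. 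The case $y=0$ follows by the symmetry $K(x,y)=K(y,x)$, which is visible from the definition.

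\emph{Generic case, reduction.} Now let $x,y\neq0$; we must show $K(x,y)=0$. Whenever $m+x\neq0$ and $m+y\neq0$, I would use $s_{-k}=\kappa/s_k$ for $k\ne0$ to rewrite the summand as
\[
\kappa^2\,\frac{s_m\,s_{m+x+y}}{s_{m+x}\,s_{m+y}}.
\]
The two exceptional indices $m=-x$ and $m=-y$ (or a single index if $x=y$) contribute explicit boundary terms involving $s_0=\theta$. The claim thus becomes a finite identity: a cyclic sum of cross-ratios of the $s$'s equals an explicit correction built from $\theta$ and $\kappa$. I would pass to the integer extension $\bar s_k$, which agrees with $s_{[k]}$ except at positive multiples of $N$, where it is multiplied by $d$. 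This lets me write the cross-ratio as values at integers $m$ of the meromorphic function
\[
f(z)=\mathrm{phase}(z)\cdot\frac{F(z)F(z+x+y)}{F(z+x)F(z+y)}.
\]
Here the quadratic phases $\lambda^z(-1)^{z(z+1)/2}$ cancel up to a linear exponential, because the quadratic terms form a cross-ratio.

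\emph{Generic case, analytic step (main obstacle).} To evaluate $\sum_{m=0}^{N-1}f(m)$, I would integrate $f(z)\,\pi\cot(\pi z)$ around a parallelogram with vertical sides at $\Re z=-\epsilon$ and $\Re z=N-\epsilon$. By \eqref{ds:short-eq:analytic:F-properties}, $f(z+N)/f(z)$ is a ratio of products of $\sin(\pi\,\cdot/\varpi_1)$ and $\sin(\pi\,\cdot/\varpi_2)$. I would then try to show two things.
\begin{enumerate}
\item The two vertical sides cancel up to an explicitly computable residue contribution, coming from the seam $\bar s_N=d\bar s_0$.
\item The horizontal sides vanish as the height tends to infinity, by the standard asymptotics $\log S(w)\sim\pm\pi i w^2/(2\varpi_1\varpi_2)$.
\end{enumerate}
The residue theorem then equates the sum over $m$ with the residues of $f$ at its own poles inside the strip, namely the zeros of $F(z+x)$ and $F(z+y)$ at points of $\varpi_1\mathbb Z+\varpi_2\mathbb Z$ shifted by the relevant amounts. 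The crux is checking that these residues, together with the seam and exceptional terms, sum exactly to the negative of the boundary corrections.

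If the contour bookkeeping proves unwieldy, the fallback is to use the shift equation in $\varpi_1$, with $\varpi_1^{-1}+\varpi_2^{-1}=1$, to derive a first-order recursion in $y$ for $K(x,y)$, and then verify a base case. The $q$-binomial-type structure suggested by the binomial asymptotics should make such a recursion telescope. I expect the cancellation of residues and boundary terms to be the main difficulty, with the growth estimate on the horizontal sides a secondary technical point.
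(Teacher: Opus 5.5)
Your axis cases are correct and coincide with the paper's: expand $R_0(m)R_0(m+y)$, then use $N\kappa=1-2\theta$ and $\theta^2=\kappa+\theta$. Your reduction of the generic case to a balanced double-sine ratio is also the paper's first step. In fact the quadratic phases cancel to the constant $(-1)^{xy}$, and $\lambda$ drops out entirely.

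The gap is the analytic step. Both boundary claims that your contour argument rests on are false.

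(1) The function $f$ is not $N$-periodic. By \eqref{ds:short-eq:analytic:F-properties}, $f(z+N)/f(z)$ is a product of ratios $\sin(\pi w/\varpi_2)/\sin(\pi w/\varpi_1)$, or their inverses, at $w=z,\,z+x+y,\,z+x,\,z+y$. Because $\varpi_1^{-1}+\varpi_2^{-1}=1$, such a ratio equals $(-1)^{k+1}$ at a nonzero integer $w=k$, but it is not constant in $z$. The difference of your two vertical sides is therefore $\int(f(z+N)-f(z))\,\pi\cot(\pi z)\,dz$ over an entire vertical line. This is a genuine integral of double-sine values off the integers. The seam $\bar s_N=d\bar s_0$ only governs how the cyclic sum compares with $\sum_m f(m)$; it produces no residue in this integral.

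(2) The horizontal sides do not vanish. Balance kills the quadratic and linear terms of $\log S$, but a constant survives: $f(t+iT)\to(-1)^{xy}e^{\pm\pi i xy/\Omega}$ as $T\to\pm\infty$. This is \eqref{ds:short-eq:analytic:balanced-limits} with $J=xy$. Hence the two horizontal segments tend to $2\pi iN(-1)^{xy}\cos(\pi xy/\Omega)$, which is nonzero because $\Omega=2+\sqrt\mu$ is irrational.

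(3) The residues at the real poles of $f$ in the strip are double-sine values at nonintegral points. Nothing in your plan explains why they would cancel.

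The fallback does not repair this. The shift equations move by $\varpi_1$ and $\varpi_2$, and the only integer step they produce is $N=\varpi_2-\varpi_1$. So there is no unit-step recursion in $y$.

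The missing idea is the folding remainder $E_H$ of \eqref{ds:short-eq:analytic:folding-remainder}. It pairs the $N$-shift with a $(-\varpi_1)$-shift, weighting them by $2\cot(\pi z)$ and by the cotangents $D_{\varpi_1},D_{\varpi_2}$ of the other two periods. Take $R(z)=F(z-p)F(z+h)/(F(z)F(z+h-p))$, that is $P=(-p,h)$ and $Q=(0,h-p)$, with $\xi_1=\xi_2=1$. Then $E_R\equiv0$; this is an algebraic identity in $e^{2\pi iz/\varpi_1}$ and $e^{2\pi iz/\varpi_2}$ coming from the two shift equations.

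Now unfold $\int_C E_R=0$ into three contour differences:
\begin{itemize}
\item the horizontal limits cancel because $N-\varpi_2+\varpi_1=0$;
\item on the Barnes-type separating contour $C$, every pole of $R$ has winding number zero in each difference;
\item each extra cotangent pole either has zero winding or is cancelled by a zero of $R$.
\end{itemize}
Only the integer poles of $\cot(\pi z)$ survive, and Lemma~\ref{ds:short-lem:analytic:folding} gives $\sum_{j=1}^N R(j)=R(N-h)-R(-h)$. Since $R(-h)=-d_+R(N-h)$, this equals $\varpi_2R(N-h)$. That exactly cancels the seam correction $(d^{-1}-1)R(N-h)=-\varpi_2R(N-h)$. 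Symmetry and evenness of $K$ then extend the result from $1\le p\le h\le r$ to all nonzero $x,y$.
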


Call $p,q\in G^3$ \emph{balanced} if $\sum_i p_i=\sum_j q_j$ in $G$, that is, modulo $N$.
For balanced triples write
\[
 M(p,q)=\sum_m\prod_i s_{m+p_i}\prod_j s_{-m-q_j},\qquad
 T(p,q)=\prod_{i,j}s_{p_i-q_j}.
\]
\begin{lemma}[Six-factor sum]\label{ds:lem:six-factor-sum}
For balanced triples with no common entry, we have:
\begin{equation}\label{ds:short-eq:matrix:six-factor}
 \kappa^2 M(p,q)=T(p,q).
\end{equation}
\end{lemma}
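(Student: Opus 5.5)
The plan is to derive \eqref{ds:short-eq:matrix:six-factor} as a degenerate, discretized form of the hyperbolic beta integral (the six-parameter ``hyperbolic Barnes second lemma''). With the reciprocal $\Gamma=1/S$ of Appendix~\ref{ds:sec:hypGamma}, that integral reads
\[
 \int_{i\mathbb R}\prod_{i=1}^3\Gamma(a_i+z)\Gamma(b_i-z)\,dz
 = c\prod_{i,j}\Gamma(a_i+b_j),\qquad \textstyle\sum_i(a_i+b_i)=\Omega .
\]
The summand of $M(p,q)$ is, up to the quadratic phases and $\lambda$-powers in \eqref{ds:samples}, the value at $z=m$ of
\[
 \Phi(z)=\prod_i F(z+p_i)\prod_j F(-z-q_j).
\]
Its ``diagonal'' values $F(p_i-q_j)$ are exactly the factors of $T(p,q)$. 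Balancedness $\sum p_i\equiv\sum q_j \pmod N$ is the discrete shadow of the balancing condition. It is what makes $\Phi(z+\varpi_1)/\Phi(z)$ and $\Phi(z+\varpi_2)/\Phi(z)$ ratios of products of sines, by \eqref{ds:short-S-convention}. Those ratios are $1$ up to the explicit signs recorded in \eqref{ds:short-eq:analytic:F-properties} when $z$ is shifted by $N=\varpi_2-\varpi_1$.

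The key steps are as follows.
\begin{enumerate}
\item I will choose integer lifts of $p,q$ and perturb the beta-integral parameters to $a_i=\varpi_1+p_i+\epsilon_i$ and $b_j=-q_j+\epsilon'_j$, keeping the balancing exact; the integer defect $\sum p_i-\sum q_j=N\ell$ is absorbed using $\Omega=\varpi_1+\varpi_2$ and $\varpi_2-\varpi_1=N$.
\item As the perturbation tends to zero, pole sequences of $\Gamma(a_i+z)$ and $\Gamma(b_j-z)$ pinch the contour at the integer points. The integral then diverges at a rate governed by the collision of poles, and its leading coefficient is a sum of residues over these integer points.
\item Using the quasi-periodicity of $\Phi$ under $z\mapsto z+N$ together with the seam relations \eqref{ds:short-eq:matrix:seam}, this residue sum collapses to a sum over one period $m\in G$, namely to $M(p,q)$.
\item On the right-hand side, the factors $\Gamma(a_i+b_j)=1/S(\varpi_1+p_i-q_j+\cdots)$ produce the same divergent prefactor times $\prod_{i,j}F(p_i-q_j)$. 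With phases, this is $T(p,q)$ times an explicit normalization.
\item I will match the normalization constants, namely $c$, the $1/\sqrt\Omega$ factors in \eqref{ds:samples}, and the residue of $S$ at its simple pole. This should produce exactly the factor $\kappa^2=(\varpi_1\varpi_2)^{-2}$.
\end{enumerate}

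The hypothesis that $p$ and $q$ have no common entry is used so that no factor $s_{p_i-q_j}$ hits index $0$. This excludes the exceptional seam $\bar s_N=d\bar s_0$, which would otherwise add correction terms of the type seen in Lemma~\ref{ds:lem:four-factor-sum}. It also keeps all collisions in step 2 of the same generic order.

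As a consistency check, I will verify the phase bookkeeping against Lemma~\ref{ds:lem:four-factor-sum}. Setting one $p_i$ and one $q_j$ close together and extracting the leading term should reduce the six-factor identity to the four-factor one.

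I expect the main obstacle to be the degeneration in step 2: justifying the exchange of limit and contour shift, and controlling the decay of $\Phi$ along the imaginary direction in this regime. Because $\varpi_1$ is irrational, the lattice $\varpi_1\mathbb Z+\varpi_2\mathbb Z$ has no coincidences with the integer points beyond those counted, which I expect to keep the pole structure clean. The sign bookkeeping with $\lambda$ and $(-1)^{g(g+1)/2}$ is then routine, using the checks already carried out after \eqref{ds:short-eq:matrix:seam}.

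If the degeneration proves too delicate, the fallback is a purely discrete route. I would view $m\mapsto$ summand as a meromorphic function that is $N$-periodic up to the computed signs. I would then evaluate $\sum_m$ by integrating it against $\pi\cot(\pi z)$ over a period strip, using the $\varpi_1$-shift to relate the top and bottom edges of the strip.
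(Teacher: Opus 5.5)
Your main route (steps 1--3) cannot work, for three concrete reasons.

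\emph{The balancing cannot be kept with vanishing perturbations.} With $a_i=\varpi_1+p_i+\epsilon_i$ and $b_j=-q_j+\epsilon'_j$, the parameter sum is $3\varpi_1+N\ell+\sum_i(\epsilon_i+\epsilon'_i)$. Equating this with $\Omega=2\varpi_1+N$ forces $\sum_i(\epsilon_i+\epsilon'_i)=N(1-\ell)-\varpi_1$, which is irrational. The relation $\varpi_2-\varpi_1=N$ cannot absorb a whole copy of $\varpi_1$.

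\emph{At these parameters nothing pinches.} The left poles $-\varpi_1-p_i-\Lambda=\{-(1+m+n)\varpi_1-p_i-nN\}$ contain no integers and never meet the right poles $-q_j+\Lambda$, so there is no divergence to extract. Correspondingly, $\prod_{i,j}\Gamma(a_i+b_j)=\prod_{i,j}F(p_i-q_j)^{-1}$ is finite, and it is built from the reciprocals of the values entering $T(p,q)$. More fundamentally, no degeneration of a single beta integral can produce $M(p,q)$. Pinch points lie in the right families $b_j+\Lambda$. Since $m\varpi_1+n\varpi_2=(m+n)\varpi_1+nN$ with $\varpi_1$ irrational, the integers in any translate of $\Lambda$ are mutually congruent modulo $N$. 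Whatever you arrange, the residues therefore occupy at most three classes modulo $N$, never a full period $m\in G$.

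\emph{The summand is not itself a balanced beta integrand.} We have $F(z+p_i)=\Gamma(\varpi_2-p_i-z)$ and $F(-z-q_j)=\Gamma(\varpi_2+q_j+z)$, whose parameter sum is $6\varpi_2+\sum_jq_j-\sum_ip_i\neq\Omega$. In the paper, the beta integrand $I_3$ differs from $H(z-\varpi_1)$ by the trigonometric factor in \eqref{ds:short-eq:cubic:single-beta-ratio}. Some mechanism other than specialization is needed.

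Your fallback has the right first ingredient but not the idea that makes the lemma work. Integrating $\cot(\pi z)H(z)$ over a separating contour $C$ and its translate $C+N$ does produce a finite sum of values $H(k)$. What remains, $\int_C\cot(\pi z)\bigl(H(z+N)-H(z)\bigr)\,dz$, is not a known integral, since $H(z+N)/H(z)$ is a ratio of sines rather than $1$.

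The paper adds the two further kernels of \eqref{ds:short-eq:analytic:folding-remainder}, namely $-D_{\varpi_2}(z)\bigl(H(z+N)-H(z-\varpi_1)\bigr)$ and $D_{\varpi_1}(z)\bigl(H(z)-H(z-\varpi_1)\bigr)$. By Lemma~\ref{ds:short-lem:analytic:folding}:
\begin{itemize}
\item in the contour differences $C+N-(C-\varpi_1)$ and $C-(C-\varpi_1)$, every pole of $H$ has winding number zero;
\item the extra cotangent poles either have winding number zero or are cancelled by zeros of $S$;
\item the horizontal edges cancel because $N-\varpi_2+\varpi_1=0$.
\end{itemize}

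The decisive step is the choice of weights $(\xi_1,\xi_2,\xi_3)=(w_2,w_1,1)$ in \eqref{ds:short-eq:cubic:adapted-weights}, obtained by interpolating the affine quantities $A_i=\alpha t_i+\beta$. This makes $E_H=BI_3$ identically, as in \eqref{ds:short-eq:cubic:single-beta}. Only then does the hyperbolic Saalsch\"utz integral evaluate the contour integral as a multiple of $\prod_{i,j}F(P_i-Q_j)$.

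The Barnes detours of $C$ around $0$ and $-U$, after moving $H(-U)$ to $H(m_*)$ in steps of $N$, give $\sum_{m=0}^{N-1}H(m)-\varpi_2H(m_*)$. The last term is exactly the single seam crossing, by the factor $m+U$ under the lifts \eqref{ds:short-cubic:lifts}, that relates $M(p,q)$ to the values $H(m)$. So, contrary to your remark, the no-common-entry hypothesis does not remove seams from $M(p,q)$. It keeps the two pole families disjoint, so that a separating contour exists, and it limits the exceptional steps to this one crossing.
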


The proofs of Lemmas~\ref{ds:lem:four-factor-sum}
and~\ref{ds:lem:six-factor-sum} are given in
Appendix~\ref{ds:app:cyclic-sums}. They use the double-sine shift identities
to express the finite sums as contour integrals; the six-factor calculation
also uses the hyperbolic Saalsch\"utz integral.

\begin{lemma}[Cancelling common entries]\label{ds:short-lem:matrix:multisets}
Equation \eqref{ds:short-eq:matrix:six-factor} holds whenever the two balanced triples differ after ignoring order but counting repetitions. If they are equal and have three distinct entries, then
\begin{equation}\label{ds:short-eq:matrix:multiset-defect}
 \kappa^2M(p,p)-T(p,p)=-\theta\kappa^3.
\end{equation}
\end{lemma}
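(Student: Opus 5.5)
The plan is to reduce the six-factor sum with a common entry to the four-factor sum of Lemma~\ref{ds:lem:four-factor-sum}. The two-point product $R_0(x)=s_xs_{-x}=\kappa+\theta\mathbf1_{\{x=0\}}$ from \eqref{ds:short-eq:matrix:seam} absorbs the repeated pair. If the two balanced triples share no entry, Lemma~\ref{ds:lem:six-factor-sum} already applies. So assume a common entry $c$, and reorder (both $M$ and $T$ are symmetric in the entries of each triple) so that
\[
p=(c,p_2,p_3),\qquad q=(c,q_2,q_3),\qquad p_2+p_3=q_2+q_3 .
\]

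\textbf{Step 1: expand $M$.} In $M(p,q)$ the factors $s_{m+c}s_{-m-c}=R_0(m+c)$ pair off. Splitting $R_0$ into its constant part $\kappa$ and its delta part at $m=-c$ gives
\[
M(p,q)=\kappa\sum_m s_{m+p_2}s_{m+p_3}s_{-m-q_2}s_{-m-q_3}+\theta P,\qquad
P=s_{p_2-c}\,s_{p_3-c}\,s_{c-q_2}\,s_{c-q_3}.
\]
Shift $m\mapsto m-p_2$ and set $x=q_2-p_2$, $y=q_3-p_2$. Balancedness gives $p_3-p_2=x+y$, so the remaining sum is exactly $K(x,y)$. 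Hence $\kappa^2M=\kappa^3K(x,y)+\theta\kappa^2P$.

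\textbf{Step 2: factor $T$.} In $T(p,q)$, the $(c,c)$ factor is $s_0=\theta$, and the four factors involving $c$ once give $P$. The remaining four factors pair off using $p_3-q_3=-(p_2-q_2)$ and $p_3-q_2=-(p_2-q_3)$, which both follow from balancedness. This gives
\[
T(p,q)=\theta\,P\,R_0(x)\,R_0(y).
\]

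\textbf{Step 3: the two cases.} If the multisets differ, then $x\neq0$ and $y\neq0$; indeed $x=0$ forces $p_2=q_2$ and then $p_3=q_3$, and similarly for $y$. So $K(x,y)=0$ by Lemma~\ref{ds:lem:four-factor-sum}, and $R_0(x)=R_0(y)=\kappa$. Both sides then equal $\theta\kappa^2P$, which proves \eqref{ds:short-eq:matrix:six-factor}.

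If $p=q$ with distinct entries, take $p_2=q_2$ and $p_3=q_3$. Then $x=0$ and $y=p_3-p_2\neq0$, so Lemma~\ref{ds:lem:four-factor-sum} gives $K=\kappa$, while $R_0(x)=\kappa+\theta$ and $R_0(y)=\kappa$. Also $P=R_0(p_2-c)R_0(p_3-c)=\kappa^2$ since the entries are distinct. Therefore
\[
\kappa^2M-T=\kappa^4+\theta\kappa^4-\theta\kappa^3(\kappa+\theta)=\kappa^3(\kappa-\theta^2)=-\theta\kappa^3,
\]
using $\kappa=\theta^2-\theta$ from \eqref{ds:matrix-scalar-identities}.

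\textbf{Main obstacle.} The only delicate point is the index bookkeeping. One must check that after the shift the four-factor sum has exactly the shape $K(x,y)$, with the signs of the indices matching the definition of $K$. One must also confirm that balancedness alone makes the leftover factors of $T$ pair into $R_0$'s. Everything else is substitution of the scalar identities.
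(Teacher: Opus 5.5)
Your proof is correct and follows essentially the paper's argument. You remove one copy of the common entry $c$ and expand $R_0(m+c)=\kappa+\theta\mathbf1_{\{m=-c\}}$, which gives $M=\kappa K(x,y)+\theta P$ and $T=\theta P\,R_0(x)R_0(y)$, and then apply Lemma~\ref{ds:lem:four-factor-sum}.

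The only small difference is in the equal-triple case. There the paper expands $M(p,p)=\sum_m\prod_i R_0(m+p_i)=N\kappa^3+3\theta\kappa^2$ and $T(p,p)=\theta^3\kappa^3$ directly, rather than reusing your reduction with $K(0,y)=\kappa$. The two computations agree via $N\kappa=1-2\theta$ and $\kappa=\theta^2-\theta$.
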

\begin{proof}
If two such unequal triples share an entry $c$, remove one copy from each, obtaining pairs $p',q'$. These pairs have no entry in common: a second common entry would force the last entries to agree. Their sum of products of four factors is zero by \eqref{ds:short-eq:matrix:K-all}. Expanding the removed pair as
$s_{m+c}s_{-m-c}=\kappa+\theta\mathbf1_{\{m=-c\}}$ gives $M=\theta B$, where
$B=\prod_{i=1}^2s_{p'_i-c}\prod_{j=1}^2s_{c-q'_j}$. Repeated entries contribute repeated factors.
The product on the right-hand side is $T=\theta B\kappa^2$: its remaining four differences form two nonzero opposite pairs. This proves \eqref{ds:short-eq:matrix:six-factor}. For three distinct common entries,
$M=N\kappa^3+3\theta\kappa^2$ and $T=\theta^3\kappa^3$; \eqref{ds:matrix-scalar-identities} gives \eqref{ds:short-eq:matrix:multiset-defect}.
\end{proof}

To verify the matrix equations, we write the correction at the origin
separately:
\begin{equation}\label{ds:short-eq:matrix:symmetric-origin}
 A_{g,h}=\widetilde A_{g,h}+\varpi_2\mathbf1_{\{g=0\}}\mathbf1_{\{h=0\}},\qquad
 \widetilde A_{g,h}=\kappa^{-1}s_gs_{h-g}s_{-h},\qquad
 A_{g,0}=A_{0,g}=A_{g,g}=\theta+\mathbf1_{\{g=0\}}.
\end{equation}
This follows from $R_0(x)$ and $\varpi_2=-\theta/\kappa$. The three factors give the symmetry \eqref{eq:HImatrix1}, and summing the
boundary entries gives \eqref{eq:HImatrix2}. For $k\ne0$, substitution in \eqref{ds:short-eq:matrix:symmetric-origin} gives
$\sum_rA_{h+r,k}A_{k,r}=K(h,k)/\kappa$.
For $k=0$, the boundary formula gives $N\theta^2+2\theta+\mathbf1_{\{h=0\}}$. Thus \eqref{eq:HImatrix3} follows from \eqref{ds:short-eq:matrix:K-all} and \eqref{ds:matrix-scalar-identities}.

It remains to prove \eqref{eq:HI-cubic}. Let $\mathcal R(g,h,k,l)$ be its left side minus its right side. First suppose $g,h,g+h$ are nonzero, and set
\begin{equation}\label{ds:short-eq:matrix:pq}
 p=(0,k-g,l-h),\quad q=(-g-h,k,l),\quad
 c=\frac{s_gs_hs_{-g-h}}{\kappa^3}.
\end{equation}
All three factors in the cubic sum equal their $\widetilde A$-expressions, hence its value is $cM(p,q)$. Grouping the nine factors in $T(p,q)$ gives
\[
 T(p,q)=\kappa^2s_{g+h}s_{-g}s_{-h}\,\widetilde A_{g+l,k}\widetilde A_{h+k,l},\qquad
 cT(p,q)/\kappa^2=\widetilde A_{g+l,k}\widetilde A_{h+k,l}.
\]
If $p,q$ differ after ignoring order but counting repetitions, Lemma~\ref{ds:short-lem:matrix:multisets} proves \eqref{eq:HI-cubic}: a correction at the origin in a matrix entry on the right-hand side would require $(k,l)=(0,-g)$ or $(-h,0)$, and either makes the triples equal after ignoring order and counting repetitions. 

Conversely, equality of the triples forces one of these two alternatives. Indeed $0\in p$ must occur in $q$, so $k=0$ or $l=0$; canceling this entry and using $g,h\ne0$ gives the alternatives. In each, the common triple has three distinct entries. Equation \eqref{ds:short-eq:matrix:multiset-defect} says the cubic sum is
$\widetilde A_{g+l,k}\widetilde A_{h+k,l}-\theta\kappa c$.
Exactly one matrix entry on the right-hand side is at the origin, and the other equals $\kappa^2c$, so \eqref{ds:short-eq:matrix:symmetric-origin} restores precisely $\varpi_2\kappa^2c=-\theta\kappa c$. This proves the cubic identity for all $k,l$ under the standing assumption that $g,h,g+h$ are nonzero.

All zero-row cases follow from one identity. Put $\epsilon_x=\mathbf1_{\{x=0\}}$ and
\[
 Q(u,v)=R_0(u)R_0(v)-\theta\bigl(\epsilon_uR_0(v)+\epsilon_vR_0(u)\bigr)
       =\kappa^2-\theta^2\epsilon_u\epsilon_v.
\]
Expanding \eqref{ds:short-eq:matrix:symmetric-origin}, with $\varpi_2=-\theta/\kappa$, gives
\[
 \begin{aligned}
 D_{h;k,l}&:=A_{-k,h}A_{h,l-k}-A_{l,k}A_{h+k,l}\\
 &=\frac{\theta}{\kappa^2}
   \bigl(\epsilon_hQ(k,l-k)-\epsilon_lQ(k,h+k)\bigr)
 =\theta(\epsilon_h-\epsilon_l).
 \end{aligned}
\]
The two quadratic origin-correction terms cancel. After substituting the formula for $Q$, the remaining triple-indicator terms are both supported at $h=k=l=0$ and cancel as well.
For $g=0$, the boundary formula and \eqref{eq:HImatrix3} give
\[
 \mathcal R(0,h,k,l)
 =\theta(\epsilon_l-a\epsilon_h)+D_{h;k,l}+a\epsilon_h
 =\theta(\epsilon_l-\epsilon_h)+D_{h;k,l}=0,
\]
where $(1-\theta)a=-\theta$. This includes $h=0$.
Finally \eqref{eq:HImatrix1}, with the change of variable $m'=m+k-g$, shows that the residual is invariant under
\[
 (g,h,k,l)\mapsto(h,g,l,k),\qquad
 (g,h,k,l)\mapsto(-g-h,h,-h-k,g-k+l).
\]
These give the cases $h=0$ and $g+h=0$. We have proved \eqref{eq:HI-cubic} for every tuple. Theorem~\ref{thm:cubic-criterion} now proves Theorem~\ref{ds:uniform-existence}.

\subsection{Consequences of the construction}\label{ds:sec:consequences}

\begin{proposition}[Asymptotic integrality at fixed indices]
\label{ds:fixed-index-integrality}
Let $A^{(N)}$ be the matrix of Theorem~\ref{ds:uniform-existence}.
As $N\to\infty$ through odd integers, for each fixed pair $1\leq u<v$,
\begin{equation*}
 A^{(N)}_{u,v}\longrightarrow(-1)^{u(v-u)}\binom vu,
 \qquad A^{(N)}_{v,u}\longrightarrow0,
 \qquad A^{(N)}_{u,u}\longrightarrow1.
\end{equation*}
The indices are represented by the indicated integers once $N>v$.
\end{proposition}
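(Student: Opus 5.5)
The plan is to compute the fixed-index asymptotics of the samples $s_g$ directly. By \eqref{ds:matrix-formula}, for $1\le u<v<N$ we have $A_{u,v}=s_us_{v-u}/s_v$, and $A_{u,u}=\theta+0$ by \eqref{ds:short-eq:matrix:symmetric-origin}. For $A_{v,u}$ the index $u-v$ is negative. Using $s_{[u-v]}s_{[v-u]}=\kappa$ from \eqref{ds:short-eq:matrix:seam}, this gives
\[
A_{v,u}=\kappa\, s_v/(s_us_{v-u}).
\]
So everything reduces to one statement: for fixed $g\ge0$,
\[
 F(g)\sim F(0)\,\frac{1}{g!}\Bigl(\frac{\varpi_2}{2\pi}\Bigr)^{g}
\]
as $N\to\infty$, with a relative error $o(1)$.

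\textbf{Step 1 (phases and normalisations).} The factors $\lambda^u\lambda^{v-u}\lambda^{-v}$ cancel. The quadratic signs contribute
\[
(-1)^{u(u+1)/2+(v-u)(v-u+1)/2-v(v+1)/2}=(-1)^{-u(v-u)}=(-1)^{u(v-u)}.
\]
The normalisation contributes $\Omega^{-1/2}$ once in the ratio $s_us_{v-u}/s_v$. The factor $F(0)$ also appears once net, and $F(0)=\sqrt{d_+}$ by \eqref{ds:short-eq:analytic:F-properties}. Since $d_+\sim N$ and $\Omega=\varpi_1\varpi_2\sim N$, their product $F(0)/\sqrt{\Omega}\to1$. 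The powers $(\varpi_2/2\pi)^{u+(v-u)-v}$ cancel exactly, and the factorials give $\binom vu$. This yields the first limit. For the second, $s_v/(s_us_{v-u})$ tends to $(-1)^{u(v-u)}\binom vu^{-1}$, which is bounded, while $\kappa=-1/\Omega\to0$. For the third, $\theta=\varpi_1^{-1}\to1$.

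\textbf{Step 2 (shift along $\varpi_1$).} Write $g=g\varpi_1-g/d_+$. Iterating the first shift in \eqref{ds:short-S-convention} gives
\[
S(\varpi_1+g\varpi_1)=F(0)\prod_{j=1}^{g}\bigl(2\sin(\pi j\varpi_1/\varpi_2)\bigr)^{-1}.
\]
Since $\varpi_1\to1$ and $\varpi_2\sim N$, each sine equals $(\pi j/\varpi_2)(1+O(N^{-1}))$. This produces the claimed main term $F(0)(\varpi_2/2\pi)^g/g!$.

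\textbf{Step 3 (the $O(1/N)$ displacement; the main obstacle).} It remains to show that
\[
\log S(\varpi_1+g)-\log S(\varpi_1+g\varpi_1)\to0 .
\]
The displacement is $g/d_+=O(1/N)$, so it suffices to bound $(\log S)'$ by $o(N)$ on the real segment between $1+g$ and $(1+g)\varpi_1$. This must hold uniformly as $\varpi_2\to\infty$ with $\varpi_1\to1$.

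I would first try the integral representation of $\log S$ (the hyperbolic gamma representation of Appendix~\ref{ds:sec:hypGamma}). On the strip $0<\Re w<\Omega$ it gives $(\log S)'(w)$ as an absolutely convergent integral. For $w$ in a fixed compact subset of $(0,\infty)$ it should be $O(\log\varpi_2)$, heuristically matching the limit $S(w\mid1,\varpi_2)\approx(\varpi_2/2\pi)^{w-1/2}\Gamma(w)/\sqrt{2\pi}$-type behaviour.

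As a fallback, I would use the second shift equation together with the reflection $S(w)S(\Omega-w)=1$. These reduce the estimate to the behaviour of $S$ near $1$, where positivity and log-convexity on $(0,\Omega)$ can bound the derivative by difference quotients. Those difference quotients are in turn controlled by the sine products from Step 2, each of logarithmic size.

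Either route gives a derivative bound of order $O(\log N)$. Multiplying by the displacement $O(1/N)$ gives $o(1)$, which completes the proof.
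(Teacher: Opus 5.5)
Your proposal is correct, and it takes a genuinely different route from the paper. The paper rescales by homogeneity, $F(k)=S(1+k/\varpi_1\mid 1,d_+)$, and applies the locally uniform degeneration \eqref{ds:double-sine-gamma-limit} of the double sine to $\Gamma_{\mathrm E}(x)^{-1}$ as $d_+\to\infty$. It then works with the ratios $\beta_j=\frac{F(1)}{F(0)}\frac{F(j-1)}{F(j)}$, chosen so that the powers of $2\pi/d_+$ cancel exactly, and obtains $\beta_j\to j$.

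You instead obtain the main term exactly, by iterating the $\varpi_1$-shift along the lattice $\varpi_1\mathbb Z_{\ge1}$. This leaves only a perturbation of size $g/d_+$ in the argument. Your algebraic reduction is right and is equivalent to the paper's $P_g$ bookkeeping: the sign $(-1)^{u(v-u)}$, the cancellation of $\lambda$, $F(0)/\sqrt\Omega=d_+/(d_++1)\to1$, and $A_{v,u}=\kappa s_v/(s_us_{v-u})$ with $\kappa=-1/\Omega\to0$. The paper's route is shorter, but it depends on a nontrivial cited limit theorem. Yours uses only the shift equations and the integral representation already quoted for positivity, and it gives an explicit rate $O(\log N/N)$.

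Step~3 is asserted rather than proved, but your first route closes it in a few lines. Differentiating the integral representation gives, for $0<w<\Omega$,
\[
 (\log S)'(w)=\int_0^\infty\Bigl(\frac{\cosh((\Omega-2w)y)}{\sinh(\varpi_1y)\sinh(\varpi_2y)}-\frac{1}{\Omega y^2}\Bigr)\,dy .
\]
Splitting at $y=1/\varpi_2$ and $y=1$ bounds the three contributions by $O(1)$, $O(\log\varpi_2)$ and $O(1)$. These bounds are uniform for $1\le w\le g+3$ and $1\le\varpi_1\le 2$.

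Your fallback, however, rests on a false claim: $\log S$ cannot be convex on $(0,\Omega)$. Reflection makes it odd about $\Omega/2$, so convexity would force it to be affine, yet it tends to $-\infty$ at $0$. Differentiating once more shows it is concave on $(0,\Omega/2)$, and concavity is all the difference-quotient argument needs. Two bounds squeeze $\log F(g)-\log S((g+1)\varpi_1)$ between quantities of size $O(\log N/N)$, both computed from your sine products:
\begin{itemize}
\item the chord through $g\varpi_1$ and $(g+1)\varpi_1$ gives the lower bound;
\item the tangent at $(g+1)\varpi_1$, whose slope is at least the slope to $(g+2)\varpi_1$, gives the upper bound.
\end{itemize}
Neither the second shift nor reflection is needed.
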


\begin{proof}
By homogeneity of the double sine \cite[Theorem~2.1(e)]{KK} and
$\varpi_2/\varpi_1=d_+$,
\begin{equation*}
 F(k)=S(\varpi_1+k\mid\varpi_1,\varpi_2)
 =S\!\left(1+\frac{k}{\varpi_1}\,\middle|\,1,d_+\right).
\end{equation*}
As $N\to\infty$, $d_+\to\infty$ and $\varpi_1\to1$.
We use the standard degeneration of the double sine to the Euler gamma
function \cite[Proposition~III.6]{ruijsenaars1997first}. In our reciprocal-sine
normalization,
\begin{equation}
 S(x\mid1,L)
 =\sqrt{2\pi}
 \left(\frac{2\pi}{L}\right)^{1/2-x}
 \Gamma_{\mathrm E}(x)^{-1}(1+o(1)),
 \qquad L\to\infty,
 \label{ds:double-sine-gamma-limit}
\end{equation}
locally uniformly in $x$.

For fixed $j\geq1$, put
\begin{equation*}
 \beta_j=\frac{F(1)}{F(0)}\frac{F(j-1)}{F(j)}.
\end{equation*}
Applying \eqref{ds:double-sine-gamma-limit} to the four factors gives
\begin{align*}
 \beta_j
 =\frac{S(1+\varpi_1^{-1}\mid1,d_+)}
         {S(1\mid1,d_+)}
   \frac{S(1+(j-1)\varpi_1^{-1}\mid1,d_+)}
         {S(1+j\varpi_1^{-1}\mid1,d_+)}
         =
         \frac{\Gamma_{\mathrm E}(1+j\varpi_1^{-1})}
 {\Gamma_{\mathrm E}(1+\varpi_1^{-1})
  \Gamma_{\mathrm E}(1+(j-1)\varpi_1^{-1})}(1+o(1)).
\end{align*}
The powers of $2\pi/d_+$ cancel. Therefore
\begin{equation}
 \beta_j\longrightarrow
 \frac{\Gamma_{\mathrm E}(j+1)}
 {\Gamma_{\mathrm E}(2)\Gamma_{\mathrm E}(j)}=j.
 \label{ds:beta-limit}
\end{equation}

Put $v_j=(-1)^j\beta_j$, $P_1=1$, and
$P_g=\prod_{j=2}^g v_j$ for $g\geq2$.
By \eqref{ds:beta-limit}, for each fixed $g$,
\begin{equation*}
 P_g\longrightarrow(-1)^{g(g+1)/2-1}g!.
\end{equation*}

Using the consecutive-ratio formula from \eqref{ds:samples}, the matrix
formula \eqref{ds:matrix-formula}, and the product identity
$s_{[g]}s_{[-g]}=\kappa=d\theta^2$ for $g\neq0$
from \eqref{ds:short-eq:matrix:seam}, we obtain
\begin{equation*}
 (d-1)A^{(N)}_{u,v}=\frac{P_v}{P_uP_{v-u}},\qquad
 (d-1)A^{(N)}_{v,u}=d\frac{P_uP_{v-u}}{P_v}.
\end{equation*}
Hence
\begin{equation*}
 \frac{P_v}{P_uP_{v-u}}
 \longrightarrow(-1)^{u(v-u)+1}\binom vu.
\end{equation*}
Since $d\to0$ and $d-1\to-1$, it follows that
\begin{equation*}
 A^{(N)}_{u,v}\longrightarrow(-1)^{u(v-u)}\binom vu,
 \qquad
 A^{(N)}_{v,u}\longrightarrow0.
\end{equation*}
Finally, \eqref{ds:matrix-formula} and $s_{[0]}=\theta$ give 
$A^{(N)}_{u,u}=\theta=\frac1{1-d}\longrightarrow1$.
\end{proof}

\begin{remark}
For the rescaled matrix $B^{(N)}=(d_--1)A^{(N)}$, the proposition gives
$B^{(N)}_{1,2}\to2$, whereas $A^{(N)}_{1,2}\to-2$ in our normalization.
\end{remark}

\begin{corollary}[Categorical Q-system existence]\label{ds:categorical-existence}
Assume the field and scalar hypotheses of Theorem~\ref{thm:reconstruction}.
Then the matrix in~\eqref{ds:matrix-formula} reconstructs a spherical
HI fusion category of Q-system type.
\end{corollary}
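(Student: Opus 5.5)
The corollary follows by chaining three earlier results, so the plan is to check that their hypotheses line up. First, Theorem~\ref{ds:uniform-existence} gives that the matrix $A$ in \eqref{ds:matrix-formula} is a $(d,1)$-HI-matrix over $\mathbb C$ with $d=d_-$. It also gives the boundary $A_{g,0}=\delta_{g,0}+\theta$, which is the Q-system condition \eqref{eq:display-0271}. Since $\mathbb C$ is algebraically closed, Theorem~\ref{thm:reconstruction} applies. It produces an untwisted spherical fusion category $\mathcal C(A)$ of type $\mathfrak{HI}_G$, whose noninvertible simples have dimension $d_-$. We are in the main setting: $N\neq0$ in $\mathbb C$, $\omega=1$, and a square root $b$ of $a=d^{-1}$ can be chosen. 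In this setting the category is the idempotent completion of the explicit Leavitt endomorphism category.

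Second, I will check $a\neq\pm1$. Here $a=d_-^{-1}=-d_+$, and $d_+>1$ for $N\ge3$, so $a\neq\pm1$. Equivalently, the scalar equation $a^2+Na=1$ with $N$ invertible excludes $a=\pm1$: the value $a=1$ gives $N=0$, and $a=-1$ gives $N=0$ as well. Theorem~\ref{thm:algebraic-q-system} then turns the boundary condition into a connected separable algebra structure on $Q=\mathbf1\oplus\rho$ inside $\mathcal C(A)$. This is exactly what ``of Q-system type'' means in Definition~\ref{def:q-system-type}.

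The only point needing care is bookkeeping, not mathematics. The Q-system notion is defined at the level of the matrix, while the conclusion is about the category. So I should make sure that the object $\rho$ in $\mathcal C(A)$ is the one whose multiplication Theorem~\ref{thm:algebraic-q-system} (via Theorem~\ref{thm:HI-fixed-boundary-algebra}) describes. That theorem is stated for the reconstructed category, so this identification is already built in and nothing further is needed.
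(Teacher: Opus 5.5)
Your proposal is correct and follows essentially the same route as the paper: apply Theorem~\ref{thm:reconstruction} to the double-sine matrix, and use its boundary $A_{g,0}=\delta_{g,0}+\theta$ together with $a\neq\pm1$ (your explicit check $a=d_-^{-1}=-d_+$ with $d_+>1$ is fine). Then Theorem~\ref{thm:algebraic-q-system} gives the connected separable algebra on $Q=\mathbf1\oplus\rho$; as the paper stresses, this conclusion is purely algebraic and makes no positivity or unitarity claim.
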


\begin{proof}
Apply Theorem~\ref{thm:reconstruction} to the matrix in
\eqref{ds:matrix-formula}. Its boundary is
$A_{g,0}=\delta_{g,0}+\theta$, and the complex parameters here satisfy
$a\ne\pm1$. Hence Theorem~\ref{thm:algebraic-q-system} gives a connected
separable algebra structure on $Q=\mathbf1\oplus\rho$, and the reconstruction
produces the stated spherical category. This is an algebraic conclusion; no
positivity or unitary structure is claimed.
\end{proof}

\begin{remark}[Pseudo-unitarity and the remaining unitarity question]
\label{ds:unitarity-expectation}
Every generalized near-group category is $\phi$-pseudounitary for a suitable
field automorphism $\phi$ \cite[Theorem~IV.3.6]{thornton2012generalized}.
In particular, for every odd $N>1$, the positive Galois conjugates of the
double-sine construction are pseudo-unitary untwisted HI categories of
Q-system type for $\mathbb Z/N$: their invertible simples have spherical
dimension $1$, and their noninvertible simples have spherical dimension
$d_+$, equal to their Frobenius--Perron dimension.

We expect every positive Galois conjugate of the double-sine family to be
unitary. Exact Hermiticity checks verify this for the computed positive
conjugates at every odd $3\leq N\leq31$.
The uniform assertion remains open: Galois conjugation need not preserve
a positive $*$-structure. In the present HI normalization, a
positive-branch category is unitary exactly when its matrix is Hermitian,
$A_{g,h}=\overline{A_{h,g}}$
\cite[Theorem~2(c)]{evans2017non}. Thus a Hermitian positive conjugate
would give unitary existence, as would a separate construction of a
unitary structure. 
\end{remark}


\section{Connected separable algebras and Morita duals}
\label{sec:separable}

We classify connected separable algebras up to Morita equivalence in a
fixed unitary odd cyclic HI category of Q-system type.  The representatives are
the subgroup algebras $R_H$ and the algebra $Q=\mathbf1\oplus\rho$.
The subgroup duals again have HI fusion rules, possibly with a noncyclic
invertible group or a nontrivial pointed associator.  The $Q$-dual has
$N+1$ simple objects and is not an HI category; rather, it generalizes the commutative $M$-$M$ system familiar from the Haagerup subfactor.

Throughout this section, let $G=\mathbb Z/N$ for $N\geq 3$ odd and let $\mathcal C$ be a fixed complex unitary cyclic HI
category of Q-system type with trivial pointed associator. Theorem~\ref{thm:algebraic-q-system} then gives a separable algebra structure on $Q=\mathbf 1\oplus\rho$. Alternatively, by \cite[Lemma~7.1]{izumi2018classification}, we can assume the separable algebra structure on $Q$, and derive from this the trivial pointed associator.
Recall that separability means that multiplication admits a bimodule
section.  In a fusion category this implies semisimplicity of the module
category \cite[Definition~7.8.29 and Proposition~7.8.30]{EGNO}.

If $B$ is a connected separable algebra in a fusion category, write
\(\operatorname{Mod}_{\mathcal C}(B)\) for its category of right modules.
We use reverse composition for module endofunctors, so that the opposite
appearing in the usual bimodule description is absorbed into the tensor
product convention.  Thus the dual category is
\begin{equation*}
 \mathcal C^*_{\operatorname{Mod}_{\mathcal C}(B)}
 \simeq {}_B\mathcal C_B
\end{equation*}
with the relative tensor product over $B$
\cite[Proposition~7.11.1 and Remark~7.12.5]{EGNO}.
Two algebras in $\mathcal C$ are Morita equivalent when their right-module
categories are equivalent as \(\mathcal C\)-module categories.  The
classification is therefore one of indecomposable $\mathcal C$-module
categories, not of algebra structures on a fixed object.

\subsection{Subgroup algebras}

The pointed part of $\mathcal C$ is the strict pointed category on the
objects $\alpha_g$, $g\in G$.  For a subgroup $H\leq G$, put
\begin{equation*}
 R_H=\bigoplus_{h\in H}\alpha_h.
\end{equation*}
The usual group-algebra multiplication makes $R_H$ a connected separable
algebra.  In the normalized strict model, a bimodule section of its
multiplication is given on the $\alpha_g$ summand by
\begin{equation*}
 \alpha_g\longmapsto
 \frac{1}{|H|}\sum_{h\in H}
 \alpha_h\otimes\alpha_{g-h}.
\end{equation*}
The connectedness follows from the single copy of the tensor unit.  In this untwisted pointed category, the cohomological classification of
algebra structures \cite[Remark 3.9(i)]{fuchs2004tft} shows that every connected separable structure on the
same underlying object is algebra-isomorphic to this one: since $H$ is
cyclic, $H^2(H,\mathbb C^\times)=0$.  

The simple right $R_H$-modules are indexed by the quotient $G/H$ and by
which of the two HI families they come from:
\begin{equation}
 U_{\bar g}=\alpha_g\otimes R_H,\qquad
 V_{\bar g}=\alpha_g\rho\otimes R_H,
 \qquad \bar g\in G/H.
 \label{sa:RH-modules}
\end{equation}
Consequently,
\begin{equation*}
 \operatorname{rank}\operatorname{Mod}_{\mathcal C}(R_H)=2[G:H].
\end{equation*}
Free-module adjunction and the HI fusion rules show that the objects in
\eqref{sa:RH-modules} are simple and pairwise nonisomorphic
\cite[Lemma~7.8.12 and Proposition~7.8.30]{EGNO}.
Separability makes every module a summand of a free module, so this list is
exhaustive.

\subsection{The subgroup dual and the carry class}
\label{sa:subgroup-dual}

Let $L_H=(G/H)\times\widehat H$ and $\widehat H=\operatorname{Hom}(H,\mathbb C^\times)$. The pointed simple bimodules are indexed by $(P,u)\in L_H$.  Their
underlying objects are sums over a quotient coset, and $u$ records the right
$H$-action.  Write $U_{P,u}$ for the invertible bimodule and $V_{P,u}$ for
the noninvertible one.  These account for all simple objects of
\({}_{R_H}\mathcal C_{R_H}\).  Writing
\begin{equation*}
 \sigma=V_{0,1},\qquad W_\ell=U_\ell\otimes_{R_H}\sigma,
 \quad \ell\in L_H,
\end{equation*}
their fusion rules are
\begin{align*}
 U_\ell\otimes W_r&\cong W_{\ell+r},
 &W_\ell\otimes U_r&\cong W_{\ell-r},
 \\
 W_\ell\otimes W_r&\cong U_{\ell-r}\oplus
 \bigoplus_{t\in L_H}W_t,
 &U_\ell\otimes U_r&\cong U_{\ell+r}.
\end{align*}
Thus the dual has HI fusion rules with pointed group $L_H$.  This
identifies the fusion rules; a strict HI-matrix presentation also requires a
trivial pointed associator, and that condition can fail. Choose a section $s:G/H\to G$ with $s(0)=0$, and define its carry by
\begin{equation*}
 c(P,Q)=s(P)+s(Q)-s(P+Q)\in H.
\end{equation*}
The associator on the pointed bimodules is represented by
\begin{equation}
 \omega_H((P,u),(Q,v),(R,w))=u\bigl(c(Q,R)\bigr).
 \label{sa:carry-cocycle}
\end{equation}
Changing the section changes \(\omega_H\) by a coboundary. Hence its
cohomology class, the \emph{carry class}, is intrinsic to the subgroup dual.

\begin{proposition}[Carry criterion]
\label{sa:carry-criterion}
Put $m=|H|$ and $q=[G:H]$.  The carry class of the pointed subcategory of
\({}_{R_H}\mathcal C_{R_H}\) is trivial if and only if $\gcd(m,q)=1$. Equivalently, the associator of the pointed subcategory of \({}_{R_H}\mathcal C_{R_H}\) is trivial if and only if $H$ is a Hall subgroup of $G$.
\end{proposition}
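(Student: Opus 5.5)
We show that the cocycle \eqref{sa:carry-cocycle} on $L_H=(G/H)\times\widehat H$ is a coboundary exactly when $\gcd(m,q)=1$. Since $G$ is cyclic of order $mq$, $H=q\mathbb Z/mq\cong\mathbb Z/m$ and $G/H\cong\mathbb Z/q$. The carry $c$ is a $2$-cocycle on $G/H$ with values in $H$, and its class $[c]\in H^2(G/H,H)$ classifies the extension $0\to H\to G\to G/H\to 0$. The cocycle $\omega_H$ is the image of $[c]$ under the pairing map
\begin{equation*}
 H^2(G/H,H)\longrightarrow H^3\bigl((G/H)\times\widehat H,\mathbb C^\times\bigr),\qquad
 \omega_H((P,u),(Q,v),(R,w))=u(c(Q,R)),
\end{equation*}
which is the standard form of a ``mixed'' $3$-cocycle (slant-type) on a product group.

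For the \emph{if} direction, suppose $\gcd(m,q)=1$. Then $H$ is a Hall subgroup and the extension splits: by CRT, $G\cong H\times G/H$. We can therefore choose the section $s$ to be a homomorphism, namely the inclusion of the complementary subgroup of order $q$. Then $c\equiv 0$, so $\omega_H\equiv1$ on the nose. Since changing the section changes $\omega_H$ by a coboundary, as noted before the statement, the class is trivial.

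For the \emph{only if} direction, suppose $g=\gcd(m,q)>1$. We exhibit an invariant of $[\omega_H]$ that is nonzero. The plan is to restrict to the subgroup $\mathbb Z/q\times\widehat H$ (which is all of $L_H$) and use the explicit description of $H^3(\mathbb Z/q\times\mathbb Z/m,\mathbb C^\times)\cong\mathbb Z/q\oplus\mathbb Z/m\oplus\mathbb Z/g$. Take the section $s(P)=P$ for $0\le P<q$. Then $c(P,Q)=q\lfloor (P+Q)/q\rfloor\in H$, identified with $\lfloor (P+Q)/q\rfloor\in\mathbb Z/m$. Writing $u=\zeta_m^{j}$, we get
\begin{equation*}
 \omega_H=\exp\bigl(2\pi i\, j\,\lfloor (Q+R)/q\rfloor/m\bigr).
\end{equation*}
This is precisely the standard generator of the mixed $\mathbb Z/\gcd(q,m)$ summand (type $e^{2\pi i\,a_1(b_2+c_2-[b_2+c_2])/(qm)}$ in the usual normal form), with coefficient $1$.

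A concrete way to detect that this summand is nonzero is to check for nontrivial braiding obstruction. Restrict to the cyclic subgroup generated by $(1,u)$ with $u$ of order $m$, and compute the restricted $3$-cocycle class in $H^3(\mathbb Z/n,\mathbb C^\times)=\mathbb Z/n$ via the usual invariant $\prod_{k=0}^{n-1}\omega(x,kx,x)$. Alternatively, take the element $x=(1,u^{m/g})$-type combinations, so that the invariant becomes a primitive $g$-th root of unity $\zeta_g^{\pm1}\ne1$. Any nontrivial value rules out a coboundary.

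The main obstacle is this last step: choosing the right element or subgroup so that the restriction invariant is visibly nontrivial, and getting the floor-function bookkeeping right. The first attempt is to restrict to the diagonal subgroup generated by $x=(1,\chi)$, where $\chi$ is a character of $H$ of order $g$. The order of $x$ is $\operatorname{lcm}(q,g)=q$. Evaluating $\prod_{k=0}^{q-1}\omega_H(x,kx,x)=\prod_k\chi(c(k,1))$, only $k=q-1$ carries, and it gives $\chi$ of a generator of $H$, which is a primitive $g$-th root of unity. This is nontrivial because $g>1$. Finally, the equivalence with ``$H$ is Hall'' is immediate, since a subgroup $H$ is Hall exactly when $\gcd(|H|,[G:H])=1$.
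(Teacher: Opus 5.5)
Your proof is correct and follows the paper's argument. When $\gcd(m,q)=1$, a homomorphic section makes the carry vanish; when $g=\gcd(m,q)>1$, the telescoping invariant $\prod_k\omega_H(x,kx,x)$ on a cyclic subgroup is a primitive $g$-th root of unity, so $\omega_H$ is not a coboundary. The only difference is your test element: you take $x=(1,\chi)$ with $\chi$ of order $g$, which has order $q$ and produces a single carry, whereas the paper takes $(1,1)$, which has order $\operatorname{lcm}(m,q)$ and produces $\operatorname{lcm}(m,q)/q$ carries; both give the same value $e^{2\pi i/g}$ up to the choice of primitive root.
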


\begin{proof}
Identify the pointed group with $C_q\times C_m$ and use the standard
section of the quotient $C_N\to C_q$.  If
\(\zeta_m\) is a primitive $m$th root of unity, the cocycle in
\eqref{sa:carry-cocycle} has the form
\begin{equation*}
 \omega_H((i,r),(j,s),(k,t))
 =\zeta_m^{r\lfloor(j+k)/q\rfloor},
\end{equation*}
where first-coordinate entries are represented in
$\{0,\ldots,q-1\}$.  Let $f=\gcd(m,q)$ and
$L=\operatorname{lcm}(m,q)$.  The element $(1,1)$ has order $L$.
In the restriction to the cyclic subgroup it generates, $(j,j)$ means
$(j\bmod q,j\bmod m)$.  Hence the carry
$\lfloor((j\bmod q)+1)/q\rfloor$ is $1$ exactly $L/q$ times, and
\begin{equation*}
 \prod_{j=0}^{L-1}\omega_H((1,1),(j,j),(1,1))
 =\zeta_m^{L/q}=\exp(2\pi i/f).
\end{equation*}
For a $3$-coboundary the corresponding product telescopes to $1$.  Hence
the class is nontrivial when $f>1$.  When $f=1$, the extension
\(0\to C_m\to C_N\to C_q\to0\) splits, and a homomorphic section makes the carry zero.  This proves the criterion.
\end{proof}

For Hall subgroups, the subgroup dual admits an explicit matrix
description. The following result does not require unitarity or
the Q-system condition.

\begin{theorem}[Subgroup Fourier transform and Morita duality]
\label{sa:fourier-morita}
Let $A$ be a complex $(d_A,1)$-HI-matrix for
$G=\mathbb Z/N\mathbb Z$, where $N\geq3$ is odd.
Let $H\leq G$ be a Hall subgroup, write $G=H\oplus K$, and put
$L_H=K\times\widehat H$.
Define
\begin{equation*}
 (\mathscr P_H A)_{(P,u),(Q,v)}
 =\frac1{|H|}\sum_{x,y\in H}
 A_{P+x,Q+y}\,\overline{u(y/2)}\,v(x/2),
 \qquad P,Q\in K,\quad u,v\in\widehat H,
\end{equation*}
where division by $2$ is taken in $H$.
Then $\mathscr P_H A$ is a $(d_A,1)$-HI-matrix for the cyclic
group $L_H$. For the subgroup algebra
$R_H=\bigoplus_{h\in H}\alpha_h$ in $\mathcal C(A)$, there is
a tensor equivalence
\begin{equation*}
 \mathcal C(\mathscr P_H A)
 \simeq {}_{R_H}\mathcal C(A)_{R_H}.
\end{equation*}
\end{theorem}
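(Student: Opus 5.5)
The plan is to prove the categorical statement first and read off the matrix identity from it, rather than verify (E1)--(E4) for $\mathscr P_H A$ by brute force.

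\textbf{Step 1: the dual is untwisted.} Since $H$ is Hall, $\gcd(|H|,[G:H])=1$. By Proposition~\ref{sa:carry-criterion} the carry class vanishes. Concretely, I choose the homomorphic section $K\hookrightarrow G$ coming from $G=H\oplus K$. Then the carry is zero and the cocycle \eqref{sa:carry-cocycle} is identically $1$, so the pointed part of ${}_{R_H}\mathcal C(A)_{R_H}$ is strictly $\mathrm{Vec}_{L_H}$. Here $L_H=K\times\widehat H$ is cyclic of odd order $N$, because $\widehat H\cong H$ and $|K|,|H|$ are coprime.

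\textbf{Step 2: the dual is an HI category with the same $d_A$ and $\omega=1$.} The fusion rules were established in \S\ref{sa:subgroup-dual}. Since $R_H$ is pointed, the dual has the same global dimension $N(1+d_A^2)$. The noninvertible simple $\sigma=V_{0,1}$ has underlying object $\rho\otimes R_H$, so its dimension relative to $R_H$ is $d_A$.

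\textbf{Step 3: extract an HI matrix for the dual.} By \cite{izumi2018classification} and \cite{evans2017non}, an untwisted HI category for odd $L_H$ is $\mathcal C(A')$ for some $(d_A,1)$-HI matrix $A'$, and $\omega=1$ by Theorem~\ref{thm:HI-omega-trivial}. The matrix $A'$ is read off from the trivalent vertex $\sigma\to\sigma\otimes_{R_H}\sigma$ composed with the pointed isomorphisms $W_\ell$. I would realize this in the Leavitt picture. $R_H$-bimodules built from $\rho\otimes R_H$ are described by averaging over $H$ the isometries $T_g$ and $S$ of Appendix~\ref{app:reconstruction}. The character $v$ labels the right $H$-action; to match it with an eigenvector of the $\alpha_h$-conjugation we twist by $v(x/2)$. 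The half-characters arise because $\alpha_g\rho=\rho\alpha_{-g}$: moving $\alpha_x$ through $\rho$ splits the twist symmetrically, and $2$ is invertible in $H$ since $N$ is odd. With these basis choices the structure constants of the dual are exactly
\[
\frac1{|H|}\sum_{x,y\in H}A_{P+x,Q+y}\,\overline{u(y/2)}\,v(x/2).
\]
The identification $\mathcal C(\mathscr P_HA)\simeq{}_{R_H}\mathcal C(A)_{R_H}$ then follows from reconstruction (Theorem~\ref{thm:reconstruction}) together with uniqueness of the matrix presentation in the chosen gauge. Because $\mathscr P_HA$ is the matrix of an honest HI category, it satisfies the HI equations. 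As a cross-check, I would verify (E1) and the cubic identity \eqref{eq:HI-cubic} directly and invoke Theorem~\ref{thm:cubic-criterion}.

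\textbf{Direct check of (E1).} Under $(g,h)\mapsto(-h,g-h)$, the $H$- and $K$-components transform separately, by the same affine map on each summand. The Fourier kernel $\overline{u(y/2)}v(x/2)$ transforms compatibly under the corresponding map $(u,v)\mapsto$ (index permutation) on $\widehat H$, since the map preserves the bilinear pairing.

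\textbf{Direct check of the cubic.} Substitute the definition of $\mathscr P_HA$ and sum over the dual index $m=(M,w)$. Orthogonality of characters of $\widehat H$ collapses the sum over $w$ to a delta function in $H$. This leaves the cubic for $A$, averaged over $H$. The constant term $-a\delta\delta$ is handled by $\sum_u u(x)=|H|\delta_{x,0}$.

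\textbf{Main obstacle.} The hardest part is matching gauges in Step 3, that is, fixing the normalizations of $\sigma$ and $W_\ell$ so that the vertex coefficients come out exactly as $\mathscr P_HA$. In particular, the half-characters must absorb the phase from $\alpha_x\rho=\rho\alpha_{-x}$ with no leftover coboundary. I would settle this by checking the boundary entries $(Q,v)=(0,1)$ against (E2), which pins the normalization. If the gauge proves too messy, I would fall back on the direct verification of (E1) and \eqref{eq:HI-cubic}, whose main bookkeeping issue is the sign in the kernel after the order-three index change.
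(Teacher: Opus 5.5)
\textbf{The matrix half is fine.} Your fallback for the first assertion is exactly the paper's proof of that part (Theorem~\ref{sap:thm:partial-fourier-HI}). It checks (E1) via $(x',y')=(-y,x-y)$ and shows that the cubic residual of $\mathscr P_HA$ is an $H\times H$ Fourier transform of the cubic residual of $A$, then applies Theorem~\ref{thm:cubic-criterion}. Two points should be made explicit. First, the product term $C_{g+l,k}C_{h+k,l}$ acquires the same kernel $\overline{\chi(r/2)}\,\overline{\psi(s/2)}\lambda(x/2)\mu(y/2)$ as the cubic term; this is where the half-characters matter. Second, the delta term uses $G=H\oplus K$ to force $P=Q=x=y=0$.

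\textbf{The gap is the tensor equivalence.} Steps 1--2 only show that ${}_{R_H}\mathcal C(A)_{R_H}$ is an untwisted HI category with parameter $d_A$. Granting the Evans--Gannon realization theorem, it is therefore $\mathcal C(A')$ for \emph{some} $(d_A,1)$-HI-matrix $A'$. Your sentence ``with these basis choices the structure constants of the dual are exactly $\mathscr P_HA$'' is the whole content of the theorem, and nothing in the proposal computes it. The uniqueness of $A'$ up to $\operatorname{Aut}(L_H)$ is usable only after such a computation, so it cannot replace it. Your fallback, as you say, only re-proves that $\mathscr P_HA$ is an HI-matrix; it says nothing about ${}_{R_H}\mathcal C(A)_{R_H}$.

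\textbf{Your sketched mechanism cannot work as stated.} Averaging the $t_g$ against characters of $H$ inside $\mathbf L$ is just a change of Leavitt basis. Put $\tilde t_{P,u}=\sum_x u(x/2)t_{P+x}$. Then $\alpha_h(\tilde t_{P,u})=\overline{u(h)}\,\tilde t_{P,u}$ for $h\in H$, so the $\alpha_h$ act diagonally rather than by permutations, and no endomorphisms labelled by $\widehat H$ appear. You are still looking at $\mathcal C(A)$ with pointed group $G$. To see the dual you need an algebra in which the $\alpha_h$ ($h\in H$) become inner and the characters of $H$ become new invertible endomorphisms.

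\textbf{How the paper supplies this.} It works in the crossed product $\widetilde{\mathbf L}=\mathbf L\rtimes_{\alpha|_H}H$ and defines
\begin{itemize}
\item $\widehat\alpha_{P,u}|_{\mathbf L}=\alpha_P$ and $\widehat\alpha_{P,u}(z_x)=u(x)z_x$;
\item $\widehat\rho|_{\mathbf L}=\rho$ and $\widehat\rho(z_x)=z_{-x}$;
\item the character basis $\widehat s=ms$ and $\widehat t_{P,u}=\sum_{x\in H}u(x/2)\,t_{P+x}z_x$, with primed companions involving $z_{-x}$.
\end{itemize}
These form a Leavitt family splitting $\widehat\rho^{\,2}$, and $\widehat\alpha_{Q,v}(\widehat t_{P,u})=\widehat t_{P+2Q,uv^2}$. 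Reduced-word comparison in $\bigoplus_x\mathbf Lz_x$ then identifies the coefficient array of $\widehat\rho(\widehat t_{0,1})$ with $\mathscr P_HA$; this is where the half-characters come from.

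Even then, one must show that the resulting endomorphism category is the bimodule category. The paper does this with an explicit tensor functor $\theta\mapsto\bigoplus_{x\in H}\alpha_x\sigma_\theta$, whose right $R_H$-action is twisted by the character $\lambda_\theta$ through which $\widehat\theta$ acts on the $z_x$. It checks the tensorator, naturality and the pentagon, and gets full faithfulness and essential surjectivity from the simple-object correspondence $(P,u)\mapsto U_{P,u^{-1}},V_{P,u}$. Without a construction of this kind, your Step~3 asserts the conclusion rather than proving it.
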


\begin{proof}
The matrix assertion is
Theorem~\ref{sap:thm:partial-fourier-HI}.
The tensor equivalence and its simple-object correspondence are
Theorem~\ref{sap:thm:subgroup-fourier-morita}.
Both are proved in Appendix~\ref{app:separable}. With the bimodule labels of
Theorem~\ref{sap:thm:subgroup-fourier-morita}, this equivalence
sends the standard invertible and noninvertible objects labelled
$(P,u)$ to $U_{P,u^{-1}}$ and $V_{P,u}$, respectively.
\end{proof}

For the normalized Hermitian matrix $A$ of the fixed category
$\mathcal C$, the transformed matrix has $d=d_+$ and is normalized Hermitian by 
Corollary~\ref{sap:cor:partial-fourier-hermiticity}.

\begin{example}[A noncyclic dual at order nine]
\label{sa:n9-dual}
Take $G=\mathbb Z/9\mathbb Z$ and
\(H=3\mathbb Z/9\mathbb Z\cong C_3\).  Then
\begin{equation*}
 L_H\cong (G/H)\times\widehat H\cong C_3\times C_3,
 \qquad \gcd(|H|,[G:H])=3.
\end{equation*}
For the standard section $s(0)=0$, $s(1)=1$, $s(2)=2$, and a primitive
third root $\zeta_3$, the restriction of
\eqref{sa:carry-cocycle} to the generator $(1,1)$ has
\begin{equation*}
 \prod_{j=0}^{2}
 \omega_H((1,1),(j,j),(1,1))=\zeta_3.
\end{equation*}
This direct calculation detects a nontrivial pointed associator class.
Consequently the pointed subcategory of \({}_{R_H}\mathcal C_{R_H}\) is
\begin{equation*}
 \operatorname{Vec}_{C_3\times C_3}^{\,\omega_H}
\end{equation*}
with nontrivial associator.  Thus the dual ${}_{R_H}\mathcal C_{R_H}$ is twisted HI for the  noncyclic group $C_3\times C_3$.
\end{example}

Obviously there are several other examples. For example take as input either $N=25$ category in Table~\ref{tab:cyclic-census} and subgroup $H = 5 \mathbb Z/25$; then the dual is twisted HI for $C_5\times C_5$.

\subsection{The algebra on \texorpdfstring{$\mathbf 1\oplus\rho$}{1 plus rho}}
\label{sa:Q-dual}

The Q-system algebra $Q$ has a different dual.  Its right-module category
has $N+1$ simple objects.  There are $N$ modules in one regular pointed
orbit and one fixed module.  The corresponding dual category ${}_Q\mathcal C_Q$ has simple objects
\begin{equation*}
 \mathbf 1,\quad \eta,\quad
 \mu_1,\ldots,\mu_{(N-1)/2},\quad
 \nu_1,\ldots,\nu_{(N-1)/2}.
\end{equation*}
The object $\nu_i$ is attached to the pair \(\{g_i,-g_i\}\), while the
ordering of the $\mu_i$ is immaterial.  Their Frobenius--Perron dimensions
are
\begin{equation*}
 \operatorname{FPdim}(\eta)=d,\qquad
 \operatorname{FPdim}(\mu_i)=d-1,\qquad
 \operatorname{FPdim}(\nu_i)=d+1.
\end{equation*}
The self-dual object $\eta$ tensor-generates ${}_Q\mathcal C_Q$.  With columns
recording multiplication by $\eta$, its fusion matrix is
\begin{equation}
 N_\eta=
 \begin{pmatrix}
 0&1&0&0\\
 1&1&\boldsymbol 1_r^{\top}&\boldsymbol 1_r^{\top}\\
 0&\boldsymbol 1_r&J_r-I_r&J_r\\
 0&\boldsymbol 1_r&J_r&J_r+I_r
 \end{pmatrix},
 \qquad r=\frac{N-1}{2}.
 \label{sa:Q-dual-matrix}
\end{equation}
Here, $\mathbf 1_r=(1,\ldots,1)$, and $I_r$ resp.\ $J_r$ are the identity resp.\ all ones $r\times r$ matrices.
In particular, $\mathbf 1$ is the only invertible simple object, and
\({}_Q\mathcal C_Q\) does not have HI fusion rules. For $N=3$ this recovers of course the commutative $M$-$M$ system of the Haagerup subfactor.

For $U_g=\alpha_g\otimes Q$ and $W_g=\alpha_g\rho\otimes Q$, free-module
adjunction and semisimplicity give
$W_g\cong U_g\oplus X_g$, where all $X_g$ are isomorphic to one simple
module $X$; separability makes this list exhaustive.  The invertibles
translate the $U_g$ and fix $X$, giving the two pointed orbits.  The full
identification of the dual, including the tensor product and the matrix
\eqref{sa:Q-dual-matrix}, is proved in Appendix~\ref{app:separable} using
the subfactor restriction calculation and its multiplicity-one statement
\cite[Section~7, especially Proposition~7.4 and Theorem~7.5]{izumi2018classification}.

\subsection{Classification and the Morita consequence}

The subgroup algebras and $Q$ exhaust the Morita classes.

\begin{theorem}[Connected separable algebras]
\label{sa:classification}
Let $\mathcal C$ be a complex unitary cyclic untwisted HI category of Q-system type.  Every connected
separable algebra in $\mathcal C$ is Morita equivalent to exactly one of
\begin{equation*}
 \{R_H:H\leq G\}\ \sqcup\ \{Q\}.
\end{equation*}
There are $\tau(N)+1$ indecomposable Morita classes.  Their module-category
ranks are
\begin{equation}
 \operatorname{rank}\operatorname{Mod}_{\mathcal C}(R_H)=2[G:H],
 \qquad
 \operatorname{rank}\operatorname{Mod}_{\mathcal C}(Q)=N+1.
 \label{sa:classification-ranks}
\end{equation}
\end{theorem}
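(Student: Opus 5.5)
Indecomposable $\mathcal C$-module categories correspond to Morita classes of connected separable algebras, so the plan is to classify indecomposable module categories $\mathcal M$. I would work with the rank and the internal-End algebras of simple objects. Every such $\mathcal M$ is $\operatorname{Mod}_{\mathcal C}(\underline{\End}(M))$ for any simple $M\in\mathcal M$. Hence it is enough to classify the possible algebras $B=\underline{\End}(M)$ up to Morita equivalence. Each such $B$ is connected and separable, and its underlying object has the form $\bigoplus_g a_g\alpha_g\oplus\bigoplus_g b_g\alpha_g\rho$ with $a_0=1$.

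\textbf{First step.} I would show that the pointed part of $B$ is a subgroup algebra. The summand $B_{\mathrm{pt}}=\bigoplus a_g\alpha_g$ is a subalgebra, because the invertibles form a fusion subcategory. Connectedness forces $a_g\in\{0,1\}$, and the support $H=\{g:a_g=1\}$ is a subgroup. Since $\mathcal C$ is untwisted and $H^2(H,\mathbb C^\times)=0$, the cohomological classification already cited gives $B_{\mathrm{pt}}\cong R_H$.

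\textbf{Case split.} If some simple $M\in\mathcal M$ has $\underline{\End}(M)$ purely pointed, then $\mathcal M\simeq\operatorname{Mod}(R_H)$. Otherwise every simple has an internal End containing a $\rho$-type summand. In that case I would pass to the dual $\mathcal C^*_{\mathcal M}$, or equivalently restrict $\mathcal M$ to $\operatorname{Vec}_G$. As a $\operatorname{Vec}_G$-module category, $\mathcal M$ decomposes into $\operatorname{Mod}(R_H)$-type pieces, i.e.\ orbits $G/H$. Write $n=\operatorname{rank}\mathcal M$.

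\textbf{Main step: a dimension count.} The key constraint comes from the action of $\rho$ on $\mathcal M$ through a nonnegative integer matrix $\Lambda$. This matrix satisfies the fusion rule $\Lambda^2=I+\sum_g P_g\Lambda$, where $P_g$ are the permutation matrices of the $G$-action, and $P_g\Lambda=\Lambda P_{-g}$. The Perron–Frobenius eigenvector gives a dimension vector whose entries are compatible with $\mathrm{FPdim}(\rho)=d_+$. I would classify the nonnegative integer solutions in the spirit of Izumi's restriction calculation \cite[Section~7]{izumi2018classification}.

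The expected outcome has two branches:
\begin{itemize}
\item[(a)] All $G$-orbits are regular with stabilizer $H$, and $\Lambda$ matches $\operatorname{Mod}(R_H)$: rank $2[G:H]$, consisting of two orbits.
\item[(b)] One regular orbit together with one fixed point, rank $N+1$, matching $\operatorname{Mod}(Q)$ from \S\ref{sa:Q-dual}.
\end{itemize}

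A fixed point $X$ satisfies $\alpha_gX\cong X$ for all $g$, so $\underline{\End}(X)\supseteq R_G$. Fusion then forces $\rho X$ to contain $X$ with controlled multiplicity. Counting $\mathrm{FPdim}$ via $\dim\underline{\End}$ and the global dimension identity $\sum_{M}\dim(M)^2=\dim\mathcal C/\dim\mathcal C^*_{\mathcal M}$ should pin this down. In branch (b) the internal End of a point in the regular orbit has dimension $1+d_+$, so it has underlying object $\mathbf1\oplus\rho$. Its connected separable structure is unique by Theorem~\ref{thm:algebraic-q-system}, since the Q-system condition is forced by the boundary, and therefore $\mathcal M\simeq\operatorname{Mod}(Q)$.

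\textbf{Main obstacle.} I expect this integer classification to be the hardest part. In particular, excluding mixed configurations and other underlying objects such as $R_H\oplus(\text{sums of }\alpha_g\rho)$ requires real work. I would first try the following argument. Irrationality of $d_+$ together with integrality of the $\Lambda$ entries makes the dimension vector lie in $\mathbb Z[d_+]$. Comparing rational and irrational parts of $\sum_M\dim(M)^2$ should then force the ranks in (a) and (b). Unitarity enters here through positivity of dimensions.

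\textbf{Distinctness.} Finally, I would show the listed classes are pairwise inequivalent. For $R_H$ versus $Q$ this follows from the ranks in \eqref{sa:classification-ranks}, since $N+1$ is even while $2[G:H]$ can equal $N+1$ only in special cases. To separate those cases I would use the existence or absence of a $G$-fixed simple object. Two subgroup algebras $R_H,R_{H'}$ are distinguished by the stabilizers of the $G$-action, which recover $H$.
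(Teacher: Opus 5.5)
There is a genuine gap, and it sits exactly at the step you flagged as the main obstacle. The classification of nonnegative integer solutions does not yield only your branches (a) and (b). Moreover, no argument using integrality, irrationality of $d_+$, positivity, or Frobenius--Perron dimensions can remove the extra solutions.

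Here is the counterexample at the fusion-ring level. For every factorization $N=mq$, take subgroups $H,K\le G$ of orders $m,q$. Let the orbits be $\{x_{\bar g}\}=G/H$ and $\{y_{\bar k}\}=G/K$, with orbit sums $X,Y$, and let $\rho$ act in the Grothendieck group by
\begin{equation*}
 [\rho x_{\bar g}]=[x_{-\bar g}]+[Y],\qquad
 [\rho y_{\bar k}]=[X]+q[Y]-[y_{-\bar k}].
\end{equation*}
A direct check shows the following.
\begin{itemize}
\item This is a symmetric nonnegative integer matrix compatible with $\rho\alpha_g=\alpha_{-g}\rho$ and with $\Lambda^2=I+\sum_gP_g\Lambda$.
\item Its Perron--Frobenius eigenvalue is $d$, and $\sum_M\FPdim\underline{\End}(M)=N(1+d^2)$ for every $m$.
\item As an object, $\underline{\End}(x_{\bar 0})=R_H\oplus\bigoplus_{h\in H}\alpha_h\rho$. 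These are precisely the ``mixed'' objects you hoped to exclude by dimension counting.
\end{itemize}
The case $m=1$ is $\operatorname{Mod}(Q)$. The case $m=N$ (so $H=G$) has the same orbit shape as your branch (b): one fixed point and one regular orbit, rank $N+1$. But there the regular-orbit simples have internal End $\mathbf 1\oplus\bigoplus_{g\ne0}\alpha_g\rho$, not $\mathbf1\oplus\rho$, so your deduction in branch (b) is also unjustified. When $G=H\oplus K$, these configurations are exactly what a connected separable algebra on $\mathbf1\oplus\sigma$ in the subgroup dual $\mathcal D_H$ would produce. So whether they occur depends on the HI matrix, not on the fusion ring.

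The missing ingredient is categorical and matrix-level input; here is how the paper supplies it.
\begin{enumerate}
\item It shows there are exactly two pointed orbits.
\item It transports $\mathcal M$ along $\mathfrak T_{R_H}$ to $\mathcal D_H={}_{R_H}\mathcal C_{R_H}$. There the image of a simple $x$ from the orbit with smaller diagonal coefficient $a$ has trivial pointed stabilizer and internal End of dimension $1+ad$.
\item It combines $a+b\le\sum_z n_z^2=1+a$ with $c=a(N-a)+1$ and $N=s_2c$. Here $s_2=[G:H+K]\,|H\cap K|$ is a pointed module-functor rank, which is not visible in the fusion ring. This forces $a\in\{0,1\}$: either $\mathcal M\simeq\operatorname{Mod}(R_H)$, or $G=H\oplus K$ and $\mathcal D_H$ carries a connected separable algebra on $\mathbf1\oplus\sigma$.
\item It excludes the second case for $H\neq\{0\}$ using the HI matrix. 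Since $\mathcal D_H\simeq\mathcal C(\mathscr P_HA)$, such an algebra imposes the Q-system boundary on $\mathscr P_HA$. This gives $\sum_{x\in H}A_{x,y}=1$ for $0\ne y\in H$.
\item The unitary phase factorization $A_{x,y}=\gamma z_xz_{y-x}\overline{z_y}$, with $|z_g|=1$, turns that identity into $T^2=\lambda T+(m-1)I$ for a trace-zero Hermitian $T$. Eigenvalue multiplicities then force $\lambda^2=\sqrt{N^2+4}+2\in\mathbb Q$, a contradiction.
\end{enumerate}
This is where unitarity is actually used: Hermiticity and unit-modulus phases, not positivity of dimensions.

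Your distinctness step is fine and simpler than you suggest, since $2N/|H|=N+1$ would require $1<|H|<2$.
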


We prove the theorem in Appendix~\ref{app:separable}.  The invertible
objects have exactly two orbits on the simple objects of an indecomposable
module category.  Passing to a subgroup dual reduces an internal End to
$\mathbf1$ or $\mathbf1\oplus\sigma$.  The first case gives a subgroup
algebra.  In the second, the Hall-subgroup obstruction forces the subgroup
to be trivial, leaving the class of $Q$.  Finally, the ranks in
\eqref{sa:classification-ranks} distinguish all these classes.

\begin{proposition}[Morita rigidity for the specified Q-system algebras]
\label{sa:morita-rigidity}
Let $\mathcal C_1$ and $\mathcal C_2$ be  
HI categories satisfying the hypotheses of Theorem~\ref{sa:classification}. 
If $\mathcal C_1$ and $\mathcal C_2$ are
categorically Morita equivalent, then they are tensor equivalent.
\end{proposition}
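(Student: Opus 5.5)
Suppose $\mathcal C_1$ and $\mathcal C_2$ are categorically Morita equivalent. Then there is an indecomposable $\mathcal C_1$-module category $\mathcal M$ with $(\mathcal C_1)^*_{\mathcal M}\simeq\mathcal C_2$. By Theorem~\ref{sa:classification}, $\mathcal M\simeq\operatorname{Mod}_{\mathcal C_1}(B)$ with $B$ equal to either $R_H$ for some $H\le G_1$ or $Q$. So $\mathcal C_2\simeq{}_B(\mathcal C_1)_B$, and we identify which choices of $B$ give a dual that can be an untwisted cyclic HI category of Q-system type.

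\textbf{Case $B=Q$.} The dual ${}_Q\mathcal C_1{}_Q$ has $\mathbf1$ as its only invertible simple object, by \S\ref{sa:Q-dual}. An HI category for $G_2$ with $|G_2|\ge3$ has $|G_2|$ invertible simples. This case is therefore impossible.

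\textbf{Case $B=R_H$.} By \S\ref{sa:subgroup-dual}, the dual has HI fusion rules with pointed group $L_H=(G_1/H)\times\widehat H$ and pointed associator given by the carry class. Since $\mathcal C_2$ is untwisted, Proposition~\ref{sa:carry-criterion} forces $\gcd(|H|,[G_1:H])=1$, so $H$ is a Hall subgroup. In particular $L_H$ is cyclic of order $N$, consistent with $G_2\cong G_1$. Theorem~\ref{sa:fourier-morita} then gives
\[
\mathcal C_2\simeq\mathcal C(\mathscr P_HA_1),
\]
where $A_1$ is the normalized Hermitian HI-matrix of $\mathcal C_1$.

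The remaining task is to show $\mathcal C(\mathscr P_HA_1)\simeq\mathcal C_1$. I expect this to be the main obstacle. The natural route is to use the Q-system condition on $\mathcal C_2$ and compute the boundary of the Fourier transform:
\[
(\mathscr P_HA_1)_{(P,u),(0,1)}=\frac1{|H|}\sum_{x,y\in H}A_{P+x,y}\,v(x/2)\big|_{v=1}.
\]
The untransformed boundary of $A_1$ supplies only the $y=0$ terms. The other terms must be controlled using (E3) and the symmetry (E1).

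For the transformed category to be of Q-system type, its boundary must be $\delta+\theta$. This should force $H$ to be trivial or all of $G$. The heuristic is the same as in the proof outline of Theorem~\ref{sa:classification}: the Hall-subgroup obstruction forces triviality once $\mathbf1\oplus\sigma$ must be an algebra.

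A cleaner alternative avoids computing the boundary. The dual $\mathcal C_2$ contains its own algebra $Q_2$, and $\operatorname{Mod}(Q_2)$ has rank $N+1$. Transporting $Q_2$ through the Morita equivalence gives a $\mathcal C_1$-module category of rank $N+1$ whose dual has a unique invertible object. I would try matching the ranks $2[G:H]$ and $N+1$ from Theorem~\ref{sa:classification} across the bijection of module categories induced by the Morita equivalence. The number of module categories with each dual type must correspond, and self-duality of the full set of $\tau(N)+1$ classes should pin down $H\in\{0,G\}$.

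Finally, if $H=0$ then the dual is $\mathcal C_1$ itself, and we are done. If $H=G$, then $\mathscr P_GA_1$ is a full Fourier transform. Here I would verify directly, using (E1) and (E3) together with the Q-system boundary, that $\mathscr P_GA_1$ equals $\alpha(A_1)$ for some automorphism $\alpha$ of $G$, up to the identification $\widehat G\cong G$. Then \cite[Theorem~2(b)]{evans2017non} gives the tensor equivalence. I expect this verification to be the most delicate computation.
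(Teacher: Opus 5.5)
Your reduction to a Hall subgroup $H$ matches the paper: the $Q$-dual is excluded by counting invertibles, and non-Hall $H$ by the carry class. The gap is in the last step. You say the Q-system boundary ``should force $H$ to be trivial or all of $G$'', give no argument for this, and then plan to handle $H=G$ by proving $\mathscr P_GA_1=\alpha(A_1)$ for some $\alpha\in\operatorname{Aut}(G)$.

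That identity is false. If it held, $\mathscr P_GA_1$ would satisfy the boundary \eqref{eq:display-0271}, since any relabeling fixes the index $0$. Fourier inversion of $(\mathscr P_GA_1)_{u,1}=\delta_{u,1}+\theta$ then gives $\sum_{x\in G}A_{x,y}=1$ for every $y\neq0$. Already for $N=3$ this reads $2\theta+A_{2,1}=1$, i.e.\ $A_{2,1}=(d+1)/(d-1)$, whereas $|A_{2,1}|=\sqrt d/(d-1)$. (For $N=3$ this dual is the third Haagerup category $\mathcal H_3$ of \cite{GrossmanSnyder2012}, which has the fusion rules of $\mathcal H_2$ but is not equivalent to it.) So $H=G$ must be excluded like every other nontrivial Hall subgroup, not matched with $\mathcal C_1$.

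Your ``cleaner alternative'' does not close the gap either. The Morita $2$-equivalence identifies the module categories of $\mathcal C_1$ and $\mathcal C_2$ together with their duals, so the two lists of duals coincide. No comparison of ranks or of class counts can then single out which Hall dual is of Q-system type.

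What is missing is the Hall obstruction (Theorem~\ref{sap:thm:hall-short}): for every Hall $H\neq\{0\}$, including $H=G$, no object $\mathbf1\oplus W_\ell$ of ${}_{R_H}\mathcal C_1{}_{R_H}$ carries a connected separable algebra. Given this, the paper's proof is one line. A tensor equivalence $\mathcal C_2\simeq{}_{R_H}\mathcal C_1{}_{R_H}$ carries the algebra $Q_2$ on $\mathbf1\oplus\rho_2$ to such an algebra, so $H=\{0\}$ and the dual is $\mathcal C_1$ itself.

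Note also that the equivalence need not send $\rho_2$ to $\sigma=V_{0,1}$, so you cannot impose the boundary on $\mathscr P_HA_1$ directly. You must first conjugate by the invertible $U_{-\ell/2}$ to move $W_\ell$ to $\sigma$. Only then does Theorem~\ref{thm:algebraic-q-system} give the boundary of $D=\mathscr P_HA_1$, hence $\sum_{x\in H}A_{x,y}=1$ for $y\in H\setminus\{0\}$.

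Deriving a contradiction from this needs a genuine argument, which your proposal does not supply. With the phases of Lemma~\ref{sap:lem:reciprocal-phases}, $A_{x,y}=\gamma z_xz_{y-x}\overline{z_y}$, the identity becomes $T^2=\lambda T+(m-1)I_m$. Here $T_{x,y}=z_{x-y}$ for $x\neq y$ and $T_{x,x}=0$ is a traceless Hermitian matrix on $H$, $m=|H|$, and $\lambda=(d+1)/\sqrt d$. Both roots of $t^2-\lambda t-(m-1)$ must occur as eigenvalues, and their multiplicities then force $\lambda^2\in\mathbb Q$. This contradicts $\lambda^2=\sqrt{N^2+4}+2$.
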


\begin{proof}
A categorical Morita equivalence is represented by an indecomposable
module category over $\mathcal C_1$.  By Theorem~\ref{sa:classification}, it is
equivalent to the module category of $R_H$ for some $H\leq G$, or of $Q$.
Equivalent module categories have tensor-equivalent dual categories, so
$\mathcal C_2$ must be one of these duals.

The $Q$-dual has $N+1$ simple objects and only one invertible simple,
whereas a cyclic HI category has $2N$ simples and $N$ invertibles.
Since $N\geq3$, this excludes the $Q$-dual.
For a subgroup algebra, Theorem~\ref{sa:carry-criterion} shows that a non-Hall
subgroup gives a nontrivial pointed associator.  A tensor equivalence
restricts to an equivalence of pointed subcategories, so this case cannot
give $\mathcal C_2$.

It remains to consider Hall subgroups.  The Hall obstruction in
Appendix~\ref{app:separable} proves that, for every nontrivial Hall $H$,
including $H=G$, the dual has no connected separable algebra on
$\mathbf1\oplus W_\ell$ for any noninvertible simple $W_\ell$.
A tensor equivalence would transport the specified algebra of
$\mathcal C_2$ to just such an algebra.  Thus $H=\{0\}$, whose subgroup
algebra is the tensor unit and whose dual is $\mathcal C_1$ itself.
Hence $\mathcal C_1$ and $\mathcal C_2$ are tensor equivalent.
\end{proof}

In Section~\ref{ce:n15-application}, we apply
Proposition~\ref{sa:morita-rigidity} to three tensor-inequivalent order-fifteen
categories whose centers have the same modular data.


\section{Modular data of the center}
\label{sec:centers}

In this section we determine the modular data $S,T$ of the center of any HI category $\mathcal C$ (not necessarily unitary) over $\mathbb C$. This is computed in \cite[Section~6.3]{evans2017non}, building on \cite[Theorem~8.4]{IzumiLR2}. Some of these entries are manifestly nice (we call them the \emph{tractable} ones), and we recall them next subsection. The expressions for the remainder are much more complicated. However it was observed in \cite{evans2011exoticness} that the `intractable' entries are also quite simple, and in \cite[Conjecture 1]{evans2017non} suggested that this modular data always takes the form of a smashed sum of metric groups. We prove this conjecture here.

\par We use the normalization $S_{0_+,0_+}=1/\mathrm{dim}(\mathcal C)$, where $0_+$ (see below) is the tensor unit of $Z(\mathcal C)$. For brevity, set
\begin{equation*}
 \mu=N^2+4,\qquad m=\frac{1}{2}(\mu-1),\qquad
 r_\sigma=\sigma\sqrt{\mu},\qquad
 d=\frac{1}{2}(N+r_\sigma),\qquad\text{ and }\qquad\sigma\in\{+1,-1\}.
\end{equation*}
Thus $d-d^{-1}=N$, and global dimension $\operatorname{dim}(\mathcal C)=N(1+d^2)=Ndr_\sigma$.

\subsection{Tractable center data}
\label{ce:known-center-data}

Define
\begin{equation*}
 \Phi_G=\bigl((G\times\widehat G)\setminus\{(0,1)\}\bigr)/
       \bigl((g,\chi)\sim(-g,\chi^{-1})\bigr)\qquad\text{ and }\qquad
 R=\bigoplus_{g\in G}\alpha_g\rho .
\end{equation*}
The simple objects of $\mathcal{Z}(\mathcal{C})$ are $0_+$, $0_-$,
$\mathfrak a_{[g,\chi]}$ for $[g,\chi]\in\Phi_G$, and
$\mathfrak d_i$ for $1\leq i\leq(\mu-1)/2$.  Their restrictions under the
forgetful functor, dimensions, and tractable ribbon twists are
\begin{center}
\begin{tabular}{llll}
\toprule
Simple & Restriction & Dimension & Twist\\
\midrule
$0_+$ & $\mathbf1$ & $1$ & $1$ \\
$0_-$ & $\mathbf1\oplus R$ & $d^2$ & $1$\\
$\mathfrak a_{[g,\chi]}$ & $\alpha_g\oplus\alpha_{-g}\oplus R$ & $1+d^2$ & $\chi(g)$ \\
$\mathfrak d_i$ & $R$ & $Nd$ & ? \\
\bottomrule
\end{tabular}
\end{center}
The tractable portion of the $S$-matrix is $S_{\mathfrak a_{[g,\chi]},\mathfrak d_i}=0$, with nonzero entries
\begin{align*}
 S_{0_+,0_+}=S_{0_-,0_-}&=\frac{d^{-1}}{Nr_\sigma},
 &S_{0_+,0_-}&=\frac d{Nr_\sigma},\\
 S_{0_\pm,\mathfrak a_{[g,\chi]}}&=\frac1N,
 &S_{\mathfrak a_{[g,\chi]},\mathfrak a_{[h,\psi]}}
   &=\frac{\chi(h)\psi(g)+(\chi(h)\psi(g))^{-1}}N,\\
 S_{0_+,\mathfrak d_i}&=\frac1{r_\sigma},
 &S_{0_-,\mathfrak d_i}&=-\frac1{r_\sigma},
\end{align*}
together with the entries coming from $S=S^{\top}$. This data was determined in \cite[equations~(6.5) and (6.24)]
{evans2017non}. 
The remaining (less tractable) entries are \begin{equation*}T_{\mathfrak d_i}=:w_i\qquad \mathrm{and}\qquad S_{\mathfrak d_i,\mathfrak d_j}=:K_{i,j}=K_{j,i}.\end{equation*}
We see that $0_\pm$ and $\mathfrak a_{[g,\chi]}$ are all self-dual; write $(\mathfrak d_i)^\vee=\mathfrak d_{i^\vee}$. All fusion coefficients except $N_{\mathfrak d_i,\mathfrak d_j}^{\mathfrak d_k}=:N_{i,j}^k$ can now be computed from Verlinde: e.g.\ letting $\Sigma$ denote the sum of all simples with multiplicity 1, we obtain $0_-0_-=\Sigma$ and $0_-\mathfrak d_i=-0_+-\mathfrak d_i+\Sigma$.

\subsection{The modular-data theorem}\label{secthm}

For an abelian group $H$, 
a quadratic form $q_H:H\to\mathbb Q/\mathbb Z$ below is homogeneous:
$q_H(nh)=n^2q_H(h)$, and its associated bilinear form
$B_H(h,k)=q_H(h+k)-q_H(h)-q_H(k)$ is biadditive.
It is nondegenerate if $h\mapsto B_H(h,-)$ identifies $H$ with 
$\widehat{H}$, viewed additively in $\mathbb Q/\mathbb Z$. A metric group is a pair $(H,q_H)$ with nondegenerate $q_H$.

\begin{theorem}\label{ce:metric-data}
Recall the simple-objects of Section~\ref{ce:known-center-data}. Then there exists a metric group $(H,q_H)$ of order $\mu=N^2+4$ and an indexing of the simples $\mathfrak{d}_i$ by $[h]\in(H\setminus\{0\})/\{\pm1\}$ such that
\begin{align}\label{ce:general-metric}
 T_{\mathfrak d_{[h]}}=e^{2\pi i q_H(h)},\qquad
 S_{\mathfrak d_{[h]},\mathfrak d_{[k]}}
 =-\frac{1}{r_\sigma}(e^{2\pi iB_H(h,k)}+e^{-2\pi iB_H(h,k)}),
\end{align}
and $\sum_{h\in H}e^{2\pi i q_H(h)}=-r_\sigma$.
In other words, $S$ and $T$ form the smashed-sum $\mathcal{MD}_\sigma(G;H,q_H)$ \cite[Section~6.3]{evans2017non}, recalled in Appendix~\ref{ce:compact-MD}.
\end{theorem}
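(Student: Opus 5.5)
The plan is to work inside the fusion ring of $Z(\mathcal C)$ and isolate the block spanned by the $\mathfrak d_i$, using only the tractable data of Section~\ref{ce:known-center-data} together with the general constraints on modular data: unitarity and symmetry of $S$, $(ST)^3=C$, Verlinde, and Galois symmetry. Since $S_{\mathfrak a,\mathfrak d}=0$ and $S_{0_\pm,\mathfrak d_i}=\pm 1/r_\sigma$, orthogonality of the columns indexed by $\mathfrak d_i$ and by $0_\pm,\mathfrak a$ gives
\begin{equation*}
 \sum_k K_{i,k}\overline{K_{j,k}}=\delta_{i,j}-\frac{2}{\mu}.
\end{equation*}
Hence $\widetilde K:=-r_\sigma K$ behaves like the $S$-matrix of a ``smashed'' piece. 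Verlinde then gives
\begin{equation*}
 N_{i,j}^k=\frac{4}{\mu}+\sum_l\frac{K_{i,l}K_{j,l}\overline{K_{k,l}}}{S_{0_+,\mathfrak d_l}}+(\text{tractable terms}),
\end{equation*}
and this determines $\mathfrak d_i\mathfrak d_j$ modulo the $\mathfrak d$-block. I would introduce the formal span $\mathcal A$ of the $\mathfrak d_i$ with the product obtained by discarding the $0_\pm$ and $\mathfrak a$ components (equivalently, passing to the quotient by the ideal generated by $0_+-0_-$ and the $\mathfrak a$'s, suitably corrected). The goal is to show that $\mathcal A$ is a ``$\pm$-quotient'' of a group ring, namely that there is a finite abelian group $H$ of order $\mu$ and a bijection $i\mapsto[h_i]\in(H\setminus 0)/\pm$ with
\begin{equation*}
 \mathfrak d_{[h]}\mathfrak d_{[k]}\equiv\mathfrak d_{[h+k]}+\mathfrak d_{[h-k]}
\end{equation*}
(interpreting $\mathfrak d_{[0]}$ as the appropriate combination of $0_\pm$ and $\mathfrak a$'s).

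Concretely, the steps are as follows. First, use the Galois action on $S$: all entries $\widetilde K_{i,j}$ lie in a cyclotomic field, and the Galois group acts on the $\mathfrak d$-indices by signed permutations commuting with $\widetilde K$. Second, combine this with the Frobenius-at-$2$ observation (the characteristic-two HI symmetry $A_{2g,2h}=A_{g,h}^4$ and the integrality of Theorem~\ref{thm:HI-integrality-at-two}). Reducing the structure constants $N_{i,j}^k$ modulo $2$ and comparing with the map $x\mapsto x^2$ on the fusion algebra should show that each $\mathfrak d_i^2$ contains exactly one $\mathfrak d$-summand (the ``double'' $\mathfrak d_{2i}$), and that all $N_{i,j}^k\in\{0,1\}$ with exactly two nonzero entries in each product for $i\ne j^\vee$. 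From this I would construct the group $H$ as the set $\{0\}\cup\{\pm h_i\}$ and check associativity and commutativity of the induced addition up to sign directly from associativity of the fusion ring. That $|H|=\mu$ follows from counting $(\mu-1)/2$ simples $\mathfrak d_i$.

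Third, once the fusion rules are those of $H$ up to sign, the columns of $\widetilde K$ are characters of the $\pm$-quotient ring, so
\begin{equation*}
 \widetilde K_{[h],[k]}=\chi_k(h)+\chi_k(h)^{-1}
\end{equation*}
for characters $\chi_k\in\widehat H$. Symmetry $S=S^\top$ and unitarity force $k\mapsto\chi_k$ to be an isomorphism $H\to\widehat H$ given by a symmetric bilinear form $B_H$. Finally, apply $(ST)^3=C$ restricted to the $\mathfrak d$-block, using the tractable twists $1$ on $0_\pm$ and $\chi(g)$ on the $\mathfrak a$'s. This yields $w_{[h+k]}w_{[h]}^{-1}w_{[k]}^{-1}=e^{2\pi iB_H(h,k)}$ up to the sign ambiguity, so $w_{[h]}=e^{2\pi iq_H(h)}$ for a quadratic form refining $B_H$. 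The Gauss sum $\sum_h e^{2\pi iq_H(h)}=-r_\sigma$ then comes from the $(0_+,0_+)$ entry of $(ST)^3=C$, compared with the known central charge.

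The main obstacle is the second step. Galois symmetry alone does not pin the fusion coefficients down to a group law, and the sign/normalization ambiguities of $\mathfrak d_{[0]}$ make the combinatorics delicate. I would first try to prove that $\mathcal A\otimes\mathbb F_2$, with the Frobenius squaring, is the group algebra of an abelian group of order $\mu$ modulo inversion, using that $\mu$ is odd. The $\pm$-quotient structure should then lift to characteristic zero by nonnegativity and integrality of the $N_{i,j}^k$.
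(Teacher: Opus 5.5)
Your overall architecture is the paper's: recover $H$ from a fusion subalgebra, read the pairing off the $\mathfrak d$-block of $S$, then get the twists from $(ST)^3=C$. However, Step~2, which carries all the weight, has a genuine gap and partly points in the wrong direction.

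First, the Frobenius you need is not the characteristic-two HI identity $A_{2g,2h}=A_{g,h}^4$, nor integrality of $A$ above $2$. Those concern the HI matrix and give no direct control over the fusion coefficients of $Z(\mathcal C)$. What is needed is a decomposition-group lift $\gamma_2$ of residue Frobenius at a prime $\mathfrak p\mid 2$ of the cyclotomic field generated by the $S$-entries. By Galois symmetry it permutes the $\mathfrak d_i$: the value $S_{0_+,\mathfrak d_i}=1/r_\sigma$ forces $\widehat\gamma$ to preserve the $\mathfrak d$-labels with a uniform sign. On algebraic integers, $\gamma_2$ is squaring modulo $\mathfrak p$.

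Second, the right ring is not the span of the $\mathfrak d_i$ with the $0_\pm,\mathfrak a$ components discarded. That product is not multiplicative (e.g.\ $0_-\mathfrak d_i=\Sigma-0_+-\mathfrak d_i$). The quotient by the ideal generated by $0_+-0_-$ also fails: it rationally identifies all the $\mathfrak d_i$, since $(0_+-0_-)\mathfrak d_i=2\mathfrak d_i+0_+-\Sigma$. The right object is the subring spanned by $1$ and $c_i=0_++0_--\mathfrak d_i$, with $c_ic_j=2\delta_{i,j^\vee}+\sum_ka_{ij}^kc_k$, $a_{ij}^k=1-N_{ij}^k\le1$, and $\sum_ka_{ij}^k=2-\delta_{i,j^\vee}$. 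Evaluated at the $\mathfrak d$-characters, $c_i$ is exactly the row $\widetilde K_{i,\cdot}$ of your matrix. Your predicted rules are inverted. The actual conclusion is $\mathfrak d_i^2=\Sigma-0_--\mathfrak d_{\pi(i)}$ and $\mathfrak d_i\mathfrak d_j=\Sigma-0_+-\mathfrak d_p-\mathfrak d_q$ for $i\ne j$. So one, resp.\ two, $\mathfrak d$-summands are \emph{missing}, and the claims ``exactly one $\mathfrak d$-summand'' and ``exactly two nonzero $N_{i,j}^k$'' are false.

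Even with the right Frobenius and the right ring, evaluation only gives $x_i^2\equiv x_{\pi(i)}\pmod{\mathfrak p}$ pointwise, where $x_i=\widetilde K_{i,\cdot}$. To get the coefficient congruence $x_i^2\equiv x_{\pi(i)}\pmod{2\mathscr O}$ you need the trace form $\operatorname{Tr}(x_ix_j)=r_\sigma^2(K^2)_{i,j}=\mu\delta_{i,j^\vee}-2$, which is invertible modulo $2$ because $\mu$ is odd. Only then do $a_{ij}^k\le1$ and the sum identity force self-duality of every $\mathfrak d_i$ and $c_i^2=2+c_{\pi(i)}$. This is a parity count on the coefficients of $x_i^2$ in the basis $x_k$, using $\sum_kx_k=-1$. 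For $i\ne j$ you still need the constant-term comparison in $(c_ic_j)^2=c_i^2c_j^2$, which gives $\sum_k(a_{ij}^k)^2=2=\sum_ka_{ij}^k$. ``Lifting by nonnegativity and integrality'' does not replace these steps.

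Three later steps are also thinner than you suggest.
\begin{enumerate}
\item Building $H$ from the two-valued law on $\{\star\}\sqcup I$ requires coherent sign choices. The paper invokes the classification of involutive two-valued groups and excludes the nonprincipal families because no label other than $\star$ squares to $\{\star,\star\}$. It then needs a separate lifting lemma to get $\Phi:H\to\widehat H$ in the given labels.
\item $S=S^\top$ plus unitarity cannot distinguish a symmetric $\beta$ from an alternating one, since $\beta(h,k)+\beta(h,k)^{-1}$ is symmetric either way. The paper excludes the alternating case because it would give $\operatorname{Tr}(S)=1-r_\sigma\notin\mathbb Z$, whereas self-duality makes $S^2=1$.
\item $(ST)^3=C$ gives only the additive relation $v_{h+k}+v_{h-k}=v_hv_k(\beta(h,k)+\beta(h,k)^{-1})$ for $v_h=T_{\mathfrak d_{[h]}}^{-1}$. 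Converting it to the multiplicative one needs $|v_h|=1$, from Vafa's theorem.
\end{enumerate}
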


We  recover addition in $H$ up to sign from a fusion subalgebra coming from
 the $\mathfrak d_i$. Symmetry $S=S^{\top}$ then identifies its character pairing, and $(ST)^3=C$ determines the quadratic
form.

Denote $I=\{1,\ldots,m\}$, the indexing set of the $\mathfrak d_i$. Write $S$ in block form, using the entries given in the previous subsection. Then the modular relations $S^2=C=(ST)^3$, with $C$ the duality permutation, give
\begin{equation}\label{ce:block-identities}
 \mathbf 1 K=r_\sigma^{-1}\mathbf 1 ,\qquad
 K^2=C_D-2J/\mu,\qquad 1+2\sum_iT_{\mathfrak d_i}=-r_\sigma,
\end{equation}
where $C_D$ is the restriction of $C$ to $I$, $\mathbf 1=(1,\ldots,1)$  and
$J=\mathbf 1^\top\mathbf 1$.
In particular, from the $K^2$ expression we see that $K$ is invertible.

The fusion rules of $Z(\mathcal C)$ are mostly 1's. But assuming the smashed sum formulas, they can be simplified: define $c_{[g]}=0_++{0_-}-[g]$, and we find $c_{[g]}c_{[h]}=c_{[g+h]}+c_{[g-h]}$. Motivated by this, in the fusion ring of $Z(\mathcal C)$ define $c_i=0_++0_--\mathfrak d_i$ and write 1 for the identity $0_+$. Then from the fusion rules given last section, we obtain
\begin{equation}\label{ce:integral-products}
 c_ic_j=2\delta_{i,j^\vee}+\sum_k a_{ij}^kc_k,\qquad
 a_{ij}^k:=1-N_{ij}^k\leq1,\qquad
 \sum_k a_{ij}^k=2-\delta_{i,j^\vee}.
\end{equation}

\begin{lemma}\label{ce:integral-algebra}
Every $\mathfrak d_i$ is self-dual. There is a permutation $\pi$ of $I$
which acts as an algebra automorphism on the functions $c_i$, with
\begin{equation}\label{ce:product-rules}
 c_i^2=2+c_{\pi(i)},\qquad c_ic_j=c_p+c_q\quad(i\ne j),
\end{equation}
where $p,q\in I$ are distinct.
\end{lemma}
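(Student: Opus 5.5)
The plan is to work entirely inside the $\mathbb Z$-span $\mathcal A=\mathbb Z\cdot1\oplus\bigoplus_{i\in I}\mathbb Z c_i$. By \eqref{ce:integral-products} this span is a commutative subring of the fusion ring of $Z(\mathcal C)$, with duality $c_i^\vee=c_{i^\vee}$. We first compute its characters from the $S$-matrix of Section~\ref{ce:known-center-data}.

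\textbf{Characters of $\mathcal A$.} The characters of the full fusion ring are $x\mapsto S_{x,y}/S_{0_+,y}$. At $y=0_+$, $0_-$ and $\mathfrak a_{[g,\chi]}$, a short computation using $d^2-Nd=1$ and $S_{\mathfrak a,\mathfrak d}=0$ shows that every $c_i$ takes the value $2$. At $y=\mathfrak d_j$ we have $S_{0_+,\mathfrak d_j}+S_{0_-,\mathfrak d_j}=0$, so
\[
 \varphi_j(c_i)=-r_\sigma K_{i,j}.
\]
Hence $\mathcal A\otimes\mathbb C$ has the $m+1$ characters $\varphi_0\equiv2$ and $\varphi_1,\dots,\varphi_m$. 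These are distinct because $K$ is invertible by \eqref{ce:block-identities}, so $\mathcal A\otimes\mathbb C\cong\mathbb C^{m+1}$ is semisimple. The coefficient-of-$1$ functional gives an inner product on $\mathcal A$. In this inner product $\langle c_i,c_j\rangle=2\delta_{i,j}$, and Plancherel expresses it through the weights $|S_{0_+,y}|^2$ summed over the fibres.

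\textbf{Squaring mod $2$ (Frobenius).} Reduce modulo $2$ and set $\bar{\mathcal A}=\mathcal A/2\mathcal A$, a commutative $\mathbb F_2$-algebra with basis $1,\bar c_i$. Squaring is then a ring endomorphism $\mathrm{Fr}$ of $\bar{\mathcal A}$. By \eqref{ce:integral-products},
\[
 \bar c_i^2=\sum_k a_{ii^\vee}^k\bar c_k\quad\text{if } i=i^\vee,\qquad
 \bar c_i^2=\sum_k a_{ii}^k\bar c_k\quad\text{otherwise}.
\]
To show $\mathrm{Fr}$ is bijective we need $\bar{\mathcal A}$ to be reduced. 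For this we would use that the character values $-r_\sigma K_{i,j}$ are algebraic integers, and that $\mu=N^2+4$ is odd. The relation $K^2=C_D-2J/\mu$ then makes the discriminant of the trace form of $\mathcal A$ odd: its Gram matrix is essentially $2\cdot$identity, but the relevant determinant is a power of $\mu$ after rescaling. So $\bar{\mathcal A}$ is étale over $\mathbb F_2$, $\mathrm{Fr}$ is an automorphism, and it fixes $\bar 1$. Comparing $\varphi_0$-values and using the basis, each $\bar c_i^2$ must be a single basis vector $\bar c_{\pi(i)}$ with $\pi$ a permutation of $I$. So exactly one coefficient in the expansion of $c_i^2$ is odd. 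If $\mathfrak d_i$ were not self-dual, $c_i^2$ would have coefficient sum $2$ and no constant term $2$. Pairing with $\langle\cdot,c_{i^\vee}\rangle$ then shows $c_{i^\vee}$ must occur in $c_i^2$. Applying the same reasoning to $c_{i^\vee}^2$ and using that $\mathrm{Fr}$ commutes with duality yields a contradiction. This gives self-duality, so $c_i^2=2+\sum_k a_{ii}^kc_k$ with $\sum_ka_{ii}^k=1$.

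\textbf{Ruling out negative coefficients (main obstacle).} The integers $a_{ij}^k\le1$ have fixed sum, but a priori some could be negative. We bound $\sum_k(a_{ij}^k)^2$ by computing $\|c_ic_j\|^2$ in two ways. Directly, $\|c_ic_j\|^2=4\delta_{ij}+2\sum_k(a_{ij}^k)^2$. Alternatively, by associativity, $\|c_ic_j\|^2=\langle c_i^2,c_j^2\rangle$, and this equals $4+2\langle c_{\pi(i)},c_{\pi(j)}\rangle$ once the squares are known to have the form $2+(\text{single term})$. For the squares we argue by induction: $\sum_k(a_{ii}^k)^2$ is minimal, namely $1$, if and only if $c_i^2=2+c_{\pi(i)}$, and we run this comparison for $i=j$ first. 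Then for $i\ne j$ we get $\sum_k(a_{ij}^k)^2=2$ with $\sum_ka_{ij}^k=2$ and all $a\le1$. This forces exactly two coefficients equal to $1$ at distinct $p\ne q$. Finally, $\pi$ is an algebra automorphism because Frobenius is: $\mathrm{Fr}$ permutes the basis via $\pi$, and combined with $c_ic_j=c_p+c_q$ this lifts integrally, as $c_{\pi(i)}c_{\pi(j)}$ and $c_{\pi(p)}+c_{\pi(q)}$ agree mod $2$ and both have the rigid shape. The delicate point is the first squared-norm comparison for $c_i^2$. If associativity alone does not close it, the fallback is to invoke Galois symmetry of $S$ to bound $|\varphi_j(c_i)|\le2$ for all conjugates, which forces the ``$2\cos$'' shape.
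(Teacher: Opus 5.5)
There is a genuine gap at the step where you conclude that $\mathrm{Fr}$ sends each $\bar c_i$ to a single basis vector $\bar c_{\pi(i)}$.

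\textbf{The missing Galois input.} Étaleness of $\bar{\mathcal A}$ only tells you that $\mathrm{Fr}$ is an automorphism of $\bar{\mathcal A}$. An automorphism of an étale $\mathbb F_2$-algebra has no reason to permute a chosen basis; in $\mathbb F_4$ with basis $1,\omega$, for instance, $\omega^2=1+\omega$. Comparing $\varphi_0$-values gives nothing either, since $\varphi_0(c_i)=2\equiv0$.

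The missing input is Galois symmetry of the modular $S$-matrix, which is what the paper uses. Apply $\gamma(S_{X,Y})=\epsilon_\gamma(X)S_{\hat\gamma(X),Y}$ to $S_{0_+,\mathfrak d_i}$. This shows that $\gamma$ permutes the $\mathfrak d_i$, with $\epsilon_\gamma(\mathfrak d_i)=r_\sigma/\gamma(r_\sigma)$, and hence $\gamma(\varphi_j(c_i))=\varphi_j(c_{\hat\gamma(i)})$.

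Now take $\gamma_2$ to be a decomposition-group lift of Frobenius at a prime $\mathfrak p\mid 2$ of the cyclotomic field, and set $\pi=\hat\gamma_2|_I$. This gives only \emph{pointwise} congruences $\varphi(c_i)^2\equiv\varphi(c_{\pi(i)})\pmod{\mathfrak p}$ at every character $\varphi$.

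\textbf{Lifting the pointwise congruences.} To turn these into $c_i^2\equiv c_{\pi(i)}\pmod{2\mathcal A}$, you need the trace form $\operatorname{Tr}(xy)=\sum_\varphi\varphi(x)\varphi(y)$ to be invertible mod $2$. Then $\operatorname{Tr}((c_i^2-c_{\pi(i)})y)\in\mathfrak p\cap\mathbb Z=2\mathbb Z$ for all $y$, which forces $c_i^2-c_{\pi(i)}\in 2\mathcal A$.

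The paper does this on functions on $I$, where $1=-\sum_k x_k$ and the Gram matrix is $\mu C_D-2J\equiv C_D$. On your $\mathcal A$, the Gram matrix in the basis $1,c_1,\dots,c_m$ is $\begin{pmatrix} m+1&\mathbf 1\\ \mathbf 1^{\top}&\mu C_D+2J\end{pmatrix}$. Its determinant is odd, because $m=(\mu-1)/2$ is even.

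So your étaleness claim is true, but you need it for this injectivity, not for bijectivity of $\mathrm{Fr}$. Your justification via the coefficient-of-$1$ form is also off: that form is $\operatorname{diag}(1,2,\dots,2)$, which is degenerate mod $2$.

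\textbf{The rest of the argument.} Once $c_i^2\equiv c_{\pi(i)}\pmod{2\mathcal A}$ is in hand, what you call the main obstacle disappears. Your other steps for the squares are then unnecessary, and as stated they do not work. The self-duality argument (pairing with $c_{i^\vee}$, ``$\mathrm{Fr}$ commutes with duality'') is not substantiated. The ``induction'' for $i=j$ is circular, since $\|c_i^2\|^2=\langle c_i^2,c_i^2\rangle$ is a tautology.

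The correct count is short. Exactly one $a_{ii}^k$ is odd, namely at $k=\pi(i)$, and all $a_{ii}^k\le 1$. So the even coefficients are $\le 0$ and the odd one is $\le 1$.
\begin{itemize}
\item If $i\ne i^\vee$, their sum would be $\le1$, contradicting $\sum_k a_{ii}^k=2$. Hence every $\mathfrak d_i$ is self-dual.
\item With $i=i^\vee$ the sum is $1$. This forces $a_{ii}^{\pi(i)}=1$ and all other $a_{ii}^k=0$, that is, $c_i^2=2+c_{\pi(i)}$. No negative coefficients can occur.
\end{itemize}
The paper runs the same count in the basis $x_k$.

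Your last step for $i\ne j$, comparing the coefficient of $1$ in $(c_ic_j)^2=c_i^2c_j^2$, is exactly the paper's. Your mod-$2$ lifting showing that $\pi$ is an algebra automorphism is also fine, though it is immediate once $\pi$ is known to come from the pointwise Galois action. The fallback bound $|\varphi_j(c_i)|\le 2$ is not needed, and Galois symmetry does not supply it in any case.
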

\begin{proof}
We can interpret the $c_i$ as functions $x_i:I\to\mathbb C$ by \begin{equation}x_i(j)=1+\frac{S_{0_-,\mathfrak d_j}}{S_{0_+,\mathfrak d_j}}-\frac{S_{\mathfrak d_i,\mathfrak d_j}}{S_{0_+,\mathfrak d_j}}.\end{equation}
Invertibility of $K$ gives linear independence of these $x_i$. We see from  \eqref{ce:block-identities} that $\sum_ix_i=-1$. Verlinde's formula implies we can read off $x_ix_j$ from \eqref{ce:integral-products}, and in particular the $x_i$ span a unital integral algebra $\mathscr O$.
Its rational algebra $\mathscr O_{\mathbb Q}$ is finite \'etale (i.e.\ a product of number fields), with
trace pairing
\begin{equation*}
 \operatorname{Tr}_{\mathscr O_{\mathbb Q}}(x_ix_j)
 =\mu\delta_{i,j^\vee}-2. 
\end{equation*}
In particular, this pairing is invertible modulo $2$, which implies prime ideals above 2 are unramified.

The $S$ entries for any modular category lie in a cyclotomic field. Choosing any automorphism $\gamma$ of that field,  Galois symmetry gives
\begin{equation}\label{ce:Galois}
 \gamma(S_{X,Y})=\epsilon_\gamma(X)S_{\widehat\gamma(X),Y}
 =\epsilon_\gamma(Y)S_{X,\widehat\gamma(Y)}
\end{equation}
with $\epsilon_\gamma(X)\in\{\pm1\}$ \cite{coste1994},
\cite[Theorem~10.1]{etingof2005fusion}.
Applying it to $S_{0_+,\mathfrak d_i}$, we see $\gamma(\mathfrak d_i)=\mathfrak d_{\hat{\gamma}(i)}$ and $\epsilon_\gamma(\mathfrak d_i)=r_\sigma/\gamma(r_\sigma)$, and hence 
$\gamma(x_i(j))=x_{\widehat\gamma(i)}(j)$. 

Choose a prime $\mathfrak p$ above $2$ in the cyclotomic field  and a decomposition-group
lift $\gamma_2$ of residue Frobenius
\cite[Corollary~7.59 and Proposition~8.10]{milneANT}.
This lift exists even when $2$ ramifies. Each $x_i(h)$ is integral,
being an evaluation of the finite unital algebra $\mathscr O$.
If $\pi=\widehat\gamma_2|_I$, then
$z=x_i^2-x_{\pi(i)}$ vanishes at every $h$ modulo $\mathfrak p$.
Consequently $\operatorname{Tr}_{\mathscr O_{\mathbb Q}}(zx_j)$ is
an even integer for every $j$. The trace pairing gives
$x_i^2\equiv x_{\pi(i)}\pmod{2\mathscr O}$.

Put $\eta_i=\delta_{i,i^\vee}$. In the basis $x_k$, the coefficients
of $x_i^2$ are $b_k=a_{ii}^k-2\eta_i$; thus
$b_k\leq1-2\eta_i$ and $\sum_kb_k=2-(2m+1)\eta_i$.
Exactly the coefficient at $\pi(i)$ is odd. If $\eta_i=0$, the even
coefficients are nonpositive and the odd coefficient is at most $1$,
contradicting their sum $2$. Thus $\eta_i=1$. Now the even coefficients
are at most $-2$ and the odd one at most $-1$; their sum $1-2m$
forces equality throughout.
The permutation $\pi$ acts by a Galois automorphism.
For $i\ne j$, comparing coefficients of $1$ in the basis
$1,c_1,\ldots,c_m$ in $(c_ic_j)^2=c_i^2c_j^2$ gives
$2\sum_k(a_{ij}^k)^2=4$.
Together with \eqref{ce:integral-products}, this says
$\sum_k(a_{ij}^k)^2=\sum_ka_{ij}^k=2$. Exactly two coefficients are
$1$ and the rest are zero, proving \eqref{ce:product-rules}.
\end{proof}

\begin{lemma}\label{ce:kummer-reconstruction}
Let $\star$ be a new label. There is an abelian group $H$ of order
$\mu$ and an identification
$\{\star\}\sqcup I\cong H/\{\pm1\}$ taking $\star$ to $[0]$, for which
$c_{[h]}\mapsto e_h+e_{-h}$ identifies the algebra spanned by $1,c_i$
with $\mathbb C[H]^{\{\pm1\}}$.
\end{lemma}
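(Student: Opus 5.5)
The plan is to reconstruct $H$ from the combinatorics of Lemma~\ref{ce:integral-algebra}: define a "sign-blind addition" on $\{\star\}\sqcup I$ and then lift it to a genuine abelian group. Set $c_\star=2$ (the image of $e_0+e_0$). The rules \eqref{ce:product-rules} then read uniformly as
\[
c_xc_y=c_{p}+c_{q},
\]
where $\{p,q\}$ is a multiset of labels: for $x=y\in I$ it is $\{\star,\pi(x)\}$, for $x=\star$ it is $\{y,y\}$, and for distinct $x,y\in I$ it is a pair of distinct elements of $I$. We think of $\{p,q\}$ as $\{x+y,\,x-y\}$.

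\textbf{Step 1: the algebra is a commutative étale algebra of dimension $m+1$.} The span of $1,c_i$ is the algebra $\mathscr O_{\mathbb C}$, whose characters are the evaluations at $j\in I$ together with the evaluation at the object $0_+$. Each character $\phi$ sends every $c_i$ to some number $t_i(\phi)$. We then consider the group $\Gamma$ generated, inside the unit group of $\prod_\phi\mathbb C$, by "square roots": for each character we seek $u\in\mathbb C^\times$ with $u+u^{-1}=t$.

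The cleaner route is a Kummer/Chebyshev construction. Choose the index $i_0$ with $\pi$-orbit structure given by Frobenius. Since $c_i^2=2+c_{\pi(i)}$ has the form $T_2$ in the Chebyshev recursion, each $c_i$ behaves like $e_h+e_{-h}$ with $c_{\pi(i)}\leftrightarrow e_{2h}+e_{-2h}$.

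\textbf{Step 2: define $H$ abstractly.} Let $H$ be the set of pairs $(x,\varepsilon)$, with $x\in I$ and a sign $\varepsilon$, glued with $0$. Define addition using the rule $\{x+y,x-y\}=\{p,q\}$. The sign choices are made consistently by diagonalizing: every character $\phi$ of $\mathscr O_{\mathbb C}$ is real-valued on the $c_i$ (since $c_i$ is self-adjoint, $\mathfrak d_i$ being self-dual) and lies in $[-2,2]$, because $|S_{\mathfrak d_i,\mathfrak d_j}/S_{0_+,\mathfrak d_j}|\le\dim$ bounds. So $\phi(c_i)=2\cos(2\pi\beta_\phi(i))$. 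The relations $c_ic_j=c_p+c_q$ give $\{\beta(p),\beta(q)\}\equiv\{\beta(i)\pm\beta(j)\}$ up to sign. Collecting all $m+1$ characters, the map $i\mapsto(\pm\beta_\phi(i))_\phi\in(\mathbb R/\mathbb Z)^{m+1}/\pm$ is injective by linear independence of the $x_i$. We then take $H$ to be the subgroup of $(\mathbb R/\mathbb Z)^{m+1}$ generated by the lifts $(\beta_\phi(i))_\phi$. The main obstacle is making these sign choices coherent across characters, so that $H$ is closed and its image mod $\pm1$ is exactly $\{\star\}\sqcup I$. I would resolve this by fixing one faithful character (the one at which all $\beta(i)$ are distinct mod $\pm$, guaranteed since the characters separate the basis), using Galois transitivity of $\widehat\gamma$ on characters to propagate, and checking closure from the product rule: $\beta(i)\pm\beta(j)$ always lands in $\pm\beta(\{p,q\})$.

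\textbf{Step 3: count and identify.} Every element of $H$ other than $0$ is non-$2$-torsion, since $\pi$ is a permutation and $\mu$ is odd. The set $H/\pm$ has $m+1$ elements, so $|H|=2m+1=\mu$. The map $c_{[h]}\mapsto e_h+e_{-h}$ is then an algebra map $\mathbb C[H]^{\pm}\to\mathscr O_{\mathbb C}$ by construction, and it is bijective on bases, giving the required identification.
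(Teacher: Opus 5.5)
Your proposal has a genuine gap, located exactly at the point you flag as the main obstacle. From one character $\phi$, the relation $c_ic_j=c_p+c_q$ yields only an equality of sums,
\[
\cos 2\pi\beta_\phi(p)+\cos 2\pi\beta_\phi(q)=\cos 2\pi\bigl(\beta_\phi(i)+\beta_\phi(j)\bigr)+\cos 2\pi\bigl(\beta_\phi(i)-\beta_\phi(j)\bigr).
\]
It does not yield the multiset identity $\{\pm\beta_\phi(p),\pm\beta_\phi(q)\}=\{\pm(\beta_\phi(i)+\beta_\phi(j)),\pm(\beta_\phi(i)-\beta_\phi(j))\}$. That identity, with signs chosen coherently over all characters, \emph{is} the content of the lemma, so your closure ``check'' in Step~2 presupposes the conclusion.

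Your anchor for the signs also fails in general. Characters that jointly separate the basis need not include one that separates it alone. If the lemma holds with $H$ noncyclic, every character of $H$ has nontrivial kernel, so every $\phi$ takes the value $2=\phi(c_\star)$ at some $c_i$. This happens in the paper's own data: $C_{25}\times C_5$ at $N=11$ and $C_5^2\times C_{61}$ at $N=39$. Galois conjugation cannot propagate a sign choice either, since it need not act transitively on characters: in the expected answer it preserves the order of the corresponding character of $H$, and $\mu$ is often composite. Without coherent signs, the subgroup of $(\mathbb R/\mathbb Z)^{m+1}$ generated by arbitrary lifts has no reason to have quotient $\{\star\}\sqcup I$ by $\pm1$. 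Your earlier description of $H$ as pairs $(x,\varepsilon)$, with addition read off from $\{x+y,x-y\}=\{p,q\}$, has the same defect. It is not defined until you know which of $p,q$ is the sum, and with which sign.

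What is missing is a structural result that upgrades the two-valued law to a group law. The paper first notes that $a*b=\{p,q\}$ is well defined, by linear independence of $c_\star=2,c_1,\dots,c_m$. It is commutative and associative (from associativity of the algebra), has $\star$ as a strong identity, and satisfies $\star\in a*b$ iff $a=b$. It is therefore an involutive commutative two-valued group, and the classification of such groups \cite{buchstaber2022classification} applies. Every nonprincipal case contains some $s\neq\star$ with $s*s=\{\star,\star\}$. That is impossible here, because $a*a=\{\star,\pi(a)\}$ with $\pi(a)\neq\star$, so the structure is $H/\{\pm1\}$.

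The same observation excludes nonzero $2$-torsion in $H$. That, rather than ``$\mu$ is odd'' (circular before $|H|=\mu$ is known), is the right justification in your Step~3, and counting sign orbits then gives $|H|=2m+1=\mu$. To avoid the classification you would have to prove its principal case by hand. For instance, you could build Chebyshev chains $x_{n+1}\in g*x_n$ using the reciprocity $z\in x*y\Leftrightarrow y\in x*z$ (compare coefficients of $1$ in $c_xc_yc_z$), and then show that chains for different generators glue consistently. Nothing of this kind appears in the proposal.

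A minor point: $\phi(c_i)\in[-2,2]$ does not follow from dimension bounds, which give roughly $Nd$. It does follow from $c_i^2=2+c_{\pi(i)}$. Write $\phi(c_i)=u+u^{-1}$; finiteness of the $\pi$-orbit of $i$ forces $u^{2^k}=u^{\pm1}$ for some $k\ge1$, so $u$ is a root of unity.
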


\begin{proof}
Put $c_\star=2$ and define a two-valued product by
$a*b=\{p,q\}$ when $c_ac_b=c_p+c_q$. Linear independence of the
$c_a$ makes the output multiset unique. Associativity of the algebra
identifies the four-element multisets obtained from the two bracketings of
every triple product. Thus $*$ is associative in the sense of
\cite[Definition~1.1]{buchstaber2022classification}. It is commutative,
$\star$ is a strong identity, and the square and distinct-product rules
show that a product contains $\star$ precisely when its two inputs agree.
Every label is therefore its own unique inverse, so the two-valued group is
involutive in the sense of
\cite[Definition~1.4]{buchstaber2022classification}. It is finitely
generated because it is finite.

There is no nonidentity strong involution. Indeed, if $a\ne\star$, then
$a*a=\{\star,\delta(a)\}$ with $\delta(a)\ne\star$. The classification
\cite[Theorem~1.10]{buchstaber2022classification} leaves the principal,
unipotent, and special families, together with the Boolean factors described
there. Every nonprincipal case has a nonidentity strong involution. In the
unipotent model $(V\times V)/\iota$, where
$\iota(u,v)=(u,u+v)$ and $V\ne0$ is Boolean, the class of $(0,v)$ for
$v\ne0$ squares to the identity twice. In the special model the
distinguished extra element $s$ satisfies $s*s=\{\star,\star\}$. A
nontrivial Boolean direct factor also gives such an element. Hence all
nonprincipal cases are excluded.

We therefore have a two-valued isomorphism
$\{\star\}\sqcup I\cong H/\{\pm1\}$ for an abelian group $H$. The quotient is finite
and its fibres have size at most two, so $H$ is finite. Any nonzero element
of $H[2]$ would give a nonidentity strong involution. Thus $H$ has odd
order, and counting sign orbits gives $|H|=2m+1=\mu$.

Finally,
\begin{equation*}
 (e_h+e_{-h})(e_k+e_{-k})
 =(e_{h+k}+e_{-h-k})+(e_{h-k}+e_{-h+k}).
\end{equation*}
This agrees with the prescribed products, including $c_\star=2$, and the
displayed images form a basis of $\mathbb C[H]^{\{\pm1\}}$.
\end{proof}

\begin{lemma}[Prescribed lift, unique up to a global sign]
\label{ce:prescribed-lift}
Let $H_1,H_2$ be finite abelian groups of odd order. Every two-valued
isomorphism
\begin{equation*}
 f:H_1/\{\pm1\}\longrightarrow H_2/\{\pm1\}
\end{equation*}
is induced by a group isomorphism $F:H_1\to H_2$. If $H_1$ is nontrivial,
the two lifts are $F$ and $-F$.
\end{lemma}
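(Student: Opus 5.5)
The plan is to construct $F$ by hand, working one cyclic subgroup at a time. In an odd-order group, $h\mapsto[h]$ is injective on $\{0\}$ and two-to-one elsewhere. A two-valued isomorphism $f$ means a bijection with
\[
f([h])*f([k])=f([h]*[k]),\qquad [h]*[k]=\{[h+k],[h-k]\}.
\]
In particular $f([0])=[0]$, because $[0]$ is the strong identity.

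Uniqueness comes first, since it is the easier half. Suppose $F$ and $F'$ both induce $f$. Then $F'(h)=\varepsilon(h)F(h)$ with $\varepsilon(h)\in\{\pm1\}$, and the sign is well defined for $h\ne0$ because $H_2$ has no $2$-torsion. Additivity gives
\[
\varepsilon(h+k)F(h+k)=\varepsilon(h)F(h)+\varepsilon(k)F(k).
\]
If $\varepsilon(h)\ne\varepsilon(k)$, comparing with $F(h+k)=F(h)+F(k)$ forces $2F(h)=0$ or $2F(k)=0$. Odd order then gives $h=0$ or $k=0$. So $\varepsilon$ is constant on nonzero elements, and the only lifts are $\pm F$.

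For existence, I would first show that $f$ preserves the multiples structure. Define the two-valued ``powers'' by $[nh]$. The classes $\{[nh]\}_n$ are determined recursively by $[h]*[nh]\ni[(n+1)h]$, with the other element being $[(n-1)h]$. Hence $f([nh])=[n\,h']$ for a lift $h'$ of $f([h])$. The induction uses the fact that the output multiset of $*$ determines the new element once the previous one is known; the degenerate cases where the two outputs coincide occur only when $2h=0$ or $2nh=0$, which odd order excludes.

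In particular $f$ preserves orders. Choose a decomposition $H_1=\bigoplus\langle e_j\rangle$ into cyclic factors and lifts $e'_j$ of $f([e_j])$. The main obstacle is choosing the signs of the $e'_j$ compatibly. For each pair $j<l$, the element $f([e_j+e_l])$ is one of $[e'_j\pm e'_l]$, because $[e_j]*[e_l]=\{[e_j+e_l],[e_j-e_l]\}$ and $f$ respects $*$. I would fix the sign of $e'_1$ and then choose each sign $e'_l$ so that $f([e_1+e_l])=[e'_1+e'_l]$.

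Next I need consistency for all pairs $j,l$. For this I would look at the triple product $[e_1]*[e_j]*[e_l]$. Associativity of $*$ forces its four-element multiset $\{[e_1\pm e_j\pm e_l]\}$ to map correctly, and that pins down $f([e_j+e_l])=[e'_j+e'_l]$ as well.

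Then I define $F(\sum n_je_j)=\sum n_je'_j$. This is a homomorphism by construction. It remains to check that $F$ induces $f$, and I would do this by induction on word length. Suppose $f([x])=[F(x)]$ and also $f([x-e_j])=[F(x)-e'_j]$. Then the relation
\[
f([x])*f([e_j])=\{f([x+e_j]),f([x-e_j])\}
\]
determines $f([x+e_j])=[F(x)+e'_j]$, except in degenerate coincidences, which odd order again rules out. The base cases are the multiples and the pairwise sums established above.

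Finally, $F$ is bijective: it is injective because $f$ is a bijection and the fibres of $H\to H/\{\pm1\}$ are matched, and the orders agree since $|H_i|=2|H_i/\{\pm1\}|-1$. Applying the uniqueness argument to $F$ then gives exactly the two lifts $\pm F$ when $H_1$ is nontrivial.
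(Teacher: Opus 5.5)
Your uniqueness argument and the propagation $f([nh])=[nh']$ are fine. The existence half, however, does not close. Your last step only moves along coordinate lines, from $x$ and $x-e_j$ to $x+e_j$. Starting from multiples and pairwise sums, it never leaves $U=\bigcup_{j<l}\langle e_j,e_l\rangle$, the set of elements with at most two nonzero coordinates. Indeed, suppose $x$ and $x-e_j$ both lie in $U$. If the $j$-coordinate of $x$ is zero, then that of $x-e_j$ is not, so $x$ has at most one other nonzero coordinate. If the $j$-coordinate of $x$ is nonzero, the same conclusion holds directly. Either way $x+e_j\in U$. So once the decomposition has three or more cyclic factors, you never reach $f([e_1+e_2+e_3])=[e'_1+e'_2+e'_3]$, and $F$ is not shown to induce $f$. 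This case matters: the lemma is stated for arbitrary odd-order groups, and in its application $H$ is a priori any group of order $N^2+4$ (at $N=11$ the paper lists $C_5^3$ among the possibilities).

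The missing idea is to choose between the two candidates $[F(x)\pm e'_k]$ by intersecting two different factorizations, rather than by cancellation along a line. For instance, induct on $k$ and assume $f([y])=[F(y)]$ for all $y\in H'=\langle e_1,\dots,e_{k-1}\rangle$. For $x\in H'$, the class $f([x+e_k])$ lies in both $[F(x)]*[e'_k]$ and $[F(x)-e'_1]*[e'_1+e'_k]$. A second common class would force $F(x)=e'_1+e'_k$ (using odd order). By injectivity of $f$ that means $[x]=[e_1+e_k]$, which is impossible for $x\in H'$. Your line induction in the $e_k$ direction then covers $H'\oplus\langle e_k\rangle$.

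The same intersection device is what actually justifies your pairwise consistency step, and once the induction is repaired that step becomes unnecessary. Associativity of $*$ alone gives nothing there: the two bracketings of $[e_1]*[e_j]*[e_l]$, together with $f([e_1\pm e_j])$, are consistent with the wrong value $f([e_j+e_l])=[e'_j-e'_l]$. What rules this out is the normalization $f([e_1+e_l])=[e'_1+e'_l]$, entering through $[e_1+e_l]*[e_j]$.

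The paper avoids all of this by working coordinate-free, uniformly in every rank:
it fixes one $g\ne0$ and $b\in f([g])$, and orients each $F(x)\in f([x])$ by requiring $f([x+g])=[F(x)+b]$. It then propagates this to $f([x+ng])=[F(x)+nb]$. Additivity comes from the image of $[x+ng]*[y+mg]$: hold $n+m$ fixed and let $n-m$ run through all residues modulo $t=\operatorname{ord}(b)\ge3$, using that a sign orbit has at most two elements.
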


\begin{proof}
The trivial case is immediate. Choose $0\ne g\in H_1$ and a representative
$0\ne b\in H_2$ of $f([g])$. For each $x\in H_1$, there is a unique
representative $F(x)\in f([x])$ such that
\begin{equation}\label{ce:lift-orientation}
 f([x+g])=[F(x)+b].
\end{equation}
Existence follows by applying $f$ to the product of $[x]$ and $[g]$:
changing a representative $u$ to $-u$ exchanges $[u+b]$ and $[u-b]$.
For uniqueness, $[u+b]=[-u+b]$ implies either $2u=0$ or $2b=0$.
The latter is impossible, and the former gives $u=0$, when there was only
one representative.

Preservation of products and cancellation in two-element multisets now give,
by induction in both directions,
\begin{equation}\label{ce:lift-sequence}
 f([x+ng])=[F(x)+nb]\qquad(n\in\mathbb Z).
\end{equation}
For the induction step, use
$[x+ng]*[g]=\{[x+(n+1)g],[x+(n-1)g]\}$.

Apply $f$ to the product of $[x+ng]$ and $[y+mg]$ and use
\eqref{ce:lift-sequence}. For all integers $n,m$ this gives
\begin{align*}
 &\{[F(x)+F(y)+(n+m)b],[F(x)-F(y)+(n-m)b]\}\\
 &\hspace{12mm}=
 \{[F(x+y)+(n+m)b],[F(x-y)+(n-m)b]\}.
\end{align*}
Let $t$ be the order of $b$. It is odd and at least three, so $n+m$ and
$n-m$ vary independently modulo $t$. Fix $u=n+m$. If the first entries
on the two sides were different, equality of multisets would force
$[F(x)-F(y)+vb]$ to be constant for all $v=n-m$ modulo $t$. This is
impossible: these are the images of $t\ge3$ distinct elements, whereas a
sign orbit contains at most two elements. Consequently
\begin{equation*}
 [F(x)+F(y)+ub]=[F(x+y)+ub]\qquad(u\in\mathbb Z).
\end{equation*}
Taking $u=0$ and $u=1$, and using the uniqueness argument from
\eqref{ce:lift-orientation}, proves $F(x+y)=F(x)+F(y)$.

The homomorphism $F$ realizes $f$. Its kernel is trivial because $f$
preserves the identity orbit, and
$|H_1|=2|H_1/\{\pm1\}|-1=|H_2|$, so it is an isomorphism. Any lift sends
$g$ to $b$ or $-b$, and \eqref{ce:lift-orientation} then determines it
everywhere. The only lifts are therefore $F$ and $-F$.
\end{proof}

We now have all the ingredients needed for proving Theorem~\ref{ce:metric-data}.

\begin{proof}[Proof of Theorem~\ref{ce:metric-data}]
\emph{The pairing in the prescribed labeling.}
The characters of $\mathbb C[H]^{\{\pm1\}}$ are the restrictions of
the characters of $H$, modulo inversion. Each evaluation label $[k]$
therefore gives a unique $[\chi_k]\in\widehat H/\{\pm1\}$ with
$c_{[h]}([k])=\chi_k(h)+\chi_k(h)^{-1}$.
This is a bijection of the quotient sets taking $[0]$ to $[1]$.
Symmetry of $K$, followed by
the product rule, gives
\begin{equation*}
 (\chi_k(h)+\chi_k(h)^{-1})(\chi_l(h)+\chi_l(h)^{-1})
 =c_{[k+l]}([h])+c_{[k-l]}([h]).
\end{equation*}
Independence of characters, with multiplicities, shows that
$[k]\mapsto[\chi_k]$ preserves the sum-and-difference operation.
Lemma~\ref{ce:prescribed-lift} therefore lifts this specified quotient map
to an isomorphism $\Phi:H\to\widehat H$ in the original evaluation labels,
not merely to an abstract isomorphism of the two groups.
Write the resulting nondegenerate bicharacter as $\beta(h,k)$.
Then $c_{[h]}([k])=\beta(h,k)+\beta(h,k)^{-1}$.

By symmetry and independence of characters, for each $h$ either
$\beta(h,-)=\beta(-,h)$ or $\beta(h,-)=\beta(-,h)^{-1}$.
The sets of $h$ satisfying the two alternatives are subgroups covering
$H$; one must be all of $H$. In the skew case oddness gives
$\beta(h,h)=1$, so every diagonal entry of $K$ is $-2/r_\sigma$.
Character orthogonality gives
\[
\sum_{[g,\chi]\in\Phi_G}
S_{\mathfrak a_{[g,\chi]},\mathfrak a_{[g,\chi]}}
=\frac{N-1}{N}.
\]
Consequently,
\[
\operatorname{Tr}(S)
=\frac{2}{Nr_\sigma d}+\frac{N-1}{N}-\frac{2m}{r_\sigma}
=1-r_\sigma,
\]
which is irrational. But Lemma~\ref{ce:integral-algebra} gives
$S^2=1$, so $\operatorname{Tr}(S)\in\mathbb Z$.
This contradiction proves that $\beta$ is symmetric.

\smallskip\noindent\emph{The ribbon twists.}
Write $w_h=T_{\mathfrak d_{[h]}}$, $w_0=1$, and $v_h=w_h^{-1}$.
Ribbon duality makes these functions even. By Vafa's theorem
\cite{vafa1988toward}, in the modular-category formulation
\cite[Theorem~3.1.19]{bakalov2001lectures}, ribbon twists have finite order;
hence these functions have unit modulus. The entries of
$STS=T^{-1}ST^{-1}$ give
\begin{equation}\label{ce:quadratic-relations}
 \begin{gathered}
 \sum_h w_h=-r_\sigma,\qquad
 \sum_h w_h\beta(h,k)=-r_\sigma v_k,\\
 v_{h+k}+v_{h-k}=v_hv_k\bigl(\beta(h,k)+\beta(h,k)^{-1}\bigr).
 \end{gathered}
\end{equation}
For the last identity, expand the two character sums in the remaining
block. The tractable-column contribution is $2/\mu$, exactly the contribution
of $h=0$ after the sums over sign pairs are replaced by sums over $H$.

The sum $\beta(h,k)+\beta(h,k)^{-1}$ is nonzero since its factors have
odd order. Two unit-modulus numbers with nonzero sum $z$ have product
$z/\overline z$. Applying this to both sides of the last identity gives
$v_{h+k}v_{h-k}=v_h^2v_k^2$. Induction gives $v_{nh}=v_h^{n^2}$.
If $h$ has odd order $t$, compare $n=(t+1)/2$ and $(t-1)/2$, using
evenness, to obtain $v_h^t=1$.

Let $M$ be the exponent of $H$ and write $v_h=e^{2\pi iQ(h)/M}$.
The function $Q:H\to\mathbb Z/M$ satisfies the parallelogram identity.
Since $2$ is invertible, its polarization is
\begin{equation*}
 P(h,k)=Q(h+k)-Q(h)-Q(k)=\frac{Q(h+k)-Q(h-k)}2.
\end{equation*}
It is symmetric and satisfies
$P(h+l,k)+P(h-l,k)=2P(h,k)$ and $P(2h,k)=2P(h,k)$.
Substituting $h=(a+b)/2$, $l=(a-b)/2$ proves additivity.
Thus $b_v(h,k)=v_{h+k}/(v_hv_k)$ is a symmetric bicharacter.
The last identity in \eqref{ce:quadratic-relations} now gives
$b_v+b_v^{-1}=\beta+\beta^{-1}$. Character independence and the same
two-subgroup argument imply $b_v=\beta$ or $\beta^{-1}$ globally.
Consequently the polarization of $w$ is nondegenerate and has the
same symmetrized kernel as $\beta$. Taking $q_H(h)=-Q(h)/M$ proves
\eqref{ce:general-metric} and its Gauss-sum normalization.
\end{proof}

\subsection{Square-free discriminant}

\begin{corollary}\label{ce:squarefree-discriminant}
If $\mu$ is square-free, then $H\cong\mathbb Z/\mu$ and
$q_H(h)=s h^2/\mu\pmod{\mathbb Z}$ for a unit $s$ satisfying
$\left(\frac{s}{\mu}\right)=-\sigma$ (the Jacobi symbol).
The coefficient $s$ is determined up to multiplication by a unit square.
\end{corollary}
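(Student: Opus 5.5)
We start from Theorem~\ref{ce:metric-data}, which gives a metric group $(H,q_H)$ of order $\mu$ with Gauss sum $\sum_{h\in H}e^{2\pi i q_H(h)}=-r_\sigma=-\sigma\sqrt\mu$.

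\emph{The group is cyclic.} Since $\mu$ is square-free, every Sylow subgroup of $H$ has prime order. Hence $H\cong\prod_{p\mid\mu}\mathbb Z/p\cong\mathbb Z/\mu$. Note also that $\mu=N^2+4$ is odd, since $N$ is odd.

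\emph{The form is diagonal.} A homogeneous quadratic form $q$ on $\mathbb Z/\mu$ is determined by $q(1)=c\in\mathbb Q/\mathbb Z$, via $q(h)=h^2c$. Its bilinear form is $B(h,k)=2hkc$, and $\mu B=0$ forces $2\mu c\in\mathbb Z$. Because $\mu$ is odd, homogeneity gives
\[
q(\mu h)=\mu^2 c=0,
\]
so $\mu^2c\in\mathbb Z$. Combined with $2\mu c\in\mathbb Z$ and $\gcd(\mu^2,2\mu)=\mu$, this yields $\mu c\in\mathbb Z$. Write $c=s/\mu$. Nondegeneracy of $B(h,k)=2shk/\mu$ holds exactly when $2s$ is a unit mod $\mu$, and since $\mu$ is odd this means $s\in(\mathbb Z/\mu)^\times$.

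\emph{Identifying $s$ through the Gauss sum.} The standard evaluation of quadratic Gauss sums for odd square-free $\mu$ is
\[
\sum_{h\bmod\mu}e^{2\pi i sh^2/\mu}=\left(\tfrac{s}{\mu}\right)\varepsilon_\mu\sqrt\mu,
\]
where $\varepsilon_\mu=1$ if $\mu\equiv1\pmod4$ and $\varepsilon_\mu=i$ if $\mu\equiv3\pmod4$. This follows by multiplicativity over the prime factors and the CRT twist, with the Jacobi symbol absorbing the reciprocity signs.

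Here $N$ is odd, so $N^2\equiv1\pmod8$ and $\mu\equiv5\pmod8$. Thus $\varepsilon_\mu=1$, and the sum equals $\left(\frac{s}{\mu}\right)\sqrt\mu$. Comparing with $-\sigma\sqrt\mu$ from Theorem~\ref{ce:metric-data} gives $\left(\frac{s}{\mu}\right)=-\sigma$.

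\emph{Uniqueness of $s$ up to squares.} Isometries of $(\mathbb Z/\mu,q)$ are the automorphisms $h\mapsto uh$ with $u$ a unit. These replace $s$ by $su^2$, so $s$ is determined exactly up to unit squares.

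The only step needing care is the Gauss sum sign. The key observation is that $\mu\equiv1\pmod4$, so no factor of $i$ appears. I would either cite the classical Gauss sum evaluation for composite odd moduli or reduce to prime factors by CRT. In the latter route, the cross factors $\left(\frac{\mu/p}{p}\right)$ combine via quadratic reciprocity, because each $p\equiv1\pmod4$ or the product of the $\varepsilon_p$ factors assembles correctly.
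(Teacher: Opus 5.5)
Your proof is correct and follows the paper's argument: square-free order gives a cyclic group, nondegeneracy gives a unit coefficient, the Jacobi-symbol Gauss sum is compared with $-r_\sigma$ from Theorem~\ref{ce:metric-data}, and automorphisms change $s$ by unit squares. The only difference is cosmetic. You use $\mu\equiv 5\pmod 8$ together with the composite-modulus Gauss-sum evaluation, whereas the paper observes that every prime $p\mid\mu$ is $1\bmod 4$ because $(N/2)^2\equiv-1\pmod p$; that observation also settles the hedge in your last paragraph.
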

\begin{proof}
An abelian group of square-free order is cyclic, and a nondegenerate
homogeneous form on an odd cyclic group has the stated unit coefficient. 
Every prime $p\mid\mu$ is $1$ modulo $4$, since $(N/2)^2\equiv-1\pmod p$. 
The Chinese remainder theorem and the prime Gauss-sum formula
\cite[Theorem~1.1]{murtyPathakGauss} give
$\sum_h e^{2\pi i s h^2/\mu}=(s/\mu)\sqrt\mu$;
the cross-factors cancel by quadratic reciprocity. Apply
Theorem~\ref{ce:metric-data}. Automorphisms of the cyclic group change
$s$ by a unit square.
\end{proof}

As is well-known, $8/\pi^2\approx 81\%$ of all odd numbers are square-free.

\begin{corollary}\label{ce:prime-discriminant}
If $\ell=\mu$ is prime, the spherical sign determines $(H,q_H)$ up to
isometry. With $\zeta=e^{2\pi i/\ell}$ and $(\tau/\ell)=-\sigma$,
\begin{align}
 T_{\mathfrak d_{[x]}}&=\zeta^{\tau x^2},\label{eq:ce-prime-twist}\\
 S_{\mathfrak d_{[x]},\mathfrak d_{[y]}}
 &=-r_\sigma^{-1}(\zeta^{2\tau xy}+\zeta^{-2\tau xy}).\label{eq:ce-prime-S}
\end{align}
\end{corollary}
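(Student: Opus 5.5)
This follows from Corollary~\ref{ce:squarefree-discriminant} together with a count of square classes. Since a prime $\ell$ is square-free, Corollary~\ref{ce:squarefree-discriminant} gives $H\cong\mathbb Z/\ell$ and $q_H(h)=\tau h^2/\ell$ with $(\tau/\ell)=-\sigma$. The coefficient $\tau$ is determined only up to multiplication by a unit square. For $\ell$ prime, $(\mathbb Z/\ell)^\times/((\mathbb Z/\ell)^\times)^2$ has order two, and the Legendre symbol detects the class. So the condition $(\tau/\ell)=-\sigma$ fixes $\tau$ up to unit squares.

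Conversely, replacing $\tau$ by $\tau u^2$ is realized by the automorphism $h\mapsto uh$ of $\mathbb Z/\ell$. This is an isometry $(\mathbb Z/\ell,\tau u^2h^2/\ell)\cong(\mathbb Z/\ell,\tau h^2/\ell)$. Hence all forms with $(\tau/\ell)=-\sigma$ are isometric, and $\sigma$ determines the metric group up to isometry.

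For the explicit formulas, fix such a $\tau$ and use the indexing of the $\mathfrak d_i$ by $[x]\in(H\setminus\{0\})/\{\pm1\}$ from Theorem~\ref{ce:metric-data}. Formula \eqref{eq:ce-prime-twist} is the twist $e^{2\pi i q_H(x)}=\zeta^{\tau x^2}$. For the $S$-entries, the bilinear form is
\[
B_H(x,y)=q_H(x+y)-q_H(x)-q_H(y)=2\tau xy/\ell .
\]
Substituting this into \eqref{ce:general-metric} gives \eqref{eq:ce-prime-S}.

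The only point needing care is the relabeling. Transporting along the isometry changes the indexing of the $\mathfrak d_{[x]}$ by $x\mapsto ux$, which is still a bijection on sign orbits. So the formulas hold in a suitable labeling, which is all the corollary asserts. I expect no real obstacle here; the substance is in Theorem~\ref{ce:metric-data} and Corollary~\ref{ce:squarefree-discriminant}.
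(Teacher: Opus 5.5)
Your proposal is correct and follows the paper's argument: specialize Corollary~\ref{ce:squarefree-discriminant} to prime $\mu$, note that the Legendre symbol fixes the square class of $\tau$ (unit-square rescalings being realized by automorphisms of $\mathbb Z/\ell$), and substitute $B_H(x,y)=2\tau xy/\ell$ into \eqref{ce:general-metric}. The paper compresses all of this into the single remark that the Legendre symbol determines the square class of $\tau$.
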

Indeed, the Legendre symbol determines the square class of $\tau$.
For square-free $\mu$ with $t$ prime factors, the sign condition instead
permits $2^{t-1}$ isometry classes; it does not assert that all are realized.

\subsection{Recovering the metric group from the HI matrix}\label{secrecover}

The metric group can be determined without computing the remaining
$S$-block. For the defining HI matrix $A$, consider the operator $D_A$ on the
basis $C_\rho,F_\rho,X_{g,h}$ ($g,h\in G$) given by
\begin{align*}
 D_AC_\rho&=dF_\rho,&
 D_AF_\rho&=d^{-2}C_\rho+d^{-1}\sum_uX_{u,0},\\
 D_AX_{g,h}&=\delta_{g,0}d^{-1}C_\rho+
                 \sum_uA_{h+u,2g}X_{u,g}.
\end{align*}
It is left multiplication by $dF_\rho$ in the $\rho$ tube corner
\cite[equation~(6.2)]{evans2017non}.

Write $u_k=[C_\rho]D_A^kC_\rho$ for the coefficient of $C_\rho$ and
$\Theta_k(A)=\mathcal G_A(k)/r_\sigma$ for the normalized Gauss sum below.

\begin{proposition}\label{ce:metric-recognition}\label{ce:return-coefficient}
For every $k\geq0$,
\begin{equation}\label{ce:twist-trace}
 \mathcal G_A(k):=\sum_{h\in H}e^{2\pi i kq_H(h)}
 =2\operatorname{Tr}(D_A^k)-N|G[k]|,
 \qquad \mathcal G_A(1)=-r_\sigma,
\end{equation}
\begin{equation}\label{ce:return-identity}
 \Theta_k(A)=2d u_k-|G[k]|.
\end{equation}
The values $\Theta_{p^t}(A)$, for $p\mid\mu$ and
$1\leq t\leq v_p(\mu)$, determine $(H,q_H)$ up to isometry.
If $5\mid\mu$, then
\begin{equation}\label{ce:fifth-invariant}
 \Theta_5(A)=\Xi(A):=
 2\sum_{g,h\in G}A_{g,2h}A_{h,2g}
       -4\sum_{g\in G}A_{g,0}-1.
\end{equation}
\end{proposition}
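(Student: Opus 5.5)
The plan is to identify $D_A$ spectrally through the tube algebra, and then read off both trace identities from the decomposition of $Z(\mathcal C)$ in Section~\ref{ce:known-center-data}.

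\emph{Step 1: the ribbon-twist operator.} By \cite[equation~(6.2)]{evans2017non}, $dF_\rho$ is the rotation element of the $\rho$ tube corner $\mathrm{Tube}_{\rho,\rho}$. Under the standard isomorphism
\[
\mathrm{Tube}_{\rho,\rho}\cong\bigoplus_X\End\bigl(\Hom_{\mathcal C}(F(X),\rho)\bigr),
\]
with $X$ running over simples of $Z(\mathcal C)$ and $F$ the forgetful functor, left multiplication by the rotation acts on the $X$-block as the scalar $T_X$. From the restriction table, $\rho$ occurs with multiplicity one in $F(0_-)$, in each $F(\mathfrak a_{[g,\chi]})$ and in each $F(\mathfrak d_i)$, and it does not occur in $F(0_+)$. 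The count $1+\tfrac{N^2-1}{2}+\tfrac{\mu-1}{2}=N^2+2$ matches the size of the basis $C_\rho,F_\rho,X_{g,h}$, so $D_A$ is diagonalizable with these eigenvalues. I will check the normalization, namely that $D_A$ is exactly the twist rather than a scalar multiple, against $T_{0_-}=1$.

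\emph{Step 2: the trace formula \eqref{ce:twist-trace}.} Step~1 gives
\[
\operatorname{Tr}(D_A^k)=1+\sum_{[g,\chi]\in\Phi_G}\chi(g)^k+\sum_i w_i^k .
\]
Summing over all pairs $(g,\chi)$ gives $\sum_{g,\chi}\chi(kg)=N|G[k]|$. Removing the term for $(0,1)$ and halving over sign pairs, the middle sum becomes $(N|G[k]|-1)/2$. By Theorem~\ref{ce:metric-data} the last sum is $(\mathcal G_A(k)-1)/2$. Rearranging gives \eqref{ce:twist-trace}, and the case $k=1$ is the Gauss-sum normalization already proved in that theorem.

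\emph{Step 3: the return identity \eqref{ce:return-identity}.} Write $C_\rho=\sum_X e_X C_\rho$ using the minimal central idempotents $e_X$ of the tube algebra. The coefficient functional $[C_\rho]$ is, up to a factor, the canonical trace on the tube algebra. On the $X$-block this trace is weighted by $\dim X/\dim\mathcal C$ times the dimension of $\rho$. Hence
\[
u_k=\sum_X c_X T_X^k
\]
with explicit weights $c_X$ built from the entries $S_{0_+,X}$ in Section~\ref{ce:known-center-data}. These weights should be equal on all $\mathfrak a$'s and on all $\mathfrak d$'s, and should cancel the $0_-$ term. Evaluating the resulting two character sums exactly as in Step~2 should produce $2du_k=\Theta_k(A)+|G[k]|$. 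I will fix the constants by checking $k=0$, where $u_0=1$ and $\Theta_0=\mu/r_\sigma$.

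\emph{Step 4: recognition and the fifth invariant.} Odd finite metric groups split orthogonally into $p$-primary parts. A $p$-primary form is a sum of homogeneous pieces on $(\mathbb Z/p^t)^{n_t}$, each determined by its rank and a Legendre sign. The multiplicative behaviour of Gauss sums under orthogonal sums, together with the standard evaluation $\sum_h e^{2\pi i p^t q(h)}$, recovers $|H[p^t]|$ (hence the ranks) and the local signs from $\mathcal G_A(p^t)$, $1\le t\le v_p(\mu)$; the prime-to-$p$ parts contribute known nonzero factors. For \eqref{ce:fifth-invariant}, I will expand $[C_\rho]D_A^5C_\rho$ along paths $C_\rho\to F_\rho\to X_{\ast}\to\cdots\to C_\rho$. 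The only closed paths of length five are $C\to F\to X\to X\to F\to C$-type paths and those passing through $X_{0,\ast}$. These give sums of products of $A$-entries, which I will reduce using \eqref{eq:HImatrix1}, \eqref{eq:HImatrix2} and \eqref{eq:HImatrix3} to the two sums in $\Xi(A)$, with $|G[5]|=1$ when $5\mid\mu$ (then $5\nmid N$).

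I expect this last bookkeeping to be the main obstacle. The index shifts $A_{h+u,2g}$ produce the doubled indices, and collapsing the quartic path sums to the quadratic form $\sum A_{g,2h}A_{h,2g}$ will need the right application of \eqref{eq:HImatrix3}. A secondary risk is the exact normalization of the trace weights in Step~3.
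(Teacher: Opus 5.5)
Your proposal is correct and follows the paper's argument. In both, $D_A$ is left multiplication by the twist on the commutative $\rho$ tube corner, and \eqref{ce:twist-trace} is the sign-orbit count. The return identity comes from the $C_\rho$-coefficients of the primitive idempotents, $\Xi(A)$ is a direct iteration of $D_A$, and recognition is done primewise from the Gauss-sum norm identity and local signs, as in Appendix~\ref{ce:recognition-proof}. A few details in your sketch need correcting, though none breaks the argument.

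\emph{The weights in Step~3.} The $C_\rho$-coefficient of the idempotent for $X$ is $\dim X/(d\dim\mathcal C)$, i.e.\ $1/(Nr_\sigma)$, $1/(Nd)$, $1/(dr_\sigma)$ for $0_-$, $\mathfrak a$, $\mathfrak d$; the paper reads these off \cite{evans2017non}. You wrote ``times'' the dimension of $\rho$ where it should be ``divided by''. Your $k=0$ normalization repairs the constant, since the relative weights $\propto S_{0_+,X}$ are right.

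\emph{The normalization check in Step~1.} Testing against $T_{0_-}=1$ rules out a scalar multiple but cannot distinguish $T_X$ from $T_X^{-1}$; the paper settles this from the explicit idempotents. It is harmless here. Since $(N/2)^2\equiv-1\pmod\mu$, multiplication by $N/2$ is an automorphism of $H$ sending $q_H$ to $-q_H$. Hence $\mathcal G_A(k)$, the $\mathfrak a$-sum, and the isometry class are all unchanged under inversion of the twists.

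\emph{The degree-five computation.} It is much simpler than you anticipate, and your description of the paths is wrong. Only $C_\rho$ feeds $F_\rho$, so there are no $X\to F$ steps, and a length-five return path contains at most two $X\to X$ steps. So there are no quartic sums to collapse. The closed paths are:
\begin{itemize}
\item $C_\rho\to F_\rho\to C_\rho\to F_\rho\to X_{0,0}\to C_\rho$, contributing $d^{-2}$;
\item $C_\rho\to F_\rho\to X_{0,0}\to C_\rho\to F_\rho\to C_\rho$, contributing $d^{-2}$;
\item $C_\rho\to F_\rho\to X_{h,0}\to X_{g,h}\to X_{0,g}\to C_\rho$, contributing $d^{-1}A_{g,2h}A_{h,2g}$.
\end{itemize}
Hence $u_5=2d^{-2}+d^{-1}\sum_{g,h}A_{g,2h}A_{h,2g}$ directly, with no use of \eqref{eq:HImatrix3}. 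Then $|G[5]|=1$ and \eqref{eq:HImatrix2}, in the form $d^{-1}=-\sum_gA_{g,0}$, turn $2du_5-1$ into $\Xi(A)$.
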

\begin{proof}
By the forgetful functor table, the multiplicity of $\rho$ is zero in $0_+$ and
one in every other simple in $Z(\mathcal C)$. Thus the corner is commutative
semisimple, and its regular representation counts each nonunit label once.
For the normalized primitive idempotent
$e_Z=t_Z(C_\rho+d\alpha_ZF_\rho+d\sum\beta_{g,h}X_{g,h})$,
the actual twist is $\alpha_Z/d+\sum_h\beta_{0,h}$
\cite[equations~(6.4) and~(6.22)]{evans2017non}.
The coefficient of $C_\rho$ in $dF_\rho e_Z$ is therefore $t_ZT_Z$,
so the eigenvalue of $D_A$ is $T_Z$, not its inverse.
Since $\sum_{g,\chi}\chi(g)^k=N|G[k]|$, the $\mathfrak a$ sign orbits
contribute $(N|G[k]|-1)/2$ to the corner trace. The label $0_-$ contributes
$1$, and the $\mathfrak d$ sign orbits contribute
$(\mathcal G_A(k)-1)/2$. Their sum proves \eqref{ce:twist-trace}.

The coefficients of $C_\rho$ in the primitive idempotents for
$0_-$, $\mathfrak a$ and $\mathfrak d$ are, respectively,
$1/(Nr_\sigma)$, $1/(Nd)$ and $1/(dr_\sigma)$
\cite[equations~(6.7), (6.9), (6.11) and (6.13)]{evans2017non}.
The same decomposition therefore gives
\begin{equation*}
 u_k=\frac1{Nr_\sigma}
 +\frac{N|G[k]|-1}{2Nd}
 +\frac{\mathcal G_A(k)-1}{2dr_\sigma}.
\end{equation*}
Since $2d-r_\sigma=N$, this is \eqref{ce:return-identity}.
Directly iterating the displayed operator through degree five gives
\begin{equation*}
 u_5=2d^{-2}+d^{-1}\sum_{g,h}A_{g,2h}A_{h,2g}.
\end{equation*}
Now $5\mid\mu$ implies $5\nmid N$, hence $|G[5]|=1$.
Substitute $d^{-1}=-\sum_gA_{g,0}$ from (E2).

The arithmetic recovery from the prime-power Gauss sums is proved in
Appendix~\ref{ce:recognition-proof}.
\end{proof}

Appendix~\ref{ce:return-recurrence} computes all the coefficients $u_k$
by contractions of arrays indexed by $G\times G$.
Their arithmetic meaning follows from the Gauss-sum norm identity:
\begin{equation*}
 |\Theta_{p^t}(A)|^2=|H[p^t]|\qquad(p\mid\mu).
\end{equation*}
Thus their absolute values determine the primary decomposition of $H$;
their signs determine the quadratic form, as detailed in
Appendix~\ref{ce:recognition-proof}.

When $\mu$ is square-free and $q_H(x)=s x^2/\mu$, the local coefficients
are determined by
\begin{equation}\label{ce:squarefree-coefficient}
 \left(\frac{s}{p}\right)=-\frac{\Theta_p(A)}{\sqrt p}
 \qquad(p\mid\mu).
\end{equation}
Indeed, the $p$-primary generator has coefficient $s\mu/p$.
The ratio of its Gauss sums at $p$ and $1$ is
$\sqrt p(s\mu/p\mid p)$; the other primary factors contribute
$(\mu/p\mid p)$ by quadratic reciprocity. Since
$\mathcal G_A(1)=-r_\sigma$, their product gives the formula.
For $N=41$, we have $\mu=5\cdot337$. By \eqref{ce:squarefree-coefficient}, the quadratic expression
$\Xi(A)$ determines $(s/5)$, and
$(s/337)=-\sigma(s/5)$ follows from the global Gauss-sum sign.
This identifies the full modular data from one contraction of $A$.

\begin{remark}[Galois conjugation]\label{ce:galois-metric-group}
Recall that $A$ has algebraic entries. Galois conjugation preserves the
underlying group $H$, although it can change $q_H$ and the spherical sign.
More precisely, extend a coefficient-field embedding $\gamma$ to a field
containing the $\mu$th roots of unity, and write
$\gamma(\zeta_\mu)=\zeta_\mu^v$.
The operator formula gives $D_{\gamma(A)}=\gamma(D_A)$, so its twists
are the conjugates of those for $A$. Their remaining Gauss sums are
those of $(H,vq_H)$. Proposition~\ref{ce:metric-recognition} therefore
identifies the metric group of $\gamma(A)$ with $(H,vq_H)$.
In particular, solutions with nonisomorphic groups $H$ cannot be Galois
conjugate. 
\end{remark}

\subsection{The order-fifteen application}
\label{ce:n15-application}

The completeness statement in Appendix~\ref{app:finite-cyclic-data} and
Table~\ref{tab:cyclic-census} show that, in the 
Q-system setting, there are exactly three untwisted, unitary HI categories for
$\mathbb Z/15$ up to tensor equivalence, represented by
\begin{equation*}
 \mathcal C_{j^{(1)}(15)},\qquad
 \mathcal C_{j^{(2)}(15)},\qquad
 \mathcal C_{j^{(3)}(15)}.
\end{equation*}

\begin{corollary}[Non-Morita-equivalent categories with the same modular data]
\label{sa:n15-morita}
The three categories
\(\mathcal C_{j^{(1)}(15)},\mathcal C_{j^{(2)}(15)},\mathcal C_{j^{(3)}(15)}\)
are pairwise not categorically Morita
equivalent, while
their centers have the same modular data.
\end{corollary}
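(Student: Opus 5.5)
The statement has two halves, and I would prove them separately, relying entirely on results already established. The non-Morita-equivalence will come from Proposition~\ref{sa:morita-rigidity}. The equality of modular data will come from Theorem~\ref{ce:metric-data} together with Corollary~\ref{ce:prime-discriminant} or Corollary~\ref{ce:squarefree-discriminant}.

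\emph{Non-Morita-equivalence.} The three categories $\mathcal C_{j^{(i)}(15)}$ are, by Table~\ref{tab:cyclic-census} and its completeness statement, unitary, untwisted, cyclic HI categories of Q-system type for $G=\mathbb Z/15$. They therefore satisfy the hypotheses of Theorem~\ref{sa:classification}. Proposition~\ref{sa:morita-rigidity} says that Morita equivalence between two such categories implies tensor equivalence. The table lists the three as pairwise tensor-inequivalent. One way to certify this is by \cite[Theorem~2(b)]{evans2017non}: the three HI matrices lie in distinct $\operatorname{Aut}(G)$-orbits. Hence no two of them are categorically Morita equivalent.

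\emph{Equal modular data.} Here $N=15$, so $\mu=229$, which is prime. All three categories are unitary, so each has $\dim\rho=d_+$ and spherical sign $\sigma=+1$. By Theorem~\ref{ce:metric-data}, the modular data of each center is the smashed sum $\mathcal{MD}_+(\mathbb Z/15;H,q_H)$, where $|H|=229$ and the Gauss sum equals $-\sqrt{229}$. Since $\mu$ is prime, Corollary~\ref{ce:prime-discriminant} shows that $(H,q_H)$ is determined up to isometry by $\sigma$. Explicitly, $H\cong\mathbb Z/229$ and $q_H(x)=\tau x^2/229$ with $(\tau/229)=-1$. The tractable entries of Section~\ref{ce:known-center-data} depend only on $G$, $N$ and $\sigma$, so they coincide for all three categories. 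The remaining block is given by \eqref{eq:ce-prime-twist}--\eqref{eq:ce-prime-S}, which depends only on the isometry class. This block also coincides once the $\mathfrak d_i$ are relabelled by an isometry.

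\emph{Main obstacle.} The delicate point is the first half. It needs the three categories to be genuinely tensor-inequivalent and each to satisfy unitarity, untwistedness and the Q-system boundary. That rests on the census and the Hermiticity checks recorded in Appendix~\ref{app:finite-cyclic-data}, rather than on anything proved uniformly. I would state this dependence explicitly. The second half is a direct application, since the primality of $229$ removes any ambiguity in the metric group.
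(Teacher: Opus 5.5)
Your proposal is correct and takes the same approach as the paper. Non-Morita-equivalence comes from Proposition~\ref{sa:morita-rigidity} together with the pairwise tensor-inequivalence recorded in Table~\ref{tab:cyclic-census}, and equality of the modular data comes from the primality of $229$, $\dim\rho=d_+$, and Corollary~\ref{ce:prime-discriminant}; your remarks that the tractable entries depend only on $N$ and $\sigma$, and that the first half rests on the census, add detail without changing the argument.
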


\begin{proof}
By Proposition~\ref{sa:morita-rigidity}, two of these categories can be
categorically Morita equivalent only if they are tensor equivalent. Since, using Table~\ref{tab:cyclic-census}, they are not tensor equivalent, we are done.

Since $15^2+4=229$ is prime, and these categories all have dim$(\rho)=d_+$, Corollary~\ref{ce:prime-discriminant} uniquely determines their modular data.
\end{proof}

Thus equal center modular data need not imply Morita equivalence of the
input categories. This was first observed in \cite{mignard2021modular}. Inspecting Tables~\ref{tab:cyclic-census} and \ref{tab:cyclic-negative}, we see this is far from a rare occurrence. In all these examples though, the categories are Galois associates.

\begin{remark}\label{rem:borromean}
The three order-fifteen centers in
Corollary~\ref{sa:n15-morita} can be distinguished by the
Borromean tensor $B$ introduced by Kulkarni, Mignard, and
Schauenburg \cite[Section~4]{kulkarni2018topological}. 
In each center, let $\beta$ denote
the unique simple object of dimension $d_+^2$.
An exact computer-assisted calculation shows that the three
values of $B_{\beta,\beta,\beta}$ are pairwise distinct.
Thus no relabeling identifies their modular data and
Borromean tensors simultaneously.

Further examples of non-Morita-equivalent categories with the
same modular data occur in Table~\ref{tab:cyclic-census} at
$N=21,27,33,35$.
Numerical calculations also show that
$B_{\beta,\beta,\beta}$ distinguishes all the centers
with identical modular data in these larger-order examples.
\end{remark}


\section{Equivariantization and near-group categories}
\label{sec:near-group}

Izumi constructed the passage from HI categories to
near-group categories in the $C^*$ setting
\cite[Theorem~12.9]{IzumiNearGroupCuntz}.  We give an algebraic formulation
over an algebraically closed field in which $|G|$ is invertible, without
assuming unitarity. A unitary Q-system example gives a unitary near-group
category.

Let $\kk$ be an algebraically closed field, let $G$ be a finite abelian group
of odd order $N$, and assume that $N$ is nonzero in $\kk$.  Let
$\mathcal C$ be a (possibly twisted) fusion category of type $\mathfrak{HI}_G$ over $\kk$. 
Write $\mathcal P$ for the pointed subcategory.  After choosing the labels,
\begin{equation*}
 \mathcal P\simeq\operatorname{Vec}_{G}^{\Omega}
\end{equation*}
for a normalized $3$-cocycle $\Omega$.  

\subsection{Relative centers and conjugation}

For a pointed tensor subcategory $\mathcal Q\subseteq\mathcal D$, the
relative center $Z_{\mathcal Q}(\mathcal D)$ consists of objects $X$ with
natural isomorphisms
\begin{equation*}
 c_V:V\otimes X\longrightarrow X\otimes V
 \qquad(V\in\mathcal Q)
\end{equation*}
satisfying the half-braiding axiom for objects of $\mathcal Q$.  It is
monoidal, but it need not be braided because the half-braiding is only
specified against $\mathcal Q$.

Choose representatives $\alpha_g$ and normalized multiplication maps
\(\mu_{g,h}:\alpha_g\otimes\alpha_h\to\alpha_{g+h}\).  Conjugation gives
functors
\begin{equation*}
 F_g(X)=\alpha_g\otimes X\otimes\alpha_g^\vee.
\end{equation*}
The multiplication maps and their duals give coherent tensorators
\(F_gF_h\Rightarrow F_{g+h}\).  The associator scalar in the pointed
subcategory occurs once on the left and once with its inverse on the dual
right, so it cancels in the action pentagon.

\begin{proposition}[Relative center as equivariantization]
\label{ng:relative-center}
For any fusion category $\mathcal D$ over $\kk$ and pointed tensor
subcategory \(\mathcal Q\subseteq\mathcal D\), the conjugation functors
define a tensor action of the group $\Gamma$ of isomorphism classes of
simple objects of \(\mathcal Q\), and
\begin{equation}
 \mathcal D^{\Gamma}\simeq Z_{\mathcal Q}(\mathcal D).
 \label{ng:relative-center-equivalence}
\end{equation}
The equivalence restricts to
\begin{equation*}
 \mathcal Q^{\Gamma}\simeq Z(\mathcal Q).
\end{equation*}
If $|\Gamma|$ is invertible in $\kk$, the equivariantization is again a
fusion category.  De-equivariantization by the canonical central functor
\(\operatorname{Rep}(\Gamma)\to Z(\mathcal D^\Gamma)\) then recovers
\(\mathcal D\).
\end{proposition}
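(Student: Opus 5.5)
The plan is to build the equivalence \eqref{ng:relative-center-equivalence} explicitly in the case of the conjugation action, and then to get the remaining claims from standard equivariantization theory. Concretely, choose representatives $\alpha_g$ for $g\in\Gamma$, the multiplication maps $\mu_{g,h}$, and duals $\alpha_g^\vee\cong\alpha_{-g}$. With these choices the conjugation functors $F_g$ become a monoidal functor $\underline{\Gamma}\to\operatorname{Aut}_\otimes(\mathcal D)$. The tensor structure of $F_g$ comes from evaluation $\alpha_g^\vee\otimes\alpha_g\to\mathbf 1$. The tensorators $F_gF_h\Rightarrow F_{g+h}$ come from $\mu_{g,h}$ together with its dual. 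The coherence pentagon for the action then reduces to the pentagon for $\Omega$. As noted before the statement, each associator scalar appears once directly and once inverted via the duals, so it cancels.

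Next I would define the functor $\mathcal D^\Gamma\to Z_{\mathcal Q}(\mathcal D)$. An equivariant object is a pair $(X,\{u_g:F_g(X)\to X\})$ satisfying the cocycle compatibility with the tensorators. By adjunction, $u_g$ corresponds to an isomorphism $c_{\alpha_g}:\alpha_g\otimes X\to X\otimes\alpha_g$. I then extend $c$ to all of $\mathcal Q$ by additivity and naturality. Since $\mathcal Q$ is pointed, $\operatorname{End}(\alpha_g)=\kk$, so naturality on simples is automatic. It remains to check one thing: the compatibility $u_{g+h}\circ(\text{tensorator})=u_g\circ F_g(u_h)$ should translate exactly into the half-braiding hexagon $c_{\alpha_g\otimes\alpha_h}=(c_{\alpha_g}\otimes\mathrm{id})(\mathrm{id}\otimes c_{\alpha_h})$, with $\alpha_g\otimes\alpha_h$ identified with $\alpha_{g+h}$ via $\mu_{g,h}$. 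Morphisms on the two sides are the same, namely maps in $\mathcal D$ commuting with $u$, respectively with $c$. So the functor is fully faithful, and the adjunction correspondence shows it is essentially surjective. For monoidality, the tensor product of equivariant objects uses $u_g^{X}\otimes u_g^{Y}$ through the tensor structure of $F_g$. Under the correspondence this becomes the standard half-braiding on $X\otimes Y$. I expect this bookkeeping step to be the main obstacle. The difficulty is tracking the associators $\Omega$ and the zig-zag maps consistently when translating the cocycle condition and the tensor structure, since $\mathcal Q$ need not be strict. I would first reduce to a skeletal model of $\mathcal Q$ with explicit $\Omega$ and verify that every $\Omega$ factor pairs off.

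The restriction statement $\mathcal Q^\Gamma\simeq Z(\mathcal Q)$ is the same construction applied with $\mathcal D=\mathcal Q$. In that case the relative center is the full Drinfeld center, and the action preserves $\mathcal Q$.

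For the last part, suppose $|\Gamma|$ is invertible in $\kk$. Then $\operatorname{Rep}(\Gamma)$ is semisimple, and the equivariantization is a fusion category by the standard argument. It is rigid because $F_g$ is monoidal, so duals of equivariant objects carry induced equivariant structures. It is semisimple because averaging over $\Gamma$ splits the induction functor $X\mapsto\bigoplus_g F_g(X)$. The canonical central functor $\operatorname{Rep}(\Gamma)\to Z(\mathcal D^\Gamma)$ sends $V$ to $V\otimes\mathbf1$ with the trivial half-braiding. For de-equivariantization I would invoke the general inverse-equivalence theorem \cite{EGNO}: the algebra $\operatorname{Fun}(\Gamma)$ is commutative and connected, and its image in $\mathcal D^\Gamma$ has module category $\mathcal D$. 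This step uses invertibility of $|\Gamma|$, which makes $\operatorname{Fun}(\Gamma)$ separable.
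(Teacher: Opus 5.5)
Your proposal is correct and follows essentially the same route as the paper. Adjunction with $\alpha_g$ turns an equivariant structure $u_g$ into a half-braiding $\alpha_g\otimes X\to X\otimes\alpha_g$, the action coherence becomes the half-braiding axiom (with each $\Omega$-scalar cancelling against its dual), and the fusion and de-equivariantization claims come from averaging when $|\Gamma|$ is invertible and from the separable regular algebra $\operatorname{Fun}(\Gamma)$. The one point to watch is the citation for the last step: the paper relies on Drinfeld--Gelaki--Nikshych--Ostrik in characteristic zero and on Brugui\`eres--Natale for general fields with $|\Gamma|$ invertible, so make sure the inverse-equivalence theorem you invoke is stated in that generality.
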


The graded version of this relative-center/equivariantization
correspondence is developed in
\cite[Section~3A, equations~(24)--(28), and Theorem~3.5]{GelakiNaiduNikshych2009}.
That reference works in characteristic zero and in the faithfully graded
setting.  The proposition here is proved directly, and the only field
restriction needed for its fusion and de-equivariantization assertions is
that $|\Gamma|$ be invertible in $\kk$.

\begin{proof}
There are two parts.  First, an equivariant structure on $X$ consists of
maps \(u_g:F_g(X)\to X\) satisfying the action coherence.  Evaluation and
coevaluation for $\alpha_g$ turn these maps into
\begin{equation*}
 \alpha_g\otimes X\longrightarrow X\otimes\alpha_g.
\end{equation*}
These maps give the half-braiding on the simple pointed objects, and the
half-braiding axiom is exactly the equivariant coherence for products of
such objects.  Conversely, tensor a half-braiding with
$\alpha_g^\vee$ and contract the adjacent invertible pair.  The zig-zag
identities show that the constructions are inverse on objects, morphisms,
and tensor products.  This proves \eqref{ng:relative-center-equivalence};
restricting to \(\mathcal Q\) gives the second equivalence.

Second, suppose that $|\Gamma|$ is invertible in $\kk$.  Averaging a splitting
of an equivariant epimorphism makes the equivariantization semisimple.  The
canonical central functor sends a representation to the corresponding
equivariant copy of the tensor unit.  The regular algebra
\(\operatorname{Fun}(\Gamma)\) in this central copy is separable.  Its
module category is identified with $\mathcal D$ by taking the identity
component of the associated $\Gamma$-graded module, which is the inverse
de-equivariantization.  In characteristic zero this is the stated
2-equivalence
\cite[Theorem~4.18 and Proposition~4.19]{DGNO2010}.
Over a field in which $|\Gamma|$ is invertible, the exact-sequence and
fusion statement is recorded in
\cite[Section~1]{BruguieresNatale2014}.
\end{proof}

The construction in Theorem~\ref{ng:relative-center} uses no HI fusion rule.
We now use \eqref{HIfusrul} to identify the simple objects and the
invertible group of the relative center.

\subsection{The pointed center and the metric extension}

For $g\in G$, the half-braiding of an object of degree $g$ is a module over
the twisted group algebra with factor set
\begin{equation*}
 \vartheta_g(x,y)=
 \frac{\Omega(x,g,y)}{\Omega(g,x,y)\Omega(x,y,g)}.
\end{equation*}
Its commutator is the alternating tricharacter
\begin{equation}
 \Lambda_\Omega(g,x,y)=
 \frac{\vartheta_g(x,y)}{\vartheta_g(y,x)}.
 \label{ng:slant-commutator}
\end{equation}
The twisted group algebra in degree $g$ is commutative exactly when
\(\Lambda_\Omega(g,-,-)=1\).  Since $N$ is invertible in $\kk$, it is then
semisimple and all its simple modules are one-dimensional.  Thus
\(Z(\operatorname{Vec}_{G}^{\Omega})\) is pointed exactly when
\(\Lambda_\Omega=1\).

The setup preceding
\cite[Lemma~12.8]{IzumiNearGroupCuntz} assumes the corresponding
pointedness condition.  Here it follows from the HI mixed rule and the
oddness of $|G|$.
The mixed rule in \eqref{HIfusrul} forces inversion
\(g\mapsto-g\) to preserve the cohomology class $[\Omega]$.  Applying
inversion to \eqref{ng:slant-commutator} gives
\begin{equation*}
 \Lambda_\Omega(g,x,y)=\Lambda_\Omega(-g,-x,-y)
 =\Lambda_\Omega(g,x,y)^{-1}.
\end{equation*}
Since $G$ has odd order and $\Lambda_\Omega$ is trilinear, every value is
both of odd order and of order dividing two.  Hence it is $1$.

\begin{proposition}[Pointed center of an HI pointed part]
\label{ng:pointed-center}
For every  HI category satisfying \eqref{HIfusrul}, the center
of its pointed subcategory is pointed.  Its invertible objects form an
abelian group $E_\Omega$ of order $N^2$ in an exact sequence
\begin{equation*}
 0\longrightarrow\widehat G\longrightarrow E_\Omega
 \overset{\deg}{\longrightarrow}G\longrightarrow0.
\end{equation*}
The braiding gives a nondegenerate quadratic form $q_\Omega$ on
\(E_\Omega\), and the degree-zero subgroup
\(H_\Omega=\ker(\deg)\cong\widehat G\) is Lagrangian.
\end{proposition}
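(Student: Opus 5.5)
The argument uses only the standard description of $Z(\operatorname{Vec}_G^\Omega)$ together with the vanishing $\Lambda_\Omega=1$ established just above. By the definition of the center, a simple object of $Z(\operatorname{Vec}_G^{\Omega})$ is a pair consisting of an underlying object supported in a single degree $g$ and a half-braiding. Such a half-braiding is precisely a simple module over the twisted group algebra $\kk_{\vartheta_g}[G]$, with factor set $\vartheta_g$ as displayed. Since $\Lambda_\Omega=1$, each $\kk_{\vartheta_g}[G]$ is commutative. Because $N$ is invertible in $\kk$, it is also semisimple (Maschke for twisted group algebras), so it has exactly $N$ one-dimensional simple modules. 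Every simple object of the center therefore has dimension $1$, and the center is pointed with exactly $N\cdot N=N^2$ simple objects. The set $E_\Omega$ of their isomorphism classes is a group under tensor product, and it is abelian because the center is braided.

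Next I build the exact sequence. The degree map $\deg:E_\Omega\to G$, taking the support of the underlying object, is a homomorphism because degrees add under tensor product. It is surjective since every degree carries $N\geq1$ simple modules. Its kernel consists of degree-zero objects. By normalization, $\vartheta_0=1$, so these are the characters of the untwisted group algebra $\kk[G]$, that is, $\widehat G$. The tensor product of two such half-braidings multiplies the characters, so the kernel is isomorphic to $\widehat G$ as a group. This gives $0\to\widehat G\to E_\Omega\to G\to 0$, and $|E_\Omega|=N^2$.

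For the quadratic form, the braided category $Z(\operatorname{Vec}_G^\Omega)$ is pointed, so it is $\mathcal C(E_\Omega,q_\Omega)$ with $q_\Omega(X)=c_{X,X}$. Nondegeneracy of $q_\Omega$ is equivalent to nondegeneracy of the braiding, and the Drinfeld center of a fusion category is nondegenerate (modular when the dimension is invertible; here $\dim=N^2\neq0$ in $\kk$). I expect the main obstacle to be quoting nondegeneracy of the center in positive characteristic. I would try to cite \cite{EGNO} under the hypothesis that the global dimension $N$ is nonzero. Alternatively, I would compute the double braiding directly and verify nondegeneracy.

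The Lagrangian property comes from two computations. For degree-zero objects $\chi,\psi$, the braiding $c_{\chi,\psi}$ is the half-braiding of $\psi$ evaluated on the degree-$0$ underlying object of $\chi$, so $c_{\chi,\psi}=\psi(0)=1$. Hence $q_\Omega|_{H_\Omega}=1$ and $H_\Omega$ is isotropic. Its order is $N=\sqrt{N^2}$, so it is Lagrangian: for a nondegenerate form, an isotropic subgroup satisfies $H\subseteq H^\perp$ with $|H||H^\perp|=|E_\Omega|$, forcing equality.
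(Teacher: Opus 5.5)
Your proposal is correct and follows the paper. Pointedness comes from commutativity and semisimplicity of $\kk_{\vartheta_g}[G]$ once $\Lambda_\Omega=1$; you cite that vanishing rather than reprove it. The paper proves it in the paragraph preceding the statement: the mixed rule $\alpha_g\rho\cong\rho\alpha_{-g}$ makes inversion preserve $[\Omega]$, so $\Lambda_\Omega=\Lambda_\Omega^{-1}$, and odd order forces $\Lambda_\Omega=1$. That is the only place the HI fusion rules enter.

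The paper leaves the exact sequence, nondegeneracy and Lagrangian property implicit as standard facts about $Z(\operatorname{Vec}_G^\Omega)$, and your treatment of them is sound. The direct nondegeneracy check you mention works and avoids any positive-characteristic citation. The double braiding of $(g,\phi_g)$ with $(h,\phi_h)$ is $\phi_g(h)\phi_h(g)$. Against $(0,\psi)$ this gives $\psi(g)$, and for $(0,\chi)$ it gives $\chi(h)$, so the radical is trivial.
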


The extension and its quadratic form describe the pointed braided category
\(Z(\operatorname{Vec}_{G}^{\Omega})\).  They do not assert a braiding on
the full relative center.

\subsection{The near-group category}

We can now count the simples of the relative center.  On pointed objects,
conjugation is trivial up to the equivariant structure.  On a
noninvertible object,
\begin{equation}
 F_g(\alpha_x\rho)
 \cong \alpha_g\alpha_x\rho\alpha_{-g}
 \cong \alpha_{x+2g}\rho.
 \label{ng:conjugation-orbit}
\end{equation}
Because multiplication by $2$ is bijective on $G$, these noninvertible
objects form one free orbit.

In the $C^*$ setting, the following construction and its
multiplicity are due to Izumi
\cite[Theorem~12.9]{IzumiNearGroupCuntz}.
\begin{theorem}[The relative center is near-group]
\label{ng:near-group}
Let $\mathcal C$ satisfy \eqref{HIfusrul}, with $|G|$ invertible in
$\kk$.  Then
\begin{equation*}
 \mathcal N:=Z_{\mathcal P}(\mathcal C)\simeq\mathcal C^G
\end{equation*}
is a near-group fusion category of type $(E_\Omega,N^2)$.  If $R$ is its
unique noninvertible simple object, then
\begin{equation}
 R^2\cong\bigoplus_{e\in E_\Omega}e\oplus N^2R.
 \label{ng:near-group-fusion}
\end{equation}
\end{theorem}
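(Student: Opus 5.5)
We identify $\mathcal N$ with $\mathcal C^G$ using Proposition~\ref{ng:relative-center}, taking $\mathcal D=\mathcal C$ and $\mathcal Q=\mathcal P$. Then we compute its simple objects and fusion rules from the standard description of simples in an equivariantization, namely pairs (orbit, irreducible projective representation of the stabilizer). Since $N$ is invertible in $\kk$, Proposition~\ref{ng:relative-center} already makes $\mathcal C^G$ a fusion category, and its global dimension is $N\cdot\dim\mathcal C=N\cdot N(1+d^2)=N^2(1+d^2)$. We will use this as a consistency check on the count.

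\emph{Simple objects.} On the pointed part, the orbits of the conjugation action are singletons with stabilizer $G$. The simples lying over $\alpha_g$ are exactly the half-braidings of $\alpha_g$ against $\mathcal P$, i.e. the simple objects of $\mathcal P^G\simeq Z(\mathcal P)$ of degree $g$. By Proposition~\ref{ng:pointed-center}, these form the pointed group $E_\Omega$ of order $N^2$, all of dimension $1$. By \eqref{ng:conjugation-orbit}, the noninvertible simples $\alpha_x\rho$ form a single free orbit, because $2$ is invertible on $G$. A free orbit contributes exactly one simple equivariant object, namely the induced object $R=\bigoplus_{g}F_g(\rho)\cong\bigoplus_{x}\alpha_x\rho$ with its tautological equivariant structure. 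It has dimension $Nd$. The dimension check is $N^2+(Nd)^2=N^2(1+d^2)$, which confirms that the list is complete.

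\emph{Fusion rule.} Every $e\in E_\Omega$ is invertible and $R$ is the unique noninvertible simple, so $e\otimes R\cong R$ and $R$ is self-dual. It remains to compute $R^2$. Under the forgetful functor $\mathcal N\to\mathcal C$, the image is
\[
 \Bigl(\bigoplus_x\alpha_x\rho\Bigr)^{\otimes2}
 \cong\bigoplus_{x,y}\alpha_{x-y}\rho^2
 \cong N\bigoplus_{g}\alpha_g\;\oplus\;N^2\bigoplus_x\alpha_x\rho.
\]
Multiplicities in $\mathcal N$ follow by adjunction with the induction functor. Specifically, $\Hom_{\mathcal N}(e,R^2)\cong\Hom_{\mathcal N}(e\otimes R,R)\cong\Hom_{\mathcal N}(R,R)=\kk$ for every $e\in E_\Omega$. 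Comparing dimensions then forces the multiplicity $n$ of $R$ to satisfy $N^2+n\,Nd=(Nd)^2$. Using $d^2=Nd+1$, this gives $nNd=N^2d^2-N^2=N^3d$, so $n=N^2$. This yields \eqref{ng:near-group-fusion}, so $\mathcal N$ is near-group of type $(E_\Omega,N^2)$.

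\emph{Main obstacle.} The delicate step is the Hom computation $\Hom(e\otimes R,R)$. We must check that $e\otimes R\cong R$ in $\mathcal N$ rather than only after forgetting. This holds because simplicity of $R$ follows from freeness of its orbit: the induced object has $\End=\kk$ by Frobenius reciprocity, $\End(R)\cong\Hom_{\mathcal C}(\rho,\bigoplus_x\alpha_x\rho)=\kk$. Moreover, any simple lying over the $\rho$-orbit is isomorphic to $R$.

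If the dimension argument is to avoid characteristic issues, we instead compute $\Hom_{\mathcal N}(R,R^2)$ directly by Frobenius reciprocity as $\Hom_{\mathcal C}(\rho,R^{\otimes2})$, which has dimension $N^2$ by the display above. This gives the multiplicity of $R$ in $R^2$ in all characteristics with $N\neq0$.
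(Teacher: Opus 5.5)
Your proposal is correct and follows essentially the same route as the paper. The simples are $E_\Omega$, coming from $\mathcal P^G\simeq Z(\mathcal P)$, together with the single induced object $R$ from the free orbit. The multiplicity of $R$ in $R^2$ is then read off from the forgetful image $\bigl(\bigoplus_x\alpha_x\rho\bigr)^{\otimes2}$; your Frobenius-reciprocity version is the same computation as the paper's comparison of noninvertible coefficients in $S^2=NA_0+N^2S$.

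One small caveat: the dimension count $N^2+nNd=(Nd)^2$ only determines $n$ modulo $\operatorname{char}\kk$ in positive characteristic. There you should use $\FPdim$ or your reciprocity argument instead.
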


\begin{proof}
The equivariant objects whose underlying objects lie in $\mathcal P$ form
\(Z(\mathcal P)\), which is pointed by Theorem~\ref{ng:pointed-center}.  They
therefore give the $N^2$ invertible objects $E_\Omega$.  The free orbit in
\eqref{ng:conjugation-orbit} gives one noninvertible equivariant simple,
which we call $R$.

Every $e\in E_\Omega$ satisfies $eR\cong R\cong Re$, and $R$ is self-dual.
Frobenius reciprocity gives each $e$ once in $R^2$.  It remains to determine
the coefficient of $R$.  In the Grothendieck ring of $\mathcal C$, set
\begin{equation*}
 A_0=\sum_{x\in G}[\alpha_x],\qquad
 S=\sum_{x\in G}[\alpha_x\rho].
\end{equation*}
The HI rules give
\begin{equation*}
 S^2=NA_0+N^2S.
\end{equation*}
Forgetting equivariant structures sends $R$ to $S$ and
\(\bigoplus_e e\) to $NA_0$.  Comparing the coefficient of any
noninvertible simple yields the coefficient $N^2$ in
\eqref{ng:near-group-fusion}.
\end{proof}

The inverse operation retains more than the near-group fusion ring.  The
equivariantization gives a central functor
\begin{equation*}
 \operatorname{Rep}(G)\longrightarrow Z(\mathcal N),
\end{equation*}
and the regular algebra \(\operatorname{Fun}(G)\) in this central copy has
underlying object
\begin{equation*}
 \bigoplus_{h\in H_\Omega}h,
 \qquad H_\Omega\cong\widehat G\leq E_\Omega.
\end{equation*}
De-equivariantizing by this specified central algebra recovers
\(\mathcal C\).  The subgroup $H_\Omega$ alone does not determine its
multiplication or central embedding, so the near-group fusion rules do not
classify the possible de-equivariantizations.

\begin{remark}[Stretched fusion rules]
\label{rem:ng-stretched}
The multiplicity formula $r|G|^2$ already occurs in
\cite[Theorem~12.9]{IzumiNearGroupCuntz}; the argument here gives it under
the algebraic hypotheses of this section.
The same construction relates stretched HI and near-group fusion rules.
Retain the field and group hypotheses of this section, and suppose that
$\mathcal C$ has the same simple objects and mixed rules as in
\eqref{HIfusrul}, but with
\begin{equation*}
 \rho^2\cong\mathbf 1\oplus r\bigoplus_{g\in G}\alpha_g\rho,
 \qquad r\in\mathbb Z_{\geq1}.
\end{equation*}
The pointed-center and free-orbit arguments are unchanged.  With $A_0$
and $S$ as in the proof of Theorem~\ref{ng:near-group}, the Grothendieck-ring
calculation becomes
\begin{equation*}
 S^2=NA_0+rN^2S.
\end{equation*}
Thus $Z_{\mathcal P}(\mathcal C)$ is near-group of type
$(E_\Omega,rN^2)=(E_\Omega,r|E_\Omega|)$, and de-equivariantization by
its specified central $\operatorname{Rep}(G)$ recovers the stretched HI
category $\mathcal C$.  The integer $r$ is a fusion multiplicity; it need
not be invertible in $\kk$.

This statement starts from an existing stretched HI category.  It does
not assert that an arbitrary near-group category of type $(E,r|E|)$
admits such a de-equivariantization.  That converse requires the
appropriate central algebra, the multiplicity-one splitting of the
noninvertible object, and the inversion mixed rule.  When the quotient
has this HI form, comparison of the same Grothendieck-ring coefficients
forces its square multiplicity to be $r$.
\end{remark}

\begin{corollary}[The untwisted cyclic family]
\label{ng:cyclic-near-group}
If the pointed associator is trivial, then
\begin{equation*}
 E_0\cong G\times\widehat G,\qquad q_0(x,\chi)=\chi(x).
\end{equation*}
Consequently every untwisted cyclic HI category of
Q-system type, has a near-group equivariantization of type
\begin{equation*}
 \bigl(G\times\widehat G,N^2\bigr).
\end{equation*}
\end{corollary}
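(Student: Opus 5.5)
The plan is to specialise Proposition~\ref{ng:pointed-center} to $\Omega=1$ and then apply Theorem~\ref{ng:near-group}. With trivial associator the factor set $\vartheta_g(x,y)$ is identically $1$. So in degree $g$ a half-braiding on $\alpha_g$ is just a character $\chi\in\widehat G$, giving the scalars $c_{\alpha_x}:\alpha_x\otimes\alpha_g\to\alpha_g\otimes\alpha_x$. The simple objects of $Z(\operatorname{Vec}_G)$ are therefore the pairs $(g,\chi)$.

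Next I compute the group law. The tensor product of half-braided objects multiplies the half-braiding scalars, because the associator is trivial. Hence $(g,\chi)\otimes(h,\psi)=(g+h,\chi\psi)$, and the extension $0\to\widehat G\to E_0\to G\to 0$ is canonically split: $E_0\cong G\times\widehat G$.

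For the form, the ribbon twist of $(x,\chi)$ is the self-braiding $c_{\alpha_x,\alpha_x}$. With the half-braiding convention of the relative center, $c_V:V\otimes X\to X\otimes V$ with $V=\alpha_x$ and $X=(x,\chi)$, this scalar is $\chi(x)$. So $q_0(x,\chi)=\chi(x)$, written multiplicatively. Depending on convention one might get $\chi(x)^{-1}$, but this differs only by relabelling $\chi\mapsto\chi^{-1}$, which is an automorphism of $E_0$. Its polarization $\chi(y)\psi(x)$ is the evaluation pairing, which is nondegenerate. This agrees with the tractable twists $\chi(g)$ of $\mathfrak a_{[g,\chi]}$ in Section~\ref{ce:known-center-data}, which serves as a consistency check.

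Finally, Theorem~\ref{ng:near-group} applies because $N$ is odd and invertible in $\mathbb C$. It shows that $Z_{\mathcal P}(\mathcal C)\simeq\mathcal C^G$ is near-group of type $(E_0,N^2)=(G\times\widehat G,N^2)$. The Q-system hypothesis and cyclicity play no role beyond supplying the untwisted input, so the statement follows for every untwisted cyclic HI category of Q-system type. There is no real obstacle here. The only delicate point is the sign or convention in the twist, and it is harmless for the reason given above.
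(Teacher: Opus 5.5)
Your proposal is correct and follows the paper's route. The paper treats the corollary as immediate from Proposition~\ref{ng:pointed-center} specialized to $\Omega=1$, combined with Theorem~\ref{ng:near-group}; the only adjustment is that your argument works verbatim over any algebraically closed $\kk$ with $N\neq0$ (the section's standing hypothesis), not just over $\mathbb C$.
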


Izumi applied this construction to cyclic HI categories of
orders $5$, $7$, and $9$ \cite[Example~12.15]{IzumiNearGroupCuntz}.
Theorem~\ref{ds:uniform-existence} gives HI inputs for every odd
$N\geq3$, and hence gives the family in
Corollary~\ref{ng:cyclic-near-group}.  This conclusion does not require
unitarity.

For $G=\mathbb Z/N$ with $N>1$, the group
\(G\times\widehat G\) is noncyclic.  Thus cyclic HI categories provide
canonical noncyclic near-group categories. Over $\mathbb C$, the relative
center is unitarizable when the input category is unitarizable.

\subsection{Modular data of the near-group centers}
\label{ng:center-data}

Work now over $\mathbb C$. Let $G=\mathbb Z/N\mathbb Z$, with $N\geq3$
odd, and let $\mathcal C$ be an untwisted pseudo-unitary HI category in
our constructed family. Put $\mathcal D=\mathcal C^G$ and
$E=G\times\widehat G$. Write
\begin{equation*}
 q_E(x,\chi)=\chi(x),\qquad
 b_E((x,\chi),(y,\psi))=\chi(y)\psi(x),\qquad
 \beta(a,b)=b_E(a,b/2).
\end{equation*}
Thus $\beta$ is a symmetric bicharacter with $\beta(a,a)=q_E(a)$.
Let $(H,q_H)$ be the metric group of Theorem~\ref{ce:metric-data} for
$Z(\mathcal C)$, where $|H|=N^2+4$, and put
$Q_H(h)=e^{2\pi i q_H(h)}$.

\begin{corollary}\label{ng:modular-realization}
There is a ribbon equivalence
\begin{equation}\label{ng:center-factorization}
 Z(\mathcal D)\simeq Z(\mathcal C)\boxtimes Z(\operatorname{Vec}_G).
\end{equation}
Its modular data are the Evans--Gannon near-group data for
$(E,q_E)$ and $(H,Q_H)$, and equivalently the Grossman--Izumi data for
\begin{align*}
 &(E\times E,q_1,\theta_1),&
 q_1(a,b)&=\beta(a,b),&\theta_1(a,b)&=(b,a),\\
 &(E\times H,q_2,\theta_2),&
 q_2(a,h)&=q_E(a)Q_H(h),&\theta_2(a,h)&=(a,-h).
\end{align*}
Thus the modular-data formulas of
\cite[Proposition~7 and Conjecture~2]{EvansGannonNearGroup}
and \cite[Definition~3.1 and Conjecture~3.7]{GrossmanIzumiPotential}
are realized by the constructed near-group categories of type
$((\mathbb Z/N)^2,N^2)$ for every odd $N\geq3$.
\end{corollary}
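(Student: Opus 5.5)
The plan is to obtain the factorization \eqref{ng:center-factorization} from Morita invariance of the Drinfeld center, and then read off the modular data from the two factors. By Proposition~\ref{ng:relative-center}, $\mathcal D=\mathcal C^G$ and the de-equivariantization of $\mathcal D$ by the central $\operatorname{Rep}(G)$ recovers $\mathcal C$. A $G$-equivariantization $\mathcal C^G$ is categorically Morita equivalent to the crossed product $\mathcal C\rtimes G$: the module category $\mathcal C$ over $\mathcal C^G$ has dual $\mathcal C\rtimes G$. So $Z(\mathcal D)\simeq Z(\mathcal C\rtimes G)$ as braided categories.

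Because the action is by conjugation by the pointed objects $\alpha_g$ (inner automorphisms), and the pointed part is untwisted, the action is equivalent to the trivial action. Then $\mathcal C\rtimes G\simeq\mathcal C\boxtimes\operatorname{Vec}_G$, and the crossed-product cocycle is trivial because $\Omega=1$. This gives
\[
Z(\mathcal D)\simeq Z(\mathcal C\boxtimes\operatorname{Vec}_G)\simeq Z(\mathcal C)\boxtimes Z(\operatorname{Vec}_G).
\]
The main obstacle is proving that the inner action is trivializable as a monoidal action, not merely trivial on objects. Concretely, the natural isomorphisms $F_g\cong\mathrm{id}$ must be compatible with the tensorators $F_gF_h\Rightarrow F_{g+h}$. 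Conjugation by $\alpha_g$ is isomorphic to the identity only through half-braidings of $\alpha_g$ against all of $\mathcal C$, which do not exist here. So I would instead use the more robust route: in $\mathcal C\rtimes G$, the objects $\alpha_g^{-1}\boxtimes g$ form a copy of $\operatorname{Vec}_G$ that centralizes $\mathcal C$. The obstruction to this being a tensor subcategory is a class in $H^3(G,\kk^\times)$ built from $\Omega$ and its conjugate, which is trivial in the untwisted case. Together with the copy of $\mathcal C$, this exhibits $\mathcal C\rtimes G\simeq\mathcal C\boxtimes\operatorname{Vec}_G$ by counting dimensions, since both sides have global dimension $N\dim\mathcal C$.

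For the modular data, $Z(\operatorname{Vec}_G)$ has simples $E=G\times\widehat G$ with $T_{(x,\chi)}=\chi(x)=q_E$ and $S$ given by $b_E/N$. $Z(\mathcal C)$ is given by Theorem~\ref{ce:metric-data} as $\mathcal{MD}_\sigma(G;H,q_H)$ with $\sigma=+$, since $\mathcal C$ is pseudo-unitary. The modular data of the Deligne product is the tensor product $S\otimes S$, $T\otimes T$.

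The final step is a relabeling, matching this tensor product with the Evans--Gannon near-group formulas \cite[Proposition~7]{EvansGannonNearGroup} for $(E,q_E)$ and $(H,Q_H)$, and with the Grossman--Izumi data. The $\mathfrak d_{[h]}\boxtimes a$ simples correspond to orbits of $E\times H$ under $\theta_2$, with twist $q_E(a)Q_H(h)$. The $\mathfrak a_{[g,\chi]}\boxtimes a$ and $0_\pm\boxtimes a$ simples assemble into orbits of $E\times E$ under $\theta_1$, with twist $\beta(a,b)$. To do this, I would parametrize $\Phi_G\times E$ together with $\{0_\pm\}\times E$ by $(E\times E)/\theta_1$, using $b\mapsto b/2$ (valid since $N$ is odd). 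Then I would check the $S$ entries block by block against the tractable formulas of Section~\ref{ce:known-center-data}, using $\dim\mathcal D=N^2\dim\mathcal C/N$ for the normalization.
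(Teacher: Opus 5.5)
Your proposal is correct and follows the paper's route: Morita equivalence $\mathcal C^G\sim\mathcal C\rtimes G$, untwisting the crossed product to $\mathcal C\boxtimes\operatorname{Vec}_G$, Morita invariance of the center, and the product of the two modular data.

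\textbf{The crossed-product step.} The commuting objects $\alpha_g^{-1}\boxtimes g$ you single out are exactly the preimages of $\mathbf 1\boxtimes g$ under the paper's functor $(X,g)\mapsto(X\otimes\alpha_g)\boxtimes g$. Its tensorator only cancels $\alpha_{-g}\alpha_{g+h}$ to $\alpha_h$. In the strict Leavitt model there is no $H^3$ obstruction to discuss, so writing this functor down directly is quicker than a centralizer argument plus a dimension count.

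Drop the claim that the conjugation action is equivalent to the trivial one, including the phrase ``not merely trivial on objects.'' The action is not trivial even on objects: $F_g(\alpha_x\rho)\cong\alpha_{x+2g}\rho$ by \eqref{ng:conjugation-orbit}. This free orbit is why $\mathcal D$ has a single noninvertible simple. Your fallback does not use the claim, so nothing breaks.

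Also say why the braided equivalence coming from Morita invariance is ribbon. Both sides are pseudo-unitary, and braided equivalences preserve the canonical spherical structures.

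\textbf{The final identification.} This is the one real difference, and you leave the bijection unspecified. The paper does it structurally. Put $J(x,\chi)=(x,\chi^{-1})$. Since $N$ is odd, $(z,u)\mapsto(z+Ju,z-Ju)$ is a bijection of $E\times E$. Under it, $\beta(z+Ju,z-Ju)=q_E(z)q_E(u)$ and $\theta_1$ becomes $u\mapsto-u$. Hence $(E\times E,q_1,\theta_1)\cong(E,q_E,1)\times(E,q_E,-1)$, while $(E\times H,q_2,\theta_2)=(E,q_E,1)\times(H,Q_H,-1)$. Grossman--Izumi's factorization lemma \cite[Lemma~3.2]{GrossmanIzumiPotential} then splits off the common factor $(E,q_E)$. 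What remains is literally $\mathcal{MD}_+(G;H,q_H)$, so no $S$-entries need to be compared by hand.

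This change of variables is what your ``$b\mapsto b/2$'' should be. It sends $(z,\mathfrak a_{[u]})$ to the $\theta_1$-orbit of $(z+Ju,z-Ju)$, and it makes the twist $q_E(z)\chi(g)$ visibly equal to $\beta$.

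Two smaller corrections:
\begin{itemize}
\item The quotient $(E\times E)/\theta_1$ cannot parametrize $\{0_\pm\}\times E\sqcup\Phi_G\times E$ as you state; the counts differ by $N^2$. In the Evans--Gannon and Grossman--Izumi data each fixed point $z$ carries two labels $a_z,b_z$, corresponding to $(z,0_+)$ and $(z,0_-)$.
\item To match Grossman--Izumi's formulas you must use their convention, in which the pointed $S$-matrix is $\overline{b_E}/N$ rather than $b_E/N$. Because the HI $S$-matrix is real, the switch only conjugates the pointed factor.
\end{itemize}
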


\begin{proof}
The equivariantization is Morita equivalent to the crossed product
$\mathcal C\rtimes G$. Since the action is conjugation and the pointed
associator is trivial, the functor
\begin{equation*}
 (X,g)\longmapsto (X\otimes\alpha_g)\boxtimes g
\end{equation*}
is a tensor equivalence from $\mathcal C\rtimes G$ to
$\mathcal C\boxtimes\operatorname{Vec}_G$: its tensorator cancels
$\alpha_{-g}\alpha_{g+h}$ to $\alpha_h$. Coherence follows from the
chosen associative multiplication on the $\alpha_g$.
Morita invariance of the center gives \eqref{ng:center-factorization};
this factorization was already observed in
\cite[Remark~12.10]{IzumiNearGroupCuntz}.
The categories here are pseudo-unitary, so the canonical spherical
structures make the equivalence ribbon.

For clarity, use the convention of
\cite[Definition~3.1]{GrossmanIzumiPotential}, in which the pointed
$S$-matrix uses the conjugate polarization. The HI $S$-matrix is real,
so its displayed entries are unchanged. For $a,b\in E$ and
$X,Y\in\operatorname{Irr}Z(\mathcal C)$, the complete data are therefore
\begin{equation}\label{ng:product-modular-data}
 S^{Z(\mathcal D)}_{(a,X),(b,Y)}
 =\frac{\overline{b_E(a,b)}}N S^{Z(\mathcal C)}_{X,Y},
 \qquad
 T^{Z(\mathcal D)}_{(a,X)}=q_E(a)T^{Z(\mathcal C)}_X.
\end{equation}
To identify the proposed data, put $J(x,\chi)=(x,\chi^{-1})$.
The change of variables
$(z,u)\mapsto(z+Ju,z-Ju)$ identifies $(E\times E,q_1,\theta_1)$
with $(E,q_E,1)\times(E,q_E,-1)$, since
$\beta(z+Ju,z-Ju)=q_E(z)q_E(u)$.
The second triple is already
$(E,q_E,1)\times(H,Q_H,-1)$. Their common fixed metric subgroup is
$(E,q_E)$; their orders are $N^4$ and $N^2(N^2+4)$, and their
normalized Gauss sums are $1$ and $-1$.
Grossman--Izumi's factorization lemma
\cite[Lemma~3.2]{GrossmanIzumiPotential} now splits off $(E,q_E)$.
The remaining arrays are precisely the HI arrays of
Theorem~\ref{ce:metric-data}, proving \eqref{ng:product-modular-data}
in their notation. For odd groups this is the Evans--Gannon formula;
explicitly its labels $a_z,b_z,c_{z+Ju,z-Ju},d_{z,h}$ correspond to
$(z,0_+),(z,0_-),(z,\mathfrak a_{[u]}),(z,\mathfrak d_{[h]})$.
\end{proof}

These realizations are algebraic; uniform unitarity remains open.

\section{A non-cyclotomic modular category}
\label{sec:split-field}

\begin{definition}[Split ribbon form]
A split ribbon $J$-form of a modular category $\mathcal B$, for a number field
$J\subset\overline{\mathbb Q}$, is a semisimple ribbon category over $J$
whose scalar extension to $\overline{\mathbb Q}$ is ribbon equivalent to $\mathcal B$, and for which every
geometric simple is the scalar extension of an absolutely simple object
defined over $J$.
\end{definition}

We use an explicit HI matrix for a $\mathbb Z/5$ HI category
to show that the Galois closure of every split ribbon field
of definition for its center contains a non-cyclotomic quartic field.

\subsection{The specified order-five example}

In \cite[Example~7.2]{IzumiLR2}, Izumi constructed the unitary $\mathbb Z/5$ HI category with HI matrix
\begin{equation}
 A_5=
 \frac{1}{d-1}\begin{pmatrix}
 d-2&-1&-1&-1&-1\\
 -1&-1&x&y&x\\
 -1&\overline{x}&-1&y&y\\
 -1&\overline{y}&\overline{y}&-1&x\\
 -1&\overline{x}&\overline{y}&\overline{x}&-1
 \end{pmatrix},
  \label{sf:order-five-matrix}
\end{equation}
where $d=d_+$, $\eta=\frac{d+\sqrt{d+9}}{2}$, $\eta'=\frac{d-\sqrt{d+9}}{2}$, $x=\frac{\eta+i\sqrt{4d-\eta^2}}{2}$ and $y=\frac{\eta'+i\sqrt{4d-\eta^{\prime2}}}{2}$.   In the notation of
\cite[Section~6.6]{GrossmanSnyder2012}, this is their
$\mathcal I_2$ fusion category; there are two other categories $\mathcal I_1,\mathcal I_3$ in that Morita class.
Then
\begin{equation*}
\mathcal B=Z(\mathcal I_2)=Z(\mathcal C(A_5))
\end{equation*}
is the modular category of interest here. We will prove:

\begin{theorem}[Noncyclotomic split-ribbon field]
\label{sf:counterexample}
If $J$ supports a split ribbon form of the center $\mathcal B$,  then the Galois closure of $J$ must contain  $\eta$.
In particular, no cyclotomic field supports a split ribbon form of 
this modular category.
\end{theorem}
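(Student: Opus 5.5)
The plan is to extract, from any split ribbon $J$-form of $\mathcal B$, a scalar invariant that must lie in $J$, and to show that this invariant generates $\eta$ over $\mathbb Q$ (or over $\mathbb Q(d)$).

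\textbf{Step 1: $d\in J$.} In a split form every simple is absolutely simple over $J$, so all $\operatorname{Hom}$ spaces between simples are $J$ or $0$. Hence all categorical traces lie in $J$. In particular the dimension $d^2$ of $0_-$ and the dimension $Nd$ of $\mathfrak d_i$ lie in $J$, and therefore $d\in J$.

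\textbf{Step 2: an invariant built from $\rho$.} The forgetful functor is not available inside $\mathcal B$ itself. Instead I would recover the tube-algebra corner intrinsically. Use the object $0_-$, whose restriction is $\mathbf1\oplus R$, or equivalently the Lagrangian algebra $I(\mathbf1)=0_+\oplus0_-\oplus\bigoplus\mathfrak a\oplus\bigoplus\mathfrak d$, whose module category recovers $\mathcal C(A_5)$. Concretely, I would use the connected étale algebra $L=I(\mathbf1)$ in $\mathcal B$. Its multiplication is determined up to gauge by fusion data.
\begin{enumerate}
\item Over a split form, the structure constants of $L$, normalized by choosing basis vectors in the one-dimensional multiplicity spaces (possible since these spaces are defined over $J$), satisfy polynomial equations with coefficients in $J$.
\item Gauge-invariant combinations are therefore algebraic over $J$. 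The aim is to show that certain of them (6j-type invariants) actually lie in $J$, using that $\operatorname{Hom}_{\mathcal B}(X\otimes Y,Z)$ is a $J$-space.
\end{enumerate}
The cleanest invariant is a gauge-invariant trace of a composite of morphisms inside $\operatorname{End}(\mathfrak d_i^{\otimes k})$, built from the braiding and associators. Such a trace is automatically in $J$, since traces of endomorphisms defined over $J$ lie in $J$. I would choose the Borromean-type or multiplicity-space invariants attached to $N_{\mathfrak d_i\mathfrak d_j}^{\mathfrak d_k}$. Here $\mu=29$ and the $\mathfrak d$-block has 14 simples, and these multiplicities are 0 or 1.

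\textbf{Step 3: the key computation, expected to be the main obstacle.} I need to express one such gauge-invariant trace in terms of entries of $A_5$ and show it equals a rational function of $d$ and $x+\bar x=\eta$, or of $\eta$ itself. My first attempt uses the tube-algebra description of \cite{evans2017non}. The half-braidings of $\mathfrak d_i$ on $\rho$ have matrix entries expressed through the $A_{g,h}$, and the trace of the half-braiding composite $c_{\rho,\mathfrak d_i}c_{\mathfrak d_i,\rho}$ on a multiplicity space is computable. To keep the invariant intrinsic to $\mathcal B$, I replace $\rho$ by $0_-$ and take the trace of the double braiding of $0_-$ with $\mathfrak d_i$ restricted to a fixed simple summand $Z$ of $0_-\otimes\mathfrak d_i$ appearing with multiplicity $>1$. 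Such a trace is not forced by $S$ and $T$ alone, so it can carry new information. I expect some such trace, or a sum of them, to equal $\eta$ up to an element of $\mathbb Q(d)$. The Galois orbit of $\eta$ over $\mathbb Q(d)$ is $\{\eta,\eta'\}$, so it suffices to find an invariant not fixed by $\sqrt{d+9}\mapsto-\sqrt{d+9}$.

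\textbf{Step 4: conclusion.} From Steps 1 and 3, $d,\eta\in J$, so the Galois closure contains $\mathbb Q(d,\eta)$. I would compute that the normal closure of $\mathbb Q(d,\eta)$ has Galois group $D_4$. It is the splitting field of the minimal polynomial of $\eta$, a quartic of the form $(t^2-dt+(d^2-d-9)/4)$ times its conjugate. Since $D_4$ is nonabelian, and subfields of cyclotomic fields are abelian, no cyclotomic field can contain $\eta$.
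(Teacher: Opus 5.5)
Your Steps 1 and 4 are fine. Getting $d\in J$ from the dimensions $d^2$ and $Nd$ works as well as the paper's use of the $29$th-root-of-unity twists, and the $D_4$ computation is right. The gap is Step 3, which carries all the content of the theorem: it is not carried out, and the one concrete invariant you propose cannot work.

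By the balancing axiom $\theta_{X\otimes Y}=c_{Y,X}c_{X,Y}(\theta_X\otimes\theta_Y)$, the double braiding $c_{Y,X}c_{X,Y}$ acts on $\operatorname{Hom}(Z,X\otimes Y)$ by the scalar $\theta_Z\theta_X^{-1}\theta_Y^{-1}$. So your trace for $X=0_-$, $Y=\mathfrak d_i$ is just $N_{X,Y}^Z\,\theta_Z/(\theta_X\theta_Y)$. Contrary to your claim, it is forced by $T$ and the fusion rules, and it is an integer times a root of unity, hence cyclotomic. The same holds for the eigenvalues of $c_{X,X}$ on $\operatorname{Hom}(Z,X\otimes X)$, which square to this scalar.

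An invariant that detects $\eta$ must use the associator or several noncommuting braid generators in an essential way. Examples are traces of braid words on $\operatorname{Hom}(Z,X^{\otimes 3})$, the Borromean tensor, or gauge-invariant combinations of $F$-symbols. You would then have to compute one and show it lies in $\mathbb Q(d,\eta)\setminus\mathbb Q(d)$; nothing in the proposal does this. Also, if your invariant only lies in the larger field $L=E[x,y]$, your reduction to an invariant not fixed by $\sqrt{d+9}\mapsto-\sqrt{d+9}$ must be replaced by an argument covering every lift of $\tau$.

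There is also a structural reason this is harder than it looks. A suitable lift $\tau$ of the nontrivial element of $\operatorname{Gal}(E/F)$ satisfies $\tau(A_5)_{g,h}=(A_5)_{2g,2h}$. So there is a braided equivalence $T:\mathcal B^\tau\to\mathcal B$, and it swaps $\mathfrak a_{[1,1]}$ and $\mathfrak a_{[2,1]}$. Every labelling-independent invariant of $\mathcal B$ is therefore $\tau$-stable. Your invariant must detect the labels, and finding one is at least as hard as showing that no braided autoequivalence of $\mathcal B$ induces this permutation.

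That last fact is exactly what the paper uses, without computing any invariant:
\begin{itemize}
\item If $\eta$ were not in the Galois closure of $J$, then $\tau$ could be chosen to fix $J$.
\item Base change of the split $J$-form then gives a braided equivalence $D:\mathcal B^\tau\to\mathcal B$ fixing every simple.
\item Then $T\circ D^{-1}$ is a nontrivial element of $\operatorname{EqBr}(\mathcal B)\cong\operatorname{BrPic}(\mathcal I_2)$.
\item But $\operatorname{BrPic}(\mathcal I_2)$ is trivial, by the Grossman--Izumi--Snyder result on outer autoequivalences and the Grossman--Snyder count of module categories.
\end{itemize}
The missing idea is this descent-plus-Brauer--Picard argument, or an explicit higher-invariant computation that replaces it.
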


Write $d^\ast=5-d$. Since $(d+9)(d^\ast+9)=125$, the normal closure of $E=\mathbb Q(d,\sqrt{d+9})$ is $\widetilde E=\mathbb Q(d,\sqrt{d+9},\sqrt5)$; it has degree eight and dihedral Galois group. A normal field containing $E$ contains $\widetilde E$. An abelian cyclotomic extension cannot contain this nonabelian Galois extension.

\subsection{Proof that \texorpdfstring{$\eta$}{eta} lies in the Galois closure of \texorpdfstring{$J$}{J}}

Define 
\begin{equation} F=\mathbb Q[d],\qquad E=\mathbb Q[F,\eta,\eta'],\qquad L=E[x,y].\end{equation}
Let $\tau$ be the unique nontrivial automorphism in Gal$(E/F)$ and consider any lift  to $\overline{\mathbb Q}$. This lift must permute $x,y,\overline{x},\overline{y}$. Since $\tau$ exchanges the polynomials $X^2-\eta X+d$ and
$X^2-\eta'X+d$, it sends $\{x,\overline{x}\}$ to
$\{y,\overline{y}\}$.

Note that the category $\mathcal I_2$ defined from \eqref{sf:order-five-matrix} is manifestly defined over $L$. Hitting $A_5$ with $\tau$ will give another HI matrix for $d_+$, and hence define another unitary HI category for $\mathbb Z/5$ of Q-system type. But $\mathcal I_2$ is the unique such category, up to tensor equivalence. Therefore, by \cite[Theorem 2(b)]{evans2017non},  there must exist an $\ell\in\mathrm{Aut}(\mathbb Z/5)\cong(\mathbb Z/5)^\times$ such that $\tau(A_5)_{g,h}=(A_5)_{\ell g,\ell h}$. By considering $\tau(A_5)_{1,2}$, and replacing this lift if necessary by its inverse, we can take $\ell=2$, i.e.\ $\tau(A_5)_{g,h}=(A_5)_{2 g,2 h}$.

In a split $J$-form, each simple object has endomorphism algebra $J$.
All ribbon twists of simples are therefore scalars in $J$. Corollary~\ref{ce:prime-discriminant} then tells us $J$ must contain a primitive 29th root of 1, and hence $d\in J$ using Gauss sums.

Suppose for contradiction that $\eta$ does not lie in the Galois closure of $J$.  Hence $E\cap J=F$ and
$\operatorname{Gal}(EJ/J)\cong\operatorname{Gal}(E/F)$.  Thus $\tau$
extends to an automorphism of $EJ$ fixing $J$, and then to one of
$\overline{\mathbb Q}$ fixing $J$. As explained earlier, we can assume without loss of generality that $\tau(A_5)_{g,h}=(A_5)_{2 g,2 h}$.

Comparing the half-braidings in
\cite[equation~(6.11)]{evans2017non}, the relabeling gives a
braided equivalence $T:\mathcal B^\tau\to\mathcal B$ exchanging
$\mathfrak a_{[1,1]}$ and $\mathfrak a_{[2,1]}$, where the second
coordinate denotes the trivial character. 
On the other hand, base change of the split $J$-form gives a braided
descent equivalence $D:\mathcal B^\tau\to\mathcal B$ fixing every
geometric simple-object class: each such simple is already defined over
$J$, which $\tau$ fixes.  Then
$T\circ D^{-1}$ is a braided autoequivalence of $\mathcal B$, and it must be nontrivial since its
permutation of simples is precisely that of $T$.

We can identify the group $\operatorname{EqBr}(Z(\mathcal I_2))$ of braided autoequivalences, using its isomorphism \cite[Theorem~1.1]{EtingofNikshychOstrik2010} \begin{equation*}
 \operatorname{BrPic}(\mathcal C)\cong
 \operatorname{EqBr}(Z(\mathcal C))
\end{equation*} 
with the Brauer-Picard group of
 invertible exact \(\mathcal{C}\)-bimodule categories.  First, the outer tensor
autoequivalences of $\mathcal I_2$ follow from \cite[Theorem~2.4]{GrossmanIzumiSnyder2022}: because the stabilizer of Aut$(\mathbb Z/5)$ on  \eqref{sf:order-five-matrix} is trivial, so is the outer tensor
autoequivalence group. \cite[Theorem~6.11]{GrossmanSnyder2012}
 states that $\mathcal I_2$ has exactly three
indecomposable module categories, with exactly one having dual category
$\mathcal I_2$; the other two have duals $\mathcal I_1$ and $\mathcal I_3$.   
Since the outer tensor autoequivalence group is trivial,
the unique self-dual module category gives $\operatorname{BrPic}(\mathcal I_2)=1$, contradicting the existence of  $T\circ D^{-1}$ constructed above. This concludes the proof of Theorem \ref{sf:counterexample}.

Incidentally, the same argument applied to $\mathbb Z/3$ HI categories produces a braided autoequivalence with a trivial permutation of simples, so does not yield a contradiction.


\section{Characteristic two}
\label{sec:characteristic-two}\label{sec:HI-char-two-abelian}\label{sec:HI-char-two}
In characteristic two, we prove a Frobenius relation that bounds the
field of HI-matrix entries and makes fixed-order enumeration finite.
The reconstructed categories lift to characteristic zero. Conversely,
we prove integrality above $2$ for characteristic-zero HI-matrices
and compare Q-system categories with their reductions. This gives
an orbit-count criterion for completeness of the lists in
Appendix~\ref{app:finite-cyclic-data}.

\subsection{A squared-entry identity}
\label{sec:HI-diagonal-square}

Recall the Fourier transforms $X_t,Y_t$ from \eqref{eq:display-0263}.
We combine their factorization in Lemma~\ref{lem:HI-Fourier-E3}
with a squared-entry identity obtained from the cubic equation.
The latter holds in every characteristic when $\omega=1$. 
In characteristic two, these transforms are Frobenius squares
after reindexing the characters; in this characteristic,
Theorem~\ref{thm:HI-omega-trivial} gives $\omega=1$ for every HI-matrix.

\begin{theorem}[Squared-entry identity]
\label{thm:HI-diagonal-square}
With $\omega=1$ in the scalar setup \eqref{eq:HI-scalars}--\eqref{eq:HI-a}, if $A$ satisfies
\eqref{eq:HImatrix1} and the cubic identity, then
\begin{equation}
 \label{eq:HI-diagonal-square}
 \sum_{r\in G}A_{r,2t}A_{t,r+k}^2
 =A_{-k,t}^2-a\delta_{t,0}
 \qquad(t,k\in G).
\end{equation}
\end{theorem}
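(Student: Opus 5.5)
The identity is a direct specialization of the cubic identity \eqref{eq:HI-cubic}, followed by one application of the symmetry \eqref{eq:HImatrix1}. No Fourier analysis or characteristic-specific argument should be needed.

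First, set $\omega=1$ in \eqref{eq:HI-cubic} and specialize the four free indices to $g=h=t$ and $l=k$. The left side becomes
\begin{equation*}
 \sum_{m\in G}A_{m,2t}\,A_{t,m+k}\,A_{t,m+k}=\sum_{r\in G}A_{r,2t}\,A_{t,r+k}^2 ,
\end{equation*}
after renaming $m$ to $r$. This is exactly the left side of \eqref{eq:HI-diagonal-square}. On the right side, the product becomes $A_{t+k,k}A_{t+k,k}=A_{t+k,k}^2$. The correction $a\delta_{g,0}\delta_{h,0}$ collapses to $a\delta_{t,0}$, because $\delta_{t,0}^2=\delta_{t,0}$.

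Second, identify $A_{t+k,k}$ with $A_{-k,t}$. With $\omega=1$, \eqref{eq:HImatrix1} gives $A_{g,h}=A_{-h,g-h}$. Applying this with $g=t+k$ and $h=k$ yields $A_{t+k,k}=A_{-k,t}$. Squaring and substituting into the specialized cubic gives \eqref{eq:HI-diagonal-square} for all $t,k\in G$.

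The only step needing care is the bookkeeping. One must check that the coincident indices $g=h$ and $k=l$ are legitimate, which they are since \eqref{eq:HI-cubic} holds for all $g,h,k,l$. One must also check that the index shift in \eqref{eq:HImatrix1} is the $\omega$-free one, which it is since $\omega=1$. The argument never divides by any integer, so it holds in every characteristic. In particular it holds in characteristic two, where Theorem~\ref{thm:HI-omega-trivial} supplies $\omega=1$.
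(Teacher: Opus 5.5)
Your proof is correct and is exactly the paper's argument: specialize the cubic identity \eqref{eq:HI-cubic} at $g=h=t$, $l=k$, then use \eqref{eq:HImatrix1} with $\omega=1$ to rewrite $A_{t+k,k}$ as $A_{-k,t}$.
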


\begin{proof}
Set $g=h=t$ and $l=k$ in \eqref{eq:HI-cubic}.  Its two sides become
\begin{align*}
 \sum_{r\in G}A_{r,2t}A_{t,r+k}^2
 =A_{t+k,k}^2-a\delta_{t,0}
 =A_{-k,t}^2-a\delta_{t,0},
\end{align*}
where the second equality is \eqref{eq:HImatrix1}.
\end{proof}

Suppose now that $G$ is finite abelian and
$\operatorname{char}(\kk)\nmid N=|G|$.  Write
$\widehat G:=\Hom(G,\kk^\times)$.  Using the column and row transforms from
\eqref{eq:display-0263}, define the entrywise-square transforms
\begin{equation*}
 X_t^{[2]}(\chi):=\sum_{r\in G}A_{r,t}^2\chi(r),
 \qquad
 Y_t^{[2]}(\chi):=\sum_{r\in G}A_{t,r}^2\chi(r).
\end{equation*}
The superscript $[2]$ means that the entries are squared before taking the
Fourier transform; it does not mean $X_t(\chi)^2$.

\begin{corollary}[Fourier form of the squared-entry identity]
\label{cor:HI-Fourier-diagonal-square}
Under the assumptions of Theorem~\ref{thm:HI-diagonal-square}, suppose also
that $G$ is finite abelian and $\operatorname{char}(\kk)\nmid|G|$.  Then
\begin{equation}
 \label{eq:HI-Fourier-diagonal-square}
 X_{2t}(\chi^{-1})Y_t^{[2]}(\chi)
 =X_t^{[2]}(\chi^{-1})
  -N a\delta_{t,0}\delta_{\chi,1}.
\end{equation}
\end{corollary}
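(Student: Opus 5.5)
We multiply the squared-entry identity \eqref{eq:HI-diagonal-square} by $\chi(k)$ and sum over $k\in G$, for a fixed $t\in G$ and $\chi\in\widehat G$. On the right-hand side the first term gives $\sum_k A_{-k,t}^2\chi(k)=\sum_r A_{r,t}^2\chi^{-1}(r)=X_t^{[2]}(\chi^{-1})$. The second term gives $-a\delta_{t,0}\sum_k\chi(k)=-Na\,\delta_{t,0}\delta_{\chi,1}$. The orthogonality $\sum_k\chi(k)=N\delta_{\chi,1}$ is available because $\operatorname{char}(\kk)\nmid|G|$, so $\widehat G$ has $N$ distinct characters separating points.

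The left-hand side becomes a double sum $\sum_{r,k}A_{r,2t}A_{t,r+k}^2\chi(k)$. We substitute $s=r+k$, so that $\chi(k)=\chi(s)\chi^{-1}(r)$, and the sum factors as
\[
\Bigl(\sum_r A_{r,2t}\chi^{-1}(r)\Bigr)\Bigl(\sum_s A_{t,s}^2\chi(s)\Bigr).
\]
The second factor is $Y_t^{[2]}(\chi)$ by definition. The first factor should be $X_{2t}(\chi^{-1})$, the column transform of \eqref{eq:display-0263}.

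The only point requiring care is matching conventions. We must check that \eqref{eq:display-0263} defines $X_t(\chi)=\sum_r A_{r,t}\chi(r)$, with the same sign and normalization as the squared versions introduced just above. If instead it is defined with $\chi^{-1}$ or with a normalizing factor, the displayed identity still follows after replacing $\chi$ by $\chi^{-1}$ in the summation step. Since the squared transforms were stated to be "using the column and row transforms from \eqref{eq:display-0263}", we expect the conventions to agree directly.

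Nothing beyond Theorem~\ref{thm:HI-diagonal-square} and character orthogonality is needed. There is no real obstacle here; this is a routine Fourier transform of an identity of convolution type.
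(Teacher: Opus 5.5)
Your proposal is correct and is essentially the paper's proof: multiply \eqref{eq:HI-diagonal-square} by $\chi(k)$ and sum over $k$. On the left, substitute $s=r+k$ to factor the double sum. On the right, reindex by $k\mapsto -k$ and use $\sum_k\chi(k)=N\delta_{\chi,1}$.

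The conventions in \eqref{eq:display-0263} are exactly the ones you assumed, with $X_t(\chi)=\sum_r A_{r,t}\chi(r)$. The orthogonality sum $\sum_k\chi(k)=N\delta_{\chi,1}$ holds for every character even without the characteristic hypothesis.
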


\begin{proof}
Multiply \eqref{eq:HI-diagonal-square} by $\chi(k)$ and sum over $k$.
For the left-hand side, put $s=r+k$.  Since
$\chi(k)=\chi(s)\chi(r)^{-1}$, the double sum factors as
\begin{equation*}
 \left(\sum_rA_{r,2t}\chi(r)^{-1}\right)
 \left(\sum_sA_{t,s}^2\chi(s)\right)
 =X_{2t}(\chi^{-1})Y_t^{[2]}(\chi).
\end{equation*}
For the first term on the right, put $u=-k$.  Its sum is
$X_t^{[2]}(\chi^{-1})$.  The last term uses $\sum_{k\in G}\chi(k)=N\delta_{\chi,1}$. 
This proves \eqref{eq:HI-Fourier-diagonal-square}.
\end{proof}

We now specialize to characteristic two. Since $G$ has odd order,
$N=1$ in $\kk$, and the scalar equation becomes $a^2+a=1$.
\begin{theorem}[Fourth-power relation for odd abelian groups]
\label{thm:HI-char-two-abelian-fourth-power}
Suppose that $G$ is nontrivial, take the scalar setup
\eqref{eq:HI-scalars}--\eqref{eq:HI-a} in characteristic $2$, and let $A$
satisfy \eqref{eq:HImatrix1} and \eqref{eq:HI-cubic}.  Then
\begin{equation}
 \label{eq:HI-char-two-fourth-power}
 A_{2g,2h}=A_{g,h}^4
 \qquad(g,h\in G).
\end{equation}
\end{theorem}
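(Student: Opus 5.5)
Set $p=2$, so $|G|=N$ is odd and hence a unit in $\kk$; Corollary~\ref{cor:HI-Fourier-diagonal-square} therefore applies over $\kk$. The plan is to turn that Fourier identity into a statement about Fourier transforms of $A$ itself, using the Frobenius, and then invert the transform.

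In characteristic two the Frobenius $x\mapsto x^2$ is additive. Hence
\[
X^{[2]}_t(\chi)=\sum_r A_{r,t}^2\chi(r)=\Bigl(\sum_r A_{r,t}\chi'(r)\Bigr)^2,
\]
where $\chi'$ is the unique character with $\chi'^2=\chi$. It exists and is unique because $|\widehat G|$ is odd. So $X^{[2]}_t(\chi)=X_t(\chi')^2$, and likewise $Y^{[2]}_t(\chi)=Y_t(\chi')^2$. Substituting into \eqref{eq:HI-Fourier-diagonal-square} gives
\[
X_{2t}(\chi^{-1})\,Y_t(\chi')^2=X_t(\chi'^{-1})^2-a\,\delta_{t,0}\delta_{\chi,1},
\]
using $N=1$ in $\kk$.

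Next I would bring in Lemma~\ref{lem:HI-Fourier-E3}, the Fourier factorization of \eqref{eq:HImatrix3} (available since the cubic criterion gives (E3)). I expect it to say roughly that $X_k(\chi^{-1})Y_k(\chi)=1-a\delta_{k,0}$-type terms, or more precisely that each $Y_t(\psi)$ is invertible with inverse expressed through $X_t(\psi^{-1})$. Combining the two displays, the aim is to solve for $X_{2t}(\chi^{-1})$ and obtain
\[
X_{2t}(\chi^{-1})=X_t(\chi'^{-1})^2\,Y_t(\chi')^{-2}+\ldots=X_t(\chi'^{-1})^4\cdot(\text{unit from E3}).
\]
Ideally this reduces to $X_{2t}(\chi)=X_t(\chi')^4$ up to the boundary correction at $t=0,\chi=1$. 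That correction should cancel, because $a^2+a=1$ in characteristic two.

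Finally, invert the Fourier transform, which is legitimate since $N$ is a unit. The identity $X_{2t}(\chi)=X_t(\chi')^4=\sum_r A_{r,t}^4\chi'(r)^4=\sum_r A_{r,t}^4\chi(2r)$ says that column $2t$ of $A$ equals column $t$ of $A^{\circ4}$ reindexed by $r\mapsto 2r$. That is exactly $A_{2r,2t}=A_{r,t}^4$.

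The main obstacle is the middle step: identifying exactly which product relation Lemma~\ref{lem:HI-Fourier-E3} provides, checking that $Y_t(\chi')$ is invertible, and making sure the special case $t=0$, $\chi=1$ works out. For that case I would first try a direct computation using the boundary sum (E2) together with $a^2+a=1$.

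If the Fourier route gets stuck, the fallback is a direct argument: show that the matrix $B_{g,h}=A_{g/2,h/2}^4$ also satisfies (E3) with the same $a$ (since $a^4$ relates to $a$ via $a^2=a+1$). Then the squared-entry identity \eqref{eq:HI-diagonal-square} can be read as a linear system with invertible coefficient matrix, coming from (E3), which $A_{2\cdot,2\cdot}$ and $B$ both solve, forcing them to be equal.
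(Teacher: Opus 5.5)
Your proposal is correct and is essentially the paper's proof. In order: Frobenius turns $X^{[2]}_t,Y^{[2]}_t$ into squares of $X_t,Y_t$ at the square-root character. Lemma~\ref{lem:HI-Fourier-E3} (with $N=1$) gives $X_t(\chi'^{-1})Y_t(\chi')=1$ away from $(t,\chi)=(0,1)$, so the ``unit'' you anticipated is exactly $1$, and dividing yields $X_{2t}(\xi^2)=X_t(\xi)^4$ with $\xi=\chi'^{-1}$. At the exceptional point, \eqref{eq:HImatrix2} gives $X_0(1)=a$, and $a^4=a$ follows from $a^2+a=1$. Fourier inversion then finishes exactly as you describe.

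You should state explicitly that $\omega=1$, which Corollary~\ref{cor:HI-Fourier-diagonal-square} and the evaluation $X_0(1)=a$ both require. This follows from Theorem~\ref{thm:HI-omega-trivial} once the cubic criterion makes $A$ an HI-matrix. Your fallback argument is not needed.
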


\begin{proof}
Theorem~\ref{thm:cubic-criterion} shows that $A$ is an HI-matrix, so
\eqref{eq:HImatrix3} holds. We divide the proof into three parts. First we
express the two identities through Fourier coefficients. Then we cancel a
nonzero Fourier coefficient and recover the matrix entries by Fourier
inversion.

\emph{Fourier coefficients and Frobenius.}
Since the odd integer $|G|$ equals $1$ in $\kk$,
Lemma~\ref{lem:HI-Fourier-E3} gives
\begin{equation}
 \label{eq:HI-char-two-abelian-Fourier-inverse}
 X_t(\chi)Y_t(\chi^{-1})
 =1-a\delta_{t,0}\delta_{\chi,1}.
\end{equation}

Because $G$ has odd order, squaring is an automorphism of $\widehat G$.
Write $\chi^{1/2}$ for the unique character whose square is $\chi$.
Frobenius additivity then gives
\begin{equation*}
 X_t^{[2]}(\chi)=X_t(\chi^{1/2})^2,
 \qquad
 Y_t^{[2]}(\chi)=Y_t(\chi^{1/2})^2.
\end{equation*}

\emph{Cancellation of the nonzero Fourier coefficient.}
Substituting these identities in
\eqref{eq:HI-Fourier-diagonal-square} gives
\begin{equation}
 X_{2t}(\chi^{-1})Y_t(\chi^{1/2})^2
 =X_t((\chi^{-1})^{1/2})^2
  +a\delta_{t,0}\delta_{\chi,1}.
 \label{eq:display-0436}
\end{equation}
Suppose first that $(t,\chi)\neq(0,1)$, and put
$\xi=(\chi^{-1})^{1/2}$.  Then $\xi^{-1}=\chi^{1/2}$, and
\eqref{eq:HI-char-two-abelian-Fourier-inverse}, applied to $\xi$, gives
\begin{equation}
 X_t(\xi)Y_t(\xi^{-1})=1.
 \label{eq:display-0437}
\end{equation}
Both factors in \eqref{eq:display-0437} are nonzero.  Divide
\eqref{eq:display-0436} by $Y_t(\xi^{-1})^2$ and use
\eqref{eq:display-0437}.  This gives
\begin{equation}
 X_{2t}(\xi^2)=X_t(\xi)^4.
 \label{eq:display-0438}
\end{equation}
At the remaining point $(t,\chi)=(0,1)$,
\eqref{eq:HImatrix2} gives $X_0(1)=-a=a$.  The
scalar relation is $a^2+a=1$, so $a^4=a$.  Hence
\eqref{eq:display-0438} also holds at this point.

\emph{Fourier inversion.}
Since multiplication by $2$ permutes $G$,
\begin{equation*}
 X_{2t}(\xi^2)
 =\sum_{g\in G}A_{2g,2t}\xi(g)^4,
 \qquad
 X_t(\xi)^4
 =\sum_{g\in G}A_{g,t}^4\xi(g)^4.
\end{equation*}
As $\xi$ varies, so does the character $\xi^4$, because $G$ has odd order.
Thus every Fourier coefficient of
$g\mapsto A_{2g,2t}-A_{g,t}^4$ is zero.  The odd integer $|G|$ is invertible
in $\kk$, so Fourier inversion gives
$A_{2g,2t}=A_{g,t}^4$ for every $g,t\in G$.
\end{proof}

This identity holds for the original matrix $A$ with its given group labels.

\begin{remark}[Finite fields and scalar extension]
\label{rem:HI-base-change}
Suppose that the entries of $A$ lie in a finite field $F$. Let $E$ be the
field generated over $F$ by the required scalars $d$ and $b$.
Then $E/F$ is finite, and the presentation and reduced-word basis give
\begin{equation*}
 \cL_{N+1}(F)\otimes_FE\cong\cL_{N+1}(E).
\end{equation*}
The matrix identities and the formulas for $\rho$ and $\alpha_g$ commute
with scalar extension. Thus the reconstruction theorem applies over $E$ and
over its algebraic closure.
\end{remark}

\begin{corollary}[Finite-field bound and lifting]
\label{cor:HI-char-two-field-bound}
Suppose $G$ is nontrivial, and let $A$ satisfy \eqref{eq:HImatrix1} and the
cubic identity \eqref{eq:HI-cubic} in the characteristic-$2$ setup of
Theorem~\ref{thm:HI-char-two-abelian-fourth-power}.  Put
\begin{equation*}
 r:=\min\{s\geq1:2^s g=g\text{ for every }g\in G\}.
\end{equation*}
Equivalently, if $e_G:=\exp(G)$, then
$r=\operatorname{ord}_{e_G}(2)$.
Every entry of $A$ belongs to $\mathbb F_{2^{2r}}\subset\kk$.
The matrix $A$ is an HI-matrix and reconstructs a spherical
$\mathfrak{HI}_G$-category of global dimension $d\neq0$. It admits
a unique lifting to $W(\kk)$. Extending scalars to an algebraic closure of
the fraction field of $W(\kk)$ gives a characteristic-zero categorification.
\end{corollary}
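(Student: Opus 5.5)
The plan is to take the four assertions of the corollary in turn: the field bound, the HI-matrix and reconstruction claims, the lifting to $W(\kk)$, and the characteristic-zero categorification.

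\textbf{Field bound.} Iterate the fourth-power relation of Theorem~\ref{thm:HI-char-two-abelian-fourth-power}. By induction on $s$ one gets $A_{2^sg,2^sh}=A_{g,h}^{4^s}$. Taking $s=r$, so that $2^r$ acts as the identity on $G$, gives
\[
A_{g,h}=A_{g,h}^{2^{2r}}.
\]
Hence every entry is a root of $X^{2^{2r}}-X$, that is, lies in $\mathbb F_{2^{2r}}$. The equivalence $r=\operatorname{ord}_{e_G}(2)$ holds because $2^sg=g$ for all $g$ exactly when $e_G\mid 2^s-1$.

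\textbf{HI-matrix and reconstruction.} Theorem~\ref{thm:cubic-criterion} applies since $G$ is nontrivial, so $A$ is an HI-matrix. Theorem~\ref{thm:HI-omega-trivial} forces $\omega=1$, because $N$ is odd and hence nonzero in $\kk$. Theorem~\ref{thm:reconstruction} then gives a spherical $\mathfrak{HI}_G$-category. Its global dimension is $N(1+d^2)=1+d^2$ in characteristic two. From $d^2=d+1$ this equals $d$, which is nonzero.

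\textbf{Lifting.} I will use the Etingof--Nikshych--Ostrik lifting theorem: a fusion category over $\kk$ of nonzero global dimension lifts uniquely to $W(\kk)$, because the relevant Davydov--Yetter obstruction and deformation groups vanish. The category has nonzero global dimension, so it admits a unique lift to $W(\kk)$. Extending scalars to an algebraic closure of $\operatorname{Frac}W(\kk)$ gives a fusion category in characteristic zero. The fusion rules are preserved, since simple objects and Hom dimensions lift, so the result is a categorification of the HI fusion rules.

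An alternative route to the lifting would be to Hensel-lift the matrix $A$ itself using smoothness of the HI variety; this would need the étale property and seems harder. The step I expect to be the main obstacle is exactly this citation of the lifting theorem. Its hypotheses must be checked carefully: the category should be spherical with nonzero global dimension in the sense used there. I take this from the computed global dimension $d\neq0$ together with the pivotal structure of Theorem~\ref{thm:reconstruction}.
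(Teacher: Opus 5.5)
Your proposal is correct and follows the paper's proof essentially step for step. It iterates $A_{2g,2h}=A_{g,h}^4$ to show the entries are fixed by the $4^r$-power Frobenius, then uses the cubic criterion and reconstruction, computes $\operatorname{Dim}=N(1+d^2)=d\neq0$ in characteristic two, and concludes by the Etingof--Nikshych--Ostrik lifting theorem for nondegenerate fusion categories; your extra appeal to Theorem~\ref{thm:HI-omega-trivial} for $\omega=1$ is harmless, and the Hensel-lifting alternative is not needed.
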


\begin{proof}
\emph{Field of definition.}
Iteration of \eqref{eq:HI-char-two-fourth-power} gives
\begin{equation*}
 A_{2^rg,2^rh}=A_{g,h}^{4^r}.
\end{equation*}
By the definition of $r$, every entry is fixed by the $4^r$-power Frobenius.
It therefore belongs to $\mathbb F_{4^r}=\mathbb F_{2^{2r}}$.

\emph{Lifting.}
By Theorem~\ref{thm:cubic-criterion} and
Theorems~\ref{thm:HImatrix-to-Leavitt} and \ref{thm:LRtoHIcat}, let
$\mathcal C$ be the resulting spherical $\mathfrak{HI}_G$-category.  In
characteristic $2$, the odd integer $|G|$ is $1$ in $\kk$, and $d^2=d+1$.
Remark~\ref{rem:LR-global-dimension} gives
\begin{equation*}
 \operatorname{Dim}(\mathcal C)=|G|(1+d^2)=d\neq0.
\end{equation*}
Thus $\mathcal C$ is nondegenerate in the positive-characteristic sense of
\cite[Definition~9.1, p.~634]{etingof2005fusion}; the lifting assertion is
\cite[Theorem~9.3, p.~635]{etingof2005fusion}.
\end{proof}

\subsection{Reduction from characteristic zero}
\label{sec:HI-reduction-at-two}

We now work in characteristic zero. Let $G$ be a nontrivial finite abelian
group of odd order $N$, let $A$ be a full HI-matrix,
and put $a=d^{-1}$. Theorem~\ref{thm:HI-omega-trivial} gives $\omega=1$.
We first prove that the entries are integral at every place above $2$.
The proof uses an exact Fourier identity before reduction; it does not
require the Q-system condition.

\begin{lemma}[An exact Fourier identity]\label{lem:HI-exact-fourier-lift}
Choose any total order on $G$, and define
\begin{equation}
 \mathcal T_{g,\chi}(A):=
 \sum_{k<l}\chi(k+l)^{-1}
 \bigl(A_{g+l,k}A_{g+k,l}-A_{g+k,k}A_{g+l,l}\bigr).
 \label{eq:HI-fourier-correction}
\end{equation}
For the Fourier transforms in \eqref{eq:display-0263}, one has
\begin{equation}
 X_{2g}(\chi^2)=X_g(\chi)^4+
 2X_g(\chi)^2\mathcal T_{g,\chi}(A)
 \qquad ((g,\chi)\ne(0,1)).
 \label{eq:HI-exact-fourier-lift}
\end{equation}
\end{lemma}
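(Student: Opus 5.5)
The plan is to run the argument of Theorem~\ref{thm:HI-diagonal-square} and Corollary~\ref{cor:HI-Fourier-diagonal-square} again, but without specializing to $k=l$ and without reducing modulo $2$. The correction term $2X_g(\chi)^2\mathcal T_{g,\chi}(A)$ should then appear as exactly the off-diagonal part of a squared Fourier sum.

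\textbf{Step 1: the cubic with $g=h$.} Set $g=h$ in \eqref{eq:HI-cubic} (with $\omega=1$ by Theorem~\ref{thm:HI-omega-trivial}), keeping $k,l$ independent:
\begin{equation*}
 \sum_{m}A_{m,2g}A_{g,m+k}A_{g,m+l}=A_{g+l,k}A_{g+k,l}-a\delta_{g,0}.
\end{equation*}
Multiply by $\chi(k+l)^{-1}$ (the sign of the exponent to be matched with the conventions of \eqref{eq:display-0263}) and sum over all $k,l\in G$. On the left, substitute $s=m+k$ and $s'=m+l$. The sum factors as $X_{2g}(\chi^{2})$ times $Y_g(\chi^{-1})^2$, up to the same inversion bookkeeping as in the proof of Corollary~\ref{cor:HI-Fourier-diagonal-square}; the factor $\chi(m)^{2}$ is what produces the argument $\chi^2$. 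On the right, the $\delta$-term contributes only when $g=0$ and $\chi^2=1$. Since $N$ is odd, that means $\chi=1$, and this point is excluded.

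\textbf{Step 2: split the right side into diagonal and off-diagonal parts.} The remaining right side is $\sum_{k,l}\chi(k+l)^{-1}A_{g+l,k}A_{g+k,l}$.
\begin{itemize}
\item The diagonal $k=l$ gives $\sum_k\chi(k)^{-2}A_{g+k,k}^2$.
\item The off-diagonal part is symmetric in $(k,l)$, so it equals $2\sum_{k<l}$ of the same summand.
\end{itemize}
Next, use \eqref{eq:HImatrix1} to rewrite a column entry $A_{r,g}$ as a diagonal-type entry $A_{g+k,k}$ with $k$ a linear function of $r$. This expresses $X_g(\chi)$ as $\sum_k A_{g+k,k}\chi(\cdot)$, after reindexing the character if necessary. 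Squaring gives
\begin{equation*}
 X_g(\chi)^2=\sum_k\chi(k)^{-2}A_{g+k,k}^2+2\sum_{k<l}\chi(k+l)^{-1}A_{g+k,k}A_{g+l,l}.
\end{equation*}
The diagonal term is the same as in the right side of Step 1. Subtracting shows that the right side equals $X_g(\chi)^2+2\mathcal T_{g,\chi}(A)$, with exactly the combination in \eqref{eq:HI-fourier-correction}. This is the reason the total order on $G$ enters; the identity does not depend on which order is chosen.

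\textbf{Step 3: divide out the $Y$-factor.} Lemma~\ref{lem:HI-Fourier-E3} gives $X_g(\chi)Y_g(\chi^{-1})=1$ for $(g,\chi)\ne(0,1)$. Hence $Y_g(\chi^{-1})^{-2}=X_g(\chi)^2$. Multiplying the identity from Steps 1–2 by $X_g(\chi)^2$ yields \eqref{eq:HI-exact-fourier-lift}.

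\textbf{Main obstacle.} I expect the only real difficulty to be the index and character bookkeeping: the orientation of $\chi$ versus $\chi^{-1}$, and which of the three equivalent forms in \eqref{eq:HImatrix1} identifies the column transform with the diagonal-type sums $\sum_kA_{g+k,k}$. I would fix conventions by first checking the degenerate instance $k=l$, where Steps 1–3 must reproduce Corollary~\ref{cor:HI-Fourier-diagonal-square}. As a second check, reducing the final identity modulo $2$ must recover \eqref{eq:display-0438}.
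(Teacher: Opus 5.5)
Your proposal is correct and follows the paper's proof essentially step for step. You set $h=g$ in the cubic identity and sum against $\chi(k+l)^{-1}$ to obtain $X_{2g}(\chi^2)Y_g(\chi^{-1})^2$. You then split the symmetric right-hand side into diagonal and unordered-pair terms using $A_{g+k,k}=A_{-k,g}$, so that $X_g(\chi)=\sum_k A_{g+k,k}\chi(k)^{-1}$ with no further inversions. The constant term is exactly $-aN^2\delta_{g,0}\delta_{\chi,1}$, and you cancel $Y_g(\chi^{-1})^2$ with Lemma~\ref{lem:HI-Fourier-E3}, as the paper does.
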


\begin{proof}
Set $h=g$ in the cubic identity \eqref{eq:HI-cubic}, multiply by
$\chi(k+l)^{-1}$, and sum over $k,l$. The left side becomes
$X_{2g}(\chi^2)Y_g(\chi^{-1})^2$. On the right, the product term is
symmetric in $k,l$. Separate its diagonal terms and its unordered pairs.
Since $A_{g+k,k}=A_{-k,g}$ by \eqref{eq:HImatrix1}, this gives
\begin{equation*}
 X_{2g}(\chi^2)Y_g(\chi^{-1})^2
 =X_g(\chi)^2+2\mathcal T_{g,\chi}(A)
   -aN^2\delta_{g,0}\delta_{\chi,1}.
\end{equation*}
At a nonexceptional pair, Lemma~\ref{lem:HI-Fourier-E3} gives
$X_g(\chi)Y_g(\chi^{-1})=1$. Multiply by $X_g(\chi)^2$ to obtain
\eqref{eq:HI-exact-fourier-lift}.
\end{proof}

\begin{theorem}[Integrality above two]\label{thm:HI-integrality-at-two}
Every full HI-matrix for a nontrivial finite abelian group of
odd order is integral at every place above $2$.
\end{theorem}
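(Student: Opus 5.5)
Suppose, for contradiction, that some entry of $A$ is not integral at a place $\mathfrak P$ above $2$. The entries are algebraic (Remark~\ref{algebraicic}), so we may work in a number field $K$ containing all entries, $d$, and the $N$th roots of unity, with valuation $v=v_{\mathfrak P}$. Since $N$ is odd, $N$ and all roots of unity of order dividing $N$ are units at $\mathfrak P$, and so are $d$ and $a$ (because $d^2-Nd-1=0$). Put $m=\min_{g,h}v(A_{g,h})<0$ and $\pi$ a uniformizer. Then $B=\pi^{-m}A$ is integral, and its reduction $\bar B$ over the residue field $\kappa$ (characteristic $2$) is nonzero. The aim is to show that $\bar B$ satisfies homogeneous versions of the HI relations which force $\bar B=0$.

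First, reduce the Fourier relation of Lemma~\ref{lem:HI-Fourier-E3}. In Fourier terms, $X_g(\chi)Y_g(\chi^{-1})=1$ for $(g,\chi)\neq(0,1)$. Fourier inversion is unimodular at $2$, since $N$ is a unit. Hence the minimal valuation among the $X_g(\chi)$ and $Y_g(\chi)$ equals $m$ up to the usual comparison, and some $X_g(\chi)$ has negative valuation. For that pair, $Y_g(\chi^{-1})=X_g(\chi)^{-1}$ has positive valuation. The plan is to track $w(g,\chi)=v(X_g(\chi))$.

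The key step is Lemma~\ref{lem:HI-exact-fourier-lift}, which gives
\[
X_{2g}(\chi^2)=X_g(\chi)^4+2X_g(\chi)^2\mathcal T_{g,\chi}(A).
\]
Here $\mathcal T$ is quadratic in the entries, so $v(\mathcal T)\ge 2m$. The correction term therefore has valuation at least $v(2)+2w+2m$. We choose $(g,\chi)$ minimizing $w$, so that $w\le m$; more precisely, $\min w=m'$ with $m'$ comparable to $m$ by unimodular inversion, and in fact $w\ge m$ for all pairs. Then the correction satisfies $v\ge v(2)+2m'+2m$, while $X_g(\chi)^4$ has valuation $4m'$.

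Comparing, we get $v(X_g^4)=4m'<v(2)+2m'+2m$ whenever $2m'<v(2)+2m$. This holds when $m'\le m$ and $m<0$, because then $2m'\le 2m<2m+v(2)$. Hence $v(X_{2g}(\chi^2))=4m'<m'$, which contradicts the minimality of $m'$ over the finite set of pairs $(g,\chi)\neq(0,1)$. Since $(2g,\chi^2)\ne(0,1)$ whenever $(g,\chi)\ne(0,1)$, the pair stays in range.

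The exceptional pair needs separate handling: $X_0(1)=-a$ by (E2), which is a unit. So if the minimum $m'$ were attained only there, it would be $0$, contradicting $m'<0$. Thus the minimum, when negative, is attained at a nonexceptional pair, and the contradiction applies.

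The main obstacle is the bookkeeping that relates the minimum of $v$ over the entries to the minimum over the Fourier coefficients. We need $m'\le m$, and this follows from inversion $A_{r,t}=N^{-1}\sum_\chi X_t(\chi)\chi(r)^{-1}$, since then $m\ge m'$. We also need $\mathcal T$ to be bounded below by $2m$, which is immediate from its definition. This $2$-adic argument yields that every $X_g(\chi)$ is integral, and inversion then gives integrality of $A$.
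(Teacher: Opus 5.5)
Your proof is correct and is essentially the paper's argument. Both rest on the exact identity of Lemma~\ref{lem:HI-exact-fourier-lift}, the exceptional value $X_0(1)=-a$, and Fourier inversion being unimodular at $2$; the paper rescales to $C=cA$ and reduces modulo the place to get $X_g(\chi;C)^4\equiv0$ at every nonexceptional pair, whereas you compare valuations at a minimizing pair. Your bookkeeping can be tightened: the forward transform and inversion together give $\min_{g,\chi}v(X_g(\chi))=m$ exactly, and $v(X_{2g}(\chi^2))=4m<m$ then contradicts the bound $v(X_{2g}(\chi^2))\ge m$ coming directly from the entries, so the detour through $Y_g(\chi^{-1})$ is unnecessary.
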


\begin{proof}
Fix such a place, and choose a number field containing the entries of $A$
and all character values. Write $v$ for its valuation, $R$ for its valuation
ring, and $F$ for its residue field. The scalars $a,d$ are algebraic units.
The character values and $N$ are also units in $R$, since $N$ is odd.

Suppose some entry is not integral, and put
$\mu=\min_{r,g}v(A_{r,g})<0$. Choose $c$ with $v(c)=-\mu$, set $C=cA$,
and let $B$ be its reduction. Every entry of $C$ lies in $R$, and an entry
of valuation $\mu$ in $A$ becomes a unit in $C$. Thus $B\ne0$.
We will show that all Fourier coefficients of $B$ vanish.

The transforms are linear in the entries and
$\mathcal T_{g,\chi}$ is homogeneous quadratic. Multiplying
\eqref{eq:HI-exact-fourier-lift} by $c^4$ therefore gives
\begin{equation}
 c^3X_{2g}(\chi^2;C)
 =X_g(\chi;C)^4+
 2X_g(\chi;C)^2\mathcal T_{g,\chi}(C).
 \label{eq:HI-scaled-fourier-lift}
\end{equation}
All terms are integral. Since $c$ and $2$ have positive valuation,
reduction gives $X_g(\bar\chi;B)^4=0$ at every nonexceptional pair.
The residue field has no nonzero nilpotents, so these coefficients vanish.
At the exceptional pair, \eqref{eq:HImatrix2} gives
$X_0(1;C)=-ca$, whose reduction is also zero.

Reduction identifies the odd-order characters with the characters over
$F$: roots of unity of odd order remain distinct modulo the chosen place.
Since $N$ is invertible in $F$, Fourier inversion now gives $B=0$,
contrary to its construction. Hence all entries of $A$ are integral.
\end{proof}

Fix a place of $\overline{\mathbb Q}$ above $2$ and identify its residue
field with $\overline{\mathbb F}_2$. Write $\bar A$ for reduction at this
place and $\mathcal C(A)$ for the reconstructed category. The following
comparison concerns tensor-equivalence classes, not labelled matrices.

\begin{proposition}[Good reduction and categorical comparison]
\label{prop:HI-good-reduction-at-two}
Every characteristic-zero Q-system HI-matrix $A$ has a split
integral categorical model whose special fibre is $\mathcal C(\bar A)$.
After extension to algebraically closed fields, $\mathcal C(A)$ is
equivalent to the characteristic-zero lift of $\mathcal C(\bar A)$.
For any two such matrices,
\begin{equation}
 \mathcal C(A)\simeq_\otimes\mathcal C(A')
 \quad\Longleftrightarrow\quad
 \mathcal C(\bar A)\simeq_\otimes\mathcal C(\bar A').
 \label{eq:HI-categorical-reduction-comparison}
\end{equation}
\end{proposition}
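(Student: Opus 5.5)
The plan is to build the integral model directly from the Leavitt realization of Theorem~\ref{thm:reconstruction}, using Theorem~\ref{thm:HI-integrality-at-two} to control denominators. Fix the place above $2$ and let $R$ be the valuation ring of a number field containing the entries of $A$, the scalars $d,a$, a square root $b$ of $a$, and the relevant roots of unity. By Theorem~\ref{thm:HI-integrality-at-two} all $A_{g,h}\in R$. The scalars $d,a,b$ are units, since $d^2=Nd+1$ is monic with unit constant term, and $N$ is odd, hence a unit. The first step is to verify that the endomorphism formulas of Appendix~\ref{app:reconstruction} involve only $A_{g,h}$, $d^{\pm1}$, $b^{\pm1}$ and $N^{-1}$. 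If so, the assignments defining $\rho$ and $\alpha_g$ make sense on the Leavitt algebra $\cL_{N+1}(R)$, which is free over $R$ on reduced words. The intertwiner spaces are then free $R$-modules, with bases given by the same explicit isometries in both characteristics. This gives a split $R$-linear spherical category whose generic fibre is $\mathcal C(A)$, and whose special fibre is, by construction, the reconstruction $\mathcal C(\bar A)$ from the reduced matrix. Here $\bar A$ is again a Q-system HI-matrix, since the equations reduce and $\theta=-1/(d-1)$ remains defined because $d-1$ is a unit ($d^2-d=(N-1)d+1$).

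Next I identify $\mathcal C(A)$ with the lift of $\mathcal C(\bar A)$. By Corollary~\ref{cor:HI-char-two-field-bound}, $\mathcal C(\bar A)$ has global dimension $d\ne0$ and therefore a unique lift to $W(\overline{\mathbb F}_2)$, by \cite[Theorem~9.3]{etingof2005fusion}. The integral model above, base changed to the completion of the maximal unramified-over-$W$ extension, is a lift of $\mathcal C(\bar A)$. The same uniqueness, which comes from vanishing of Davydov--Yetter cohomology for nondegenerate categories, should apply over any complete local ring with residue field $\overline{\mathbb F}_2$, after passing to a ramified extension if necessary. I would check this carefully, because the model lives over a possibly ramified ring. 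After extension to algebraically closed fields, the two characteristic-zero categories are therefore equivalent.

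The comparison \eqref{eq:HI-categorical-reduction-comparison} then follows. For the direction ($\Leftarrow$), an equivalence of the special fibres gives equivalent lifts by uniqueness of lifting, and hence $\mathcal C(A)\simeq\mathcal C(A')$ by the previous step. For the direction ($\Rightarrow$), I want lifting to induce a bijection on equivalence classes. Given $\mathcal C(A)\simeq\mathcal C(A')$, both categories are lifts of their reductions. Etingof--Nikshych--Ostrik also show that the lifting map is injective: a tensor equivalence between lifts descends, because tensor functors between nondegenerate categories are rigid and defined over the integral model. Concretely, one can realize the equivalence over $R$, after enlarging $R$ using the finiteness from Ocneanu rigidity, and reduce it.

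The main obstacle is making this last descent step rigorous in the ramified setting. The approach I would try first is to note that the equivalence class of $\mathcal C(A)$ corresponds to a finite orbit of HI-matrices, and to use the integrality theorem to reduce the matrix relating $A$ and $A'$ modulo the place.
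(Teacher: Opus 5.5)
Your construction of the integral model and your identification of $\mathcal C(A)$ with the lift of $\mathcal C(\bar A)$ follow the paper's route. The ingredients are the same: the Leavitt algebra over $R$ with its reduced-word basis, integral entries from Theorem~\ref{thm:HI-integrality-at-two}, and the splitting maps as $R$-bases of the Hom modules. Their reductions give the special-fibre Hom spaces, which is supplied by the characteristic-two reconstruction and is not automatic ``by construction''. The comparison with the Witt lift is also the paper's. Base change both models to a common, possibly ramified, complete discrete valuation ring with residue field $\overline{\mathbb F}_2$. Then run the deformation argument behind \cite[Theorem~9.3]{etingof2005fusion} one square-zero step modulo successive powers of a uniformizer. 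You cannot base change the model down to an unramified ring; you must go up to a common ramified one. The implication $(\Leftarrow)$ then follows from uniqueness of lifting, exactly as you say.

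The gap is in $(\Rightarrow)$. Lifting theory in \cite{etingof2005fusion} produces lifts of data given on the special fibre. It does not push an equivalence $\mathcal C(A)\simeq\mathcal C(A')$ found in characteristic zero down to $\mathcal C(\bar A)\simeq\mathcal C(\bar A')$. Your claim that the equivalence is ``defined over the integral model'' is exactly what needs proof. Ocneanu rigidity puts the equivalence over a number field. It does not say that, in the integral tensor bases of the two models, its tensorator can be gauged to have entries in $R$ with invertible reduction. Two integral points in the same $K$-orbit of a gauge group need not reduce into the same orbit. For $\mathbb G_m$ scaling $\mathbb A^1$, the points $\pi$ and $1$ are $K$-conjugate but reduce to $0$ and $1$. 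The HI fusion rules are multiplicity free, so the gauge group here is a torus and this phenomenon must be ruled out a priori. Ruling it out for nondegenerate fusion categories is precisely the faithfulness of lifting, which Etingof proves using Haboush's theorem over local rings. The paper invokes \cite[Theorem~4.1 and Remark~4.3]{etingof2018faithfulness} at this point, and you need that input or an equivalent orbit-closedness argument.

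Your fallback does work for cyclic $G$. There \cite[Theorem~2(b)]{evans2017non} gives $A'=\alpha(A)$ for some $\alpha\in\operatorname{Aut}(G)$. Reduction commutes with relabeling, so $\bar A'=\alpha(\bar A)$, and relabeled matrices reconstruct tensor-equivalent categories in every characteristic. That covers the cyclic censuses. However, the proposition is stated for an arbitrary nontrivial abelian group of odd order and possibly nonunitary $A$. In that generality no matrix-level equivalence criterion is available: Izumi's criterion applies only to unitary categories and also involves $H^2(G,\mathbb C^\times)$. So the statement as posed still needs the faithfulness theorem.
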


\begin{proof}
We construct the integral model and identify its two fibres before applying
lifting. Complete the coefficient field at the chosen place and enlarge it
finitely to contain $b$ with $b^2=a$. Let $R$ be the resulting complete
discrete valuation ring, with fraction field $K$. Theorem~
\ref{thm:HI-integrality-at-two} puts every entry in $R$.
The scalars $a,b,d,d-1,1-a,1+a$ are units, since
$\bar a^2+\bar a+1=0$. In particular, reduction preserves the Q-system
boundary condition.

\emph{The integral category.}
Let $L_R$ be the Leavitt algebra with the generators and relations used in
Appendix~\ref{app:reconstruction}. Its reduced-word basis is valid over
$R$: the reduction rules are monic. Thus $L_R$ is free over $R$, embeds
in $L_K$, and satisfies $L_R\cap K=R$.
The reconstruction formulas define $\rho,\alpha_g$ over $R$, since all
their coefficients lie in $R$. Their identities hold by the embedding
into $L_K$. For words $U,V$ in these endomorphisms, define their Hom module
by the intertwining equations in $L_R$. These equations give
\begin{equation*}
 \operatorname{Hom}_R(U,V)
 =L_R\cap\operatorname{Hom}_K(U,V).
\end{equation*}
Indeed, the equations extend from $R$ to $K$ and descend along the
injection $L_R\hookrightarrow L_K$.

For the labelled simples $S_i\in\{\alpha_g,\alpha_g\rho:g\in G\}$,
reconstruction over an algebraic closure of $K$ gives the generic Hom
spaces; the same intertwining equations descend them to $K$.
The intersection formula therefore gives
$\operatorname{Hom}_R(S_i,S_j)=R\delta_{i,j}$.
The integral splitting maps $s,t_g,s',t'_g$ decompose
$\rho^2\cong\mathbf1\oplus\bigoplus_g\alpha_g\rho$ and, recursively,
every word into these simples. Write $i_{U,j},p_{U,j}$ for the resulting
inclusions and projections. The composites $i_{V,l}p_{U,j}$ with equal
simple labels form an $R$-basis of $\operatorname{Hom}_R(U,V)$:
inserting the decompositions proves spanning, and composition with the
matching projection and inclusion extracts each coefficient.

These bases give the full generic Hom spaces after extending scalars.
After reduction they are the same splitting maps for $\bar A$, and
reconstruction in characteristic $2$ shows that they also give the full
special-fibre Hom spaces. Thus both fibre categories are the claimed
reconstructions. The additive category is idempotent complete, since
finite projective modules over the local ring $R$ are free. Duality is
integral: use evaluation $s'$ and coevaluation $a^{-1}s$ for $\rho$,
and inverse $\alpha_{-g}$ for $\alpha_g$.
Finally, the multiplication and separability element in
\eqref{eq:HI-fixed-boundary-multiplication} and
\eqref{eq:HI-fixed-boundary-separability-element} have coefficients in
$R$, because their denominators are units. Their identities hold in
$L_K$ and hence in $L_R$. This gives the connected separable algebra
$Q$ on both fibres.

\emph{Comparison by lifting.}
The special fibre has global dimension $N(1+\bar d^2)=\bar d\ne0$.
Apply \cite[Theorem~9.3, p.~635]{etingof2005fusion}.
To compare its Witt lift with the possibly ramified model just constructed,
enlarge the residue field to $\overline{\mathbb F}_2$ and base change
both models to a common complete discrete valuation ring.
The formal uniqueness argument applies modulo successive powers of a
uniformizer: at each square-zero step the degree-three deformation
cohomology vanishes, so a change of tensor bases identifies the two
associators while preserving the preceding identification. Completeness
gives a tensor equivalence over the ring and hence on the generic fibre.
Thus $\mathcal C(A)$ is the lift of $\mathcal C(\bar A)$.
Uniqueness of lifting proves the reverse implication in
\eqref{eq:HI-categorical-reduction-comparison}; faithfulness proves the
forward implication. For faithfulness we use \cite[Theorem~4.1 and Remark~4.3]{etingof2018faithfulness}. 
\end{proof}

\begin{corollary}[An orbit-count completeness criterion]
\label{cor:HI-categorical-completeness}
Fix $G$ and a characteristic-zero scalar root $d$, with reduction $\bar d$.
Suppose an exhaustive census of the full normalized characteristic-two
Q-system locus at $\bar d$ has exactly $m$ orbits under simultaneous
$\operatorname{Aut}(G)$-relabeling. Suppose that the displayed
characteristic-zero Q-system matrices at $d$ represent $m$ pairwise
tensor-inequivalent categories. Then they exhaust the characteristic-zero
tensor-equivalence classes at $d$. Reduction gives a bijection between the
characteristic-zero and characteristic-two categorical classes.
\end{corollary}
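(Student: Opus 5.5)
The plan is to deduce the corollary from Proposition~\ref{prop:HI-good-reduction-at-two} by building an injective map from characteristic-zero classes to characteristic-two orbits and then counting.

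\emph{The reduction map.} Take a characteristic-zero Q-system HI-matrix $A$ at $d$. Theorem~\ref{thm:HI-integrality-at-two} says its reduction $\bar A$ at the fixed place above $2$ is defined. The proof of Proposition~\ref{prop:HI-good-reduction-at-two} says $\bar A$ is a full normalized characteristic-two Q-system matrix at $\bar d$: reduction preserves (E1), the cubic identity, and the boundary $A_{g,0}=\delta_{g,0}+\theta$, because $1-a$ is a unit. Relabeling by $\alpha\in\operatorname{Aut}(G)$ commutes with reduction. So $A\mapsto\bar A$ induces a map
\[
\Psi:\{\text{char.\ }0\text{ tensor classes at }d\}\longrightarrow\{\text{char.\ }2\text{ tensor classes at }\bar d\}.
\]
It is well defined and injective by \eqref{eq:HI-categorical-reduction-comparison}. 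Its image lies in the classes represented by the $m$ relabeling orbits of the census, since relabeled matrices reconstruct equivalent categories.

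\emph{Counting.} The characteristic-two classes are a quotient of the $m$ orbits, so there are at most $m$ of them. The $m$ displayed matrices give $m$ pairwise inequivalent characteristic-zero classes. Injectivity of $\Psi$ then forces at least $m$ characteristic-two classes. Hence there are exactly $m$ characteristic-two classes, the orbits are pairwise categorically inequivalent, and $\Psi$ restricted to the displayed matrices is already a bijection.

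\emph{Exhaustion and surjectivity.} Any further characteristic-zero class at $d$ would give, through $\Psi$, an $(m+1)$st distinct image class, which is impossible. So the displayed matrices exhaust the characteristic-zero classes at $d$. Surjectivity onto the characteristic-two classes then follows from the count. One can also see it directly: every characteristic-two Q-system category lifts by Corollary~\ref{cor:HI-char-two-field-bound}, and the lift is again of HI Q-system type at $d$, because the separable algebra lifts along with the category.

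\emph{Main obstacle.} The delicate point is the first step: the reduction $\bar A$ must land in the normalized Q-system locus at the specified $\bar d$. This requires that distinct roots $d$ have distinct or controlled reductions, and that reduction does not merge the two branches $d_\pm$, whose reductions are the two roots of $x^2+x+1$. I would handle this by fixing the place and noting that the statement pairs $d$ with its own reduction $\bar d$, so only matrices at that $d$ are compared. The remaining work is bookkeeping.
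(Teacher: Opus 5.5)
Your proposal is correct and is essentially the paper's proof: the $\operatorname{Aut}(G)$-orbit count bounds the characteristic-two classes by $m$, injectivity of reduction from \eqref{eq:HI-categorical-reduction-comparison} applied to the $m$ displayed categories forces exactly $m$, and any further characteristic-zero class would then contradict injectivity. Your aside about lifting through Corollary~\ref{cor:HI-char-two-field-bound} is unnecessary, since the count already gives surjectivity; that route would also need the lift to be some $\mathcal C(B)$ with $B$ a characteristic-zero Q-system matrix at $d$. Your branch worry is vacuous, because $d_+-d_-=\sqrt{N^2+4}$ is a unit above $2$, so $d_\pm$ reduce to the two distinct roots of $x^2+x+1$.
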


\begin{proof}
Matrices in the same $\operatorname{Aut}(G)$-orbit reconstruct
tensor-equivalent categories. Hence the characteristic-two locus has at
most $m$ categorical classes.

On the other hand, Proposition~\ref{prop:HI-good-reduction-at-two} shows
that reduction is injective on characteristic-zero categorical classes.
The reductions of the $m$ displayed pairwise inequivalent categories
therefore give $m$ pairwise inequivalent characteristic-two categories.
Thus the characteristic-two locus has exactly $m$ categorical classes,
and every one is represented by the reduction of a displayed matrix.

Finally, any characteristic-zero Q-system matrix reduces to one of these
characteristic-two classes. Equation~
\eqref{eq:HI-categorical-reduction-comparison} then shows that its category
is tensor equivalent to the corresponding displayed category.
\end{proof}

The criterion compares numbers of $\operatorname{Aut}(G)$-orbits and
categorical classes, not numbers of labelled matrices. It applies only
after the finite-field census has been proved exhaustive for the full
normalized Q-system locus.

\subsection{Recovering exact characteristic-zero matrices}
\label{sec:char2-algebraic-recovery}

We first enumerate cyclic Q-system solutions in characteristic two.
The Frobenius relation \eqref{eq:HI-char-two-fourth-power} determines
each orbit of entries under $(g,h)\mapsto(2g,2h)$ from one entry,
and Corollary~\ref{cor:HI-char-two-field-bound} bounds their field of
definition. Proposition~\ref{prop:HI-cyclic-Q-system-normal-form}
reduces the search to coordinates in a single row.

We lift the row coordinates $2$-adically, recognize their algebraic
presentations, and verify the reconstructed matrices exactly. This
calculation in matrix coordinates is separate from the categorical lifting
in Corollary~\ref{cor:HI-char-two-field-bound}.

For the cyclic Q-system matrices considered here, put
$B=(d-1)A$ and $v_g=B_{1,g}$.
Proposition~\ref{prop:HI-cyclic-Q-system-normal-form} shows that
$v_2,\ldots,v_{(N+1)/2}$ determine the matrix in both characteristics
zero and two. Write
\begin{equation*}
 Q_0=Q_1=1,\qquad Q_s=\prod_{j=2}^s v_j.
\end{equation*}
The reciprocal coordinates used in the characteristic-two census are
$u_g=(d-1)^{g-1}/Q_g$ and $t_g=u_g/u_{N-1}$ for $1\leq g<N$. 
The row symmetry gives $Q_gQ_{N-g}=Q_{N-1}$, and hence
$u_gu_{N-g}=u_{N-1}$. Consequently,
\begin{equation*}
 t_gt_{N-g}=t_1,\qquad
 A_{g,h}=\frac{t_gt_{h-g}}{t_1t_h}\quad(0<g<h<N).
\end{equation*}
The inverse change is $v_g=(d-1)t_{g-1}/t_g$ for $2\leq g<N$.
Both changes are defined over the same finite field. Thus the census in
these coordinates, followed by the remaining HI checks, covers the full
normalized cyclic Q-system locus.

\emph{Lifting the row equations.}
Let $(d_0,A_0)$ be a characteristic-two solution. The initial row is
\begin{equation*}
 v_g\equiv(d_0+1)(A_0)_{1,g}\pmod2.
\end{equation*}
Lift $d_0$ to a root of $d^2-Nd-1=0$ in the unramified $2$-adic
coefficient ring. We use the selected E3 equations
\begin{equation}\label{eq:char2-row-hensel}
 F_h(v):=v_h+v_{h+1}
 +\frac{1+d}{N}\sum_{r\in\mathbb Z/N}\frac{v_r}{v_{r+h}}=0,
 \qquad 1\leq h\leq\frac{N-1}{2}.
\end{equation}
The indices in the sum are cyclic. Since $N$ is odd and the row coordinates
are units, these equations are defined over that ring. When their Jacobian
is invertible modulo $2$, Newton--Hensel iteration gives a unique lift of
this row-system solution, successively modulo $2,2^2,2^4,2^8,\ldots$.
The reduction selects the branch, but solving
\eqref{eq:char2-row-hensel} does not establish the other HI identities.

\emph{Algebraic recognition and verification.}
Suppose that $\lambda_i=v_2^{(i)}$ distinguishes the collected lifted rows.
Form
\begin{equation*}
 P(X)=\prod_i(X-\lambda_i),\qquad
 R_j(X)=\sum_i v_j^{(i)}\frac{P(X)}{X-\lambda_i}.
\end{equation*}
Then $v_j^{(i)}=R_j(\lambda_i)/P'(\lambda_i)$. If this coordinate does not
separate the rows, one first chooses a separating linear combination.
We recognize the coefficients of $P$ and $R_j$ in $\mathbb Q(d)$ or a
specified auxiliary extension, obtaining a candidate presentation
\begin{equation*}
 P(\lambda)=0,\qquad v_j=\frac{R_j(\lambda)}{P'(\lambda)}.
\end{equation*}
An alternative is to interpolate the unit-relabeling orbit of a complex
numerical matrix; the product reconstruction \eqref{eq:char2-row-recovery}
is the same. Numerical or $2$-adic recognition proposes the coefficients.
We then check denominator nonvanishing and verify the remaining E3 and
cubic identities exactly, either in the coefficient field or by modular
calculations with rigorous coefficient or norm bounds. The cubic criterion
then gives the full HI system.

At $N=43$, this procedure used all eighty-four
characteristic-two solutions and $1024$ reliable $2$-adic bits. It recovered
a degree-$84$ polynomial over $\mathbb Q(d)$; the subsequent exact modular
checks with norm bounds verified all remaining E3 and cubic identities.

The ordinary fourth-power map is not a characteristic-zero field
endomorphism. On unramified lifted coefficients the corresponding operation
is the lift of Frobenius, rather than ordinary fourth powering. Thus the
characteristic-two identity \eqref{eq:HI-char-two-fourth-power} must not be
imposed unchanged on the recovered characteristic-zero matrix.

\emph{Comparison with the characteristic-zero data.}
Entrywise Frobenius gives an $\operatorname{Aut}(G)$-equivariant
bijection between the characteristic-two solution sets at $d$ and
$d^2$, so their orbit counts agree. 
For each odd $N\leq43$ and each scalar fibre, the exhaustive
characteristic-two census has the same number of
$\operatorname{Aut}(\mathbb Z/N)$-orbits as there are pairwise
tensor-inequivalent displayed characteristic-zero categories.
The characteristic-zero representatives are inequivalent by the cyclic
equivalence criterion recalled after Theorem~\ref{thm:reconstruction}.
Corollary~\ref{cor:HI-categorical-completeness} therefore makes reduction
a bijection on categorical classes and proves that the displayed list is
complete up to tensor equivalence.

\bibliographystyle{alpha}
\bibliography{references}
\label{page:main-end}
\clearpage
\appendix


\section{The cubic criterion and the scalar obstruction}
\label{app:equations}\label{subsec:HI-general-equation-comparison}
We prove the equation criterion in the general scalar setup
\eqref{eq:HI-scalars}--\eqref{eq:HI-a}, allowing the characteristic to divide
$N$. The two implications use different hypotheses, which we state separately.
We then impose $N\ne0$ for the scalar obstruction.

\subsection{From the cubic identity to the HI equations}
\begin{theorem}[The symmetry and cubic identities imply the sum and quadratic identities]
\label{thm:HI-cubic-implies-quadratic}
Let $R$ be a field, let $G$ be a nontrivial finite abelian group of
cardinality $N$, and choose $a,\omega\in R$ such that
\begin{equation*}
 a\neq0,
 \qquad
 \omega^3=1.
\end{equation*}
Suppose that $A=(A_{g,h})_{g,h\in G}\in M_G(R)$ satisfies
\eqref{eq:HImatrix1} and \eqref{eq:HI-cubic}, interpreted over $R$.
Then $A$ satisfies \eqref{eq:HImatrix2} and \eqref{eq:HImatrix3}.
\end{theorem}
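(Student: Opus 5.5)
The plan is to obtain \eqref{eq:HImatrix2} and \eqref{eq:HImatrix3} as \emph{linear consequences} of \eqref{eq:HI-cubic}, by summing it over one free index after specialising another to $0$. Symmetry \eqref{eq:HImatrix1} is used only to re-index the resulting quadratic sums back into the shape of \eqref{eq:HImatrix3}. Put
\begin{equation*}
 \Sigma=\sum_{x\in G}A_{x,0},\qquad
 E(g,k)=\sum_{r\in G}A_{g+r,k}A_{k,r}.
\end{equation*}
Taking $g=0$ in \eqref{eq:HImatrix1} gives $A_{0,x}=\omega^{-1}A_{x,0}$, so every row-zero sum equals $\omega^{-1}\Sigma$.

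\emph{Step 1: the basic linear relation.} Set $h=0$ in \eqref{eq:HI-cubic} and sum over $l$.
\begin{itemize}
\item On the right the product becomes $A_{g+l,k}A_{k,l}$, which sums to exactly $E(g,k)$. The scalar term contributes $-aN\delta_{g,0}$.
\item On the left the factor $A_{0,m+l}$ is the only one depending on $l$, so it sums to $\omega^{-1}\Sigma$. What remains is $\omega^{-2}\Sigma\sum_mA_{m,g}A_{g,m+k}$.
\item Using the order-three index change in \eqref{eq:HImatrix1} on both factors, I expect this remaining sum to equal $\omega^{\epsilon}E(g',k')$ for an explicit affine relabelling $(g,k)\mapsto(g',k')$ of $G\times G$.
\end{itemize}
This yields a relation $E(g,k)=c\,\Sigma\,E(g',k')+aN\delta_{g,0}$ with $c$ a power of $\omega$. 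Running the same argument with the other specialisation $g=0$, summed over $k$, and with the symmetry $(g,h,k,l)\mapsto(h,g,l,k)$ of \eqref{eq:HI-cubic}, should give a second, transposed relation.

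\emph{Step 2: a second family of relations.} The relations of Step 1 are homogeneous in $E$ apart from the $\delta$ term, so they alone cannot fix the normalisation. I would therefore also sum \eqref{eq:HI-cubic} over both $k$ and $l$ with $g,h$ arbitrary.
\begin{itemize}
\item On the left this produces $(\omega^{-1}\Sigma)^2$ times a column sum of $A$.
\item On the right it produces products of row and column sums, together with $-aN^2\delta_{g,0}\delta_{h,0}$.
\end{itemize}
Specialising further to $g=h=0$ should give a polynomial equation in $\Sigma$ alone, roughly of the form $\omega^{-3}\Sigma^3=\omega^{-2}\Sigma^2-aN^2$, possibly after also using the diagonal values $A_{g,g}$.

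\emph{Step 3: solving the system.} Substitute the relations of Step 2 back into those of Step 1 and iterate the relabelling $(g,k)\mapsto(g',k')$ around its orbits, which are closed because the index change in \eqref{eq:HImatrix1} has order three. The aim is to force
\begin{equation*}
 \Sigma=-\omega^{-1}a,\qquad E(g,k)=\delta_{g,0}-a\delta_{k,0},
\end{equation*}
which are exactly \eqref{eq:HImatrix2} and \eqref{eq:HImatrix3}.

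\emph{Main obstacle.} The hard part is Step 3: eliminating the spurious roots of the scalar equation in $\Sigma$, for instance $\Sigma=0$ or a root that would only be valid when $N=0$ in $R$, since no division by $N$ is allowed. The hypotheses that $a\neq0$ and that $G$ is nontrivial must be what rule these out. My first attempt would be to show that $\Sigma=0$ forces $E\equiv aN\delta_{g,0}$. I would then evaluate \eqref{eq:HI-cubic} at $g=h=0$, $k=l$ and at some $k\neq l$, which is possible because $G\neq\{0\}$. Comparing the two values should give $a=0$, a contradiction.
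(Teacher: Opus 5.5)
Your Step~1 is correct. It is exactly the contraction the paper uses in its final step, \eqref{eq:HI-cubic-Q-contraction}. With $c=\omega^{-1}\sum_yA_{0,y}=\omega\Sigma$, it reads $E(g,k)=cE(k,-g)+Na\delta_{g,0}$. The mirrored specialisation ($g=0$, summed over $k$) gives $E(g,k)=cE(-k,g)+Na\delta_{g,0}$. Note that this relabelling has order four, not three.

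The gap is that unit-weight contractions of \eqref{eq:HI-cubic} do not determine $E$. For $c\neq0$, your two relations combine to
\[
 (1-c^2)\,E(g,k)=Na\bigl(\delta_{g,0}+c\,\delta_{k,0}\bigr),
\]
and that is all they give.
\begin{itemize}
\item If $\operatorname{char}R\mid N$, the desired conclusion itself forces $a^2=1$ and $c=-a$, hence $c^2=1$. In exactly this case the relation is vacuous, although the theorem is asserted in every characteristic.
\item If $c^2\neq1$, then $E$ is fixed in terms of $c$. But your two scalar constraints both collapse to the single equation $c^2(1-c^2)=Na(1+Nc)$. These constraints are $\sum_xE(x,0)=c^2$, from factorising the sum, and the $g=h=0$ double sum of \eqref{eq:HI-cubic}. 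One equation cannot fix both $c=-a$ and the normalisation $E(0,k)=1$, which is $Na=1-c^2$. Recall that $a^2+Na=1$ is not a hypothesis here, so it must come out of the argument.
\end{itemize}
Step~2 is also miscomputed. The sum $\sum_{k,l}A_{g+l,k}A_{h+k,l}$ is a trace-type contraction, not a product of row and column sums. At $g=h=0$ it equals $\sum_kE(0,k)$, so the identity is $\Sigma^3=\sum_kE(0,k)-aN^2$. This is not an equation in $\Sigma$ alone.

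The missing ingredient is a \emph{weighted} contraction. The paper multiplies a reindexed cubic identity by $A_{r+g,h}$ and sums over $r$. This evaluates $\Xi(g,h,i,k)=\sum_{r,m}A_{r,m}A_{r+g,h}A_{h+m,r+i}A_{i,k+m}$ in two ways. For the error $F(x,y)=E(x,y)-\delta_{x,0}+a\delta_{y,0}$, this yields \eqref{eq:HI-cubic-R-residual}:
\[
 A_{u,v}F(x,h)=A_{u,v+x}F(x,h+u).
\]
The row indices of nonzero entries generate $G$, by \eqref{eq:HI-cubic} at $(0,0,z,z)$ and $a\neq0$. Hence $F(x,\cdot)$ is either nowhere zero or identically zero.
\begin{itemize}
\item For $x\neq0$, the nowhere-zero case gives translation invariance of the boundary entries, and two further substitutions then give $a=0$.
\item For $x=0$, $F(0,\cdot)$ is a constant, say $\lambda-1$.
\end{itemize}
Only then does your Step~1 relation close the argument. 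Evaluating it at $(g\neq0,k=0)$ and at $(g=0,k\neq0)$, which is possible since $G\neq0$, gives $c\lambda=-a$ and $\lambda+ac=Na$. The factorised sum gives $c^2=\lambda-Na$. Hence $c(c+a)=0$, so $c=-a$ and $\lambda=1$, with no division by $N$. The constancy of $F(0,\cdot)$ is precisely the second scalar relation your scheme lacks, and your unweighted sums do not produce it.
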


\begin{proof}
Write $\delta_x:=\delta_{x,0}$ and define
\begin{equation*}
 P(x,y):=\sum_{r\in G}A_{x+r,y}A_{y,r},
 \qquad
 F(x,y):=P(x,y)-(\delta_x-a\delta_y).
\end{equation*}
Equation~\eqref{eq:HImatrix3} is equivalent to $F=0$.  We divide the
argument into three parts.  First we derive an identity for the error term
$F$ and prove that the row indices of the nonzero entries of $A$ generate
$G$.  We then prove that $F(x,h)=0$ for $x\neq0$.  Finally, we show that
$F(0,h)$ is independent of $h$, determine its value, and obtain
\eqref{eq:HImatrix2}.

\smallskip
\noindent\textbf{Part 1: the identity for \(F\) and the nonzero row indices.}
For all $x,h,u,v\in G$,
\begin{equation}
 A_{u,v}F(x,h)=A_{u,v+x}F(x,h+u).
 \label{eq:HI-cubic-R-residual}
\end{equation}
Moreover, if
\begin{equation*}
 \Sigma_A:=\{u\in G:A_{u,v}\neq0\text{ for some }v\in G\},
 \qquad
 H_A:=\langle\Sigma_A\rangle,
\end{equation*}
then $H_A=G$.

\smallskip
\noindent\emph{Derivation of Part 1.}
For $g,h,i,k\in G$, put
\begin{equation*}
 \Xi(g,h,i,k):=
 \sum_{r,m\in G}
 A_{r,m}A_{r+g,h}A_{h+m,r+i}A_{i,k+m}.
\end{equation*}
To derive \eqref{eq:HI-cubic-R-residual}, evaluate $\Xi$ in two ways and
equate the results.
In \eqref{eq:HI-cubic}, replace $(g,h,k,l)$ by
$(r,i,-h,k-h)$, replace the old summation variable by $q+h$, and then
rename $q$ as $m$.  This gives
\begin{equation}
 \sum_{m\in G} A_{h+m,r+i} A_{r,m}A_{i,k+m}
 =\omega A_{r+k-h,-h}A_{i-h,k-h}
  -\omega a\delta_r\delta_i.
 \label{eq:display-0073}
\end{equation}
Multiply \eqref{eq:display-0073} by $A_{r+g,h}$ and sum over $r$.
The resulting left-hand side is $\Xi(g,h,i,k)$. For the first term on the
right, \eqref{eq:HImatrix1} gives $A_{r+k-h,-h}=\omega A_{h,r+k}$. 
After the change of variable $s=r+k$, the corresponding sum is
\begin{equation*}
 \sum_{r\in G}A_{r+g,h}A_{h,r+k}
 =\sum_{s\in G}A_{g-k+s,h}A_{h,s}
 =P(g-k,h).
\end{equation*}
Since $\delta_r=0$ unless $r=0$, we have
\begin{equation*}
 \sum_{r\in G}A_{r+g,h}\delta_r=A_{g,h}.
\end{equation*}
Combining these equalities gives the first expression for $\Xi$:
\begin{equation}
 \Xi(g,h,i,k)
 =\omega^2A_{i-h,k-h}P(g-k,h)
  -\omega aA_{g,h}\delta_i.
 \label{eq:HI-cubic-Xi-first}
\end{equation}
For the second expression, \eqref{eq:HImatrix1} gives
$A_{r+g,h}=\omega A_{-h,r+g-h}$. 
Apply \eqref{eq:HI-cubic} with
$(g,h,k,l)=(-h,h+m,g-h,i)$ and multiply by $\omega$.  We obtain
\begin{equation*}
 \sum_{r\in G}A_{r,m}A_{-h,r+g-h}A_{h+m,r+i}
 =\omega A_{i-h,g-h}A_{m+g,i}
  -\omega a\delta_h\delta_{h+m}.
\end{equation*}
Multiply this identity by $\omega A_{i,k+m}$ and sum over $m$.  After the
change of variable $s=k+m$, the first resulting sum is
\begin{equation*}
 \sum_{m\in G}A_{m+g,i}A_{i,k+m}
 =\sum_{s\in G}A_{g-k+s,i}A_{i,s}
 =P(g-k,i).
\end{equation*}
If $\delta_h\neq0$, then $h=0$, and $\delta_{h+m}\neq0$ only for $m=0$.
Hence
\begin{equation*}
 \delta_h\sum_{m\in G}\delta_{h+m}A_{i,k+m}
 =\delta_hA_{i,k}.
\end{equation*}
This gives the second expression for $\Xi$:
\begin{equation}
 \Xi(g,h,i,k)
 =\omega^2A_{i-h,g-h}P(g-k,i)
  -\omega^2a\delta_hA_{i,k}.
 \label{eq:HI-cubic-Xi-second}
\end{equation}
Equate \eqref{eq:HI-cubic-Xi-first} and
\eqref{eq:HI-cubic-Xi-second}, divide by the unit $\omega^2$, and put
$x=g-k$.  We obtain
\begin{align}
 A_{i-h,k-h}P(x,h)-A_{i-h,k+x-h}P(x,i)
 =\omega^{-1}a\delta_iA_{k+x,h}
  -a\delta_hA_{i,k}.
 \label{eq:HI-cubic-Q-residual}
\end{align}
Evaluate the left-hand side of
\eqref{eq:HI-cubic-Q-residual} at the desired values
$P(x,h)=\delta_x-a\delta_h$ and
$P(x,i)=\delta_x-a\delta_i$.  It becomes
\begin{equation*}
 \delta_x\bigl(A_{i-h,k-h}-A_{i-h,k+x-h}\bigr)
 -a\delta_hA_{i-h,k-h}
 +a\delta_iA_{i-h,k+x-h}.
\end{equation*}
The first term is zero: if $\delta_x\neq0$, then $x=0$ and the two matrix
entries in parentheses are equal.  The other two terms give exactly the
right-hand side of \eqref{eq:HI-cubic-Q-residual}, because
\begin{equation*}
 \delta_hA_{i-h,k-h}=\delta_hA_{i,k},
 \qquad
 \delta_iA_{i-h,k+x-h}
 =\omega^{-1}\delta_iA_{k+x,h}.
\end{equation*}
The desired values of $P$ satisfy the same equation as $P$ itself.
Subtracting the two equations and setting
$u=i-h$, $v=k-h$ gives the identity in
\eqref{eq:HI-cubic-R-residual}.
The change of variables
\begin{equation*}
 x=g-k,\qquad u=i-h,\qquad v=k-h
\end{equation*}
is bijective, with inverse
\begin{equation*}
 g=x+v+h,\qquad i=u+h,\qquad k=v+h.
\end{equation*}
Hence $x,h,u,v$ in \eqref{eq:HI-cubic-R-residual} are independent
variables.

If $A_{u,v}\neq0$, then $u\in\Sigma_A\subseteq H_A$.  By
\eqref{eq:HImatrix1},
\begin{equation}
 A_{-v,u-v}=\omega^{-1}A_{u,v}\neq0,
 \label{eq:HI-cubic-row-index-transfer}
\end{equation}
so $-v\in\Sigma_A\subseteq H_A$, and hence $v\in H_A$.

We claim that $H_A=G$. Otherwise choose $z\in G\setminus H_A$ and take
$(g,h,k,l)=(0,0,z,z)$ in \eqref{eq:HI-cubic}.  The $m$th summand on its
left-hand side is $A_{m,0}A_{0,m+z}^2$.  If this were nonzero, then
$A_{m,0}\neq0$ would imply $m\in H_A$, and
$A_{0,m+z}\neq0$ would imply $m+z\in H_A$ by
\eqref{eq:HI-cubic-row-index-transfer}.  This would give $z\in H_A$, a
contradiction.  Thus the left-hand side is zero.  Also, $A_{z,z}=0$, since
otherwise the same implication would give $z\in H_A$.  The resulting identity
is therefore $0=-a$, which contradicts $a\neq0$.

\smallskip
\noindent\textbf{Part 2: the case \(x\neq0\).}
For every $x\neq0$ and $h\in G$,
\begin{equation}
 F(x,h)=0.
 \label{eq:display-0082}
\end{equation}

\smallskip
\noindent\emph{Proof of Part 2.}
Fix $x\neq0$ and suppose, for a contradiction, that
$F(x,h_0)\neq0$ for some $h_0\in G$.  Write
$f_h:=F(x,h)$.  We first show that this one nonzero value forces
$f_h\neq0$ for every $h$.  Two substitutions in the cubic identity will then
give the contradiction $a=0$.
If $u\in\Sigma_A$, choose $v$ with $A_{u,v}\neq0$.
Equation~\eqref{eq:HI-cubic-R-residual} shows that
\begin{equation*}
 f_h\neq0 \quad\Longrightarrow\quad f_{h+u}\neq0.
\end{equation*}
Indeed, the left-hand side of \eqref{eq:HI-cubic-R-residual} is nonzero.
Since $R$ is a field, the right-hand side is nonzero, and hence
$f_{h+u}\neq0$.
Starting with $h=h_0$ and iterating this implication gives
$f_{h_0+w}\neq0$ for every $w$ in the submonoid generated by $\Sigma_A$.
Since $G$ is finite, this submonoid is the subgroup
$\langle\Sigma_A\rangle=G$.  Hence $f_h\neq0$ for every $h\in G$.

Set $u=0$ in \eqref{eq:HI-cubic-R-residual}. Since $f_h\neq0$, cancellation
gives $A_{0,v+x}=A_{0,v}$.  Equation~\eqref{eq:HImatrix1} gives
$A_{q,0}=\omega A_{0,q}$, so the same equality also holds for the first
column.  Thus, for a fresh variable $q\in G$,
\begin{equation}
 A_{0,v+x}=A_{0,v},
 \qquad
 A_{q+x,0}=A_{q,0}.
 \label{eq:HI-cubic-boundary-translation}
\end{equation}

For $q,h\in G$, equation \eqref{eq:HI-cubic-R-residual}, first with
$(u,v)=(q,0)$ and then with $(u,v)=(-q,-q)$ and $h$ replaced by $h+q$,
gives
\begin{equation}
 A_{q,0}f_h=A_{q,x}f_{h+q},
 \qquad
 A_{-q,-q}f_{h+q}=A_{-q,x-q}f_h.
 \label{eq:HI-cubic-product-translation-prep}
\end{equation}
Since $f_h$ and $f_{h+q}$ are nonzero, multiply the two equalities in
\eqref{eq:HI-cubic-product-translation-prep} and cancel their product.
Equation~\eqref{eq:HImatrix1} gives
$A_{-q,-q}=\omega^{-1}A_{0,q}$ and
$A_{-q,x-q}=\omega^{-1}A_{x,q}$.  Hence
\begin{equation}
 A_{q,x}A_{x,q}=A_{q,0}A_{0,q}.
 \label{eq:HI-cubic-product-translation}
\end{equation}

Fix $l\in G$.  Substituting $(g,h,k,l)=(0,0,x,l)$ in
\eqref{eq:HI-cubic} gives
\begin{equation*}
 \omega^{-1}\sum_{m\in G}A_{m,0}A_{0,m+x}A_{0,m+l}
 =A_{l,x}A_{x,l}-a.
\end{equation*}
On the left, \eqref{eq:HI-cubic-boundary-translation} with $q=m$ replaces
$A_{0,m+x}$ by $A_{0,m}$.  On the right,
\eqref{eq:HI-cubic-product-translation} with $q=l$ replaces
$A_{l,x}A_{x,l}$ by $A_{l,0}A_{0,l}$.  Hence
\begin{equation}
 \omega^{-1}\sum_{m\in G}A_{m,0}A_{0,m}A_{0,m+l}
 =A_{l,0}A_{0,l}-a.
 \label{eq:display-0080}
\end{equation}

Substituting $(g,h,k,l)=(x,0,0,l)$ in
\eqref{eq:HI-cubic} gives
\begin{equation*}
 \omega^{-1}\sum_{m\in G}A_{m,x}A_{x,m}A_{0,m+l}
 =A_{x+l,0}A_{0,l}.
\end{equation*}
The delta term is zero because $x\neq0$.  On the left,
\eqref{eq:HI-cubic-product-translation} with $q=m$ replaces
$A_{m,x}A_{x,m}$ by $A_{m,0}A_{0,m}$.  On the right,
\eqref{eq:HI-cubic-boundary-translation} with $q=l$ replaces
$A_{x+l,0}$ by $A_{l,0}$.  Hence
\begin{equation}
 \omega^{-1}\sum_{m\in G}A_{m,0}A_{0,m}A_{0,m+l}
 =A_{l,0}A_{0,l}.
 \label{eq:display-0081}
\end{equation}
Comparing \eqref{eq:display-0080} and \eqref{eq:display-0081} gives
$a=0$, contradicting the assumption $a\neq0$.  Therefore there is no $h_0\in G$
for which $F(x,h_0)\neq0$.  Hence $F(x,h)=0$ for every $h\in G$.
Since $x\neq0$ was arbitrary, this proves \eqref{eq:display-0082}.

\smallskip
\noindent\textbf{Part 3: the case \(x=0\).}
For every $h\in G$, one has $F(0,h)=0$.  The matrix $A$ also satisfies
\eqref{eq:HImatrix2}.

\smallskip
\noindent\emph{Proof of Part 3.}
For $x=0$, first show that the error is independent of the second variable,
then determine this constant together with the row sum.
Let $h\in G$ be arbitrary.  For each $u\in\Sigma_A$, choose $v$ with
$A_{u,v}\neq0$.  Setting $x=0$ in
\eqref{eq:HI-cubic-R-residual} and cancelling $A_{u,v}$ gives
\begin{equation*}
 F(0,h)=F(0,h+u).
\end{equation*}
Since $\Sigma_A$ generates $G$, the value $F(0,h)$ is independent of $h$.
Write this constant as $\lambda-1$. Then the desired conclusion
$F(0,h)=0$ is equivalent to $\lambda=1$, and
\begin{equation*}
 F(0,h)=\lambda-1
 \qquad(h\in G).
\end{equation*}
By the definition of $F$, this 
\begin{equation*}
 P(0,h)=\lambda-a\delta_h
 \qquad(h\in G).
\end{equation*}
For $x\neq0$, Part 2 gives $F(x,h)=0$, so
$P(x,h)=\delta_x-a\delta_h=-a\delta_h$.  Combining the two cases gives
\begin{equation}
 P(x,h)=\lambda\delta_x-a\delta_h
 \qquad(x,h\in G).
 \label{eq:HI-cubic-Q-defect}
\end{equation}

It remains to determine $\lambda$. Put
\begin{equation*}
 c:=\omega^{-1}\sum_{y\in G}A_{0,y}.
\end{equation*}
Set $h=0$ in \eqref{eq:HI-cubic} and sum over $l$.  The sum of the product
term over $l$ is $P(g,k)$.  In the cubic sum, summing first over $l$ gives
$\omega c$.  Next use
$A_{m,g}=\omega A_{-g,m-g}$ and
$A_{g,m+k}=\omega^{-1}A_{m+k-g,-g}$, and put $r=m-g$.  The remaining sum
over $m$ is then $P(k,-g)$.  The constant term contributes
$N a\delta_g$.  Hence
\begin{equation}
 P(g,k)-cP(k,-g)=N a\delta_g.
 \label{eq:HI-cubic-Q-contraction}
\end{equation}
Choose a nonzero element of $G$.  Evaluating
\eqref{eq:HI-cubic-Q-contraction} first at $g\neq0$, $k=0$ and at
$g=0$, $k\neq0$ gives
\begin{equation}
 -a-c\lambda=0,
 \qquad
 \lambda+ac=N a.
 \label{eq:display-0084}
\end{equation}
The scalar $a$ is a unit because $R$ is a field and $a\neq0$.  The first
equality in \eqref{eq:display-0084} therefore implies $c\neq0$.  We record
this as $c\lambda=-a$. 
For one more relation, sum \eqref{eq:HI-cubic-Q-defect} over $x$ with
$h=0$.  Its left-hand side factors as
\begin{equation}
 \left(\sum_xA_{x,0}\right)\left(\sum_rA_{0,r}\right),
 \label{eq:display-0085}
\end{equation}
and its right-hand side is $\lambda-N a$.  By
\eqref{eq:HImatrix1} and the definition of $c$, the two factors in
\eqref{eq:display-0085} are $\omega^2c$ and $\omega c$.  Therefore $c^2=\lambda-N a$. 
The second equality in \eqref{eq:display-0084} now gives
 $c^2=-ac$. 
Since $c\neq0$, we obtain $c=-a$ and $\lambda=1$. 
Therefore \eqref{eq:HI-cubic-Q-defect} is exactly
\eqref{eq:HImatrix3}.  Finally,
\begin{equation*}
 \sum_{r\in G}A_{r,0}
 =\omega^2c=-\omega^{-1}a,
\end{equation*}
which is \eqref{eq:HImatrix2}.

Parts 2 and 3 show that $F=0$, so
\eqref{eq:HImatrix3} holds.  Part 3 also gives
\eqref{eq:HImatrix2}.  This proves the theorem.
\end{proof}

The question whether \eqref{eq:HI-cubic}, together with
\eqref{eq:HImatrix1}--\eqref{eq:HImatrix3}, implies the quartic equation
\eqref{eq:HImatrix4} is raised in
\cite[equation~(4.11), p.~14, and Proposition~2, p.~29]{evans2017non}.
The comparison uses one map. For the vectors $B$ and $C$ below, the cubic
identity is $B=C$, while \eqref{eq:HImatrix4} is
$\mathcal A_hB=\mathcal A_hC$.  One direction follows by applying
$\mathcal A_h$; the reverse direction follows when $\mathcal A_h$ is
injective.  The next lemma proves these statements.

\begin{lemma}[Convolution operator]
\label{lem:HI-convolution-operator}
Let $R$ be a commutative ring, let $G$ be a finite abelian group of
cardinality $N$, and choose $a,\omega\in R$ with $\omega^3=1$.  Suppose
that $A=(A_{g,h})_{g,h\in G}\in M_G(R)$ satisfies
\begin{equation}
 A_{g,h}=\omega A_{-h,g-h},
 \qquad g,h\in G,
 \label{eq:HI-order-three-ring}
\end{equation}
and \eqref{eq:HImatrix3}.  For $h\in G$, define
\begin{equation*}
 (\mathcal A_hX)_g:=\sum_{r\in G}A_{r+g,h}X_r
 \qquad(X\in R^G).
\end{equation*}
If $1-N a$ is a unit in $R$, then $\mathcal A_h$ is injective for every
$h\in G$.  The same conclusion holds if $A$ also satisfies
\eqref{eq:HImatrix2} and $a$ is not a zero-divisor in $R$.

For fixed $h,i,k\in G$, define $B,C\in R^G$ by
\begin{align*}
 B_r&:=\sum_{m\in G}
 A_{r,m}A_{h+m,r+i}A_{i,k+m},
\\
 C_r&:=\omega A_{r+k-h,-h}A_{i-h,k-h}
       -\omega a\delta_{r,0}\delta_{i,0}.
\end{align*}
Then the cubic identity is equivalent to $B=C$, and
\eqref{eq:HImatrix4} is equivalent to
$\mathcal A_hB=\mathcal A_hC$ for every $h,i,k\in G$.
\end{lemma}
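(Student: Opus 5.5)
The lemma has two parts: injectivity of $\mathcal A_h$, and the reformulation of the cubic and quartic identities. I would prove them separately, starting with injectivity.

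\emph{Injectivity.} The plan is to compose $\mathcal A_h$ with a second convolution-type operator and use \eqref{eq:HImatrix3} to show the composite is a scalar plus a rank-one correction. Define $(\mathcal B_hY)_s:=\sum_{g}A_{h,s+g}Y_g$. Then
\[
(\mathcal B_h\mathcal A_hX)_s=\sum_{r}X_r\sum_g A_{h,s+g}A_{r+g,h},
\]
and reindexing with $g'=s+g$ turns the inner sum into $P(r-s,h)$. By \eqref{eq:HImatrix3} this equals $\delta_{r,s}-a\delta_{h,0}$.

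Hence $\mathcal B_h\mathcal A_h=I-a\delta_{h,0}J$, where $J$ is the all-ones operator. For $h\ne0$ this is the identity, so $\mathcal A_h$ is injective. For $h=0$, if $\mathcal A_0X=0$, then $X=a(\sum_rX_r)\mathbf 1$. Summing over coordinates gives $(1-Na)\sum_rX_r=0$.
\begin{itemize}
\item If $1-Na$ is a unit, then $\sum_r X_r=0$, so $X=0$.
\item Under the alternative hypothesis, use \eqref{eq:HImatrix2} instead. Applying $\mathcal A_0$ to the constant vector $\mathbf 1$ gives the entries $\sum_rA_{r+g,0}=\sum_rA_{r,0}=-\omega^{-1}a$. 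So $0=\mathcal A_0X=-\omega^{-1}a\cdot a(\sum X_r)\mathbf 1$. Since $a$ is not a zero-divisor and $\omega$ is a unit, $\sum X_r=0$, and again $X=0$.
\end{itemize}

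\emph{The reformulation.} I would redo the reindexing already carried out in the proof of Theorem~\ref{thm:HI-cubic-implies-quadratic}. Substituting $(g,h,k,l)\mapsto(r,i,-h,k-h)$ in \eqref{eq:HI-cubic} and shifting the summation variable by $h$ gives exactly \eqref{eq:display-0073}. After using \eqref{eq:HI-order-three-ring} to rewrite the right side, this is the statement $B_r=C_r$. Since that substitution is a bijection of index tuples, the cubic identity for all tuples is equivalent to $B=C$ for all $(h,i,k)$.

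Next compute $(\mathcal A_hB)_g=\sum_rA_{r+g,h}B_r$. This is literally the left side of \eqref{eq:HImatrix4}, with summation variable $l=r$ and the same $m$. For $(\mathcal A_hC)_g$, the two pieces are evaluated as in \eqref{eq:HI-cubic-Xi-first}:
\begin{itemize}
\item the product term gives $\omega^2A_{i-h,k-h}P(g-k,h)$;
\item the delta term gives $-\omega aA_{g,h}\delta_{i,0}$.
\end{itemize}
Inserting \eqref{eq:HImatrix3}, $P(g-k,h)=\delta_{g,k}-a\delta_{h,0}$, and then using \eqref{eq:HI-order-three-ring} twice ($\omega^2A_{i-h,g-h}$ with $g=k$ becomes $A_{h-g,i-g}$; $\omega^2A_{i,k}$ with $h=0$ becomes $\omega^{-1}A_{i,k}$), the result should match the right side of \eqref{eq:HImatrix4} term by term.

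The main obstacle I expect is this last bookkeeping of powers of $\omega$ and index shifts. I would check it carefully using both forms of the order-three symmetry. Once it goes through, $\mathcal A_hB=\mathcal A_hC$ for all $g$ is exactly \eqref{eq:HImatrix4}.
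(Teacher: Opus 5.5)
Your proposal is correct and essentially matches the paper's proof. The reformulation uses the same substitution and the same evaluation of $\mathcal A_hC$. For injectivity, the paper contracts with $A_{g-s,-h}=\omega A_{h,g-s+h}$, which is your $\mathcal B_h$ after $s\mapsto h-s$ and up to the unit $\omega$, so your version only avoids invoking the order-three symmetry at that step.

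Minor points: \eqref{eq:display-0073} is already literally $B_r=C_r$, so no symmetry rewriting is needed there. The $h=0$ term in $\mathcal A_hC$ only uses $\omega^2=\omega^{-1}$.
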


\begin{proof}
Equations \eqref{eq:HI-order-three-ring} and \eqref{eq:HImatrix3} give
the convolution identity
\begin{align}
 \sum_{r\in G}A_{r+x,h}A_{r,-h}
 =\omega\sum_{u\in G}A_{u+x-h,h}A_{h,u}
 =\omega\bigl(\delta_{x,h}-a\delta_{h,0}\bigr).
 \label{eq:HI-cubic-convolution}
\end{align}
For the first equality, use
$A_{r,-h}=\omega A_{h,r+h}$ and put $u=r+h$.  The second equality is
\eqref{eq:HImatrix3} with its two fixed indices equal to $(x-h,h)$.

We next prove injectivity. Suppose that $\mathcal A_hX=0$.
For fixed $s\in G$, multiply $(\mathcal A_hX)_g=0$ by
$A_{g-s,-h}$ and sum over $g$.  Applying
\eqref{eq:HI-cubic-convolution} to the inner sum gives
\begin{equation}
 0=\omega\left(X_{h-s}-a\delta_{h,0}\sum_rX_r\right).
 \label{eq:display-0097}
\end{equation}
If $h\neq0$, varying $s$ shows that every coordinate of $X$ is zero.  If
$h=0$, summing \eqref{eq:display-0097} over $s$ gives
$(1-N a)\sum_rX_r=0$.  When $1-N a$ is a unit, this again gives
$X=0$.

For the second injectivity criterion, suppose that $A$ satisfies \eqref{eq:HImatrix2} and that $a$ is
not a zero-divisor.  The case $h\neq0$ is unchanged.  If $h=0$, then
\eqref{eq:display-0097} shows that all coordinates of $X$ are equal to a
scalar $c$.  Translation by $g$ permutes $G$.  Since $\mathcal A_0X=0$,
equation \eqref{eq:HImatrix2} therefore gives
\begin{equation*}
 0=(\mathcal A_0X)_g
   =c\sum_rA_{r+g,0}
   =-\omega^{-1}ac.
\end{equation*}
Thus $c=0$, and $\mathcal A_0$ is also injective.

The substitution
\begin{equation*}
 (g,h,k,l)=(r,i,-h,k-h),
 \qquad q=m+h,
\end{equation*}
shows that the cubic identity, multiplied by $\omega$, is
\begin{equation*}
 B_r=C_r.
\end{equation*}
The definition of $B$ gives
\begin{align*}
 (\mathcal A_hB)_g
 =\sum_{r,m\in G}
 A_{r+g,h}A_{r,m}A_{h+m,r+i}A_{i,k+m} 
 =A_{h-g,i-g}\delta_{k,g}
  -\omega^{-1}a\delta_{h,0}A_{i,k}
  -\omega aA_{g,h}\delta_{i,0}
\end{align*}
if and only if \eqref{eq:HImatrix4} holds.  For $C$, we have
\begin{align}
 (\mathcal A_hC)_g
 &=\omega A_{i-h,k-h}
   \sum_rA_{r+g,h}A_{r+k-h,-h}
   -\omega aA_{g,h}\delta_{i,0}\notag
\\
 &=\omega^2A_{i-h,k-h}
   \bigl(\delta_{g,k}-a\delta_{h,0}\bigr)
   -\omega aA_{g,h}\delta_{i,0}.
 \label{eq:display-0101}
\end{align}
Using \eqref{eq:HI-cubic-convolution} after the change of variable
$u=r+k-h$ gives
\begin{equation*}
 \sum_rA_{r+g,h}A_{r+k-h,-h}
 =\omega\bigl(\delta_{g,k}-a\delta_{h,0}\bigr).
\end{equation*}
When $g=k$, equation \eqref{eq:HI-order-three-ring} gives
$\omega^2A_{i-h,k-h}=A_{h-g,i-g}$.  When $h=0$, it gives
$A_{i-h,k-h}=A_{i,k}$.  Thus \eqref{eq:display-0101} is the right-hand side
of \eqref{eq:HImatrix4}.  Hence \eqref{eq:HImatrix4} is equivalent to
$\mathcal A_hB=\mathcal A_hC$.
\end{proof}

\begin{theorem}[Cubic implies quartic]
\label{thm:HI-cubic-implies-quartic}
Let $R$ be a commutative ring, let $G$ be a finite abelian group, and choose
$a\in R$ and $\omega\in R$ with $\omega^3=1$.  Suppose that
$A=(A_{g,h})_{g,h\in G}\in M_G(R)$ satisfies
\eqref{eq:HI-order-three-ring}, \eqref{eq:HImatrix3}, and
\eqref{eq:HI-cubic}, interpreted over $R$.  Then $A$ satisfies
\eqref{eq:HImatrix4}.
\end{theorem}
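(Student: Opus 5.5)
This follows directly from Lemma~\ref{lem:HI-convolution-operator}, so the plan is to use only the forward direction of that lemma. No injectivity of $\mathcal A_h$ is needed, and hence there is no unit or zero-divisor hypothesis on $1-Na$ or $a$. Fix $h,i,k\in G$ and form the vectors $B,C\in R^G$ exactly as defined in the lemma. The lemma's hypotheses are the order-three symmetry \eqref{eq:HI-order-three-ring} and \eqref{eq:HImatrix3}. Under these, it records two facts: the cubic identity \eqref{eq:HI-cubic}, after the substitution $(g,h,k,l)=(r,i,-h,k-h)$ with summation variable $q=m+h$, is equivalent to $B_r=C_r$ for all $r$; and \eqref{eq:HImatrix4} is equivalent to $\mathcal A_hB=\mathcal A_hC$ for all $h,i,k$.

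The key steps, in order, are as follows.
\begin{enumerate}
\item Check that the substitution $(g,h,k,l)=(r,i,-h,k-h)$ is a bijection of $G^4$ for fixed summation relabeling. This guarantees that the cubic identity, which holds for all indices, yields $B=C$ as elements of $R^G$ for every fixed triple $(h,i,k)$. Multiplying the cubic identity by $\omega$ is harmless, since $\omega^3=1$ makes $\omega$ a unit.
\item Apply the $R$-linear map $\mathcal A_h$ to both sides to get $\mathcal A_hB=\mathcal A_hC$. Linearity holds over any commutative ring.
\item Invoke the second equivalence of the lemma to conclude \eqref{eq:HImatrix4}.
\end{enumerate}

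The only substantive content lies inside the lemma, in the computation of $\mathcal A_hC$. There the convolution identity \eqref{eq:HI-cubic-convolution} collapses $\sum_r A_{r+g,h}A_{r+k-h,-h}$ to $\omega(\delta_{g,k}-a\delta_{h,0})$. The symmetry \eqref{eq:HI-order-three-ring} then rewrites $\omega^2A_{i-h,k-h}$ as $A_{h-g,i-g}$ when $g=k$, and as $A_{i,k}$ when $h=0$. That computation is already carried out in the lemma, so I expect no real obstacle.

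The one point to double-check is hypotheses. The lemma asks for \eqref{eq:HImatrix3} together with only the first equality of \eqref{eq:HImatrix1}, namely \eqref{eq:HI-order-three-ring}, and these are exactly the hypotheses of the theorem. No field assumption, nontriviality of $G$, or condition on $a$ is used.
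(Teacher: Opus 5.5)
Your proposal is correct and matches the paper's proof: the cubic identity specialized at $(r,i,-h,k-h)$ gives $B=C$, applying $\mathcal A_h$ gives $\mathcal A_hB=\mathcal A_hC$, and the last assertion of Lemma~\ref{lem:HI-convolution-operator} turns this into \eqref{eq:HImatrix4}, with no injectivity or condition on $a$ needed. The bijectivity check in your first step is unnecessary, since specializing a universally quantified identity already suffices, but it does no harm.
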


\begin{proof}
For the vectors in Lemma~\ref{lem:HI-convolution-operator}, the cubic
identity gives $B=C$. Apply $\mathcal A_h$ and use the last assertion of that
lemma to obtain \eqref{eq:HImatrix4}.
\end{proof}

\begin{theorem}[Quartic implies cubic]
\label{thm:HI-quartic-implies-cubic}
Let $R$ be a commutative ring, let $G$ be a finite abelian group of
cardinality $N$, and choose $a,\omega\in R$ such that
\begin{equation*}
 1-N a\in R^\times,
 \qquad
 \omega^3=1.
\end{equation*}
If $A=(A_{g,h})_{g,h\in G}\in M_G(R)$ satisfies
\eqref{eq:HI-order-three-ring}, \eqref{eq:HImatrix3}, and
\eqref{eq:HImatrix4}, interpreted over $R$, then it satisfies the cubic
identity \eqref{eq:HI-cubic}.
\end{theorem}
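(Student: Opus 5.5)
The plan is to reverse Theorem~\ref{thm:HI-cubic-implies-quartic} using Lemma~\ref{lem:HI-convolution-operator}. Fix $h,i,k\in G$ and form the vectors $B,C\in R^G$ of that lemma. Its last assertion says that \eqref{eq:HImatrix4}, for this triple, is equivalent to $\mathcal A_hB=\mathcal A_hC$. Its first assertion says that the cubic identity, after the substitution $(g,h,k,l)=(r,i,-h,k-h)$ and multiplication by the unit $\omega$, is $B_r=C_r$ for all $r$. So it suffices to cancel $\mathcal A_h$.

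For the cancellation we use the first injectivity criterion of Lemma~\ref{lem:HI-convolution-operator}. Its hypotheses are \eqref{eq:HI-order-three-ring} and \eqref{eq:HImatrix3}, both assumed here, together with $1-Na\in R^\times$, which is also assumed. Hence each $\mathcal A_h$ is injective, and $\mathcal A_h(B-C)=0$ gives $B=C$. Note that the ring need not be a domain. The injectivity argument only multiplies by entries $A_{g-s,-h}$ and uses the convolution identity \eqref{eq:HI-cubic-convolution}; for $h=0$ it then cancels the unit $1-Na$.

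It remains to check that every triple $(g,h,k,l)$ of the cubic identity is reached as $(r,i,-h,k-h)$ for some $r,h,i,k$. This is a bijective change of variables: take $r=g$, $i=h_{\rm old}$, $h=-k_{\rm old}$, and $k=l_{\rm old}-k_{\rm old}$. The summation reindexing $q=m+h$ is also bijective. Multiplying by $\omega$ and its inverse is harmless since $\omega^3=1$. Hence $B=C$ for all $h,i,k$ is exactly \eqref{eq:HI-cubic}.

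The main obstacle is bookkeeping rather than ideas. One has to confirm that the index substitution in Lemma~\ref{lem:HI-convolution-operator} covers all quadruples. One also has to confirm that \eqref{eq:HImatrix1} is only used in its one-sided form \eqref{eq:HI-order-three-ring}, since the second equality in \eqref{eq:HImatrix1} follows by iterating the first. Both points are already encoded in the lemma, so the proof is a short composition of the lemma's two assertions.
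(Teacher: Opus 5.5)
Your proposal is correct and is essentially the paper's proof. You use Lemma~\ref{lem:HI-convolution-operator} to get injectivity of every $\mathcal A_h$ from $1-Na\in R^\times$, cancel $\mathcal A_h$ in the reformulation $\mathcal A_hB=\mathcal A_hC$ of \eqref{eq:HImatrix4}, and identify the resulting $B=C$ with the cubic identity via the same lemma; your extra checks (the bijectivity of the index substitution and the fact that $R$ need not be a domain) are accurate but already contained in that lemma.
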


\begin{proof}
Lemma~\ref{lem:HI-convolution-operator} shows that every $\mathcal A_h$ is
injective.  Equation~\eqref{eq:HImatrix4} gives
$\mathcal A_hB=\mathcal A_hC$, so $B=C$.  By the same lemma, this is the
cubic identity.
\end{proof}

\begin{corollary}[General equivalence of the two HI equation systems]
\label{cor:HI-cubic-full-equivalence-general}
\label{cor:HI-cubic-criterion}
In the scalar setup \eqref{eq:HI-scalars}--\eqref{eq:HI-a}, suppose that
$G$ is nontrivial.  A matrix satisfies
\eqref{eq:HImatrix1}--\eqref{eq:HImatrix4} if and only if it satisfies
\eqref{eq:HImatrix1} and the cubic identity \eqref{eq:HI-cubic}.
\end{corollary}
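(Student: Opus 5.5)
The corollary follows by combining the three appendix theorems just proved. The scalar setup \eqref{eq:HI-scalars}--\eqref{eq:HI-a} is stated over an algebraically closed field $\kk$. In it $d\neq0$, so $a=d^{-1}\neq0$, and $\omega^3=1$. The plan is to check that these scalars meet the hypotheses of each theorem and then chain them together.

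\emph{Reverse direction.} Assume $A$ satisfies \eqref{eq:HImatrix1} and \eqref{eq:HI-cubic}. Take $R=\kk$, which is a field, and note that $G$ is nontrivial with $a\neq0$. Then Theorem~\ref{thm:HI-cubic-implies-quadratic} gives \eqref{eq:HImatrix2} and \eqref{eq:HImatrix3}. The first identity in \eqref{eq:HImatrix1} is exactly \eqref{eq:HI-order-three-ring}. So the hypotheses of Theorem~\ref{thm:HI-cubic-implies-quartic} hold over the commutative ring $\kk$, and that theorem yields \eqref{eq:HImatrix4}.

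\emph{Forward direction.} Assume \eqref{eq:HImatrix1}--\eqref{eq:HImatrix4}. To apply Theorem~\ref{thm:HI-quartic-implies-cubic} we need $1-Na$ to be a unit. Multiplying $d^2=Nd+1$ by $a^2$ gives $1=Na+a^2$, so $1-Na=a^2$. This is nonzero, hence a unit in the field $\kk$. The theorem then gives \eqref{eq:HI-cubic}. Alternatively, one could use the second criterion of Lemma~\ref{lem:HI-convolution-operator}, since \eqref{eq:HImatrix2} holds and $a$ is not a zero-divisor.

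There is essentially no obstacle here. The only points to check are the unit computation $1-Na=a^2$ and that the second equality in \eqref{eq:HImatrix1} is not needed as an extra hypothesis in the ring-theoretic theorems. The characteristic of $\kk$ plays no role, since no division by $N$ occurs.
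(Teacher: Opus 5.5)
Your proposal is correct and is the paper's own argument. For the reverse direction you chain Theorem~\ref{thm:HI-cubic-implies-quadratic} (over the field $\kk$, using $a\neq0$) with Theorem~\ref{thm:HI-cubic-implies-quartic}, and for the forward direction you apply Theorem~\ref{thm:HI-quartic-implies-cubic}, using that $1-Na=a^2$ is a unit; your remark that the second equality in \eqref{eq:HImatrix1} is not needed is also right, since it follows from applying the first equality twice.
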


\begin{proof}
For the implication from the symmetry and cubic identities to the full
system, Theorem~\ref{thm:HI-cubic-implies-quadratic} gives the sum and
quadratic identities, and Theorem~\ref{thm:HI-cubic-implies-quartic} gives the
quartic identity. Conversely, apply
Theorem~\ref{thm:HI-quartic-implies-cubic}, using $1-N a=a^2$, which is a
unit in the scalar field.
\end{proof}

\begin{remark}
Theorem~\ref{thm:HI-cubic-implies-quadratic} uses neither oddness nor
cyclicity of $G$ nor algebraic closedness of the coefficient field.  It only
requires $a\neq0$.  It does not divide by $2$, $3$, $N$, or $N^2+4$,
and hence applies when the characteristic divides $|G|$.
Theorem~\ref{thm:HI-cubic-implies-quartic} works over a commutative ring and
does not use any scalar relation involving $a$.  The converse only requires
$1-N a$ to be a unit.  Thus both directions remain valid in every
characteristic under their stated hypotheses.
\end{remark}

\subsection{The scalar cube-root}\label{app:omega}
For the rest of this appendix, the field is algebraically closed, $G$ has
odd order, and $N\ne0$ in the field. We prove
Theorem~\ref{thm:HI-omega-trivial}. The row and trace sums and the Fourier
identity below are the preliminary computations.

The determinant argument will also use the following two sums.

\begin{lemma}[Row and trace sums]
\label{lem:HI-row-trace-sums}
Every HI-matrix satisfies
\begin{align}
 \sum_{r\in G}A_{0,r}&=-\omega a,
 \label{eq:HI-row-sum}\\
 \sum_{r,k\in G}A_{r,k}A_{k,r}&=N-a.
 \label{eq:HI-trace-sum}
\end{align}
\end{lemma}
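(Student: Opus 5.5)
The two identities follow directly from \eqref{eq:HImatrix1}--\eqref{eq:HImatrix3}; neither needs the quartic equation or the hypothesis $N\neq0$.

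\emph{The row sum \eqref{eq:HI-row-sum}.} Specialize the first equality in \eqref{eq:HImatrix1} to $h=0$. It gives $A_{g,0}=\omega A_{-0,g-0}=\omega A_{0,g}$ for every $g\in G$, that is, $A_{0,g}=\omega^{-1}A_{g,0}$. Summing over $g$ and applying \eqref{eq:HImatrix2},
\begin{equation*}
 \sum_{r\in G}A_{0,r}=\omega^{-1}\sum_{r\in G}A_{r,0}
 =\omega^{-1}\bigl(-\omega^{-1}a\bigr)=-\omega^{-2}a=-\omega a.
\end{equation*}
The last step uses $\omega^3=1$.

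\emph{The trace sum \eqref{eq:HI-trace-sum}.} Take the quadratic identity \eqref{eq:HImatrix3} with $h=0$. For each $k\in G$ it reads
\begin{equation*}
 \sum_{r\in G}A_{r,k}A_{k,r}=1-a\delta_{k,0}.
\end{equation*}
Summing over $k\in G$ gives $N-a$, which is \eqref{eq:HI-trace-sum}.

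There is no genuine obstacle. The only points needing care are reading the index substitution in \eqref{eq:HImatrix1} correctly, so that $A_{g,0}=\omega A_{0,g}$ and not the relation with the inverse root, and using $\omega^{-2}=\omega$ to reach the stated form.
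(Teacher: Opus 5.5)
Your proof is correct and is the paper's argument. The paper obtains $A_{0,r}=\omega^{-1}A_{r,0}$ from \eqref{eq:HImatrix1} via the two substitutions $(g,h)=(0,r)$ and $(-r,-r)$, whereas your single substitution $h=0$ gets there slightly more directly; it then sums using \eqref{eq:HImatrix2} and gets the trace sum by setting $h=0$ in \eqref{eq:HImatrix3} and summing over $k$, exactly as you do.
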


\begin{proof}
Putting $(g,h)=(0,r)$ and then $(g,h)=(-r,-r)$ in
\eqref{eq:HImatrix1} gives
$A_{0,r}=\omega^2A_{r,0}=\omega^{-1}A_{r,0}$.  Summing and using
\eqref{eq:HImatrix2} proves \eqref{eq:HI-row-sum}.  To prove
\eqref{eq:HI-trace-sum}, put $h=0$ in \eqref{eq:HImatrix3} and sum over
$k$.
\end{proof}

We next record the Fourier form of \eqref{eq:HImatrix3}. The sum in that
equation repeatedly translates a group element. A character turns each such
translation into multiplication by a scalar, so the sum factors. This form is
used in both the obstruction theorem and the
characteristic-$2$ arguments below.  For the next lemma, let $\kk$ be
algebraically closed, let $G$ be a finite
abelian group of order $N$, choose $a\in\kk$,
assume $\operatorname{char}(\kk)\nmid N$, and write
$\widehat G:=\Hom(G,\kk^\times)$.  For a matrix
$A=(A_{g,h})_{g,h\in G}$ and for $t\in G$, $\chi\in\widehat G$, define the
column and row transforms by
\begin{equation}
 X_t(\chi):=\sum_{r\in G}A_{r,t}\chi(r),
 \qquad
 Y_t(\chi):=\sum_{r\in G}A_{t,r}\chi(r).
 \label{eq:display-0263}
\end{equation}

\begin{lemma}[Fourier form of \texorpdfstring{\eqref{eq:HImatrix3}}{E3}]
\label{lem:HI-Fourier-E3}
If $A$ satisfies \eqref{eq:HImatrix3}, then
\begin{equation*}
 X_t(\chi)Y_t(\chi^{-1})
 =1-N a\delta_{t,0}\delta_{\chi,1}
 \qquad(t\in G,\ \chi\in\widehat G).
\end{equation*}
\end{lemma}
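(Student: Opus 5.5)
The plan is to multiply the quadratic identity \eqref{eq:HImatrix3} by a character value and sum over the free index, so that the translation inside the sum factors as a product of two Fourier transforms. Fix $t\in G$ and $\chi\in\widehat G$. In \eqref{eq:HImatrix3}, take the fixed indices to be $(h,k)$ with $k=t$, multiply both sides by $\chi(h)^{-1}$, and sum over $h\in G$.

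For the left-hand side, the double sum is $\sum_{h,r}A_{h+r,t}A_{t,r}\chi(h)^{-1}$. Put $s=h+r$; then $h=s-r$ and $\chi(h)^{-1}=\chi(s)^{-1}\chi(r)$. As $(h,r)$ runs over $G\times G$, so does $(s,r)$, and the sum factors as
\[
\Bigl(\sum_{s}A_{s,t}\chi^{-1}(s)\Bigr)\Bigl(\sum_rA_{t,r}\chi(r)\Bigr)=X_t(\chi^{-1})Y_t(\chi).
\]
For the right-hand side, $\sum_h\chi(h)^{-1}\delta_{h,0}=1$ and $\sum_h\chi(h)^{-1}a\delta_{t,0}=Na\,\delta_{t,0}\delta_{\chi,1}$, by orthogonality of characters. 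No division by $N$ is needed for this step; orthogonality only uses that $\chi\neq1$ has some $g$ with $\chi(g)\ne1$.

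This gives $X_t(\chi^{-1})Y_t(\chi)=1-Na\delta_{t,0}\delta_{\chi,1}$. Replacing $\chi$ by $\chi^{-1}$, and noting that $\delta_{\chi^{-1},1}=\delta_{\chi,1}$, yields the stated identity. There is no real obstacle here; the only care needed is in the sign conventions of the change of variable, that is, in ensuring that the character attaches as $\chi^{-1}$ to the column transform so as to match the displayed form. If the orientation came out reversed, the final inversion $\chi\mapsto\chi^{-1}$ fixes it. The hypothesis $\operatorname{char}\kk\nmid N$ is used only to make $\widehat G$ the full dual, so that these identities are meaningful for later Fourier inversion.
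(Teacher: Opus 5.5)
Your proposal is correct and is essentially the paper's argument: set $k=t$ in \eqref{eq:HImatrix3}, weight by a character of $h$, substitute $s=h+r$ to factor the double sum, and use character orthogonality on the right. The only difference is cosmetic: the paper weights by $\chi(h)$ and obtains $X_t(\chi)Y_t(\chi^{-1})$ directly, whereas you weight by $\chi(h)^{-1}$ and then replace $\chi$ by $\chi^{-1}$.
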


\begin{proof}
Set $k=t$ in \eqref{eq:HImatrix3}, multiply by $\chi(h)$, and sum over
$h\in G$.  On the left, put $u=h+r$.  Since
$\chi(h)=\chi(u)\chi(r)^{-1}$, the double sum is
$X_t(\chi)Y_t(\chi^{-1})$ by \eqref{eq:display-0263}. On the right, use
$\sum_h\chi(h)=N\delta_{\chi,1}$.
\end{proof}

The scalar $a$ may be either root of \eqref{eq:HI-a} when the two
roots are distinct. The proof computes the square of the product of all row Fourier coefficients
in two ways. Write this product as $\Pi$. The first calculation uses
\eqref{eq:HImatrix1} and \eqref{eq:HImatrix3}. For the second, the cubic
identity gives a trace Gram matrix, whose rank-one correction can be evaluated
explicitly. Comparing the two formulas for $\Pi^2$ then gives $\omega=1$.

\begin{proof}[Proof of Theorem~\ref{thm:HI-omega-trivial}]
If $G=\{0\}$, the conclusion is Remark~\ref{rem:HI-trivial-group}. We may
therefore assume that $G$ is nontrivial.  By definition, $A$ satisfies
\eqref{eq:HImatrix2} and \eqref{eq:HImatrix3}.
Theorem~\ref{thm:cubic-criterion} also gives
\eqref{eq:HI-cubic}.  As in the
section setup, $N$ in a scalar identity denotes its image in $\kk$, whereas
$N^2$ in an exponent denotes the integer $|G\times\widehat G|$.

\smallskip
\noindent\emph{Part 1. The first formula and the trace Gram determinant.}
We first obtain \eqref{eq:HI-omega-quadratic-product}, then construct a
matrix basis and express its trace Gram determinant in terms of the same
Fourier product.

For $g\in G$ and $\chi\in\widehat G$, put
\begin{equation*}
 \varepsilon(g,\chi):=1-N a\delta_{g,0}\delta_{\chi,1},
 \qquad
 \Pi:=\prod_{g\in G,\,\chi\in\widehat G}Y_g(\chi).
\end{equation*}
The scalar relation gives $\varepsilon(0,1)=1-N a=a^2$, so every value of
$\varepsilon$ is nonzero.  Equation~\eqref{eq:HImatrix1} gives
\begin{equation}
 X_k(\chi)=\omega\chi(k)Y_{-k}(\chi).
 \label{eq:HI-omega-column-transform}
\end{equation}
Combining this with Lemma~\ref{lem:HI-Fourier-E3} gives
\begin{equation}
 \omega\chi(k)Y_{-k}(\chi)Y_k(\chi^{-1})
 =\varepsilon(k,\chi).
 \label{eq:HI-omega-inverse-pair}
\end{equation}
In particular, every row and column Fourier coefficient is nonzero.

Multiply \eqref{eq:HI-omega-inverse-pair} over
$G\times\widehat G$.  The substitutions $k\mapsto-k$ and
$\chi\mapsto\chi^{-1}$ show separately that the two products of $Y$-terms
equal $\Pi$.  Moreover,
$\prod_{k\in G}\chi(k)=1$ because the elements of the odd-order group $G$
pair as $k$ and $-k$.  Only $(k,\chi)=(0,1)$ contributes an exceptional
factor on the right.  Hence
\begin{equation}
 \Pi^2=a^2\omega^{-N^2}.
 \label{eq:HI-omega-quadratic-product}
\end{equation}

To compute the trace Gram determinant, use translated and character-weighted
copies of $A$. Put $\mathcal P_G:=G\times\widehat G$, with
group law $(g,\chi)+(h,\eta)=(g+h,\chi\eta)$ and identity $0=(0,1)$.
The canonical identification of $\mathcal P_G$ with its dual
$\widehat G\times G$ sends $(g,\chi)$ to $(\chi,g)$.  It gives the symmetric
nondegenerate pairing
\begin{equation*}
 \langle(g,\chi),(h,\eta)\rangle:=\chi(h)\eta(g).
\end{equation*}
Let $\mathcal F_{\mathcal P_G}:=(\langle x,y\rangle)_{x,y\in\mathcal P_G}$,
using the same ordering on both indices.  Character orthogonality gives
$\mathcal F_{\mathcal P_G}^2=N^2P_{\mathrm{inv}}$, where $P_{\mathrm{inv}}$ is inversion
on $\mathcal P_G$.  This permutation has $(N^2-1)/2$ transpositions, an
even number since $N$ is odd.  Therefore
\begin{equation}
 (\det\mathcal F_{\mathcal P_G})^2=N^{2N^2}.
 \label{eq:HI-omega-Fourier-determinants}
\end{equation}

For $(g,\chi)\in\mathcal P_G$, define
\begin{equation*}
 (M_{g,\chi})_{i,j}:=A_{i+g,j}\chi(j).
\end{equation*}
For $y=(v,\psi)\in\mathcal P_G$, Fourier-transforming this family gives
\begin{equation}
 \widetilde{M}_y
 :=\sum_{g,\chi}\psi(g)\chi(v)M_{g,\chi}
 =\lambda_yE_y,
 \label{eq:HI-omega-basis-transform}
\end{equation}
where
\begin{equation*}
 \lambda_y:=N\omega\psi(-v)Y_v(\psi),
 \qquad
 E_y:=\varphi_\psi\mathbf e_{-v}^{\top},
 \qquad
 \varphi_\psi:=(\psi(-i))_{i\in G}.
\end{equation*}
The sum over $\chi$ forces $j=-v$, after which
\eqref{eq:HI-omega-column-transform} gives the displayed scalar.  For fixed
$v$, the vectors $\varphi_\psi$ form a Fourier basis.  Since $N\neq0$ and
every $Y_v(\psi)$ is nonzero, every $\lambda_y$ is nonzero.
Thus the matrices $E_y$ form a basis of $M_G(\kk)$, and invertibility
of the parameter Fourier transform shows that the matrices
$M_{g,\chi}$ do as well.

Let $\Gamma$ be the trace Gram matrix of this basis,
\begin{equation*}
 \Gamma_{(g,\chi),(h,\eta)}
 :=\operatorname{Tr}(M_{g,\chi}M_{h,\eta}).
\end{equation*}
It is invertible because the trace pairing on $M_G(\kk)$ is nondegenerate.
The rank-one basis has the same Fourier matrix as its trace Gram matrix:
for $y=(v,\psi)$ and $y'=(w,\phi)$,
\begin{equation*}
 \operatorname{Tr}(E_yE_{y'})
 =\psi(w)\phi(v)=\langle y,y'\rangle.
\end{equation*}
With $D_\lambda:=\operatorname{diag}(\lambda_y)$,
the basis transformation \eqref{eq:HI-omega-basis-transform} therefore gives
\begin{equation*}
 \mathcal F_{\mathcal P_G}\Gamma
 \mathcal F_{\mathcal P_G}
 =D_\lambda\mathcal F_{\mathcal P_G}D_\lambda.
\end{equation*}
Each transformed basis element is multiplied by one scalar $\lambda_y$.
The trace pairing has two inputs, so each $\lambda_y$ occurs twice in the
Gram determinant. This gives the square of $\prod_y\lambda_y$ below.
For each $\psi$, pairing $v$ with $-v$ gives $\prod_v\psi(-v)=1$.
Hence $\prod_y\lambda_y=(N\omega)^{N^2}\Pi$.
Taking determinants and cancelling the nonzero determinant of
$\mathcal F_{\mathcal P_G}$ gives
\begin{equation}
 \det\mathcal F_{\mathcal P_G}\,\det\Gamma
 =\left(\prod_y\lambda_y\right)^2
 =(N\omega)^{2N^2}\Pi^2.
 \label{eq:HI-omega-direct-Gram-determinant}
\end{equation}
This expresses the trace Gram determinant in terms of the Fourier product
computed in Part 1.

\smallskip
\noindent\emph{Part 2. The Gram matrix from the cubic identity.}
We now use the cubic identity to write the same Gram matrix as a matrix with
known determinant plus one rank-one term. This relates a determinant ratio to
the Fourier product.
Direct matrix multiplication gives
\begin{equation*}
 \operatorname{Tr}(M_{g,\chi}M_{h,\eta})
 =\sum_{k,l}A_{g+l,k}A_{h+k,l}\chi(k)\eta(l).
\end{equation*}
Multiply \eqref{eq:HI-cubic} by $\chi(k)\eta(l)$ and sum over $k,l$.
The left-hand side becomes
\begin{equation*}
 \omega^{-1}Y_g(\chi)Y_h(\eta)
 X_{g+h}((\chi\eta)^{-1}).
\end{equation*}
Since $Y_{g+h}(\chi\eta)\neq0$, Lemma~\ref{lem:HI-Fourier-E3} replaces
the last factor by $\varepsilon(g+h,\chi\eta)/Y_{g+h}(\chi\eta)$; here
$\varepsilon(g+h,(\chi\eta)^{-1})=\varepsilon(g+h,\chi\eta)$.
The constant term contributes only at $g=h=0$ and $\chi=\eta=1$, so
\begin{align}
 \Gamma_{(g,\chi),(h,\eta)}
 ={} \omega^{-1}\varepsilon(g+h,\chi\eta)
       \frac{Y_g(\chi)Y_h(\eta)}{Y_{g+h}(\chi\eta)} +aN^2\delta_{g,0}\delta_{h,0}
                  \delta_{\chi,1}\delta_{\eta,1}.
 \label{eq:HI-omega-cubic-Gram-kernel}
\end{align}

For $x=(g,\chi)\in\mathcal P_G$, put
\begin{equation*}
 Z_x:=Y_g(\chi),
 \qquad
 f_x:=\frac{\varepsilon(g,\chi)}{Z_x},
 \qquad
 D:=\operatorname{diag}(Z_x),
 \qquad
 H_{x,y}:=f_{x+y}.
\end{equation*}
Then \eqref{eq:HI-omega-cubic-Gram-kernel} becomes
\begin{equation*}
 \Gamma=\Gamma_0+aN^2\mathbf e_0\mathbf e_0^{\top},
 \qquad
 \Gamma_0:=\omega^{-1}DHD.
\end{equation*}
Thus the exceptional cubic term changes only the origin entry of the Gram
matrix.

It remains to compute the determinant of $H$.  Applying
\eqref{eq:HI-omega-inverse-pair} to $(g,\chi^{-1})$ gives
\begin{equation*}
 f_{(g,\chi)}=\omega\chi(g)^{-1}Y_{-g}(\chi^{-1}).
\end{equation*}
For $y=(v,\psi)\in\mathcal P_G$, character orthogonality and
\eqref{eq:HImatrix1} now give
\begin{align*}
 \widehat f(y)
 &:=\sum_{g,\chi}f_{(g,\chi)}\psi(g)\chi(v)=\omega N\sum_gA_{-g,v-g}\psi(g)
 =N Y_v(\psi).
\end{align*}
The last equality uses $\omega A_{-g,v-g}=A_{v,g}$.
In particular, $\lambda_y=\omega\psi(-v)\widehat f(y)$: the basis rescalings
and the Hankel transform involve the same row Fourier coefficients.

The Hankel matrix is diagonalized by congruence. Put $u=x+x'$ in the matrix
product and use character orthogonality to obtain
\begin{align*}
 (\mathcal F_{\mathcal P_G}H\mathcal F_{\mathcal P_G})_{y,y'}
 =\sum_u f_u\langle u,y'\rangle
       \sum_x\langle x,y-y'\rangle
 =N^2\delta_{y,y'}\widehat f(y).
\end{align*}
Taking determinants and using \eqref{eq:HI-omega-Fourier-determinants}
therefore gives
\begin{equation}
 \det H=\prod_y\widehat f(y)=N^{N^2}\Pi.
 \label{eq:HI-omega-Hankel-determinant}
\end{equation}
Since $\det D=\Pi$, equations
\eqref{eq:HI-omega-direct-Gram-determinant} and
\eqref{eq:HI-omega-Hankel-determinant} give
\begin{equation*}
 \det\Gamma_0=\omega^{-N^2}N^{N^2}\Pi^3,
 \qquad
 \frac{\det\Gamma_0}{\det\Gamma}
 =\frac{\det\mathcal F_{\mathcal P_G}}{N^{N^2}}\Pi,
\end{equation*}
where the ratio uses $\omega^{3N^2}=1$.  Squaring and using
\eqref{eq:HI-omega-Fourier-determinants} yields
\begin{equation}
 \left(\frac{\det\Gamma_0}{\det\Gamma}\right)^2
 =\Pi^2.
 \label{eq:HI-omega-Gram-ratio-square}
\end{equation}
Thus the squared determinant ratio equals the squared Fourier product. It
remains to evaluate the ratio from the rank-one correction.

\smallskip
\noindent\emph{Part 3. The rank-one correction and the comparison.}
We compute the determinant ratio from one entry of the inverse Gram matrix,
using a trace-dual element. This gives the second formula for $\Pi^2$. Let $R_0$ have an all-ones zero row and all other rows zero, and set
\begin{equation*}
 (R_0)_{i,j}:=\delta_{i,0},
 \qquad\text{ and }\qquad
 W:=A-\omega R_0.
\end{equation*}
Equations \eqref{eq:HImatrix3} and \eqref{eq:HImatrix2}, respectively, give
\begin{align}
 \operatorname{Tr}(M_{g,\chi}A)
 &=N\delta_{g,0}\delta_{\chi,1}-a,
 \label{eq:HI-omega-trace-A}\\
 \operatorname{Tr}(M_{g,\chi}R_0)
 &=-\omega^{-1}a.
 \label{eq:HI-omega-trace-R-zero}
\end{align}
The term $-\omega R_0$ cancels the constant $-a$ in
\eqref{eq:HI-omega-trace-A}, giving
\begin{equation}
 \operatorname{Tr}(M_{g,\chi}W)
 =N\delta_{g,0}\delta_{\chi,1}.
 \label{eq:HI-omega-dual-vector}
\end{equation}
Thus $W/N$ is trace-dual to
$M_{0,1}=A$, and $(\Gamma^{-1})_{0,0}
 =\operatorname{Tr}(W^2)/N^2$.

Equation~\eqref{eq:HI-omega-dual-vector} at $(g,\chi)=(0,1)$ gives
$\operatorname{Tr}(AW)=N$.  Also,
\eqref{eq:HI-omega-trace-R-zero} and $R_0^2=R_0$ give
$\operatorname{Tr}(R_0W)
 =-\omega^{-1}a-\omega$. 
It follows that
\begin{equation*}
 \operatorname{Tr}(W^2)
 =\operatorname{Tr}\bigl((A-\omega R_0)W\bigr)
 =N+a+\omega^2.
\end{equation*}
The matrix determinant lemma applied to
$\Gamma_0=\Gamma-aN^2\mathbf e_0\mathbf e_0^{\top}$ now gives
\begin{align}
 \frac{\det\Gamma_0}{\det\Gamma} =1-aN^2(\Gamma^{-1})_{0,0} = 1-a(N+a+\omega^2) = -a\omega^2,
 \label{eq:HI-omega-rank-one-ratio}
\end{align}
where the last equality uses $a(N+a)=1$.

Squaring \eqref{eq:HI-omega-rank-one-ratio} and comparing it with
\eqref{eq:HI-omega-Gram-ratio-square} gives
\begin{equation*}
 \Pi^2=a^2\omega.
\end{equation*}
This is the second formula for $\Pi^2$. Together with
\eqref{eq:HI-omega-quadratic-product}, and using $a\neq0$, it implies
\begin{equation}
 \omega^{N^2+1}=1.
 \label{eq:HI-omega-obstruction}
\end{equation}
Every square modulo $3$ is $0$ or $1$, so $3\nmid N^2+1$.  Since
$\omega^3=1$, equation~\eqref{eq:HI-omega-obstruction} forces $\omega=1$.
\end{proof}

\begin{remark}
Over $\mathbb C$, Theorem~\ref{thm:HI-omega-trivial} applies to every finite
abelian group of odd order.  It also applies in characteristic $p$ whenever
$p\nmid N$.  In particular, it applies to every odd-order finite abelian
group in characteristic $2$.  The proof is algebraic: it uses no conjugation,
positivity, Hermitian form, or division by $2$, $3$, or a discriminant.

If $p\mid N$ and $p\neq3$, Fourier inversion is unavailable, so this
argument gives no conclusion.  In characteristic $3$, the identity
$t^3-1=(t-1)^3$ already shows that every cube root of unity is $1$.  The
theorem concerns the full strict scalar equations in this manuscript.  
\end{remark}


\section{Reconstruction over an arbitrary characteristic}
\label{app:reconstruction}\label{app:HI-general-reconstruction}\label{sec:background}
We prove the reconstruction in two stages. First, we show that a Leavitt
realization produces a spherical fusion category. Second, starting from an
HI-matrix, we construct the endomorphisms required for such a realization.
For the explicit endomorphisms, we use the coefficient calculations of
Evans--Gannon and check their validity in arbitrary characteristic.
We retain the normalization and rigidity arguments, the treatment of
coincident scalar roots, and the grading proof of the mixed intertwiner
vanishing, since these require care outside characteristic zero.

\subsection{Additive completion and endomorphisms}\label{subsec:Cauchy}
Let $\C$ be a $\kk$-linear preadditive category. Its \emph{additive
envelope} $\operatorname{Add}(\C)$ has finite formal direct sums of objects
of $\C$ as its objects and matrices of morphisms as its morphisms.  If $\D$
is additive, its \emph{Karoubi envelope} (or \emph{idempotent completion})
$\operatorname{Kar}(\D)$ has pairs $(X,p)$, where $p\in\End_{\D}(X)$ is
idempotent, as its objects.  We write
$\C^{\oplus}:=\operatorname{Add}(\C)$ and
$\overline{\C^{\oplus}}:=\operatorname{Kar}(\C^{\oplus})$; the latter is the
\emph{Cauchy completion} of $\C$.  Thus, when $\C$ is already additive, its
Cauchy completion is its Karoubi envelope.

If $\C$ is monoidal and its tensor product is $\kk$-bilinear, the tensor
product extends to $\overline{\C^{\oplus}}$ by taking finite direct sums and
then restricting along the idempotents. The inclusion
$\C\hookrightarrow\overline{\C^{\oplus}}$ is monoidal. We use the following elementary criterion. It requires only that $\kk$ be a
field.

\begin{lemma}[Finite semisimplicity criterion]
\label{lem:finite-semisimplicity-criterion}
Let $\D$ be an additive $\kk$-linear category.  Suppose that
$X_1,\ldots,X_n$ are objects such that every object of $\D$ is isomorphic to
a finite direct sum of the $X_i$ and
\begin{equation*}
 \Hom_{\D}(X_i,X_j)=\delta_{i,j}\kk.
\end{equation*}
Then $\D$ is a finite semisimple $\kk$-linear abelian category.  Its simple
objects up to isomorphism are exactly $X_1,\ldots,X_n$, and it is idempotent
complete.
\end{lemma}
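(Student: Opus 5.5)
The plan is to reduce everything to the semisimple structure of endomorphism algebras of finite direct sums of the $X_i$. First, fix an arbitrary object $Y$. By hypothesis $Y\cong\bigoplus_i X_i^{\oplus m_i}$, so, using the Hom condition, $\End_{\D}(Y)\cong\prod_i M_{m_i}(\kk)$. More generally, $\Hom_{\D}(Y,Y')$ is identified with the block-diagonal matrices $\prod_i \Hom_\kk(\kk^{m_i},\kk^{m_i'})$. This translates $\D$ into the category of finite-dimensional $\prod_i \kk$-graded vector spaces: send $Y$ to $(\Hom_{\D}(X_i,Y))_i$. I will show that this functor is an equivalence with $\operatorname{Vec}_\kk^{\,n}$.

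\textbf{Full faithfulness.} For $Y=\bigoplus_i X_i^{\oplus m_i}$, the space $\Hom(X_i,Y)$ is $\kk^{m_i}$. A morphism $Y\to Y'$ is a matrix whose entries lie in $\Hom(X_i,X_j)$. Since these spaces vanish for $i\ne j$ and equal $\kk$ for $i=j$, morphisms correspond exactly to tuples of linear maps $\kk^{m_i}\to\kk^{m'_i}$. This correspondence is compatible with composition and linearity, because composition in $\D$ of matrices of scalar multiples of identities is matrix multiplication.

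\textbf{Essential surjectivity.} Given dimensions $(m_i)$, the object $\bigoplus_i X_i^{\oplus m_i}$ exists because $\D$ is additive. Hence $\D\simeq\operatorname{Vec}_\kk^{\,n}$ as $\kk$-linear categories. The latter is finite semisimple abelian with simples the standard lines, and these correspond to $X_1,\dots,X_n$. Idempotent completeness is then inherited: an idempotent on $Y$ is a tuple of idempotent matrices, each of which splits in $\operatorname{Vec}_\kk$, and the splitting transports back along the equivalence.

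The only delicate point is that $\D$ is merely additive, not assumed abelian. The equivalence argument sidesteps this: kernels, cokernels and splittings all exist in $\operatorname{Vec}_\kk^{\,n}$ and transfer across the equivalence, so nothing about $\D$ beyond the stated hypotheses is needed. The step requiring the most care is making the transport of structure explicit, namely checking that the functor really is additive and that the isomorphism classes of simples correspond. Both follow from the Hom computation, and in particular $\End(X_i)=\kk$ shows each $X_i$ is simple and that distinct $X_i$ are nonisomorphic.
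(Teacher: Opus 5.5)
Your argument is correct and complete. The paper states this lemma as an ``elementary criterion'' and gives no proof. Your functor $Y\mapsto(\Hom_{\D}(X_i,Y))_i$, giving a $\kk$-linear equivalence $\D\simeq\operatorname{Vec}_\kk^{\,n}$, is the standard argument the paper takes for granted. It also uses only that $\kk$ is a field, matching the paper's remark.

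Two points could be made more explicit.
\begin{itemize}
\item The one-dimensionality of $\End_{\D}(X_i)$ is what forces $X_i\neq0$ and $\End_{\D}(X_i)=\kk\,\mathrm{id}_{X_i}$. You need this when you treat the matrix entries as scalar multiples of identities, and when you conclude that the $X_i$ are pairwise nonisomorphic from $\Hom_{\D}(X_i,X_j)=0$ for $i\ne j$.
\item Simplicity of $X_i$ should be read off from the equivalence, since its image is a standard line, or from semisimplicity together with $\dim\End_{\D}(X_i)=1$. In a general abelian category, a one-dimensional endomorphism algebra does not by itself force simplicity.
\end{itemize}
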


\subsection{Endomorphism categories}\label{subsec:endoCat}
Let $\Lambda$ be a $\kk$-algebra. Its algebra endomorphisms form a
$\kk$-linear preadditive monoidal category $\End(\Lambda)$ with the following
structure.
\begin{itemize}
 \item The objects are algebra endomorphisms $f:\Lambda\to\Lambda$.
 \item The morphism spaces are
 \begin{equation*}
  \Hom(f,g)=\{a\in\Lambda\mid af(x)=g(x)a
  \text{ for every }x\in\Lambda\}.
 \end{equation*}
 \item Composition is multiplication in $\Lambda$:
 \begin{equation*}
  (g\xrightarrow{b}h)\circ(f\xrightarrow{a}g)
  =(f\xrightarrow{ba}h).
 \end{equation*}
 \item The tensor product of objects is composition, $f\otimes g=f\circ g$.
 \item The tensor product of morphisms is
 \begin{equation*}
 (f\xrightarrow{a}g)\otimes(f'\xrightarrow{b}g')
  =(ff'\xrightarrow{g(b)a=af(b)}gg').
 \end{equation*}
 The equality of the two expressions follows by applying the
 intertwining relation for the first morphism to the second intertwiner.
\end{itemize}
For the reconstruction, we use the subcategory of $\End(\Lambda)$ generated
by endomorphisms representing the basis elements of the fusion ring. Let $\E$
be a collection of endomorphisms that contains the identity and is closed
under composition.  The corresponding
subcategory $\C(\E)$ is again $\kk$-linear, preadditive, and monoidal. In the
application below, it is generated by the chosen invertible endomorphisms and
one distinguished noninvertible endomorphism.

\subsection{Leavitt realizations produce spherical fusion categories}
\label{sec:LeavittRealization}

We now pass from a Leavitt realization to a fusion category. Starting from the
realization, we show that its additive envelope is semisimple with finitely
many simple isomorphism classes. We construct duals from the Leavitt splitting
maps and prove that the resulting pivotal structure is unique and spherical.
The argument works in arbitrary characteristic. The final remark records when
the global dimension vanishes in positive characteristic. The additive
envelope is already idempotent complete, so the Cauchy completion adds no new
objects up to isomorphism.

\begin{definition}
\label{def:Leavitt-realization}
Fix a finite abelian group $G$ of odd order
$N:=|G|=2n+1$.  A \emph{Leavitt realization} of $\mathfrak{HI}_G$ over
$\kk$ consists of the Leavitt
algebra
\begin{equation}
 L:=\cL_{N+1}=\cL_{2n+2},
 \label{eq:display-0022}
\end{equation}
written with generators $s,s',t_g,t_g'$ for $g\in G$ and defining relations
\begin{equation}
 s's=1,
 \qquad s't_g=t_g's=0,
 \qquad t_g't_h=\delta_{g,h},
 \qquad ss'+\sum_{g\in G}t_gt_g'=1,\semanticTag{LA}\label{eq:LeavittRelations}
\end{equation}
together with a unital $\kk$-algebra endomorphism $\rho:L\to L$ and unital
$\kk$-algebra automorphisms $\alpha_g:L\to L$ satisfying
\begin{align}
 \alpha_g\alpha_h&=\alpha_{g+h}, \semanticTag{LR1}\label{eq:LR1}\\
 \alpha_g\rho&=\rho\alpha_{-g}, \semanticTag{LR2}\label{eq:LR2}\\
 \rho^2(x)&=sxs'+\sum_{g\in G}t_g\alpha_g(\rho(x))t_g'
 \qquad(x\in L), \semanticTag{LR3}\label{eq:LR3}
\end{align}
and the simplicity and orthogonality conditions
\begin{align}
 \Hom(\alpha_g\rho,\alpha_h\rho)=\delta_{g,h}\kk,\qquad\qquad
 \semanticTag{LR4}\label{eq:display-0023}\\
 \Hom(\alpha_g,\alpha_h\rho)=0,
 \qquad
 \Hom(\alpha_g\rho,\alpha_h)=0. \semanticTag{LR5}\label{eq:display-0024}
\end{align}

Here
\begin{equation*}
 \Hom(\beta,\gamma)
 :=\{x\in L:x\beta(y)=\gamma(y)x\text{ for every }y\in L\}.
\end{equation*}
\end{definition}

Throughout the paper, the phrase \emph{Leavitt relations} means exactly the
four identities in \eqref{eq:LeavittRelations}.
The word ``endomorphism'' includes preservation of the unit.  In particular,
when endomorphisms are constructed by specifying the images of the Leavitt
generators, the universal property may be invoked only after those images have
been shown to satisfy all relations in \eqref{eq:LeavittRelations}.

\begin{remark}[Other diagonal Hom spaces]
\label{rem:LR-alpha-Hom}
The conditions above imply $\Hom(\alpha_g,\alpha_h)=\delta_{g,h}\kk$. Indeed, if $x\in\Hom(\alpha_g,\alpha_h)$, then, for every $y\in L$, $x\alpha_g\rho(y)=\alpha_h\rho(y)x$. This gives an inclusion: $\Hom(\alpha_g,\alpha_h)\hookrightarrow\Hom(\alpha_g\rho,\alpha_h\rho)$. Equation~\eqref{eq:display-0023} gives zero when $g\neq h$.  When $g=h$, the target is the scalar subspace $\kk 1_L$, and
$\kk 1_L\subseteq\Hom(\alpha_g,\alpha_g)$.  Hence the source is also
$\kk 1_L$.
\end{remark}

We now construct the category. Set
\begin{equation*}
 \E:=\{\alpha_g\rho^m:g\in G,\ m\geq0\},
\end{equation*}
and let $\C(\E)$ be the full $\kk$-linear subcategory of
$\operatorname{End}(L)$ on these objects.  Set
\begin{equation*}
 \D:=\operatorname{Add}(\C(\E)).
\end{equation*}
We use the same symbol $N$ for $N 1_{\kk}\in\kk$. Algebra endomorphisms remain
unital.

We first identify the tensor unit and decompose every object of $\C(\E)$ in
$\operatorname{Add}(\C(\E))$ as a finite direct sum of the objects
$\alpha_h$ and $\alpha_h\rho$.  We then show that these $2N$ objects are
all the simple objects and that the additive
envelope is already the Cauchy completion.

\begin{lemma}[The neutral group element]
\label{lem:LR-unit}
One has $\alpha_0=\operatorname{id}_L$.  In particular, every $\alpha_g$ is
an automorphism with inverse $\alpha_{-g}$.
\end{lemma}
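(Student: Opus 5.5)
The plan is to get $\alpha_0=\operatorname{id}_L$ from the group law \eqref{eq:LR1} together with the Hom-space conditions. Note that $\alpha_0$ need not be the identity a priori.

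\emph{Step 1: $\alpha_0$ is idempotent and invertible.} Put $h=g=0$ in \eqref{eq:LR1} to get $\alpha_0\alpha_0=\alpha_0$. By Definition~\ref{def:Leavitt-realization}, $\alpha_0$ is an automorphism. Composing with $\alpha_0^{-1}$ therefore gives $\alpha_0=\operatorname{id}_L$ at once.

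\emph{Step 2: fallback if bijectivity is not used.} For safety I would also give an argument that only uses the Hom conditions. From \eqref{eq:LR1}, $\alpha_0\alpha_g=\alpha_g$ for every $g$. Also $\rho=\operatorname{id}\circ\rho$, and \eqref{eq:LR2} at $g=0$ gives $\alpha_0\rho=\rho\alpha_0$.

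Here $\alpha_0\rho$ is an object of $\E$. I would look for a nonzero intertwiner between $\rho$ and $\alpha_0\rho$. If one exists, $\alpha_0\rho\cong\rho$ in $\End(L)$. Applying \eqref{eq:LR3} to both sides should then force $\alpha_0$ to fix the generators. The expected route is as follows.

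\begin{itemize}
\item Apply $\alpha_0$ to \eqref{eq:LR3}.
\item Use idempotence to obtain $\alpha_0(s)=s$ and $\alpha_0(t_g)=t_g$ up to the scalar Hom spaces.
\end{itemize}

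This fallback is the main obstacle, because it is more delicate than Step 1.

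\emph{Step 3: the consequence.} With $\alpha_0=\operatorname{id}_L$, \eqref{eq:LR1} gives $\alpha_g\alpha_{-g}=\alpha_{-g}\alpha_g=\alpha_0=\operatorname{id}_L$. So each $\alpha_g$ is an automorphism with inverse $\alpha_{-g}$, which is the second assertion. Since the definition assumes the $\alpha_g$ are automorphisms, Step 1 is the proof, and the remaining steps are routine.
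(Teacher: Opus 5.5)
Your Step 1 together with Step 3 is exactly the paper's proof: $\alpha_0^2=\alpha_0$ from \eqref{eq:LR1}, cancellation using that $\alpha_0$ is assumed to be an automorphism, and then $\alpha_g\alpha_{-g}=\alpha_{-g}\alpha_g=\alpha_0=\operatorname{id}_L$. The Step~2 fallback is unnecessary, and as written it is only an unfinished sketch rather than an argument, so you should simply drop it.
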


\begin{proof}
The relation $\alpha_0^2=\alpha_0$ and invertibility of $\alpha_0$ give
$\alpha_0=\operatorname{id}_L$.  Equation~\eqref{eq:LR1} then gives
$\alpha_g\alpha_{-g}=\alpha_{-g}\alpha_g=\operatorname{id}_L$.
\end{proof}

It follows from Lemma~\ref{lem:LR-unit} and \eqref{eq:LR1}--\eqref{eq:LR2}
that $\E$ contains the tensor unit and is closed under composition.  Thus
$\C(\E)$ is a monoidal subcategory of $\operatorname{End}(L)$.

\begin{lemma}[The Leavitt splitting]
\label{lem:LR-splitting}
In $\D$ there is an isomorphism
\begin{equation}
 \rho^2\cong \unit\oplus\bigoplus_{g\in G}\alpha_g\rho
 \label{eq:display-0027}
\end{equation}
whose inclusions are $s,t_g$ and whose projections are $s',t_g'$.  Thus,
every $\alpha_g\rho^m$ is a finite direct sum of objects among
$\{\alpha_h,\alpha_h\rho:h\in G\}$.
\end{lemma}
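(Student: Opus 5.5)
The plan is to check directly that the Leavitt generators are intertwiners of the required types, and then use the Cuntz–Leavitt identities to exhibit the splitting as a direct-sum decomposition in $\D$.

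\emph{Step 1: the inclusions.} We show that $s\in\Hom(\unit,\rho^2)$ and $t_g\in\Hom(\alpha_g\rho,\rho^2)$. By \eqref{eq:LR3}, for every $x\in L$,
\[
 \rho^2(x)s=sxs's+\sum_h t_h\alpha_h(\rho(x))t_h's=sx,
\]
using $s's=1$ and $t_h's=0$ from \eqref{eq:LeavittRelations}. Similarly,
\[
 \rho^2(x)t_g=sxs't_g+\sum_h t_h\alpha_h\rho(x)t_h't_g=t_g\alpha_g\rho(x),
\]
using $s't_g=0$ and $t_h't_g=\delta_{h,g}$.

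\emph{Step 2: the projections.} Multiplying \eqref{eq:LR3} on the left by $s'$ and by $t_g'$ gives
\[
 s'\rho^2(x)=xs',\qquad t_g'\rho^2(x)=\alpha_g\rho(x)t_g'.
\]
So $s'\in\Hom(\rho^2,\unit)$ and $t_g'\in\Hom(\rho^2,\alpha_g\rho)$.

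\emph{Step 3: the splitting.} Composition in $\C(\E)$ is multiplication in $L$. The relations $s's=1$, $t_g't_h=\delta_{g,h}$ and $s't_g=t_g's=0$ say that the projections composed with the inclusions give the identity matrix. The relation $ss'+\sum_g t_gt_g'=1$ says that the inclusions composed with the projections give $\mathrm{id}_{\rho^2}$. Together these are exactly the matrix isomorphism \eqref{eq:display-0027} in $\operatorname{Add}(\C(\E))$.

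\emph{Step 4: every object decomposes.} We induct on $m$ to show that $\alpha_g\rho^m$ is a finite direct sum of objects $\alpha_h$ and $\alpha_h\rho$.

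\begin{itemize}
 \item For $m\le1$ there is nothing to prove.
 \item For $m\ge2$, write $\alpha_g\rho^m=(\alpha_g\rho^{m-2})\otimes\rho^2$. Tensoring on the left by an object is additive, and it carries the isomorphism of Step 3 to an isomorphism.
 \item This gives $\alpha_g\rho^m\cong\alpha_g\rho^{m-2}\oplus\bigoplus_h\alpha_g\rho^{m-2}\alpha_h\rho$.
 \item By \eqref{eq:LR1}, \eqref{eq:LR2} and Lemma~\ref{lem:LR-unit}, we have $\alpha_g\rho^{m-2}\alpha_h\rho=\alpha_{g\pm h}\rho^{m-1}$, with the sign depending on the parity of $m-2$. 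Both summands therefore have lower $\rho$-degree, and the induction hypothesis applies.
\end{itemize}

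\emph{Where care is needed.} The only delicate point is Step 4. One must check that the whiskered maps, $\alpha_g\rho^{m-2}$ applied to $s,t_h$ via the tensor-product rule $f(b)$, are still intertwiners. This holds because $\End(L)$ is a monoidal category, so whiskering preserves the split-idempotent identities. The whole argument is characteristic-free.
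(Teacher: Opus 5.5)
Your proof is correct and follows the paper's argument essentially step for step. The paper also multiplies \eqref{eq:LR3} by $s,t_g$ on the right and by $s',t_g'$ on the left to get the intertwiner types, and reads off both matrix identities from the Leavitt relations. It then uses $\rho^r\alpha_h=\alpha_{(-1)^rh}\rho^r$ to obtain $\alpha_g\rho^m\cong\alpha_g\rho^{m-2}\oplus\bigoplus_{h}\alpha_{g+(-1)^{m-2}h}\rho^{m-1}$ and concludes by strong induction on $m$ starting from $m=0,1$.
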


\begin{proof}
Multiplying \eqref{eq:LR3} on the right by $s$ and $t_g$, and on the left by
$s'$ and $t_g'$, gives
\begin{equation*}
 \rho^2(x)s=sx,\qquad \rho^2(x)t_g=t_g(\alpha_g\rho)(x),
 \qquad s'\rho^2(x)=xs',\qquad
 t_g'\rho^2(x)=(\alpha_g\rho)(x)t_g'.
\end{equation*}
Thus these elements have the asserted sources and targets.  The two matrix
composites are the identity by the Leavitt relations: projection after
inclusion is the identity on each summand and zero between distinct summands,
while the sum of inclusion after projection is the completeness relation.

The relation $\alpha_g\rho=\rho\alpha_{-g}$ implies
$\rho^r\alpha_h=\alpha_{(-1)^rh}\rho^r$.  Hence, for $m\geq2$,
\begin{equation}
 \alpha_g\rho^m
 \cong \alpha_g\rho^{m-2}\oplus
 \bigoplus_{h\in G}\alpha_{g+(-1)^{m-2}h}\rho^{m-1}.
 \label{eq:display-0029}
\end{equation}
Strong induction on $m$, starting with $m=0,1$, proves the final assertion.
\end{proof}

\begin{proposition}[Semisimplicity of the additive envelope]
\label{prop:fdHom}
The category $\D$ is semisimple abelian and is already idempotent complete.
Its $2N$ simple objects, up to isomorphism, are exactly
\begin{equation*}
 \{\alpha_g,\alpha_g\rho:g\in G\}.
\end{equation*}
In particular, all Hom spaces are finite dimensional and the unit is simple.
Thus the inclusion $\D\to\overline{\C(\E)^{\oplus}}$ is an equivalence.
\end{proposition}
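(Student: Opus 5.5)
The approach is to apply Lemma~\ref{lem:finite-semisimplicity-criterion} to $\D=\operatorname{Add}(\C(\E))$, taking $X_1,\dots,X_{2N}$ to be the objects $\alpha_g$ and $\alpha_g\rho$, $g\in G$. Two hypotheses have to be checked.

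\emph{Generation.} Every object of $\D$ must be isomorphic to a finite direct sum of the $X_i$. An object of $\D$ is a finite formal sum of objects $\alpha_g\rho^m$. By Lemma~\ref{lem:LR-splitting}, each $\alpha_g\rho^m$ is isomorphic in $\D$ to a finite direct sum of objects among the $\alpha_h$ and $\alpha_h\rho$. Since direct sums of isomorphisms are isomorphisms in the additive envelope, the first hypothesis follows.

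\emph{Hom spaces.} We need $\Hom(X_i,X_j)=\delta_{i,j}\kk$ for the $2N$ listed objects. There are three cases.
\begin{enumerate}
\item The spaces $\Hom(\alpha_g\rho,\alpha_h\rho)$ are given by \eqref{eq:display-0023}.
\item The mixed spaces $\Hom(\alpha_g,\alpha_h\rho)$ and $\Hom(\alpha_g\rho,\alpha_h)$ vanish by \eqref{eq:display-0024}.
\item The spaces $\Hom(\alpha_g,\alpha_h)=\delta_{g,h}\kk$ are Remark~\ref{rem:LR-alpha-Hom}.
\end{enumerate}
For the diagonal entries we also need the identity scalar $1_L$ to lie in each endomorphism space, so that these spaces are exactly $\kk1_L$ rather than merely at most one-dimensional. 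This holds because $\Hom(f,f)\ni 1$ for every endomorphism $f$. The lemma then gives a finite semisimple abelian category, idempotent complete, whose simples are exactly the $2N$ listed objects. These are pairwise nonisomorphic because the Hom spaces between distinct ones vanish.

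\emph{Remaining assertions.} Finite dimensionality of all Hom spaces follows from semisimplicity with finitely many simples, since Hom between finite sums is a finite matrix of Hom spaces of dimension at most one. The unit $\alpha_0=\operatorname{id}_L$ (Lemma~\ref{lem:LR-unit}) is one of the simples, so it is simple. Finally, $\D$ is idempotent complete, so every idempotent in $\D$ splits. Hence the canonical functor $\D\to\operatorname{Kar}(\D)=\overline{\C(\E)^{\oplus}}$ is essentially surjective. It is fully faithful by construction, and therefore an equivalence.

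The main obstacle is modest: making sure the isomorphisms produced by Lemma~\ref{lem:LR-splitting} are genuine isomorphisms in $\D$, so that the generation hypothesis holds. Concretely, the Leavitt inclusions and projections must satisfy the intertwining relations and compose to identities in $L$, which was verified in that lemma. Beyond this, the argument is bookkeeping.
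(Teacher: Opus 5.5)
Your proposal is correct and follows the paper's proof essentially step for step. Generation comes from Lemma~\ref{lem:LR-splitting}, the Hom table from \eqref{eq:display-0023}, \eqref{eq:display-0024} and Remark~\ref{rem:LR-alpha-Hom}, the main conclusion from Lemma~\ref{lem:finite-semisimplicity-criterion}, and simplicity of the unit from Lemma~\ref{lem:LR-unit}; your extra check that $1_L$ lies in each diagonal endomorphism space is harmless, since those statements already give exactly $\kk 1_L$.
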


\begin{proof}
By Lemma~\ref{lem:LR-splitting}, every object is isomorphic to a finite direct
sum of objects $\alpha_g$ and $\alpha_g\rho$.  The assumed Hom-space
dimensions in \eqref{eq:display-0023}--\eqref{eq:display-0024}, together with Remark~\ref{rem:LR-alpha-Hom}, give
\begin{equation}
\Hom(\alpha_g,\alpha_h)=\delta_{g,h}\kk,\quad  \Hom(\alpha_g\rho,\alpha_h\rho)=\delta_{g,h}\kk,\quad\Hom(\alpha_g,\alpha_h\rho)
  =\Hom(\alpha_g\rho,\alpha_h)=0.
 \label{eq:display-0031}
\end{equation}
Lemma~\ref{lem:finite-semisimplicity-criterion}, applied to this family of
$2N$ objects, proves semisimplicity, abelianness, idempotent completeness,
Hom-finiteness, and the asserted list of simple objects.  Finally,
$\alpha_0=\unit$ by Lemma~\ref{lem:LR-unit}, so the unit is simple.
\end{proof}

The tensor products of the simple objects are
\begin{align}
 \alpha_g\alpha_h&=\alpha_{g+h}, \label{eq:display-0060}\\
 \alpha_g(\alpha_h\rho)&=\alpha_{g+h}\rho
 \cong(\alpha_h\rho)\alpha_{-g},\notag
\\
 (\alpha_g\rho)(\alpha_h\rho)
 &\cong\alpha_{g-h}\oplus\bigoplus_{k\in G}\alpha_k\rho.
 \label{eq:display-0062}
\end{align}
The first two formulas follow from \eqref{eq:LR1} and \eqref{eq:LR2}. For the
last one, use
$(\alpha_g\rho)(\alpha_h\rho)=\alpha_{g-h}\rho^2$, apply
Lemma~\ref{lem:LR-splitting}, and reindex the noninvertible summands.

\begin{lemma}[The rigidity scalar]
\label{lem:LR-rigidity-scalar}
Set
\begin{equation*}
 a:=s'\rho(s),\qquad\text{ and }\qquad a':=\rho(s')s.
\end{equation*}
Then $a,a'\in\End(\rho)=\kk$, $a=a'\neq0$, and
$\alpha_g(s)=s$, $\alpha_g(s')=s'$ for every $g\in G$.
\end{lemma}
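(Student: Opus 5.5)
First I will check that $a$ and $a'$ are endomorphisms of $\rho$, using only the Leavitt splitting relations from the proof of Lemma~\ref{lem:LR-splitting}. For $y\in L$ we have $s'\rho^2(x)=xs'$, so
\[
 a\rho(y)=s'\rho(s)\rho(y)=s'\rho(sy).
\]
Since $sy=\rho^2(y)s$, this equals $s'\rho^3(y)\rho(s)$. Applying $s'\rho^2(z)=zs'$ with $z=\rho(y)$ turns it into $\rho(y)s'\rho(s)=\rho(y)a$. The argument for $a'$ is symmetric, using $\rho(s')\rho^3(y)=\rho(\rho^2(y)s')=\rho(s'y)$ together with $\rho^2(x)s=sx$. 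Condition~\eqref{eq:display-0023} with $g=h=0$ gives $\End(\rho)=\kk$, so $a,a'\in\kk$.

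Next I will show that $\alpha_g$ fixes $s$ and $s'$. Apply $\alpha_g$ to \eqref{eq:LR3} and use \eqref{eq:LR2} twice: $\alpha_g\rho^2=\rho\alpha_{-g}\rho=\rho^2\alpha_g$. This shows $\alpha_g(s)\in\Hom(\unit,\rho^2)$, since $\alpha_g(s)x=\alpha_g(s\alpha_{-g}(x))=\alpha_g(\rho^2(\alpha_{-g}x)s)=\rho^2(x)\alpha_g(s)$. In the same way $\alpha_g(t_h)\in\Hom(\alpha_h\rho,\rho^2)$ after reindexing. By the splitting and \eqref{eq:display-0031}, $\Hom(\unit,\rho^2)=\kk s$, so $\alpha_g(s)=c_gs$. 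Similarly $\alpha_g(s')=c'_gs'$, and applying $\alpha_g$ to $s's=1$ gives $c'_g=c_g^{-1}$. To pin down $c_g$, I note that $g\mapsto c_g$ is a character of $G$ by \eqref{eq:LR1}. I would then apply $\alpha_g$ to the identity $a=s'\rho(s)$. Since $\alpha_g\rho(s)=\rho\alpha_{-g}(s)=c_{-g}\rho(s)$, this gives $a=c_g^{-1}c_{-g}a=c_g^{-2}a$. If $a\neq0$, then $c_g^2=1$, and odd order forces $c_g=1$. So this step should be done after nonvanishing. If that ordering becomes circular, I will fall back on the same argument with $a'$.

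Then I will prove $a=a'$ and $a\neq0$, which I expect to be the main obstacle. For the equality, I plan the standard trick of evaluating $s'\rho(s')\rho^2(s)s$ in two orders. Naturality gives $\rho^2(s)s=ss$ and $s'\rho^2(s')=s's'$. Grouping the product as $s'\bigl(\rho(s')\rho^2(s)\bigr)s$ gives $s'\rho(a)s=a$, because $a$ is a scalar and $\rho$ is unital. Grouping it as $\bigl(s'\rho(s')\bigr)\rho^2(s)s$ gives, via naturality, $s'\rho(s's)\cdots$, and I will need to juggle this into $a'$. Concretely, I will use $\rho^2(s)s=ss$ to get $s'\rho(s')ss$, and $\rho(s')s=a'$ to get $s'a's=a'$. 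Meanwhile $s'\rho(s')\rho^2(s)=s'\rho(s'\rho(s))=s'\rho(a)=a$, which then gives $as=\ldots$. Comparing the two evaluations yields $a=a'$.

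For nonvanishing, suppose $a=0$. Multiplying \eqref{eq:LR3} for $x=s$ on the left by $s'$ gives $s'\rho^2(s)=ss'$. Using the splitting and $\Hom(\unit,\rho)=0$, I will argue that $\rho(s)$ lies in $\Hom(\rho,\rho^3)$, and that its component through $\unit\otimes\rho$ is $a$. Meanwhile $s'\rho(s')\rho^2$ decompositions would force $\rho(s)$ to factor through $\bigoplus\alpha_g\rho^2$. Applying $\rho(s')$ then gives $1=\rho(s's)=\rho(s')\rho(s)$. Using $\rho(s')\rho(s)=\rho(s')\bigl(ss'+\sum t_gt'_g\bigr)\rho(s)=a'a+\sum\rho(s')t_gt'_g\rho(s)$, the remaining terms lie in $\Hom(\rho,\alpha_g\rho^2)$ composed with maps that vanish by \eqref{eq:display-0024}. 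Indeed $t'_g\rho(s)\in\Hom(\rho,\alpha_g\rho\rho)$ and $\rho(s')t_g\in\Hom(\alpha_g\rho\rho,\rho)$. I need to verify that these composites vanish by the grading argument the appendix mentions, for example by showing that $t'_g\rho(s)=0$ through a Hom-space computation. Then $1=aa'=a^2$, so $a\neq0$. This vanishing is the step I expect to need the most care.
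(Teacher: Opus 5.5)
You prove $a,a'\in\End(\rho)=\kk$ correctly. Your proof of $a=a'$ is also correct, and it is simpler than the paper's. Evaluating $s'\rho(s')\rho^2(s)s$ as $s'\rho(a)s=a$, and, via $\rho^2(s)s=ss$, as $s'\bigl(\rho(s')s\bigr)s=a'$, gives $a=a'$ in every characteristic with no further input. The paper instead imports the Evans--Gannon scalar identities and then runs a separate argument through $N^2+4$. (Your side remark $s'\rho(s')\rho^2(s)=a$ should read $as'$, but you do not need it.) Your $\alpha_g$-invariance step is the paper's argument and is valid once $a\neq0$ is known.

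The genuine gap is that nonvanishing, and your $\alpha_g$-step depends on it. The vanishing you hope for is false. The element $t_g'\rho(s)$ lies in $\Hom(\rho,\alpha_g\rho^2)$, which is one-dimensional and spanned by $t_g$: the object $\alpha_g\rho^2\cong\alpha_g\oplus\bigoplus_k\alpha_{g+k}\rho$ contains $\rho$ at $k=-g$, and \eqref{eq:display-0024} says nothing about this space. In the HI realization $t_g'\rho(s)=bt_g$ and $\rho(s')t_g=\omega bt_g'$. So every cross term $\rho(s')t_gt_g'\rho(s)$ equals $\omega b^2=a$, and $1=\rho(s')\rho(s)=a^2+Na$. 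Your conclusion $a^2=1$ therefore fails whenever $N\neq0$ in $\kk$.

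What you need is an evaluation of these cross terms. Multiply \eqref{eq:LR3} at $x=s$ on the left by $\rho(s')$, so the left side becomes $\rho(s'\rho(s))=a$, and on the right by $t_h$. This gives $\rho(s')t_h\,\alpha_h(\rho(s))=a\,t_h$. Now use $\alpha_h\rho(s)=c_{-h}\rho(s)$, $t_h'\rho(s)\in\kk t_h$ and $\rho(s')t_h\in\kk t_h'$. The identity becomes $\rho(s')t_ht_h'\rho(s)=ac_h$, hence $1=\rho(s')\rho(s)=a^2+a\sum_{h\in G}c_h$. This forces $a\neq0$. Your character argument then gives $c\equiv1$ and $a^2+Na=1$. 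The paper obtains the same scalar relations by citing the Evans--Gannon coefficient comparisons, after renormalizing the $t_g$ by $\alpha$-translation.
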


\begin{proof}
The splitting and the Hom table \eqref{eq:display-0031} show that
$a,a'\in\End(\rho)=\kk$.  Before using the coefficient comparisons, we
record the normalization they require.  Conjugation by $\alpha_h$ sends
$\Hom(\beta,\gamma)$ to
$\Hom(\alpha_h\beta\alpha_{-h},\alpha_h\gamma\alpha_{-h})$.
By \eqref{eq:LR2} twice, $\alpha_h\rho^2\alpha_{-h}=\rho^2$.
The splitting and \eqref{eq:display-0031} therefore give
\begin{equation*}
 \Hom(\unit,\rho^2)=\kk s,\qquad\text{ and }\qquad
 \Hom(\rho^2,\unit)=\kk s'.
\end{equation*}
Thus $\alpha_h(s)=\psi(h)s$ and
$\alpha_h(s')=\psi(h)^{-1}s'$ for a character
$\psi:G\to\kk^*$.  Similarly,
$\alpha_h(t_g)\in\kk t_{g+2h}$ and
$\alpha_h(t_g')\in\kk t_{g+2h}'$.
Since multiplication by $2$ is bijective on $G$, replace the splitting maps
by $\widetilde t_g=\alpha_{g/2}(t_0)$ and
$\widetilde t_g'=\alpha_{g/2}(t_0')$.  Each new pair is obtained from
$t_g,t_g'$ by inverse scalar rescalings, since
$\widetilde t_g'\widetilde t_g=1$.  Thus every product $t_gt_g'$ and
$t_g\alpha_g(\rho(x))t_g'$ is unchanged.  The Leavitt relations and
\eqref{eq:LR3} are preserved, and in this normalization
$\alpha_h(\widetilde t_g)=\widetilde t_{g+2h}$ and likewise for the
primed maps.  We may therefore use the coefficient comparisons in the proof of
\cite[Theorem~1, pp.~15--17, especially equations~(4.19), (4.22),
and~(4.24)]{evans2017non}.
These calculations use only the Leavitt relations, the splitting and the
scalar Hom spaces.  They remain valid over $\kk$: the rescaling of the
$t_g,t_g'$ uses the bijection $g\mapsto2g$ on $G$, and
Lemma~\ref{lem:HI-reduced-words} justifies coefficient comparison in every
characteristic.  Before the final normalization in the cited proof, they give
scalars $u,v\in\kk$ such that
\begin{align*}
 a^2+N a=a'^2+N a'=1,\qquad aa'+N u=aa'+N v=1,
 \qquad\text{ and }\qquad aa'=uv.
\end{align*}
In particular, $a,a'\neq0$.

We check the remaining scalar comparison without assuming characteristic
zero.  If $N=0$ in $\kk$, the displayed identities give $a^2=aa'=1$,
hence $a=a'$.  If $N\neq0$, they give $u=v$; write their common value as
$z$.  Then $aa'=z^2$ and $z^2+N z=1$.
Suppose $a\neq a'$.  Since they are distinct roots of
$X^2+N X-1$, we have $a+a'=-N$ and $aa'=-1$.
Thus $z^2=-1$ and $N z=2$, which imply $N^2+4=0$.  Consequently
\begin{equation*}
 (a-a')^2=(a+a')^2-4aa'=N^2+4=0,
\end{equation*}
contradicting $a\neq a'$ in a field.  Hence $a=a'$ in every characteristic.

Finally, $a\in\kk$ and \eqref{eq:LR2} give
\begin{equation*}
 a=\alpha_h(s'\rho(s))
   =\psi(h)^{-1}s'\rho(\psi(-h)s)
   =\psi(h)^{-2}a.
\end{equation*}
Since $a\ne0$, every value of $\psi$ has square $1$.  The group $G$ has
odd order, so $\psi$ is trivial.  Hence $\alpha_h(s)=s$ and
$\alpha_h(s')=s'$ for every $h\in G$.
\end{proof}

\begin{lemma}[Rigidity]
\label{lem:CErigid}
The monoidal categories $\C(\E)$ and $\D$ are rigid.
\end{lemma}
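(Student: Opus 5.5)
Rigidity is easiest to establish on the simple objects first and then propagate: in a monoidal category, tensor products of rigid objects are rigid, and so are direct summands, which split by idempotents that exist in $\D$. Since every object of $\D$ is a finite direct sum of the $2N$ simples $\alpha_g$ and $\alpha_g\rho$ (Proposition~\ref{prop:fdHom}), it is enough to give left and right duals for $\alpha_g$ and for $\rho$. The objects $\alpha_g\rho$ are then handled as tensor products of rigid objects. Rigidity of $\C(\E)$ follows in the same way, because every object of $\C(\E)$ has the form $\alpha_g\rho^m$, a composite of the generators.

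For the invertibles the construction is immediate. Lemma~\ref{lem:LR-unit} gives $\alpha_g\alpha_{-g}=\alpha_{-g}\alpha_g=\mathrm{id}_L$ strictly. Taking evaluation and coevaluation to be $1_L$, viewed as intertwiners between $\mathrm{id}_L$ and $\alpha_g\alpha_{-g}$, makes the zig-zag identities trivial.

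For $\rho$, I would take $\rho$ to be its own dual. By Lemma~\ref{lem:LR-splitting}, $s\in\Hom(\unit,\rho^2)$ and $s'\in\Hom(\rho^2,\unit)$, so I set the coevaluation to be $s$ and the evaluation to be $a^{-1}s'$, where $a\neq0$ is the scalar from Lemma~\ref{lem:LR-rigidity-scalar}. The two zig-zag composites in $\End(L)$, with tensor product of morphisms as in Section~\ref{subsec:endoCat}, are
\[
 (\mathrm{ev}\otimes\mathrm{id}_\rho)\circ(\mathrm{id}_\rho\otimes\mathrm{coev})=a^{-1}s'\rho(s),
 \qquad
 (\mathrm{id}_\rho\otimes\mathrm{ev})\circ(\mathrm{coev}\otimes\mathrm{id}_\rho)=a^{-1}\rho(s')s .
\]
Lemma~\ref{lem:LR-rigidity-scalar} says $s'\rho(s)=a$ and $\rho(s')s=a'=a$, so both composites equal $1_L=\mathrm{id}_\rho$. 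The same pair, read in the opposite order, supplies a left dual, so $\rho$ is two-sided self-dual. The genuine content here is the equality $a=a'$, which that lemma proves in every characteristic. This is exactly the step I would expect to be the obstacle, and it is already done. Without it one could only conclude that $\rho$ has a left dual and a right dual with different normalizations, and that they fail to match.

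Next, $\alpha_g\rho$ has dual $\rho^\vee\alpha_g^\vee=\rho\alpha_{-g}=\alpha_g\rho$ by \eqref{eq:LR2}, with the composed evaluation and coevaluation. It remains to pass duals through direct sums in $\D=\operatorname{Add}(\C(\E))$: the dual of a sum is the sum of the duals, with block-diagonal evaluation and coevaluation. I would also remark that the strictness of $\End(L)$ removes any associator bookkeeping, so the zig-zag checks above are literally products in $L$.
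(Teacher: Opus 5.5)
Your proof is correct and follows the paper's argument: the invertibles get duals $\alpha_{-g}$ with unit evaluation and coevaluation, $\rho$ is made self-dual using $s,s'$ normalized by $a=a'$ (the paper puts $a^{-1}$ on the coevaluation rather than the evaluation), and rigidity then passes through tensor products and componentwise direct sums. One small correction to your closing aside: if $a\neq a'$, then $\rho$ would admit neither a left nor a right self-duality, because with $\mathrm{ev}=e\,s'$ and $\mathrm{coev}=c\,s$ each of the two dualities already requires both identities $ec\,a=1$ and $ec\,a'=1$.
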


\begin{proof}
Each $\alpha_g$ has dual $\alpha_{-g}$, with evaluation and coevaluation both
equal to $1$.  Give $\rho$ the self-duality with evaluation $s'$ and
coevaluation $a^{-1}s$.  Lemma~\ref{lem:LR-rigidity-scalar} gives the two
triangle identities
\begin{equation*}
 a^{-1}\rho(s')s=1,
 \qquad a^{-1}s'\rho(s)=1.
\end{equation*}
Every object of $\C(\E)$ is a tensor product of objects of these two types and
is therefore dualizable.  Finite direct sums of dualizable objects are
dualizable componentwise, so $\D$ is rigid as well.
\end{proof}

We use the standard conventions for pivotal and spherical structures
\cite[Definitions~4.7.7 and~4.7.14, pp.~74--75]{EGNO}.

\begin{lemma}[Pivotality and sphericality]
\label{lem:CEspherical}
For the duality in Lemma~\ref{lem:CErigid}, the double-dual is the identity as
a strict monoidal functor.  The identity transformation is the unique pivotal
structure.  It is spherical and, under the equivalence in
Proposition~\ref{prop:fdHom}, gives a spherical structure on
$\overline{\C(\E)^{\oplus}}$.  Moreover,
\begin{equation}
 \dim(\alpha_g)=1,\qquad \dim(\alpha_g\rho)=a^{-1}.
 \label{eq:display-0055}
\end{equation}
Consequently,
\begin{equation}
 a^2+N a-1=0.
 \label{eq:display-0034}
\end{equation}
\end{lemma}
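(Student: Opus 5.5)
The plan is to write the double-dual functor down explicitly using the chosen dualities, observe that it is the identity, and then use simplicity of the objects to pin down all pivotal structures and compute the dimensions.

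\emph{Double dual.} For $\alpha_g$ the dual is $\alpha_{-g}$ with evaluation and coevaluation equal to $1$, so $\alpha_g^{\vee\vee}=\alpha_g$. For $\rho$ the dual is $\rho$ itself, with evaluation $s'$ and coevaluation $a^{-1}s$. The dual of a morphism $x\in\Hom(\beta,\gamma)$ is the standard composite of coevaluation, $x$, and evaluation; it lies in $L$ and is computed by the triangle identities of Lemma~\ref{lem:CErigid}. I would check that applying this transpose twice returns $x$ on the generating morphisms. Because $\rho$ is self-dual via $(s',a^{-1}s)$ and Lemma~\ref{lem:LR-rigidity-scalar} gives $a=a'$, the left and right transposes agree. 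Hence $x^{\vee\vee}=x$, and the double dual is the identity strict monoidal functor on $\C(\E)$; this extends additively to $\D$.

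\emph{Uniqueness of the pivotal structure.} Pivotal structures form a torsor over monoidal automorphisms of the identity functor. Such an automorphism is a scalar on each simple object, multiplicative along fusion. On $\alpha_g$ these scalars form a character $\psi$ of $G$. Since $\rho^2$ contains $\unit$, the scalar $c$ on $\rho$ satisfies $c^2=1$. Since $\rho^2$ also contains $\rho$, we get $c^2=c$, so $c=1$. The summand $\alpha_g\rho\subset\rho^2$ then gives $\psi(g)c=c^2$, so $\psi$ is trivial. Therefore the only pivotal structure is the identity.

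\emph{Dimensions and sphericality.} The left dimension of $\rho$ is evaluation composed with coevaluation, $s'\cdot a^{-1}s=a^{-1}$. The right dimension is computed with the same maps in the other order, which gives $a^{-1}\rho(s')s$ or $a^{-1}s'\rho(s)$ up to the convention; either way it is $a^{-1}$, again using $a=a'$. The invertible objects $\alpha_g$ have both dimensions equal to $1$. Since left and right dimensions agree on every simple object, the structure is spherical, and this transfers to $\overline{\C(\E)^{\oplus}}$ by Proposition~\ref{prop:fdHom}.

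Dimension is a ring homomorphism, so applying it to $\rho^2\cong\unit\oplus\bigoplus_g\alpha_g\rho$ gives $a^{-2}=1+Na^{-1}$. Multiplying by $a^2$ yields \eqref{eq:display-0034}.

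The main obstacle is the first step: verifying $x^{\vee\vee}=x$ for morphisms between different words. I would reduce this to the generating maps $s,s',t_g,t_g'$ and use naturality, checking for example that $t_g^{\vee\vee}=t_g$ through the Leavitt relations.
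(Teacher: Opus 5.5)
Your outline matches the paper's, and your uniqueness argument ($c^2=1$, $c^2=c$, $\psi(g)c=c^2$) is a correct variant of the paper's naturality argument. There are two problems, one of them a genuine gap.

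\textbf{The double-dual step.} This is the step you flag yourself, $x^{\vee\vee}=x$, and it is not established. That the left and right transposes agree does not by itself give $x^{\vee\vee}=x$. Your proposed check of $t_g^{\vee\vee}=t_g$ ``through the Leavitt relations'' also does not close in the setting of this lemma, because in an abstract Leavitt realization $\rho$ is known only through (LR1)--(LR5). Written out, $t_g^{\vee}=a^{-1}s'\rho(s')\rho^2(t_gs)$. Expanding $\rho^2$ by (LR3) leaves terms such as $\rho(s')t_r$, on which the Leavitt relations give no information.

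The missing idea is to avoid computing altogether. Since $(\rho^2)^\vee=\rho^2$ and $(\alpha_g\rho)^\vee=\rho\alpha_{-g}=\alpha_g\rho$, Lemma~\ref{lem:LR-splitting} and the Hom table \eqref{eq:display-0031} give
\[
s^\vee\in\Hom(\rho^2,\unit)=\kk s',\quad (s')^\vee\in\kk s,\quad t_g^\vee\in\Hom(\rho^2,\alpha_g\rho)=\kk t_g',\quad (t_g')^\vee\in\kk t_g .
\]
Dualizing $s's=1$ and $t_g't_g=1$ forces the two scalars in each pair to be reciprocal. Hence the double dual fixes $s,s',t_g,t_g'$.

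You also need to say why fixing these four suffices; naturality alone does not do it. The inclusions and projections of the iterated splittings are built from $s,s',t_g,t_g'$ by composition and tensoring with identities. Relative to these splittings, every morphism between words is a scalar matrix, because each simple has scalar endomorphisms and distinct simples have no maps between them. This is what lets a linear strict monoidal functor that fixes the generators fix every morphism.

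\textbf{The dimension step.} The composites $a^{-1}\rho(s')s$ and $a^{-1}s'\rho(s)$ are the zig-zag maps $\rho\to\rho$. They equal $1$ by Lemma~\ref{lem:CErigid}; they are not dimensions. Both the left and the right dimension of $\rho$ are the scalar $s'\cdot a^{-1}s=a^{-1}$, because the same pair $(s',a^{-1}s)$ is the chosen duality for $\rho$ and for $\rho^\vee=\rho$. For $\alpha_g\rho$, note that $\alpha_g$ fixes $s$ and $s'$, so its tensor-product duality is again $(s',a^{-1}s)$; alternatively, use multiplicativity of left and right dimensions. With these two repairs the rest of your argument goes through as written, including $a^{-2}=1+Na^{-1}$.
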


\begin{proof}
For $X_{g,m}:=\alpha_g\rho^m$, use the standard tensor-product duality.  It
gives
\begin{equation}
 X_{g,m}^{\vee}=\alpha_{(-1)^{m+1}g}\rho^m.
 \label{eq:display-0047}
\end{equation}
We choose the usual tensor-product evaluation and coevaluation maps, so
\begin{equation*}
 (XY)^{\vee}=Y^{\vee}X^{\vee},
 \qquad
 \operatorname{ev}_{XY}
 =\operatorname{ev}_Y\,Y^{\vee}(\operatorname{ev}_X),
 \qquad
 \operatorname{coev}_{XY}
 =X(\operatorname{coev}_Y)\operatorname{coev}_X.
\end{equation*}
These are literal equalities.  Hence $(-)^{\vee}$ is strict anti-monoidal,
$(-)^{\vee\vee}$ is strict monoidal, and \eqref{eq:display-0047} shows that
the double-dual fixes every object of $\C(\E)$.  On $\D$, take duals of finite
direct sums componentwise; the double-dual then fixes these objects as well.

For a morphism $f:X\to Y$, write
$f^{\vee}:Y^{\vee}\to X^{\vee}$ for its image under the chosen duality
functor.  The splitting in Lemma~\ref{lem:LR-splitting} and the Hom table
\eqref{eq:display-0031} show that the Hom spaces containing the following
maps are one-dimensional:
\begin{equation*}
 s^{\vee}\in\kk^\times s',\quad (s')^{\vee}\in\kk^\times s,
 \quad t_g^{\vee}\in\kk^\times t_g',\quad\text{ and }\quad
 (t_g')^{\vee}\in\kk^\times t_g.
\end{equation*}
Dualizing $s's=1$ and $t_g't_g=1$ shows that the two scalars in each pair are
reciprocal.  Hence the double-dual fixes $s,s',t_g,t_g'$.

We now describe the decompositions used for the remaining objects.  Start
with the splitting of $\rho^2$.  For $m\geq2$, tensor it on the left by
$\alpha_g\rho^{m-2}$; this is the decomposition in
\eqref{eq:display-0029}.  Repeating this step lowers the power of $\rho$ until
all summands are simple.  Every inclusion and projection in these chosen
decompositions is obtained from $s,s',t_g,t_g'$ by composition and tensoring
with identity maps.  The double-dual fixes all these maps.  Relative to the
chosen decompositions, \eqref{eq:display-0031} shows that every morphism is a
matrix of scalars.  It follows that the double-dual fixes every morphism.

Any pivotal structure is therefore a monoidal natural automorphism $\eta$ of
the identity functor.  Write its components on $\alpha_g$ and
$\alpha_g\rho$ as the scalars $x_g$ and $y_g$.  Naturality for the unit
summand and every noninvertible summand in
$\rho^2\cong\unit\oplus\bigoplus_k\alpha_k\rho$ gives
$y_0^2=x_0=1$ and $y_0^2=y_k$ for every $k$.  Taking $k=0$ and using
$y_0\neq0$ gives $y_0=1$, hence $y_k=1$ for every $k$.  Naturality with the
inclusions in
$(\alpha_g\rho)\rho\cong\alpha_g\oplus\bigoplus_k\alpha_k\rho$
then gives $x_g=1$.
Thus $\eta=\operatorname{id}$, so the identity transformation is the unique
pivotal structure.

The left and right dimensions of $\alpha_g$ are both $1$.  The object
$\alpha_g\rho$ is self-dual.  Because $\alpha_g$ fixes $s$ and $s'$, its
evaluation is $s'$ and its coevaluation is $a^{-1}s$.  Thus its left and right
dimensions are both $a^{-1}$.  In a finite semisimple category, equality of
the left and right dimensions on the simple objects implies equality of the
two traces on every endomorphism.  Hence the pivotal structure is spherical.
Proposition~\ref{prop:fdHom} identifies $\D$ with the Cauchy completion.

Finally, the dimension function is a character of the Grothendieck ring
\cite[Proposition~4.7.12, pp.~74--75]{EGNO}.  Apply it to
\eqref{eq:display-0027}.  Using
\eqref{eq:display-0055} gives
$a^{-2}=1+N a^{-1}$, which is equivalent to
\eqref{eq:display-0034}.
\end{proof}

\begin{theorem}[Leavitt reconstruction]
\label{thm:LRtoHIcat}
Let $(L,\rho,\{\alpha_g\}_{g\in G})$ be a Leavitt realization as in
Definition~\ref{def:Leavitt-realization}.  Then $\overline{\C(\E)^{\oplus}}$ is a spherical fusion category.  Its simple isomorphism classes are exactly
$\{\alpha_g,\alpha_g\rho:g\in G\}$, with fusion rules
\eqref{eq:display-0060}--\eqref{eq:display-0062}.  If
$d:=\dim(\alpha_g\rho)=a^{-1}$, then
\begin{equation}
 d^2=1+N d.
 \label{eq:display-0063}
\end{equation}
The pivotal structure is unique.  Under the strict equality
$(-)^{\vee\vee}=\operatorname{id}$ of Lemma~\ref{lem:CEspherical}, it is the
identity transformation.
In particular, this is a spherical $\mathfrak{HI}_G$-category.
\end{theorem}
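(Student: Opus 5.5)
The plan is to assemble Theorem~\ref{thm:LRtoHIcat} from the lemmas of this subsection, because almost every assertion has already been proved for $\D=\operatorname{Add}(\C(\E))$.

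\textbf{Step 1: fusion category structure.} Proposition~\ref{prop:fdHom} shows that $\D$ is finite semisimple and idempotent complete. Its $2N$ simple objects are $\alpha_g$ and $\alpha_g\rho$, and the unit $\alpha_0=\unit$ is simple by Lemma~\ref{lem:LR-unit}. Moreover, the inclusion $\D\to\overline{\C(\E)^{\oplus}}$ is an equivalence. I will transport the monoidal structure along this equivalence. The tensor product on $\D$ is $\kk$-bilinear, since composition of endomorphisms and the formula $g(b)a$ are bilinear, and it is exact because $\D$ is semisimple. Rigidity is Lemma~\ref{lem:CErigid}. Together these give a fusion category.

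\textbf{Step 2: fusion rules.} These come from \eqref{eq:display-0060}--\eqref{eq:display-0062}. Those in turn come from \eqref{eq:LR1}, \eqref{eq:LR2}, and the splitting in Lemma~\ref{lem:LR-splitting}, reindexed via $(\alpha_g\rho)(\alpha_h\rho)=\alpha_{g-h}\rho^2$. I will check that they match \eqref{HIfusrul} under the labels $\alpha_g$, $\alpha_g\rho$.

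\textbf{Step 3: pivotal and spherical structure.} Lemma~\ref{lem:CEspherical} gives that $(-)^{\vee\vee}$ is strictly the identity functor, that the identity transformation is the unique pivotal structure, and that this structure is spherical. It also gives $\dim(\alpha_g\rho)=a^{-1}$ together with $a^2+Na-1=0$. Setting $d=a^{-1}$ and multiplying that relation by $d^2$ yields \eqref{eq:display-0063}; here $a\neq0$ by Lemma~\ref{lem:LR-rigidity-scalar}.

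The one point needing care is that uniqueness and sphericality transfer to the Cauchy completion. This should be automatic: an equivalence of monoidal categories induces a bijection of pivotal structures, and it preserves the property that left and right dimensions agree on simple objects. So I expect no real obstacle; the substantive work was already done in Lemma~\ref{lem:CEspherical}, especially in showing that the double dual fixes $s,s',t_g,t_g'$.
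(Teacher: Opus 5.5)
Your proposal is correct and follows the same route as the paper: it too cites Proposition~\ref{prop:fdHom} for semisimplicity and the simple unit, Lemma~\ref{lem:CErigid} for rigidity, Lemma~\ref{lem:CEspherical} for the unique spherical pivotal structure and the dimensions, the earlier computation for the fusion rules, and the reciprocal form of \eqref{eq:display-0034} for \eqref{eq:display-0063}. The transfer to $\overline{\C(\E)^{\oplus}}$ that you flag is already handled inside Lemma~\ref{lem:CEspherical} through the equivalence of Proposition~\ref{prop:fdHom}.
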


\begin{proof}
Proposition~\ref{prop:fdHom} gives a finite semisimple category with simple
unit, Lemma~\ref{lem:CErigid} gives rigidity, and
Lemma~\ref{lem:CEspherical} gives the unique spherical pivotal structure and
the dimensions.  The fusion rules were computed above.  Equation
\eqref{eq:display-0063} is the reciprocal form of
\eqref{eq:display-0034}.
\end{proof}

The theorem does not require a nonzero global dimension.

\begin{remark}[Global dimension in positive characteristic]
\label{rem:LR-global-dimension}
The global dimension with respect to this spherical structure is $\operatorname{Dim}\bigl(\overline{\C(\E)^{\oplus}}\bigr)
 =N(1+d^2)=N(2+N d)$. No nonzero-global-dimension or separability hypothesis was used.  In
characteristic $p>0$ this scalar is zero precisely when either $p\mid N$, or
$p\neq2$ and $p\mid(N^2+4)$.  Thus zero global dimension is compatible with
the conclusion that the category is spherical fusion.
\end{remark}

A Leavitt realization over an algebraically closed field of any
characteristic therefore produces a spherical $\mathfrak{HI}_G$-category with
the stated simple objects, fusion rules, and dimensions. The next section
constructs a realization from the finite HI-matrix equations.

\subsection{HI matrices produce Leavitt realizations}
The preceding subsection proves the first stage of the reconstruction. We now
verify that the endomorphisms attached to an HI-matrix satisfy the hypotheses
of the Leavitt reconstruction theorem.
Retain $d^2=Nd+1$, $a=d^{-1}$, and $\omega^3=1$. Choose
\begin{equation}\label{eq:HI-general-b}
 b^2=\omega^{-1}a.
\end{equation}
These scalars exist because the field is algebraically closed; $a,b,\omega$
are nonzero. We now prove the conditions in
Definition~\ref{def:Leavitt-realization} for the explicit matrix assignments.
\subsection{Reduced words}
\label{subsec:HI-general-reduced-words}

We now turn from reducing the HI equations to constructing endomorphisms of
the Leavitt algebra.  The following normal form makes every later coefficient
comparison characteristic-free.

Recall that $L=\cL_{N+1}$ is the Leavitt algebra defined by
\eqref{eq:display-0022} and \eqref{eq:LeavittRelations}.
Let $I:=\{\ast\}\sqcup G$, and write
\begin{equation*}
 u_\ast=s,\quad u_g=t_g,\qquad
 v_\ast=s',\quad v_g=t_g'.
\end{equation*}

\begin{lemma}[Reduced-word basis]
\label{lem:HI-reduced-words}
Every element of $L$ has a unique expression as a finite $\kk$-linear
combination of words
\begin{equation}
 u_{i_1}\cdots u_{i_r}v_{j_1}\cdots v_{j_s},
 \qquad r,s\geq0,\quad i_1,\ldots,i_r,j_1,\ldots,j_s\in I,
 \label{eq:HI-reduced-word-form}
\end{equation}
where, if $r,s>0$, one does not have $i_r=j_1=\ast$.
Thus coefficients of distinct words in \eqref{eq:HI-reduced-word-form} may be
compared over $\kk$ in every characteristic.  Also, $L$ has the
$\mathbb Z$-grading
\begin{equation}
 \deg u_i=1,\qquad \deg v_i=-1.
 \label{eq:display-0110}
\end{equation}
\end{lemma}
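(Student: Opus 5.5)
The plan is to treat the Leavitt algebra $L$ as a presentation by generators and relations, and to apply Bergman's diamond lemma. We orient the relations \eqref{eq:LeavittRelations} as reduction rules:
\begin{equation*}
 v_iu_j\longmapsto\delta_{i,j}\quad(i,j\in I),\qquad
 u_\ast v_\ast\longmapsto 1-\sum_{g\in G}u_gv_g .
\end{equation*}
Here $v_\ast u_\ast=s's=1$, $v_\ast u_g=s't_g=0$, $v_gu_\ast=t_g's=0$ and $v_gu_h=t_g'u_h=\delta_{g,h}$. All of these have the form $v_iu_j\mapsto\delta_{i,j}$. The completeness relation is solved for the single monomial $ss'$.

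The irreducible words are exactly those with no subword $v_iu_j$ and no subword $u_\ast v_\ast$. The first condition forces every $u$ to precede every $v$, giving the shape \eqref{eq:HI-reduced-word-form}. The second condition excludes exactly $i_r=j_1=\ast$ at the junction.

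For a semigroup ordering, use word length followed by a deglex order in which $u_\ast v_\ast$ exceeds each $u_gv_g$. This is compatible with the rules, since each rule strictly lowers length or keeps length $2$ but decreases in the chosen order. It also satisfies the descending chain condition, because the alphabet is finite.

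\textbf{Ambiguities (main step).} We resolve all ambiguities, which are overlaps only, since there are no inclusions. There are two kinds.

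First, $v_iu_\ast v_\ast$. Reducing the left pair gives $\delta_{i,\ast}v_\ast$. Reducing the right pair gives
\begin{equation*}
 v_i-\sum_g v_iu_gv_g \longrightarrow v_i-\sum_g\delta_{i,g}v_g .
\end{equation*}
This equals $v_\ast$ if $i=\ast$ and $0$ if $i\in G$, so it agrees with the left reduction.

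Second, $u_\ast v_\ast u_j$. This is symmetric: one reduction gives $u_\ast\delta_{\ast,j}$, and the other gives $u_j-\sum_g u_g\delta_{g,j}$, which match.

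There is no overlap between two $u_\ast v_\ast$ rules or between two $v_iu_j$ rules, because these patterns cannot overlap themselves in a nontrivial way. So every ambiguity resolves, and the diamond lemma gives a $\kk$-basis of irreducible words.

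This argument is characteristic-free. Only the integer coefficients $0,\pm1$ appear, so it holds over any field, and indeed over any commutative ring, which is needed later in Proposition~\ref{prop:HI-good-reduction-at-two}. Uniqueness of the expression is precisely the basis statement, and coefficient comparison follows.

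\textbf{Grading.} For \eqref{eq:display-0110}, assign $\deg u_i=1$ and $\deg v_i=-1$ on the free algebra. Every defining relation is homogeneous of degree $0$: $v_iu_j-\delta_{i,j}$ has degree $0$, and $ss'+\sum t_gt_g'-1$ has degree $0$. The relation ideal is therefore homogeneous, and the grading descends to $L$.

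I expect no real obstacle. The only care needed is choosing the ordering so that the completeness rule is oriented compatibly, and checking the two overlap families above.
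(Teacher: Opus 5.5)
Your proposal is correct and follows essentially the same route as the paper: the same monic reductions $v_iu_j\mapsto\delta_{i,j}$ and $u_\ast v_\ast\mapsto 1-\sum_{g}u_gv_g$, a shortlex order with $u_\ast$ above every $u_g$, the same two overlap families $v_iu_\ast v_\ast$ and $u_\ast v_\ast u_j$ resolved by the Kronecker-delta relations, and homogeneity of the relations for the grading. Your remark that monic leading terms make the diamond lemma valid over any commutative ring is exactly what the paper later relies on for the integral model in Proposition~\ref{prop:HI-good-reduction-at-two}.
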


\begin{proof}
Use the monic reductions
\begin{equation*}
 v_i u_j\longmapsto\delta_{i,j},\qquad
 u_\ast v_\ast\longmapsto1-\sum_{g\in G}u_gv_g.
\end{equation*}
These are the reductions in \cite[Lemma~1, p.~11]{evans2017non}.
For shortlex order with $u_\ast>u_g$, every reduction decreases the word.
The only overlaps are $v_i u_\ast v_\ast$ and
$u_\ast v_\ast u_j$; their two reductions agree by the Kronecker-delta
relations.  Thus the irreducible words are precisely
\eqref{eq:HI-reduced-word-form}, with unique coefficients.
All leading coefficients are $1$, so the same argument works over every
field.  Homogeneity of the defining relations gives
\eqref{eq:display-0110}.
\end{proof}

\subsection{The endomorphism formulas and the Leavitt relations}
\label{subsec:HI-general-endomorphism-formulas}

Using the HI-matrix entries as coefficients, we define the proposed images
of the Leavitt generators and then verify that they satisfy the defining
relations. Define the following elements of $L$:
\begin{align}
 \label{eq:rhos-general}
 S&:=\rho(s)
 =as+b\sum_{r\in G}t_rt_r,
 \qquad 
 S':=\rho(s')
 =as'+\omega b\sum_{r\in G}t_r't_r',\\
 \label{eq:rhotg-general}
 T_g&:=\rho(t_g)
 =bs t_{-g}'+\omega t_{-g}ss'
   +\sum_{h,k\in G}A_{h+g,k+g}
      t_ht_{h+k+g}t_k',\\
\label{eq:rhotg-prime-general}
 T_g'&:=\rho(t_g')
 =\omega b t_{-g}s'+\omega^{-1}ss't_{-g}'
   +\sum_{h,k\in G}A_{k+g,h+g}
      t_kt_{g+h+k}'t_h'.
\end{align}
For $u\in G$, define
\begin{equation}
 \label{eq:alphag-general}
 \alpha_u(s)=s,\qquad
 \alpha_u(s')=s',\qquad
 \alpha_u(t_h)=t_{h+2u},\qquad\text{ and }\qquad
 \alpha_u(t_h')=t_{h+2u}'.
\end{equation}
These formulas are the algebraic forms of
\cite[equations~(4.3)--(4.6), p.~14, and Theorem~2(a),
p.~18]{evans2017non}, with 
$\overline{\omega}$ replaced by $\omega^{-1}$.
We use the coefficient verifications from that proof, recording below
why their algebraic steps remain valid over $\kk$.  Until the Leavitt
relations have been verified, $\rho$ denotes only the assignment on
generators.

\begin{lemma}[The permutation automorphisms]
\label{lem:HI-alpha-endomorphisms}
For every $u\in G$, the assignments \eqref{eq:alphag-general} preserve all Leavitt
relations and hence extend uniquely to a unital algebra automorphism of $L$.
Also,
\begin{equation}
 \alpha_u\alpha_v=\alpha_{u+v}.
 \label{eq:display-0122}
\end{equation}
\end{lemma}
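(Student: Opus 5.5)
The plan is to check the four Leavitt relations on the proposed images, invoke the universal property of $L$ to obtain a unital endomorphism, and then verify the composition law on generators.

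\emph{Relations.} Each relation in \eqref{eq:LeavittRelations} is sent to a relation of the same form. Since $s$ and $s'$ are fixed, $s's=1$ is preserved. The relations $s't_g=t_g's=0$ go to $s't_{g+2u}=t_{g+2u}'s=0$, which are again defining relations. For the third relation, $t_g't_h=\delta_{g,h}$ becomes $t_{g+2u}'t_{h+2u}=\delta_{g+2u,h+2u}=\delta_{g,h}$, because translation by $2u$ is a bijection of $G$. For completeness, the image of $ss'+\sum_gt_gt_g'$ is
\begin{equation*}
 ss'+\sum_{g\in G}t_{g+2u}t_{g+2u}',
\end{equation*}
and reindexing by $g\mapsto g-2u$ returns the original sum, which equals $1$. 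The universal property of the presented algebra $L$ therefore gives a unique unital algebra endomorphism $\alpha_u$ with the prescribed values on generators. Uniqueness holds because the generators generate $L$.

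\emph{Composition and invertibility.} Two endomorphisms that agree on generators are equal. On generators, $\alpha_u\alpha_v$ fixes $s,s'$ and sends
\begin{equation*}
 t_h\mapsto t_{h+2v}\mapsto t_{h+2v+2u},\qquad t_h'\mapsto t_{h+2u+2v}',
\end{equation*}
which is the action of $\alpha_{u+v}$. This proves \eqref{eq:display-0122}.

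On generators, $\alpha_0$ acts as the identity, so $\alpha_0=\operatorname{id}_L$ by uniqueness. Combined with \eqref{eq:display-0122}, this gives $\alpha_u\alpha_{-u}=\alpha_{-u}\alpha_u=\operatorname{id}$, so each $\alpha_u$ is an automorphism.

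No step is a real obstacle. The only point needing care is to apply the universal property only after all four relations have been checked, as the paper stresses after Definition~\ref{def:Leavitt-realization}. No characteristic assumption enters, since only reindexing of finite sums over $G$ is used.
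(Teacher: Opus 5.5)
Your proposal is correct and follows essentially the same route as the paper: the assignments fix $s,s'$ and permute the pairs $t_h,t_h'$ by translation $h\mapsto h+2u$, so every Leavitt relation is preserved, composition adds translations, and $\alpha_{-u}$ gives the inverse. You simply write out in more detail what the paper states in two lines, including the point that $\alpha_0=\operatorname{id}_L$ follows from uniqueness on generators.
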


\begin{proof}
The assignments fix $s,s'$ and permute the pairs $t_h,t_h'$ by
$h\mapsto h+2u$.  They therefore preserve every Leavitt relation.
The inverse permutation is induced by $\alpha_{-u}$, and composition adds
the translations, proving \eqref{eq:display-0122}.
\end{proof}

\begin{proposition}[The matrix formulas define $\rho$]
\label{prop:HI-rho-endomorphism}
The elements $S,S',T_g,T_g'$ satisfy
\begin{align*}
 S'S=1, \quad S'T_g=0 \quad T_g'S=0, \quad T_g'T_h=\delta_{g,h},\quad\text{ and }\quad SS'+\sum_{g\in G}T_gT_g' =1.
\end{align*}
Thus \eqref{eq:rhos-general}--\eqref{eq:rhotg-prime-general} extend uniquely to a unital
algebra endomorphism $\rho:L\to L$.
\end{proposition}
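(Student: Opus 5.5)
The plan is to check the five Leavitt relations directly in the reduced-word basis of Lemma~\ref{lem:HI-reduced-words}. Each product is expanded using only the relations \eqref{eq:LeavittRelations} for the generators $s,s',t_g,t_g'$. The coefficient of each reduced word is then matched against the right-hand side. Once all five relations hold, the universal property of $\cL_{N+1}$ gives the unique unital endomorphism $\rho$. We proceed in increasing order of complexity, and we track the scalars $a,b,\omega$ using only $b^2=\omega^{-1}a$, $a^2+Na=1$ and $\omega^3=1$.

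\emph{The relations involving $S$ and $S'$.}
\begin{itemize}
\item For $S'S$, the cross terms vanish because $s't_r=t_r's=0$. The remaining terms give $a^2+\omega b^2\sum_r t_r't_r'\,t_rt_r=a^2+\omega b^2N=a^2+aN=1$.
\item For $S'T_g$, the $s'$-part of $S'$ meets $bst'_{-g}$ and $\omega t_{-g}ss'$. This produces $ab\,t'_{-g}$, since $s't_{-g}=0$ and the $A$-sum starts with $t_h$.
\item The $\omega b\sum t_r't_r'$ part of $S'$ meets $\omega t_{-g}ss'$ (killed by $t'_r s=0$) and the $A$-sum. The latter gives $\omega b\sum_{h,k}A_{h+g,k+g}\,t'_h t_{h+k+g}t'_k$, which collapses via $t_r't_r'\,t_ht_{h+k+g}=\delta_{r,h+k+g}\delta_{r,h}$, i.e.\ $k=-g$, to $\omega b\sum_h A_{h+g,0}\,t'_{-g}$.
\item Hence $S'T_g=(ab+\omega b\sum_hA_{h,0})t'_{-g}$, which vanishes by \eqref{eq:HImatrix2}.
\item $T_g'S=0$ is handled symmetrically, now using the row sum \eqref{eq:HI-row-sum}.
\end{itemize}

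\emph{The relation $T_g'T_h=\delta_{g,h}$.} Multiply out the $3\times 3$ blocks of terms.
\begin{itemize}
\item The block $(\omega b\,t_{-g}s')(bs\,t'_{-h})$ gives $\omega b^2\,t_{-g}t'_{-h}=a\,t_{-g}t'_{-h}$.
\item The block $(\omega^{-1}ss't'_{-g})(\omega t_{-h}ss')$ gives $\delta_{g,h}ss'$.
\item Pairing the two $A$-sums gives $\sum A_{k+g,l+g}A_{l'+h,k'+h}\,t_k t'_{\dots}t'_l\,t_{l'}t_{\dots}t'_{k'}$, and $t'_lt_{l'}$ forces $l=l'$.
\item Mixed blocks, such as the $A$-sum of $T'_g$ against $bst'_{-h}$, die through $t'_\cdot s=0$. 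The remaining ones, e.g.\ $\omega b\,t_{-g}s'$ against the $A$-sum of $T_h$, also vanish because $s't=0$.
\end{itemize}
After the $l=l'$ contraction, the word $t'_{\dots}t_{\dots}$ is not reduced only through the completeness relation. The leftover terms are words $t_kt'_{k'}$ together with a possible $ss'$ coming from resolving $\sum t_rt'_r=1-ss'$. Their coefficients are exactly $\sum_r A_{\cdot,\cdot}A_{\cdot,\cdot}$ of the shape in \eqref{eq:HImatrix3}. Applying \eqref{eq:HImatrix3} should produce $\delta_{g,h}(1-ss')$ minus $a\,t_{-g}t'_{-h}$-type terms, and these cancel the block contributions above.

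\emph{The completeness relation $SS'+\sum_gT_gT'_g=1$.} This is the main obstacle. The expansion produces words of length up to six: for instance $t_ht_{h+k+g}t'_k$ followed by $t_{k'}t'_{\dots}t'_{h'}$, with a full sum over $g$. These must be brought to reduced form using $\sum_r t_rt'_r=1-ss'$ repeatedly and then compared with $1$. Coefficients of words like $t_ht_mt'_{m'}t'_{h'}$ then require summing products $\sum_g A\,A$ over $g$, which again reduces to \eqref{eq:HImatrix3}, now with the roles of the indices permuted by \eqref{eq:HImatrix1}. The delicate bookkeeping concerns the cross terms $\omega b\,st'_{-g}t_{-g}s'$ and $t_{-g}ss't'_{-g}$ together with the $ss'$-corrections. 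These must be verified to combine to $1$ using $a^2+Na=1$ and $\omega b^2=a$.

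Since this is the computation of \cite[Theorem~1]{evans2017non}, I would follow its coefficient equations. The only change is to observe that no step divides by anything beyond units among $a,b,\omega$, and that coefficient comparison is legitimate in every characteristic by Lemma~\ref{lem:HI-reduced-words}.
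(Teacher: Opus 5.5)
Your proposal is correct and follows the paper's proof. Both check the five Leavitt relations coefficient by coefficient in the reduced-word basis: the column and row sums kill $S'T_g$ and $T_g'S$, \eqref{eq:HImatrix3} handles $T_g'T_h$ and the completeness relation, and the detailed bookkeeping is deferred to the Evans--Gannon calculation, just as the paper does. Your sketch of completeness is off in three details: the three-letter terms $s\,t_q't_q'$ and $t_rt_r\,s'$ cancel by the row and column sums; the products of $A$-sums reduce to words $t_ht_{h+r}t_{u+r}'t_u'$ by the contraction $t_k't_{k'}=\delta_{k,k'}$, not by completeness; and their coefficient $\sum_m A_{h-u+m,r}A_{r,m}=\delta_{h,u}-a\delta_{r,0}$ is \eqref{eq:HImatrix3} directly, with no appeal to \eqref{eq:HImatrix1}.
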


\begin{proof}
The verification is the coefficient calculation in
\cite[proof of Theorem~2(a), Section~5, pp.~18--19,
equation~(5.1)]{evans2017non}.
With $a=d^{-1}$ and $\omega b^2=a$, its scalar relation is
$a^2+Na=1$.  The products $S'T_g$ and $T_g'S$ vanish by the column and
row sums, while $T_g'T_h$ and the completeness relation reduce to
\eqref{eq:HImatrix3}.  For example, the coefficient sum in $T_g'T_h$ is
\begin{equation*}
 \sum_u A_{g+r,g+u}A_{h+u,g+r}
 =\delta_{g,h}-a\delta_{r,-g};
\end{equation*}
its second term cancels $a t_{-g}t_{-h}'$, leaving
$\delta_{g,h}(ss'+\sum_rt_rt_r')=\delta_{g,h}$.

For completeness, after the row and column sums cancel the three-letter
terms, the residual is
\begin{equation*}
 a\sum_{h,u}t_ht_ht_u't_u'-\sum_{h,v}t_ht_vt_v't_h'+Z,
 \qquad\text{and}\qquad
 Z=\sum_{g,h,k,u}A_{h+g,k+g}A_{k+g,u+g}
 t_ht_{h+k+g}t_{g+u+k}'t_u'.
\end{equation*}
A word $t_ht_vt_w't_u'$ in $Z$ satisfies $r:=v-h=w-u$.
Putting $m=u+g$, its coefficient is
\begin{equation*}
 \sum_m A_{h-u+m,r}A_{r,m}
 =\delta_{h,u}-a\delta_{r,0}
\end{equation*}
by \eqref{eq:HImatrix3}.  These are exactly the coefficients needed to
cancel the other two sums.

The cited calculation is a sequence of monic Leavitt reductions,
reindexings of finite sums, and substitutions from
\eqref{eq:HImatrix1}--\eqref{eq:HImatrix3}.  The coefficient identities
use no division by an integer.  Replacing every occurrence of
$\overline\omega$ by $\omega^{-1}$ therefore gives the same reductions
over $\kk$, with coefficients compared using
Lemma~\ref{lem:HI-reduced-words}.  This establishes the identities in
arbitrary characteristic from their algebraic derivation.
The universal property of $L$ gives the unital endomorphism $\rho$.
\end{proof}

\subsection{The composition identities}
\label{subsec:HI-general-composition-identities}

Having constructed the endomorphisms, we prove
\eqref{eq:LR1}--\eqref{eq:LR3}. Lemma~\ref{lem:HI-alpha-endomorphisms}
proves \eqref{eq:LR1}. The next lemma proves \eqref{eq:LR2}. The rest of
the subsection proves \eqref{eq:LR3}.

\begin{lemma}[Relation \texorpdfstring{\eqref{eq:LR2}}{LR2}]
\label{lem:HI-translation-covariance}
The endomorphisms defined in \eqref{eq:rhos-general}--\eqref{eq:alphag-general} satisfy
\begin{equation*}
 \alpha_u\rho=\rho\alpha_{-u}
 \qquad(u\in G).
\end{equation*}
\end{lemma}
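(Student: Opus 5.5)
Since both sides of $\alpha_u\rho=\rho\alpha_{-u}$ are unital algebra endomorphisms of $L$ (Lemma~\ref{lem:HI-alpha-endomorphisms} and Proposition~\ref{prop:HI-rho-endomorphism}), the plan is to use the universal property of the Leavitt algebra and check the identity on the four families of generators $s,s',t_g,t_g'$. Two endomorphisms that agree on generators agree everywhere. Coefficients can be compared by Lemma~\ref{lem:HI-reduced-words}, though only relabellings of finite sums should be needed and no cancellation between reduced words.

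For $s$ and $s'$ the check is short. Since $\alpha_{-u}$ fixes $s$, we have $\rho\alpha_{-u}(s)=S=as+b\sum_r t_rt_r$. Applying $\alpha_u$ to $S$ instead gives $as+b\sum_r t_{r+2u}t_{r+2u}$, and reindexing $r\mapsto r-2u$ returns $S$. The same argument handles $S'$. The only input is that $\alpha_u$ fixes $s,s'$ and translates the $t$-indices, which holds by \eqref{eq:alphag-general}.

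For $t_g$, the right side is $\rho(t_{g-2u})=T_{g-2u}$. The left side is $\alpha_u(T_g)$, and we would expand it term by term:
\[
 bs\,t'_{-g+2u}+\omega t_{-g+2u}ss'+\sum_{h,k}A_{h+g,k+g}\,t_{h+2u}t_{h+k+g+2u}t'_{k+2u}.
\]
The first two terms match those of $T_{g-2u}$, because $-(g-2u)=-g+2u$. In the triple sum, substitute $h'=h+2u$ and $k'=k+2u$. The indices become $t_{h'}t_{h'+k'+(g-2u)}t'_{k'}$, and the coefficient becomes $A_{h'+(g-2u),\,k'+(g-2u)}$, which is exactly the coefficient in $T_{g-2u}$. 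The computation for $t'_g$ is identical, using \eqref{eq:rhotg-prime-general}.

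The main point to watch is this index bookkeeping in the triple sums. One has to confirm that the shift by $2u$ on all three $t$-letters is absorbed entirely by replacing $g$ with $g-2u$, so that the matrix $A$ needs no symmetry from \eqref{eq:HImatrix1}. This works because every letter carries exactly one of $h$ or $k$, together with at most one occurrence of $g$.
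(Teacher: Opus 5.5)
Your proof is correct and is the same argument as the paper's: check the identity on the generators, note that $\alpha_u$ fixes $S$ and $S'$ after reindexing, and absorb the shift in the triple sums of $T_g$ and $T_g'$ by substituting $h\mapsto h+2u$, $k\mapsto k+2u$, which yields $T_{g-2u}$ and $T'_{g-2u}$. One small correction to your closing heuristic: the middle letter $t_{h+k+g}$ carries both $h$ and $k$, so the substitution adds $4u$ to it while $\alpha_u$ adds only $2u$, and the net $-2u$ is what $g\mapsto g-2u$ absorbs; your explicit index computation already handles this correctly.
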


\begin{proof}
The sums defining $\rho(s)$ and $\rho(s')$ are fixed by $\alpha_u$.
In the formula for $\alpha_u(\rho(t_l))$, substitute
$(h,k)=(H-2u,K-2u)$.  The coefficient becomes
$A_{H+l-2u,K+l-2u}$ and the middle generator has index $H+K+l-2u$,
so the result is $T_{l-2u}=\rho(\alpha_{-u}(t_l))$.
The same substitution gives $\alpha_u(T_l')=T_{l-2u}'$.
This proves equality on every generator and hence on $L$.
\end{proof}

To prove \eqref{eq:LR3}, define
\begin{equation*}
 \Gamma(x):=sxs'+\sum_{r\in G}t_r\alpha_r(\rho(x))t_r'.
\end{equation*}
The identities in \eqref{eq:LeavittRelations} show directly that
$\Gamma(1)=1$ and
$\Gamma(xy)=\Gamma(x)\Gamma(y)$.  Indeed, the mixed products vanish by
$s't_r=t_r's=0$, and $t_r't_q=\delta_{r,q}$ reduces the remaining double sum
to a single sum.  Thus $\Gamma$ is a unital algebra endomorphism.
It is therefore enough to prove $\rho^2=\Gamma$ on the Leavitt generators.

Multiplying the formulas in \eqref{eq:rhos-general}--\eqref{eq:rhotg-general} on the left by
$s'$ or $t_r'$ gives
\begin{equation}
 \label{eq:HI-left-corners}
 s'S=a,
 \qquad s'T_g=bt_{-g}',
 \qquad t_r'S=bt_r,
 \qquad
 t_r'T_g=\omega\delta_{r,-g}ss'
 +\sum_kA_{r+g,k+g}t_{r+k+g}t_k'.
\end{equation}

The completeness relation shows that these left products determine an
element.  Indeed, if $s'x=s'y$ and $t_r'x=t_r'y$ for every $r$, then
\begin{equation*}
 x-y=\left(ss'+\sum_rt_rt_r'\right)(x-y)=0.
\end{equation*}

\begin{lemma}[Relation \texorpdfstring{\eqref{eq:LR3}}{LR3} on the unprimed generators]
\label{lem:HI-unprimed-LR3}
For $x\in\{s,t_h:h\in G\}$ and $r\in G$,
\begin{equation*}
 s'\rho^2(x)=xs',
 \qquad\text{and}\qquad
 t_r'\rho^2(x)=\alpha_r(\rho(x))t_r'.
\end{equation*}
\end{lemma}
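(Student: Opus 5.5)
The plan is to verify the two identities by direct expansion: substitute the generator formulas \eqref{eq:rhos-general}--\eqref{eq:rhotg-prime-general} into $\rho^2(x)=\rho(\rho(x))$, peel off the leftmost projection using the left-corner formulas \eqref{eq:HI-left-corners}, and compare coefficients of reduced words (Lemma~\ref{lem:HI-reduced-words}).

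For $x=s$ we have $\rho^2(s)=aS+b\sum_rT_rT_r$. Multiplying on the left by $s'$ gives $a\,s'S+b\sum_r(s'T_r)T_r=a^2+b^2\sum_rt_{-r}'T_r$. The last sum is evaluated with the fourth formula in \eqref{eq:HI-left-corners}. It produces an $ss'$ term and a diagonal sum $\sum_k(\sum_rA_{0,k+r})t_kt_k'$, whose coefficient is given by the row sum \eqref{eq:HI-row-sum}. The completeness relation and the scalar relations $b^2=\omega^{-1}a$ and $a^2+Na=1$ then have to collapse this to $ss'$. The powers of $\omega$ must be tracked here, since this is where a sign or normalization slip would appear, and I would recheck this small computation carefully. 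The identity $t_r'\rho^2(s)=\alpha_r(S)t_r'=St_r'$ is handled the same way. One computes $t_r'S=bt_r$ and $t_r'T_q$, expands $a\,t_r'S+b\sum_q(t_r'T_q)T_q$, and uses $\alpha_r(S)=S$ (Lemma~\ref{lem:HI-translation-covariance}) on the right-hand side. The terms then match after the left-corner reduction of the $T_q$ products and one application of \eqref{eq:HImatrix3}.

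For $x=t_h$, $\rho^2(t_h)=bSt_{-h}'{}^{\rho}+\omega T_{-h}SS'+\sum_{k,l}A_{k+h,l+h}T_kT_{k+l+h}T_l'$, where $t_{-h}'{}^{\rho}$ denotes $T_{-h}'$. The identity $s'\rho^2(t_h)=t_hs'$ needs $s'S=a$, $s'T_k=bt_{-k}'$ and the column and row sums. After collecting, it reduces to the quadratic identity \eqref{eq:HImatrix3} and \eqref{eq:HImatrix2}. The remaining identity $t_r'\rho^2(t_h)=\alpha_r(T_h)t_r'=T_{h-2r}t_r'$ is the main obstacle. There, $t_r'T_k$ contributes the full matrix sum, so the triple product $T_kT_{k+l+h}T_l'$ generates words $t\,t\,t'\,t'$ whose coefficients are double sums of triple products of $A$. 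I expect these coefficients to reduce, via \eqref{eq:HImatrix1} reindexing, to exactly the cubic identity \eqref{eq:HI-cubic} (equivalently \eqref{eq:HImatrix4} through Lemma~\ref{lem:HI-convolution-operator}). Its constant term $-a\delta\delta$ should cancel against the $ss'$-words generated by the $\omega\delta_{r,-g}ss'$ piece. I would organize this step by sorting words by length and by the $\mathbb Z$-grading \eqref{eq:display-0110}, and treating the $s$-containing words and the pure $t$-words separately. The approach would follow the Evans--Gannon coefficient comparison with $\overline\omega$ replaced by $\omega^{-1}$, checking that no division by integers occurs.
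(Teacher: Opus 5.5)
Your proposal is correct and follows the paper's route: expand with the left-corner formulas \eqref{eq:HI-left-corners}, compare reduced-word coefficients as in the Evans--Gannon verification, and note that every coefficient except the four-letter words in $t_r'\rho^2(t_h)$ vanishes by the scalar relation, the row and column sums, and \eqref{eq:HImatrix3}. The differences are minor. The paper first uses translation covariance to reduce to $r=0$, and the four-letter coefficients are double sums of fourfold (not threefold) products of entries of $A$ that are literally the \eqref{eq:HImatrix4} residual up to an \eqref{eq:HImatrix3}-contraction, so \eqref{eq:HImatrix4} applies directly with no detour through the cubic identity.
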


\begin{proof}
This is the verification of the unprimed-generator part of (4.2) in
\cite[proof of Theorem~2(a), Section~5, p.~19,
equations~(5.3)--(5.7)]{evans2017non}.
Translation covariance reduces it to
\begin{equation*}
 s'\rho^2(s)=ss',\quad t_0'\rho^2(s)=St_0',\quad
 s'\rho^2(t_0)=t_0s',\quad\text{and}\quad t_0'\rho^2(t_q)=T_qt_0'
 \quad(q\in G).
\end{equation*}
Here the reduction to index $0$ uses the bijection $g\mapsto2g$ of the
abstract group, not division by $2$ in $\kk$. Expand with \eqref{eq:HI-left-corners} and compare reduced words.
As in the cited proof, all coefficients vanish by the scalar relation,
the row and column sums, and \eqref{eq:HImatrix3}, except for the
four-matrix coefficients in the last identity, which are exactly
\eqref{eq:HImatrix4}.  The auxiliary sums in (5.4)--(5.7) there are
reindexings and contractions of \eqref{eq:HImatrix1} and
\eqref{eq:HImatrix3}; for example,
\begin{equation*}
 \sum_{l,m}A_{l,m}A_{m,l+h}A_{h,k+m}
 =\delta_{h,0}\sum_m A_{h,k+m}-aA_{h,k}
 =-\omega a\delta_{h,0}-aA_{h,k}.
\end{equation*}
For the remaining identity, put $\Delta_q=t_0'\rho(T_q)-T_qt_0'$.
After the left-corner reductions and the replacement
$ss'=1-\sum_gt_gt_g'$, the only reduced-word types are
$1$, $t_rt_{r+q}'$, $st_{r-q}'t_r'$, $t_rt_{r+q}s'$, and
four-letter words. Define
\begin{align*}
 U_{g,k}&:=\sum_{l,m}A_{l,m}A_{l+g,k}A_{k+m,l}
          +b^2\delta_{k,0}+aA_{g,k},\\
 V_{g,h}&:=\sum_rA_{r+g,h}A_{r,-h}
          -\omega\delta_{h,g}+\omega^{-1}b^2\delta_{h,0},\\
 W_{g,k}&:=\sum_mA_{g,m+g}A_{-g,m+k}
          -\omega^{-1}\delta_{k,0}+b^2\delta_{g,0},\\
 F^{(4)}_{g,h,i,k}
 &:=\sum_{l,m}A_{l,m}A_{l+g,h}A_{h+m,l+i}A_{i,k+m}-A_{h-g,i-g}\delta_{k,g}
 +\omega^{-1}a\delta_{h,0}A_{i,k}
 +\omega aA_{g,h}\delta_{i,0}.
\end{align*}
The contraction identities cited above give $U=V=W=0$, while
$F^{(4)}=0$ is \eqref{eq:HImatrix4}. Writing $[w]\Delta_q$ for the
coefficient of a reduced word $w$, direct collection gives
\begin{align*}
 [t_rt_{r+q}']\Delta_q&=V_{q,-r},&
 [st_{r-q}'t_r']\Delta_q&=b\omega W_{q,r},\\
 [t_rt_{r+q}s']\Delta_q&=b\omega U_{-q,r},&
 [1]\Delta_q&=\delta_{q,0}\omega^2
 \left(a^2+b^2\sum_mA_{0,m}\right)=0.
\end{align*}
For $q=-g$, every four-letter word has the form
$t_ht_{h+i-g}t_{k+i-g}'t_{k-g}'$, with coefficient
\begin{equation*}
 F^{(4)}_{g,h,i,k}-\delta_{h,k}V_{-g,-h}=0.
\end{equation*}
Thus every reduced-word coefficient of $\Delta_q$ vanishes.

These are formal coefficient reductions over $\kk$ after
$\overline\omega$ is replaced by $\omega^{-1}$.  No averaging, integer
inversion, or separation of scalar roots is used, so the argument applies
in every characteristic.
\end{proof}

The identities for the primed generators follow from the following symmetry.

\begin{lemma}[Transposition and the primed generators]
\label{lem:HI-prime-reversal}
Let $\dagger:L\to L$ be the $\kk$-linear anti-automorphism determined by $s^\dagger=s'$, $(s')^\dagger=s$, $t_g^\dagger=t_g'$, and $(t_g')^\dagger=t_g$. For an endomorphism $f$, put
$\widetilde f(x):=f(x^\dagger)^\dagger$.  Then
$\widetilde\alpha_u=\alpha_u$, and $\widetilde\rho$ is given by the same
formulas as $\rho$ with
\begin{equation}
 \widetilde\omega=\omega^{-1},
 \qquad \widetilde b=\omega b,
 \qquad\text{ and }\qquad\widetilde A_{g,h}=A_{h,g}.
 \label{eq:display-0159}
\end{equation}
The transposed data again satisfy
\eqref{eq:HImatrix1}--\eqref{eq:HImatrix4}.
\end{lemma}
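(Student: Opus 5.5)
The plan is to verify three things: that $\dagger$ is well defined, that conjugating each generator image by $\dagger$ yields the stated formulas, and that the transposed data $(\omega^{-1},\omega b,A^{\top})$ satisfy the HI equations.

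\emph{Step 1: $\dagger$ is well defined.} The anti-automorphism $\dagger$ exists because reversing words and swapping primed with unprimed generators maps the Leavitt relations \eqref{eq:LeavittRelations} to themselves:
\begin{itemize}
\item $s's=1$ goes to itself;
\item $s't_g=0$ and $t_g's=0$ are exchanged;
\item $t_g't_h=\delta_{g,h}$ goes to itself after relabeling;
\item the completeness relation is invariant.
\end{itemize}
So $\dagger$ extends uniquely to a linear anti-automorphism of order two. For $\alpha_u$, the formulas \eqref{eq:alphag-general} commute with $\dagger$ on generators. Since $\widetilde{\alpha_u}$ is again a unital algebra endomorphism (a composite of two anti-homomorphisms and a homomorphism), this gives $\widetilde\alpha_u=\alpha_u$.

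\emph{Step 2: the formula for $\widetilde\rho$.} I would compute $\widetilde\rho$ on each generator.
\begin{itemize}
\item $\widetilde\rho(s)=\rho(s')^\dagger=(as'+\omega b\sum_r t_r't_r')^\dagger=as+\omega b\sum_r t_rt_r$. This is $\rho(s)$ with $b$ replaced by $\widetilde b=\omega b$.
\item $\widetilde\rho(s')=S^\dagger=as'+b\sum_r t_r't_r'$. Writing $b=\widetilde\omega\,\widetilde b$ with $\widetilde\omega=\omega^{-1}$ matches the formula for $S'$.
\item $\widetilde\rho(t_g)=(T_g')^\dagger=\omega b\,st_{-g}'+\omega^{-1}t_{-g}ss'+\sum_{h,k}A_{k+g,h+g}t_ht_{g+h+k}t_k'$. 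This matches \eqref{eq:rhotg-general} with $\widetilde b$, $\widetilde\omega$ and $\widetilde A_{h+g,k+g}=A_{k+g,h+g}$.
\item $\widetilde\rho(t_g')=T_g^\dagger$. This is checked by the same reversal.
\end{itemize}
Because $\rho$ is already known to be an endomorphism by Proposition~\ref{prop:HI-rho-endomorphism}, $\widetilde\rho$ is too, and it agrees with the stated formulas on generators.

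\emph{Step 3: the scalar relations.} The scalars satisfy $\widetilde\omega^3=1$. They also satisfy $\widetilde b^2=\omega^2b^2=\omega\cdot\omega b^2=\omega a=\widetilde\omega^{-1}a$, which is \eqref{eq:HI-general-b} for the transposed data.

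\emph{Step 4: transposed equations, (E1)--(E3).} I would prove that $A^{\top}$ is an HI matrix with $\omega^{-1}$. This is the main obstacle.
\begin{itemize}
\item (E1): $A_{h,g}=\omega^{-1}A_{g-h,-h}$, obtained from the second form of (E1) at $(h,g)$. This rearranges to $\widetilde A_{g,h}=\omega^{-1}\widetilde A_{-h,g-h}$. The other equality follows similarly.
\item (E2): it becomes $\sum_rA_{0,r}=-\omega a$, which is \eqref{eq:HI-row-sum}.
\end{itemize}
For the remaining equations I would avoid direct manipulation and apply Theorem~\ref{thm:cubic-criterion}, because it suffices to prove (E1) and the cubic identity \eqref{eq:HI-cubic} for $\widetilde A$ with $\widetilde\omega$. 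For the cubic, transpose each factor in $\omega\sum_mA_{g+h,m}A_{m+k,g}A_{m+l,h}$ and use (E1) to rotate each entry back into the shape $A_{\cdot,\cdot}$ appearing in the original cubic. The $\omega$ powers should cancel to the required $\omega$ prefactor. Reindexing $m$ then reduces the expression to \eqref{eq:HI-cubic} at a permuted tuple of $(g,h,k,l)$, with the right-hand product and delta term transforming compatibly.

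If that index bookkeeping becomes unwieldy, the fallback is conceptual. Since $\widetilde\rho$ is an endomorphism satisfying the Leavitt relations, the coefficient comparisons of Proposition~\ref{prop:HI-rho-endomorphism} and Lemma~\ref{lem:HI-unprimed-LR3}, read backwards, force (E2)--(E4) for the transposed data.
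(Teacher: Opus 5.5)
Your route is the paper's: check that $\dagger$ preserves the Leavitt relations, apply $\dagger$ to the four generator formulas, verify \eqref{eq:HImatrix1} and \eqref{eq:HImatrix2} directly, and obtain the rest from the cubic identity for $A$ via Theorem~\ref{thm:cubic-criterion}. For the cubic you need that $A$ itself satisfies \eqref{eq:HI-cubic}, which is the forward direction of that theorem.

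The cubic step you left at ``should'' does close, though not at a permuted tuple. Put $n=m-g-h$. Then \eqref{eq:HImatrix1} gives $A_{g+h,m}=\omega^{-1}A_{n,-g-h}$, $A_{m+k,g}=\omega A_{-g,n+h+k}$ and $A_{m+l,h}=\omega A_{-h,n+g+l}$. Combined with the outer factor $\widetilde\omega^{-1}=\omega$, the net scalar is $\omega^{2}=\omega^{-1}$. So the transposed left side is $\omega^{-1}\sum_n A_{n,-g-h}A_{-g,n+h+k}A_{-h,n+g+l}$. This is the cubic for $A$ at the affinely transformed tuple $(-g,-h,h+k,g+l)$. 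Its right side $A_{l,h+k}A_{k,g+l}-a\delta_{g,0}\delta_{h,0}$ is exactly $\widetilde A_{g+l,k}\widetilde A_{h+k,l}-a\delta_{g,0}\delta_{h,0}$.

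Two corrections are needed.

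First, Theorem~\ref{thm:cubic-criterion} is stated only for nontrivial $G$. In this appendix the odd order $N=2n+1$ does not exclude $N=1$, so the case $G=\{0\}$ must be treated separately. Remark~\ref{rem:HI-trivial-group} gives $\omega=1$ there, so $\widetilde\omega=\omega$, $\widetilde b=b$ and $\widetilde A=A$, and the transposed data are the original data.

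Second, drop the fallback, at least for \eqref{eq:HImatrix4}. Reading Lemma~\ref{lem:HI-unprimed-LR3} backwards for $\widetilde\rho$ presupposes \eqref{eq:LR3} for $\widetilde\rho$ on the unprimed generators. Under $\dagger$, that is \eqref{eq:LR3} for $\rho$ on the primed generators, which is precisely what this lemma is used to establish, so the argument is circular.
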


\begin{proof}
We first check that $\dagger$ is well defined.  Reversing products and swapping
primed and unprimed generators gives
\begin{align*}
 (s's)^\dagger&=s's, &
 (s't_g)^\dagger&=t_g's, &
 (t_g's)^\dagger&=s't_g,
\\
 (t_g't_h)^\dagger&=t_h't_g, &
 \left(ss'+\sum_gt_gt_g'\right)^\dagger
 &=ss'+\sum_gt_gt_g'.
\end{align*}
Thus every identity in \eqref{eq:LeavittRelations} is preserved.  Hence
$\dagger$ is an involutive anti-automorphism of $L$.

The formula \eqref{eq:alphag-general} immediately gives
$\widetilde\alpha_u=\alpha_u$.  Applying $\dagger$ to the formulas for $\rho$
gives
\begin{align*}
 \widetilde\rho(s)&=as+\widetilde b\sum_rt_rt_r,\qquad\widetilde\rho(s')=as'+\widetilde\omega\widetilde b\sum_rt_r't_r',\\ \widetilde\rho(t_g)&=\widetilde b\,s t_{-g}'
   +\widetilde\omega t_{-g}ss'
   +\sum_{h,k}\widetilde A_{h+g,k+g}
      t_ht_{h+k+g}t_k',
\\
\widetilde\rho(t_g')
&=\widetilde\omega\widetilde b\,t_{-g}s'
   +\widetilde\omega^{-1}ss't_{-g}'+\sum_{h,k}\widetilde A_{k+g,h+g}
      t_kt_{g+h+k}'t_h'.
\end{align*}
These are exactly the formulas in
\eqref{eq:rhos-general}--\eqref{eq:rhotg-prime-general} with the data in
\eqref{eq:display-0159}.  Also
$\widetilde\omega^3=1$ and
$(\widetilde b)^2=\omega^2b^2=(\widetilde\omega d)^{-1}$. It remains to check the matrix equations.  For \eqref{eq:HImatrix1},
\begin{equation*}
 \widetilde A_{g,h}=A_{h,g}
 =\omega^{-1}A_{g-h,-h}
 =\widetilde\omega\widetilde A_{-h,g-h}
 =\omega A_{-g,h-g}
 =\widetilde\omega^{-1}\widetilde A_{h-g,-g}.
\end{equation*}
For \eqref{eq:HImatrix2}, \eqref{eq:HI-row-sum} gives
\begin{equation*}
 \sum_r\widetilde A_{r,0}=\sum_rA_{0,r}
 =-\omega a=-\widetilde\omega^{-1}a.
\end{equation*}

Suppose first that $G$ is nontrivial.  By
Corollary~\ref{cor:HI-cubic-full-equivalence-general}, the original matrix satisfies
the cubic identity.  We show that the transposed matrix does as well.  Put
$n=m-g-h$.  Equation~\eqref{eq:HImatrix1} gives
\begin{equation*}
A_{g+h,m}=\omega^{-1}A_{n,-g-h},\qquad A_{m+k,g}=\omega A_{-g,n+h+k},\qquad\text{ and }A_{m+l,h}=\omega A_{-h,n+g+l}.
\end{equation*}
Therefore the left-hand side of the cubic identity for the transposed data is
\begin{align*}
 &\widetilde\omega^{-1}\sum_m
 \widetilde A_{m,g+h}\widetilde A_{g,m+k}\widetilde A_{h,m+l}=\omega^{-1}\sum_n
 A_{n,-g-h}A_{-g,n+h+k}A_{-h,n+g+l}.
\end{align*}
This is the cubic identity for $A$ with
$(g,h,k,l)=(-g,-h,h+k,g+l)$.  Hence
\begin{equation*}
 \widetilde\omega^{-1}\sum_m
 \widetilde A_{m,g+h}\widetilde A_{g,m+k}\widetilde A_{h,m+l}
 =A_{l,h+k}A_{k,g+l}-a\delta_{g,0}\delta_{h,0}.
\end{equation*}
Since the coefficients commute, the right-hand side is
\begin{equation*}
 \widetilde A_{g+l,k}\widetilde A_{h+k,l}
 -a\delta_{g,0}\delta_{h,0},
\end{equation*}
as required.  Corollary~\ref{cor:HI-cubic-full-equivalence-general} now shows that the
transposed data satisfy \eqref{eq:HImatrix1}--\eqref{eq:HImatrix4}.

If $G=\{0\}$, Remark~\ref{rem:HI-trivial-group} gives $\omega=1$, and the
transposed data are the original data.  This completes the proof.
\end{proof}

The first two composition identities \eqref{eq:LR1}--\eqref{eq:LR2}
follow from Lemmas~\ref{lem:HI-alpha-endomorphisms} and
\ref{lem:HI-translation-covariance}. For \eqref{eq:LR3}, apply
Lemma~\ref{lem:HI-unprimed-LR3} to the transposed data of
Lemma~\ref{lem:HI-prime-reversal}, then apply $\dagger$.
Since $f\mapsto\widetilde f$ preserves composition, this gives, for
$y\in\{s',t_h':h\in G\}$,
\begin{equation*}
 \rho^2(y)s=sy,\qquad
 \rho^2(y)t_r=t_r\alpha_r(\rho(y)).
\end{equation*}
For an unprimed generator, insert the completeness relation
$1=ss'+\sum_rt_rt_r'$ on the left and use
Lemma~\ref{lem:HI-unprimed-LR3}; for a primed generator, insert it on the
right and use the displayed identities. In both cases this yields
$\rho^2(x)=\Gamma(x)$. Both sides are algebra endomorphisms, so the identity
holds on all of $L$. It remains to check simplicity and orthogonality.

\subsection{Simplicity and orthogonality}
\label{subsec:HI-general-simplicity}

It remains to verify the three Hom-space conditions in the definition of a
Leavitt realization. Reduced words determine the diagonal space, and the
grading rules out the mixed spaces.

\begin{lemma}[Centralizers]
\label{lem:HI-centralizers}
Recall from \eqref{eq:rhos-general} that
\begin{equation*}
 S=as+b\sum_gt_gt_g,
 \qquad\text{ and }\qquad
 S'=as'+\omega b\sum_gt_g't_g'.
\end{equation*}
Then $\operatorname{Cent}_L(S)=\kk[S]$. In particular, $\operatorname{Cent}_L(S)\cap\operatorname{Cent}_L(S')=\kk$.
\end{lemma}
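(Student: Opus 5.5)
The plan is to use the reduced-word basis of Lemma~\ref{lem:HI-reduced-words} together with the $\mathbb Z$-grading \eqref{eq:display-0110}. The element $S=as+b\sum_gt_gt_g$ has a clean leading structure. Its degree-$1$ part is $as$ and its degree-$2$ part is $b\sum_g t_gt_g$, with $a,b\ne0$. First observe that $\kk[S]\subseteq\operatorname{Cent}_L(S)$ trivially. The content is the reverse inclusion.

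For the reverse inclusion, take $x\in L$ with $xS=Sx$ and write $x=\sum_n x_n$ in homogeneous components. Since $S$ is not homogeneous, we cannot split the commutation degree by degree directly. Instead we use a filtration argument on word length and prefixes. Write each reduced word as $u_Iv_J$. Consider left multiplication by $S$: prepending $s$ or $t_gt_g$ to a reduced word $u_Iv_J$ yields again a reduced word, except when we prepend $s$ to a word with $r=0$ and $j_1=\ast$. Right multiplication by $S$ appends $s$ or $t_gt_g$, which collides with the $v$-part via $v_iu_j\mapsto\delta_{i,j}$.

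So we proceed by induction on the maximal length of the $v$-part (the "negative" length $s$) appearing in $x$. Comparing coefficients of the longest words in $Sx=xS$, we aim to show that $x$ has no $v$-letters: $x=\sum c_I u_I$ lies in the subalgebra generated by $s,t_g$. If $x$ has a top $v$-length $s_0>0$, right multiplication by $S$ shortens those terms, while left multiplication preserves the $v$-length. Matching the terms of $v$-length $s_0$ then forces their coefficients to satisfy an equation of the form $Sy=0$ modulo shorter terms. That is impossible, because left multiplication by $S$ is injective: $s'S=a\ne0$. For the positive part, $x=\sum c_Iu_I$ lies in the free algebra on the $u$'s (no relations among $u$-words). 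The equation $Sx=xS$ becomes commutation in a free algebra with $S=as+b\sum t_gt_g$. By Bergman's centralizer theorem (the centralizer of a non-scalar element in a free algebra is a polynomial ring in one variable), or more elementarily by an induction comparing leading words in shortlex order, we get $x\in\kk[S]$. The fact that $S$ is not a proper power follows from its degree-$1$ term $as$.

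For the final assertion, suppose $x$ also commutes with $S'=as'+\omega b\sum t_g't_g'$. Then $x=p(S)$ for a polynomial $p$. Apply the anti-automorphism $\dagger$ (Lemma~\ref{lem:HI-prime-reversal}) or the symmetric argument to get $x\in\kk[S']$. An element of $\kk[S]$ involves only $u$-words, and an element of $\kk[S']$ involves only $v$-words. By uniqueness of reduced words, only the empty word survives, so $x\in\kk$.

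The main obstacle is the step eliminating $v$-letters, because the reduction $u_\ast v_\ast\mapsto 1-\sum u_gv_g$ creates cross terms when $s$ is prepended. We handle this by choosing the leading term in shortlex order with $u_\ast>u_g$ and tracking only maximal words.
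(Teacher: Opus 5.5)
Your route differs from the paper's and works in outline, but one step rests on a justification that does not prove what you need. Write $V_k$ for the span of reduced words with at most $k$ primed letters. You correctly note that $xS\in V_{s_0-1}$ and $SV_k\subseteq V_k$. So commutation forces the component of $Sy$ in $V_{s_0}/V_{s_0-1}$ to vanish, where $y$ is the top part of $x$. What you need is that $y\mapsto Sy \bmod V_{s_0-1}$ is injective on words of $v$-length $s_0$.

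The identity $s'S=a$ only gives injectivity of $y\mapsto Sy$ on all of $L$, and it cannot be upgraded. Left multiplication by $s'$ raises the $v$-length of words with empty unprimed block, so from $Sy\in V_{s_0-1}$ you only get $ay=s'Sy\in V_{s_0}$, which is no contradiction.

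The correct reason is a direct coefficient count. For every reduced word $w$, the coefficient of the reduced word $t_0t_0w$ in $Sy$ is exactly $b$ times the coefficient of $w$ in $y$. This holds for three reasons:
\begin{itemize}
\item $t_gt_gw'$ is always reduced;
\item if $w'$ does not begin with $s'$, then $sw'$ is reduced and begins with $s$;
\item if $w'=s'w''$, then $sw'=w''-\sum_g t_gt_g'w''$, and none of these words begins with $t_0t_0$.
\end{itemize}
Since $t_0t_0w$ has the same $v$-length as $w$ and $b\neq0$, the top part of $x$ vanishes.

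There are two smaller points to fix.
\begin{itemize}
\item The anti-automorphism $\dagger$ sends $S$ to $as'+b\sum_g t_g't_g'$, which equals $S'$ only when $\omega=1$. Run the first part with $\omega b$ in place of $b$ before applying $\dagger$.
\item After invoking Bergman's theorem you need that $S=p(z)$ forces $\deg p=1$. Being ``not a proper power'' is not the right condition. Normalize $z$ to have zero constant term. The degree-one piece $as$ forces the linear coefficient of $p$ to be nonzero and $z$ to have lowest degree one. Comparing top degrees then leaves only $\deg p=2$ with $z$ homogeneous linear. That case is excluded by the degree-two piece, since $\sum_g t_gt_g$ is not a multiple of $s^2$.
\end{itemize}

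With these repairs, here is how your proof compares with the paper's. The paper never passes to the free subalgebra or invokes Bergman. It first subtracts multiples of $S^m$ to remove the pure words $s^m$; among powers of $S$, the word $s^m$ occurs only in $S^m$, with coefficient $a^m$. It then kills the remainder by tracking the longest initial string of $s$'s, which left multiplication by $as$ lengthens and right multiplication by $S$ cannot reach. The case where every word begins with $s'$ is handled separately, via the nonzero family $-a\sum_g t_gt_g'x_1$ produced by $ss'=1-\sum_g t_gt_g'$. That initial-string argument is exactly the elementary substitute for Bergman that you allude to, applied without first eliminating primed letters.

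For the final assertion, the paper compares reduced words of length $2m+2$ in $P(S)S'$ and in $S'P(S)$, so it needs no centralizer statement for $S'$. Your argument identifies $\kk[S]\cap\kk[S']$ as spanned by words that are both purely unprimed and purely primed. It is shorter and cleaner, but it relies on the symmetric centralizer result, which is fine once the $\omega$ bookkeeping above is done.
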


\begin{proof}
Let $x$ commute with $S$.  The coefficient of $s^m$ in $S^m$ is
$a^m\neq0$.  A term obtained by choosing any summand other than $as$ has a
$t$-letter, while powers with different exponents have different all-$s$
lengths.  Hence no other power of $S$ contains the reduced word $s^m$.
Starting with the largest $m$, subtract suitable scalar multiples of $S^m$.
This removes all pure powers of $s$ from the reduced expression for $x$.
It is enough to prove that the resulting $x$ is zero.

\smallskip
\noindent\emph{A word not beginning with $s'$.}
Suppose first that some reduced word in $x$ does not begin with $s'$.  Among
those words choose the maximal length $l$ of an initial string of $s$'s, and
collect the nonzero sum $w$ of terms with that initial string.  In $Sx$, the
summand $asw$ is nonzero because left multiplication by $s$ is injective.  It
consists of reduced words beginning with $s^{l+1}$.  No term in
$\bigl(b\sum_gt_gt_g\bigr)x$ begins with $s$, and right multiplication by $S$
does not increase the initial string of $s$'s of any remaining word, since no
such word is a pure power of $s$.  Hence these words cannot occur in $xS$,
contradicting $Sx=xS$.

\smallskip
\noindent\emph{Every word beginning with $s'$.}
It remains that every word in $x$ begins with $s'$.  Such a reduced word has
no unprimed block, so it is entirely primed.  Hence $x=s'x_1$ with
$x_1$ a sum of primed-only words.  Reducing $ss'=1-\sum_gt_gt_g'$ in $Sx$
produces the nonzero family
\begin{equation}
 -a\sum_gt_gt_g'x_1.
 \label{eq:display-0169}
\end{equation}
The family in \eqref{eq:display-0169} is nonzero when $x_1\neq0$, because
$a\neq0$ and its words are distinct reduced words beginning with $t_gt_g'$.
The quadratic
part of $Sx$ begins with two unprimed $t$'s.  In $xS$, reductions can occur
only where the final primed block of a word in $x$ meets the initial
unprimed block of a term in $S$.  These boundary cancellations cannot create
the initial block $t_gt_g'$.  Thus
$x_1=0$.
This proves $\operatorname{Cent}_L(S)=\kk[S]$.

It remains to impose commutation with $S'$.  Suppose that a polynomial
$P(S)$ of degree $m\geq1$ commutes with $S'$.  In $P(S)S'$, the leading
term contributes reduced words of length $2m+2$ by choosing
$b\sum_gt_gt_g$ from every copy of $S$ and
$\omega b\sum_gt_g't_g'$ from $S'$.  Their coefficients are nonzero.  No
word of this length occurs in $S'P(S)$: at the boundary between $S'$ and the
first copy of $S$, a primed letter followed by an unprimed letter contracts
or gives zero.  Lower
powers of $S$ also give shorter words.  This contradiction shows that
$m=0$, and hence the common centralizer is $\kk$.
\end{proof}

\begin{proposition}[Simplicity and orthogonality]
\label{prop:HI-simplicity}
For all $g,h\in G$,
\begin{align*}
\Hom(\alpha_g\rho,\alpha_h\rho)=\delta_{g,h}\kk,\qquad\Hom(\alpha_g\rho,\alpha_h)=0,\qquad\text{ and }\qquad\Hom(\alpha_g,\alpha_h\rho)=0.
\end{align*}
\end{proposition}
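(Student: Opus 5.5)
We will prove the three Hom-space statements separately, using the reduced-word basis of Lemma~\ref{lem:HI-reduced-words}, the $\mathbb Z$-grading \eqref{eq:display-0110}, and the centralizer computation of Lemma~\ref{lem:HI-centralizers}. Throughout, $\alpha_g\rho$ and $\rho\alpha_{-g}$ agree by Lemma~\ref{lem:HI-translation-covariance}, and every $\alpha_u$ is an automorphism permuting the $t_h$ and fixing $s,s'$.

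\emph{Mixed spaces.} Let $x\in\Hom(\alpha_g,\alpha_h\rho)$, so $x\alpha_g(y)=\alpha_h\rho(y)x$ for all $y$. We first use $y=s$ and $y=s'$. Since $\alpha_g$ fixes them, this gives $xs=\alpha_h(S)x$ and $xs'=\alpha_h(S')x$. The elements $s,s'$ are homogeneous of degrees $\pm1$. By contrast, $S=as+b\sum t_rt_r$ has components in degrees $1$ and $2$, and $S'$ has components in degrees $-1$ and $-2$. We decompose $x=\sum_n x_n$ into homogeneous components and compare the top degree in $xs=\alpha_h(S)x$. If $x_M$ is the top component, degree $M+2$ forces $(b\sum_r t_{r+2h}t_{r+2h})x_M=0$. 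Left multiplication by $\sum_r t_rt_r$ is injective: apply $t_q't_q'$ and use $t'_qt_r=\delta$ to recover $x_M$. Hence $x_M=0$, so $x=0$. The space $\Hom(\alpha_g\rho,\alpha_h)$ is handled the same way, or by applying the anti-automorphism $\dagger$ of Lemma~\ref{lem:HI-prime-reversal}, which exchanges source and target.

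\emph{Diagonal space.} Let $x\in\Hom(\alpha_g\rho,\alpha_h\rho)$. Put $c=h-g$ and conjugate by $\alpha_{-g}$. This reduces us to $x\in\Hom(\rho,\alpha_c\rho)$, i.e.\ $x\rho(y)=\alpha_c\rho(y)x$. With $y=s,s'$, and using $\alpha_c(S)=S$ and $\alpha_c(S')=S'$ (the sums are reindexed), we get that $x$ commutes with $S$ and $S'$. Lemma~\ref{lem:HI-centralizers} then gives $x=\lambda\in\kk$. If $\lambda\ne0$, then $\rho(y)=\alpha_c\rho(y)$ for all $y$. Taking $y=t_0$ and comparing the reduced-word coefficient of $st'_{0}$ in $T_0=\rho(t_0)$ with that in $\alpha_c(T_0)$, which contains $st'_{2c}$, gives $b\,\delta_{0,2c}=b$. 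Since $b\ne0$ and $2$ is invertible on $G$, this forces $c=0$. So the space is $\kk$ for $g=h$ and $0$ otherwise.

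\emph{Main obstacle.} The delicate point is the grading argument for the mixed spaces. Only the highest-degree part is controlled, and $x$ must be nonzero in some finite range of degrees, so one must check that the top component really cannot cancel. Injectivity of left multiplication by $\sum_r t_rt_r$ settles this cleanly. If it fails in some case, we would fall back on comparing the bottom degree using $xs'=\alpha_h(S')x$, which has the same structure with the primed generators.
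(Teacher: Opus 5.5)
Your proof is correct and follows the paper's argument: for the diagonal space you use commutation with $S,S'$, the centralizer lemma, and the coefficient of $s\,t'_{2c}$ in $\rho(t_0)$ (your conjugation by $\alpha_{-g}$ is only cosmetic), and for $\Hom(\alpha_g,\alpha_h\rho)$ you compare top degrees in $xs=Sx$ and use that left multiplication by $\sum_r t_rt_r$ is injective. For $\Hom(\alpha_g\rho,\alpha_h)$ your $\dagger$ reduction is valid, but a direct proof ``the same way'' has to be the paper's mirror image: evaluate at $s'$, take the \emph{lowest} degree, and use that right multiplication by $\sum_r t_r't_r'$ is injective because $(\sum_r t_r't_r')t_0t_0=1$. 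The other two variants do not work, since right multiplication by $\sum_r t_rt_r$ and left multiplication by $\sum_r t_r't_r'$ are not injective; this rules out both the literal analogue with $s$ and the fallback in your last paragraph.
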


\begin{proof}
\smallskip
\noindent\emph{The diagonal space.}
If $x\in\Hom(\alpha_g\rho,\alpha_h\rho)$, evaluation at $s,s'$ shows that
$x$ commutes with $S=\rho(s)$ and $S'=\rho(s')$, because every $\alpha_u$
fixes these two elements.  By
Lemma~\ref{lem:HI-centralizers}, $x=\lambda\in\kk$.  If $\lambda\neq0$, then
evaluation at $t_0$ and Lemma~\ref{lem:HI-translation-covariance} give $\rho(t_{-2g})=\rho(t_{-2h})$. After reducing the $t_{-q}ss'$ term in \eqref{eq:rhotg-general}, the word
$st_{2g}'$ has coefficient $b$ in $\rho(t_{-2g})$ and coefficient zero in
$\rho(t_{-2h})$ unless $g=h$.  Since $b\neq0$, this proves the diagonal
formula.

\smallskip
\noindent\emph{The mixed spaces.}
For the mixed spaces, use the grading of
Lemma~\ref{lem:HI-reduced-words}.  The identities
\begin{equation*}
 t_0't_0'\left(\sum_gt_gt_g\right)=1
\qquad\text{ and }\qquad \left(\sum_gt_g't_g'\right)t_0t_0=1
\end{equation*}
show that right multiplication by $\sum_gt_g't_g'$ and left multiplication
by $\sum_gt_gt_g$ are injective.  Indeed, a zero product on the right may be
multiplied by $t_0t_0$, and a zero product on the left may be multiplied by
$t_0't_0'$, to recover the original element.
Let $0\neq x\in\Hom(\alpha_g\rho,\alpha_h)$ and write
$x=\sum_mx_m$ in homogeneous components.  Evaluation at $s'$ gives
$xS'=s'x$.  If $m_0$ is the least degree occurring in $x$, the degree
$m_0-2$ component of this equation is
$\omega b x_{m_0}\sum_gt_g't_g'=0$.  Since $\omega b\neq0$ and this right
multiplication is injective, we get $x_{m_0}=0$, a contradiction.

Finally, if $0\neq x\in\Hom(\alpha_g,\alpha_h\rho)$, evaluation at $s$ gives
$xs=Sx$.  If $M$ is the greatest degree occurring in $x$, its degree $M+2$
component is $b(\sum_gt_gt_g)x_M=0$.  Since $b\neq0$ and this left
multiplication is injective, we get $x_M=0$, again a contradiction.  This
argument does not require $a\neq1$ and remains valid when the characteristic
divides $N$.
\end{proof}

Remark~\ref{rem:LR-alpha-Hom} also gives
$\Hom(\alpha_g,\alpha_h)=\delta_{g,h}\kk$.

\begin{theorem}[HI-matrix reconstruction]
\label{thm:HImatrix-to-Leavitt-general}
Let $\kk$ be algebraically closed of characteristic $p\geq0$, let $G$ be a
finite abelian group of odd order $N=2n+1$, choose scalars satisfying
\eqref{eq:HI-scalars} and \eqref{eq:HI-general-b}, and let $A$ be a
$(d,\omega)$-HI-matrix.  Then
\eqref{eq:rhos-general}--\eqref{eq:alphag-general} define a Leavitt realization of
$\mathfrak{HI}_G$ in the sense of Section~\ref{sec:LeavittRealization}.
\end{theorem}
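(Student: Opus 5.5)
The plan is to assemble the pieces already established in this appendix and check them against Definition~\ref{def:Leavitt-realization} item by item. First I would record that the scalars exist and are nonzero. From $d^2=Nd+1$ we get $d\neq0$, so $a=d^{-1}$ is defined and nonzero. Since $\kk$ is algebraically closed, a square root $b$ of $\omega^{-1}a$ exists, and it is nonzero. Lemma~\ref{lem:HI-reduced-words} then supplies the characteristic-free coefficient comparison that every later step uses.

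Next I would verify that the assignments really are endomorphisms. Lemma~\ref{lem:HI-alpha-endomorphisms} shows that each $\alpha_u$ is a unital automorphism and gives \eqref{eq:LR1}. Proposition~\ref{prop:HI-rho-endomorphism} shows that $S,S',T_g,T_g'$ satisfy the Leavitt relations, using only \eqref{eq:HImatrix1}--\eqref{eq:HImatrix3}. The universal property of $L$ then gives the unital endomorphism $\rho$. Lemma~\ref{lem:HI-translation-covariance} gives \eqref{eq:LR2}.

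The third step is \eqref{eq:LR3}. I would introduce the unital endomorphism $\Gamma$ defined before Lemma~\ref{lem:HI-unprimed-LR3}, so that it suffices to prove $\rho^2=\Gamma$ on generators. For the unprimed generators, Lemma~\ref{lem:HI-unprimed-LR3} gives the left-corner identities; this is where \eqref{eq:HImatrix4} enters. For the primed generators, I would apply that same lemma to the transposed data of Lemma~\ref{lem:HI-prime-reversal}. Those data again satisfy \eqref{eq:HImatrix1}--\eqref{eq:HImatrix4}, for nontrivial $G$ through the cubic criterion and for trivial $G$ because $\omega=1$. Applying $\dagger$ and using $\widetilde{f\circ g}=\widetilde f\circ\widetilde g$ then yields the right-corner identities. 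Inserting the completeness relation on the appropriate side converts the corner identities into $\rho^2(x)=\Gamma(x)$ for all generators $x$, and hence for all of $L$.

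Finally, Proposition~\ref{prop:HI-simplicity} gives \eqref{eq:display-0023} and \eqref{eq:display-0024}, which completes the definition. The main obstacle is the transposition step. One must confirm that $\widetilde\rho$ is literally the same formula with data $(\widetilde\omega,\widetilde b,\widetilde A)$ and that the scalar normalization $\widetilde b^{\,2}=\widetilde\omega^{-1}a$ holds. Checking this reduces to the computation $(\omega b)^2=\omega^2\omega^{-1}a=\omega a$. All remaining steps are direct citations, and none of them divides by integers, so the argument is valid in every characteristic.
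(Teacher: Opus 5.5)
Your proposal is correct and follows the paper's proof essentially step for step. The paper likewise assembles Lemma~\ref{lem:HI-alpha-endomorphisms}, Proposition~\ref{prop:HI-rho-endomorphism}, Lemma~\ref{lem:HI-translation-covariance}, the reduction of \eqref{eq:LR3} to generators via $\Gamma$ using Lemma~\ref{lem:HI-unprimed-LR3} together with the transposition of Lemma~\ref{lem:HI-prime-reversal}, and Proposition~\ref{prop:HI-simplicity}; your scalar check $(\omega b)^2=\omega a=\widetilde\omega^{-1}a$ is the normalization the paper verifies for the transposed data.
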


\begin{proof}
 Proposition~\ref{prop:HI-rho-endomorphism} and
 Lemma~\ref{lem:HI-alpha-endomorphisms} give the required unital algebra
 endomorphisms.  The preceding composition lemmas prove
\eqref{eq:LR1}--\eqref{eq:LR3}, and
Proposition~\ref{prop:HI-simplicity} proves all simplicity and orthogonality
conditions.
\end{proof}

\begin{remark}[Characteristic and scalar assumptions]
\label{rem:HI-characteristic-audit}
The proof divides only by $d$, $\omega$, and $b$.  The first two are nonzero
by \eqref{eq:HI-scalars}, and $b$ is nonzero by
\eqref{eq:HI-general-b}.  It takes one square root to choose $b$.  It never
divides by $2$, $3$, $N$, or the discriminant $N^2+4$, and it does not
average over $G$.

In characteristic $2$, the integer $N$ equals $1$ in $\kk$, while
multiplication by $2$ is still an automorphism of the abstract odd-order group
$G$; all sign identities in \eqref{eq:HImatrix1}--\eqref{eq:HImatrix4} and
the cubic equation are polynomial
identities in $\kk$.  If
$p\mid N$, then $d^2=1$, and both choices $d=1$ and $d=-1$ (when distinct)
are allowed.  If $p\mid(N^2+4)$, the polynomial for $d$ may have a repeated
root, but no root separation or discriminant inversion is used.  In
characteristic $3$, the only cube root of unity is $\omega=1$.
\end{remark}

Combining Theorem~\ref{thm:HImatrix-to-Leavitt-general} with
Theorem~\ref{thm:LRtoHIcat}, every $(d,\omega)$-HI-matrix therefore gives a
spherical $\mathfrak{HI}_G$-category in every characteristic.  We denote this
category by $\mathcal C(A)$ when its dependence on the matrix matters.


\section{The Q-system theorem}
\label{app:HI-q-system-algebra}\label{app:q-system}

We return to the reconstruction setup of Appendix~
\ref{app:HI-general-reconstruction}, with $\omega=1$ and $b^2=a$.
Thus $\kk$ is algebraically closed and $G$ is a finite abelian group of odd
order. We allow $N$ to be zero in $\kk$. The category used below is obtained
from Theorems~\ref{thm:HImatrix-to-Leavitt-general} and
\ref{thm:LRtoHIcat}.

This appendix identifies the algebraic meaning of the Q-system boundary
condition. The scalar equation shows that either $a=1$ or $a=-1$ forces
$N=0$ in $\kk$. Thus $a\neq\pm1$ whenever
$\operatorname{char}(\kk)$ does not divide $N$.  We include the exceptional
cases here. The theorem concerns associative and separable algebra
structures; it imposes no positivity condition.

An algebra is separable if its multiplication admits a section as a
bimodule map \cite[Definition~7.8.29, p.~146]{EGNO}.

\begin{theorem}[The Q-system condition]
\label{thm:HI-fixed-boundary-algebra}
\label{thm:HI-char-two-Q-system-boundary}
Let $A$ be a $(d,1)$-HI-matrix, let $\mathcal C=\mathcal C(A)$ be the
spherical $\mathfrak{HI}_G$-category reconstructed from $A$ by
Theorems~\ref{thm:HImatrix-to-Leavitt-general} and~\ref{thm:LRtoHIcat}, and put
$Q:=\unit\oplus\rho$.
\begin{enumerate}
\item If $a=1$, the restriction of every unital associative multiplication
  $m:Q\otimes Q\to Q$ to $\rho\otimes\rho$ is zero.  In particular,
  $Q$ has no separable algebra structure.
\item Suppose $a\neq1$.  Then
  the Q-system condition \eqref{eq:display-0271} holds
  if and only if $Q$ admits a unital associative algebra structure whose
  multiplication is nonzero on $\rho\otimes\rho$.  After rescaling the
  summand $\rho\subset Q$, the only component of the multiplication not fixed
  by the unit is its restriction to $\rho\otimes\rho$, namely
  \begin{equation}
   \label{eq:HI-fixed-boundary-multiplication}
   m_{\rho,\rho}=x s'\oplus t_0',
   \qquad x=\frac{b}{1-a}.
  \end{equation}
\item Suppose $a\neq1$.  If also $a\neq-1$, the Q-system
  condition in \eqref{eq:display-0271} is equivalent to the existence of a
  connected separable
  algebra structure on $Q$.  If instead $a=-1$, the algebra in
  part~\textup{(b)} is not separable.
\end{enumerate}
For the normalized algebra in part~\textup{(b)}, with $a\neq\pm1$, an
explicit separability element is
\begin{equation}
 \label{eq:HI-fixed-boundary-separability-element}
 e=r\,\operatorname{id}_{\unit}\oplus z s,
 \qquad
 r=\frac{a}{1+a},
 \qquad
 z=\frac{1-a}{b(1+a)}.
\end{equation}
The associated bimodule section has restrictions
\begin{align}
 \Delta|_{\unit}
 &=r\,\operatorname{id}_{\unit}\oplus z s,
 \label{eq:HI-fixed-boundary-coproduct-unit}\\
 \Delta|_\rho
 &=r\,\operatorname{id}_{\rho\otimes\unit}
   \oplus r\,\operatorname{id}_{\unit\otimes\rho}
   \oplus zb\,t_0.
 \label{eq:HI-fixed-boundary-coproduct-rho}
\end{align}
Thus $\Delta:Q\to Q\otimes Q$ is a $Q$-bimodule map satisfying
$m\circ\Delta=\operatorname{id}_Q$.
\end{theorem}

\begin{proof}[Proof of Theorem~\ref{thm:HI-fixed-boundary-algebra}]
We first write every possible multiplication and reduce associativity to
scalar equations. We then treat the cases $a=1$ and $a\neq1$. Finally, we
solve the equations for a separability element.

\emph{Multiplication and associativity.}
Since $\Hom(\unit,Q)=\kk$, we may normalize the unit to the canonical
inclusion $\unit\to Q$. The unit axioms then force the mixed multiplication
components to be identities. Write the component on $\rho\otimes\rho$ as
\begin{equation*}
 m_{\rho,\rho}=xs'\oplus yt_0'
\end{equation*}
for $x,y\in\kk$. Compare the two associativity composites on $\rho^3$.
Comparing the coefficients of $s'$ and of each $t_h't_h'$ gives
\begin{align}
 (1-a)x&=by^2,  \label{eq:display-0273}\\
 y^2\delta_{h,0}&=bx+y^2A_{0,h}
 \qquad(h\in G). 
 \label{eq:display-0274}
\end{align}
The two sides lie in the span of the displayed reduced words $s'$ and
$t_h't_h'$ (with $h\in G$); these words are linearly independent by
Lemma~\ref{lem:HI-reduced-words}.  Hence this coefficient comparison is
exhaustive for the component with codomain $\rho$.
The comparison uses
\begin{equation*}
 x s'+y^2t_0't_0'
 =x\rho(s')+y^2t_0'\rho(t_0'),
\end{equation*}
together with
\begin{equation*}
 \rho(s')=as'+b\sum_{h\in G}t_h't_h',
 \qquad
 t_0'\rho(t_0')=bs'+\sum_{h\in G}A_{0,h}t_h't_h'.
\end{equation*}
The component with codomain $\unit$ follows from
$s'\rho(t_0')=s't_0'$.

\emph{The case $a=1$.}
If $a=1$, equation~\eqref{eq:display-0273} gives $y=0$, since $b\neq0$.
Equation~\eqref{eq:display-0274} then gives $x=0$.  Thus the only
multiplication has $m_{\rho,\rho}=0$.  To see directly that it is not separable,
write a possible separability element as
$e=r\,\operatorname{id}_{\unit}\oplus zs$.  Comparing the left and right
$Q$-actions on $\rho$ gives $r=0$: the two contributions are in
the distinct summands $\rho\otimes\unit$ and $\unit\otimes\rho$, while the
$zs$ terms vanish because $m_{\rho,\rho}=0$.  On the other hand,
$m\circ e=\operatorname{id}_{\unit}$ gives $r=1$.  This contradiction proves
the claim.

\emph{The case $a\neq1$.}
Now suppose $a\neq1$. If $(x,y)\neq(0,0)$, then $y\neq0$. Rescale the
$\rho$ summand by the automorphism of $Q$ that is the identity on $\unit$ and
$\lambda\operatorname{id}_\rho$ on $\rho$.  Transporting the multiplication
along this automorphism changes the coefficients to
$(\lambda^{-2}x,\lambda^{-1}y)$.  Taking $\lambda=y$ normalizes $y=1$.
Equation~\eqref{eq:display-0273} then gives $x=b/(1-a)$, and
\eqref{eq:display-0274} gives
\begin{equation*}
 A_{0,h}
 =\delta_{h,0}-\frac{b^2}{1-a}
 =\delta_{h,0}+\theta.
\end{equation*}
Equation~\eqref{eq:HImatrix1} gives the same formula for $A_{g,0}$.
Conversely, this boundary and the values in
\eqref{eq:HI-fixed-boundary-multiplication} satisfy the two associativity
equations \eqref{eq:display-0273} and \eqref{eq:display-0274}.  This proves
the boundary equivalence and the formula for $m_{\rho,\rho}$.

\emph{Separability.}
The square-zero argument uses only $m_{\rho,\rho}=0$, so it excludes
separability for such a multiplication for every $a$. A separable
multiplication must therefore be nonzero on $\rho\otimes\rho$.
We now use the normalized multiplication with
$m_{\rho,\rho}\neq0$.

Every possible separability element has the form $e=r\,\operatorname{id}_{\unit}\oplus zs$, 
because the fusion rules give
$\Hom(\unit,Q\otimes Q)=\kk\operatorname{id}_{\unit}\oplus\kk s$.
Define
\begin{equation*}
 \Delta=(m\otimes\operatorname{id}_Q)
 (\operatorname{id}_Q\otimes e).
\end{equation*}
Comparing the left and right $Q$-actions on the $\rho$ summand, using
\begin{equation*}
 s'\rho(s)=\rho(s')s=a,
 \qquad
 t_0'\rho(s)=\rho(t_0')s=bt_0,
\end{equation*}
the two expressions agree exactly when
\begin{equation}
 \label{eq:HI-fixed-boundary-centrality}
 r=zxa.
\end{equation}
Under this relation the restrictions of $\Delta$ are
$\Delta|_{\unit}=r\,\operatorname{id}_{\unit}\oplus zs$ 
and
$\Delta|_\rho =r\,\operatorname{id}_{\rho\otimes\unit}
  \oplus r\,\operatorname{id}_{\unit\otimes\rho} \oplus zb\,t_0$. 
Thus $e$ is central precisely when $\Delta$ is a $Q$--$Q$-bimodule map.

The section equation on the unit summand is
\begin{equation}
 \label{eq:HI-fixed-boundary-section-unit}
 r+zx=1.
\end{equation}
On the $\rho$ summand it is $2r+zb=1$. 
The value of $x$ in \eqref{eq:HI-fixed-boundary-multiplication} and the values
of $r,z$ in \eqref{eq:HI-fixed-boundary-separability-element} satisfy
\begin{equation*}
 r=zxa,\qquad r+zx=1,\qquad 2r+zb=1.
\end{equation*}
For the last identity, the left-hand side is
\begin{equation*}
 \frac{2a}{1+a}+\frac{1-a}{1+a}=1.
\end{equation*}
Hence \eqref{eq:HI-fixed-boundary-separability-element} is a central
section whenever $a\neq-1$.

Conversely, \eqref{eq:HI-fixed-boundary-centrality} and
\eqref{eq:HI-fixed-boundary-section-unit} imply $zx(1+a)=1$. They are inconsistent when $a=-1$, and when $a\neq-1$ they force the values of $r$ and $z$ in \eqref{eq:HI-fixed-boundary-separability-element}. 
The section equation on the $\rho$ summand then follows from
$(1-a)x=b$.  This proves all separability assertions.  The equality
$\Hom(\unit,Q)=\kk$ proves connectedness.
\end{proof}

\begin{corollary}[Special Frobenius structure]
\label{cor:HI-fixed-boundary-special-Frobenius}
\label{cor:HI-char-two-Q-special-Frobenius}
Assume $a\neq\pm1$ and the equivalent conditions of
Theorem~\ref{thm:HI-fixed-boundary-algebra}.  Let
$\pi_{\unit}:Q\to\unit$ be the projection and put
\begin{equation}
 \label{eq:HI-fixed-boundary-counit}
 \varepsilon:=\frac{1+a}{a}\pi_{\unit}.
\end{equation}
Then $(Q,m,u,\Delta,\varepsilon)$ is a connected special Frobenius algebra,
with
\begin{equation*}
 m\Delta=\operatorname{id}_Q,
 \qquad
 \varepsilon u=\frac{1+a}{a}\operatorname{id}_{\unit}.
\end{equation*}
\end{corollary}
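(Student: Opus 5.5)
We need to show that $(Q,m,u,\Delta,\varepsilon)$ is a Frobenius algebra with $m\Delta=\operatorname{id}_Q$ and $\varepsilon u=(1+a)/a$. Theorem~\ref{thm:HI-fixed-boundary-algebra} already supplies the ingredients: the normalized multiplication \eqref{eq:HI-fixed-boundary-multiplication}, the bimodule section $\Delta$ with restrictions \eqref{eq:HI-fixed-boundary-coproduct-unit}--\eqref{eq:HI-fixed-boundary-coproduct-rho}, and the identity $m\Delta=\operatorname{id}_Q$. The identity $\varepsilon u=(1+a)a^{-1}\operatorname{id}_{\unit}$ is immediate from \eqref{eq:HI-fixed-boundary-counit}, because $\pi_{\unit}u=\operatorname{id}_{\unit}$. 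Connectedness is the equality $\Hom(\unit,Q)=\kk$ already used in that theorem. What remains is coassociativity, counitality, and the Frobenius compatibility.

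The Frobenius condition and coassociativity come essentially for free. $\Delta$ is a $Q$-bimodule map by Theorem~\ref{thm:HI-fixed-boundary-algebra}, and this is precisely the pair of Frobenius identities
\[
(m\otimes\operatorname{id})(\operatorname{id}\otimes\Delta)=\Delta m=(\operatorname{id}\otimes m)(\Delta\otimes\operatorname{id}).
\]
Indeed, $\Delta=(m\otimes\operatorname{id})(\operatorname{id}\otimes e)$, and the left- and right-module properties give these two equalities after precomposing with $\operatorname{id}\otimes u$ or $u\otimes\operatorname{id}$. Coassociativity then follows by the standard argument. Write $\Delta=\Delta m(\operatorname{id}\otimes u)$ and apply the bimodule property:
\[
(\Delta\otimes\operatorname{id})\Delta=(\Delta\otimes\operatorname{id})(m\otimes\operatorname{id})(\operatorname{id}\otimes e),
\]
and both this and $(\operatorname{id}\otimes\Delta)\Delta$ reduce to $(m\otimes\operatorname{id}\otimes\operatorname{id})(\operatorname{id}\otimes(\Delta\otimes\operatorname{id})e)$. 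So coassociativity reduces to the single symmetry
\[
(\Delta\otimes\operatorname{id})e=(\operatorname{id}\otimes\Delta)e
\]
in $\Hom(\unit,Q^{\otimes3})$. I would check this directly from $e=r\,\operatorname{id}_{\unit}\oplus zs$ and the explicit restrictions of $\Delta$.

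The main computation is counitality,
\[
(\varepsilon\otimes\operatorname{id})\Delta=\operatorname{id}_Q=(\operatorname{id}\otimes\varepsilon)\Delta.
\]
I would check it summand by summand. On $\unit$, the component $r\,\operatorname{id}_{\unit}$ of $\Delta|_{\unit}$ contributes $\frac{1+a}{a}\,r=1$, since $r=a/(1+a)$. The component $zs$ lands in $\rho\otimes\rho$, so $\pi_{\unit}\otimes\operatorname{id}$ kills it. On $\rho$, only the summand $r\,\operatorname{id}_{\unit\otimes\rho}$ (respectively $r\,\operatorname{id}_{\rho\otimes\unit}$) survives, again giving $\frac{1+a}{a}r=1$. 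The term $zb\,t_0$ lies in $\rho\otimes\rho$ and is killed. So counitality is exactly the value $r=a/(1+a)$, and it needs $a\neq-1$ (together with $a\neq0$) for $\varepsilon$ to be defined.

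Specialness is then $m\Delta=\operatorname{id}_Q$ together with $\varepsilon u\neq0$, which holds because $a\neq-1$. The step I expect to need the most care is the symmetry identity $(\Delta\otimes\operatorname{id})e=(\operatorname{id}\otimes\Delta)e$, since it involves the $\rho^{\otimes3}$ component, where $zb\,t_0$ is tensored with $s$. My first attempt there is to argue via the rigidity scalar of Lemma~\ref{lem:LR-rigidity-scalar} that both sides equal the canonical element of $\Hom(\unit,\rho^3)$ built from $s$ and $t_0$, so that they agree by the one-dimensionality of that Hom space.
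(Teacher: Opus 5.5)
Your treatment of counitality, connectedness, specialness and the Frobenius identities matches the paper. The Frobenius identities are literally the bimodule property of $\Delta$, so no precomposition with the unit is needed. Your reduction of coassociativity to
\[
(\Delta\otimes\operatorname{id})e=(\operatorname{id}\otimes\Delta)e
\]
is the identity the paper's Sweedler display establishes before attaching the input $q$. (Strictly, $(\operatorname{id}\otimes\Delta)\Delta$ reduces to the analogous expression with $(\operatorname{id}\otimes\Delta)e$; that is why the symmetry is needed.)

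The weak point is how you propose to prove that identity. One-dimensionality of $\Hom(\unit,\rho^{3})$ only shows that the two $\rho^{\otimes3}$ components are proportional, not equal. The rigidity scalar $s'\rho(s)=a$ does not fix the constant. Also, $\Hom(\unit,Q^{\otimes3})$ has four further components that must be compared: those on $\unit^{\otimes3}$, $\unit\otimes\rho\otimes\rho$, $\rho\otimes\unit\otimes\rho$ and $\rho\otimes\rho\otimes\unit$.

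You already have what is needed. Centrality of $e$ says
\[
\Delta=(m\otimes\operatorname{id})(\operatorname{id}\otimes e)=(\operatorname{id}\otimes m)(e\otimes\operatorname{id}).
\]
Use the second form in $(\Delta\otimes\operatorname{id})e$ and the first in $(\operatorname{id}\otimes\Delta)e$. The interchange law then turns both into $(\operatorname{id}\otimes m\otimes\operatorname{id})(e\otimes e)$, so the identity is formal. This is the paper's argument.

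If you prefer your explicit check, it does go through. The four components with a unit factor carry the coefficients $r^2$, $rz$, $rz$, $rz$ on both sides. The $\rho^{\otimes3}$ components are $z^2b\,t_0s$ and $z^2b\,\rho(t_0)s$. These agree because $\rho(t_0)s=t_0s$, which follows from \eqref{eq:rhotg-general} with $\omega=1$ together with $t_0's=0$ and $s's=1$.
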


\begin{proof}
We verify the counit, Frobenius, coassociativity, and specialness conditions
in that order.

By \eqref{eq:HI-fixed-boundary-coproduct-unit} and
\eqref{eq:HI-fixed-boundary-coproduct-rho}, the only component surviving
either counit composite has coefficient
$\frac{1+a}{a}r=1$. 
Therefore
\begin{equation*}
 (\varepsilon\otimes\operatorname{id}_Q)\Delta
 =\operatorname{id}_Q
 =(\operatorname{id}_Q\otimes\varepsilon)\Delta.
\end{equation*}

Theorem~\ref{thm:HI-fixed-boundary-algebra} shows that $\Delta$ is a
$Q$--$Q$-bimodule map.  Hence
\begin{equation*}
 \Delta m
 =(m\otimes\operatorname{id}_Q)
   (\operatorname{id}_Q\otimes\Delta)
 =(\operatorname{id}_Q\otimes m)
   (\Delta\otimes\operatorname{id}_Q),
\end{equation*}
which is the Frobenius identity.

It remains to check coassociativity.  Centrality is the equality of morphisms
\begin{equation}
 (m\otimes\operatorname{id}_Q)(\operatorname{id}_Q\otimes e)
 =
 (\operatorname{id}_Q\otimes m)(e\otimes\operatorname{id}_Q).
 \label{eq:display-0291}
\end{equation}
For a compact verification, write $e=\sum_i x_i\otimes y_i$.  This is
Sweedler-style notation for composites in the strictified linear monoidal
category; it does not assume that $Q$ has a concrete realization.  In this
notation,
\eqref{eq:display-0291} says
$\sum_i qx_i\otimes y_i=\sum_i x_i\otimes y_iq$.
For each $i$, apply this identity with $q=x_i$, tensor with $y_i$, and sum
over $i$.  Relabeling the two dummy indices, and then attaching the input
$q$ to the first tensor factor, gives
\begin{equation*}
 \sum_{i,j}qx_ix_j\otimes y_j\otimes y_i
 =\sum_{i,j}qx_i\otimes y_ix_j\otimes y_j.
\end{equation*}
These are $(\Delta\otimes\operatorname{id}_Q)\Delta(q)$ and
$(\operatorname{id}_Q\otimes\Delta)\Delta(q)$, respectively.
Thus $\Delta$ is coassociative.  The section equations give
$m\Delta=\operatorname{id}_Q$, and the definition
\eqref{eq:HI-fixed-boundary-counit} gives
$\varepsilon u=(1+a)a^{-1}\operatorname{id}_{\unit}$.
\end{proof}

In characteristic $2$, put $C:=\theta(\theta-1)$. The scalar equation gives
\begin{equation*}
 a=d^2,\qquad b=d,\qquad
 \theta=d,\qquad C=1.
\end{equation*}
In this case, Theorem~\ref{thm:HI-fixed-boundary-algebra} gives
\begin{equation*}
 x=1,\qquad r=d,\qquad z=d^2,\qquad
 \varepsilon=d^2\pi_{\unit},
\end{equation*}
These are the characteristic-$2$ specializations of
\eqref{eq:HI-fixed-boundary-multiplication},
\eqref{eq:HI-fixed-boundary-separability-element}, and
\eqref{eq:HI-fixed-boundary-counit}.

\subsection{Cyclic Q-system normal form}

We return to arbitrary characteristic and now assume that $G$ is cyclic.
The Q-system boundary then allows reconstruction of the matrix from
half of its first nonzero row.

\begin{proposition}[Cyclic Q-system normal form]
\label{prop:HI-cyclic-Q-system-normal-form}
Let $G=\mathbb Z/N\mathbb Z$, with odd $N\geq3$, and let $A$ be a
$(d,1)$-HI-matrix satisfying the Q-system condition, with $d\ne1$.
Use the representatives $0,\ldots,N-1$ for inequalities between indices.
Put $B=(d-1)A$ and $v_g=B_{1,g}$. Then every $v_g$ is nonzero, and
\begin{equation*}
 v_0=v_1=-1,\qquad v_g=v_{1-g}.
\end{equation*}
Thus the independent coordinates are $v_2,\ldots,v_{(N+1)/2}$. 
For nonzero row coordinates, define
\begin{equation*}
 Q_0=Q_1=1,\qquad Q_s=\prod_{j=2}^s v_j.
\end{equation*}
The resulting product formula is
\begin{equation}\label{eq:char2-row-recovery}
 B_{g,h}=\frac{Q_h}{Q_gQ_{h-g}},\qquad
 B_{h,g}=\frac d{B_{g,h}}\qquad(0<g<h<N),
\end{equation}
with $B_{0,0}=d-2$ and all other axis and diagonal entries equal to $-1$. 
A row reconstructed by these formulas must still satisfy the remaining
HI equations.
\end{proposition}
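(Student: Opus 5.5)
The plan is to obtain everything from the symmetry \eqref{eq:HImatrix1} with $\omega=1$, the Q-system boundary \eqref{eq:display-0271}, and the quadratic identity \eqref{eq:HImatrix3}. The product formula comes from a particular specialization of the cubic identity \eqref{eq:HI-cubic}. The boundary gives $A_{g,0}=\delta_{g,0}+\theta$, and \eqref{eq:HImatrix1} then gives $A_{0,g}=A_{g,g}$ equal to the same value. Since $(d-1)\theta=-1$, we get $B_{0,0}=d-2$ and every other axis or diagonal entry equals $-1$; in particular $v_0=v_1=-1$.

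Next I would prove the row symmetry $v_g=v_{1-g}$. The natural route is to combine the order-three symmetry with the transposition-type relation $A_{g,h}A_{h,g}=\mathrm{const}$ for $g\ne h$, $g,h\ne 0$. I would derive this relation from \eqref{eq:HImatrix3} with $h$ replaced by $0$: summing $A_{r,k}A_{k,r}$ over $r$ gives $1-a\delta_{k,0}$, but that only controls sums. The better tool is \eqref{eq:HI-cubic} with $k=0$ and $l=0$, or with $g=0$. In that specialization, boundary entries replace two factors of the cubic sum by constants, so the sum collapses. The result I expect is $A_{g,h}A_{h,g}=\theta^2 d=\kappa$ for $0\ne g\ne h\ne0$, matching the seam identity $\bar s_k\bar s_{-k}=\kappa$ from the double-sine section. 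In $B$-normalization this reads $B_{g,h}B_{h,g}=(d-1)^2\kappa=d$, which gives the second formula in \eqref{eq:char2-row-recovery} and shows that all off-axis entries are nonzero. Applying \eqref{eq:HImatrix1}, $A_{1,g}=A_{-g,1-g}$, and then the product relation to $A_{g-1,g}$ (via $A_{g,h}=A_{h-g,-g}$) should yield $v_g=v_{1-g}$ after one more use of the cyclic symmetry.

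For the product formula, I would specialize the cubic identity \eqref{eq:HI-cubic} with $h=1$ and suitable $k,l$, so that its right side becomes $A_{g+l,k}A_{1+k,l}$ while one factor on the left is a boundary entry. This should produce a two-term recursion $B_{g,h}B_{g+1,?}=\ldots$, amounting to a multiplicative cocycle relation $B_{g,h}B_{h,h+j}\propto B_{g,h+j}B_{h-g,j}$. I would aim for $B_{g,h}=B_{g-1,h-1}\,v_h/v_g$-type ratios and then verify \eqref{eq:char2-row-recovery} by induction on $g$, with base case $B_{1,h}=v_h=Q_h/Q_{h-1}$, which holds by the definition of $Q$. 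I expect this step to be the main obstacle: finding the exact specialization of the cubic that isolates the ratio $B_{g+1,h+1}/B_{g,h}$ without residual sums. If it fails, I would instead use \eqref{eq:HImatrix4} or iterate \eqref{eq:HImatrix1}, which moves $(g,h)\mapsto(-h,g-h)$, combined with the product relation to walk from row $1$ to all entries.
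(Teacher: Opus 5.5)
Your outline (boundary values, then $B_{k,l}B_{l,k}=d$, then a one-step recursion and induction on $g$ from row $1$) follows the paper's, and your first step is correct. The two steps that carry the content, however, are either wrong or missing.

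\textbf{Row symmetry.} You cannot get $v_g=v_{1-g}$ from \eqref{eq:HImatrix1} and the transposition relation alone. Encode $A_{g,h}$ by the zero-sum triple $(g,h-g,-h)$. Then \eqref{eq:HImatrix1} rotates the triple cyclically, and transposition negates and reverses it. So these moves only relate entries in the same orbit of a group of order six. The entry $v_g$ has triple $(1,g-1,-g)$, while $v_{1-g}$ has the plain reversal $(1,-g,g-1)$. For general $g$ (e.g.\ $g=3$, $N\ge7$) these lie in different orbits. Your chain does give $v_{1-g}=B_{g-1,g}$, but nothing in it relates $B_{g-1,g}$ to $B_{1,g}$; that relation is the product formula at $(g-1,g)$.

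The paper therefore proves row symmetry last. By \eqref{eq:HImatrix1}, $v_h=B_{h-1,N-1}$ for $2\le h<N$. Then \eqref{eq:char2-row-recovery} gives $Q_{N-1}/(Q_{h-1}Q_{N-h})$, which is invariant under $h\mapsto N+1-h$. The same orbit count shows that your fallback, walking out from row $1$ by \eqref{eq:HImatrix1} and transposition, never reaches entries whose triple avoids $\pm1$, such as $A_{2,4}$ for $N\ge7$.

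A smaller point: the transposition relation comes from \eqref{eq:HI-cubic} with $g=h=0$ and $k\ne l$ both nonzero. There all three factors in the sum are boundary entries, and the sum equals $N\theta^3+3\theta^2$ because $0,-k,-l$ are distinct. The choice $k=l=0$ does not produce $A_{k,l}A_{l,k}$.

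\textbf{The recursion.} As you suspected, this is the missing idea. Put the zero in the second slot: take $(g,h,k,l)=(x,0,y,t)$ in \eqref{eq:HI-cubic} with $x,y\ne0$. The delta term drops. The boundary factor $A_{0,m+t}=\theta+\delta_{m,-t}$ splits the cubic sum into $A_{-t,x}A_{x,y-t}$ plus the residual $\theta\sum_mA_{m,x}A_{x,m+y}$. The residual is harmless: it is \eqref{eq:HImatrix3} with $(h,k)=(-y,x)$, hence zero. This leaves
\[
B_{x+t,y}B_{y,t}=B_{-t,x}B_{x,y-t}.
\]
Now take $x=1-h$, $y=g$, $t=h$ with $1<g<h<N$. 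By \eqref{eq:HImatrix1}, $B_{-h,1-h}=v_h$ and $B_{1-h,g-h}=B_{g-1,h-1}$, so $v_gB_{g,h}=v_hB_{g-1,h-1}$. Induction on $g$ from $B_{1,h}=v_h$ then proves \eqref{eq:char2-row-recovery}, and row symmetry follows as described above.
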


\begin{proof}
Write $\theta=1/(1-d)$. The Q-system condition and
\eqref{eq:HImatrix1} give
$A_{g,0}=A_{0,g}=A_{g,g}=\theta+\delta_{g,0}$.
Taking $g=h=0$ and distinct nonzero $k,l$ in the cubic identity gives
\begin{equation*}
 A_{k,l}A_{l,k}=a+N\theta^3+3\theta^2
 =\frac{d}{(d-1)^2}.
\end{equation*}
Here the three exceptional summands have indices $0,-k,-l$;
the last equality uses $a=d^{-1}$ and $d^2-Nd=1$.
Thus $B_{k,l}B_{l,k}=d\ne0$, proving reciprocity and nonvanishing.

Next take $(g,h,k,l)=(x,0,y,t)$ in \eqref{eq:HI-cubic}, with
$x,y\ne0$. Expanding $A_{0,m+t}=\theta+\delta_{m,-t}$ and using
\eqref{eq:HImatrix3} with $(h,k)=(-y,x)$ to cancel the sum gives
\begin{equation*}
 B_{x+t,y}B_{y,t}=B_{-t,x}B_{x,y-t}.
\end{equation*}
Set $x=1-h$, $y=g$, $t=h$, where $1<g<h<N$.
By \eqref{eq:HImatrix1}, this becomes
\begin{equation*}
 v_gB_{g,h}=v_hB_{g-1,h-1}.
\end{equation*}
Starting with $B_{1,h}=v_h$, induction on $g$ proves
\eqref{eq:char2-row-recovery}. Finally,
\eqref{eq:HImatrix1} gives $v_h=B_{h-1,N-1}$ for $2\leq h<N$, so
\begin{equation*}
 v_h=\frac{Q_{N-1}}{Q_{h-1}Q_{N-h}}=v_{N+1-h}.
\end{equation*}
Together with $v_0=v_1=-1$, this proves the asserted row symmetry.
\end{proof}


\section{Analytic details of the double-sine construction}
\label{app:double-sine}

This appendix proves the two finite summation identities used in
Section~\ref{sec:double-sine}. We retain its notation: $N=2r+1\ge3$,
$d=d_-$, $\varpi_1$, $\varpi_2$, $\Omega$, $F$, and the sequence $s$.
We first collect the analytic facts, then evaluate the four-factor and
six-factor sums by contour integrals. Finally, we convert those
evaluations into the cyclic identities stated in
Lemmas~\ref{ds:lem:four-factor-sum} and~\ref{ds:lem:six-factor-sum}.

\subsection{Analytic facts}\label{ds:sec:hypGamma}
For the contour proof, we use the hyperbolic gamma function and its beta integral.
In the notation of \cite[\S4.1]{vBRS}, the hyperbolic
gamma function $G(z;\varpi_1,\varpi_2)$ is related to our double sine by
\begin{equation}\label{ds:short-analytic:Gamma-definition}
 \Gamma(w):=S(w)^{-1}=G\bigl(i(w-\Omega/2);\varpi_1,\varpi_2\bigr).
\end{equation}
Indeed, the right side satisfies the reciprocal shifts to
\eqref{ds:short-S-convention} and has value $1$ at $\Omega/2$. The quotient of the two sides has periods
$\varpi_1,\varpi_2$. These periods generate a dense subgroup of the real
line, so a meromorphic function with both periods is constant. The value
at the midpoint makes this constant equal to one.

Using the zeros and poles of the hyperbolic gamma function
\cite[Proposition~III.3]{ruijsenaars1997first}, together with
\eqref{ds:short-analytic:Gamma-definition} and $F(z)=S(\varpi_1+z)$,
we obtain, with $\Lambda=\mathbb Z_{\ge0}\varpi_1+\mathbb Z_{\ge0}\varpi_2$,
\[
 \operatorname{Zeros}(F)=-\varpi_1-\Lambda,\qquad
 \operatorname{Poles}(F)=\varpi_2+\Lambda.
\]
The integral representation in \cite[\S4.1, p.~47]{vBRS} also gives
$S(w)>0$ for $0<w<\Omega$.
In particular, $F$ has neither a zero nor a pole at an integer. To see this,
substitute $\varpi_2=\varpi_1+N$ into either set: the coefficient of the
irrational $\varpi_1$ is nonzero. Hence all values used in the sequence
are finite and nonzero.

We use the hyperbolic Saalsch\"utz integral
\cite[Corollary~4.6,(4.13)]{vBRS}, the hyperbolic analogue of the
nonterminating $q$-Saalsch\"utz formula
\cite[(2.10.12)]{GasperRahman2004}, in the form
\begin{equation}\label{ds:short-analytic:beta-formula}
 \int_C\prod_{j=1}^3\Gamma(c_j+z)\prod_{i=1}^3\Gamma(e_i-z)\,dz
 =i\sqrt{\varpi_1\varpi_2}\prod_{i,j=1}^3\Gamma(c_j+e_i),\qquad \sum_jc_j+\sum_ie_i=\Omega.
\end{equation}
The contour is oriented upward, with $-c_j-\Lambda$ to its left and
$e_i+\Lambda$ to its right. A pole from one family must not coincide with
a pole from the other. For the standard separating-contour convention
in Barnes integrals, see \cite[\S4.1]{GasperRahman2004}.
To convert the hyperbolic integral to our notation, use $\omega_1=\varpi_1$, $\omega_2=\varpi_2$, 
$\omega=\Omega/2$, with its variable $v=iz$ and parameters
$u_i=i(\Omega/2-e_i)$, $u_{3+j}=i(\Omega/2-c_j)$. Reflection gives the integrand in \eqref{ds:short-analytic:beta-formula}.
The source's left-to-right $v$-contour becomes a downward $z$-contour.
Reversing this orientation and using $dv=i\,dz$ gives the displayed
$+i$ for the upward contour. The identity extends meromorphically in
parameters satisfying the balancing condition, with separating contours
as in \cite[\S4.1 and~4.4]{vBRS}.

For integer tuples $P,Q$ of the same length with $\sum_iP_i=\sum_jQ_j$ in $\mathbb Z$, put
\begin{equation}\label{ds:short-eq:analytic:balanced-ratio}
 H(z)=\frac{\prod_iF(z+P_i)}{\prod_jF(z+Q_j)},\qquad
 J=\frac{\sum_iP_i^2-\sum_jQ_j^2}{2}.
\end{equation}
The ratio asymptotic \cite[\S4.1,(4.4)]{vBRS}, with
$\zeta=i(z-N/2)$, yields
\begin{equation}\label{ds:short-eq:analytic:balanced-limits}
 H(x+iy)\longrightarrow e^{\pm\pi iJ/(\varpi_1\varpi_2)}\quad(y\to\pm\infty),
\end{equation}
uniformly on bounded real $x$-intervals. Indeed,
\[H(z)=\frac{\prod_jG(\zeta+iQ_j;\varpi_1,\varpi_2)}{\prod_iG(\zeta+iP_i;\varpi_1,\varpi_2)}.\]
Balance also makes $J$ an integer, since $x^2\equiv x\pmod2$ for integers $x$. The balancing condition  
cancels the terms linear in the argument; the
remaining constant exponent at $\Re\zeta\to+\infty$ is $-\pi iJ/(\varpi_1\varpi_2)$. Reflection gives the other limit.

\subsection{Separating contours and the four-factor sum}\label{ds:sec:contours}
Our aim is to express a finite sum of values $H(k)$ as a contour integral.
We use three translates of a contour separating the two families of poles.
In each difference of contours, the poles of $H$ have winding number zero.
Only cotangent poles at integers contribute, and their residues give the
required finite sum.
Assume now $Q_j\ge0$ and $P_i\not\equiv Q_j\pmod N$. Set
\[
 \mathcal L=\bigcup_j(-Q_j-\Lambda),\qquad
 \mathcal R=\bigcup_i(-P_i+\Lambda),\qquad \rho=(\varpi_1-1)/2\in(0,1).
\]
These two locally finite sets are disjoint: an opposing coincidence would equate a nonzero integer $P_i-Q_j$ to an element of $\Lambda$, impossible by irrationality. The line $\Re z=\rho$ avoids these arrays and their translates by $\varpi_1$ and $N$: their points lie in $\mathbb Z+\varpi_1\mathbb Z$, whereas $\rho=(\varpi_1-1)/2$ does not, by irrationality. We orient the line upward and describe the contour $C$ as a sum of paths: $\rho+i\mathbb R$ minus one small positively oriented circle about each \emph{distinct} point of $\mathcal R$ to the left of that line. There are finitely many such points. Circles can be joined by retraced arcs, giving the usual Barnes detours. Choose the discs disjoint, with each containing no other point of the locally finite union of the polar sets below, their relevant translates, and $\mathbb Z$. Their boundaries then avoid all these points. For any point $w$ off these boundaries, let $\ell_C(w)$ denote its left-side index: $1$ to the left of the line, less the indicators of these discs. Thus $\ell_C=1$ on $\mathcal L$ and $0$ on $\mathcal R$.

Choose constants $\xi_i$ with $\sum_i \xi_i=2$ and put
\begin{align}
 D_q(z)&=\sum_i \xi_i\cot\frac{\pi(z+P_i)}q\quad(q=\varpi_1,\varpi_2),\notag\\
 E_H(z)&=2\cot(\pi z)\bigl(H(z+N)-H(z)\bigr)\notag\\
 &\quad-D_{\varpi_2}(z)\bigl(H(z+N)-H(z-\varpi_1)\bigr)
 +D_{\varpi_1}(z)\bigl(H(z)-H(z-\varpi_1)\bigr).\label{ds:short-eq:analytic:folding-remainder}
\end{align}
For the four-factor sum, $E_H$ will vanish. For the six-factor sum, we choose the coefficients so that $E_H$ becomes one hyperbolic beta integrand.
\begin{lemma}[A finite sum from a separating contour]\label{ds:short-lem:analytic:folding}
If $E_H$ is absolutely integrable on $C$, then
\begin{equation}\label{ds:short-eq:analytic:folding}
 \int_C E_H(z)\,dz
 =4i\sum_{k\in\mathbb Z}\bigl(\ell_C(k-N)-\ell_C(k)\bigr)H(k).
\end{equation}
The sum is finite. No restriction on same-side pole multiplicities is required.
\end{lemma}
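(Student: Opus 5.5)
The plan is to split $\int_C E_H$ into its six terms, translate each one so that its $H$-factor has argument $w$, and collect the result into integrals of three fixed meromorphic functions over differences of translated contours. Each such difference will be a compact cycle, so the integral becomes a sum of residues weighted by winding numbers, and the residues of $2\cot(\pi w)H(w)$ at integers produce the right-hand side of \eqref{ds:short-eq:analytic:folding}.

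\emph{Step 1: shifting.} In the terms containing $H(z+N)$ put $w=z+N$, and in those containing $H(z-\varpi_1)$ put $w=z-\varpi_1$. Since $\cot$ has period $\pi$, we have $\cot(\pi(w-N))=\cot(\pi w)$. Since $\varpi_2=\varpi_1+N$, each summand of $D_{\varpi_2}$ satisfies
\[
\cot\frac{\pi(w+\varpi_1+P_i)}{\varpi_2}=\cot\Bigl(\frac{\pi(w-N+P_i)}{\varpi_2}+\pi\Bigr),
\]
so $D_{\varpi_2}(w+\varpi_1)=D_{\varpi_2}(w-N)=:\widetilde D(w)$. Finally $D_{\varpi_1}(w+\varpi_1)=D_{\varpi_1}(w)$. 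Formally this gives
\[
\int_C E_H=\Bigl(\int_{C+N}-\int_C\Bigr)2\cot(\pi w)H(w)
-\Bigl(\int_{C+N}-\int_{C-\varpi_1}\Bigr)\widetilde D\,H
+\Bigl(\int_{C}-\int_{C-\varpi_1}\Bigr)D_{\varpi_1}H .
\]
To justify this, truncate at $|\Im z|=T$ and close each difference with horizontal segments of bounded length. By \eqref{ds:short-eq:analytic:balanced-limits}, $H$ tends to the same constant $e^{\pm\pi iJ/(\varpi_1\varpi_2)}$ as $\Im z\to\pm\infty$, uniformly in bounded strips. Also $\cot$ and each $D_q$ tend to constants $\mp i$ and $\mp2i$ there. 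In every difference the two integrands then have the same limit on both sides, so the connector integrals tend to zero by uniform convergence. Together with the assumed absolute integrability of $E_H$, this legitimizes the rearrangement as $T\to\infty$.

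\emph{Step 2: residues.} The winding number of $(C+N)-C$ about a point $w$ off the contours is $\ell_C(w-N)-\ell_C(w)$, and similarly for the other two cycles. Each cycle encloses only finitely many poles, because the Barnes detours are finite and translation by $N$ or $\varpi_1$ moves a vertical line by a bounded amount. The residue of $2\cot(\pi w)H(w)$ at an integer $k$ is $2H(k)/\pi$, and multiplying by $2\pi i$ gives $4i\,H(k)$, which is exactly the sum in \eqref{ds:short-eq:analytic:folding}. The poles of $D_{\varpi_1}$ and $\widetilde D$ lie in $-P_i+\mathbb Z\varpi_1+\mathbb Z\varpi_2$. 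By the choice of $\rho$ and the irrationality of $\varpi_1$, none of these poles is an integer, and none lies on a contour.

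\emph{Step 3: cancellation, the expected main obstacle.} It remains to show that every non-integer pole contributes zero in total. These are the poles of $H$, which lie in translates of $\mathcal L$ and $\mathcal R$, and the cotangent poles of $D_{\varpi_1}$ and $\widetilde D$. For the poles of $H$, I would first check, using $\ell_C=1$ on $\mathcal L$ and $\ell_C=0$ on $\mathcal R$ together with the choice of small discs, that the pole arrays of $H$ satisfy $\ell_C(w-N)=\ell_C(w)=\ell_C(w+\varpi_1)$ except at points where two cycles overlap. At such overlaps I would show that the residues cancel between the three groups: the $\cot(\pi w)$ term, the $\widetilde D$ term (which enters with the relative shift by $\varpi_2$), and the $D_{\varpi_1}$ term. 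The normalization $\sum_i\xi_i=2$ is what matches the coefficient $2$ of $\cot$ against the $D_q$ terms, and the shift equations \eqref{ds:short-S-convention} relate $H(w)$ to $H(w\pm\varpi_1)$. The cotangent poles of the $D_q$ are handled the same way: a cotangent pole at $-P_i+m\varpi_1$ on the $C$-side should be paired with the corresponding pole on the $(C-\varpi_1)$-side. Each such pole should either fall outside both cycles or be cancelled by a zero of $F$ at the same point, using $\operatorname{Zeros}(F)=-\varpi_1-\Lambda$.

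This lattice bookkeeping is the delicate part. Because of the phrase ``no restriction on same-side pole multiplicities'', I would do it at the level of winding numbers of whole cycles rather than pole by pole, and I would track only the distinct points of $\mathcal R$ that the detours encircle. Finiteness of the resulting sum is then immediate, since only finitely many integers $k$ have $\ell_C(k-N)\neq\ell_C(k)$.
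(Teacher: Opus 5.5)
There is a genuine gap in Step~1, and it is exactly the step where the hypothesis $\sum_i\xi_i=2$ is used. Your skeleton is otherwise the paper's: shift by $N$ and $-\varpi_1$, regroup into the cycles $(C+N)-C$, $(C+N)-(C-\varpi_1)$, $C-(C-\varpi_1)$, and read off the integer residues of $2\cot(\pi w)H(w)$.

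\textbf{Step~1: the connectors.} The horizontal connectors do \emph{not} tend to zero. Your three integrands tend to the nonzero constants $\mp2i\,h_\pm$ as $\Im w\to\pm\infty$, where $h_\pm=e^{\pm\pi iJ/\Omega}$ has modulus one. A top or bottom segment of fixed width $W$ therefore contributes asymptotically $W$ times that constant; uniform convergence gives exactly this, not zero. Up to a common orientation sign, the closed-up cycles carry boundary terms $2iN(h_++h_-)$, $2i\varpi_2(h_++h_-)$ and $2i\varpi_1(h_++h_-)$, entering with signs $+,-,+$. Here $h_++h_-=2\cos(\pi J/\Omega)\neq0$ because $\Omega$ is irrational. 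Only their signed sum vanishes, since it is a multiple of $N-\varpi_2+\varpi_1=0$. The factor $2$ in the last two limits is $\sum_i\xi_i$. With $\sum_i\xi_i=s$ the leftover would be $\pm iN(2-s)(h_++h_-)\neq0$, and the stated identity would be false. Your argument never uses $s=2$ in a checkable way; the residue-matching role you assign to it in Step~3 does not occur. As written, it would equally ``prove'' the identity for $s\neq2$.

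\textbf{Step~3: the poles.} Step~3 looks for the wrong mechanism. No cancellation between the three groups is needed, because every winding coefficient vanishes cycle by cycle. Each pole $w$ of $H$ lies in $-P_i+\varpi_2+\Lambda$ or in $-Q_j-\varpi_1-\Lambda$. In the first case $w$, $w-N$, $w+\varpi_1$ all lie in $\mathcal R$; in the second they all lie in $\mathcal L$. So $\ell_C$ takes one value at the three points, and all of $\ell_C(w-N)-\ell_C(w)$, $\ell_C(w-N)-\ell_C(w+\varpi_1)$, $\ell_C(w)-\ell_C(w+\varpi_1)$ vanish. Since this concerns the whole integrand at $w$, it also handles higher-order poles and coincidences with cotangent poles, which is the content of the multiplicity remark.

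Your claim that no cotangent pole is an integer is false: $-P_i$ is a pole of $D_{\varpi_1}$ and $N-P_i$ is a pole of $\widetilde D$. These, and more generally $-P_i+k\varpi_1$ with $k\ge0$ and $-P_i-\varpi_1+k\varpi_2$ with $k\ge1$, have zero coefficient because both relevant translates lie in $\mathcal R$. At the remaining ones the factor $F(w+P_i)$ equals $S((k+1)\varpi_1)$, resp.\ $S(k\varpi_2)$, which vanishes and removes the simple pole. You must still check that no other numerator factor has a pole and no denominator factor has a zero there. Comparing $\varpi_1$-coefficients rules out the first. The second is excluded only by the standing hypothesis $P_i\not\equiv Q_j\pmod N$, which your sketch never invokes.
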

\begin{proof}
At height $T$, change variables by $N$ and $-\varpi_1$ in \eqref{ds:short-eq:analytic:folding-remainder}. With all vertical paths upward, the resulting combination is
\begin{equation}\label{ds:short-eq:analytic:folding-chain}
 2\int_{C+N-C}\!\cot(\pi z)H(z)\,dz
 -\int_{C+N-(C-\varpi_1)}\!D_{\varpi_2}(z+\varpi_1)H(z)\,dz
 +\int_{C-(C-\varpi_1)}\!D_{\varpi_1}(z)H(z)\,dz.
\end{equation}
Close each contour difference by horizontal segments. Before applying the
coefficients in \eqref{ds:short-eq:analytic:folding-chain}, the combined
contributions from the top and bottom segments tend respectively to $iN(h_++h_-)$, 
$2i\varpi_2(h_++h_-)$, $2i\varpi_1(h_++h_-)$, where $h_\pm$ are \eqref{ds:short-eq:analytic:balanced-limits}. 
With the coefficients $2,-1,1$ in \eqref{ds:short-eq:analytic:folding-chain}, their sum is 
$2i(N-\varpi_2+\varpi_1)(h_++h_-)=0$. Thus the residue theorem applies to \eqref{ds:short-eq:analytic:folding-chain} after taking $T\to\infty$.

Every pole of $H$ is in $-P_i+\varpi_2+\Lambda$ or $-Q_j-\varpi_1-\Lambda$. 
In the first case all of $w,w+\varpi_1,w-N$ belong to $\mathcal R$; in the second 
they all belong to $\mathcal L$. Its winding coefficient is therefore zero 
in \emph{each} of the three contour differences. This argument applies to the whole meromorphic integrand in each term of
\eqref{ds:short-eq:analytic:folding-chain}. It therefore also covers
higher-order poles and points where a cotangent pole coincides with a
pole of $H$.

The additional poles of $D_{\varpi_2}(z+\varpi_1)$ are $w=-P_i-\varpi_1+k\varpi_2$. For $k\ge1$, both $w+\varpi_1,w-N$ belong to $\mathcal R$, so their winding difference is zero. For $k\le0$, the numerator contains the zero $S(k\varpi_2)$. No other numerator can have a pole there: its $\varpi_1$-coefficient would require $k=m+n+2>0$. No denominator can have a zero there: comparison of $\varpi_1$-coefficients would force $Q_j-P_i\in N\mathbb Z$. Hence the zero cancels the simple cotangent pole.

Similarly, at an additional $D_{\varpi_1}$-pole $w=-P_i+k\varpi_1$, the points $w,w+\varpi_1$ are in $\mathcal R$ for $k\ge0$. For $k\le-1$ there is a zero $S((k+1)\varpi_1)$; another numerator pole would require $k+1=m+n+2>0$, and a denominator zero would again force a forbidden congruence. This exhausts the cotangent poles.

Only the integer poles of $\cot(\pi z)$ remain. $H$ is regular there, so $\cot(\pi z)H(z)$ has residue $H(k)/\pi$, and the first contour difference has index $\ell_C(k-N)-\ell_C(k)$. The coefficient $2$ in \eqref{ds:short-eq:analytic:folding-chain} gives \eqref{ds:short-eq:analytic:folding}. Local finiteness and the fixed horizontal widths justify the finite residue sums throughout.
\end{proof}

\emph{The four-factor sum.}
For $1\le p\le h\le r$, take $P=(-p,h)$, $Q=(0,h-p)$, and denote $H$ by $R$. The tuples have no entry in common modulo $N$, since $p,h,p+h$ are nonzero modulo $N$. Choose $\xi_1=\xi_2=1$. Then $E_R=0$.
Here is an algebraic check which will also be useful for six factors. Put
$x=e^{2\pi iz/\varpi_1}$, $y=e^{2\pi iz/\varpi_2}$ and $t_i=e^{2\pi iP_i/\varpi_1}$.
If $\mathcal P=\prod_i(1-t_ix)$, $\mathcal B=\prod_i(y-t_i)$ and
$X_q=\prod_j\sin(\pi(z+Q_j)/q)/\prod_i\sin(\pi(z+P_i)/q)$, the shifts give
$H=X_{\varpi_2}H(z-\varpi_1)$ and $H(z+N)=X_{\varpi_1}H(z-\varpi_1)$. In the two-factor case,
$X_{\varpi_1}-1=\alpha x/\mathcal P$, $X_{\varpi_2}-1=\alpha y/\mathcal B$ for some $\alpha$, by exact balance. If $t=t_1,u=t_2$, then
\[
 \frac{\cot\pi z}{i}=\frac{xy+1}{xy-1},\quad
 \frac{D_{\varpi_1}}{2i}=\frac{tu x^2-1}{(1-tx)(1-ux)},\quad
 \frac{D_{\varpi_2}}{2i}=\frac{y^2-tu}{(y-t)(y-u)}.
\]
Substitution in \eqref{ds:short-eq:analytic:folding-remainder} gives zero: the numerator cancels using
\[
 x\mathcal B-y\mathcal P=(xy-1)(y-tu x),\qquad
 x(y^2-tu)-y(tu x^2-1)=(xy+1)(y-tu x).
\]

At integers $\ell_C(k)=\mathbf1_{\{k\le0\}}-\mathbf1_{\{k=-h\}}$: the only integer point moved to the right is $-h$. Lemma~\ref{ds:short-lem:analytic:folding} therefore gives
$\sum_{j=1}^{N}R(j)=R(N-h)-R(-h)$.
Using \eqref{ds:short-eq:analytic:F-properties} at the nonzero integers $p,h,p+h$ gives
\begin{equation}\label{ds:short-analytic:two-index-moment}
 \sum_{j=1}^{N}\frac{F(j-p)F(j+h)}{F(j)F(j+h-p)}
 =\varpi_2\frac{F(p)F(h)}{F(0)F(p+h)}=\varpi_2 R(N-h),
 \qquad R(-h)=-d_+ R(N-h).
\end{equation}
This includes $p=h$; no simple-pole specialization was used.

\subsection{The cubic sum from one beta integral}\label{ds:short-sec:cubic}
Call $p,q\in G^3$ \emph{balanced} if $\sum_i p_i=\sum_j q_j$ in $G$, that is, modulo $N$. First suppose the triples are balanced and have no entry in common, so $p_i\ne q_j$ for every $i,j$. Translating both triples by the same element preserves both conditions. Translate and permute so $p_1=0$; then $q_j\ne0$ for every $j$. Choose $Q_j\in\{1,\ldots,N-1\}$, and set
\begin{equation}\label{ds:short-cubic:lifts}
 \sigma=\sum_jQ_j,\quad U\equiv r_0(\mathrm{mod}N),\quad U>\sigma,\quad V=\sigma-U,\quad
 P=(0,V,U),\qquad 0\le r_0<N,
\end{equation}
where $r_0$ represents $p_3$; for instance $U=r_0+3N$ works since $\sigma\le3(N-1)$. Then $V<0<U$, the $P_i$ are distinct integers, and the two lifted sums agree. Use $H$ from \eqref{ds:short-eq:analytic:balanced-ratio}, and put
$\Pi=\prod_{i,j}F(P_i-Q_j)$.

Retain $x,y,t_i,\mathcal P,\mathcal B$ above, set
$u_j=e^{2\pi iQ_j/\varpi_1}$ and $T=\prod_it_i=\prod_ju_j$, and define
\begin{equation}\label{ds:short-cubic:alpha-beta}
 \prod_j(1-u_jx)-\mathcal P=x(\alpha+\beta x),\qquad
 A_i=\alpha t_i+\beta=t_i^{-1}\prod_j(t_i-u_j).
\end{equation}
All $A_i$ and all $t_i-t_j$ for $i\ne j$ are nonzero: the corresponding integer differences cannot be multiples of the irrational $\varpi_1$.
Linear interpolation of the affine expression $\alpha t+\beta$ determines coefficients for which $E_H$ is a constant multiple of the integrand $I_3$ in the hyperbolic beta formula. Choose
\begin{equation}\label{ds:short-eq:cubic:adapted-weights}
 w_1=\frac{(t_1-t_3)A_2}{(t_1-t_2)A_3},\quad
 w_2=\frac{(t_3-t_2)A_1}{(t_1-t_2)A_3},\qquad
 (\xi_1,\xi_2,\xi_3)=(w_2,w_1,1).
\end{equation}
Since $A_i$ is affine in $t_i$, $w_1+w_2=1$, as required in Lemma~\ref{ds:short-lem:analytic:folding}.
For these coefficients, $E_H$ is a constant multiple of that integrand:
\begin{equation}\label{ds:short-eq:cubic:single-beta}
 E_H=B I_3,\qquad
 I_3=\Gamma(\Omega-z-P_3)\Gamma(-z-P_1)\Gamma(-z-P_2)\prod_j\Gamma(z+Q_j),\quad
 B=4i\frac{A_1A_2}{A_3}(-1)^{\sigma-P_3}\frac{t_3}{T}.
\end{equation}
We verify this identity explicitly. Write
$W_{ij}=(t_i+t_j)(xy+1)-2t_it_jx-2y$ and $k_i=(1-t_ix)(y-t_i)$.
Let $E_{ij}$ denote the expression obtained by assigning coefficient one
to the pair $i,j$ and zero to the remaining index. Substitution of
$X_{\varpi_1}-1=x(\alpha+\beta x)/\mathcal P$ and
$X_{\varpi_2}-1=y(\alpha y+\beta)/\mathcal B$ in \eqref{ds:short-eq:analytic:folding-remainder} gives
\begin{equation}\label{ds:short-eq:cubic:pair-certificate}
 \frac{E_{ij}}{H(z-\varpi_1)}=-\frac{2iA_kxyW_{ij}}{\mathcal P\mathcal B}\;\;(\{i,j,k\}=\{1,2,3\}),\quad
 \frac{(t_1-t_3)W_{23}+(t_3-t_2)W_{13}}{t_1-t_2}=-2k_3.
\end{equation}
The second identity follows by comparing the coefficients of $xy,x,y,1$. Hence
\[
 E_H=w_1E_{23}+w_2E_{13}=4i\frac{A_1A_2}{A_3}\frac{xyk_3}{\mathcal P\mathcal B}H(z-\varpi_1).
\]
Finally, reflection and the $\Omega$-shift of $\Gamma$ give
\begin{equation}\label{ds:short-eq:cubic:single-beta-ratio}
 \frac{I_3}{H(z-\varpi_1)}=(-1)^{\sigma-P_3}\frac{T\,xyk_3}{t_3\mathcal P\mathcal B}.
\end{equation}
To check the sign, write each removed factor as
$-4\sin(\pi(z+P_i)/\varpi_1)\sin(\pi(z+P_i)/\varpi_2)
=(-1)^{P_i+1}e^{-\pi iz}k_i/t_i$; two reciprocal factors give \eqref{ds:short-eq:cubic:single-beta-ratio}. This proves \eqref{ds:short-eq:cubic:single-beta}. Equations \eqref{ds:short-eq:analytic:balanced-limits} and \eqref{ds:short-eq:cubic:single-beta-ratio} also give $I_3(z)=O(e^{-2\pi|\Im z|})$ uniformly for $\Re z$ in bounded intervals as $|\Im z|\to\infty$, so the integral of $E_H$ converges absolutely.

We must also check that the beta integral can be evaluated on the same
contour, including when poles within one family coincide. The contour $C$
of Lemma~\ref{ds:short-lem:analytic:folding} separates the beta poles: its left pole families are $-Q_j-\Lambda$ and its right pole families are subsets of $\mathcal R$, with parameters $e=(-P_1,-P_2,\Omega-P_3)$. Their balance is $\Omega$. A circle about a point of $\mathcal R$ that is not a beta pole encloses a regular point of $I_3$ and contributes zero. 

Put $\Delta_{ij}=Q_j-P_i$. The arguments $\Delta_{ij}$ and $\Omega+\Delta_{ij}$ are neither zeros nor poles of $\Gamma$. For example, $\Delta_{ij}=-m\varpi_1-n\varpi_2$ forces $m=n=0$ and then $\Delta_{ij}=0$; the other three divisor equalities are excluded by the same comparison of $\varpi_1$-coefficients. Thus no left pole coincides with a right pole. Repeated $Q_j$ can merge
left poles, and repeated residue classes of the $P_i$ can merge right poles. 
To apply \eqref{ds:short-analytic:beta-formula} at these parameters, keep
each cluster of coincident poles inside a fixed disc disjoint from the
contour and the opposite family as the parameters vary. 

To control the two unbounded ends of the contour, use the full asymptotic
formula for $G$ in \cite[Proposition~III.4]{ruijsenaars1997first}, with $a_+=\varpi_1$, $a_-=\varpi_2$. Its error is uniform for the imaginary argument in compact intervals. Under \eqref{ds:short-analytic:Gamma-definition}, bounded parameter perturbations stay in such intervals; balance makes the combined exponential $e^{2\pi iz}$ at the upper end and $e^{-2\pi iz}$ at the lower end, times a locally bounded constant. Thus the tails are uniformly $O(e^{-2\pi|\Im z|})$. On the compact contour portion convergence is uniform. We may therefore pass to the limiting parameters in the integral. This
justifies the formula without computing residues at multiple poles.

With $C_0=\prod_{i,j}\Gamma(\Delta_{ij})$ and $C_3=\prod_{i,j}\Gamma(Q_j+e_i)$ the gamma-product in \eqref{ds:short-analytic:beta-formula}, the shifts give
\begin{equation}\label{ds:short-eq:cubic:single-beta-constants}
 \frac{C_3}{C_0}=(-1)^{\sigma-P_3+1}\frac{A_3^2}{Tt_3},\qquad
 \Pi=i\frac{A_1A_2A_3}{T^2}C_0,\qquad
 \int_C E_H=-4i\sqrt{\varpi_1\varpi_2}\,\Pi.
\end{equation}
Indeed, for an integer $\Delta=Q_j-P_i$,
$\Gamma(\Omega+\Delta)/\Gamma(\Delta)
=(-1)^{\Delta+1}(t_i-u_j)^2/(t_iu_j)$ and
$F(-\Delta)=2\sin(\pi\Delta/\varpi_1)\Gamma(\Delta)$.
The nine latter factors have total exponent $\sum_{i,j}\Delta_{ij}=0$, leaving $i^9=i$; these formulas prove all constants in \eqref{ds:short-eq:cubic:single-beta-constants}.

At integers the present contour has
$\ell_C(k)=\mathbf1_{\{k<0\}}-\mathbf1_{\{k=-U\}}$; the points $0,-U$ are moved to the right, while $-V$ is a positive integer to the right of $\rho$. Therefore \eqref{ds:short-eq:analytic:folding} and \eqref{ds:short-eq:cubic:single-beta-constants} imply
\begin{equation}\label{ds:short-cubic:finite-sum}
 \sum_{m=0}^{N-1}H(m)+\varpi_1 H(-U)=-\sqrt{\varpi_1\varpi_2}\,\Pi,
 \qquad
 \sum_{m=0}^{N-1}H(m)-\varpi_2 H(m_*)=-\sqrt{\varpi_1\varpi_2}\,\Pi,
\end{equation}
where $m_*$ represents $-r_0$. To obtain the first equality, use
$H(-U+N)=d H(-U)$ in the finite sum. To obtain the second, follow
$-U,-U+N,\ldots,m_*$. Every step starts at a negative integer. Only the $U$-factor starts a step at zero; no $Q_j$-factor can do so because the triples have no entry in common modulo $N$, while the $V$-origin $-V$ is positive and the $P_1$-origin is zero. Generic signs cancel by balance, while the initial exceptional ratio is $d$ times its generic sign. Thus $H(m_*)=d H(-U)$ and $\varpi_1/d=-\varpi_2$.

\subsection{Conversion to cyclic sums}\label{ds:app:cyclic-sums}
We now prove Lemmas~\ref{ds:lem:four-factor-sum}
and~\ref{ds:lem:six-factor-sum}, using the sums $K$, $M$, and $T$
defined in Section~\ref{ds:short-sec:matrix}.

\begin{proof}[Proof of Lemma~\ref{ds:lem:four-factor-sum}]
We use the ratio $R$ from Appendix~\ref{ds:sec:contours}.
For $1\le p\le h\le r$, use $m=j-p$, $1\le j\le N$, in $K(p,h)$. The four integer indices used in the formula for $\bar s$ are $j-p,j+h,-j,p-h-j$; only $j+h$ can be a positive multiple of $N$, when $j+h=N$. The product identity for values at opposite indices and the phase difference $ph$ therefore give
\[
 K(p,h)=\kappa^2(-1)^{ph}\left(\sum_{j=1}^{N}R(j)+(d^{-1}-1)R(N-h)\right)=0
\]
by \eqref{ds:short-analytic:two-index-moment}. The sum is symmetric in $x,y$, and substituting $m=-u-y$ in $K(-x,y)$ shows evenness in $x$, hence also in $y$. Thus this covers every nonzero $x,y$ at odd order. On the axes, expansion of $R_0(m)R_0(m+y)$ and \eqref{ds:matrix-scalar-identities} give \eqref{ds:short-eq:matrix:K-all}.
\end{proof}

\begin{proof}[Proof of Lemma~\ref{ds:lem:six-factor-sum}]
For triples with no entry in common, use the integer representatives \eqref{ds:short-cubic:lifts}. The product formed using $\bar s$ is
$\kappa^3(-1)^JH(m)$, since all $m+Q_j$ are positive and nonzero. Exactly one factor, $m+U$, meets a positive multiple of $N$, at $m_*$. Thus
\[
 M(p,q)=\kappa^3(-1)^J\left(\sum_{m=0}^{N-1}H(m)-\varpi_2 H(m_*)\right)
 =-\kappa^3(-1)^J\sqrt{\varpi_1\varpi_2}\,\Pi.
\]
Every difference $P_i-Q_j$ in the product $T(p,q)$ avoids multiples of $N$, so
$T(p,q)=\Omega^{-9/2}(-1)^{J_\partial}\Pi$, where
$J_\partial=\frac12\sum_{i,j}(P_i-Q_j)^2$.
Both exponents are integers and
$J_\partial-J=\sum_iP_i^2+2\sum_jQ_j^2-\sigma^2$ is even. Since $\kappa=-1/\Omega$ and $\Omega=\varpi_1\varpi_2$, we obtain \eqref{ds:short-eq:matrix:six-factor}.
Translations and separate permutations preserve both sides, so the chosen lifts impose no restriction.
\end{proof}


\section{Detailed proofs for connected separable algebras}
\label{app:separable}

This appendix gives the algebraic details behind Section~\ref{sec:separable}.
Throughout, $G=\bZ/N\bZ$ has odd order $N\geq3$, the pointed
associator is trivial, and $\C$ is the fixed complex unitary cyclic
HI category.  The object $Q=\one\oplus\rho$ is equipped with
the connected separable algebra structure assumed in the main text.  We use
$\operatorname{FPdim}(\rho)=d$ and the normalized Hermitian HI matrix $A$
of the fixed category.  The statements below are formulated in the strict
pointed normalization; the negative scalar branch in the HI equations is
allowed in the intermediate Fourier argument and carries no unitarity claim.

\begin{remark}[Normalized presentation]
\label{sap:normalized-presentation}
The reconstruction and normalization results used here identify the fixed
category with the endomorphism category of its normalized Hermitian HI
matrix.  We use the positive Q-system boundary from the main text and the
matrix reconstruction of Evans--Gannon, including their equivalence
criterion under automorphisms of $G$; see
\cite[Theorems~1 and~2(a)--(c), and the paragraph after equation~(4.11)]{evans2017non}.
The only phase parametrization used later is the one recorded in
\cite[Section~7.2]{evans2017non}, where the earlier source is attributed as
\cite{IzumiLR2}.
\end{remark}

For a Hall subgroup $H$, the subgroup transform $\mathscr P_HA$
is defined by \eqref{sap:eq:partial-fourier}, with $B=A$.
The Q-system boundary is \eqref{eq:display-0271}.

The main text states the subgroup-dual description in
Section~\ref{sa:subgroup-dual}, the description of the $Q$-dual in
Section~\ref{sa:Q-dual}, and the classification in
Theorem~\ref{sa:classification}.  The proofs below follow that order.  The
partial Fourier theorem is stated here
separately because it is stronger: it applies to every full untwisted HI
matrix, including the negative scalar branch, before the categorical
identification is made.

\begin{proposition}[Uniqueness of algebra structures]
\label{sap:prop:short-uniqueness}
Every connected separable algebra structure on $\one\oplus\alpha_g\rho$
gives an algebra isomorphic to a conjugate of the fixed algebra
$Q=\one\oplus\rho$ by an invertible object.
\end{proposition}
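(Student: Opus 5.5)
The plan is to reduce to the case $g=0$ by conjugating with an invertible object, and then use the classification of multiplications on $\one\oplus\rho$ from Theorem~\ref{thm:HI-fixed-boundary-algebra}.

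\emph{Reduction to $g=0$.} For $h\in G$, conjugation $X\mapsto\alpha_{-h}\otimes X\otimes\alpha_h$ transports algebras to algebras. The multiplication of $\alpha_{-h}B\alpha_h$ is $m_B$ sandwiched between the evaluation $\alpha_h\alpha_{-h}\to\one$ in the middle and identities on the outer factors. Connectedness, associativity, unitality and separability all transport, since a bimodule section conjugates to a bimodule section. The pointed part is strict with trivial associator, so no associator scalars intervene. The HI fusion rules give
\[
\alpha_{-h}(\one\oplus\alpha_g\rho)\alpha_h\cong\one\oplus\alpha_{-h}\alpha_g\alpha_{-h}\rho=\one\oplus\alpha_{g-2h}\rho .
\]
Since $N$ is odd, $2$ is invertible in $G$, so we may take $h=g/2$. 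Then any connected separable algebra $B$ on $\one\oplus\alpha_g\rho$ is the conjugate, by $\alpha_{g/2}$, of a connected separable algebra $B'$ on $\one\oplus\rho$, namely $B'=\alpha_{-g/2}B\alpha_{g/2}$. Hence it suffices to prove that every connected separable algebra on $\one\oplus\rho$ is isomorphic to the fixed $Q$.

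\emph{The case $g=0$.} We apply Theorem~\ref{thm:HI-fixed-boundary-algebra}, using its proof rather than only its statement.
\begin{enumerate}[label=\textup{(\roman*)}]
\item Because $\Hom(\one,\one\oplus\rho)=\mathbb C$, we compose with an automorphism of $Q$ scaling the unit summand so that the unit becomes the canonical inclusion. The unit axioms then force the mixed components of $m$ to be identities.
\item The remaining component is $m_{\rho,\rho}=xs'\oplus yt_0'$. Separability excludes $m_{\rho,\rho}=0$ by the square-zero argument in that proof. Since $a\neq1$ over $\mathbb C$, this forces $y\neq0$.
\item Rescaling the summand $\rho$ by $\lambda=y$ normalizes the multiplication to \eqref{eq:HI-fixed-boundary-multiplication}, which is unique. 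This rescaling is an algebra isomorphism.
\end{enumerate}
Hence every such algebra is isomorphic to the normalized algebra.

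\emph{Identifying the fixed $Q$.} It remains to check that the fixed algebra structure on $Q$ from the main text is this normalized one. This follows because it is itself a connected separable structure on $\one\oplus\rho$, so the same normalization applies to it. The Q-system boundary \eqref{eq:display-0271} is exactly the consistency condition \eqref{eq:display-0274} for the normalized algebra. Composing the isomorphisms gives $B\cong\alpha_{g/2}\,Q\,\alpha_{-g/2}$, which is the required statement.

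\emph{Main obstacle.} I expect the main difficulty to be bookkeeping in the conjugation step. One must check that the conjugated multiplication is associative and unital and that the conjugated section is still bimodular. One must also track the identification $\alpha_{-h}\alpha_g\rho\alpha_h=\alpha_{g-2h}\rho$ as an equality of endomorphisms in the strict Leavitt model, via \eqref{eq:LR2}, so that no nontrivial isomorphism is hidden. To handle this I would work in the strict endomorphism category of Appendix~\ref{app:reconstruction}. There, $\alpha_g$ fixes $s,s'$ by Lemma~\ref{lem:LR-rigidity-scalar}, and the duality maps for $\alpha_h$ are identities, so the conjugated structure maps are literally $\alpha_{-h}$ applied to the original ones.
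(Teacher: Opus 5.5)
Your argument is correct, but it takes a different route from the paper. The paper does not reduce to $g=0$. It notes that $X=\alpha_g\rho$ is simple with $\dim\Hom(X\otimes X,X)=1$. It then invokes Ostrik's Lemma~8, together with semisimplicity of module categories over separable algebras, to get at most one isomorphism class of connected separable algebra structures on $\one\oplus X$. Finally it observes that $\alpha_{g/2}Q\alpha_{-g/2}$ realizes that class.

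You instead conjugate by $\alpha_{-g/2}$ to move $B$ onto $\one\oplus\rho$, and prove uniqueness there by hand. You use the explicit parametrization $m_{\rho,\rho}=xs'\oplus yt_0'$ from the proof of Theorem~\ref{thm:HI-fixed-boundary-algebra}:
\begin{itemize}
\item the square-zero argument rules out $m_{\rho,\rho}=0$;
\item the associativity relation $(1-a)x=by^2$ with $a\neq1$ then forces $y\neq0$;
\item rescaling the $\rho$ summand normalizes $y=1$, which in turn fixes $x$.
\end{itemize}

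The paper's route is shorter and uses only fusion-multiplicity data. It would apply unchanged in any fusion category with these multiplicities, with no Leavitt presentation and no associator bookkeeping. Your route avoids the external lemma and produces an explicit isomorphism: a scalar rescaling of $\rho$ composed with conjugation. The cost is that you must work in the strict endomorphism model. That is, you rely on identifying the fixed unitary category with $\mathcal C(A)$, and on $a\neq1$; both hold here.

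Your remark about the boundary condition \eqref{eq:display-0271} is not needed. The fixed $Q$ is itself a connected separable structure on $\one\oplus\rho$, so it already witnesses that the normalized structure exists.
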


\begin{proof}
Let $X=\alpha_g\rho$.  The HI fusion rules give
$\dim\Hom(X\otimes X,X)=1$, and $X$ is simple.  Every separable algebra has
a semisimple module category by \cite[Proposition~7.8.30]{EGNO}, so
\cite[Lemma~8]{OstrikModules} gives at most one algebra-isomorphism class of
such structures on $\one\oplus X$.  The conjugate algebra
$\alpha_{g/2}Q\alpha_{-g/2}$ has underlying object
$\one\oplus\alpha_g\rho$, and hence realizes this unique class.
The invertible bimodule $\alpha_{g/2}Q$ gives its Morita equivalence with $Q$.
\end{proof}

\subsection{Separable algebras}
\label{sap:sec:separable-algebras}
\label{sap:sec:standard-algebras}

For $m,n$ in a finite semisimple $\C$-module category $\M$, the internal
Hom $\underline{\Hom}(m,n)$ represents
$X\mapsto\Hom_{\M}(X\otimes m,n)$.  Its internal End
$\underline{\End}(m)$ is an algebra.  If $\M$ is indecomposable and $m$ is
simple, this algebra is connected and separable, and
$\M\simeq\operatorname{Mod}_{\C}(\underline{\End}(m))$
\cite[Section~3.2 and Theorem~1]{OstrikModules}; see also
\cite[Sections~7.9--7.10]{EGNO}.

The next three lemmas let us normalize multiplication on the pointed
summands, compare Frobenius--Perron dimensions, and pass an algebra to a
dual category.  These lemmas
apply to fusion categories without a unitarity assumption.

\begin{lemma}
\label{sap:lem:pointed-truncation}
Let $\mathcal E$ be a fusion category and let $A$ be a connected separable
algebra in $\mathcal E$.
\begin{enumerate}[label=(\roman*)]
 \item The regular right module $A_A$ is simple and generates
 $\operatorname{Mod}_{\mathcal E}(A)$ as an $\mathcal E$-module category.
 In particular, this module category is indecomposable.
 \item The invertible simple summands of $A$ form a subgroup $K$,
 each occurring once.  Then $A_{\mathrm{pt}}:=\bigoplus_{u\in K}u$ is a
 subalgebra with nonzero multiplication coefficients $\psi(u,v)$.
 If its pointed associator is $\omega$, then
 $\delta\psi=\omega^{-1}$, where
 \begin{equation*}
  (\delta\psi)(u,v,w)
  =\frac{\psi(v,w)\psi(u,vw)}{\psi(uv,w)\psi(u,v)}.
 \end{equation*}
 Thus $\psi$ is a $2$-cochain whose coboundary trivializes $\omega$; in
 particular, $[\omega]=0$.
 \item Under the hypotheses of (ii), if $\omega=1$ and
 $H^2(K,\bC^{\times})=0$, then $A$ is
 algebra-isomorphic to an algebra on the same object whose pointed
 subalgebra has the standard group-algebra multiplication.
\end{enumerate}
\end{lemma}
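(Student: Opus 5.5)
We prove the three parts in order, using only general facts about algebras in fusion categories.

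For (i), simplicity of $A_A$ comes from internal Hom and connectedness. Free-module adjunction gives
\[
\Hom_A(A,A)\cong\Hom_{\mathcal E}(\one,A)=\bC,
\]
so $\End(A_A)$ is one-dimensional. Separability makes $\operatorname{Mod}_{\mathcal E}(A)$ semisimple \cite[Proposition~7.8.30]{EGNO}, and a semisimple object with one-dimensional endomorphism algebra is simple. For generation, every module $M$ is a quotient of the free module $M\otimes A$ via the action map. By semisimplicity this quotient splits, so $M$ is a summand of $M\otimes A_A$. Thus $A_A$ generates under the $\mathcal E$-action and direct summands. If the module category split as $\M_1\oplus\M_2$ of $\mathcal E$-submodule categories, the simple $A_A$ would lie in one summand, and so would everything it generates. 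Hence the category is indecomposable.

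For (ii), the key point is that multiplication restricted to invertible summands stays invertible. Take invertible summands $u,v\subset A$. The composite
\[
u\otimes v\to A\otimes A\to A
\]
is nonzero: compose with the separability section $\Delta$ and note $m\Delta=\mathrm{id}$, or argue that $A_A$ is simple, so left multiplication $u\otimes A\to A$ is an isomorphism of right modules $u\otimes A_A\to A_A$. The latter argument is cleaner. Since $u\otimes A_A$ is a simple module, the nonzero map $u\otimes A\to A$ given by multiplication (nonzero by the unit) is an isomorphism. It therefore carries the summand $u\otimes v$ isomorphically onto a summand $uv$ of $A$. So invertible summands are closed under products, and $K$ is a subgroup.

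Multiplicity one follows from connectedness. We have
\[
\Hom(u,A)\cong\Hom(\one,u^{-1}A)\cong\Hom_A(u\otimes A,A),
\]
which is at most one-dimensional because $u\otimes A_A$ and $A_A$ are simple. Choose basis maps $\iota_u:u\to A$. Multiplication restricted to $u\otimes v$ lands in $uv$ as $\psi(u,v)\iota_{uv}$ with $\psi(u,v)\neq0$. Writing associativity on $u\otimes v\otimes w$ with the associator scalar $\omega(u,v,w)$ yields
\[
\psi(u,v)\psi(uv,w)=\omega(u,v,w)\,\psi(v,w)\psi(u,vw)
\]
up to the convention on $\omega$. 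This says $\delta\psi=\omega^{-1}$, so $[\omega]=0$.

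For (iii), with $\omega=1$ the relation says $\psi$ is a $2$-cocycle. Since $H^2(K,\bC^\times)=0$, we can write $\psi=\delta\phi$ for some 1-cochain $\phi$. Rescaling each $\iota_u$ by $\phi(u)$, and the identity on the non-pointed summands, gives an algebra automorphism of the underlying object. Transporting the multiplication along it turns $\psi$ into $1$.

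The only delicate step is the sign and orientation convention for $\delta\psi$ against $\omega$, which just requires fixing the associator convention consistently.
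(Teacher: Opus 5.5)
Your proposal is correct and follows essentially the same route as the paper: free-module adjunction plus semisimplicity for (i); the nonzero right-module map $u\otimes A\to A$ between simples being an isomorphism, which gives nonzero $\psi$, with multiplicity at most one from $\Hom_{\mathcal E}(u,A)\cong\Hom_A(u\otimes A,A)$; the associativity identity $\psi(u,v)\psi(uv,w)=\omega(u,v,w)\psi(v,w)\psi(u,vw)$; and, for (iii), rescaling the pointed summands by a trivializing $1$-cochain and extending by the identity on the rest. The only difference is cosmetic: the paper gets the subgroup property by identifying $K$ with the stabilizer $\{u : u\otimes A_A\cong A_A\}$, whereas you derive closure under products from the multiplication isomorphism — and before multiplicity one is known, the restriction to $u\otimes v$ is only an injection into the $uv$-isotypic part, which already suffices.
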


\begin{proof}
Part (i) follows from connectedness, free-module adjunction, and the
semisimplicity argument in
\cite[Lemma~7.8.12 and Proposition~7.8.30, including its proof]{EGNO}.

For (ii), free-module adjunction gives $\Hom_{\mathcal E}(u,A)\cong\Hom_A(u\otimes A,A)$ for every invertible simple $u$.  Both right modules are simple by (i),
so this space has dimension zero or one.  It is nonzero precisely when
$u\otimes A\cong A$.  Thus the invertible constituents form the stabilizer
subgroup $K$ of $A_A$, and each occurs once.
For $u\in K$, multiplication gives a nonzero right-module map
$u\otimes A\to A$, hence an isomorphism.
Thus its coefficients on the invertible summands are nonzero, and
$A_{\mathrm{pt}}$ is a twisted group algebra as in
\cite[Example~7.8.3(4)]{EGNO}.  To fix the cocycle convention, associativity
reads $\psi(u,v)\psi(uv,w)=\omega(u,v,w)\psi(v,w)\psi(u,vw)$, which gives
$\delta\psi=\omega^{-1}$.

For (iii), a coboundary is removed by rescaling the homogeneous summands
\cite[Section~2.6, equations~(2.32)--(2.33)]{EGNO}.  Extend this normalized
rescaling by the identity on the noninvertible summands of $A$ and transport
the multiplication.
\end{proof}

\begin{lemma}[Dimensions under relative tensor product]
\label{sap:lem:relative-dimension}
Let $R$ be a connected separable algebra in a fusion category $\mathcal A$.
For $R$-bimodules $X,Y$,
\begin{equation*}
 \FPdim_{\mathcal A}(Y\otimes_R X)
 =\frac{\FPdim_{\mathcal A}(Y)\FPdim_{\mathcal A}(X)}
 {\FPdim_{\mathcal A}(R)},
 \qquad
 \FPdim_{{}_R\mathcal A_R}(X)
 =\frac{\FPdim_{\mathcal A}(X)}{\FPdim_{\mathcal A}(R)}.
\end{equation*}
The first formula also holds when $Y$ is only a right $R$-module.
There is an analogous formula for a bimodule tensored with a left module.
\end{lemma}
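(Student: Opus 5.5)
The first formula reduces to a rank/dimension count for the relative tensor product. The second formula follows from the first by a Perron--Frobenius eigenvector argument.

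\emph{First formula.} Write $Y\otimes_R X$ as the image of the idempotent on $Y\otimes X$ built from the separability section $\Delta$ of $R$. Equivalently, it is the coequalizer of the two action maps $Y\otimes R\otimes X\rightrightarrows Y\otimes X$. Since $\FPdim_{\mathcal A}$ is additive and multiplicative on $\mathcal A$, it suffices to compute the dimension of that image.

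For a free right module $Y=Z\otimes R$ we have $(Z\otimes R)\otimes_R X\cong Z\otimes X$. Hence
\[
\FPdim(Y\otimes_R X)=\FPdim(Z)\FPdim(X)=\FPdim(Y)\FPdim(X)/\FPdim(R).
\]
A general right module $Y$ is a direct summand of the free module $Y\otimes R$, via the splitting given by $\Delta$. The functor $-\otimes_R X$ is additive, and the map $Y\mapsto \FPdim(Y\otimes_R X)$ is additive on the semisimple category $\operatorname{Mod}_{\mathcal A}(R)$. We use the exact sequence $0\to K\to Y\otimes R\to Y\to 0$ of right modules, split by separability. Applying the free computation to both $Y\otimes R$ and the kernel $K$ needs care, so we argue more cleanly as follows.

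Both sides of the desired identity are additive in $Y$. Every simple right module $M$ is a summand of some free module $V\otimes R$ with $V$ simple, so the two sides agree on a spanning set of the Grothendieck group provided they are compatible. We therefore compare the additive functionals $Y\mapsto\FPdim(Y\otimes_R X)$ and $Y\mapsto\FPdim(Y)\FPdim(X)/\FPdim(R)$ on $K_0(\operatorname{Mod}(R))\otimes\mathbb Q$. They agree on the images of the free modules $[V\otimes R]$. These images span $K_0(\operatorname{Mod}(R))\otimes\mathbb Q$, because $\operatorname{Hom}_R(V\otimes R,M)\cong\operatorname{Hom}(V,M)$ shows every simple $M$ occurs in some free module. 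The issue is that spanning over $\mathbb Q$ requires the free classes to generate, not merely to contain every simple. We handle this by letting $\mathcal A$ act on $K_0(\operatorname{Mod}(R))$. The free-module map $K_0(\mathcal A)\to K_0(\operatorname{Mod}(R))$ is adjoint to the forgetful map, and its rational image is all of $K_0(\operatorname{Mod}(R))\otimes\mathbb Q$. This holds because the composite of forgetting and then inducing is multiplication by the nonsingular matrix coming from $R$: its Gram matrix is positive definite since the module category is semisimple. This spanning step is where I expect the main obstacle. The left-module and bimodule variants are symmetric.

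\emph{Second formula.} The map $X\mapsto\FPdim_{\mathcal A}(X)/\FPdim_{\mathcal A}(R)$ is positive on nonzero bimodules and equals $1$ on the unit $R$. By the first formula it is multiplicative for $\otimes_R$, and it is additive. A positive ring homomorphism on the fusion ring of ${}_R\mathcal A_R$ is unique, so it coincides with $\FPdim_{{}_R\mathcal A_R}$ (\cite[Proposition~3.3.6]{EGNO}).
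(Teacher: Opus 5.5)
Your deduction of the second formula from the first is fine. The first formula, however, rests on a spanning claim that is false in general, so there is a genuine gap.

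The composite of forgetting and then inducing on $K_0(\operatorname{Mod}_{\mathcal A}(R))$ has matrix $U^{\top}U$. This matrix is only positive semidefinite. It is definite exactly when the forgetful map is injective on $K_0\otimes\mathbb Q$, and semisimplicity does not give that.

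For a counterexample, take $\mathcal A=\operatorname{Rep}(S_3)$ and the connected separable algebra $R=\operatorname{Fun}(S_3/A_3)\cong\mathbf 1\oplus\mathrm{sgn}$. Then $\operatorname{Mod}_{\mathcal A}(R)\simeq\operatorname{Rep}(A_3)$, the forgetful functor is induction, and the free-module functor is restriction.
\begin{itemize}
\item Both nontrivial characters $\chi_1,\chi_2$ of $A_3$ induce to the two-dimensional irreducible $V_2$.
\item The restrictions of $\mathbf 1,\mathrm{sgn},V_2$ span only the plane generated by $[\chi_0]$ and $[\chi_1]+[\chi_2]$, inside a three-dimensional space.
\item Right multiplication by $R$ on $K_0(\mathcal A)$ is also singular, since $\mathbf 1\otimes R\cong\mathrm{sgn}\otimes R$.
\end{itemize}
So agreement on free modules does not pin down an additive functional on $K_0(\operatorname{Mod}_{\mathcal A}(R))$. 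In the paper's HI applications the free modules happen to reach every simple, but the lemma is stated for an arbitrary fusion category.

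The missing idea is the one you already use for the second formula: Perron--Frobenius uniqueness, applied now to the module category. Fix $X\neq0$ and set
\begin{itemize}
\item $\phi_1(Y)=\FPdim_{\mathcal A}(Y\otimes_R X)$,
\item $\phi_2(Y)=\FPdim_{\mathcal A}(Y)\FPdim_{\mathcal A}(X)/\FPdim_{\mathcal A}(R)$.
\end{itemize}
Both are nonnegative additive functions on right modules. Both satisfy $\phi_i(V\otimes Y)=\FPdim_{\mathcal A}(V)\phi_i(Y)$, because $(V\otimes Y)\otimes_R X\cong V\otimes(Y\otimes_R X)$.

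Since $R$ is connected, $\operatorname{Mod}_{\mathcal A}(R)$ is indecomposable. Hence the matrix by which the sum of all simples of $\mathcal A$ acts on the simple modules has positive entries. The positive eigenvector $\phi_2$ therefore spans the Perron--Frobenius eigenspace, so $\phi_1=c\,\phi_2$. Evaluating at $Y=R$ gives $c=1$.

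This repair needs no spanning statement, and it proves the formula for an arbitrary right module tensored with an arbitrary left module. With it, your route is a valid alternative to the paper's. The paper instead quotes the second formula from \cite[Exercise~7.16.9(i)]{EGNO} and obtains the bimodule case of the first from multiplicativity of $\FPdim_{{}_R\mathcal A_R}$. It then handles a one-sided $Y$ by passing to the bimodule $R\otimes Y$ and cancelling $\FPdim_{\mathcal A}(R)$.
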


\begin{proof}
The second formula is \cite[Exercise~7.16.9(i)]{EGNO}; multiplicativity in
${}_R\mathcal A_R$ gives the first.  For a right $R$-module $Y$, apply the
bimodule formula to $R\otimes Y$ and $X$.  The isomorphism
$(R\otimes Y)\otimes_R X\cong R\otimes(Y\otimes_R X)$ lets us cancel
$\FPdim_{\mathcal A}(R)$ and obtain the claimed formula.  The left-module
case is analogous, using $Y\otimes R$.
\end{proof}

For a connected separable algebra $R$ in a fusion category $\mathcal A$,
write $\mathfrak T_R$ for the Morita $2$-equivalence
from $\mathcal A$-module categories to ${}_R\mathcal A_R$-module categories,
given by
\begin{equation*}
 \mathfrak T_R(\mathcal M)=\operatorname{Fun}_{\mathcal A}
 (\operatorname{Mod}_{\mathcal A}(R),\mathcal M).
\end{equation*}
The module category of the connected separable algebra is nonzero and
semisimple, hence faithful and exact; its simple regular module makes it
indecomposable.

\begin{lemma}[Algebra extensions in the dual category]
\label{sap:lem:algebra-extension-transport}
Let $R\hookrightarrow A$ be a unital embedding of connected separable
algebras in a fusion category $\mathcal A$.  Then ${}_RA_R$ is a connected
separable algebra in ${}_R\mathcal A_R$, with multiplication induced by
$A\otimes_R A\to A$.  Moreover,
\begin{equation}
 \mathfrak T_R\bigl(\operatorname{Mod}_{\mathcal A}(A)\bigr)
 \simeq
 \operatorname{Mod}_{{}_R\mathcal A_R}({}_RA_R).
 \label{sap:eq:algebra-extension-module-transport}
\end{equation}
The subalgebra ${}_RR_R$ becomes the tensor unit of ${}_R\mathcal A_R$.
\end{lemma}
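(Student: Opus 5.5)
Three things need proof: ${}_RA_R$ is a connected separable algebra in ${}_R\mathcal A_R$, ${}_RR_R$ is the unit, and the module-category equivalence \eqref{sap:eq:algebra-extension-module-transport} holds. The plan is to work entirely with relative tensor products over $R$. Since $R$ is connected separable, ${}_R\mathcal A_R$ is a (multi)fusion category with tensor product $\otimes_R$ and unit $R$. Separability of $R$ makes $\otimes_R$ the image of an idempotent on $\otimes$, so everything reduces to identities in $\mathcal A$.

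\textbf{Step 1 (algebra structure and unit).} The embedding $R\hookrightarrow A$ makes $A$ an $R$-bimodule. Multiplication $m:A\otimes A\to A$ is $R$-balanced because associativity of $m$ restricts to $R$. Hence it descends to a bimodule map $\bar m:A\otimes_RA\to A$, and the unit of ${}_RA_R$ is the embedding $R\to A$ itself. Associativity and unitality of $\bar m$ follow from those of $m$, since the projections $X\otimes Y\to X\otimes_RY$ are epimorphisms. The claim that ${}_RR_R$ is the tensor unit is the standard unit of ${}_R\mathcal A_R$.

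\textbf{Step 2 (connectedness and separability).} For connectedness, $\Hom_{{}_R\mathcal A_R}(R,A)\cong\Hom_{\mathcal A_R}(R,A)\cong\Hom_{\mathcal A}(\one,A)=\bC$ by free-module adjunction and connectedness of $A$. For separability, take a bimodule section $\Delta:A\to A\otimes A$ of $m$ and compose it with the projection $A\otimes A\to A\otimes_RA$. The result is an $A$-bimodule map, and hence an $R$-bimodule map, that is a section of $\bar m$, because $\bar m\circ\mathrm{proj}=m$.

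\textbf{Step 3 (module categories).} I would use the known description $\mathfrak T_R(\mathcal M)\simeq$ the category of $R$-modules in $\mathcal M$, i.e.\ objects $M\in\mathcal M$ with a right action $M\otimes R\to M$ (EGNO, Section~7.12, duality of $\operatorname{Fun}_{\mathcal A}(\operatorname{Mod}(R),\mathcal M)$ with $R$-modules in $\mathcal M$). Applied to $\mathcal M=\operatorname{Mod}_{\mathcal A}(A)$, an $R$-module object in $\operatorname{Mod}_{\mathcal A}(A)$ is a left $R$-, right $A$-bimodule. The functor $X\mapsto X\otimes_RA$ then identifies such objects with right ${}_RA_R$-modules in ${}_R\mathcal A_R$, with inverse given by forgetting. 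The check that this respects the ${}_R\mathcal A_R$-action amounts to associativity of relative tensor products.

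\textbf{Main obstacle.} The main obstacle is Step 3: fixing left/right conventions so that the $\mathfrak T_R$ action matches the reverse-composition convention used in the paper. This is a careful but standard unwinding. I would first verify it on free modules $X\otimes A$ and then extend by semisimplicity.
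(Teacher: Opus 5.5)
Your overall route matches the paper's. Separability comes from composing a bimodule section of $m$ with the quotient $A\otimes A\to A\otimes_RA$. Connectedness comes from comparison with $\Hom_{\mathcal A}(\one,A)$. The module equivalence comes from identifying both sides with $R$-$A$-bimodules; for the left side the paper cites EGNO, Proposition~7.11.1. Two steps need correcting, however.

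\textbf{Step~3.} The identifying functor is not $X\mapsto X\otimes_RA$. That is the free right ${}_RA_R$-module functor, left adjoint to forgetting. Since $X\otimes_RA\not\cong X$ in general (take $X=A$ with $R\ne A$), the claim ``inverse given by forgetting'' fails as written. The equivalence is the identity on underlying objects:
\begin{itemize}
\item An $R$-$A$-bimodule $X$, viewed in ${}_R\mathcal A_R$ through its restricted right $R$-action, carries the action $X\otimes_RA\to X$ obtained by descending $X\otimes A\to X$. This map is balanced because the right $R$-action is the restriction of the $A$-action.
\item Conversely, a right ${}_RA_R$-module $M$ acquires a right $A$-action by precomposing its action with the quotient $M\otimes A\to M\otimes_RA$. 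Unitality along $R\hookrightarrow A$ shows this $A$-action extends the given right $R$-action.
\end{itemize}
These constructions are natural and mutually inverse, so you do not need to check free modules and then extend by semisimplicity. Compatibility with the left actions $Y\otimes_R-$ is just associativity of $\otimes_R$.

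Also, $\mathcal M$ is a left $\mathcal A$-module category. So the $R$-module objects in $\mathcal M$ carry left actions $R\otimes M\to M$, not right actions $M\otimes R\to M$. Your following sentence (left $R$, right $A$) already uses the correct side.

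\textbf{Step~2.} The first isomorphism $\Hom_{{}_R\mathcal A_R}(R,A)\cong\Hom_{\mathcal A_R}(R,A)$ does not follow from free-module adjunction. A priori it is only an inclusion of bimodule maps into right-module maps. To conclude that the left side is one-dimensional, you need one more input: the embedding $R\hookrightarrow A$ is a nonzero bimodule map, and the right side is one-dimensional. This is exactly how the paper argues, via the injection given by precomposition with the unit of $R$.
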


\begin{proof}
Let $s:A\to A\otimes A$ be an $A$-$A$-bimodule splitting of the
multiplication $\mu$, and let $q:A\otimes A\to A\otimes_R A$ be the
quotient map.  If $\mu_R$ is the relative multiplication, then $q$ is
an $A$-$A$-bimodule map and
\begin{equation*}
 \mu_R(qs)=\mu s=\operatorname{id}_A.
\end{equation*}
Thus $qs$ proves separability in ${}_R\mathcal A_R$.  By
\cite[Proposition~7.11.1, p.~154]{EGNO}, the left-hand category in
\eqref{sap:eq:algebra-extension-module-transport} is the category of $R$-$A$
bimodules via $-\otimes_R M$.  The right-hand category also consists of these bimodules: an
action $M\otimes_R A\to M$ in ${}_R\mathcal A_R$ extends the right
$R$-action to an $A$-action.  These identifications preserve the left
${}_R\mathcal A_R$-action.  Precomposition with $\eta_R:\one\to R$ gives an
injection $\Hom_{R\text{-}R}(R,A)\hookrightarrow\Hom_{\mathcal A}(\one,A)$;
the latter is one-dimensional, and $R\hookrightarrow A$ gives a nonzero
element of the former, so ${}_RA_R$ is connected.  Finally,
${}_RR_R$ is the tensor unit by definition of relative tensor product.
\end{proof}

The next observation relates restriction to a fusion subcategory and passage
to the dual category.  Let $\mathcal B\subseteq\mathcal A$ be a
fusion subcategory, and let $R$ be a
connected separable algebra in $\mathcal B$.  Put
$\mathcal P={}_R\mathcal B_R\subseteq{}_R\mathcal A_R$.

\begin{lemma}[Restriction and Morita equivalence]
\label{sap:lem:restriction-morita-transport}
For a finite semisimple $\mathcal A$-module category $\mathcal M$, restriction
of functors gives an equivalence of $\mathcal P$-module categories
\begin{equation*}
 \operatorname{Res}_{\mathcal P}\mathfrak T_R(\mathcal M)
 \simeq
 \operatorname{Fun}_{\mathcal B}
 \bigl(\operatorname{Mod}_{\mathcal B}(R),
       \operatorname{Res}_{\mathcal B}\mathcal M\bigr).
\end{equation*}
\end{lemma}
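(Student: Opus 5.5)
The plan is to write both sides as categories of bimodules and compare them directly. On the left, the description of $\mathfrak T_R$ used in the proof of Lemma~\ref{sap:lem:algebra-extension-transport}, namely \cite[Proposition~7.11.1]{EGNO}, identifies $\operatorname{Fun}_{\mathcal A}(\operatorname{Mod}_{\mathcal A}(R),\mathcal M)$ with the category of $R$-module objects in $\mathcal M$. These are objects $m\in\mathcal M$ with an action $R\otimes m\to m$ satisfying the usual axioms. Such an object corresponds to the functor $Y\mapsto Y\otimes_R m$. The ${}_R\mathcal A_R$-action is $X\triangleright m=X\otimes_R m$.

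On the right, the same proposition applies to the fusion category $\mathcal B$ and the $\mathcal B$-module category $\operatorname{Res}_{\mathcal B}\mathcal M$. It identifies $\operatorname{Fun}_{\mathcal B}(\operatorname{Mod}_{\mathcal B}(R),\operatorname{Res}_{\mathcal B}\mathcal M)$ with $R$-module objects in $\mathcal M$ as well, since $R$ lies in $\mathcal B$. The ${}_R\mathcal B_R$-action is again $X\otimes_R m$.

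The comparison functor is restriction along the inclusion $\operatorname{Mod}_{\mathcal B}(R)\hookrightarrow\operatorname{Mod}_{\mathcal A}(R)$. It sends an $\mathcal A$-module functor to its underlying $\mathcal B$-module functor on the subcategory. Under the two identifications, both functors are determined by their value on the regular module $R$, and that value is the same $R$-module object in $\mathcal M$. So the restriction functor becomes the identity on $R$-module objects. In particular it is fully faithful and essentially surjective.

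Checking compatibility with the actions is a one-line argument. The action of $X\in\mathcal P$ is the relative tensor product $X\otimes_R(-)$ on both sides, and the module associativity constraints are the canonical ones $(X\otimes_R Y)\otimes_R m\cong X\otimes_R(Y\otimes_R m)$ in both cases.

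The main point needing care is that an $\mathcal A$-module functor out of $\operatorname{Mod}_{\mathcal A}(R)$ is determined by its restriction to $\operatorname{Mod}_{\mathcal B}(R)$. I would handle this with the reconstruction statement of \cite[Proposition~7.11.1]{EGNO} itself rather than by hand: every $\mathcal A$-module functor $F$ is isomorphic to $-\otimes_R F(R)$, so $F$ is recovered from its value on $R$. The free module $R$ lies in $\operatorname{Mod}_{\mathcal B}(R)$, and this makes restriction an equivalence.
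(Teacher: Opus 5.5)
Your proposal is correct and takes essentially the same approach as the paper. Both arguments identify the two functor categories with left $R$-module objects in $\mathcal M$ by evaluation at the regular module $R$ (via \cite[Proposition~7.11.1]{EGNO}, with the paper also citing \cite[Theorem~7.12.16]{EGNO}), use that restriction commutes with these evaluations because $R\in\operatorname{Mod}_{\mathcal B}(R)$, and check that the $\mathcal P$-action is $X\otimes_R(-)$ with the relative-tensor-product associativity constraints on both sides.
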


\begin{proof}
By the module-functor description
\cite[Proposition~7.11.1 and Theorem~7.12.16]{EGNO}, evaluation at $R$
identifies both sides with the category of left $R$-module objects $Z$ in
$\mathcal M$.  Its inverse sends $Z$ to $Y\mapsto Y\otimes_R Z$, using the
action of $\mathcal A$ or its restriction to $\mathcal B$, respectively.
For $X\in\mathcal P$, precomposition acts on either evaluation by
$Z\mapsto X\otimes_R Z$.  The associativity constraints on both sides are
those of relative tensor product.  Thus restriction preserves the
$\mathcal P$-module structure, not just the underlying category.
\end{proof}

\subsection{Pointed module categories}

Put $\mathcal P=\operatorname{Vec}_G$ and, for $J\leq G$,
write $\mathcal N_J=\operatorname{Mod}_{\mathcal P}(R_J)$.
For a cyclic subgroup $H$, the standard calculation
\cite[Example~2.6.4]{EGNO} gives
\begin{equation}
 H^2(H,\bC^\times)=0.
 \label{sap:eq:H2-cyclic}
\end{equation}
The classification of pointed module categories therefore says that every
indecomposable $\mathcal P$-module category is equivalent to a unique
$\mathcal N_J$: its simples form the transitive $G$-set $G/J$, with no
additional cocycle choice
\cite[Example~2.1]{OstrikDouble}
\cite[Theorem~1.1 and the following paragraph]{NataleModuleEquivalence}.

\begin{lemma}[Pointed module-functor rank]
\label{sap:lem:pointed-module-rank}
For subgroups $H,K\leq G$,
\begin{equation*}
 r(H,K):=\rank\operatorname{Fun}_{\mathcal P}(\mathcal N_H,\mathcal N_K)
 =[G:H+K]\,|H\cap K|.
\end{equation*}
\end{lemma}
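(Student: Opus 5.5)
We compute the rank as the number of simple objects of $\operatorname{Fun}_{\mathcal P}(\mathcal N_H,\mathcal N_K)$. By the module-functor description \cite[Proposition~7.11.1]{EGNO}, this category is the category of $R_H$-$R_K$-bimodules in $\mathcal P=\operatorname{Vec}_G$; equivalently, it is $G$-graded vector spaces with compatible left $H$- and right $K$-actions. Because $G$ is abelian and the associator is trivial, such a bimodule is the same as a $G$-equivariant vector bundle on $G/H\times G/K$ for the diagonal action; that is, a module over the crossed product of functions on $G/H\times G/K$ with $G$. So the first step is to identify $\operatorname{Fun}_{\mathcal P}(\mathcal N_H,\mathcal N_K)$ with $\operatorname{Vec}_G$-equivariant sheaves on $G/H\times G/K$. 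I would justify this by writing a bimodule as $\bigoplus_g M_g$ with commuting actions $\alpha_h\otimes M_g\to M_{g+h}$ and $M_g\otimes\alpha_k\to M_{g+k}$. No cocycle twists appear, since $\omega=1$ and $H^2$ of the cyclic groups $H$ and $K$ vanishes by \eqref{sap:eq:H2-cyclic}.

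Next I would count simples. Simple equivariant sheaves correspond to pairs (orbit $\mathcal O$ of $G$ on $G/H\times G/K$, irreducible representation of the stabilizer). The stabilizer of a point $(x+H,y+K)$ is $H\cap K$, which is abelian, so each orbit contributes $|H\cap K|$ one-dimensional irreducibles. Because $H^2(H\cap K,\mathbb C^\times)=0$ and the ambient associator is trivial, no projective twist appears.

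The orbits are counted via the map $(x+H,y+K)\mapsto x-y+(H+K)$. This map is $G$-invariant, and its fibres are single orbits. Hence the number of orbits is $|G/H||G/K|\,|H\cap K|/|G|=[G:H+K]$, using $|H+K||H\cap K|=|H||K|$. Multiplying the two counts gives $r(H,K)=[G:H+K]\,|H\cap K|$.

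The main obstacle is the first step. One must verify carefully that the stabilizer twisted group algebras are genuinely untwisted, not only cohomologically trivial in some weaker sense. I would argue this directly: with strict trivial associator, the stabilizer acts on a fibre through honest commuting $H$- and $K$-actions restricted to $H\cap K$. As a cross-check, this gives $r(H,H)=[G:H]|H|=N$, and for $H=K=0$ it gives $N$, consistent with $\operatorname{Vec}_G^*$ being rank $N$.
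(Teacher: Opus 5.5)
Your proposal is correct and takes essentially the same route as the paper. The paper cites Ostrik's mixed-bimodule formula and counts simple $R_H$-$R_K$-bimodules as pairs consisting of a double coset and a character of its stabilizer $H\cap K$, with all cocycles trivial; its $[G:H+K]$ double cosets correspond exactly to your $G$-orbits on $G/H\times G/K$. The step you flag as the main obstacle is harmless, since only the cohomology class of the stabilizer cocycle affects the number of simples, and that class vanishes because $H\cap K$ is cyclic.
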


\begin{proof}
This is the untwisted abelian specialization of the mixed-bimodule formula
\cite[Proposition~3.1, p.~5]{OstrikDouble}; see also
\cite[Lemma~3.3 and Remark~3.4]{NataleModuleEquivalence}.
Indeed, simple $R_H$-$R_K$ bimodules are indexed by a double coset and an
irreducible representation of its stabilizer.  The double cosets are the
$[G:H+K]$ cosets of $H+K$.  Each stabilizer is $H\cap K$, whose irreducible
representations are its $|H\cap K|$ characters.  All cocycles in this
calculation are trivial.
\end{proof}

\subsection{Subgroup algebras and their dual categories}
\label{sap:sec:subgroup-algebras}

We describe $R_H$, its modules and its bimodules.  We then determine the
pointed associator and, for Hall subgroups, the HI matrix of the dual.

\subsubsection{The subgroup algebra and its modules}

\begin{proposition}
\label{sap:prop:subgroup-algebras}
For every $H\leq G$, the object $R_H$ has a connected separable algebra
structure.  Every connected separable algebra structure on the same
underlying object is algebra-isomorphic to the standard one.
\end{proposition}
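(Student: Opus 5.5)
The plan has three steps: exhibit the standard structure, verify separability and connectedness, and then prove uniqueness with Lemma~\ref{sap:lem:pointed-truncation}.

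\emph{Existence.} Since the pointed subcategory is the strict category $\operatorname{Vec}_G$ with trivial associator, the group-algebra multiplication $\alpha_h\otimes\alpha_{h'}\to\alpha_{h+h'}$, with unit the inclusion of $\alpha_0=\one$, is associative and unital. Associativity holds on the nose because the associator is trivial.

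\emph{Separability and connectedness.} I take the averaged section displayed in the main text,
\begin{equation*}
 \alpha_g\longmapsto\frac1{|H|}\sum_{h\in H}\alpha_h\otimes\alpha_{g-h}.
\end{equation*}
Composing with multiplication gives $\operatorname{id}$. The bimodule property reduces to reindexing the sum over $H$ under left and right translation. Here $|H|$ is invertible over $\bC$. Connectedness follows because $\one=\alpha_0$ occurs exactly once in $R_H$, so $\dim\Hom(\one,R_H)=1$.

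\emph{Uniqueness.} Let $A$ be any connected separable algebra structure on the object $R_H$. Every simple summand of $R_H$ is invertible. By Lemma~\ref{sap:lem:pointed-truncation}(ii), the invertible constituents form a subgroup $K$, each occurring once; here $K=H$ and $A_{\mathrm{pt}}=A$. The lemma further shows the multiplication coefficients $\psi$ are all nonzero and satisfy $\delta\psi=\omega^{-1}=1$, since the pointed associator is trivial. So $\psi$ is a $2$-cocycle on $H$. Since $H\le G$ is cyclic, \eqref{sap:eq:H2-cyclic} gives $H^2(H,\bC^\times)=0$. Lemma~\ref{sap:lem:pointed-truncation}(iii) then rescales the homogeneous summands to make $\psi\equiv1$. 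Rescaling by a $1$-cochain is an algebra isomorphism onto the standard structure. The unit normalization is automatic, since $\Hom(\one,R_H)$ is one-dimensional and the unit maps into the $\alpha_0$ summand.

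The only step needing care is the last one: checking that the twisted-group-algebra description really covers every possible multiplication. This means confirming that no multiplication coefficient can vanish, which is exactly Lemma~\ref{sap:lem:pointed-truncation}(ii) applied with $A=A_{\mathrm{pt}}$. Beyond that the argument is the standard cohomological one, as in \cite[Remark 3.9(i)]{fuchs2004tft}.
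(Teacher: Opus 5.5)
Your proposal is correct and follows the paper's argument. The steps match: the standard group-algebra structure with the averaged bimodule section, connectedness from the single copy of $\one$, and uniqueness from $H^2(H,\bC^\times)=0$ for cyclic $H$. The only difference is that the paper cites the cohomological classification directly (\cite[Remark~3.9(i)]{fuchs2004tft}, \cite[Remark~14(iii)]{OstrikModules}), whereas you obtain it from Lemma~\ref{sap:lem:pointed-truncation}(ii)--(iii), which packages the same cocycle-rescaling argument.
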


\begin{proof}
Section~\ref{sec:separable} gives the subgroup-algebra construction
\cite[Example~7.8.3(3)]{EGNO} and proves connectedness and uniqueness,
using \cite[Remark~14(iii)]{OstrikModules} and \eqref{sap:eq:H2-cyclic}.
For later use, we record the bimodule splitting on the $\alpha_g$-summand:
\begin{equation}
 \alpha_g\longmapsto
 \frac1{|H|}\sum_{h\in H}\alpha_h\otimes\alpha_{g-h}.
 \label{sap:eq:RH-splitting}
\end{equation}
\end{proof}

\begin{remark}
The proof of Theorem~\ref{sap:prop:subgroup-algebras} uses only that the pointed
subcategory on $H$ is strict and that $H$ is cyclic.  It does not use the
algebra $Q=\one\oplus\rho$ or any special form of the HI matrix.
\end{remark}

\begin{proposition}
\label{sap:prop:RH-modules}
The simple right $R_H$-modules are
\begin{equation}
 U_{\bar g}:=\alpha_g\otimes R_H,
 \qquad
 V_{\bar g}:=\alpha_g\rho\otimes R_H,
 \qquad \bar g\in G/H.
 \label{sap:eq:RH-simple-modules}
\end{equation}
Consequently, $\rank\operatorname{Mod}_{\C}(R_H)=2[G:H]$.  Its restriction
to $\operatorname{Vec}_G$ is the direct sum of two copies of the untwisted transitive
module category on $G/H$, with $G$ acting by translation on the simple
objects indexed by $G/H$.
\end{proposition}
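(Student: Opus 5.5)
We will show that the listed objects are right $R_H$-modules, that they are simple and pairwise nonisomorphic, and that every simple module occurs among them. The argument is free-module adjunction together with separability, following the pattern of \cite[Lemma~7.8.12 and Proposition~7.8.30]{EGNO}.

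First, each $X\otimes R_H$ with $X$ simple in $\C$ is a free right module. Since $R_H$ is separable (Proposition~\ref{sap:prop:subgroup-algebras}), every right module is a direct summand of the free module $M\otimes R_H$. Hence every simple module is a summand of $X\otimes R_H$ for some simple $X\in\{\alpha_g,\alpha_g\rho\}$. Free-module adjunction gives
\begin{equation*}
 \Hom_{R_H}(X\otimes R_H,Y\otimes R_H)\cong\Hom_{\C}(X,Y\otimes R_H)=\bigoplus_{h\in H}\Hom_{\C}(X,Y\alpha_h).
\end{equation*}
\begin{itemize}
 \item For $X=Y=\alpha_g$, the summand $\alpha_g\alpha_h=\alpha_{g+h}$ contains $\alpha_g$ only when $h=0$.
 \item For $X=Y=\alpha_g\rho$, we have $\alpha_g\rho\,\alpha_h=\alpha_{g-h}\rho$ by the mixed rule \eqref{HIfusrul}, which is isomorphic to $\alpha_g\rho$ only when $h=0$.
\end{itemize}
In both cases the endomorphism algebra is $\bC$, so each free module $\alpha_g\otimes R_H$ and $\alpha_g\rho\otimes R_H$ is simple.

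The same computation settles isomorphisms between these modules. The space $\Hom(\alpha_g\otimes R_H,\alpha_{g'}\otimes R_H)$ is nonzero exactly when $g-g'\in H$. The space $\Hom(\alpha_g\rho\otimes R_H,\alpha_{g'}\rho\otimes R_H)$ is nonzero exactly when $g'-g\in H$. The mixed spaces vanish because invertible and noninvertible simples are distinct. So the modules depend only on $\bar g\in G/H$, they are pairwise nonisomorphic otherwise, and these $2[G:H]$ modules exhaust the simples. This gives the rank formula.

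For the restriction to $\operatorname{Vec}_G$, the action is $\alpha_x\otimes U_{\bar g}\cong U_{\overline{x+g}}$ and $\alpha_x\otimes V_{\bar g}=\alpha_{x+g}\rho\otimes R_H\cong V_{\overline{x+g}}$. Each family is therefore a transitive $G$-set $G/H$ with stabilizer $H$. The module category splits as the direct sum of the subcategories generated by the $U$'s and by the $V$'s, since no $\alpha_x$ maps one family to the other. By the pointed classification recalled above (with \eqref{sap:eq:H2-cyclic}), each summand is the untwisted $\mathcal N_H$.

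The main subtlety is this last identification. One must check that the restricted $V$-family carries no cocycle twist. This follows because $H$ is cyclic, so there is no cocycle choice. Alternatively, one can identify it directly as $\operatorname{Mod}_{\operatorname{Vec}_G}(\underline{\End}(V_0))$ with $\underline{\End}(V_0)=\rho R_H\rho^\vee\cap\operatorname{Vec}_G\cong R_H$.
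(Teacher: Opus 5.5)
Your proposal is correct and follows essentially the same route as the paper. Free-module adjunction \cite[Lemma~7.8.12]{EGNO} computes the Hom spaces between $\alpha_g\otimes R_H$ and $\alpha_g\rho\otimes R_H$, separability \cite[Proposition~7.8.30]{EGNO} gives exhaustion, and $H^2(H,\bC^\times)=0$ for cyclic $H$ rules out a cocycle on the two translation orbits. The optional internal-End identification at the end is not needed, and it would still require either this vanishing of $H^2$ or an explicit check of the multiplication to identify $R_H$ as an algebra.
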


\begin{proof}
The free-module adjunction calculation, using
\cite[Lemma~7.8.12]{EGNO}, is
\begin{equation*}
 \dim\Hom_{R_H}(U_{\bar g},U_{\bar h})
 =\dim\Hom_{R_H}(V_{\bar g},V_{\bar h})
 =\delta_{\bar g,\bar h}.
\end{equation*}
Together with the mixed Hom vanishing, this proves simplicity and pairwise
nonisomorphism of the modules in
\eqref{sap:eq:RH-simple-modules}.  The exhaustiveness of the list is the
argument in Section~\ref{sec:separable}, using separability and the free
modules \cite[Proposition~7.8.30 and its proof]{EGNO}.  The restriction to
the pointed subcategory is two copies of the untwisted transitive module
category on $G/H$, with the invertibles acting by translation; the absence
of a cocycle follows from \eqref{sap:eq:H2-cyclic}, and the module-category
description is given in \cite[Example~7.4.10]{EGNO}.
\end{proof}

\subsubsection{The subgroup Morita dual}
\label{sap:sec:subgroup-dual}

We now compute the dual category for $R_H$.  We use its fusion rules in the
orbit reduction.  Its pointed associator determines when the dual admits
a strict-pointed HI matrix.

Fix $H\leq G$ and choose a section
$s:G/H\to G$ with $s(0)=0$.  We use its carry
$c(P,Q)=s(P)+s(Q)-s(P+Q)\in H$ as in the introduction.
For $P\in G/H$ and $u\in\widehat H$, define objects
\begin{equation}
 U_{P,u}=\bigoplus_{x\in H}\alpha_{s(P)+x},
 \qquad
 V_{P,u}=\bigoplus_{x\in H}\alpha_{s(P)+x}\rho.
 \label{sap:eq:subgroup-bimodule-objects}
\end{equation}
The underlying objects depend only on the coset label; the character $u$
records the right $H$-action.  On the summand indexed by $x$, the left action
of $\alpha_h$ has coefficient $1$ and sends $x$ to $x+h$.  The right actions
are
\begin{align}
 (U_{P,u})_x\otimes\alpha_h&\longrightarrow(U_{P,u})_{x+h},
 &\text{coefficient }u(h),
 \label{sap:eq:U-right-action}\\
 (V_{P,u})_x\otimes\alpha_h&\longrightarrow(V_{P,u})_{x-h},
 &\text{coefficient }u(h).
 \label{sap:eq:V-right-action}
\end{align}
Here the arrows are the canonical nonzero maps in the normalized
endomorphism presentation.  The strict pointed action and
$\rho\alpha_h=\alpha_{-h}\rho$ show directly that the two $R_H$-actions
commute.

\begin{proposition}[Simple subgroup bimodules]
\label{sap:prop:subgroup-bimodules}
Let
\begin{equation*}
 \D_H={}_{R_H}\C_{R_H},
 \qquad
 L_H=(G/H)\times\widehat H.
\end{equation*}
Then the following statements hold.
\begin{enumerate}[label=(\roman*)]
 \item The objects $U_{P,u}$ and $V_{P,u}$ in
 \eqref{sap:eq:subgroup-bimodule-objects} are all the simple objects of $\D_H$.
 They are pairwise nonisomorphic and satisfy
 \begin{equation*}
  \FPdim_{\D_H}(U_{P,u})=1,
  \qquad
  \FPdim_{\D_H}(V_{P,u})=d.
 \end{equation*}
 \item The invertible simples are the $U_{P,u}$ and form the group $L_H$.
 The tensorators in \eqref{sap:eq:pointed-bimodule-tensorator} identify both
 $(U_{P,u}\otimes_{R_H}U_{Q,v})\otimes_{R_H}U_{R,w}$ and
 $U_{P,u}\otimes_{R_H}(U_{Q,v}\otimes_{R_H}U_{R,w})$ with
 $U_{P+Q+R,uvw}$.  Under these identifications, the associator acts by the
 scalar $\omega_H$ in \eqref{sa:carry-cocycle}.
 \item Put $\sigma=V_{0,1}$, where $1$ denotes the trivial character, and
 write
 \begin{equation}
  W_\ell=U_\ell\otimes_{R_H}\sigma
  \qquad(\ell\in L_H).
  \label{sap:eq:W-labels}
 \end{equation}
 Then the fusion rules are
 \begin{align}
  U_\ell U_r&=U_{\ell+r},
  &
  U_\ell W_r&=W_{\ell+r},
  \notag\\
  W_\ell U_r&=W_{\ell-r},
  &
  W_\ell W_r&=U_{\ell-r}\oplus\bigoplus_{t\in L_H}W_t.
  \label{sap:eq:dual-fusion-two}
 \end{align}
 Thus $\D_H$ has HI fusion rules with pointed group $L_H$.
\end{enumerate}
\end{proposition}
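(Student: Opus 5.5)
The plan is to work entirely with the right $R_H$-modules of Proposition~\ref{sap:prop:RH-modules} and the free-module adjunction. The standard description of bimodules over an algebra in a fusion category then gives
\[
{}_{R_H}\C_{R_H}\simeq \operatorname{Fun}_\C(\operatorname{Mod}_\C(R_H),\operatorname{Mod}_\C(R_H)).
\]
We count simples in two ways.

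\emph{Rank.} By Lemma~\ref{sap:lem:restriction-morita-transport} and Proposition~\ref{sap:prop:RH-modules}, restricting to $\operatorname{Vec}_G$ the module category splits into two copies of $\mathcal N_H$. Lemma~\ref{sap:lem:pointed-module-rank} then gives $r(H,H)=[G:H]|H|=N$ pointed-type functors for each of the two orbits. This bounds the number of simple bimodules with underlying object in the pointed part by $N$, and likewise for the $\rho$-part.

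\emph{Part (i).} For each $(P,u)\in L_H$, the objects $U_{P,u},V_{P,u}$ with actions \eqref{sap:eq:U-right-action}--\eqref{sap:eq:V-right-action} are bimodules, since commutativity of the actions uses $\rho\alpha_h=\alpha_{-h}\rho$. Their endomorphisms are computed via $\Hom_{R_H\text{-}R_H}(X,Y)\subseteq\Hom_\C(X,Y)$. A bimodule map between summand-indexed objects is determined by its component on $x=0$, and compatibility with the right action forces equality of characters. This gives $\Hom(U_{P,u},U_{Q,v})=\delta\,\mathbb C$ and similarly for $V$; mixed Homs vanish by the HI fusion rules. This yields $2N$ pairwise nonisomorphic simples. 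Lemma~\ref{sap:lem:relative-dimension} gives the dimensions $|H|/|H|=1$ and $d|H|/|H|=d$. Summing $\FPdim^2$ gives $N(1+d^2)=\FPdim\C$, which equals $\FPdim$ of the dual, so the list is exhaustive.

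\emph{Part (ii).} The $U_{P,u}$ have dimension $1$, hence are invertible. We compute $U_{P,u}\otimes_{R_H}U_{Q,v}$ by realizing the relative tensor product as the image of the idempotent built from \eqref{sap:eq:RH-splitting}. Its underlying object is $\bigoplus_x\alpha_{s(P)+s(Q)+x}$, and reindexing by $s(P+Q)$ uses the carry $c(P,Q)\in H$. The character of the right action is $uv$. We fix tensorators \eqref{sap:eq:pointed-bimodule-tensorator} by sending the $x$-summand to the $(x+c(P,Q))$-summand. The two bracketings differ only when the right $H$-action of $U_{P,u}$ is moved past the carry of $(Q,R)$, which contributes $u(c(Q,R))$. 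This is exactly \eqref{sa:carry-cocycle}. The main bookkeeping obstacle is here: the normalizations of the relative-tensor identifications must be fixed consistently so that the only surviving scalar is the one from the right action of $U_{P,u}$, rather than some coboundary variant. I would fix them by choosing the balanced representative in the first factor.

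\emph{Part (iii).} Dimension counting gives the first three rules. Specifically, $U_\ell W_r=W_{\ell+r}$ by definition and associativity. For $W_\ell U_r$, the relation $\sigma U_r\cong U_{-r}\sigma$ follows from $\rho\alpha_h=\alpha_{-h}\rho$ together with the sign flip of the right action in \eqref{sap:eq:V-right-action}. For $W_\ell W_r=U_\ell\sigma U_{-r}\sigma=U_{\ell-r}\sigma^2$, it remains to show
\[
\sigma^2\cong U_0\oplus\bigoplus_t W_t.
\]
We have $\FPdim\sigma^2=d^2=1+Nd$. Since $\sigma$ is self-dual (its dual is a $V$-type simple of the same label, by the dimension and Hom calculation), $\one$ occurs once in $\sigma^2$, and no other invertible occurs because $\sigma U_\ell\cong\sigma$ only for $\ell=0$. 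Finally, each $W_t$ appears with multiplicity $\dim\Hom(W_t,\sigma^2)=\dim\Hom(\sigma W_t^\vee,\sigma)$. Regarded as a functor, $\sigma W_{t}^\vee\sigma^\vee$ reduces to $\sigma^2$ containing the $W$'s, and the total multiplicity forced by the dimension is $N$. Equality for every $t$ follows from $L_H$-equivariance, since conjugation by $U_\ell$ permutes the $W_t$ transitively because $2$ is invertible on $L_H$.
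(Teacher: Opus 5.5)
Your proposal is correct. Parts (ii) and (iii) follow the paper's proof closely. In (ii) you use the balanced tensorator $e_x\otimes e_y\mapsto u(y)e_{x+y+c(P,Q)}$ of \eqref{sap:eq:pointed-bimodule-tensorator}, whose two bracketings differ exactly by $u(c(Q,R))$. In (iii) you run the equivariance argument on $\sigma^2=U_0\oplus\bigoplus_t n_tW_t$: your conjugation $U_\ell W_tU_{-\ell}\cong W_{t+2\ell}$ is the paper's comparison of $\sigma^2U_r$ with $U_r\sigma^2$, and $\sum_t n_t=N$ comes from $d^2=1+Nd$.

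The genuine difference is exhaustion in (i). The paper argues structurally: the left $H$-action preserves right-module isotypic components, so a simple bimodule is right-isotypic. Relative to the trivial-character identifications, the left action is then a representation of $H$ on a multiplicity space, which simplicity forces to be a character. You count instead, and either of your counts works on its own.
\begin{itemize}
\item Lemma~\ref{sap:lem:restriction-morita-transport} with $\mathcal M=\operatorname{Mod}_{\C}(R_H)$ gives $\rank\D_H=\rank\operatorname{Fun}_{\operatorname{Vec}_G}(\mathcal N_H,\mathcal N_H\oplus\mathcal N_H)=2r(H,H)=2N$. Under evaluation at $R_H$, the two summands correspond exactly to bimodules of pointed type and of $\rho$-type, so your ``bound'' is actually an equality.
\item Alternatively, $\FPdim(\D_H)=\FPdim(\C)=N(1+d^2)$ for the dual of the indecomposable module category, and the $2N$ listed simples already exhaust this.
\end{itemize}
The counting route is shorter and reuses machinery already in the appendix (or the standard Morita invariance of global FP dimension). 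The paper's argument is more elementary and explains why every simple bimodule has the displayed form.

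Two assertions in (iii) need a line of proof; the paper is equally terse here.
\begin{itemize}
\item You need $U_\ell\sigma\cong\sigma$ only for $\ell=0$. This is used both for $[\sigma^2:U_\ell]=\delta_{\ell,0}$ and for the $W_t$ to be $N$ distinct simples. To prove it, note $[\sigma\sigma^\vee:U_\ell]=\dim\Hom(U_\ell\sigma,\sigma)$, while $\FPdim(\sigma\sigma^\vee)=Nd+1$ with $d$ irrational forces exactly one invertible constituent, namely $\one$.
\item You need $\sigma^\vee\cong\sigma$. Your ``dimension and Hom calculation'' does not pin the label down to $(0,1)$. Write $\sigma^\vee\cong W_m$. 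Then $\one$ occurs in both $\sigma\sigma^\vee\cong U_{-m}\sigma^2$ and $\sigma^\vee\sigma\cong U_m\sigma^2$. By the same irrationality count, $\sigma^2$ has a unique invertible constituent, so $U_m\cong U_{-m}$, and $m=0$ since $|L_H|$ is odd.
\end{itemize}
The sentence about $\sigma W_t^\vee\sigma^\vee$ ``regarded as a functor'' can be dropped; only $\sum_t n_t=N$ is used.
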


\begin{proof}
After forgetting the left action, $U_{P,u}$ and $V_{P,u}$ are the two simple
right-module types over the coset $P$ in Theorem~\ref{sap:prop:RH-modules}; rescaling the
$H$-summands removes the character twist.  Hence they are simple.  A bimodule
map is determined by one scalar, and the action formulas force the character
labels to agree.  Thus the displayed bimodules are pairwise nonisomorphic.

For exhaustion, the left $H$-action preserves each right-module isotypic
component, because it fixes the coset label.  A simple bimodule must therefore
be isotypic as a right module.  The explicit actions with trivial character
give coherent identifications on that right-module type; relative to these,
the left action is an ordinary representation of $H$ on the multiplicity
space.  Simplicity forces this representation to be a character, giving
precisely the displayed bimodules.  Their dimensions follow from
Theorem~\ref{sap:lem:relative-dimension}.

We next compute the pointed tensor product.  Denote the summands of
$U_{P,u}$ by $e_x$, $x\in H$.  The balanced map
\begin{equation}
 e_x\otimes e_y
 \longmapsto
 u(y)e_{x+y+c(P,Q)}
 \label{sap:eq:pointed-bimodule-tensorator}
\end{equation}
identifies
$U_{P,u}\otimes_{R_H}U_{Q,v}$ with $U_{P+Q,uv}$.  The factor $u(y)$ balances the right action on the first factor with the
left action on the second.  The map also preserves the remaining actions.
It is nonzero between objects of dimension one, so it is an isomorphism.

On three pointed bimodules, the left-parenthesized composite in
\eqref{sap:eq:pointed-bimodule-tensorator} has coefficient
$u(y+z)v(z)$.  The right-parenthesized composite has the same coefficient
times $u(c(Q,R))$.  This proves
\eqref{sa:carry-cocycle} with the convention in the statement.

Finally, we determine the products of noninvertible simples.  Directly from
\eqref{sap:eq:U-right-action} and \eqref{sap:eq:V-right-action},
\begin{equation}
 \sigma U_\ell\cong U_{-\ell}\sigma.
 \label{sap:eq:sigma-inversion}
\end{equation}
This proves the first three fusion formulas once the $W_\ell$ are defined by
\eqref{sap:eq:W-labels}.  Dualizing the explicit actions shows that
$\sigma^\vee\cong\sigma$.  Frobenius reciprocity and
\eqref{sap:eq:sigma-inversion} therefore give
\begin{equation*}
 [\sigma^2:U_\ell]
 =[\sigma U_\ell:\sigma]
 =\delta_{\ell,0}.
\end{equation*}
Write
\begin{equation}
 \sigma^2=U_0\oplus\bigoplus_{\ell\in L_H}n_\ell W_\ell.
 \label{sap:eq:sigma-square-unknown}
\end{equation}
Equation \eqref{sap:eq:sigma-inversion} implies
$\sigma^2U_r\cong U_r\sigma^2$.  Comparing the coefficients of $W_s$ in
this identity gives
\begin{equation}
 n_{s+r}=n_{s-r}
 \qquad(r,s\in L_H).
 \label{sap:eq:n-translation}
\end{equation}
The group $L_H$ has odd order $N$, so multiplication by $2$ is bijective and
\eqref{sap:eq:n-translation} makes all $n_\ell$ equal.  Frobenius--Perron
dimensions in \eqref{sap:eq:sigma-square-unknown}, together with $d^2=1+Nd$,
give common value $1$.  Commuting the invertible factor reduces every
product of noninvertible simples to this square.  Multiplying on the left by $U_{\ell-r}$
gives the last rule in
\eqref{sap:eq:dual-fusion-two}.
\end{proof}

The pointed bimodules are exactly those whose underlying objects lie in
$\mathcal P=\operatorname{Vec}_G$.  Thus
$(\D_H)_{\mathrm{pt}}={}_{R_H}\mathcal P_{R_H}$, the pointed dual that
appears in Theorem~\ref{sap:lem:restriction-morita-transport}.

\subsubsection{The carry class}
\label{sap:sec:carry}

We determine when the pointed associator of $\D_H$ is cohomologically
trivial.  This is exactly the condition that $H$ be Hall.

Put $m=|H|$ and $q=[G:H]$.  The pointed group of $\D_H$ is
$L_H=(G/H)\times\widehat H\cong C_q\times C_m$.
Here $C_a$ denotes a cyclic group of order $a$.
Choose representatives $i,j,k\in\{0,\ldots,q-1\}$ for the $C_q$
coordinates.  In the standard bimodule basis, its pointed associator is
represented by
\begin{equation*}
 \omega_H((i,r),(j,s),(k,t))
 =\zeta_m^{\,r\kappa(j,k)},
 \qquad
 \kappa(j,k)=\left\lfloor\frac{j+k}{q}\right\rfloor.
\end{equation*}
Replacing $s$ by
$s+\beta$, with $\beta:G/H\to H$ and $\beta(0)=0$, changes the carry to
$c'(P,Q)=c(P,Q)+\beta(P)+\beta(Q)-\beta(P+Q)$.
For the cochain $\psi((P,u),(Q,v))=u(\beta(Q))$, we have
\begin{equation*}
 (\delta\psi)((P,u),(Q,v),(R,w))
 =u\bigl(\beta(Q+R)-\beta(Q)-\beta(R)\bigr).
\end{equation*}
Here $\delta$ is the group-cohomology coboundary defined in
Theorem~\ref{sap:lem:pointed-truncation}.
Thus the new associator is $\omega_H\delta\psi^{-1}$.

The carry criterion and the order-nine example are stated and proved in
Section~\ref{sa:subgroup-dual}; the cochain calculation above records the
section-independence needed for that statement.

\subsubsection{Subgroup Fourier--Morita equivalence}
\label{sap:sec:subgroup-fourier-morita}

Recall that $d=\FPdim(\rho)=(N+\sqrt{N^2+4})/2$.  A \emph{full untwisted
cyclic HI matrix} is a matrix $B=(B_{x,y})_{x,y\in G}$ that satisfies all
four Evans--Gannon HI equations in the strict-pointed presentation, with the
Evans--Gannon scalar $\omega=1$.  We allow
the scalar parameter $d_B\in\{d,-d^{-1}\}$.  Explicitly, $B$ satisfies
\cite[equations~(4.7)--(4.10)]{evans2017non} with $\omega=1$ and
$\delta_\pm=d_B$.  We refer to these as the linear symmetry, row-sum,
quadratic, and quartic equations, respectively.
The reconstruction theorem \cite[Theorem~2(a), p.~18]{evans2017non}
gives a fusion category $\C(B)$.  No unitarity or algebra boundary condition
is imposed on $\C(B)$.  Its noninvertible simples have Frobenius--Perron
dimension $d$, regardless of the sign of $d_B$.  For the fixed matrix $A$, we
have $d_A=d$.

Let $H$ be Hall, so $G=H\oplus K$.
The standard subgroup algebra $R_H=\bigoplus_{h\in H}\alpha_h$ is separable,
and its dual has pointed group $L_H=K\times\widehat H\cong C_N$.

We first prove that the partial Fourier transform preserves the HI equations,
then identify the reconstructed category with the bimodule dual.

\begin{theorem}[Partial Fourier transform of HI matrices]
\label{sap:thm:partial-fourier-HI}
Define
\begin{equation}
 (\mathscr P_HB)_{(P,u),(Q,v)}
 =\frac1{|H|}\sum_{x,y\in H}
 B_{P+x,Q+y}\overline{u(y/2)}v(x/2),
\label{sap:eq:partial-fourier}
\end{equation}
where $P,Q\in K$ and $u,v\in\widehat H$.
Then $\mathscr P_HB$ is a full untwisted cyclic HI matrix with scalar
parameter $d_B$.
\end{theorem}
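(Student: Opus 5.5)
By Theorem~\ref{thm:cubic-criterion}, it is enough to show that $\mathscr P_HB$ satisfies the symmetry \eqref{eq:HImatrix1} and the cubic identity \eqref{eq:HI-cubic} for the group $L_H=K\times\widehat H$, with the same $a=d_B^{-1}$ and $\omega=1$. Here $L_H$ is nontrivial, and the cubic criterion holds over $\mathbb C$ for an arbitrary finite abelian group. Since $H$ is Hall, we have $G=H\oplus K$, and we write group elements of $G$ as pairs $(x,P)$. The transform \eqref{sap:eq:partial-fourier} keeps the $K$-coordinates and applies a Fourier transform in the $H$-coordinates. The half-characters $u(y/2)$ make sense because $|H|$ is odd. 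I will treat $\mathscr P_H$ as a change of basis on $\mathbb C^{G}\otimes\mathbb C^G$. On each $H$-factor it acts as the unitary Fourier transform $\mathbb C^H\to\mathbb C^{\widehat H}$, precomposed with the automorphism $x\mapsto x/2$, and the normalization $1/|H|$ is split as $|H|^{-1/2}$ on each index.

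\emph{Symmetry.} Apply \eqref{eq:HImatrix1} for $B$, namely $B_{g,h}=B_{-h,g-h}$, inside the defining sum. Then substitute $x'=-y$, $y'=x-y$. The character factor $\overline{u(y/2)}v(x/2)$ becomes $\overline{u'(y'/2)}v'(x'/2)$ once one sets $(Q',v')=(-Q,\,u^{-1})$ and $(P',u')=(P-Q,\,u^{-1}v)$, up to the sign conventions for the $\widehat H$-coordinate. This identification of the labels in $L_H$ is exactly what the bimodule correspondence $U_{P,u^{-1}},V_{P,u}$ in Theorem~\ref{sa:fourier-morita} suggests. The plan is to fix the group law on $L_H$ so that $(P,u)\mapsto$ ``$P-u$'' is additive, and then check that the order-three substitution intertwines correctly. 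This is a finite bookkeeping of characters.

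\emph{Cubic identity.} Expand each of the three factors on the left-hand side of \eqref{eq:HI-cubic} for $\mathscr P_HB$ into $H$-sums. Summing over the $\widehat H$-part of the summation variable $m$ produces a delta function, which collapses one $H$-sum. The result is the $H$-Fourier transform, in the four free variables, of the left-hand side of the cubic identity for $B$, with the $H$-components of $g,h,k,l$ integrated against characters. Both the right-hand product term and the $\delta$ term transform the same way: the product $\mathscr P_HB\cdot\mathscr P_HB$ becomes a convolution that matches the image of $B_{g+l,k}B_{h+k,l}$, and the delta $\delta_{g,0}\delta_{h,0}$ goes to the delta at the origin of $L_H$. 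The only place that uses oddness is the half-characters. They guarantee that sums such as $u(y/2)v(x/2)$, taken over the arguments appearing in $m,g+h,m+k,\dots$, recombine into characters of the corresponding sums, so that the exponents match term by term. Because the Fourier transform is invertible, the cubic identity for $B$, holding for all arguments, gives the cubic identity for $\mathscr P_HB$.

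\emph{Main obstacle.} I expect the main difficulty to be the exponent bookkeeping in the cubic step. One has to verify that the characters attached to the six matrix indices on each side combine into the same total character. This is where the factor $1/2$ and the choice of the group law on $L_H$ must be fixed consistently with the symmetry step. My first attempt would be to write every index as $(\text{$K$-part},\text{$H$-part})$, note that each matrix entry carries the phase $\exp$ of $\tfrac12(\text{row}\cdot v-\text{column}\cdot u)$, and check that the total phase is a bilinear form that is preserved under the substitutions relating the two sides. The scalar $a$ is unchanged, so the cubic criterion then completes the proof.
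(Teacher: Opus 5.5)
Your plan is the paper's proof: reduce to \eqref{eq:HImatrix1} and \eqref{eq:HI-cubic} by Theorem~\ref{thm:cubic-criterion}, get the symmetry from $B_{g,h}=B_{-h,g-h}$ and the substitution $(x',y')=(-y,x-y)$, and show that the cubic residual of $\mathscr P_HB$ is a partial Fourier transform of the cubic residual of $B$, with the sum over the $\widehat H$-part of $m$ producing the delta that collapses one $H$-sum. The bookkeeping you defer goes through with the ordinary product law on $K\times\widehat H$, so no twisted group law is needed; the symmetry reads $(\mathscr P_HB)_{(P,u),(Q,v)}=(\mathscr P_HB)_{(-Q,v^{-1}),(P-Q,uv^{-1})}$, and the labels with $u^{-1}$ and $u^{-1}v$ that you wrote are incorrect; also, since the residual of $\mathscr P_HB$ is a linear combination of values of the residual of $B$, only linearity of the transform is needed, not its invertibility.
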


\begin{proof}
Put $C=\mathscr P_HB$ and $a=d_B^{-1}$.
Since $|K\times\widehat H|=N\geq3$,
Theorem~\ref{thm:cubic-criterion} reduces the proof to
\eqref{eq:HImatrix1} and \eqref{eq:HI-cubic}.
The symmetry \eqref{eq:HImatrix1} follows from
\eqref{sap:eq:partial-fourier} by the change of variables
$(x',y')=(-y,x-y)$.

For a matrix $T$ indexed by a finite abelian group $J$, define
its cubic residual by
\begin{equation*}
 \mathcal R_T(g,h,k,l)
 :=\sum_{n\in J}T_{n,g+h}T_{g,n+k}T_{h,n+l}
   -T_{g+l,k}T_{h+k,l}+a\delta_{g,0}\delta_{h,0}.
\end{equation*}
Since $B$ is an HI-matrix,
Theorem~\ref{thm:cubic-criterion} gives $\mathcal R_B=0$.

For $P,Q,R,S\in K$ and $\chi,\psi,\lambda,\mu\in\widehat H$, we have
\begin{align}
 &\mathcal R_C\bigl((P,\chi),(Q,\psi),(R,\lambda),(S,\mu)\bigr)
 \notag\\
 &\quad=\frac1{|H|^2}\sum_{x,y,r,s\in H}
 \overline{\chi(r/2)}\,\overline{\psi(s/2)}\lambda(x/2)\mu(y/2)\cdot\mathcal R_B(P+x,Q+y,R+r,S+s).
 \label{sap:eq:partial-fourier-cubic-residual}
\end{align}
To verify the cubic terms, expand the three factors with intermediate label
$(M,\eta)\in K\times\widehat H$ using $H$-index pairs $(t,z)$, $(x,r_0)$,
and $(y,s_0)$.
The sum over $\eta$ is
$\sum_\eta\eta((x+y-z)/2)=|H|\delta_{z,x+y}$.
Set $r_0=t+r$, $s_0=t+s$, and $n=M+t$; the remaining sum over $n\in G$ is the
cubic term on the right of \eqref{sap:eq:partial-fourier-cubic-residual}.
For the quadratic terms, expand
$C_{(P+S,\chi\mu),(R,\lambda)}C_{(Q+R,\psi\lambda),(S,\mu)}$
using shifts $(x+s,r)$ and $(y+r,s)$.  Their character factor is
\begin{align*}
 &\overline{(\chi\mu)(r/2)}\lambda((x+s)/2)
   \overline{(\psi\lambda)(s/2)}\mu((y+r)/2)=\overline{\chi(r/2)}\overline{\psi(s/2)}
          \lambda(x/2)\mu(y/2).
\end{align*}
The two matrix entries are
$B_{P+S+x+s,R+r}B_{Q+R+y+r,S+s}$, which gives the quadratic term in
\eqref{sap:eq:partial-fourier-cubic-residual}.
Finally, $P+x=Q+y=0$ forces $P=Q=x=y=0$ since $G=H\oplus K$; summing over
$r,s$ then gives $|H|^2\delta_{\chi,1}\delta_{\psi,1}$, as required for the
delta terms. Since $\mathcal R_B=0$,
\eqref{sap:eq:partial-fourier-cubic-residual} gives
$\mathcal R_C=0$. Thus $C$ satisfies
\eqref{eq:HImatrix1} and \eqref{eq:HI-cubic}, and
Theorem~\ref{thm:cubic-criterion} completes the proof.
\end{proof}

For the rest of this subsection, put $m=|H|$ and $q=|K|$.
The bimodule calculation in Theorem~\ref{sap:prop:subgroup-bimodules}(i) uses only the
fusion rules and the strict pointed action.  Applied to $\C(B)$, it gives
simple bimodules $U_{P,u}$ and $V_{P,u}$ of dimensions $1$ and $d$.

\begin{theorem}[Subgroup Fourier--Morita equivalence]
\label{sap:thm:subgroup-fourier-morita}
There is a tensor equivalence
\begin{equation}
 {}_{R_H}\C(B)_{R_H}
 \simeq
 \C(\mathscr P_HB).
 \label{sap:eq:subgroup-fourier-morita}
\end{equation}
The equivalence from right to left sends the standard invertible and
noninvertible labels $(P,u)$ to $U_{P,u^{-1}}$ and $V_{P,u}$, respectively.
\end{theorem}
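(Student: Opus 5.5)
The plan is to prove \eqref{sap:eq:subgroup-fourier-morita} by exhibiting ${}_{R_H}\C(B)_{R_H}$ as an untwisted HI category with pointed group $L_H$. We then read off its HI matrix in the Evans--Gannon normal form and identify that matrix with $\mathscr P_HB$. The uniqueness of reconstruction, Theorem~\ref{thm:reconstruction} together with \cite[Theorem~1]{evans2017non}, then gives the equivalence.

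The first step fixes the pointed part. Proposition~\ref{sap:prop:subgroup-bimodules} gives HI fusion rules with pointed group $L_H\cong C_N$. Since $H$ is Hall, Proposition~\ref{sa:carry-criterion} lets us choose a homomorphic section $s:K\to G$, so the carry vanishes. With the tensorators \eqref{sap:eq:pointed-bimodule-tensorator} the pointed bimodules then form a strict copy of $\operatorname{Vec}_{L_H}$. The noninvertible generator is taken to be $\sigma=V_{0,1}$.

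The second step builds the Leavitt-type data in the dual. We use the splitting $\sigma\otimes_{R_H}\sigma\cong U_0\oplus\bigoplus_\ell W_\ell$. Concretely, we compute $\sigma\otimes_{R_H}\sigma$ as the image of the idempotent built from the separability splitting \eqref{sap:eq:RH-splitting} on $\rho\otimes\rho$, summed over $H$-translates. Its inclusions and projections are obtained by averaging the original $s,t_g,s',t_g'$ against characters of $H$. For $W_{(P,v)}$ the relevant maps are
\[
\tfrac{1}{\sqrt{|H|}}\sum_{x\in H}v(x/2)\,t_{P+x}.
\]
The half-characters $v(x/2)$ appear because $\alpha_h$ conjugates $t_g$ to $t_{g+2h}$. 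Substituting these into the coefficient formulas \eqref{eq:rhotg-general} extracts the six-$j$ coefficient that defines the dual HI matrix. That coefficient is the double character sum \eqref{sap:eq:partial-fourier}. The normalization $1/|H|$ comes from the two $1/\sqrt{|H|}$ factors, and the conjugate character comes from the primed maps. The scalar $d_B$, and hence $a$, is unchanged because $\FPdim_{\D_H}(\sigma)=d$ by Lemma~\ref{sap:lem:relative-dimension}, and the spherical dimension transports likewise.

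The third step concludes. By Theorem~\ref{sap:thm:partial-fourier-HI}, $\mathscr P_HB$ is already a full HI matrix. Since the dual realizes it, the Evans--Gannon uniqueness of the category attached to a normalized presentation yields the tensor equivalence. We track labels through this: the right action \eqref{sap:eq:U-right-action} on invertibles carries character $u$, whereas in the matrix the invertible $(P,u)$ acts through $u^{-1}$ because of the conjugation in the pairing. This accounts for the stated correspondence $(P,u)\mapsto U_{P,u^{-1}}$, and $(P,u)\mapsto V_{P,u}$ on noninvertibles.

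The main obstacle is the second step: choosing normalizations of the splitting maps so that the dual presentation is strict on the pointed part and has exactly the Evans--Gannon shape, including the $b$-terms and the sign of $\omega=1$. Otherwise one obtains $\mathscr P_HB$ only up to the gauge ambiguity of relabelling characters. I would first verify the boundary row $(Q,v)=(0,1)$ and one generic entry directly, then use \eqref{eq:HImatrix1} and the cubic criterion to pin down the rest.
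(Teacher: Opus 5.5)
\textbf{The gap is in Step 3, where the tensor equivalence is supposed to come from.} Your character-averaged splitting maps are the right Fourier idea; up to bookkeeping they are the paper's basis $\widehat t_{P,u}=\sum_{x\in H}u(x/2)\,t_{P+x}z_x$. But your Step~3 needs a statement of the form: \emph{an abstract HI fusion category whose normalized splitting data produce the coefficient array $C$ is tensor equivalent to $\C(C)$.} None of the results you cite gives this.

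\begin{itemize}
\item Theorem~\ref{thm:reconstruction} only goes from a matrix to a category.
\item The Evans--Gannon results, as used in this paper, either extract a matrix from a Leavitt realization, or compare two already reconstructed categories $\C(A)$ and $\C(A')$. In the first case the extraction expands $\rho(t_g)$ as an algebra element and compares reduced words.
\end{itemize}

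For ${}_{R_H}\C(B)_{R_H}$ you have neither a Leavitt realization nor a functor to or from $\C(\mathscr P_HB)$. You only have bimodule morphisms and one family of coefficients. Your maps $\frac1{\sqrt{|H|}}\sum_x v(x/2)t_{P+x}$ combine intertwiners with different sources $\alpha_{P+x}\rho$, so they become morphisms only after the $H$-indexed matrix bookkeeping is made explicit.

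To close the gap you would have to verify the full Evans--Gannon normal form in the dual:
\begin{itemize}
\item the pointed action on the splitting maps, sending label $(P,u)$ to $(P+2Q,uv^2)$;
\item the $a$- and $b$-terms in the image of $s$;
\item that the primed expansion gives the transposed array of the \emph{same} matrix.
\end{itemize}
Even then you would still need a presentation or uniqueness theorem that turns matching structure constants into a tensor functor. You flag the normalization as the main obstacle, but it stays open, and the conclusion does not follow from the cited theorems.

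\textbf{Your fallback does not repair this.} \eqref{eq:HImatrix1} together with the cubic identity is a criterion for a matrix to be an HI matrix, not a uniqueness principle. In general these equations do not determine the remaining entries from a boundary column and one generic entry. For example, all Q-system matrices share the same boundary, yet there are three inequivalent ones at $N=15$. You must compute every entry.

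\textbf{This is what the paper does.} It realizes $C=\mathscr P_HB$ concretely inside the crossed product $\mathbf L\rtimes_{\alpha|_H}H$, using $\widehat s=ms$, $\widehat{\,s'\,}=m^{-1}s'$ and the character basis. Comparing reduced-word coefficients of both $\widehat\rho(\widehat t_{0,1})$ and $\widehat\rho(\widehat{\,t'_{0,1}\,})$, followed by Fourier inversion, gives $C=\mathscr P_HB$ at every label.

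It then builds an explicit tensor functor $\Phi_H(\theta)=\bigoplus_{x\in H}\alpha_x\sigma_\theta$ into $R_H$-bimodules. The right action has coefficient $\lambda_\theta(h)^{-1}$, and the tensorator is explicit, with its pentagon following from the composition law for $\lambda_\theta$. The equivalence follows because $\Phi_H$ maps the $2N$ standard simples bijectively onto the simple bimodules. This also accounts for the $u^{-1}$ in the label correspondence, with no abstract uniqueness theorem required.
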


\begin{proof}
We realize the partial Fourier matrix in a crossed product, then construct
one tensor functor from its standard reconstruction to $R_H$-bimodules.
The simple-object correspondence will prove that this functor is an equivalence.

\smallskip
\noindent\emph{The crossed-product HI system.}
Put $a=d_B^{-1}$ and choose $b\in\bC$ with $b^2=a$.  Let $\mathbf{L}$ be the
Leavitt algebra used to reconstruct $\C(B)$, with generators
$s,s',t_g,t_g'$ for $g\in G$.  The primed generators are algebraic companions;
they are not assumed to be adjoints.  No $*$-structure is used.
In the untwisted convention,
\begin{align}
 \alpha_z(s)&=s,
 &\alpha_z(s')&=s',
 \notag\\
 \alpha_z(t_g)&=t_{g+2z},
 &\alpha_z(t_g')&=t_{g+2z}',\notag
\\
 \rho(s)&=as+b\sum_gt_gt_g,
 &\rho(s')&=as'+b\sum_gt_g't_g',
 \label{sap:eq:partial-algebraic-rho-s}\\
 \rho(t_g)
 &=bs t_{-g}'+t_{-g}ss'
   +\sum_{h,k}B_{h+g,k+g}t_ht_{h+k+g}t_k',
 \label{sap:eq:partial-algebraic-rho-t}\\
 \rho(t_g')
 &=bt_{-g}s'+ss't_{-g}'
   +\sum_{h,k}B_{k+g,h+g}t_kt_{g+h+k}'t_h'.
 \label{sap:eq:partial-algebraic-rho-t-prime}
\end{align}
These are the formulas in
\cite[equations~(4.3)--(4.6), with $\omega=1$]{evans2017non}.

Form the algebraic crossed product
\begin{equation*}
 \widetilde{\mathbf{L}}=\mathbf{L}\rtimes_{\alpha|_H}H,
 \qquad
 (cz_x)(ez_y)
 =c\alpha_x(e)z_{x+y}.
\end{equation*}
and denote its canonical invertible elements by $z_x$, $x\in H$.
Thus $z_xc=\alpha_x(c)z_x$ for $c\in\mathbf{L}$.  For
$P\in K$ and $u\in\widehat H$, define endomorphisms of $\widetilde{\mathbf{L}}$ by
\begin{align*}
 \widehat\alpha_{P,u}(c)&=\alpha_P(c),
 &
 \widehat\alpha_{P,u}(z_x)&=u(x)z_x,
\\
 \widehat\rho(c)&=\rho(c),
 &
 \widehat\rho(z_x)&=z_{-x}
\end{align*}
for $c\in\mathbf{L}$.  The equality $G=H\oplus K$ and the relation
$\rho\alpha_x=\alpha_{-x}\rho$ show that these assignments preserve the
crossed-product relations.  They satisfy
\begin{equation*}
 \widehat\alpha_{P,u}\widehat\alpha_{Q,v}
 =\widehat\alpha_{P+Q,uv},
 \qquad
 \widehat\alpha_{P,u}\widehat\rho
 =\widehat\rho\widehat\alpha_{-P,u^{-1}}.
\end{equation*}

Define the character basis
\begin{align}
 \widehat s&=ms,
 &\widehat{\,s'\,}&=m^{-1}s',
 \label{sap:eq:partial-character-s}\\
 \widehat t_{P,u}
 &=\sum_{x\in H}u(x/2)t_{P+x}z_x,
 &
 \widehat{\,t'_{P,u}\,}
 &=\frac1m\sum_{x\in H}\overline{u(x/2)}
   z_{-x}t_{P+x}'.
 \label{sap:eq:partial-character-t}
\end{align}
Thus $\widehat{\,s'\,}$ and $\widehat{\,t'_{P,u}\,}$ denote the transformed
primed generators, not the primes of $\widehat s$ and
$\widehat t_{P,u}$.
\begin{lemma}[Character-basis relations]
\label{sap:lem:partial-character-basis}
The elements in \eqref{sap:eq:partial-character-s} and
\eqref{sap:eq:partial-character-t} form a faithful Leavitt family and split
$\widehat\rho^2$ with labels in $L_H=K\times\widehat H$.  They are intertwiners for this splitting, satisfy the reconstruction formulas for
$\widehat\rho(\widehat s)$ and $\widehat\rho(\widehat{\,s'\,})$, and obey
\begin{equation*}
 \widehat\alpha_{Q,v}(\widehat t_{P,u})
 =\widehat t_{P+2Q,uv^2},
 \qquad
 \widehat\alpha_{Q,v}(\widehat{\,t'_{P,u}\,})
 =\widehat{\,t'_{P+2Q,uv^2}\,}.
\end{equation*}
\end{lemma}

\begin{proof}
Character orthogonality gives the contractions
$\widehat{\,s'\,}\widehat s=1$ and
$\widehat{\,t'_{P,u}\,}\widehat t_{Q,v}=\delta_{P,Q}\delta_{u,v}$,
the vanishing mixed contractions, and the completeness relation.  It also
transforms the two quadratic sums in
\eqref{sap:eq:partial-algebraic-rho-s} into the corresponding sums in the new
basis.  Checking the intertwiner equations on
$\mathbf{L}$ and the elements $z_x$ gives the stated types.
The same formulas give the pointed action.  The resulting homomorphism from the one-vertex Leavitt algebra is
injective by
\cite[Theorem~3.11 and Corollary~3.13(iii), pp.~329--333]{abrams2005leavitt}.
\end{proof}

It remains to identify the transformed coefficient matrix.
Let $C$ be the matrix obtained by expanding
$\widehat\rho(\widehat t_{0,1})$ and
$\widehat\rho(\widehat{\,t'_{0,1}\,})$ in this basis.  We compare coefficients using the reduced-word basis of the Leavitt algebra
and the unique crossed-product expansion
$\widetilde{\mathbf{L}}=\bigoplus_{x\in H}\mathbf{L}z_x$.  In the unprimed calculation, the source cubic words are
$t_ht_{h+k+g}t_k'z_{-g}$.  The term at shifts $x,y,z\in H$ in
$\widehat t_{P,u}\widehat t_{P+Q,uv}\widehat{\,t'_{Q,v}\,}$ is
\begin{equation*}
 \frac1m u((x+y)/2)v((y-z)/2)
 t_{P+x}t_{P+Q+2x+y}t'_{Q+2x+2y-z}z_{x+y-z}.
\end{equation*}
Matching words gives
$g=z-x-y$, $h=P+x$, and $k=Q+2x+2y-z$.
Set $X=x+g=z-y$ and $Y=x+y$.  The source coefficient becomes
$B_{P+X,Q+Y}$, and the character factor gives
\begin{equation}
 B_{P+X,Q+Y}
 =\frac1m\sum_{u,v\in\widehat H}
 C_{(P,u),(Q,v)}u(Y/2)\overline{v(X/2)}.
 \label{sap:eq:partial-fourier-inverse}
\end{equation}
For the primed calculation, the source term is
$m^{-1}B_{k+g,h+g}t_{k+2g}t_{h+k+3g}'t_{h+2g}'z_g$.
The term at shifts $x,y,z$ in
$\widehat t_{Q,v}\widehat{\,t'_{P+Q,uv}\,}
\widehat{\,t'_{P,u}\,}$ is
\begin{equation*}
 \frac1{m^2}v((x-y)/2)\overline{u((y+z)/2)}
 t_{Q+x}t'_{P+Q+2x-y}t'_{P+2x-2y-z}z_{x-y-z}.
\end{equation*}
Now $g=x-y-z$, $k=Q-x+2y+2z$, and $h=P+z$.
Putting $X=y+z$ and $Y=x-y$ gives the source entry $B_{Q+X,P+Y}$.
After cancelling the source factor $m^{-1}$, coefficient comparison
gives \eqref{sap:eq:partial-fourier-inverse} with the two labels interchanged.
This agrees with the reversed coefficient indices in
\eqref{sap:eq:partial-algebraic-rho-t-prime}, so the primed array is also $C$.
Fourier inversion on $H\times H$ therefore gives
$C=\mathscr P_HB$.  The noncubic terms in
\eqref{sap:eq:partial-algebraic-rho-t} and
\eqref{sap:eq:partial-algebraic-rho-t-prime} transform directly, while
Theorem~\ref{sap:lem:partial-character-basis} handles the quadratic terms.
Doubling and squaring are bijective on the two label groups.
Thus $\widehat\rho$ and $\widehat\alpha_{P,u}$ satisfy the reconstruction
formulas for $\mathscr P_HB$ at every label $(P,u)$.

\smallskip
\noindent\emph{The tensor functor to bimodules.}
Put $C=\mathscr P_HB$.  By Theorem~\ref{sap:thm:partial-fourier-HI} and
\cite[Theorem~2(a)]{evans2017non}, $\C(C)$ is a fusion category with the
standard $2N$ simple labels.  We construct a tensor functor to the bimodule
category and check that it identifies these two lists of simples.

The character basis in Theorem~\ref{sap:lem:partial-character-basis} gives a unital
embedding $j:\mathbf L_C\hookrightarrow\widetilde{\mathbf L}$ with
\begin{equation}
 j\alpha^C_{P,u}=\widehat\alpha_{P,u}j,
 \qquad\text{ and }\qquad j\rho^C=\widehat\rho j.
 \label{sap:eq:partial-reconstruction-equivariance}
\end{equation}
Here word multiplication means $\theta\phi=\theta\circ\phi$.
For a source word $\theta$, let $\widehat\theta$ be the corresponding
hatted word and put $\sigma_\theta=\widehat\theta|_{\mathbf L}$.
Equation~\eqref{sap:eq:partial-reconstruction-equivariance} then gives
$j\theta=\widehat\theta j$ for every word.  Source morphisms
$f:\theta\to\phi$ are Leavitt intertwiners, with
$f\theta(a)=\phi(a)f$ for every $a\in\mathbf L_C$.
We index the hatted word's character by its source word.
Thus $\lambda_\theta\in\widehat H$ and the sign
$\epsilon_\theta\in\{1,-1\}$ are defined by
\begin{equation*}
 \widehat\theta(z_x)
 =\lambda_\theta(x)z_{\epsilon_\theta x}.
\end{equation*}
For the generators they are $(u,1)$ and $(1,-1)$, respectively, and
\begin{equation}
 \lambda_{\theta\phi}(x)
 =\lambda_\phi(x)\lambda_\theta(\epsilon_\phi x),
 \qquad\text{and }\qquad \epsilon_{\theta\phi}=\epsilon_\theta\epsilon_\phi.
 \label{sap:eq:partial-lambda-composition}
\end{equation}

We first check that $j$ sends source morphisms to intertwiners on the
whole crossed product.  Repeated fusion splittings give maps
$i^\theta_{\tau,r}:\tau\to\theta$ and
$p^\theta_{\tau,r}:\theta\to\tau$ through the standard simple labels.
They are composites and tensor products of the Leavitt splitting maps and
their translates.  By Theorem~\ref{sap:lem:partial-character-basis} and
\eqref{sap:eq:partial-reconstruction-equivariance}, their images intertwine on
$\mathbf L$ and every $z_x$.  Since the standard labels are
pairwise nonisomorphic simples, every source morphism has an expansion
\begin{equation*}
 f:\theta\to\phi,
 \qquad
 f=\sum_{\tau,r,t}i^\phi_{\tau,r}c_{\tau,r,t}p^\theta_{\tau,t},
 \qquad c_{\tau,r,t}\in\bC.
\end{equation*}
Intertwiner identities are preserved by composition and tensor product,
so $j(f)\widehat\theta(b)=\widehat\phi(b)j(f)$ for every
$b\in\widetilde{\mathbf L}$. Define
$\Phi_H(\theta)=\bigoplus_{x\in H}\alpha_x\sigma_\theta$.
The left $R_H$-action translates the index with coefficient $1$; the right
$\alpha_h$-action is
\begin{equation*}
 \alpha_x\sigma_\theta\otimes\alpha_h
 \longrightarrow\alpha_{x+\epsilon_\theta h}\sigma_\theta,
 \qquad\text{ with coefficient }\lambda_\theta(h)^{-1}.
\end{equation*}
These actions commute, since
$\sigma_\theta\alpha_h=\alpha_{\epsilon_\theta h}\sigma_\theta$.
For $f:\theta\to\phi$, write
\begin{equation*}
 j(f)=\sum_{r\in H}t_rz_r,
 \qquad\text{ and }\qquad \Phi_H(f)_{x,x+r}=\alpha_x(t_r).
\end{equation*}
Rows index target summands.  The intertwiner equation on $\mathbf L$
gives $t_r\in\Hom(\alpha_r\sigma_\theta,\sigma_\phi)$, and that on
$z_{-h}$ gives
\begin{equation}
 \lambda_\theta(h)^{-1}t_{r+\epsilon_\theta h}
 =\lambda_\phi(h)^{-1}
   \alpha_{-\epsilon_\phi h}(t_{r+\epsilon_\phi h}).
 \label{sap:eq:direct-right-linearity}
\end{equation}
These are exactly the block typing and right-linearity conditions;
left linearity follows from translation.  Crossed-product multiplication
and block composition both use the convolution
$\sum_{q+r=v}s_q\alpha_q(t_r)$, so the construction preserves composition
and identities.

The tensorator sends the $(x,y)$-component to the
$(x+\epsilon_\theta y)$-summand with coefficient
$\lambda_\theta(y)^{-1}$:
\begin{equation*}
 J_{\theta,\phi}:\Phi_H(\theta)\otimes_{R_H}\Phi_H(\phi)
 \longrightarrow\Phi_H(\theta\phi).
\end{equation*}
The character law balances the two middle actions, and
\eqref{sap:eq:partial-lambda-composition} preserves the remaining right action.
The inverse on the $v$-summand is the separability average
\begin{equation*}
 \frac1m\sum_{y\in H}\lambda_\theta(y)
       (v-\epsilon_\theta y,y).
\end{equation*}
The two composites are the identity and the relative-tensor projector
from \eqref{sap:eq:RH-splitting}.  Naturality in the first variable is
\eqref{sap:eq:direct-right-linearity} with $h=y$, after reindexing.
For second-variable naturality, write $j(g)=\sum_qs_qz_q$.
On the $(x,y+q)$-component the two coefficients of
$\alpha_{x+\epsilon_\theta y}\sigma_\theta(s_q)$ are
$\lambda_\theta(y)^{-1}$ and
$\lambda_\theta(y+q)^{-1}\lambda_\theta(q)$, which agree.
Here all-word equivariance gives
$j(f\otimes g)=j(f)\widehat\theta(j(g))$.
The pentagon is
\begin{equation*}
 \lambda_\theta(y)^{-1}\lambda_{\theta\phi}(z)^{-1}
 =\lambda_\phi(z)^{-1}
  \lambda_\theta(y+\epsilon_\phi z)^{-1},
\end{equation*}
which follows from \eqref{sap:eq:partial-lambda-composition}.
For the identity word the bimodule is $R_H$, and the tensorators are the
module-action maps, proving the unit diagrams.  Thus $\Phi_H$ extends
through additive and idempotent completion to a tensor functor on $\C(C)$.

Its values on standard simples are
\begin{equation*}
 \Phi_H(\alpha^C_{P,u})\cong U_{P,u^{-1}},
 \qquad \Phi_H(\alpha^C_{P,u}\rho^C)\cong V_{P,u}.
\end{equation*}
The second character is $u$, since
$\lambda_{\alpha^C_{P,u}\rho^C}(h)=u(-h)$.
By Theorem~\ref{sap:prop:subgroup-bimodules} applied to $\C(B)$, these are pairwise nonisomorphic simples
and exhaust the target.  On Hom spaces between simples, $\Phi_H$
is therefore either $0\to0$ or the unital linear map $\bC\to\bC$.
Semisimplicity proves full faithfulness on all objects, and the same list
proves essential surjectivity.  This proves
\eqref{sap:eq:subgroup-fourier-morita}.  Together with \eqref{sap:eq:W-labels},
the correspondence also gives $W_{(P,u)}\cong V_{P,u^{-1}}$.
\end{proof}

\subsubsection{Consequences of subgroup Fourier duality}
\label{sap:sec:fourier-consequences}

We retain the setup of Subsubsection~\ref{sap:sec:subgroup-fourier-morita}.

\begin{corollary}[Hermiticity under the partial Fourier transform]
\label{sap:cor:partial-fourier-hermiticity}
For $B$ as in Theorem~\ref{sap:thm:partial-fourier-HI}, define
$(B^*)_{g,h}=\overline{B_{h,g}}$.  Then
\begin{equation}
 (\mathscr P_HB)^*=\mathscr P_H(B^*).
 \label{sap:eq:partial-fourier-star}
\end{equation}
Consequently, $\mathscr P_HB$ is Hermitian if and only if $B$ is Hermitian.
\end{corollary}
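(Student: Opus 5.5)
We compute both sides of \eqref{sap:eq:partial-fourier-star} entrywise from the definition \eqref{sap:eq:partial-fourier} and compare them after one change of summation variables. No HI equations are needed: the identity is a formal property of the partial Fourier transform.

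\emph{Left-hand side.} For labels $(P,u),(Q,v)\in K\times\widehat H$, the definition of $*$ gives
\begin{equation*}
 \bigl((\mathscr P_HB)^*\bigr)_{(P,u),(Q,v)}
 =\overline{(\mathscr P_HB)_{(Q,v),(P,u)}}
 =\frac1{|H|}\sum_{x,y\in H}\overline{B_{Q+x,P+y}}\;v(y/2)\,\overline{u(x/2)}.
\end{equation*}
Here we use that characters of the finite group $H$ take values in roots of unity, so $\overline{\overline{v(y/2)}}=v(y/2)$ and the conjugate of $u(x/2)$ is $u(x/2)^{-1}$.

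\emph{Right-hand side.} Again by definition,
\begin{equation*}
 \bigl(\mathscr P_H(B^*)\bigr)_{(P,u),(Q,v)}
 =\frac1{|H|}\sum_{x',y'\in H}\overline{B_{Q+y',P+x'}}\;\overline{u(y'/2)}\,v(x'/2).
\end{equation*}
Renaming $(x',y')=(y,x)$, which is a bijection of $H\times H$, turns this into exactly the previous expression. This proves \eqref{sap:eq:partial-fourier-star}.

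\emph{Equivalence of Hermiticity.} If $B=B^*$, then applying \eqref{sap:eq:partial-fourier-star} gives $(\mathscr P_HB)^*=\mathscr P_HB$. For the converse we need $\mathscr P_H$ to be injective. It is a linear map that restricts, on each block $P,Q\in K$, to a two-variable Fourier transform on $H\times H$ composed with the bijection $x\mapsto x/2$. The block is normalized by $1/|H|$, while a full two-variable transform would carry $1/|H|^2$, so each block is a nonzero scalar multiple of an invertible map. Fourier inversion on $H\times H$ is exactly what the proof of Theorem~\ref{sap:thm:subgroup-fourier-morita} used in \eqref{sap:eq:partial-fourier-inverse}. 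So if $\mathscr P_HB$ is Hermitian, then $\mathscr P_H(B^*)=\mathscr P_H(B)$, and injectivity gives $B^*=B$.

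The only points requiring care are the unitarity of character values over $\mathbb C$ and the bookkeeping of which index carries which character in the swap. Apart from that, the argument is a direct computation.
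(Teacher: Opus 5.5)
Your proposal is correct and is the paper's argument: conjugate \eqref{sap:eq:partial-fourier}, exchange the summation variables $x$ and $y$, and for the converse use injectivity of $\mathscr P_H$, which is blockwise Fourier inversion on $H\times H$ as in \eqref{sap:eq:partial-fourier-inverse}. The appeal to roots of unity is not needed, since $\overline{\overline{z}}=z$ holds for every complex $z$.
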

The case $d_B=d>0$ is called the positive branch.  In this case, the
transformed matrix has the same scalar parameter.  Thus
the partial Fourier transform along a Hall subgroup cannot produce a
positive-branch non-Hermitian matrix from a Hermitian one.

\begin{proof}
Conjugating \eqref{sap:eq:partial-fourier} and exchanging $x$ and $y$ proves
\eqref{sap:eq:partial-fourier-star}.  Injectivity from
\eqref{sap:eq:partial-fourier-inverse}
gives the Hermiticity equivalence.
\end{proof}

The full Fourier transform is the case $H=G$.

\begin{corollary}[Full Fourier--Morita equivalence]
\label{sap:thm:algebraic-full-fourier-morita}
\label{sap:cor:algebraic-full-fourier-centers}
For $u,v\in\widehat G$, put
\begin{equation*}
 (\mathscr F_GB)_{u,v}
 =\frac1N\sum_{x,y\in G}
 B_{x,y}\overline{u(y/2)}v(x/2).
\end{equation*}
Then $\mathscr F_GB$ is a full untwisted cyclic HI matrix with scalar $d_B$, and ${}_{R_G}\C(B)_{R_G}\simeq\C(\mathscr F_GB)$ as tensor categories, with the simple-object correspondence in
Theorem~\ref{sap:thm:subgroup-fourier-morita}.  Moreover,
under $G\cong\widehat{\widehat{G\,}}\!\!$ one has $\mathscr F_{\widehat G}\mathscr F_GB=B$. The fusion categories
$\C(B)$ and $\C(\mathscr F_GB)$ are
categorically Morita equivalent, with
\begin{equation}
 \mathcal Z(\C(B))
 \simeq_{\mathrm{br}}
 \mathcal Z(\C(\mathscr F_GB)).
 \label{sap:eq:algebraic-full-fourier-centers}
\end{equation}
\end{corollary}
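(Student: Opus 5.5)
The statement to prove is Corollary~\ref{sap:cor:algebraic-full-fourier-centers}, which is the case $H=G$ of the subgroup results. With $H=G$ we have $K=0$, and $H$ is trivially a Hall subgroup. The transform $\mathscr P_G$ then coincides with $\mathscr F_G$, and the label group is $L_G=\widehat G$. So the plan is to specialize the earlier results and add two separate checks: Fourier involutivity and the center statement.

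\emph{Step 1: the first two assertions.} Theorem~\ref{sap:thm:partial-fourier-HI} with $H=G$ shows that $\mathscr F_GB$ is a full untwisted cyclic HI matrix with scalar $d_B$. The group $\widehat G$ is cyclic of odd order $N\geq3$, so the cubic criterion applies. Theorem~\ref{sap:thm:subgroup-fourier-morita} with $H=G$ gives the tensor equivalence ${}_{R_G}\C(B)_{R_G}\simeq\C(\mathscr F_GB)$, together with its simple-object correspondence.

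\emph{Step 2: involutivity.} I would compute $\mathscr F_{\widehat G}\mathscr F_GB$ directly. Write $g,h\in G\cong\widehat{\widehat G}$, acting on characters by evaluation. The $(g,h)$-entry is
\[\frac1{N^2}\sum_{u,v\in\widehat G}\sum_{x,y\in G}B_{x,y}\,\overline{u(y/2)}\,v(x/2)\,\overline{h(v/2)}\,g(u/2).\]
Here $h(v/2)=v(h)^{1/2}=v(h/2)$, since halving commutes with evaluation in odd order. The sum over $v$ gives $N\delta_{x,h}$, and the sum over $u$ gives $N\delta_{y,g}$, up to ordering. This is the main point to verify: the sign and order conventions must actually return $B_{g,h}$, not the transpose or $B_{-g,-h}$. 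If they do not, I would check that the stated identification $G\cong\widehat{\widehat G}$ is the one that absorbs the discrepancy. The conjugation pattern, $\overline{u}$ on the second index and $v$ on the first, suggests that the indices swap twice and so return to the original.

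\emph{Step 3: the center statement.} Categorical Morita equivalence follows because $\C(\mathscr F_GB)$ is equivalent to the dual ${}_{R_G}\C(B)_{R_G}$, with respect to the indecomposable module category $\operatorname{Mod}_{\C(B)}(R_G)$. By Lemma~\ref{sap:lem:pointed-truncation}(i), this module category is indecomposable, since $R_G$ is connected and separable. The braided equivalence of Drinfeld centers then follows from the standard invariance of the center under passage to the dual category \cite{EGNO}. This requires only the fusion-category structure, with no unitarity assumption.
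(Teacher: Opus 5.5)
Your route is the paper's: specialize Theorems~\ref{sap:thm:partial-fourier-HI} and~\ref{sap:thm:subgroup-fourier-morita} to the Hall subgroup $H=G$, $K=\{0\}$, then prove involutivity by character orthogonality. The center statement then comes from the dual-category description and Morita invariance of the center \cite[Proposition~8.5.3]{EGNO}. Steps~1 and~3 are fine.

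The one defect is in Step~2, which you leave unresolved. Your displayed sum has the output indices in the wrong slots. In the defining formula, the \emph{first} output label is conjugated against the \emph{second} input index. So the transform on $\widehat G$, using $g(v/2)=v(g/2)$, reads
\[
 (\mathscr F_{\widehat G}C)_{g,h}=\frac1N\sum_{u,v\in\widehat G}C_{u,v}\,\overline{v(g/2)}\,u(h/2).
\]
Your factors $\overline{h(v/2)}\,g(u/2)$ instead compute the $(h,g)$-entry, which is why you are led toward $B_{h,g}$. With the correct placement, substitute $C=\mathscr F_GB$. The $v$-sum is $\sum_v v((x-g)/2)=N\delta_{x,g}$ and the $u$-sum is $\sum_u u((h-y)/2)=N\delta_{y,h}$. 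So the entry is exactly $B_{g,h}$ under the canonical evaluation identification, with no transpose and no sign.

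Your fallback would not have worked anyway. Any identification $G\cong\widehat{\widehat G}$ relabels both indices simultaneously, so it cannot turn $B^{\top}$ into $B$, and HI matrices are not symmetric in general. A genuine transpose would therefore have made the involutivity claim false, not a matter of convention.
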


\begin{proof}
Apply Theorems~\ref{sap:thm:partial-fourier-HI} and \ref{sap:thm:subgroup-fourier-morita} with
$H=G$ and $K=\{0\}$.
Applying character orthogonality twice recovers the two original matrix
indices and proves involutivity.  The bimodule equivalence gives categorical Morita
equivalence \cite[Proposition~7.11.1 and Remark~7.12.5]{EGNO}, and invariance
of the centre under Morita equivalence \cite[Proposition~8.5.3]{EGNO},
with our reverse-composition convention, gives
\eqref{sap:eq:algebraic-full-fourier-centers}.
\end{proof}

Apply these results to $B=A$, with $d_A=d$.
Then \eqref{sap:eq:partial-fourier} is the formula in the subgroup-dual
description of Section~\ref{sa:subgroup-dual}.

We also compute the composition of the partial and full transforms.
Retain the Hall decomposition $G=H\oplus K$ and identify
$\widehat{L_H}=\widehat K\times H$ by the pairing
$\langle(\phi,x),(P,u)\rangle=\phi(P)u(x)$.  Let
$\Theta_0:\widehat K\times H\to H\times\widehat K=L_K$ be
$\Theta_0(\phi,x)=(x,\phi)$.

\begin{lemma}[Subgroup-Fourier composition]
\label{sap:lem:composition}
The full symplectic Fourier transform on $L_H$ satisfies
\begin{equation*}
 \mathscr F_{L_H}\mathscr P_HA
 =\Theta_0^*\mathscr P_KA.
\end{equation*}
\end{lemma}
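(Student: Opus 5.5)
The plan is to verify the identity entrywise by direct expansion, reducing it to character orthogonality on $\widehat H$. No HI equation is needed: both sides are linear in $A$, and both are explicit Fourier sums. Throughout I write $m=|H|$ and $q=|K|$, so $N=mq$. Halving is taken in $H$, $K$, $\widehat H$ and $\widehat K$ separately; this is consistent because $G=H\oplus K$ has odd order.

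\emph{Step 1: expand the left side.} Fix labels $U=(\phi,x)$ and $V=(\psi,y)$ in $\widehat{L_H}=\widehat K\times H$. Summation labels are $X=(P,u)$ and $Y=(Q,v)$ in $L_H$. Following the convention of the full transform in Corollary~\ref{sap:thm:algebraic-full-fourier-morita}, where the row label pairs with the column index and vice versa,
\begin{equation*}
 (\mathscr F_{L_H}C)_{U,V}
 =\frac1N\sum_{X,Y\in L_H}C_{X,Y}\,\overline{\langle U,Y/2\rangle}\,\langle V,X/2\rangle .
\end{equation*}
Here $\langle U,Y/2\rangle=\phi(Q/2)v(x/2)$ and $\langle V,X/2\rangle=\psi(P/2)u(y/2)$.

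\emph{Step 2: substitute and sum over characters.} Substitute $C=\mathscr P_HA$ from \eqref{sap:eq:partial-fourier}, with inner summation variables $x',y'\in H$; it contributes the factor $\overline{u(y'/2)}v(x'/2)$. Interchange the finite sums. The variable $u$ now occurs only in $u((y-y')/2)$, and $v$ occurs only in $v((x'-x)/2)$. Orthogonality on $\widehat H$ gives $m\delta_{y',y}$ and $m\delta_{x',x}$, respectively. The prefactor becomes $m^2/(Nm)=1/q$, leaving
\begin{equation*}
 (\mathscr F_{L_H}\mathscr P_HA)_{(\phi,x),(\psi,y)}
 =\frac1q\sum_{P,Q\in K}A_{P+x,\,Q+y}\,\overline{\phi(Q/2)}\,\psi(P/2).
\end{equation*}

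\emph{Step 3: compare with the right side.} Read $\Theta_0^*M$ as $(\Theta_0^*M)_{U,V}=M_{\Theta_0U,\Theta_0V}$. Definition \eqref{sap:eq:partial-fourier}, with the roles of $H$ and $K$ exchanged, gives $(\mathscr P_KA)_{(x,\phi),(y,\psi)}$. This is exactly the display in Step 2 after the relabelling $x\leftrightarrow P$, $y\leftrightarrow Q$ of the summation variables; the relabelling is harmless because $A_{x+P,y+Q}=A_{P+x,Q+y}$ in the abelian group $G$.

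I expect the main obstacle to be bookkeeping of conventions rather than any substantive computation. Three choices must be fixed consistently: which index of the full transform pairs with which label (getting this wrong produces a spurious transpose, $A_{P+y,Q+x}$), the order of the two coordinates in the pairing $\langle(\phi,x),(P,u)\rangle=\phi(P)u(x)$, and the placement of complex conjugates. I will fix these exactly as in Corollary~\ref{sap:thm:algebraic-full-fourier-morita} and \eqref{sap:eq:partial-fourier} before expanding. If a residual transpose or inversion survives anyway, I would first absorb it using the order-three symmetry \eqref{eq:HImatrix1} and the involutivity $\mathscr F_{\widehat G}\mathscr F_G=\mathrm{id}$, rather than redefine $\Theta_0$.
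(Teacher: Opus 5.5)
Your proposal is correct and follows the paper's proof exactly. You substitute \eqref{sap:eq:partial-fourier} into the full transform, and the sums over $u$ and $v$ then force the inner $H$-indices to equal $y$ and $x$. This leaves $\frac1q\sum_{P,Q\in K}A_{P+x,Q+y}\overline{\phi(Q/2)}\psi(P/2)$, which is precisely $(\mathscr P_KA)_{\Theta_0(\phi,x),\Theta_0(\psi,y)}$, so the fallback in your last paragraph is never needed.
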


\begin{proof}
For $\phi,\psi\in\widehat K$ and $x,y\in H$, substitute
\eqref{sap:eq:partial-fourier} into the full transform.  The sums over $v$ and
$u$ force the first and second $H$-indices to be $x$ and $y$, respectively.
Thus
\begin{align*}
 (\mathscr F_{L_H}\mathscr P_HA)_{(\phi,x),(\psi,y)}
 &=\frac1q\sum_{P,Q\in K}
 A_{P+x,Q+y}\overline{\phi(Q/2)}\psi(P/2)\\
 &=(\mathscr P_KA)_{\Theta_0(\phi,x),\Theta_0(\psi,y)}.\qedhere
\end{align*}
\end{proof}

\begin{remark}[Pivotal and ribbon structures]
The braided equivalence in \eqref{sap:eq:algebraic-full-fourier-centers} is not
asserted to preserve a chosen pivotal or ribbon normalization.
\end{remark}

\begin{proof}[Proof of the subgroup-dual description]
The dual is the bimodule category
$\D_H$.  Its fusion rules and carry associator are given by
Theorem~\ref{sap:prop:subgroup-bimodules}; Theorem~\ref{sa:carry-criterion} makes this associator trivial
exactly for Hall $H$.  In that case,
Theorems~\ref{sap:thm:partial-fourier-HI} and \ref{sap:thm:subgroup-fourier-morita} give
\eqref{sap:eq:partial-fourier}, including
the boundary cases $H=\{0\}$ and $H=G$.  The scalar parameter is unchanged,
and Hermiticity follows from Theorem~\ref{sap:cor:partial-fourier-hermiticity} because
$A$ is Hermitian.
\end{proof}

\subsection{The algebra \texorpdfstring{$Q$}{Q} and its dual category}
\label{sap:sec:Q-algebra}

We describe the right modules over $Q=\one\oplus\rho$ and identify its dual
category.  We first recall the corresponding matrix boundary condition.

By Theorem~\ref{thm:algebraic-q-system}, the assumed separable algebra on $Q$
gives the Q-system boundary condition
\begin{equation*}
 A_{g,0}=\delta_{g,0}-\frac1{d-1}
 \qquad(g\in G).
\end{equation*}
Conversely, this boundary gives a Q-system on $\one\oplus\rho$
by the criterion stated in \cite[Section~6.1, equation~(6.1)]{izumi2018classification}.
Hermiticity and cyclic symmetry also give
$A_{0,g}=A_{g,g}=-(d-1)^{-1}$ for every $g\neq0$. The free-module argument used for $R_H$ also gives the $Q$-module rank in
\eqref{sa:classification-ranks}.

\begin{proposition}[The category of $Q$-modules]
\label{sap:prop:Q-modules}
The category $\operatorname{Mod}_{\C}(Q)$ has $N+1$ simple objects.  Its
restriction to the pointed subcategory has one regular orbit of size $N$ and
one fixed orbit.
\end{proposition}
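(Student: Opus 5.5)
We will use free modules and Frobenius reciprocity, with separability guaranteeing exhaustiveness, exactly as for $R_H$. For each simple $Y\in\C$ the free module $Y\otimes Q$ satisfies
\begin{equation*}
 \Hom_Q(Y\otimes Q,Z\otimes Q)\cong\Hom_{\C}(Y,Z\otimes Q)=\Hom_{\C}(Y,Z)\oplus\Hom_{\C}(Y,Z\rho).
\end{equation*}
Since $Q$ is separable, every right $Q$-module is a summand of a free module \cite[Proposition~7.8.30]{EGNO}. Hence the simple modules are exactly the simple summands of the $2N$ free modules $U_g=\alpha_g\otimes Q$ and $W_g=\alpha_g\rho\otimes Q$.

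First, we compute the endomorphism algebras from the fusion rules. For $U_g$, $\End_Q(U_g)\cong\Hom(\alpha_g,\alpha_g)\oplus\Hom(\alpha_g,\alpha_g\rho)=\bC$, so $U_g$ is simple. Moreover $\Hom_Q(U_g,U_h)=\delta_{g,h}\bC$, because $\alpha_g$ is never a summand of $\alpha_h\rho$; thus the $U_g$ give $N$ pairwise nonisomorphic simples. For $W_g$, we get $\dim\End_Q(W_g)=\dim\Hom(\alpha_g\rho,\alpha_g\rho)+\dim\Hom(\alpha_g\rho,\alpha_g\rho\rho)=1+1=2$, using $\rho^2\ni\alpha_0\rho$ once. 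Also $\dim\Hom_Q(U_g,W_h)=\dim\Hom(\alpha_g,\alpha_h\rho\rho)=\delta_{g,h}$, since $\alpha_h\rho^2=\alpha_h\oplus\cdots$. Therefore $W_g\cong U_g\oplus X_g$ with $X_g$ simple and not isomorphic to any $U_k$. (The endomorphism algebra is $2$-dimensional semisimple, hence $\bC^2$, and $U_g$ occurs exactly once.)

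Next, we show that all $X_g$ are isomorphic. We have $\dim\Hom_Q(W_g,W_h)=\dim\Hom(\alpha_g\rho,\alpha_h\rho)+\dim\Hom(\alpha_g\rho,\alpha_h\rho^2)=\delta_{g,h}+1$, because $\alpha_h\rho^2=\alpha_h\oplus\bigoplus_k\alpha_{h+k}\rho$ contains $\alpha_g\rho$ exactly once. For $g\ne h$, the decomposition $W_g=U_g\oplus X_g$, $W_h=U_h\oplus X_h$ together with $\Hom(U_g,U_h)=0$, $\Hom(U_g,X_h)=0$ and $\Hom(X_g,U_h)=0$ forces $\dim\Hom_Q(X_g,X_h)=1$, so $X_g\cong X_h=:X$. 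The vanishing of $\Hom(X_g,U_h)$ follows by computing $\Hom_Q(W_g,U_h)=\Hom(\alpha_g\rho,\alpha_h\oplus\alpha_h\rho)=0$. This gives exactly $N+1$ simples, namely $U_g$ for $g\in G$ together with $X$. As a consistency check, the dimension count in $\mathrm{Mod}(Q)$ is compatible with $\FPdim$.

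Finally, we analyze the pointed action. Left action by $\alpha_k$ sends $U_g\mapsto U_{k+g}$ and $W_g\mapsto W_{k+g}$, hence it sends $X_g\mapsto X_{k+g}\cong X$. So the $U_g$ form one regular orbit of size $N$, and $X$ is fixed. The main point requiring care is the multiplicity count $\dim\Hom(\alpha_g\rho,\alpha_h\rho^2)=1$ for all $g,h$, which uses the mixed rule $\alpha_h\rho\rho=\alpha_h\rho^2$ and the fact that every $\alpha_k\rho$ occurs in $\rho^2$ with multiplicity one. Everything else is formal.
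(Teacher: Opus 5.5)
Your proof is correct and follows essentially the paper's route: free-module adjunction gives $\dim\Hom_Q(U_g,U_h)=\delta_{g,h}$, $\dim\Hom_Q(U_k,W_g)=\delta_{k,g}$ and $\dim\Hom_Q(W_g,W_h)=1+\delta_{g,h}$; separability gives semisimplicity and exhaustion by free modules; and translating $W_g\cong U_g\oplus X$ by $\alpha_k$ shows that $X$ is fixed. The only cosmetic difference is that you split $W_g$ via $\End_Q(W_g)\cong\bC^2$, whereas the paper subtracts Hom dimensions directly, and the summand $\Hom(\alpha_g,\alpha_h\rho)$ you dropped when computing $\Hom_Q(U_g,W_h)$ is zero, so nothing is lost.
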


\begin{proof}
Let $F(X)=X\otimes Q$ be the free right $Q$-module, and put
$U_g=F(\alpha_g)$ and $W_g=F(\alpha_g\rho)$.
By \cite[Lemma~7.8.12]{EGNO}, free-module adjunction and the fusion rules give
\begin{align*}
 \dim\Hom_Q(U_g,U_h)=\delta_{g,h},\,\,\, \dim\Hom_Q(U_k,W_g)=\delta_{k,g},\text{ and }\dim\Hom_Q(W_g,W_h)=1+\delta_{g,h}.
\end{align*}
Since $Q$ is separable, its module category is semisimple.
Thus the $U_g$ are pairwise nonisomorphic simples, and
$W_g\cong U_g\oplus X_g$ for actual objects $X_g$.
The second equality gives $\Hom_Q(U_k,X_g)=0$; semisimplicity
gives $\Hom_Q(X_g,U_k)=0$ as well.  Subtracting these summands gives
\begin{equation*}
 \Hom_Q(U_k,X_g)=0,
 \qquad\text{ and }\qquad\dim\Hom_Q(X_g,X_h)=1\quad(g,h\in G).
\end{equation*}
The diagonal equality makes every $X_g$ simple; the other equalities
identify them all with one simple $X$.
Every simple right $Q$-module is a summand of a free one by
\cite[Proposition~7.8.30 and its proof]{EGNO}, so the simples are
$\{U_g:g\in G\}\sqcup\{X\}$.  The invertible object $\alpha_k$ sends $U_g$
to $U_{k+g}$.  Applying $\alpha_k$ to $W_g\cong U_g\oplus X$ shows that it
fixes $X$.  Thus the pointed orbit sizes are $N$ and $1$.  The noninvertible
action is $\rho U_g\cong U_{-g}\oplus X$ and $\rho X\cong\bigoplus_{h\in G}U_h\oplus(N-1)X$. The first formula is $\rho U_g\cong W_{-g}$.  For the second, apply $\rho$ to $W_0\cong U_0\oplus X$ and use
$\rho W_0=U_0\oplus\bigoplus_h W_h$.
\end{proof}

We now identify the dual category.  Tensor generation here allows taking
direct summands of tensor powers.
We identify the tensor category using the subfactor construction and
Theorem~\ref{sap:prop:short-uniqueness}, then compute its fusion matrix.

\begin{proof}[Proof of the $Q$-dual description]
It remains to identify this bimodule category and compute its fusion matrix.
Evans--Gannon identify the normalized data of $\C$ with Izumi's normalization
\cite[p.~15, paragraph following equation~(4.11)]{evans2017non}.  By
\eqref{eq:display-0271}, these data satisfy Izumi's equations and the
algebra boundary equation~(6.1).  The construction in
\cite[Theorems~5.7 and~7.3, and the construction following
Theorem~7.3]{izumi2018classification} gives a $3^G$ subfactor $P\subset M$ whose $M$-$M$ even
part has the same data and whose canonical algebra has underlying object
$\one\oplus\rho$.  The converse reconstruction
\cite[Theorem~2(a)]{evans2017non} identifies this $M$-$M$ even part with the
fixed category $\C=\C(A)$ by a label-preserving tensor equivalence.
Under this equivalence, the canonical subfactor algebra becomes an algebra on
$\one\oplus\rho$; by
Theorem~\ref{sap:prop:short-uniqueness}, it is algebra-isomorphic to $Q$.  Hence the
$P$-$P$ even part is ${}_Q\C_Q$.

Let $\mathcal U:{}_Q\C_Q\to\operatorname{Mod}_{\C}(Q)$ forget the left
$Q$-action, with left adjoint $\mathcal L(Y)=Q\otimes Y$.
Let $\iota:P\hookrightarrow M$ denote the inclusion.
Under the subfactor identification, the modules $U_g$ of
Theorem~\ref{sap:prop:Q-modules} correspond to $\alpha_g\iota$, and $X$ corresponds
to the remaining odd simple $\kappa$.
Izumi's calculation
\cite[Section~7, especially Proposition~7.4 and Theorem~7.5]{izumi2018classification}
gives the complete simple list and its restrictions.  Here odd order
removes nonzero elements of order two, and Theorem~7.5 gives the
multiplicity-one conclusion.  Relabelling Izumi's
$\widehat\rho,\sigma_i,\pi_{g_i}$ as $\eta,\mu_i,\nu_i$, we obtain
\begin{align}
 \mathcal U(\one)&=U_0,&
 \mathcal U(\eta)&=U_0\oplus X,&
 \mathcal U(\mu_i)&=X,
 \label{sap:eq:Q-dual-induction-two}\\
 \mathcal U(\nu_i)&=U_{g_i}\oplus U_{-g_i}\oplus X.
 \label{sap:eq:Q-dual-induction-three}
\end{align}
Let $R$ be the matrix of $\mathcal U$, with rows indexed by the right-module
simples and columns ordered as $\one,\eta,\mu_1,\ldots,\mu_r,\nu_1,\ldots,\nu_r$.
Writing $e_g,e_X$ for the corresponding coordinate vectors, its columns are
\begin{equation}
 e_0,\qquad e_0+e_X,\qquad
 e_X\ (1\leq i\leq r),\qquad
 e_{g_i}+e_{-g_i}+e_X\ (1\leq i\leq r).
 \label{sap:eq:Q-branching-columns}
\end{equation}
Adjunction gives $R^{\top}$ for the matrix of $\mathcal L$; equivalently,
\begin{equation*}
 \mathcal L(U_0)=\one\oplus\eta,\qquad
 \mathcal L(X)=\eta\oplus\bigoplus_i\mu_i\oplus\bigoplus_i\nu_i,
 \qquad \mathcal L(U_{\pm g_i})=\nu_i.
\end{equation*}
Since $U_0=Q_Q$, the first equality identifies $Q\otimes Q$ with
$\one\oplus\eta$ as a bimodule.
For any bimodule $Z$, we have
$\mathcal L\mathcal U(Z)=Q\otimes Z
\cong(Q\otimes Q)\otimes_Q Z$.
Hence the matrix of left multiplication by $\eta$ is
\begin{equation*}
 N_\eta=R^{\top}R-I_{N+1}.
\end{equation*}
The pairwise inner products of the columns in
\eqref{sap:eq:Q-branching-columns} give the fusion matrix \eqref{sa:Q-dual-matrix}.
The underlying $\C$-dimensions of $U_g$ and $X$ are $d+1$ and
$(d-1)(d+1)$, respectively, by their free-module decompositions.
Dividing the dimensions in
\eqref{sap:eq:Q-dual-induction-two}--\eqref{sap:eq:Q-dual-induction-three}
by $\FPdim(Q)=d+1$, as in Theorem~\ref{sap:lem:relative-dimension}, gives
the dimensions displayed in Section~\ref{sa:Q-dual}.
The coefficient of the unit in $\eta^2$ is one, so $\eta$ is self-dual.
The same column contains every simple, so $\eta$ tensor-generates.
Since $d>2$, no nonunit simple is invertible.
Finally, for $N=3$ the identification with $\mathscr H_1$ and the fusion rules
in these labels are \cite[Table~2 and Lemma~3.18]{GrossmanSnyder2012}.
\end{proof}

\subsection{Classification of connected separable algebras}
\label{sap:sec:classification-analysis}

We now prove that the algebras already described exhaust the Morita classes.
We choose the pointed orbit with smaller diagonal coefficient and show that
this coefficient is unchanged on passing to the subgroup dual.  A bound on
endomorphism spaces then leaves two possible internal End algebras.
The subgroup rank formula and the Hall boundary obstruction complete the
classification.

\subsubsection{Two pointed orbits}
\label{sap:sec:two-orbits}

We first count the pointed orbits using only the fusion rules.  Let $L$ be any finite
abelian group of odd order $n\geq3$, and let $\mathcal E$ be a fusion
category whose simple objects are $\beta_\ell$ and $\beta_\ell\sigma$, for
$\ell\in L$, with $\sigma\cong\sigma^\vee$ and fusion rules
\begin{equation*}
 \beta_\ell\beta_r=\beta_{\ell+r},\qquad
 \beta_\ell\sigma=\sigma\beta_{-\ell},\qquad\text{ and }\qquad
 \sigma^2=\one\oplus\bigoplus_{\ell\in L}\beta_\ell\sigma.
\end{equation*}
No assumption is made on the associator of its pointed subcategory.

Let $\M$ be an indecomposable finite semisimple $\mathcal E$-module category.  The
invertible objects permute its simple objects.  Let
$\mathcal O_1,\ldots,\mathcal O_t$ be the resulting $L$-orbits.  For
$y\in\mathcal O_i$, define
\begin{equation*}
 M_{ij}:=\sum_{v\in\mathcal O_j}[\sigma y:v].
\end{equation*}
The relation $\sigma\beta_\ell=\beta_{-\ell}\sigma$ makes this independent of
the choice of $y$.

\begin{proposition}[Two-orbit theorem]
\label{sap:prop:two-orbits}
Every indecomposable semisimple $\mathcal E$-module category has exactly two
pointed orbits.
\end{proposition}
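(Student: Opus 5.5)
The plan is to restrict the action of the Grothendieck ring on $K_0(\M)$ to the subspace of $L$-invariant vectors. There the fusion rules force $\sigma$ to satisfy a quadratic with irrational roots. Perron--Frobenius and Galois conjugation then pin the dimension of that subspace to $2$.

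\textbf{Step 1: $\sigma$ preserves the invariant subspace.} For each orbit $\mathcal O_i$ let $e_i=\sum_{y\in\mathcal O_i}[y]$ in $K_0(\M)\otimes\mathbb C$. The $e_i$ span the subspace $V$ of $L$-invariant vectors, so $\dim V=t$. The rule $\beta_\ell\sigma=\sigma\beta_{-\ell}$ gives $\beta_\ell(\sigma e_i)=\sigma\beta_{-\ell}e_i=\sigma e_i$. Hence $\sigma e_i\in V$, and we can write $\sigma e_i=\sum_j c_{ij}e_j$. Comparing coefficients at any single simple of $\mathcal O_j$ shows each $c_{ij}$ is a nonnegative integer; in fact $c_{ij}=|\mathcal O_i|M_{ij}/|\mathcal O_j|$. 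Thus $\sigma|_V$ has a nonnegative integer matrix $C=(c_{ij})$ in the basis $(e_i)$.

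\textbf{Step 2: the quadratic relation on $V$.} Every vector $v\in V$ satisfies $\beta_\ell v=v$, so $\sum_\ell\beta_\ell$ acts on $V$ as $n$. The relation $\sigma^2=\one\oplus\bigoplus_\ell\beta_\ell\sigma$ therefore gives $C^2=I+nC$ on $V$. The roots of $x^2-nx-1$ are $\delta_\pm=(n\pm\sqrt{n^2+4})/2$. They are distinct, and they are irrational because $n^2+4$ is not a square for $n\ge 3$. So $C$ is diagonalizable with spectrum contained in $\{\delta_+,\delta_-\}$.

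\textbf{Step 3: counting eigenvalues.} The characteristic polynomial of $C$ has integer coefficients. Since $\delta_\pm$ are Galois conjugate, they occur with equal multiplicity $k$, so $t=2k$. In particular $t$ is even, which already gives $t\ge 2$.

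\textbf{Step 4: $k=1$ via Perron--Frobenius.} Indecomposability of $\M$ means every simple is reached from every other by acting with simple objects of $\mathcal E$. The invertibles only move within orbits, so the orbit graph with an edge $i\to j$ whenever $c_{ij}>0$ (equivalently $M_{ij}>0$) is strongly connected. Hence $C$ is an irreducible nonnegative matrix. By Perron--Frobenius its spectral radius is a simple eigenvalue, and it equals $\delta_+$, the largest root. So $k=1$ and $t=2$.

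\textbf{Main obstacle.} The delicate point is the irreducibility in Step 4: one must check that indecomposability of $\M$ really descends to strong connectivity of the orbit matrix. I would argue directly. If a nonempty proper set of orbits were closed under $\sigma$, then the full subcategory on those simples would be stable under all $\beta_\ell$ and under $\sigma$. It would then be stable under every simple of $\mathcal E$, giving an $\mathcal E$-submodule category and a decomposition of $\M$. A secondary point is integrality of the $c_{ij}$, which comes from $L$-invariance of $\sigma e_i$. No assumption on the pointed associator is used anywhere, since only the Grothendieck ring enters the argument.
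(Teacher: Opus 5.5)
Your proof is correct and is essentially the paper's argument: the orbit matrix satisfies $X^2=I+nX$, the irreducibility of $x^2-nx-1$ over $\mathbb Q$ forces the two conjugate roots to occur with equal multiplicity, and Perron--Frobenius makes $\delta_+$ a simple eigenvalue, so $t=2$. The differences are cosmetic. Your matrix $C$ on orbit sums equals $DMD^{-1}$ with $D=\operatorname{diag}(|\mathcal O_i|)$, so it is similar to the paper's orbit-quotient matrix $M$. For irreducibility you use the standard fact that a $\sigma$-stable set of orbits splits off a direct summand; this needs the complement to be stable as well, which is where rigidity enters. The paper gets the same conclusion directly from $M_{ij}=0\iff M_{ji}=0$, which follows from the self-duality of $\sigma$.
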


\begin{proof}
On the orbit quotient, every invertible acts as the identity.  The fusion rule for $\sigma^2$ is
\begin{equation*}
 M^2=I_t+nM.
\end{equation*}
Self-duality of $\sigma$ makes its full fusion matrix symmetric, so
$M_{ij}=0$ if and only if $M_{ji}=0$.
Thus reducibility of $M$ would split $\M$ into two nonzero stable
unions of pointed orbits, contradicting indecomposability.  The two
roots of $x^2-nx-1$ are
\begin{equation*}
 d_n=\frac{n+\sqrt{n^2+4}}2,
 \qquad
 -d_n^{-1}.
\end{equation*}
The polynomial is irreducible over $\mathbb Q$ because
$n^2<n^2+4<(n+1)^2$.  Integrality of $M$ gives equal multiplicities
for the conjugate roots, while Perron--Frobenius gives multiplicity one for
$d_n$.  Hence $t=2$.
\end{proof}

For $n=N$, Theorem~\ref{sap:prop:two-orbits} applies to $\C$ and every subgroup dual
$\D_H$.  For $\C$, take $L=G$ and
$\sigma=\rho$; for $\D_H$, use the pointed group and the distinguished
noninvertible object from Theorem~\ref{sap:prop:subgroup-bimodules}.  The two eigenvalues
have sum $N$ and product $-1$, so the orbit matrix is
\begin{equation}
 M=\begin{pmatrix}a&b\\ c&N-a\end{pmatrix}.
 \label{sap:eq:two-orbit-matrix}
\end{equation}
Then
\begin{equation}
 bc=a(N-a)+1.
 \label{sap:eq:bc-relation}
\end{equation}
If the orbit sizes are $s_1,s_2$, symmetry of the fusion matrix of
the chosen self-dual noninvertible is
\begin{equation}
 s_1b=s_2c.
 \label{sap:eq:orbit-balance}
\end{equation}
Both sides count the entries joining the two orbits in the symmetric
fusion matrix.

We now return to the fixed category $\C$.  For an indecomposable
$\C$-module category $\M$, the pointed classification in
Section~\ref{sec:separable} identifies its two orbits as
\begin{equation}
 \operatorname{Res}_{\mathcal P}\M
 \simeq\mathcal N_H\oplus\mathcal N_K,
 \qquad \mathcal P=\operatorname{Vec}_G.
 \label{sap:eq:original-pointed-orbits}
\end{equation}
Here $H$ and $K$ are the stabilizers of simples in the two original orbits.
Order them so that the first has the smaller diagonal coefficient $a$ in
\eqref{sap:eq:two-orbit-matrix}.  Since the two coefficients sum to the odd
integer $N$, we have $a\leq(N-1)/2$.  Choose a simple $x$ in this first orbit,
so $\operatorname{Stab}_G(x)=H$.
We use the notation $\mathfrak T_H:=\mathfrak T_{R_H}$ from the preliminaries.
In particular,
\begin{equation}
 \mathfrak T_H\bigl(\operatorname{Mod}_{\C}(R_H)\bigr)\simeq\D_H
 \label{sap:eq:regular-module-transport}
\end{equation}
as regular $\D_H$-module categories.

\subsubsection{Stabilizer reduction}
\label{sap:sec:core-reduction}
\label{sap:sec:core-list}

\begin{proposition}[Stabilizer reduction]
\label{sap:prop:core-list}
\label{sap:prop:stabilizer-dichotomy}
With $H,K$ and the smaller coefficient $a$ chosen above,
$\mathfrak T_H(\M)$ has a regular pointed orbit with diagonal
coefficient $a$.  Exactly one of the following holds:
\begin{enumerate}[label=(\roman*)]
 \item $a=0$, $H=K$, and $\M\simeq\operatorname{Mod}_{\C}(R_H)$.
 The transported orbit sizes are $N,N$, and a simple object has unit
 internal End.
 \item $a=1$ and $G=H\oplus K$.  The transported orbit sizes are $N,1$,
 and a simple object has connected separable internal End with underlying
 object $\one\oplus\sigma$, where $\sigma=V_{0,1}$ in $\D_H$.
\end{enumerate}
In particular, every class not represented by a subgroup algebra has
complementary Hall stabilizers.
\end{proposition}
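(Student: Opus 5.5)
We compute the restriction of $\mathfrak T_H(\M)$ to the pointed part $\mathcal P_H={}_{R_H}\mathcal P_{R_H}$ of $\D_H$ using Lemma~\ref{sap:lem:restriction-morita-transport}. That lemma identifies it with $\operatorname{Fun}_{\mathcal P}(\mathcal N_H,\mathcal N_H\oplus\mathcal N_K)$, by \eqref{sap:eq:original-pointed-orbits}. Lemma~\ref{sap:lem:pointed-module-rank} then gives the ranks of the two pieces: $r(H,H)=N$ and $r(H,K)=[G:H+K]\,|H\cap K|$. The first piece is the transport of the orbit of $x$. Since $\mathfrak T_H$ is a 2-equivalence, the orbit matrix of $\sigma=V_{0,1}$ on the transported orbits should coincide with that of $\rho$ on the original orbits. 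To justify this, I would compute $M$ as a multiplicity of internal Homs, which $\mathfrak T_H$ preserves up to the dimension normalization of Lemma~\ref{sap:lem:relative-dimension}. So the transported first orbit has $N$ simples and diagonal coefficient $a$. Because $\mathcal P_H$ has odd order $N$, an orbit of size $N$ with trivial stabilizer is regular.

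Next, I would take a simple $y$ in this regular orbit and bound its internal End $B=\underline{\End}(y)$ in $\D_H$. Because the orbit is regular, $\Hom(U_\ell,B)=\Hom(U_\ell y,y)=\delta_{\ell,0}$, so the only invertible summand of $B$ is the unit. For a noninvertible $W_\ell$, $\Hom(W_\ell,B)=[W_\ell y:y]$, and summing over $\ell$ gives $\sum_\ell[W_\ell y:y]=\sum_{v\in\mathcal O_1}[\sigma y:v]=a$. Hence $B\cong\one\oplus\bigoplus_\ell n_\ell W_\ell$ with $\sum_\ell n_\ell=a\le (N-1)/2$.

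To pin down $B$, I would use the orbit-matrix relations \eqref{sap:eq:bc-relation} and \eqref{sap:eq:orbit-balance}. With $s_1=N$, balance gives $Nb=s_2c$, and the ranks above give $s_2=r(H,K)$, which divides $N^2$. Combining this with $bc=a(N-a)+1$ and the bound $a\le(N-1)/2$ is meant to force $a\in\{0,1\}$. If $a=0$, then $bc=1$, so $b=c=1$ and $s_2=N$. In that case $B=\one$, so $\M\simeq\operatorname{Mod}_{\D_H}(\one)=\D_H$. Transporting back with \eqref{sap:eq:regular-module-transport} gives $\M\simeq\operatorname{Mod}_{\C}(R_H)$, and comparing stabilizers gives $H=K$. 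If $a=1$, then $bc=N$, and I would try to show $b=N$, $c=1$, $s_2=1$. Then $r(H,K)=1$, which forces $H+K=G$ and $H\cap K=0$, i.e.\ $G=H\oplus K$. Also $B=\one\oplus W_\ell$ for one $\ell$. Since $\mathcal P_H$ acts transitively on the regular orbit, we may replace $y$ by a translate and make $\ell=0$, so $W_0=\sigma$.

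The main obstacle is excluding $a\ge2$. The counting in the previous step may not do this alone. If it fails, I would apply Proposition~\ref{sap:prop:two-orbits} in $\D_H$ to the sub-$\C$-module generated by $y$. I would then show that $\one\oplus\bigoplus n_\ell W_\ell$ with $\sum n_\ell\ge2$ cannot carry a connected separable algebra structure. For this I would compare $\FPdim B=1+ad$ with $\FPdim$ of $\M$ computed through the internal Hom formula, and check integrality of the resulting orbit sizes, in the spirit of the Frobenius--Perron arguments in Proposition~\ref{sap:prop:two-orbits}.
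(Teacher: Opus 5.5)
Your setup matches the paper up to the dichotomy: the restriction via Lemma~\ref{sap:lem:restriction-morita-transport}, the orbit sizes $N$ and $r(H,K)$, the shape $B\cong\one\oplus\bigoplus_\ell n_\ell W_\ell$ with $\sum_\ell n_\ell=a$, and the $a=0$ endgame. The step that actually proves the dichotomy is missing, and the relations you list do not supply it.

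\begin{itemize}
\item \emph{The constraints allow $a\ge2$.} For $N=15$, $a=2$, the values $b=3$, $c=9$, $s_2=5$ satisfy $bc=a(N-a)+1$, $Nb=s_2c$ and $a\le(N-1)/2$. Moreover $5=r(H,K)$ for $|H|=5$, $K=G$.
\item \emph{They do not fix the case $a=1$.} For $N=9$, $a=1$, the values $b=c=3$, $s_2=9$ also satisfy them. Your target ``$b=N$, $c=1$, $s_2=1$'' violates $Nb=s_2c$; the correct values are $b=1$, $c=N$.
\item \emph{The Frobenius--Perron fallback adds nothing.} The dimension vector is the Perron--Frobenius eigenvector of the orbit matrix. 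The global-dimension identity then reduces to $(1+ad)(d-a)=d\,bc$, and this holds for every $a$ once $bc=a(N-a)+1$ and $d^2=Nd+1$.
\item \emph{Only the diagonal entry of the orbit matrix is transported.} The orbit sizes change, so by \eqref{sap:eq:orbit-balance} $b$ and $c$ generally change. In the hypothetical case $a=1$, $G=H\oplus K$, one has $b=|H|$ before transport and $b=1$ after. The paper proves $a'=a$ by comparing $\FPdim_{\C}(A_x)=|H|(1+ad)$ with $\FPdim_{\D_H}(\overline A_x)=1+a'd$. Your internal-Hom remark points this way, but it should be stated as that comparison.
\end{itemize}

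The missing idea is the bound $b=1$ for the \emph{transported} orbit matrix. For $y$ in the regular orbit, write $\sigma y=\bigoplus_z n_z z$. Self-duality of $\sigma$, the rule $\sigma^2\cong U_0\oplus\bigoplus_\ell W_\ell$, and regularity of the orbit give
\[
 \sum_z n_z^2=\dim\Hom(y,\sigma^2y)=1+\sum_\ell[\sigma y:U_{-\ell}y]=1+a,
 \qquad
 \sum_z n_z=a+b .
\]
Since $n_z\le n_z^2$, this gives $b\le1$. Indecomposability then gives $b=1$, and also every $n_z\in\{0,1\}$.

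Now \eqref{sap:eq:bc-relation} and \eqref{sap:eq:orbit-balance} give $c=a(N-a)+1$ and $N=s_2c$, so $c\le N$. But $2\le a\le(N-1)/2$ would give $c-N=(a-1)(N-a-1)>0$, a contradiction. The same equations give $s_2=N$ when $a=0$ and $s_2=1$ when $a=1$. Through the rank formula these yield $H=K$ and $G=H\oplus K$, respectively.
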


\begin{proof}
By Theorem~\ref{sap:lem:restriction-morita-transport}, restriction to
$(\D_H)_{\mathrm{pt}}={}_{R_H}\mathcal P_{R_H}$ gives
\begin{equation}
 \operatorname{Fun}_{\mathcal P}(\mathcal N_H,\mathcal N_H)
 \oplus\operatorname{Fun}_{\mathcal P}(\mathcal N_H,\mathcal N_K).
 \label{sap:eq:transported-pointed-orbits}
\end{equation}
The first summand is regular.  The second is indecomposable by pointed
Morita equivalence \cite[Theorem~7.12.16]{EGNO}, hence is one pointed orbit.
Their sizes are
\begin{equation}
 s_1=N,\qquad\text{ and }\qquad s_2=r(H,K)=[G:H+K]|H\cap K|.
 \label{sap:eq:transported-orbit-ranks}
\end{equation}

Choose a simple $x$ in the first original orbit and put
$A_x=\underline{\End}_{\C}(x)$.
Its pointed support is $H$ with multiplicity one.  Internal Hom gives
$[A_x:\alpha_g\rho]=[\rho x:\alpha_{-g}x]$.
As $g$ varies, each simple in the orbit of $x$ occurs $|H|$ times, so the
total noninvertible multiplicity is $|H|a$.
By Theorem~\ref{sap:lem:pointed-truncation}(iii), normalize its pointed subalgebra
to $R_H$.  Then Theorem~\ref{sap:lem:algebra-extension-transport} gives the connected
separable algebra $\overline A_x={}_{R_H}(A_x){}_{R_H}$ representing
$\mathfrak T_H(\M)$, and Theorem~\ref{sap:lem:relative-dimension} gives
\begin{equation}
 \FPdim_{\C}(A_x)=|H|(1+ad),
 \qquad\text{ and }\qquad \FPdim_{\D_H}(\overline A_x)=1+ad.
 \label{sap:eq:diagonal-transport}
\end{equation}
Only the summand $R_H$ becomes pointed, by
Theorem~\ref{sap:prop:subgroup-bimodules}(i), and it becomes the tensor unit.
Thus the simple regular $\overline A_x$-module $\widetilde x$ has trivial
pointed stabilizer and internal End $\overline A_x$.
Under evaluation in Theorem~\ref{sap:lem:restriction-morita-transport}, it comes from
$x$ with its $R_H$-action, so lies in the first summand of
\eqref{sap:eq:transported-pointed-orbits}.
If its orbit has diagonal coefficient $a'$, regularity and internal Hom
give $\FPdim_{\D_H}(\overline A_x)=1+a'd$.
Comparison with \eqref{sap:eq:diagonal-transport} proves $a'=a$.

Write $\sigma\widetilde x=\bigoplus_z n_z z$.
Self-duality and the square fusion rule give
\begin{equation*}
 \sum_z n_z^2
 =\dim\Hom(\widetilde x,\sigma^2\widetilde x)
 =1+\sum_{\ell\in L_H}[\sigma\widetilde x:U_{-\ell}\widetilde x]
 =1+a,
\end{equation*}
since the $U_{-\ell}\widetilde x$ run through the first orbit once.
Consequently,
\begin{equation*}
 a+b=\sum_z n_z\leq\sum_z n_z^2=1+a.
\end{equation*}
Indecomposability gives $b>0$, so $b=1$, and equality forces every $n_z$
to be $0$ or $1$.  Equations \eqref{sap:eq:bc-relation} and
\eqref{sap:eq:orbit-balance} give
\begin{equation}
 c=a(N-a)+1,\qquad\text{ and }\qquad N=s_2c.
 \label{sap:eq:c-bound}
\end{equation}
Thus $c\leq N$.  If $a\geq2$, then
$c-N=(a-1)(N-a-1)>0$ from $a\leq(N-1)/2$.
Hence $a=0,1$.

The pointed part of $\underline{\End}(\widetilde x)$ is the unit, and
$[\underline{\End}(\widetilde x):W_\ell]
=[\sigma\widetilde x:U_{-\ell}\widetilde x]$.
Its noninvertible part is therefore multiplicity-free of length $a$.
If $a=0$, the internal End is the unit and \eqref{sap:eq:c-bound} gives
$s_2=N$.  The rank formula then says $|H+K|=|H\cap K|$, hence $H=K$.
The transported category is regular.  Since $\mathfrak T_H$ is a
2-equivalence, \eqref{sap:eq:regular-module-transport} identifies
$\M\simeq\operatorname{Mod}_{\C}(R_H)$, proving (i).
If $a=1$, then $s_2=1$, so both factors in
\eqref{sap:eq:transported-orbit-ranks} are one.
Thus $G=H\oplus K$, which makes $H,K$ Hall since $G$ is cyclic.
The internal End is $\one\oplus W_\ell$ for some $\ell$.
Translating $\widetilde x$ by $U_r$ conjugates its internal End and replaces
$\ell$ by $\ell+2r$.  Choosing $r=-\ell/2$ proves (ii).
\end{proof}

\subsubsection{The Hall-subgroup obstruction}
\label{sap:sec:hall-short}
\label{sap:sec:hall-analysis}

By Theorem~\ref{sap:prop:stabilizer-dichotomy}, any class not already represented by a
subgroup algebra has complementary Hall stabilizers.
Write $G=H\oplus K$, $m=|H|$, and $D=\mathscr P_HA$.
We compare the algebra $Q$ in $\C$ with a hypothetical connected
separable algebra on $\one\oplus\sigma$ in $\D_H$.  The hypothetical algebra imposes a subgroup-sum identity on $A$.
The cyclic phase parametrization supplied by the given algebra turns this identity into
a convolution equation.

By Theorems~\ref{sap:thm:partial-fourier-HI} and \ref{sap:thm:subgroup-fourier-morita} and
Theorem~\ref{sap:cor:partial-fourier-hermiticity}, $D$ is
a normalized Hermitian HI matrix for $\D_H$.  A connected separable algebra
on $\one\oplus\sigma$ therefore imposes \eqref{eq:display-0271}
on $D$.  For $y\in H\setminus\{0\}$, specialize
\eqref{sap:eq:partial-fourier-inverse} at $P=Q=0$ and sum over $x\in H$ to obtain
\begin{equation}
 \sum_{x\in H}A_{x,y}
 =\sum_{u\in\widehat H}D_{(0,u),(0,1)}u(y/2)
 =1.
 \label{sap:eq:partial-boundary-key}
\end{equation}
The sum over $x$ removes every character $v\neq1$.  The last equality uses
$D_{(0,u),(0,1)}=\delta_{u,1}-(d-1)^{-1}$ and
$\sum_u u(y/2)=0$ for $y\neq0$.

The next lemma extends the cyclic phase parametrization to the boundary
entries of $A$.  We use it to test whether a subgroup dual admits an
algebra structure on the unit plus one noninvertible simple.

\begin{lemma}[Reciprocal phase normalization]
\label{sap:lem:reciprocal-phases}
There exist $z_g \in \mathbb{C}$, for $g\in G\setminus\{0\}$, such that
\begin{equation}
 |z_g|=1,
 \qquad
 z_{-g}=\overline{z_g}.
 \label{sap:eq:z-reciprocity}
\end{equation}
and after setting $z_0=-d^{-1/2}$,
\begin{equation}
 A_{x,y}
 =\gamma z_xz_{y-x}\overline{z_y},
 \qquad
 \gamma=\frac{\sqrt d}{d-1}. \qquad (x,y\in G, \ y\neq 0)
 \label{sap:eq:G-factorization}
\end{equation}
\end{lemma}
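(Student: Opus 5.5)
The plan is to build the phases $z_g$ explicitly from the cyclic Q-system normal form (Proposition~\ref{prop:HI-cyclic-Q-system-normal-form}), then adjust them by a character-like rescaling to force reciprocity. Throughout, use the representatives $0,\ldots,N-1$ for elements of $G$, and put $B=(d-1)A$, $v_g=B_{1,g}$, $Q_s=\prod_{j=2}^s v_j$ as in that proposition. Here $d=d_+>0$ and $A$ is Hermitian.

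\emph{Step 1: moduli.} The normal form gives $B_{k,l}B_{l,k}=d$ for distinct nonzero $k,l$. Hermiticity gives $B_{l,k}=\overline{B_{k,l}}$, hence $|B_{k,l}|=\sqrt d$. In particular $|v_j|=\sqrt d$ for $2\le j<N$, and so $|Q_g|=d^{(g-1)/2}$ for $1\le g<N$.

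\emph{Step 2: provisional phases.} For $1\le g\le N-1$ define $z_g^{(0)}=d^{(g-1)/2}/Q_g$. By Step 1 these have modulus one. A direct computation from \eqref{eq:char2-row-recovery} gives, for $0<x<y<N$,
\begin{equation*}
 \gamma\, z^{(0)}_x z^{(0)}_{y-x}\overline{z^{(0)}_y}
 =\gamma\,\frac{z^{(0)}_xz^{(0)}_{y-x}}{z^{(0)}_y}
 =\frac{1}{d-1}\,\frac{Q_y}{Q_xQ_{y-x}}=A_{x,y}.
\end{equation*}
The powers of $d$ combine to $d^{-1/2}$, which cancels against $\sqrt d$ in $\gamma$. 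The boundary cases need no phase information. If $x=0$ or $x=y$ with $y\ne0$, the right side of \eqref{sap:eq:G-factorization} is $\gamma z_0|z_y|^2=-1/(d-1)$, which is the Q-system value $A_{0,y}=A_{y,y}$ recorded just before the lemma. The same holds for any unimodular choice of the $z_g$.

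\emph{Step 3: reciprocity (main obstacle).} Any rescaling $z_g=\zeta^g z^{(0)}_g$ with $|\zeta|=1$ preserves Step 2, since $\zeta^x\zeta^{y-x}\zeta^{-y}=1$ on integer representatives. So the real content is to show that $p_g:=z_gz_{N-g}$ is independent of $g$, and then to choose $\zeta$ making it $1$.

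For independence, apply \eqref{eq:HImatrix1} in the form $A_{x,y}=A_{-y,x-y}$ with $0<x<y<N$. The representatives $N-y$ and $N-y+x$ again satisfy $0<N-y<N-y+x<N$, so Step 2 applies to both sides. Cancelling $\gamma z_x$ gives $z_{N-y}z_y=z_{N-(y-x)}z_{y-x}$, that is, $p_y=p_{y-x}$. Letting $x$ vary shows that $p_g=p$ is constant, and $|p|=1$. Replacing $z_g$ by $\zeta^g z_g$ multiplies $p$ by $\zeta^N$. Choose $\zeta$ with $\zeta^N=p^{-1}$; then $p=1$, which is exactly $z_{-g}=z_{N-g}=\overline{z_g}$, giving \eqref{sap:eq:z-reciprocity}.

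\emph{Step 4: the lower triangle.} For $0<y<x<N$, Hermiticity gives
\begin{equation*}
 A_{x,y}=\overline{A_{y,x}}=\gamma\,\overline{z_y}\,\overline{z_{x-y}}\,z_x .
\end{equation*}
Reciprocity turns $\overline{z_{x-y}}$ into $z_{y-x}$, which yields \eqref{sap:eq:G-factorization} in this case. Together with Steps 2 and 3, this covers all $x$ and all $y\ne0$.

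I expect Step 3 to be the only delicate point: one must keep the integer representatives inside $1,\ldots,N-1$ when invoking \eqref{eq:HImatrix1}, and notice that the global phase of $p$ is absorbed by the character rescaling rather than being forced by the equations.
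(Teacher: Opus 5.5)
Your proof is correct, and it reaches the same phases by a more self-contained route than the paper.

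The paper imports the Evans--Gannon/Izumi parametrization $A_{x,y}=\gamma\exp(\mathrm i(j_y-j_x-j_{y-x}))$ for $0<x<y<N$ from \cite[Section~7.2]{evans2017non}. It sets $z^{(0)}_g=e^{-\mathrm ij_g}$ and quotes the recurrence $j_{n+1+i}=j_{n+1}+j_n-j_{n-i}$ to conclude that $z^{(0)}_gz^{(0)}_{N-g}$ is constant. The rest goes as in your Steps~2--4: the rescaling by a character $\xi^g$ with $\xi^N$ inverse to that constant, the Hermitian extension to $x>y$, and the boundary cases $x=0$ and $x=y$ from $z_0=-d^{-1/2}$.

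You instead build the parametrization from Proposition~\ref{prop:HI-cyclic-Q-system-normal-form}. The identity $B_{k,l}B_{l,k}=d$ together with Hermiticity gives $|B_{k,l}|=\sqrt d$, and the product formula \eqref{eq:char2-row-recovery} gives the factorization. Your provisional phases are in fact the paper's: in the Evans--Gannon coordinates one has $B_{1,g}=\sqrt d\,e^{\mathrm i(j_g-j_{g-1})}$, hence $Q_g=d^{(g-1)/2}e^{\mathrm ij_g}$. Your Step~3 then derives the constancy of $z_gz_{N-g}$ directly from \eqref{eq:HImatrix1}. Taking $y=n+1$ and $x=i+1$ in your relation $p_y=p_{y-x}$ reproduces the cited recurrence modulo $2\pi$.

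The paper's version is shorter, but it relies on matching an external normalization and recurrence to the present conventions. Yours uses only results proved in the paper: the cubic identity via the normal form, \eqref{eq:HImatrix1}, the Q-system boundary, and Hermiticity. It also makes explicit that unitarity enters only twice, in fixing the moduli $\sqrt d$ and in extending the factorization to the lower triangle.
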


\begin{proof}
Use representatives $0,1,\ldots,N-1$.  Evans--Gannon give real phase parameters such that, for
$0<x<y<N$,
\begin{equation}
 A_{x,y}=\gamma\exp\bigl(\mathrm i(j_y-j_x-j_{y-x})\bigr)
 \label{sap:eq:EG-ordered-phases}
\end{equation}
and extend by Hermiticity.  Both this phase formula and the recurrence
below are stated in \cite[Section~7.2]{evans2017non}, which attributes
them to Izumi \cite{IzumiLR2}.
Put $z_g^{(0)}=e^{-ij_g}$.  The recurrence
\begin{equation*}
 j_{n+1+i}=j_{n+1}+j_n-j_{n-i},
 \qquad N=2n+1,\quad 1\leq i<n,
\end{equation*}
shows that $z_g^{(0)}z_{N-g}^{(0)}$ is constant, say $c$, for every $g\neq0$.
For $1\leq g<n$, take $i=n-g$; the pair $g=n$ and the remaining pairs follow
by symmetry between $g$ and $N-g$.  Choose
$\xi\in\mathbb T$ with $\xi^N=c^{-1}$ and set
$z_g=\xi^gz_g^{(0)}$ for $1\leq g<N$.  Since
$\xi^x\xi^{y-x}\overline{\xi^y}$ is $1$ whenever $0<x<y<N$,
\eqref{sap:eq:EG-ordered-phases} is unchanged, while $z_gz_{N-g}=1$ proves
\eqref{sap:eq:z-reciprocity}.

Hermiticity gives \eqref{sap:eq:G-factorization} for $x>y$.  The cases $x=0$ and
$x=y$ follow from the algebra boundary values
$A_{0,y}=A_{y,y}=-(d-1)^{-1}$ and the choice $z_0=-d^{-1/2}$.
\end{proof}

\begin{theorem}[Nonexistence on $\one\oplus W_\ell$]
\label{sap:thm:hall-short}
If $H\neq\{0\}$ is Hall, including $H=G$, then no object
$\one\oplus W_\ell$, $\ell\in L_H$, admits a connected separable algebra
structure in $\D_H$.
\end{theorem}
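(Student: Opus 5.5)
The plan is to first reduce to the case $\ell=0$, then convert the hypothetical algebra into the boundary identity \eqref{sap:eq:partial-boundary-key}, and finally show by Fourier analysis on $H$ that this identity is incompatible with the phase factorization of Lemma~\ref{sap:lem:reciprocal-phases} unless $|H|=1$.

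\emph{Reduction to $\ell=0$.} Conjugating by the invertible $U_r$ sends an algebra on $\one\oplus W_\ell$ to one on $\one\oplus W_{\ell+2r}$. Since $|L_H|=N$ is odd, we may take $\ell=0$, so the object is $\one\oplus\sigma$. Theorem~\ref{sap:thm:subgroup-fourier-morita} identifies $\D_H$ with $\C(D)$, where $D=\mathscr P_HA$. Over $\mathbb C$ we have $a\neq\pm1$, so Theorem~\ref{thm:algebraic-q-system} turns a connected separable algebra on $\one\oplus\sigma$ into the Q-system boundary for $D$. As explained before Lemma~\ref{sap:lem:reciprocal-phases}, this gives \eqref{sap:eq:partial-boundary-key}: $\sum_{x\in H}A_{x,y}=1$ for every $y\in H\setminus\{0\}$.

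\emph{Conversion to a convolution equation.} Put $m=|H|\geq3$, and define $f:H\to\mathbb C$ by $f(x)=z_x$, with $z_0=-d^{-1/2}$ and the phases of Lemma~\ref{sap:lem:reciprocal-phases}. Substituting \eqref{sap:eq:G-factorization} into \eqref{sap:eq:partial-boundary-key} and multiplying by $z_y$ (a unit) gives
\[
(f*f)(y)=\gamma^{-1}f(y)\qquad (y\neq0).
\]
At $y=0$, reciprocity \eqref{sap:eq:z-reciprocity} gives $(f*f)(0)=d^{-1}+(m-1)$. We also have $\gamma^{-1}f(0)=-(d-1)/d$. Hence
\[
f*f=\gamma^{-1}f+m\,\delta_0 .
\]
Applying the Fourier transform on $H$, every value $\widehat f(\chi)$ is a root of $X^2-\gamma^{-1}X-m=0$. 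Write the roots as $r_\pm=(u\pm v)/2$, where
\[
u=\gamma^{-1}=\frac{d-1}{\sqrt d},\qquad v=\sqrt{u^2+4m}.
\]

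\emph{Root-counting contradiction.} Suppose $k$ characters take the value $r_+$ and $m-k$ take the value $r_-$. Fourier inversion at $0$ gives $\sum_\chi\widehat f(\chi)=m f(0)=-m d^{-1/2}$. This becomes
\[
(2k-m)\,v=-\frac{m(d+1)}{\sqrt d}.
\]
Squaring, and using $u^2d=(d-1)^2=(N-2)d+2$ and $(d+1)^2=(N+2)d+2$ (both from $d^2=Nd+1$), we get
\[
(2k-m)^2\bigl((N-2+4m)d+2\bigr)=m^2\bigl((N+2)d+2\bigr).
\]
Since $d$ is irrational, we can compare the rational parts and the coefficients of $d$ separately. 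The rational parts force $(2k-m)^2=m^2$. The coefficients of $d$ then force $N-2+4m=N+2$, i.e.\ $m=1$. This contradicts $H\neq\{0\}$. The case $H=G$ is included, since Lemma~\ref{sap:lem:reciprocal-phases} applies to all of $G$.

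The main point to check carefully is that the boundary identity really applies to every nonzero $y\in H$. That identity relies on the Hermitian normalization of $D$ (Corollary~\ref{sap:cor:partial-fourier-hermiticity}). It also uses the boundary values $A_{0,y}=A_{y,y}=-(d-1)^{-1}$, which are built into the choice $z_0=-d^{-1/2}$. Once those are in place, the rest is the exact arithmetic above.
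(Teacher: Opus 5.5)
Your proof is correct, and its core is the paper's argument in Fourier-diagonalized form. The paper's matrix $T$ is $C_f+d^{-1/2}I$, where $C_f$ is the convolution matrix $(C_f)_{x,y}=f(x-y)$. Its eigenvalues are therefore $\widehat f(\chi)+d^{-1/2}$. Under this shift, your quadratic $\widehat f^2=\gamma^{-1}\widehat f+m$ becomes exactly the paper's $T^2=\lambda T+(m-1)I$ with $\lambda=(d+1)/\sqrt d$. Your Fourier inversion at $0$ is the paper's $\operatorname{Tr}T=0$. Your comparison of coefficients in the $\mathbb Q$-basis $\{1,d\}$ replaces the paper's step of forcing both opposite-sign roots to occur and then using irrationality of $\lambda^2$. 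Both endings are equally short.

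The one substantive difference is how you obtain the Q-system boundary for $D=\mathscr P_HA$. You get it directly from the purely algebraic Theorem~\ref{thm:algebraic-q-system}. This is legitimate because the equivalence of Theorem~\ref{sap:thm:subgroup-fourier-morita} sends $\rho^D$ to $\sigma=V_{0,1}$, and $a\neq\pm1$ over $\mathbb C$. The paper instead passes to unitary $R_H$-bimodules and invokes Giorgetti--Yuan--Zhao to replace the algebra by a special $C^*$-Frobenius algebra. Your route avoids that analytic input.

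One small correction to your closing remark: \eqref{sap:eq:partial-boundary-key} does not rely on Hermiticity of $D$. It uses only the column boundary $D_{(0,u),(0,1)}=\delta_{u,1}-(d-1)^{-1}$ and the inversion formula \eqref{sap:eq:partial-fourier-inverse}. Hermiticity enters only through $A$, in Lemma~\ref{sap:lem:reciprocal-phases}.
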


\begin{proof}
Suppose such an algebra exists, and put $m=|H|>1$.
We derive a quadratic matrix identity whose trace gives a contradiction.
Give $R_H$ its standard special $C^*$-Frobenius structure and realize
$\D_H$ by the equivalent category of unitary $R_H$-bimodules.
This model is a multitensor $C^*$-category.  Transfer the hypothetical
separable algebra to this model and apply \cite[Theorem~4.13]{GiorgettiYuanZhao2024};
it is algebra-isomorphic there to a special $C^*$-Frobenius algebra.
The equivalence preserves algebra structures and algebra isomorphisms.
Since doubling is
bijective on the odd-order group $L_H$, conjugation moves its noninvertible
summand to the distinguished one while preserving connectedness and
separability.  Hence \eqref{sap:eq:partial-boundary-key} holds.

Take the phases from Theorem~\ref{sap:lem:reciprocal-phases} and define the matrix
indexed by $H$ by $T_{x,x}=0$ and $T_{x,y}=z_{x-y}$ for $x\neq y$.
Reciprocity makes $T$ Hermitian, with trace zero.  For $r\neq0$,
\eqref{sap:eq:partial-boundary-key} and \eqref{sap:eq:G-factorization} give
\begin{equation*}
 \sum_{u\in H}z_u z_{r-u}=\frac{d-1}{\sqrt d}\,z_r.
\end{equation*}
Removing $u=0,r$, where $z_0=-d^{-1/2}$, gives the off-diagonal entries of
\begin{equation}
 T^2=\lambda T+(m-1)I_m,
 \qquad \lambda=\frac{d+1}{\sqrt d}.
 \label{sap:eq:hall-trace-quadratic}
\end{equation}
The diagonal entries are $m-1$, since $|z_u|=1$ for $u\neq0$.

By \eqref{sap:eq:hall-trace-quadratic}, the eigenvalues lie among the roots
$t_+,t_-$ of $t^2-\lambda t-(m-1)$, which have opposite signs since $m>1$.  Both occur as eigenvalues of $T$, because its trace is zero.
If their multiplicities are $k,m-k$, then $0<k<m$, $kt_++(m-k)t_-=0$, and $t_+t_-=-(m-1)$. Eliminating the roots and using their sum $\lambda$ gives
\begin{equation*}
 \lambda^2=(m-1)\frac{(m-2k)^2}{k(m-k)}\in\mathbb Q.
\end{equation*}
But $\lambda^2=d+d^{-1}+2=\sqrt{N^2+4}+2$ is irrational, since
$N^2<N^2+4<(N+1)^2$.  This contradiction proves the theorem.
\end{proof}

\begin{remark}[Small orders]
Theorem~\ref{sa:classification} gives three indecomposable classes for $N=3,5,7$
and four for $N=9$.  For $N=3$ and $N=5$, this recovers the Grossman--Snyder
classifications discussed in Section~\ref{sec:separable}.  At $N=9$, the representatives
are $\one$, $R_{C_3}$, $R_{C_9}$, and $Q$.
\end{remark}

\begin{proof}[Proof of Theorem~\ref{sa:classification}]
It is enough to classify indecomposable finite semisimple $\C$-module
categories, by Theorem~\ref{sap:lem:pointed-truncation}(i) and the internal End
description recalled in Section~\ref{sec:separable}.

Let $\M$ be such a module category.  Order its pointed orbits by their
diagonal coefficients as in \eqref{sap:eq:original-pointed-orbits}, and let $H,K$
be their stabilizers.  By
Theorem~\ref{sap:prop:core-list}, either $\M$ is represented
by $R_H$, or $G=H\oplus K$ and $\mathfrak T_H(\M)$ has a connected
internal End with underlying object $\one\oplus\sigma$.

The latter case is excluded by Theorem~\ref{sap:thm:hall-short} unless $H=\{0\}$.
Then $\mathfrak T_H$ is the identity, so $\M$ itself has such an internal
End.  By Theorem~\ref{sap:prop:short-uniqueness}, its algebra structure is an invertible
conjugate of $Q$, hence Morita equivalent to $Q$.  Thus every module category is represented by an algebra on the list.

To distinguish these classes, Theorems~\ref{sap:prop:RH-modules} and \ref{sap:prop:Q-modules} give ranks
$2N/|H|$ and $N+1$.  Different subgroups of a cyclic group have different
orders.  Moreover, $2N>N+1$, while $2N/|H|\leq N$ for nontrivial $H$.
Thus all listed classes are distinct.
Since a cyclic group has one subgroup for every divisor of its order, there
are $\tau(N)$ subgroup-algebra classes and the class of $Q$, giving
$\tau(N)+1$ classes in total.
\end{proof}


\section{Finite cyclic Q-system data}
\label{app:finite-cyclic-data}
\label{sec:tables}

We collect the characteristic-zero cyclic examples and their modular
data, using the notation of Section~\ref{sec:centers}.
\subsection{Compact modular-data notation}
\label{ce:compact-MD}

We give the compact notation used for the cyclic rows below (it generalizes easily to any odd abelian group).  For $G=\mathbb Z/N$, let
$H$ be a metric group of order $\mu=N^2+4$.  When a center calculation
identifies its remaining family with $(H,q_H)$, we write the resulting
arrays as $\mathcal{MD}_{\sigma}(G;H,q_H)$.  Write
\begin{equation*}
 r_G=\frac{N^2-1}{2},\qquad m_H=\frac{\mu-1}{2},\qquad
 y=\frac{N}{\sqrt{\mu}},
\end{equation*}
and let $\Phi_G$ and $\Phi_H=(H\setminus\{0\})/\{\pm1\}$ be the sign-orbit
sets.  The remaining primaries are written
\begin{equation*}
  0_+,\ 0_-,\qquad
  \mathfrak a_{[a,b]}\quad([a,b]\in\Phi_G),\qquad
  \mathfrak d_{[h]}\quad([h]\in\Phi_H).
\end{equation*}
In particular, $\mathfrak d_{[h]}=\mathfrak d_{[-h]}$; the subscript is an
orbit label, not a multiplicity label.

For a quadratic form $q_H:H\to\mathbb Q/\mathbb Z$, put
\begin{equation*}
 B_H(h,h')=q_H(h+h')-q_H(h)-q_H(h').
\end{equation*}
The untwisted compact notation
$\mathcal{MD}_{\sigma}(G;H,q_H)$ uses
\begin{equation*}
 T_{\mathfrak a_{[a,b]}}=\exp(2\pi iab/N),\qquad
 T_{\mathfrak d_{[h]}}=\exp\left(2\pi i q_H(h)\right),
\end{equation*}
where $a,b\in\mathbb Z/N$ are the group and character coordinates.   
We use the notation only when the center
identification also gives the required normalization, including the
Gauss-sum sign $\sum_{h\in H}\exp(2\pi i q_H(h))=-r_\sigma$.

Let $\mathbf1_{r\times s}$ denote the $r\times s$ all-one matrix.  In the
ordered basis
\begin{equation*}
 (0_+,0_-),\quad
 (\mathfrak a_{[a,b]})_{[a,b]\in\Phi_G},\quad
 (\mathfrak d_{[h]})_{[h]\in\Phi_H},
\end{equation*}
the normalized $S$-matrix is
\begin{equation}\label{ce:MD-definition}
 S=\frac1N
 \begin{pmatrix}
  A_\sigma&
  \mathbf1_{2\times r_G}&
  B_\sigma\\
  \mathbf1_{r_G\times2}&
  E&
  \mathbf0_{r_G\times m_H}\\
  B_\sigma^{\top}&
  \mathbf0_{m_H\times r_G}&
  F_\sigma
 \end{pmatrix},
\end{equation}
where
\begin{align*}
 A_\sigma
 &=\frac12
 \begin{pmatrix}
  1-\sigma y&1+\sigma y\\
  1+\sigma y&1-\sigma y
 \end{pmatrix},&
 B_\sigma
 &=\sigma y
 \begin{pmatrix}
  1&\cdots&1\\
  -1&\cdots&-1
 \end{pmatrix},\\
 (E)_{[a,b],[a',b']}
 &=2\cos\left(2\pi\,
   \frac{ab'+a'b}{N}\right),&\text{ and }\qquad
 (F_\sigma)_{[h],[h']}
 &=-2\sigma y\cos\left(2\pi B_H(h,h')\right).
\end{align*}
In this block decomposition, $A_\sigma,B_\sigma,E,F_\sigma$ denote
blocks of the modular matrix. The notation $A_{g,h}$ continues to denote
entries of the HI-matrix. In particular,
\begin{equation*}
 S_{\mathfrak d_{[h]},\mathfrak d_{[h']}}
 =\frac{(F_\sigma)_{[h],[h']}}N
 =-\frac{2\sigma}{\sqrt{\mu}}\cos\left(2\pi B_H(h,h')\right)
 =-\frac1{r_\sigma}
   \left(e^{2\pi iB_H(h,h')}+e^{-2\pi iB_H(h,h')}\right).
\end{equation*}
For a cyclic metric group $H=\mathbb Z/\mu$, write
\begin{equation*}
 q_H(h)=\frac{s h^2}{\mu}\pmod{\mathbb Z},
 \qquad B_H(h,h')=\frac{2s hh'}{\mu}.
\end{equation*}
The label $\mu^\sigma_s$ records this cyclic metric group, its spherical
branch, and its coefficient $s$.  Replacing $s$ by $sv^2$ for a unit $v$
modulo $\mu$ gives an equivalent metric form.  A product such as
$65^+_2\mathbin{\times}13^+_2$ denotes a direct product of metric groups,
with the sum of the displayed quadratic forms.  A phrase such as
$r$ rows: $\mu^\sigma_s$ records row multiplicity and is not a metric-group
product.

\subsection{Arithmetic recognition of the metric group}
\label{ce:recognition-proof}

We give the arithmetic proof of Proposition~\ref{ce:metric-recognition}.
Write $q=q_H$, $M=|H|$, and, for $p^a\Vert M$, write $H_p$ for the
$p$-primary part of $H$. We recover its primary ranks and determinant
signs from the Gauss sums in \eqref{ce:twist-trace}. For odd $p$, an odd-primary nondegenerate homogeneous quadratic module has
an orthogonal decomposition
\begin{equation*}
 (H_p,q|_{H_p})\cong
 \bigoplus_{s=1}^{a}\ \bigoplus_{\nu=1}^{n_{p,s}}
 \langle c_{p,s,\nu}\rangle_{p^s},
 \qquad\text{ with}\qquad
 q(x)=\frac{c_{p,s,\nu}x^2}{p^s},
 \quad c_{p,s,\nu}\in(\mathbb Z/p^s)^\times .
\end{equation*}
The notation $\langle c\rangle_{p^s}$ denotes the cyclic group of order
$p^s$ with the displayed quadratic form.  Set
\begin{equation*}
 \Delta_{p,s}=\left(\frac{\prod_\nu c_{p,s,\nu}}{p}\right),
\end{equation*}
with the empty product equal to $1$.  The integers $n_{p,s}$ and signs
$\Delta_{p,s}$ determine the isometry class.  To see the decomposition,
put $p^e=\exp(H_p)$ and choose elements $x,y$ whose polar pairing has order
$p^e$. Nondegeneracy gives such a pair. At least one of
$q(x),q(y),q(x+y)$ has denominator $p^e$;
otherwise their polarization would have smaller order.  Choose such an
element $z$.  Its cyclic summand is nondegenerate because $2q(z)$ has the
same order, and hence it splits off orthogonally.  Iterating gives the
displayed decomposition.  At a fixed
scale, a unit modulo $p^s$
is a square times either $1$ or a fixed nonsquare $\nu$.  The two nonsquare
summands can be replaced by two square summands: the sets of squares and of
$\nu$ minus squares in $\mathbb F_p$ intersect, so there are units $u,v$
with $u^2+v^2\equiv\nu\pmod p$.  Hensel's lemma lifts this relation to
$\mathbb Z/p^s$ (both coordinates are nonzero), and the change of basis with
columns $(u,v)$ and $(-v,u)$ gives the required equivalence.

The required Gauss sums can be read from the ordinary prime powers.  For
any integer $k$, nondegeneracy and oddness give
\begin{equation*}
 \left|\mathcal G_A(k)\right|^2=M|H[k]|.
\end{equation*}
To see this, write one variable as $y+z$ in the product of $\mathcal G_A(k)$
with its complex conjugate.  The sum over $y$ vanishes unless $kz=0$ by
nondegeneracy of the polar form.  In that case $k b(z,z)=0$, so
$2kq(z)=0$.  The order of $q(z)$ is odd, since the order of $z$ is odd and
$q$ is homogeneous; hence $kq(z)=0$.  If
$f_{p,t}=\sum_s\min(t,s)n_{p,s}$, then
\begin{equation*}
 \left|\mathcal G_A(p^t)\right|^2=M p^{f_{p,t}},
 \qquad f_{p,0}=0,\quad f_{p,a}=f_{p,a+1}=a.
\end{equation*}
Thus $n_{p,s}=2f_{p,s}-f_{p,s-1}-f_{p,s+1}$ for $1\leq s\leq a$ and it remains to recover the signs. Put
\begin{equation*}
 h_p=\sum_{u\bmod p}\exp\!\left(2\pi i\frac{u^2}{p}\right).
\end{equation*}
The quadratic Gauss-sum evaluation
\cite[Theorem~1.1 and Lemma~4.1]{murtyPathakGauss} gives, for a unit $c$,
\begin{equation*}
 \sum_{x\bmod p^j}\exp\!\left(2\pi i\frac{cx^2}{p^j}\right)
 =\begin{cases}
    p^{j/2},&j\text{ even},\\
    \left(\dfrac cp\right)p^{(j-1)/2}h_p,&j\text{ odd}.
   \end{cases}
\end{equation*}
For completeness, the recurrence behind this formula is
$g_j(c)=p g_{j-2}(c)$ for $j\geq2$: writing
$x=u+p^{j-1}v$ and summing over $v$ forces $p\mid u$.
For $0\leq t\leq a$, put
\begin{equation*}
 Q_{p,t}=\sum_{x\in H_p}\exp\!\left(2\pi i p^tq(x)\right).
\end{equation*}
Consequently, with
\begin{equation*}
 b_{p,t}=\sum_{\substack{s>t\\s-t\ {\rm odd}}}n_{p,s},
 \qquad
 E_{p,t}=\frac{a+f_{p,t}-b_{p,t}}2,
\end{equation*}
the normalized local sums
\begin{equation*}
 u_{p,t}=\frac{Q_{p,t}}{p^{E_{p,t}}h_p^{b_{p,t}}}
\end{equation*}
recover the determinant signs through
\begin{equation*}
 \Delta_{p,s}=u_{p,s-1}u_{p,s+1},
 \qquad u_{p,t}=1\quad(t\geq a).
\end{equation*}
The local sum $Q_{p,t}$ is obtained from the global trace without using CRT
exponents.  Define
\begin{equation*}
 \mathcal G_{A,p,t}=\mathcal G_A(p^t),\qquad
 \varepsilon_p=
 \prod_{\substack{p'\mid M\\p'\ne p}}
 \left(\frac p{p'}\right)^{\sum_{s\ {\rm odd}}n_{p',s}}.
\end{equation*}
Since multiplication by $p^t$ changes each other-primary odd block by the
factor recorded in $\varepsilon_p^t$, while the $p$-primary factor at
$t=a$ is $p^a$, we have
\begin{equation*}
 Q_{p,t}=p^a\varepsilon_p^{a-t}
       \frac{\mathcal G_{A,p,t}}{\mathcal G_{A,p,a}}
       \qquad(0\leq t<a).
\end{equation*}
The denominators are nonzero by the norm identity.  Since every prime
dividing $M=N^2+4$ is coprime to $N$, the trace formula simplifies here to
$\mathcal G_{A,p,t}=2\operatorname{Tr}(D_A^{p^t})-N$ for $t\geq1$.
The value at $t=0$ is the known number $\mathcal G_A(1)=-r_\sigma$.
Therefore at most $\sum_{p\mid M}v_p(M)$ traces of powers of $D_A$ determine
all $n_{p,s}$ and $\Delta_{p,s}$, and hence determine $(H,q)$.

\subsection{Computing the invariants from the HI matrix}
\label{ce:return-recurrence}

Here we compute the return coefficients in \eqref{ce:return-identity} without
forming powers of $D_A$. Put $a=d^{-1}$ and define $G\times G$ arrays
\begin{equation*}
 V_1(g,h)=A_{g,2h},\qquad
 V_{j+1}(g,h)=\sum_{x\in G}A_{g+x,2h}V_j(h,x).
\end{equation*}
Set $\beta_0=1$, $\beta_1=-a$, and, for $j\geq1$, put
\begin{equation*}
 \beta_{2j}=\sum_{g,h}V_j(g,h)V_j(h,g),\qquad
 \beta_{2j+1}=\sum_{g,h}V_j(g,h)V_{j+1}(h,g).
\end{equation*}
These contractions are bilinear; there is no complex conjugation.
The desired coefficients are given by $u_0=1$, $u_1=0$, and
\begin{equation}\label{ce:return-recursion}
 u_n=a u_{n-2}+a\sum_{t=0}^{n-3}\beta_t u_{n-3-t}\qquad(n\geq2),
\end{equation}
where an empty sum is zero. To prove \eqref{ce:return-recursion}, define
\begin{equation*}
 (LU)(g,h)=\sum_x A_{g+x,2h}U(h,x),\qquad
 v(g,h)=\delta_{h,0},\qquad\text{ and }\qquad w(U)=\sum_hU(0,h).
\end{equation*}
Writing $D_A^kC_\rho=u_kC_\rho+f_kF_\rho+\sum U_k(g,h)X_{g,h}$,
the operator formula gives
\begin{equation*}
 u_{k+1}=a^2f_k+a w(U_k),\qquad
 f_{k+1}=d u_k,\qquad U_{k+1}=a f_kv+LU_k.
\end{equation*}
The initial values are $u_0=1$, $f_0=0$ and $U_0=0$.
Eliminating the last two formal generating series gives
\begin{equation*}
 \sum_{k\geq0}u_kz^k
 =\frac1{1-az^2-az^3\sum_{t\geq0}w(L^tv)z^t}.
\end{equation*}
For the bilinear form $\langle U,V\rangle=\sum_{g,h}U(g,h)V(h,g)$,
exchanging $g$ and $x$ gives
\begin{equation*}
 \langle LU,V\rangle
 =\sum_{g,h,x}A_{g+x,2h}U(h,x)V(h,g)
 =\langle U,LV\rangle.
\end{equation*}
Since $w(U)=\langle v,U\rangle$ and $L^jv=V_j$, it follows that
$w(L^{2j}v)=\langle V_j,V_j\rangle$ and
$w(L^{2j+1}v)=\langle V_j,V_{j+1}\rangle$.
Also $w(v)=1$ and
$w(Lv)=\sum_hA_{0,2h}=\sum_hA_{h,0}=-a$ by (E1)--(E2).
Thus $w(L^tv)=\beta_t$, proving the recurrence.
Only four array updates after $V_1$ are needed for $\Theta_{13}$;
$\Theta_5$ is already the quadratic contraction \eqref{ce:fifth-invariant}.

The exponent of $H$ also has a direct spectral interpretation: it is the
least common multiple of the remaining twist orders. Indeed, if $h$ has
odd order $t$, homogeneity and polarization give $t^2q_H(h)=2tq_H(h)=0$,
so $tq_H(h)=0$. Conversely, an integer annihilating all values of $q_H$
annihilates its polarization, and hence $H$ by nondegeneracy. For $N=11,29,39$, one additional invariant and the spherical sign
identify the metric form. The possibilities are
\begin{center}
\begin{tabular}{c|c|c|c}
 $N$ & invariant & value & $H$\\
 \hline
 $11$ & $\Theta_5(A)$ & $\sigma\sqrt5$ & $C_{125}$\\
      &                 & $\pm5$ & $C_{25}\times C_5$\\
      &                 & $5\sigma\sqrt5$ & $C_5^3$\\
 $29$ & $\Theta_{13}(A)$ & $\pm\sqrt{13}$ & $C_5\times C_{169}$\\
      &                  & $\pm13$ & $C_5\times C_{13}^2$\\
 $39$ & $\Theta_5(A)$ & $\pm\sqrt5$ & $C_{25}\times C_{61}$\\
      &                 & $\pm5$ & $C_5^2\times C_{61}$
\end{tabular}
\end{center}
The signed invariant and $\mathcal G_A(1)=-r_\sigma$ determine all
local determinant classes. These are abstract metric possibilities;
we do not assert that each is realized by an HI matrix.

\emph{Exact specialization.}\label{ce:finite-field-recognition}
We record the condition under which a finite-field comparison proves
a characteristic-zero spectral identity.  Remove the known factors from
$\det(XI-D_A)$ and denote the resulting remaining-twist polynomial by
$R_A(X)$.  Let $R_{H,q}(X)$ be the polynomial obtained from the metric-group
twists.  Suppose that their coefficients have a common good specialization
at a prime $\ell\nmid\mu$ and that the two reduced polynomials agree.  Both
polynomials split over characteristic zero into $\mu$th roots of unity.
Since $X^\mu-1$ is separable in characteristic $\ell$, distinct such roots
remain distinct after reduction.  The reduced polynomials therefore have the
same multiplicity at every root, and the polynomials agree in characteristic
zero.  A congruence between one trace and one candidate value is sufficient
only when the candidate fingerprints remain pairwise distinct under the same
compatible specialization.  This is the check required in the finite-field
identifications at $N=39,41,43$; the embeddings used for the displayed rows are described below.

\subsection{Finite cyclic examples}

Throughout this appendix the matrices are over $\mathbb C$. Put $N=2n+1$
and $G=\mathbb Z/N$. We use the compact modular-data
notation of Section~\ref{ce:compact-MD}: a label such as $85^+_{42}$ means
$\mu^+_{42}$ for the cyclic metric group of order $\mu=N^2+4=85$,
with the sign recording the spherical branch.  Products of labels denote
orthogonal sums of metric groups. Tables~\ref{tab:cyclic-census} and~\ref{tab:cyclic-negative} list our
positive- and negative-branch examples through $N=43$, respectively. For every positive row, we use the reconstruction coordinates of
\cite[Section~7.2]{evans2017non}.  Given
$j=(j_2,\ldots,j_{n+1})$, put $j_1=0$ and extend by
\begin{equation*}
 j_{n+1+i}=j_{n+1}+j_n-j_{n-i}\qquad(1\leq i<n).
\end{equation*}
With $d=d_+$, the matrix is recovered by
\begin{equation*}
 A_{g,h}=\frac{\sqrt d}{d-1}
 \exp\!\bigl(\mathrm i(j_h-j_g-j_{h-g})\bigr)\quad(0<g<h<N),
 \qquad A_{h,g}=\overline{A_{g,h}},
\end{equation*}
with $A_{g,0}=A_{0,g}=A_{g,g}=-(d-1)^{-1}$ for $g\ne0$ and
$A_{0,0}=1-(d-1)^{-1}$.

\emph{From decimal phases to exact matrices.}
For a positive row, put
$z_g=\exp\!\bigl(\mathrm i(j_g-j_{g-1})\bigr)$
and replace complex conjugates by inverses on the unit circle.
After adjoining a square root of $d$ and clearing denominators,
the HI equations become polynomial equations in these coordinates.
We refine the numerical solution and apply the algebraic recognition
and exact verification procedure of
Section~\ref{sec:char2-algebraic-recovery}.
Root isolation specifies the intended embedding.
The rounded decimals alone do not determine an algebraic number uniquely.

For each odd $N\leq43$ and each scalar fibre, the displayed rows
are complete up to tensor equivalence by
Corollary~\ref{cor:HI-categorical-completeness}, as explained in
Section~\ref{sec:char2-algebraic-recovery}. 
For example, at $N=15$ the positive locus consists of $24$ marked
Hermitian matrices in three $\operatorname{Aut}(\mathbb Z/15)$-orbits,
represented by the three order-fifteen rows of
Table~\ref{tab:cyclic-census}.

When all displayed representatives belong to one algebraic family over
$\mathbb Q(d)$, we write $j^{(1)},j^{(2)},\ldots$ and
$\alpha^{(1)},\alpha^{(2)},\ldots$.  When several algebraic families occur,
the family is recorded by a subscript, as in $j_A^{(1)},j_B^{(1)}$ and
$\alpha_A^{(1)},\alpha_B^{(1)}$. At $N=11$ the families $A$ and $B$ have
degrees $10$ and $2$; at $N=29$ they have degrees $56$ and $4$.
Representative numbers are chosen separately for the two spherical signs
and do not specify a pairing between signs.

\emph{Rows at orders thirty-three through forty-three.}
For the positive rows at these orders, the exact computations first give
the principal arguments of the entries $B_{1,g}$.
Table~\ref{tab:cyclic-census} records the same
cumulative coordinates as the earlier positive rows:
\begin{equation*}
 j_g\equiv\sum_{r=2}^g\arg B_{1,r}\pmod{2\pi},\qquad
 -\pi<j_g\leq\pi.
\end{equation*}
Thus $B_{1,g}=\sqrt d\exp\!\bigl(\mathrm i(j_g-j_{g-1})\bigr)$.

The exact algebraic families contain respectively
$100,72,108,120,80,84$ marked matrices for each scalar sign at
$N=33,35,37,39,41,43$. The displayed representatives give respectively
$5,3,3,5,2,2$ pairwise tensor-inequivalent classes under unit relabeling.
The exhaustive characteristic-two censuses have the same respective
numbers of $\operatorname{Aut}(\mathbb Z/N)$-orbits.
At $N=43$, the two displayed representatives are inequivalent because
their modular data differ.

The entries are rounded evaluations of algebraic presentations checked
against the full HI equations. At $N=39$, the four representatives with
subscript $A$ belong to a degree-$96$ family over $\mathbb Q(d)$, and
the representative with subscript $B$ belongs to a degree-$24$ family.

For $N=33,35,37$ the modular labels follow from
Corollary~\ref{ce:prime-discriminant}. At $N=39,41,43$, exact
comparisons of the full remaining twist spectra at the split primes
$2379001$, $2901571$ and $31074811$ cover all marked reductions of the
specified families. Compatible matrix and cyclotomic specializations
identify the characteristic-zero modular data by
Appendix~\ref{ce:finite-field-recognition}; the remaining $S$-blocks
were also checked at these primes. Quadratic Gauss sums fix the radical
embeddings. Matching to the displayed decimal representatives uses
high-precision phase evaluations at $N=39,41$, and exact factorization
with root isolation at $N=43$.

\begingroup
\footnotesize
\begin{longtable}{@{}>{\raggedright\arraybackslash}p{.12\linewidth}
 >{\raggedright\arraybackslash}p{.13\linewidth}
 >{\fontsize{7}{8.8}\selectfont\raggedright\arraybackslash}p{\dimexpr.75\linewidth-4\tabcolsep\relax}@{}}
\caption{Unitary solutions.}\label{tab:cyclic-census}\\
\toprule
row & modular label & \footnotesize phase vector $j$\\
\midrule
\endfirsthead
\multicolumn{3}{c}{\tablename\ \thetable\ -- continued from previous page}\\
\toprule
row & modular label & \footnotesize phase vector $j$\\
\midrule
\endhead
\midrule
\multicolumn{3}{r@{}}{continued on next page}\\
\endfoot
\bottomrule
\endlastfoot
$j(3)$ & $13^+_6$ & $\bigl(1.2920763\bigr)$ \\
$j(5)$ & $29^+_{14}$ & $\bigl(0.1846862,\allowbreak 1.5984701\bigr)$ \\
$j(7)$ & $53^+_{26}$ & $\bigl(2.4712280,\allowbreak 0.5168555,\allowbreak 0.2137724\bigr)$ \\
$j^{(1)}(9)$ & $85^+_{42}$ & $\bigl(2.3969766,\allowbreak 2.0792511,\allowbreak -0.2079168,\allowbreak -2.5086739\bigr)$ \\
$j^{(2)}(9)$ & $85^+_{6}$ & $\bigl(-2.3647370,\allowbreak 1.0310571,\allowbreak 1.5696921,\allowbreak 0.3383837\bigr)$ \\
$j_A^{(1)}(11)$ & $125^+_{62}$ & $\bigl(0.9996506,\allowbreak 2.7258434,\allowbreak -0.5714202,\allowbreak -1.7797340,\allowbreak 1.2675984\bigr)$ \\
$j_B^{(1)}(11)$ & $25^+_{12} \times 5^+_2$ & $\bigl(-2.6444397,\allowbreak -1.7629598,\allowbreak -2.6444397,\allowbreak 2.7572656,\allowbreak 0.1128259\bigr)$ \\
$j(13)$ & $173^+_{86}$ & $\bigl(-3.1050384,\allowbreak 0.5993398,\allowbreak -0.1117084,\allowbreak -0.9697663,\allowbreak 1.3368480,\allowbreak 1.0048312\bigr)$ \\
$j^{(1)}(15)$ & $229^+_{114}$ & $\bigl(-1.0777622,\allowbreak -0.7748018,\allowbreak -2.1718632,\allowbreak -1.6068402,\allowbreak -0.2575080,\allowbreak 2.0925023,\allowbreak 0.7228956\bigr)$ \\
$j^{(2)}(15)$ & $229^+_{114}$ & $\bigl(-0.8200962,\allowbreak 2.2979417,\allowbreak -2.2018727,\allowbreak -1.3053464,\allowbreak -1.5531161,\allowbreak -0.5755386,\allowbreak -2.0576419\bigr)$ \\
$j^{(3)}(15)$ & $229^+_{114}$ & $\bigl(-1.7195632,\allowbreak -1.3052084,\allowbreak 2.8412918,\allowbreak -0.1489797,\allowbreak -0.5712485,\allowbreak 0.0066450,\allowbreak 0.2236904\bigr)$ \\
$j(17)$ & $293^+_{146}$ & $\bigl(-1.4660739,\allowbreak 0.2914894,\allowbreak 3.1307349,\allowbreak -2.6931849,\allowbreak 1.3981531,\allowbreak -0.6119383,\allowbreak -1.6670776,\allowbreak -1.7548211\bigr)$ \\
$j^{(1)}(19)$ & $365^+_{182}$ & $\bigl(-2.6774651,\allowbreak 1.0889721,\allowbreak -0.8994418,\allowbreak 0.0154481,\allowbreak -1.2409281,\allowbreak -0.4933943,\allowbreak 1.8398793,\allowbreak -1.5258845,\allowbreak -2.0843734\bigr)$ \\
$j^{(2)}(19)$ & $365^+_{11}$ & $\bigl(-1.1232827,\allowbreak -2.7155433,\allowbreak -2.5538699,\allowbreak -0.7474485,\allowbreak 1.7489569,\allowbreak 1.5103331,\allowbreak -2.6124761,\allowbreak -2.7460298,\allowbreak -2.6924799\bigr)$ \\
$j^{(1)}(21)$ & $445^+_{222}$ & $\bigl(-2.2370593,\allowbreak -1.5238885,\allowbreak -1.0139143,\allowbreak -0.8419553,\allowbreak 1.4781116,\allowbreak -0.9083293,\allowbreak -0.3741674,\allowbreak 2.3389385,\allowbreak -1.7444562,\allowbreak 1.8977849\bigr)$ \\
$j^{(2)}(21)$ & $445^+_6$ & $\bigl(0.2558899,\allowbreak 2.9194734,\allowbreak 2.9503132,\allowbreak 1.3415835,\allowbreak -0.0065763,\allowbreak 1.0901391,\allowbreak 0.9954091,\allowbreak -1.4362241,\allowbreak 2.4093776,\allowbreak 0.0671580\bigr)$ \\
$j^{(3)}(21)$ & $445^+_{222}$ & $\bigl(-2.2025440,\allowbreak -1.9404632,\allowbreak -3.0108077,\allowbreak -2.6513734,\allowbreak -2.6276981,\allowbreak -0.3349944,\allowbreak -2.1862386,\allowbreak -1.9231830,\allowbreak -0.1691744,\allowbreak 1.9753120\bigr)$ \\
$j^{(4)}(21)$ & $445^+_6$ & $\bigl(2.6908357,\allowbreak 1.2426190,\allowbreak -2.4360935,\allowbreak 1.1685504,\allowbreak 1.9651482,\allowbreak -2.1124178,\allowbreak 0.0707480,\allowbreak -1.7294321,\allowbreak -2.1420528,\allowbreak -1.7089847\bigr)$ \\
$j^{(1)}(23)$ & $533^+_{266}$ & $\bigl(0.7193532,\allowbreak 1.1507367,\allowbreak -0.3661121,\allowbreak 1.2000074,\allowbreak -2.5246756,\allowbreak 0.0378541,\allowbreak -1.6153618,\allowbreak 1.4169612,\allowbreak -1.6456978,\allowbreak -1.5483920,\allowbreak 0.6236422\bigr)$ \\
$j^{(2)}(23)$ & $533^+_3$ & $\bigl(0.5637638,\allowbreak 0.5944634,\allowbreak -1.1281791,\allowbreak -0.8519442,\allowbreak 1.5891687,\allowbreak 1.7194777,\allowbreak 2.4667932,\allowbreak 1.8808503,\allowbreak -1.5202376,\allowbreak 1.4817129,\allowbreak -0.2559956\bigr)$ \\
$j^{(1)}(25)$ & $629^+_{314}$ & $\bigl(0.3339126,\allowbreak 1.7042370,\allowbreak 2.9944255,\allowbreak 2.4721207,\allowbreak 2.4430927,\allowbreak -0.9063114,\allowbreak -2.5268216,\allowbreak 1.0196997,\allowbreak 1.2080308,\allowbreak 2.8066300,\allowbreak -0.1828353,\allowbreak 1.1979048\bigr)$ \\
$j^{(2)}(25)$ & $629^+_3$ & $\bigl(-0.5127148,\allowbreak -0.6413885,\allowbreak -0.5896815,\allowbreak 0.4889445,\allowbreak -2.5418207,\allowbreak 0.9660650,\allowbreak 1.4519598,\allowbreak -1.3553586,\allowbreak 0.6226600,\allowbreak -0.0764020,\allowbreak 1.5578307,\allowbreak 2.5802782\bigr)$ \\
$j^{(1)}(27)$ & $733^+_{366}$ & $\bigl(-2.4092469,\allowbreak -0.8563151,\allowbreak 3.0329948,\allowbreak 3.1371830,\allowbreak -0.9926756,\allowbreak -2.7341860,\allowbreak 0.9349933,\allowbreak 2.9664625,\allowbreak -1.2284420,\allowbreak -2.0725752,\allowbreak -2.8810383,\allowbreak 2.3033243,\allowbreak 0.3403274\bigr)$ \\
$j^{(2)}(27)$ & $733^+_{366}$ & $\bigl(-3.0415938,\allowbreak -1.7376577,\allowbreak 2.4244864,\allowbreak -0.1942298,\allowbreak 2.7311386,\allowbreak -2.7990193,\allowbreak -2.7674486,\allowbreak 2.7795863,\allowbreak -1.3563228,\allowbreak -2.3226722,\allowbreak -0.2362278,\allowbreak 1.8383025,\allowbreak 2.9236301\bigr)$ \\
$j^{(3)}(27)$ & $733^+_{366}$ & $\bigl(1.4237361,\allowbreak 2.4690096,\allowbreak -0.8026222,\allowbreak 1.5221708,\allowbreak -1.7539809,\allowbreak 2.2230386,\allowbreak 2.0496486,\allowbreak 1.5390928,\allowbreak -1.6108452,\allowbreak -0.3152309,\allowbreak -0.9287271,\allowbreak 0.6804064,\allowbreak -0.6576277\bigr)$ \\
$j_A^{(1)}(29)$ & $845^+_{422}$ & $\bigl(1.3127646,\allowbreak 0.0414993,\allowbreak 0.5026149,\allowbreak -1.3399270,\allowbreak 1.5639559,\allowbreak -0.9521122,\allowbreak -2.6826562,\allowbreak 3.0077611,\allowbreak -1.3009749,\allowbreak 1.6389150,\allowbreak 2.0377123,\allowbreak -1.8638674,\allowbreak 0.8641946,\allowbreak -1.5906218\bigr)$ \\
$j_A^{(2)}(29)$ & $845^+_3$ & $\bigl(-1.1805882,\allowbreak -2.0828341,\allowbreak 1.3424986,\allowbreak 1.6095980,\allowbreak 0.5660763,\allowbreak -1.7827282,\allowbreak -0.1211648,\allowbreak 2.1097822,\allowbreak 2.8898475,\allowbreak -2.5651291,\allowbreak -2.8248067,\allowbreak 1.7160818,\allowbreak 2.9340175,\allowbreak 1.3691798\bigr)$ \\
$j_B^{(1)}(29)$ & $65^+_2 \times 13^+_2$ & $\bigl(-1.64251434,\allowbreak -1.29303720,\allowbreak 1.74738570,\allowbreak 2.09686284,\allowbreak 2.44633999,\allowbreak 2.09686284,\allowbreak -1.145899545,\allowbreak -2.788413893,\allowbreak -0.4469452614,\allowbreak 1.502779925,\allowbreak 0.2520090213,\allowbreak -1.390505327,\allowbreak 2.551211350,\allowbreak 1.300440446\bigr)$ \\
$j^{(1)}(31)$ & $965^+_{482}$ & $\bigl(-2.617177804,\allowbreak -0.464461128,\allowbreak 2.39062333,\allowbreak -2.693178171,\allowbreak 0.955318177,\allowbreak -1.6648769535,\allowbreak -2.29945502,\allowbreak 1.47842445,\allowbreak 1.008586631,\allowbreak 1.43438315,\allowbreak 2.960718542,\allowbreak 0.947576986,\allowbreak 0.330689829,\allowbreak 2.9697571515,\allowbreak -1.507647469\bigr)$ \\
$j^{(2)}(31)$ & $965^+_{11}$ & $\bigl(2.474615516,\allowbreak -3.01932787,\allowbreak -0.20656944,\allowbreak -0.551196761,\allowbreak -2.362054488,\allowbreak 1.7829773505,\allowbreak -1.08525091,\allowbreak -2.44205921,\allowbreak -2.214844455,\allowbreak 2.980046108,\allowbreak -0.91082405,\allowbreak 0.063255467,\allowbreak -1.363391313,\allowbreak -2.7187506376,\allowbreak -2.333442255\bigr)$ \\
$j^{(1)}(33)$ & $1093^{+}_{546}$ & $\bigl(-2.90935208,\allowbreak -3.03906161,\allowbreak 1.15689750,\allowbreak 1.55673557,\allowbreak -1.07152462,\allowbreak 1.68013575,\allowbreak 1.10197818,\allowbreak 0.24069563,\allowbreak -2.48926255,\allowbreak -1.62869169,\allowbreak -0.82298684,\allowbreak 1.10485327,\allowbreak -2.49018273,\allowbreak 0.92744248,\allowbreak -1.51091796,\allowbreak 0.45817844\bigr)$ \\
$j^{(2)}(33)$ & $1093^{+}_{546}$ & $\bigl(-2.88458991,\allowbreak -1.69698063,\allowbreak 2.15822291,\allowbreak 0.77881548,\allowbreak 0.73592017,\allowbreak 1.73394591,\allowbreak -2.27018420,\allowbreak 1.26928611,\allowbreak -3.06274598,\allowbreak 1.17751451,\allowbreak 0.25153269,\allowbreak 2.59415668,\allowbreak 1.92057295,\allowbreak 0.46730227,\allowbreak -0.36908265,\allowbreak -2.60479949\bigr)$ \\
$j^{(3)}(33)$ & $1093^{+}_{546}$ & $\bigl(-2.80128812,\allowbreak 1.29812898,\allowbreak -1.75431895,\allowbreak -0.33478543,\allowbreak 1.24419542,\allowbreak 0.18784159,\allowbreak 0.87624330,\allowbreak 1.28207988,\allowbreak -1.55665312,\allowbreak -0.60139609,\allowbreak 1.84099168,\allowbreak -1.55627592,\allowbreak -1.17019042,\allowbreak -1.77735527,\allowbreak -0.50116163,\allowbreak -2.72305181\bigr)$ \\
$j^{(4)}(33)$ & $1093^{+}_{546}$ & $\bigl(-2.68781642,\allowbreak 2.05810584,\allowbreak -0.44318017,\allowbreak 1.08671268,\allowbreak -3.07241308,\allowbreak 2.46114122,\allowbreak -0.33462735,\allowbreak -1.04518877,\allowbreak -2.78125046,\allowbreak 0.91057337,\allowbreak 1.92695133,\allowbreak 1.27118647,\allowbreak 2.60323603,\allowbreak -1.34499043,\allowbreak 1.69417288,\allowbreak 2.62765178\bigr)$ \\
$j^{(5)}(33)$ & $1093^{+}_{546}$ & $\bigl(-2.64053685,\allowbreak -1.33484794,\allowbreak 0.56132453,\allowbreak -0.66610337,\allowbreak 3.14009321,\allowbreak -2.70254458,\allowbreak 1.35967071,\allowbreak 0.91646863,\allowbreak 0.42599182,\allowbreak 2.12190292,\allowbreak 2.77552698,\allowbreak 2.27631917,\allowbreak 2.22546349,\allowbreak 2.66944962,\allowbreak 0.20190707,\allowbreak 2.75918605\bigr)$ \\
$j^{(1)}(35)$ & $1229^{+}_{614}$ & $\bigl(-2.81089063,\allowbreak -1.92652714,\allowbreak -3.13707655,\allowbreak 2.66476167,\allowbreak 2.68730099,\allowbreak -2.37240357,\allowbreak 2.59293204,\allowbreak -2.38970735,\allowbreak -1.80658102,\allowbreak 0.75659409,\allowbreak 0.26627533,\allowbreak -0.86658396,\allowbreak 1.93362594,\allowbreak -1.61688000,\allowbreak -0.02991702,\allowbreak 1.71526796,\allowbreak 0.56698174\bigr)$ \\
$j^{(2)}(35)$ & $1229^{+}_{614}$ & $\bigl(-2.58633223,\allowbreak -2.48673065,\allowbreak 1.78743663,\allowbreak 2.19770567,\allowbreak 2.47535759,\allowbreak -1.81437718,\allowbreak 2.01406312,\allowbreak -1.48532922,\allowbreak 2.10662455,\allowbreak -2.45710300,\allowbreak -2.02248981,\allowbreak 2.07753485,\allowbreak -1.79504230,\allowbreak -0.38935444,\allowbreak -0.75657117,\allowbreak 0.53705343,\allowbreak 1.61992748\bigr)$ \\
$j^{(3)}(35)$ & $1229^{+}_{614}$ & $\bigl(-1.93071574,\allowbreak 2.98219380,\allowbreak -2.64760422,\allowbreak -0.25862687,\allowbreak 1.58734692,\allowbreak 0.18899346,\allowbreak 2.26186841,\allowbreak -0.34353253,\allowbreak -0.32821702,\allowbreak 1.11143573,\allowbreak -1.97137515,\allowbreak 0.53137850,\allowbreak 3.09505134,\allowbreak 2.61066748,\allowbreak -2.39823186,\allowbreak -2.28931228,\allowbreak -1.43279508\bigr)$ \\
$j^{(1)}(37)$ & $1373^{+}_{686}$ & $\bigl(-2.93211149,\allowbreak -0.70392961,\allowbreak 3.04557250,\allowbreak 1.55147116,\allowbreak -2.46650241,\allowbreak 1.92702138,\allowbreak 2.00677927,\allowbreak 2.00148491,\allowbreak -0.30717680,\allowbreak -2.78086400,\allowbreak 2.90109018,\allowbreak -0.73623583,\allowbreak -1.89159622,\allowbreak 0.45096040,\allowbreak 2.52171688,\allowbreak -2.08529667,\allowbreak -2.62088685,\allowbreak -2.98612965\bigr)$ \\
$j^{(2)}(37)$ & $1373^{+}_{686}$ & $\bigl(-2.86183690,\allowbreak -0.23300076,\allowbreak 0.91485787,\allowbreak 1.22232078,\allowbreak 2.07492203,\allowbreak 1.11375472,\allowbreak -1.72514627,\allowbreak -3.10134637,\allowbreak -2.80244353,\allowbreak 1.10588280,\allowbreak 0.56375681,\allowbreak -1.95988914,\allowbreak -2.99499232,\allowbreak -0.23460333,\allowbreak 2.34740839,\allowbreak 1.38694876,\allowbreak 0.30196591,\allowbreak 3.09657873\bigr)$ \\
$j^{(3)}(37)$ & $1373^{+}_{686}$ & $\bigl(-2.55850855,\allowbreak -0.25148601,\allowbreak -0.27194303,\allowbreak 1.42309483,\allowbreak -0.38417040,\allowbreak 2.59311271,\allowbreak -0.53107919,\allowbreak 0.75414553,\allowbreak 3.04043797,\allowbreak 0.54187279,\allowbreak 0.93137805,\allowbreak 1.39860249,\allowbreak 0.01771039,\allowbreak 0.65560457,\allowbreak -2.20898615,\allowbreak 1.91344766,\allowbreak -0.17366787,\allowbreak -0.63791792\bigr)$ \\
$j_A^{(1)}(39)$ & $1525^+_{762}$ & $\bigl(-3.13369268,\allowbreak 2.95818037,\allowbreak -0.98823140,\allowbreak -1.40041149,\allowbreak -0.72042208,\allowbreak 0.59431207,\allowbreak 1.40554587,\allowbreak 0.48348010,\allowbreak 1.28356200,\allowbreak -0.83600716,\allowbreak 1.01826302,\allowbreak 2.74779526,\allowbreak 1.23563362,\allowbreak -0.84117736,\allowbreak -0.20423459,\allowbreak -0.41653466,\allowbreak -1.23532759,\allowbreak 2.78463832,\allowbreak -0.07374024\bigr)$ \\
$j_A^{(2)}(39)$ & $1525^+_{762}$ & $\bigl(-3.12164776,\allowbreak -2.68160281,\allowbreak -0.33164731,\allowbreak -0.02197197,\allowbreak 2.06781805,\allowbreak -0.48986462,\allowbreak -2.45904707,\allowbreak -2.99029930,\allowbreak 2.35984068,\allowbreak 2.84001351,\allowbreak 0.77420286,\allowbreak 1.37857867,\allowbreak 2.24328091,\allowbreak 1.38786498,\allowbreak 0.51012702,\allowbreak 1.13575418,\allowbreak -2.21631103,\allowbreak 1.98120639,\allowbreak 1.50523810\bigr)$ \\
$j_A^{(3)}(39)$ & $1525^+_{6}$ & $\bigl(-3.03803470,\allowbreak 0.92190281,\allowbreak 2.96162525,\allowbreak -2.31806250,\allowbreak 1.06657924,\allowbreak -1.47716199,\allowbreak 0.57609544,\allowbreak -0.86056489,\allowbreak -0.68333111,\allowbreak -3.12754059,\allowbreak -2.92125976,\allowbreak 1.85335242,\allowbreak -0.42893833,\allowbreak -1.03445083,\allowbreak 1.69642075,\allowbreak -1.88800191,\allowbreak -0.42808229,\allowbreak -0.02132942,\allowbreak -0.94808374\bigr)$ \\
$j_A^{(4)}(39)$ & $1525^+_{6}$ & $\bigl(-3.03365890,\allowbreak -2.38050810,\allowbreak 3.01742052,\allowbreak 1.51578516,\allowbreak -0.37153659,\allowbreak 2.01208215,\allowbreak -0.74781721,\allowbreak -2.53821679,\allowbreak 2.94677725,\allowbreak 0.53296501,\allowbreak -2.78076847,\allowbreak 2.43999499,\allowbreak -2.62185550,\allowbreak -1.25089710,\allowbreak -1.70783724,\allowbreak 0.29744333,\allowbreak 0.76047085,\allowbreak -1.63915357,\allowbreak -1.92999404\bigr)$ \\
$j_B^{(1)}(39)$ & $5^+_{1}\times\allowbreak 5^+_{1}\times\allowbreak 61^+_{2}$ & $\bigl(-2.83797502,\allowbreak 0.83610832,\allowbreak 2.64880121,\allowbreak -2.04518786,\allowbreak -2.64821957,\allowbreak -0.46792812,\allowbreak 1.41855078,\allowbreak 2.40382317,\allowbreak -0.09687180,\allowbreak 0.66356249,\allowbreak -1.63817423,\allowbreak 0.70967933,\allowbreak 1.27333779,\allowbreak 0.15781794,\allowbreak -0.96297334,\allowbreak 2.98358380,\allowbreak 2.15727672,\allowbreak -1.53795992,\allowbreak 0.85779195\bigr)$ \\
$j^{(1)}(41)$ & $1685^+_{842}$ & $\bigl(-2.93155045,\allowbreak -1.90223096,\allowbreak -1.78842531,\allowbreak -2.75857905,\allowbreak 0.57535106,\allowbreak 1.85901855,\allowbreak -0.66086818,\allowbreak -3.08947449,\allowbreak 1.01132521,\allowbreak -2.65234394,\allowbreak -1.41511324,\allowbreak -0.50010945,\allowbreak 2.20509583,\allowbreak -2.86332858,\allowbreak 2.31018496,\allowbreak 3.12285157,\allowbreak -1.84568475,\allowbreak -2.24845059,\allowbreak -0.34331650,\allowbreak -1.67396055\bigr)$ \\
$j^{(2)}(41)$ & $1685^+_{11}$ & $\bigl(-2.91011077,\allowbreak -0.57969312,\allowbreak 2.16367483,\allowbreak 0.13013752,\allowbreak -1.06188213,\allowbreak -1.61255072,\allowbreak -0.85268651,\allowbreak -2.11089060,\allowbreak 1.26349217,\allowbreak 0.52252630,\allowbreak -1.11531668,\allowbreak 1.80425216,\allowbreak 2.80726699,\allowbreak 1.87294535,\allowbreak 2.51598754,\allowbreak -2.73327444,\allowbreak 1.34768116,\allowbreak 1.05692826,\allowbreak -1.58770514,\allowbreak -0.78725361\bigr)$ \\
$j^{(1)}(43)$ & $1853^+_{3}$ & $\bigl(-3.03031904,\allowbreak -2.85070822,\allowbreak 1.41722344,\allowbreak 1.81570063,\allowbreak 0.82808079,\allowbreak -1.47090652,\allowbreak 2.12199138,\allowbreak -1.23767246,\allowbreak -2.65713723,\allowbreak 1.30971258,\allowbreak -2.29167336,\allowbreak -2.97522241,\allowbreak 2.48673959,\allowbreak -0.88976244,\allowbreak 0.36694230,\allowbreak 1.74276444,\allowbreak 2.49202310,\allowbreak 1.25550479,\allowbreak 0.52959473,\allowbreak 3.00471824,\allowbreak 2.77627799\bigr)$ \\
$j^{(2)}(43)$ & $1853^+_{926}$ & $\bigl(-2.79377627,\allowbreak -0.23506613,\allowbreak -1.37464406,\allowbreak 0.07411665,\allowbreak -0.46849875,\allowbreak -0.50063710,\allowbreak -2.57982020,\allowbreak 3.06545300,\allowbreak -0.81994313,\allowbreak 0.76459725,\allowbreak 1.30342936,\allowbreak 1.68478365,\allowbreak -0.92621382,\allowbreak 0.94445134,\allowbreak -1.56023914,\allowbreak 2.44009164,\allowbreak -2.70387910,\allowbreak -0.44445444,\allowbreak 2.96180300,\allowbreak -1.37842932,\allowbreak -0.79414721\bigr)$ \\
\end{longtable}
\endgroup

For a negative row $\alpha=(\alpha_2,\ldots,\alpha_{n+1})$, the coordinates
are $\alpha_g=(d_--1)A_{1,g}$ for $2\leq g\leq n+1$.
The full matrix is recovered by
Proposition~\ref{prop:HI-cyclic-Q-system-normal-form}, with $d=d_-$
and $v_g=\alpha_g$ for these indices.
Every row in Table~\ref{tab:cyclic-negative} uses this convention.
The displayed vectors are rounded decimal approximations;
the matrices reconstructed from them do not, by themselves,
verify the HI equations.

\begingroup
\footnotesize
\begin{longtable}{@{}>{\raggedright\arraybackslash}p{.12\linewidth}
 >{\raggedright\arraybackslash}p{.13\linewidth}
 >{\fontsize{7}{8.8}\selectfont\raggedright\arraybackslash}p{\dimexpr.75\linewidth-4\tabcolsep\relax}@{}}
\caption{Non-unitary solutions.}\label{tab:cyclic-negative}\\
\toprule
row & modular label & \footnotesize scaled first-row vector $\alpha$\\
\midrule
\endfirsthead
\multicolumn{3}{c}{\tablename\ \thetable\ -- continued from previous page}\\
\toprule
row & modular label & \footnotesize scaled first-row vector $\alpha$\\
\midrule
\endhead
\midrule
\multicolumn{3}{r@{}}{continued on next page}\\
\endfoot
\bottomrule
\endlastfoot
$\alpha(3)$ & $13^-_1$ & $\bigl(1.2434888\bigr)$ \\
$\alpha(5)$ & $29^-_1$ & $\bigl(1.5147145,\allowbreak -1.6938509\bigr)$ \\
$\alpha(7)$ & $53^-_1$ & $\bigl(1.6526511,\allowbreak -2.0677335,\allowbreak 2.21047598\bigr)$ \\
$\alpha^{(1)}(9)$ & $85^-_1$ & $\bigl(1.7329877,\allowbreak -2.289969,\allowbreak 2.6393036,\allowbreak -2.758412\bigr)$ \\
$\alpha^{(2)}(9)$ & $85^-_3$ & $\bigl(-0.0833485,\allowbreak 0.1369450,\allowbreak 0.3271260,\allowbreak -0.1461992\bigr)$ \\
$\alpha_A^{(1)}(11)$ & $125^-_1$ & $\bigl(1.7845947,\allowbreak -2.4333014,\allowbreak 2.9201099,\allowbreak -3.2221873,\allowbreak 3.3245994\bigr)$ \\
$\alpha_B^{(1)}(11)$ & $25^-_1 \times 5^-_1$ & $\bigl(0.9491070,\allowbreak 0.2046143,\allowbreak -0.4406823,\allowbreak -0.4406823,\allowbreak 0.9491070\bigr)$ \\
$\alpha(13)$ & $173^-_1$ & $\bigl(1.8201765,\allowbreak -2.5318290,\allowbreak 3.1141143,\allowbreak -3.5465260,\allowbreak 3.8128364,\allowbreak -3.9027751\bigr)$ \\
$\alpha^{(1)}(15)$ & $229^-_1$ & $\bigl(1.8460330,\allowbreak -2.6029967,\allowbreak 3.2542442,\allowbreak -3.7820521,\allowbreak 4.1709631,\allowbreak -4.4092041,\allowbreak 4.4894462\bigr)$ \\
$\alpha^{(2)}(15)$ & $229^-_1$ & $\bigl(-0.0398642,\allowbreak 0.0354818,\allowbreak -0.0430297,\allowbreak 0.0883484,\allowbreak 0.1957864,\allowbreak -0.0626497,\allowbreak 0.0502065\bigr)$ \\
$\alpha^{(3)}(15)$ & $229^-_1$ & $\bigl(0.3612877,\allowbreak -0.4571683,\allowbreak 0.0780723,\allowbreak 1.5080356,\allowbreak -0.0482876,\allowbreak -1.2299295,\allowbreak -0.1833390\bigr)$ \\
$\alpha(17)$ & $293^-_1$ & $\bigl(1.8655938,\allowbreak -2.6564473,\allowbreak 3.3592131,\allowbreak -3.9587578,\allowbreak 4.4410548,\allowbreak -4.7943831,\allowbreak 5.0099846,\allowbreak -5.0824596\bigr)$ \\
$\alpha^{(1)}(19)$ & $365^-_1$ & $\bigl(1.8808675,\allowbreak -2.6978671,\allowbreak 3.4402322,\allowbreak -4.0950639,\allowbreak 4.6498819,\allowbreak -5.0936512,\allowbreak 5.4173427,\allowbreak -5.6142856,\allowbreak 5.6803920\bigr)$ \\
$\alpha^{(2)}(19)$ & $365^-_7$ & $\bigl(1.2900891,\allowbreak -0.6416790,\allowbreak -0.3807808,\allowbreak 0.6623690,\allowbreak -0.1380162,\allowbreak -0.6607167,\allowbreak 0.5616232,\allowbreak 0.2270854,\allowbreak -0.6071557\bigr)$ \\
$\alpha^{(1)}(21)$ & $445^-_1$ & $\bigl(1.8931012,\allowbreak -2.7307935,\allowbreak 3.5043385,\allowbreak -4.2027135,\allowbreak 4.8148956,\allowbreak -5.3307561,\allowbreak 5.7415407,\allowbreak -6.0401740,\allowbreak 6.2214666,\allowbreak -6.2822522\bigr)$ \\
$\alpha^{(2)}(21)$ & $445^-_3$ & $\bigl(-0.1358047,\allowbreak 0.0388500,\allowbreak 0.0572406,\allowbreak -0.3621546,\allowbreak 0.6415308,\allowbreak 0.0339559,\allowbreak -0.0964870,\allowbreak 0.0327963,\allowbreak 0.1177727,\allowbreak 1.2434487\bigr)$ \\
$\alpha^{(3)}(21)$ & $445^-_1$ & $\bigl(0.5605789,\allowbreak 0.9357197,\allowbreak -0.9074126,\allowbreak -0.2320658,\allowbreak 0.0951729,\allowbreak 1.0803263,\allowbreak -0.5913956,\allowbreak -0.4152355,\allowbreak -0.2049533,\allowbreak 0.8227112\bigr)$ \\
$\alpha^{(4)}(21)$ & $445^-_3$ & $\bigl(-0.0354184,\allowbreak -0.0420227,\allowbreak 1.6141476,\allowbreak 0.0214035,\allowbreak -0.0456781,\allowbreak -0.0993081,\allowbreak -0.6556051,\allowbreak 0.0155921,\allowbreak -0.0961727,\allowbreak 0.3427684\bigr)$ \\
$\alpha^{(1)}(23)$ & $533^-_1$ & $\bigl(1.9031068,\allowbreak -2.7575276,\allowbreak 3.5561312,\allowbreak -4.2894565,\allowbreak 4.9477837,\allowbreak -5.5219252,\allowbreak 6.0036414,\allowbreak -6.3859039,\allowbreak 6.6630778,\allowbreak -6.8310509,\allowbreak 6.8873218\bigr)$ \\
$\alpha^{(2)}(23)$ & $533^-_6$ & $\bigl(1.3945246,\allowbreak -0.8570807,\allowbreak -0.3578319,\allowbreak 1.1931777,\allowbreak -1.2079603,\allowbreak 0.4574721,\allowbreak 0.4912947,\allowbreak -0.7403223,\allowbreak 0.1471603,\allowbreak 1.0477742,\allowbreak -1.5530340\bigr)$ \\
$\alpha^{(1)}(25)$ & $629^-_1$ & $\bigl(1.9114337,\allowbreak -2.7796229,\allowbreak 3.5987220,\allowbreak -4.3605692,\allowbreak 5.0565828,\allowbreak -5.6784710,\allowbreak 6.2185965,\allowbreak -6.6702048,\allowbreak 7.0275830,\allowbreak -7.2861762,\allowbreak 7.4426728,\allowbreak -7.4950629\bigr)$ \\
$\alpha^{(2)}(25)$ & $629^-_6$ & $\bigl(1.6849559,\allowbreak -1.8522369,\allowbreak 1.4162542,\allowbreak -0.4815149,\allowbreak -0.7432366,\allowbreak 1.6655411,\allowbreak -2.0433085,\allowbreak 1.7718447,\allowbreak -0.9419326,\allowbreak -0.2029681,\allowbreak 0.9753562,\allowbreak -1.2627561\bigr)$ \\
$\alpha^{(1)}(27)$ & $733^-_1$ & $\bigl(1.8554544,\allowbreak -2.5267070,\allowbreak 2.9454784,\allowbreak -3.0667676,\allowbreak 2.8749734,\allowbreak -2.3857297,\allowbreak 1.6436935,\allowbreak -0.7152035,\allowbreak -0.3381156,\allowbreak 1.2211322,\allowbreak -1.9178588,\allowbreak 2.3652139,\allowbreak -2.5194527\bigr)$ \\
$\alpha^{(2)}(27)$ & $733^-_1$ & $\bigl(0.6229592,\allowbreak 0.3307942,\allowbreak -0.0569321,\allowbreak -0.3418595,\allowbreak -0.2107442,\allowbreak 0.0713592,\allowbreak 0.2689532,\allowbreak 0.1564335,\allowbreak -0.0856702,\allowbreak -0.2404725,\allowbreak -0.1243237,\allowbreak 0.1023910,\allowbreak 0.2325228\bigr)$ \\
$\alpha^{(3)}(27)$ & $733^-_1$ & $\bigl(1.9184663,\allowbreak -2.7981623,\allowbreak 3.63428084,\allowbreak -4.4197441,\allowbreak 5.1469554,\allowbreak -5.8084422,\allowbreak 6.3971810,\allowbreak -6.90679639,\allowbreak 7.3316995,\allowbreak -7.6671916,\allowbreak 7.9095418,\allowbreak -8.0560451,\allowbreak 8.1050625\bigr)$ \\
$\alpha_A^{(1)}(29)$ & $845^-_1$ & $\bigl(1.9244810,\allowbreak -2.8139216,\allowbreak 3.6643605,\allowbreak -4.4696296,\allowbreak 5.2229811,\allowbreak -5.9176781,\allowbreak 6.5472851,\allowbreak -7.1058449,\allowbreak 7.5879998,\allowbreak -7.9890824,\allowbreak 8.3051856,\allowbreak -8.5332166,\allowbreak 8.6709382,\allowbreak -8.7169964\bigr)$ \\
$\alpha_A^{(2)}(29)$ & $845^-_6$ & $\bigl(0.5270628,\allowbreak 0.1860281,\allowbreak -0.0759645,\allowbreak -0.5134186,\allowbreak -0.0590021,\allowbreak 0.6006571,\allowbreak 0.0763627,\allowbreak -1.7787721,\allowbreak 0.0159781,\allowbreak 2.0677539,\allowbreak 0.0996648,\allowbreak -0.7073088,\allowbreak -0.0398396,\allowbreak 0.9646156\bigr)$ \\
$\alpha_B^{(1)}(29)$ & $65^-_1 \times 13^-_1$ & $\bigl(-0.5218822,\allowbreak 0.2057083,\allowbreak 0.09962041,\allowbreak 0.2057083,\allowbreak 0.2057083,\allowbreak -0.1674305,\allowbreak 0.09962041,\allowbreak -0.5218822,\allowbreak -0.08108325,\allowbreak -1.077645,\allowbreak -0.03926699,\allowbreak -0.5218822,\allowbreak 0.4247714,\allowbreak -0.03926699\bigr)$ \\
$\alpha^{(1)}(31)$ & $965^-_1$ & $\bigl(1.9296816,\allowbreak -2.8274695,\allowbreak 3.6900981,\allowbreak -4.5121662,\allowbreak 5.2876590,\allowbreak -6.0104925,\allowbreak 6.6747782,\allowbreak -7.2749805,\allowbreak 7.8060240,\allowbreak -8.2633733,\allowbreak 8.6430954,\allowbreak -8.9419089,\allowbreak 9.1572226,\allowbreak -9.2871651,\allowbreak 9.3306054\bigr)$ \\
$\alpha^{(2)}(31)$ & $965^-_{13}$ & $\bigl(-2.7399559,\allowbreak -0.8393053,\allowbreak -0.1182483,\allowbreak -1.4435280,\allowbreak -0.4174896,\allowbreak -0.2941592,\allowbreak -2.2494138,\allowbreak -1.3632528,\allowbreak 0.13112346,\allowbreak -1.2077058,\allowbreak -0.904472714,\allowbreak 0.099198200,\allowbreak -2.12552714,\allowbreak -2.221277522,\allowbreak -0.01804310\bigr)$ \\
$\alpha^{(1)}(33)$ & $1093^{-}_{1}$ & $\bigl(1.94328725,\allowbreak 0.01863406,\allowbreak -1.87022383,\allowbreak 0.04005396,\allowbreak 0.79983049,\allowbreak -0.26267870,\allowbreak 0.16242696,\allowbreak 0.35658909,\allowbreak 0.34846383,\allowbreak -0.13648156,\allowbreak -0.73772057,\allowbreak -0.08941442,\allowbreak 0.51963575,\allowbreak -0.81099518,\allowbreak -0.05356399,\allowbreak 1.33423479\bigr)$ \\
$\alpha^{(2)}(33)$ & $1093^{-}_{1}$ & $\bigl(1.93422118,\allowbreak -2.83923189,\allowbreak 3.71234296,\allowbreak -4.54880412,\allowbreak 5.34323384,\allowbreak -6.09012600,\allowbreak 6.78409211,\allowbreak -7.42000311,\allowbreak 7.99308494,\allowbreak -8.49898925,\allowbreak 8.93384863,\allowbreak -9.29432091,\allowbreak 9.57762492,\allowbreak -9.78156881,\allowbreak 9.90457174,\allowbreak -9.94567912\bigr)$ \\
$\alpha^{(3)}(33)$ & $1093^{-}_{1}$ & $\bigl(1.89092342,\allowbreak -2.65059354,\allowbreak 3.22692730,\allowbreak -3.57861404,\allowbreak 3.67938591,\allowbreak -3.52028520,\allowbreak 3.11037540,\allowbreak -2.47592164,\allowbreak 1.65788352,\allowbreak -0.70668553,\allowbreak -0.33755155,\allowbreak 1.25020393,\allowbreak -2.02909031,\allowbreak 2.62650126,\allowbreak -3.00227588,\allowbreak 3.13050550\bigr)$ \\
$\alpha^{(4)}(33)$ & $1093^{-}_{1}$ & $\bigl(1.81224929,\allowbreak -2.80075776,\allowbreak -0.01339349,\allowbreak 1.31456736,\allowbreak -4.61182118,\allowbreak 0.01656695,\allowbreak 0.63106959,\allowbreak -5.53101224,\allowbreak -0.06729999,\allowbreak 0.13713607,\allowbreak -5.13182837,\allowbreak -0.65931785,\allowbreak -0.01746619,\allowbreak -3.69262239,\allowbreak -1.96215143,\allowbreak 0.01067282\bigr)$ \\
$\alpha^{(5)}(33)$ & $1093^{-}_{1}$ & $\bigl(1.69814576,\allowbreak -1.87541310,\allowbreak 1.46362734,\allowbreak -0.60649274,\allowbreak -0.40993946,\allowbreak 1.09428198,\allowbreak -1.26183616,\allowbreak 0.88673582,\allowbreak -0.13953696,\allowbreak -0.78465180,\allowbreak 1.28999380,\allowbreak -1.22336116,\allowbreak 0.63336447,\allowbreak 0.24957299,\allowbreak -0.92934429,\allowbreak 1.18937567\bigr)$ \\
$\alpha^{(1)}(35)$ & $1229^{-}_{1}$ & $\bigl(1.93821699,\allowbreak -2.84953342,\allowbreak 3.73174096,\allowbreak -4.58064500,\allowbreak 5.39141344,\allowbreak -6.15905059,\allowbreak 6.87862078,\allowbreak -7.54537730,\allowbreak 8.15484819,\allowbreak -8.70289914,\allowbreak 9.18578258,\allowbreak -9.60017738,\allowbreak 9.94322132,\allowbreak -10.21253759,\allowbreak 10.40625598,\allowbreak -10.52302916,\allowbreak 10.56204414\bigr)$ \\
$\alpha^{(2)}(35)$ & $1229^{-}_{1}$ & $\bigl(1.86991231,\allowbreak 0.01718196,\allowbreak -1.70855729,\allowbreak 0.02912538,\allowbreak 0.78374100,\allowbreak 0.33602380,\allowbreak -0.06459944,\allowbreak -0.91682491,\allowbreak -0.02711630,\allowbreak 1.07513290,\allowbreak -0.03437599,\allowbreak -0.63552642,\allowbreak -0.18372300,\allowbreak 0.11028420,\allowbreak 0.69098091,\allowbreak 0.03260751,\allowbreak -0.96962096\bigr)$ \\
$\alpha^{(3)}(35)$ & $1229^{-}_{1}$ & $\bigl(1.69174126,\allowbreak -1.83664059,\allowbreak 1.34713054,\allowbreak -0.35060972,\allowbreak -0.91870928,\allowbreak 1.85715709,\allowbreak -2.17560733,\allowbreak 1.76711558,\allowbreak -0.75675610,\allowbreak -0.56098402,\allowbreak 1.59800433,\allowbreak -2.07319944,\allowbreak 1.85086670,\allowbreak -1.02033873,\allowbreak -0.14461563,\allowbreak 0.93974880,\allowbreak -1.23888710\bigr)$ \\
$\alpha^{(1)}(37)$ & $1373^{-}_{1}$ & $\bigl(1.94176039,\allowbreak -2.85862557,\allowbreak 3.74879109,\allowbreak -4.60853940,\allowbreak 5.43351670,\allowbreak -6.21917846,\allowbreak 6.96099717,\allowbreak -7.65458055,\allowbreak 8.29574913,\allowbreak -8.88059264,\allowbreak 9.40551376,\allowbreak -9.86726367,\allowbreak 10.26297136,\allowbreak -10.59016795,\allowbreak 10.84680680,\allowbreak -11.03127959,\allowbreak 11.14242875,\allowbreak -11.17955623\bigr)$ \\
$\alpha^{(2)}(37)$ & $1373^{-}_{1}$ & $\bigl(2.11677531,\allowbreak 0.01491531,\allowbreak -2.21513281,\allowbreak 0.02632909,\allowbreak 1.09632757,\allowbreak -0.35677131,\allowbreak 0.09712764,\allowbreak 0.61294931,\allowbreak 0.11516128,\allowbreak -0.36782643,\allowbreak -0.47430974,\allowbreak -0.10092149,\allowbreak 0.44352014,\allowbreak -0.23995221,\allowbreak -0.13014358,\allowbreak 0.99172085,\allowbreak -0.03132459,\allowbreak -1.38253405\bigr)$ \\
$\alpha^{(3)}(37)$ & $1373^{-}_{1}$ & $\bigl(1.73852139,\allowbreak -2.01370677,\allowbreak 1.72564987,\allowbreak -0.93852225,\allowbreak -0.14639143,\allowbreak 0.95568517,\allowbreak -1.37211809,\allowbreak 1.26540690,\allowbreak -0.65770371,\allowbreak -0.28967220,\allowbreak 1.15126092,\allowbreak -1.70769781,\allowbreak 1.78687884,\allowbreak -1.36127169,\allowbreak 0.55842734,\allowbreak 0.39275842,\allowbreak -1.08658635,\allowbreak 1.34334670\bigr)$ \\
$\alpha_A^{(1)}(39)$ & $1525^-_{3}$ & $\bigl(1.96770666,\allowbreak -0.00547312,\allowbreak -9.93471099,\allowbreak -0.01286042,\allowbreak -0.91900026,\allowbreak -3.24339452,\allowbreak 0.00482034,\allowbreak -9.58073700,\allowbreak 0.01198489,\allowbreak 0.82005346,\allowbreak -3.74267160,\allowbreak -0.00738332,\allowbreak 2.90291926,\allowbreak 0.00471194,\allowbreak -7.43587194,\allowbreak 0.14164621,\allowbreak -0.07169065,\allowbreak -6.70971847,\allowbreak -0.00359744\bigr)$ \\
$\alpha_A^{(2)}(39)$ & $1525^-_{1}$ & $\bigl(1.94492347,\allowbreak -2.86670597,\allowbreak 3.76388434,\allowbreak -4.63315284,\allowbreak 5.47057573,\allowbreak -6.27200781,\allowbreak 7.03328869,\allowbreak -7.75035185,\allowbreak 8.41929530,\allowbreak -9.03643249,\allowbreak 9.59833154,\allowbreak -10.10184707,\allowbreak 10.54414659,\allowbreak -10.92273270,\allowbreak 11.23546170,\allowbreak -11.48055902,\allowbreak 11.65663170,\allowbreak -11.76267792,\allowbreak 11.79809391\bigr)$ \\
$\alpha_A^{(3)}(39)$ & $1525^-_{3}$ & $\bigl(1.91336749,\allowbreak -2.72825665,\allowbreak 3.40456879,\allowbreak -3.90746278,\allowbreak 4.21033149,\allowbreak -4.29656414,\allowbreak 4.16055088,\allowbreak -3.80800372,\allowbreak 3.25559619,\allowbreak -2.52988877,\allowbreak 1.66537624,\allowbreak -0.70071276,\allowbreak -0.33708195,\allowbreak 1.26830886,\allowbreak -2.09840666,\allowbreak 2.79103771,\allowbreak -3.31238774,\allowbreak 3.63621670,\allowbreak -3.74603225\bigr)$ \\
$\alpha_A^{(4)}(39)$ & $1525^-_{1}$ & $\bigl(1.64351683,\allowbreak -1.65090298,\allowbreak 0.98338433,\allowbreak 0.11480992,\allowbreak -0.87291100,\allowbreak 1.10050567,\allowbreak -0.68114539,\allowbreak -0.22505237,\allowbreak 1.06299007,\allowbreak -1.50219773,\allowbreak 1.33704854,\allowbreak -0.63542799,\allowbreak -0.29681604,\allowbreak 0.92092023,\allowbreak -0.99172426,\allowbreak 0.47409123,\allowbreak 0.41889737,\allowbreak -1.17884533,\allowbreak 1.47759886\bigr)$ \\
$\alpha_B^{(1)}(39)$ & $5^-_{1}\times\allowbreak 5^-_{1}\times\allowbreak 61^-_{1}$ & $\bigl(1.85351625,\allowbreak -2.47325587,\allowbreak 2.77173185,\allowbreak -2.70514019,\allowbreak 2.28076022,\allowbreak -1.55590031,\allowbreak 0.62795848,\allowbreak 0.39485378,\allowbreak -1.25428087,\allowbreak 1.87338267,\allowbreak -2.17374219,\allowbreak 2.11936626,\allowbreak -1.72449025,\allowbreak 1.05131678,\allowbreak -0.19747623,\allowbreak -0.79169361,\allowbreak 1.60340098,\allowbreak -2.13890310,\allowbreak 2.32596659\bigr)$ \\
$\alpha^{(1)}(41)$ & $1685^-_{1}$ & $\bigl(1.94776386,\allowbreak -2.87393194,\allowbreak 3.77733103,\allowbreak -4.65501272,\allowbreak 5.50340810,\allowbreak -6.31872629,\allowbreak 7.09713688,\allowbreak -7.83487122,\allowbreak 8.52828732,\allowbreak -9.17391600,\allowbreak 9.76849629,\allowbreak -10.30900402,\allowbreak 10.79267561,\allowbreak -11.21702821,\allowbreak 11.57987678,\allowbreak -11.87934858,\allowbreak 12.11389519,\allowbreak -12.28230229,\allowbreak 12.38369713,\allowbreak -12.41755395\bigr)$ \\
$\alpha^{(2)}(41)$ & $1685^-_{17}$ & $\bigl(1.86260109,\allowbreak -2.50599124,\allowbreak 2.84477353,\allowbreak -2.83083429,\allowbreak 2.46059842,\allowbreak -1.77545406,\allowbreak 0.85497687,\allowbreak 0.20293039,\allowbreak -1.09204466,\allowbreak 1.78507157,\allowbreak -2.19636593,\allowbreak 2.27179712,\allowbreak -1.99968499,\allowbreak 1.41280631,\allowbreak -0.58247374,\allowbreak -0.40647358,\allowbreak 1.31847768,\allowbreak -2.06813502,\allowbreak 2.56081407,\allowbreak -2.73252593\bigr)$ \\
$\alpha^{(1)}(43)$ & $1853^-_{1}$ & $\bigl(1.95032819,\allowbreak -2.88043017,\allowbreak 3.78938030,\allowbreak -4.67454172,\allowbreak 5.53266872,\allowbreak -6.36028554,\allowbreak 7.15385853,\allowbreak -7.90989073,\allowbreak 8.62498182,\allowbreak -9.29587021,\allowbreak 9.91946525,\allowbreak -10.49287291,\allowbreak 11.01341733,\allowbreak -11.47865903,\allowbreak 11.88641051,\allowbreak -12.23474971,\allowbreak 12.52203135,\allowbreak -12.74689647,\allowbreak 12.90828017,\allowbreak -13.00541753,\allowbreak 13.03784793\bigr)$ \\
$\alpha^{(2)}(43)$ & $1853^-_{6}$ & $\bigl(-1.76848200,\allowbreak -0.75377868,\allowbreak 0.74626278,\allowbreak 1.01666968,\allowbreak 0.02920041,\allowbreak 1.55477723,\allowbreak 1.83623830,\allowbreak 0.48815344,\allowbreak -0.36025546,\allowbreak 0.52939498,\allowbreak -0.17388609,\allowbreak -1.55690534,\allowbreak -1.20872065,\allowbreak -0.08976415,\allowbreak -1.00865037,\allowbreak -0.89195242,\allowbreak 0.26489605,\allowbreak 0.88926734,\allowbreak -0.14505852,\allowbreak 0.70697952,\allowbreak 1.41173802\bigr)$ \\
\end{longtable}
\endgroup

\end{document}